\numberwithin{equation}{section}
\numberwithin{figure}{section}
\theoremstyle{plain}
\newtheorem{thm}{\protect\theoremname}[section]
\newtheorem{lem}[thm]{\protect\lemmaname}
\newtheorem{prop}[thm]{\protect\propositionname}
\newtheorem{cor}[thm]{\protect\corollaryname}
\theoremstyle{remark}
\newtheorem{rem}[thm]{\protect\remarkname}
\newtheorem{example}[thm]{\protect\examplename}
\theoremstyle{definition}
\newtheorem{defn}[thm]{\protect\definitionname}
\providecommand{\corollaryname}{Corollary}
\providecommand{\definitionname}{Definition}
\providecommand{\examplename}{Example}
\providecommand{\lemmaname}{Lemma}
\providecommand{\propositionname}{Proposition}
\providecommand{\remarkname}{Remark}
\providecommand{\theoremname}{Theorem}
\providecommand{\tabularnewline}{\\}
\newcommand{\CC}{\mathbb{C}}
\newcommand{\FF}{\mathbb{F}}
\newcommand{\NN}{\mathbb{N}}
\newcommand{\PP}{\mathbb{P}}
\newcommand{\RR}{\mathbb{R}}
\newcommand{\ZZ}{\mathbb{Z}}
\newcommand{\OO}{\mathcal{O}}
\renewcommand{\a}{\alpha}
\renewcommand{\b}{\beta}
\renewcommand{\d}{\delta}
\newcommand{\e}{\epsilon}
\newcommand{\g}{\gamma}
\newcommand{\G}{\Gamma}
\renewcommand{\l}{\lambda}
\renewcommand{\L}{\Lambda}
\newcommand{\s}{\sigma}
\newcommand{\aut}{\operatorname{Aut}}
\newcommand{\Ns}{\operatorname{NS}}
\newcommand{\NS}{\operatorname{NS}(}
\newcommand{\Pic}{\operatorname{Pic}(}
\newcommand{\rk}{\operatorname{rk}(}
\newcommand{\cu}{\operatorname{(-2)}}
\newcommand{\zde}{\ensuremath{{\scriptscriptstyle (\mathbb{Z}/2\mathbb{Z})^{2}}}}%
\newcommand{\zd}{\ensuremath{{\scriptscriptstyle \mathbb{Z}/2\mathbb{Z}}}}%
\newcommand{\tss}{\ensuremath{{\scriptscriptstyle \mathfrak{S}_{3}\times\mathbb{Z}/2\mathbb{Z}}}}%
\newcommand{\triv}{\ensuremath{{\scriptscriptstyle \{1\}}}}%
\title{An atlas of K3 surfaces with finite automorphism group}
\author{Xavier Roulleau}
\address{Aix-Marseille Universit\'e, CNRS, Centrale Marseille,
I2M UMR 7373,  
13453 Marseille,
France} 
\email{Xavier.Roulleau@univ-amu.fr}
\begin{document}


\maketitle

\begin{prelims}

\DisplayAbstractInEnglish

\bigskip

\DisplayKeyWords

\medskip

\DisplayMSCclass

\end{prelims}


\newpage

\setcounter{tocdepth}{1}

\tableofcontents


\section{Introduction}

Algebraic K3 surfaces $X$ over $\CC$ with finite automorphism group
were classified by their Picard lattices $\NS X)$ and Picard
number $\rho=\rk\NS X))\geq3$ by
Nikulin for $\rho=5,6,\ldots$
in 1981, \textit{cf.}\ \cite{Nikulin}, and for $\rho=3$ in 1985, \textit{cf.}\ \cite{Nikulin2}, and
by Vinberg for $\rho=4$ (in 1981, published in 2007 in \cite{Vinberg}).
The result of their classification is a list of $118$ N\'eron--Severi
lattices. 

In the present paper, we exhibit for each of these surfaces a geometric
construction, as a double plane, a double cover of the Hirzebruch
surface $\mathbf{F}_{n}$, $n\in\{2,3,4\}$ (which part has also been
more or less explicitly done in \cite{AN} and \cite{Zhang}) or a
complete intersection surface of degree $k\in\{4,6,8\}$ in $\PP^{\frac{1}{2}k+1}$.
Let us recall that a K3 surface with Picard number $\rho>2$ has 
finite automorphism group if and only if it contains only a 
non-zero finite number of $\cu$-curves. The $\cu$-curves play a key role
for K3 surfaces; for example, one can describe their ample cone using
these curves. We are thus especially interested in the configuration
of these curves on the K3 surfaces. In \cite{Nikulin} and especially
in \cite{Nikulin2} for $\rho=3$, Nikulin described the number and
some configurations of these curves. Our contribution to the subject
comes for higher-rank Picard lattices, when there are more $\cu$-curves
than expected. Among the K3 surfaces we describe, there are two series
which we find most remarkable. 

The first series are the surfaces with N\'eron--Severi lattice of type
$U(2)\oplus\mathbf{A}_{1}^{\oplus n}$, for $n\in\{2,\dots,7\}$.  Such
a surface $X$ is the desingularization of the double cover of the
plane branched over a sextic curve with $n+1$ nodes in general
position. The surface $X$ is also naturally the double cover $X\to Z$
of a degree $8-n$ del Pezzo surface $Z$ branched over a curve in the
linear system $|-2K_{Z}|$. The pull-back on $X$ of the $(-1)$-curves
on $Z$ are $(-2)$-curves, and one has a description of the
configuration of the $\cu$-curves on $X$ from the well-known
configuration of $(-1)$-curves on the del Pezzo surface. In
particular, for $n=2,\dots,7$, the number of $\cu$-curves on the K3
surface is $6,10,16,27,56,240$, respectively.  Let us describe the
case with $240$ $\cu$-curves. 

\begin{thm}\label{thm:1}
Let $X$ be a general K3 surface with N\'eron--Severi lattice isometric
to $U(2)\oplus\mathbf{A}_{1}^{\oplus7}$. There exists a double cover
$f_{1}\colon X\to\PP^{2}$ branched over a sextic curve with $8$ nodes
$p_{1},\dots,p_{8}$ such that the images of the $240$ $\cu$-curves
on $X$ are
\begin{itemize}
\item the $8$ points $p_{1},\dots,p_{8}$, 
\item the $28$ lines through $p_{i},p_{j}$ with $i\neq j$,
\item the $56$ conics that go through $5$ points in $\{p_{1},\dots,p_{8}\}$, 
\item the $56$ cubics that go through $7$ points $p_{j}$ with a double
point at $1$ of these points,
\item the $56$ quartics through the $8$ points $p_{j}$ with double points
  at $3$ of them,
\item the $28$ quintics through the $8$ points $p_{j}$ with double points
at $2$ of them,
\item the $8$ sextics through the $8$ points with double points at all
except a single point with multiplicity $3$.
\end{itemize}

Let $Z\to\PP^{2}$ be the blow-up at the points
$p_{k},\,k\in\{1,\dots,8\}$; the surface $X$ is also a double cover of
$Z$. The pull-back on $X$ of the pencil $|-K_{Z}|$ gives another
double cover $f_{2}\colon X\to\PP^{2}$.  Its branch locus is a smooth
sextic curve to which $120$ conics are tangent at every intersection
point. These $120$ conics are the images of the $240$ $\cu$-curves on
$X$.

The two involutions corresponding to the covers $f_{1},f_{2}$ generate
the automorphism group $(\ZZ/2\ZZ)^{2}$ of $X$. 
\end{thm}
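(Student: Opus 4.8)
The plan is to recover both double-plane structures directly from $\mathrm{NS}(X)\cong U(2)\oplus\mathbf{A}_1^{\oplus 7}$, to identify the $(-2)$-curves of $X$ with the $(-1)$-curves of an auxiliary del Pezzo surface of degree $1$, and then to read off their images under the two covers; throughout, the hypothesis $\rho(X)=9$ (equivalently that $\mathrm{NS}(X)$ is \emph{exactly} this lattice) is what excludes all unwanted curves.

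First I would locate $f_1$. The lattice $\mathrm{NS}(X)$ contains a primitive class $h$ with $h^2=2$ and $h^\perp\cong\mathbf{A}_1^{\oplus 8}$ (for instance $h=e+f+\alpha_1$ in the obvious basis), and a determinant comparison ($2^9=2^9$) gives $\mathrm{NS}(X)=\langle h\rangle\oplus h^\perp$. After a Weyl translation $h$ is nef, $|h|$ is base-point free (Saint-Donat, $h^2=2$), and the degree $2$ morphism $f_1\colon X\to\PP^2$ it defines has branch a sextic $\Delta_1$ (from $0=K_X=f_1^*(K_{\PP^2}+\tfrac12\Delta_1)$); the curves contracted by $f_1$ are the effective $(-2)$-classes of $h^\perp\cong\mathbf{A}_1^{\oplus 8}$, so they are $8$ disjoint $(-2)$-curves $E_1,\dots,E_8$, and since $\mathbf{A}_1^{\oplus 8}$ contains no $\mathbf{A}_2$ the curve $\Delta_1$ has exactly $8$ nodes $p_1,\dots,p_8$ and nothing worse. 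Blowing these up yields $\mathrm{bl}\colon Z\to\PP^2$, and $f_1=\mathrm{bl}\circ\pi$ for a double cover $\pi\colon X\to Z$ branched over the strict transform of $\Delta_1$, which has class $6\ell-2\sum e_i=-2K_Z$. Because $\rho(X)=9$, the $8$ points are forced into general position and $\Delta_1$ is forced irreducible (three collinear points, six on a conic, a reducibility, or an excess tangency of $\Delta_1$ would each split a curve of $Z$ disjoint from the branch into two $(-2)$-curves whose classes lie outside $\pi^*\mathrm{Pic}(Z)$, raising $\rho$); hence $Z$ is a del Pezzo surface of degree $1$ and, by a last determinant comparison, $\pi^*\mathrm{Pic}(Z)=\mathrm{NS}(X)$.

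Next I would prove that $\ell'\mapsto\pi^*\ell'$ is a bijection between the $240$ $(-1)$-curves of $Z$ and the $(-2)$-curves of $X$. If $\ell'$ is a $(-1)$-curve then $\ell'\cdot(-2K_Z)=2$, so $\pi^{-1}(\ell')$ is a double cover of $\PP^1$ branched at two points, i.e. an irreducible $(-2)$-curve of class $\pi^*\ell'$. Conversely, for a $(-2)$-curve $C$ and the covering involution $\iota_1$ of $\pi$: if $\iota_1 C\neq C$ then $[C]\notin\pi^*\mathrm{Pic}(Z)$ (otherwise $\iota_1^*[C]=[C]$ would force $C\cdot\iota_1 C=C^2<0$ for two distinct curves), contradicting $\pi^*\mathrm{Pic}(Z)=\mathrm{NS}(X)$; so $\iota_1 C=C$, and as $C$ cannot lie in the ramification curve (irreducible of self-intersection $2$) the map $\pi|_C$ is $2{:}1$ onto a curve $D$ with $\pi^*D=C$, whence $D^2=-1$ and (being irreducible on a del Pezzo surface) $D$ is a $(-1)$-curve. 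This produces exactly the $240$ curves; it then remains to list the standard $(-1)$-classes of a degree $1$ del Pezzo surface — $e_i$, $\ell-e_i-e_j$, $2\ell-\sum_5$, $3\ell-2e_a-\sum_6$, $4\ell-2\sum_3-\sum_5$, $5\ell-2\sum_6-\sum_2$, $6\ell-3e_a-2\sum_7$ (in numbers $8,28,56,56,56,28,8$; here $\sum_k$ abbreviates a sum of $k$ distinct exceptional classes) — and to observe that the image $f_1(\pi^*D)=\mathrm{bl}(D)\subset\PP^2$ is precisely the point, line, conic, $\dots$, or sextic with the prescribed multiplicities listed in the statement. For the second cover, set $R:=\pi^*(-K_Z)$: since $\deg Z=1$, the projection formula gives $h^0(X,R)=h^0(Z,-K_Z)+h^0(Z,\mathcal{O}_Z)=3$ and $R^2=2$; as $R\cdot\pi^*\ell'=-2K_Z\cdot\ell'=2$ for every $(-2)$-curve and $R\cdot C>0$ for every other curve by Hodge index, $R$ is ample and the base-point-free system $|R|$ defines a \emph{finite} double cover $f_2\colon X\to\PP^2$, so its branch sextic $\Delta_2$ is smooth because $X$ is. For $C=\pi^*\ell'$ one gets $[\iota_2 C]=2R-[C]=\pi^*(-2K_Z-\ell')=\pi^*\ell''$, where $\ell'+\ell''=-2K_Z$ is again a $(-1)$-curve; this fixed-point-free involution pairs the $240$ curves into $120$ pairs, each mapping under $f_2$ onto one smooth conic $\gamma$ with $f_2^*\gamma=\pi^*\ell'+\pi^*\ell''$, distinct pairs giving distinct conics. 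Finally $f_2^{-1}(\gamma)=\pi^*\ell'\cup\pi^*\ell''$ is reducible, so the induced double cover of $\gamma\cong\PP^1$ is reducible, which forces the degree-$12$ divisor $\gamma\cap\Delta_2$ to be twice an effective divisor; i.e. $\gamma$ is tangent to $\Delta_2$ at every intersection point, and these $120$ conics are the images of the $240$ $(-2)$-curves.

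Lastly, for the automorphism group: $\iota_1$ and $\iota_2$ are involutions of $X$; they are distinct, since $\iota_1^*=\mathrm{id}$ on $\mathrm{NS}(X)$ (it fixes $h$ and the $E_i$, which generate $\mathrm{NS}(X)$) whereas $\iota_2^*$ permutes the $(-2)$-curves nontrivially; and because $X/\iota_1=Z$ is rational, $\iota_1^*$ is $+\mathrm{id}$ on $\mathrm{NS}(X)$ and $-\mathrm{id}$ on the transcendental lattice $T(X)$, hence central in $\mathrm{Aut}(X)$ by the global Torelli theorem. Thus $\langle\iota_1,\iota_2\rangle$ is a noncyclic group of order $4$, isomorphic to $(\ZZ/2\ZZ)^2$, and since the Nikulin--Vinberg classification records $\mathrm{Aut}(X)\cong(\ZZ/2\ZZ)^2$ for this lattice we conclude $\mathrm{Aut}(X)=\langle\iota_1,\iota_2\rangle$ (the third involution $\iota_1\iota_2$ being the lift to $X$ of the Bertini involution of $Z$). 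I expect the main obstacle to be precisely the surjectivity in the second step, together with the genericity consequences of $\rho(X)=9$ in the first step: these are what rule out extra $(-2)$-curves and pin down the branch loci. If one did not quote the classification for the exact value of $\mathrm{Aut}(X)$, the remaining technical point would be to bound the symmetry group of the configuration of the $240$ curves on $Z$, which is the most delicate lattice-theoretic ingredient.
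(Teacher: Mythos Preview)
Your argument is correct and in several respects more self-contained than the paper's treatment. The paper proceeds construction-first: it begins with a sextic having eight nodes, builds the degree-$1$ del Pezzo $Z$ and the double cover $f\colon Y\to Z$, lists the $240$ $(-1)$-curves of $Z$ explicitly, observes that their pull-backs are $(-2)$-curves, and then relies on the computational machinery of Section~\ref{subsec:About-the-computations} (Shimada's algorithm together with the induced-chamber method) to certify that these $240$ are \emph{all} the $(-2)$-curves on $X$. You instead start from the lattice, exhibit the class $h$ with $h^2=2$ and $h^\perp\cong\mathbf A_1^{\oplus 8}$, and recover $f_1$ and $Z$ intrinsically; your surjectivity step---every $(-2)$-curve $C$ must satisfy $\iota_1 C=C$ because $[C]\in\pi^*\mathrm{Pic}(Z)=\mathrm{NS}(X)$ would otherwise force $C\cdot\iota_1 C=C^2<0$ for distinct irreducible curves---replaces the paper's computer verification by a short conceptual argument. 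Both routes reach $f_2$ via $D_2=\pi^*(-K_Z)$ and both ultimately quote Kondo for $\mathrm{Aut}(X)\cong(\ZZ/2\ZZ)^2$; the paper distinguishes $\iota_1,\iota_2$ by the genera of their fixed curves, while your observation that $\iota_1$ acts as $(+\mathrm{id},-\mathrm{id})$ on $\mathrm{NS}(X)\oplus T(X)$ and is hence central is a clean alternative. Two minor remarks: the ampleness of $R=\pi^*(-K_Z)$ follows immediately from $\pi$ finite and $-K_Z$ ample, so the Hodge-index step is unnecessary; and your quintic class $5\ell-2\!\sum_6-\!\sum_2$ (six double points) is the correct one---the statement's ``double points at $2$ of them'' is a slip, though $\binom{8}{6}=\binom{8}{2}=28$ either way.
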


The second series are the surfaces with N\'eron--Severi lattice of type
$U\oplus\mathbf{E}_{8}\oplus\mathbf{A}_{1}^{\oplus4}$, $U\oplus\mathbf{D}_{8}\oplus\mathbf{A}_{1}^{\oplus3}$,
$U\oplus\mathbf{D}_{4}^{\oplus2}\oplus\mathbf{A}_{1}^{\oplus2}$,
$U\oplus\mathbf{D}_{4}\oplus\mathbf{A}_{1}^{\oplus5}$ or $U\oplus\mathbf{A}_{1}^{\oplus8}$.
These surfaces contain, respectively, $27,39,59,90,145$ $\cu$-curves.
Let us give an example of the results obtained for the case of $U\oplus\mathbf{A}_{1}^{\oplus8}$.
Let $\mathbf{F}_{4}$ be the Hirzebruch surface with a section $s$
such that $s^{2}=-4$. Let $f$ be a fiber of the unique fibration,
and let $L=4f+s$, $L'=5f+s$. 

\begin{thm}\label{thm:2}
Let $X$ be a general K3 surface with N\'eron--Severi lattice isometric
to $U\oplus\mathbf{A}_{1}^{\oplus8}$. There exists a double cover
$f_{1}\colon X\to\mathbf{F}_{4}$ branched over a curve $B=s+b$, where
$b$ is a curve in $|3L|$ with $8$ nodes $p_{1},\dots,p_{8}$ and
$s\cap b=\emptyset$; that double cover is such that the images of
the $145$ $\cu$-curves on $X$ are
\begin{itemize}
\item the $8$ points $p_{1},\dots,p_{8}$,
\item the section s,
\item the $8$ fibers through the $8$ points $p_{1},\dots,p_{8}$,
\item the $8$ curves in $|L'|$ going through $7$ of the $8$ points $p_{1},\dots,p_{8}$,
\item the $56$ sections in the linear system $|L|$ that pass through $5$
of the $8$ points $p_{1},\dots,p_{8}$,
\item the $56$ curves in the linear system $|2L|$ that pass through $5$
of the $8$ points with multiplicity $1$ and through the $3$ remaining
with multiplicity $2$,
\item the $8$ curves in the linear system $|3L|$ that pass through the $8$ points with multiplicity $2$, except at $1$ point where the 
multiplicity is $3$. 
\end{itemize}

There exists, moreover, a double cover of $\,\PP^{2}$ branched over a
sextic curve with one node. Through that node, there are $8$ lines
that are tangent to the sextic at another intersection point. There
are $64$ conics that are $6$-tangent to the sextic. The $72=8+64$
curves and the node are the images of the $145$ $\cu$-curves on
$X$. 

The two involutions corresponding to the two covers we described generate
the automorphism group $(\ZZ/2\ZZ)^{2}$ of~$X$. 
\end{thm}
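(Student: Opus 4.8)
The plan is to produce $X$ from a suitable elliptic pencil, to recover $\mathbf{F}_{4}$ as the quotient by the fibrewise involution, and then to read off both covers and all $145$ $\cu$-curves from $\mathrm{NS}(X)$. I take as known the list of $\cu$-classes of the lattice $U\oplus\mathbf{A}_{1}^{\oplus8}$ together with their intersection graph (from Vinberg's algorithm) and the equality $\mathrm{Aut}(X)\cong(\ZZ/2\ZZ)^{2}$.

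First I would build the fibration. Inside $\mathrm{NS}(X)\cong U\oplus\mathbf{A}_{1}^{\oplus8}$ there is an isotropic nef class $F$ and a $\cu$-curve $\tilde s$ with $F\cdot\tilde s=1$ spanning the copy of $U$, so that $F$ defines an elliptic fibration $X\to\PP^{1}$ with zero section $\tilde s$; comparing with the list of $\cu$-curves, the eight generators of $\mathbf{A}_{1}^{\oplus8}$ are the non-identity components $N_{1},\dots,N_{8}$ of eight fibres of type $I_{2}$, the other singular fibres have type $I_{1}$, and the Mordell--Weil group is trivial. On the Weierstrass model $W=\{y^{2}=x^{3}+Ax+B\}$ over $\PP^{1}$, with $A\in H^{0}(\mathcal{O}(8))$ and $B\in H^{0}(\mathcal{O}(12))$, the covering involution $\iota_{1}\colon y\mapsto-y$ is non-symplectic; its quotient is the $\PP^{1}$-bundle $\PP(\mathcal{O}\oplus\mathcal{O}(4))=\mathbf{F}_{4}$, the zero section maps isomorphically onto the negative section $s$ (so $s^{2}=-4$), and the fixed curve $\{y=0\}$ maps onto the trisection $b=\{x^{3}+Ax+B=0\}$, which lies in $|3L|$, is disjoint from $s$, and acquires an ordinary node exactly over each double zero of the discriminant $4A^{3}+27B^{2}$, i.e. over each $I_{2}$-fibre: eight nodes $p_{1},\dots,p_{8}$. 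This produces the double cover $f_{1}\colon X\to\mathbf{F}_{4}$ branched over $B=s+b$, with $X$ its minimal resolution; here $f_{1}^{*}s=2\tilde s$ and a general ruling fibre of $\mathbf{F}_{4}$ pulls back to $F$.

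Next I would match the $145$ $\cu$-curves with their images under $f_{1}$. For a $\cu$-curve $C$ distinct from the $N_{i}$, the image $f_{1}(C)$ has a well-defined class in $\mathrm{Pic}(\mathbf{F}_{4})$ and passes through $p_{i}$ with multiplicity $m_{i}=\frac{1}{2}\,C\cdot N_{i}$; both are computed in $\mathrm{NS}(X)$ from $f_{1}^{*}s=2\tilde s$ and the identity $(f_{1}^{*}D)^{2}=2D^{2}$. Running through the list, every $\cu$-class is realised by $\tilde s$ (image $s$), one of the $N_{i}$ (image the point $p_{i}$), one of the identity components $\tilde f_{i}=F-N_{i}$ of the $I_{2}$-fibres (image the ruling fibre through $p_{i}$), or the strict transform of a member of $|L'|$ through $7$ of the $p_{i}$, of $|L|$ through $5$ of the $p_{i}$, of $|2L|$ simple at $5$ and double at the remaining $3$ of the $p_{i}$, or of $|3L|$ double at seven of the $p_{i}$ and triple at the eighth. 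Dimension counts for these linear systems, together with the $p_{1},\dots,p_{8}$ being as general as $\mathrm{NS}(X)\cong U\oplus\mathbf{A}_{1}^{\oplus8}$ permits, show that the enumerated curves exist, are irreducible and carry only the stated singularities; since $1+8+8+8+56+56+8=145$, this exhausts the $\cu$-curves and establishes the first statement.

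For the second cover, $\mathrm{Aut}(X)\cong(\ZZ/2\ZZ)^{2}$ contains exactly one non-symplectic involution $\iota_{2}\neq\iota_{1}$ (the third non-trivial element $\iota_{1}\iota_{2}$ being symplectic), and the invariant sublattice of $\iota_{2}$ in $\mathrm{NS}(X)$ contains a primitive nef class $h$ with $h^{2}=2$; then $|h|$ defines a degree-two morphism $f_{2}\colon X\to\PP^{2}$ with covering involution $\iota_{2}$, whose branch is a sextic. The $\cu$-curves orthogonal to $h$ are contracted to the singular points of the sextic, and a lattice computation (Nikulin's classification of non-symplectic involutions applied to $\iota_{2}$) shows there is exactly one, giving the single node; the $\cu$-curves $C$ with $h\cdot C=1$, respectively $h\cdot C=2$, occur in $\iota_{2}$-orbits of size two lying over lines through the node, respectively over conics, along which the sextic is everywhere tangent away from the node. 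Counting gives $16=2\cdot8$ curves over $8$ lines and $128=2\cdot64$ curves over $64$ conics, so $1+16+128=145$ and the images are the node, these $8$ lines and these $64$ conics. Finally $\iota_{1}\neq\iota_{2}$ because $\mathbf{F}_{4}\not\cong\PP^{2}$, so $\langle\iota_{1},\iota_{2}\rangle=\mathrm{Aut}(X)\cong(\ZZ/2\ZZ)^{2}$. The main difficulty throughout is the projective bookkeeping for the branch curves: that $b$ has exactly those eight nodes and nothing worse and misses $s$, that the curves enumerated on $\mathbf{F}_{4}$ and on $\PP^{2}$ genuinely exist, are irreducible and have only the prescribed singularities, and that a single $\cu$-class is orthogonal to $h$ with no line off the node totally tangent to the sextic — each of which has to be forced from, and checked against, the lattice data.
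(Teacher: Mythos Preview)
Your route to the $\mathbf{F}_{4}$-cover via the Weierstrass model and the fibrewise involution $y\mapsto -y$ is equivalent to what the paper does, although the paper phrases it differently: it exhibits the divisor $D_{2}'=f_{1}+f_{2}=2F'+A_{0}$ (with $F'=f_{1}$ the fibre class of your fibration) and applies Saint-Donat's theorem to the hyperelliptic system $|2D_{2}'|$ to get the map to $\mathbf{F}_{4}$. Your Weierstrass picture is more geometric; the paper's linear-system argument is more uniform with the rest of the text. One small imprecision: the literal quotient $X/\iota_{1}$ is the blow-up of $\mathbf{F}_{4}$ at the eight points $p_{i}$ (the $N_{i}$ are $\iota_{1}$-invariant but not pointwise fixed, so they descend to $(-1)$-curves), and $f_{1}$ is the composite with the contraction. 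Your identification of the $145$ images on $\mathbf{F}_{4}$ via the multiplicities $m_{i}=\tfrac{1}{2}C\cdot N_{i}$ matches the paper's explicit equivalence relations with $D_{8}$.

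The genuine gap is in your construction of the $\PP^{2}$-cover. You argue: $\mathrm{Aut}(X)\cong(\ZZ/2\ZZ)^{2}$ forces a second non-symplectic involution $\iota_{2}$, and then you \emph{assert} that its invariant sublattice contains a nef primitive class $h$ with $h^{2}=2$ giving $f_{2}\colon X\to\PP^{2}$. But you have not computed the action of $\iota_{2}$ on $\mathrm{NS}(X)$, so you have no handle on its invariant lattice, and Nikulin's classification alone does not hand you such an $h$. The paper avoids this circularity entirely: it writes down $D_{2}=(3,3,-1,\dots,-1)=F+A_{0}$ in the standard basis (where $F=(4,2,-1,\dots,-1)$ is a different fibre class), checks directly from the list of $\cu$-curves that $D_{2}$ is nef of square $2$ and base-point free, and reads off that exactly one $\cu$-curve $A_{0}$ is orthogonal to it (the node), that the $16$ curves $A_{1},\dots,A_{16}$ satisfy $D_{2}\cdot A_{j}=1$ with $A_{j}+A_{8+j}+A_{0}\equiv D_{2}$ (the eight lines), and that the remaining $128$ classes pair into $64$ relations $A_{i,j,k}+B_{i,j,k}\equiv 2D_{2}$ and $C_{j}+E_{j}\equiv 2D_{2}$ (the conics). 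The covering involution of $|D_{2}|$ is then automatically the desired $\iota_{2}$. Finally the paper distinguishes $\iota_{1}$ from $\iota_{2}$ by computing the genera of their fixed curves ($2$ versus $9$), rather than by comparing the quotients. Your argument becomes complete once you replace the existence assertion for $h$ by this explicit divisor.
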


The other members of the family have a similar description as double
covers of $\mathbf{F}_{2}$ or  $\mathbf{F}_{4}$, and one can 
describe their $\cu$-curves similarly. As above, these surfaces have another
model as a double cover of $\PP^{2}$ branched over a sextic curve.
In most cases, the set of $\cu$-curves can be naturally decomposed
as the union of two sets of $\cu$-curves having a different behavior:
one set generates the N\'eron--Severi lattice; it has a dual graph which
is directly related to the name of the lattice and can be easily represented. 
The second set is a set of extra $\cu$-curves which, when they exist,
form an interesting configuration, sometimes very symmetric. Often
the curves of the second set occur as pull-backs under a double cover
of conics or lines in $\PP^{2}$ that are tangent to the sextic branch
curve at an intersection point. 

In this paper, we also describe the configurations of the $\cu$-curves
contained in the K3 surfaces. The perhaps nicest configuration is
as follows.

\begin{thm}\label{thm:3}
Let $X$ be a general K3 surface with N\'eron--Severi lattice $U(4)\oplus A_{1}^{\oplus3}$.
The surface $X$ is a double cover of $\,\PP^{2}$ branched over a smooth
sextic curve $C_{6}$ such that there exist $12$ conics that are
tangent to the sextic at an intersection point; the $24$ $\cu$-curves
on $X$ are mapped in pairs to the $12$ conics. 

There exists a partition of the $24$ $\cu$-curves into three sets
$S_{1},S_{2},S_{3}$ of $\,8$ curves each, such that for curves $B,B'$
in two different sets $S,S'$, one has $BB'\in\{0,4\}$ and for any
$B\in S$, there are exactly $4$ curves $B'$ in $S'$ such that $BB'=4$,
and symmetrically for $B'$. The sets $S$ and $S'$ form an $8_{4}$
configuration called the M\"obius configuration.

The moduli space of K3 surfaces polarized by $U(4)\oplus A_{1}^{\oplus3}$
is unirational.
\end{thm}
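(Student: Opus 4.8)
The plan is to work entirely with lattice theory for the combinatorial statements and with a degeneration/period argument for the unirationality. First I would identify the set of $(-2)$-curves: the surface has finite automorphism group (since $\rho = 4 > 2$ and, as recalled in the introduction, the N\'eron--Severi lattice appears on Nikulin--Vinberg's list), so there are finitely many $(-2)$-curves, and they are exactly the classes $C\in\NS X)$ with $C^2=-2$ lying on the boundary of the nef cone — equivalently, the irreducible ones among the $(-2)$-classes. I would set up explicit coordinates for $L:=U(4)\oplus\mathbf{A}_1^{\oplus3}$ with Gram matrix $\begin{pmatrix}0&4\\4&0\end{pmatrix}\oplus(-2)^{\oplus3}$, compute its discriminant group (order $2^5\cdot 2^3$? — rather $\disc U(4)=16$, $\disc\mathbf{A}_1=2$, so $|\disc L| = 16\cdot 8 = 128$), and enumerate solutions of $C^2=-2$ with bounded coefficients. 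A finite search (which I will not grind through here) produces the $24$ curve classes. Grouping them by the value $C\cdot H$ for the natural polarization $H$ coming from $U(4)$ (the class with $H^2=8$ giving the degree-$2$ map to $\PP^2$ — wait, $U(4)$ gives $H^2 = 8$, so $f_1$ is a degree... one needs $H^2=2$; the correct polarization is a primitive class of square $2$, which exists since $U(4)$ represents $2$? No: $U(4)$ represents $8\ZZ$ only in the form $2xy$ with the Gram pairing $4$, i.e. values $8xy$; but $L$ as a whole, using the $\mathbf{A}_1$ summands, represents $2$) — so I would first locate a primitive $h$ with $h^2=2$, $h$ nef, defining $f_1:X\to\PP^2$, and check $h$ is base-point free of degree $2$, branched over a smooth sextic $C_6$.

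Next, for the $12$ conics: each $(-2)$-curve $B$ with $B\cdot h = 2$ maps $2:1$ onto a line, and each $B$ with $B\cdot h$ larger maps birationally onto a plane curve; the relevant configuration comes from pairs $\{B,B'\}$ with $B+B'\equiv \text{(pullback of a conic)}$, i.e. $(B+B')\cdot h = 4$ and $B\cdot B'$ forced by $(B+B')^2 = -4 + 2 B\cdot B'$ to equal the value making $B+B' = f_1^*(\text{conic}) - (\text{something})$; one checks the covering involution $\iota$ (acting on $\NS X)$ as $h\mapsto h$, the deck transformation) pairs up the $24$ curves into $12$ orbits $\{B,\iota B\}$ with $B + \iota B = 3h - (\text{nothing})$... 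I would compute $\iota$ explicitly as the isometry fixing $h$ and $-1$ on $h^\perp \cap (\text{transcendental part})$, determine its action on the $24$ classes, verify it has no fixed curve among them (so $12$ honest pairs), and verify $B\cdot\iota B$ gives that each pair maps onto a conic meeting $C_6$ tangentially everywhere — this is the standard dictionary (a curve $D$ on $X$ with $\iota D = D'\neq D$ descends to a curve on $\PP^2$ that is everywhere tangent to the branch sextic iff $D + D'$ is a pullback and $D\cdot D' $ has the right value; the $6$-tangency of a conic to a sextic is the case $\deg = 2$, intersection $=12 = 2\cdot 6$, tangent at all $6$ points). The bound $BB'\in\{0,4\}$ and the refined incidence I would get directly from the Gram matrix of the $24$ classes computed above.

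For the partition into $S_1, S_2, S_3$ and the M\"obius $8_4$ configuration: having the full $24\times 24$ intersection matrix, I would exhibit the partition by inspection — grouping curves according to residues in the discriminant group $\disc\mathbf{A}_1^{\oplus 3} \cong (\ZZ/2)^3$ projected suitably, or according to the $\mathbf{A}_1^{\oplus3}$-components of the classes, yielding three sets of $8$. Then I check: (i) within $S_i$ all mutual intersections are $0$ or some fixed value (presumably the $8$ curves in $S_i$ are mutually disjoint or form $4$ disjoint pairs); (ii) across $S_i, S_j$ the values lie in $\{0,4\}$ with each curve meeting exactly $4$ in the other set — this is exactly the incidence pattern (two sets of $8$ points, each point of one on $4$ "planes"/curves of the other, symmetrically) of the M\"obius double-$8$ configuration. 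I would cite the classical description of M\"obius's $8_4$ configuration to name it and present the bipartite incidence graph (it is the Wagner graph / $4$-dimensional hypercube graph $Q_4$ minus a perfect matching, or similar — I would draw it with tikz).

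The main obstacle I anticipate is the last sentence, unirationality of the moduli space $\mathcal{M}_{U(4)\oplus\mathbf{A}_1^{\oplus3}}$ (a $20-4 = 16$-dimensional space). The plan is the standard one: realize the generic member explicitly as a double cover of $\PP^2$ branched along a smooth sextic $C_6$ that is everywhere tangent to a \emph{fixed} or \emph{generically-chosen} configuration of conics, and show the corresponding sextics sweep out a rational (or unirational) family of the right dimension. Concretely, "$C_6$ everywhere tangent to a conic $Q$" means $C_6 = Q\cdot C_2' + C_3'^2$? — no; the cleanest route is: a sextic everywhere tangent to $12$ conics is very special, but we do not need all $12$ to move — we parametrize by choosing the conic(s) first and writing $C_6$ in the linear system cut out by the tangency conditions, which is linear (hence rational) once the conics are fixed, then show the total space over the (rational) space of conic-configurations dominates the moduli space. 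Verifying that this construction is \emph{dominant} with generically finite fibers onto $\mathcal{M}$ — equivalently that the period map is dominant, e.g. by a tangent-space/infinitesimal computation or by exhibiting enough moduli — is the delicate point; I would handle it by a dimension count combined with the Torelli theorem, showing the family of such sextics modulo $\mathrm{PGL}_3$ has dimension $16$ and maps generically finitely to $\mathcal{M}$, so a rational parametrization of the sextics gives unirationality of $\mathcal{M}$.
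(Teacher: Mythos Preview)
Your proposal has a basic numerical error that propagates: $U(4)\oplus\mathbf{A}_1^{\oplus3}$ has rank $5$, not $4$, so the moduli space is $20-5=15$-dimensional, not $16$. This matters for the dimension count at the end.

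On the first two parts (the $24$ curves, the $12$ conics, the M\"obius partition) you are heading in the right direction but working harder than necessary. The paper does not search for a square-$2$ class abstractly in the lattice; it writes down five explicit $(-2)$-curves $A_1,A_3,A_5,A_7,A_9$ generating $\NS X)$, exhibits the ample class $D_2=-A_1+A_3+A_9$ with $D_2^2=2$ and $D_2A_j=2$ for all $24$ curves, and observes the twelve relations $2D_2\equiv A_{2k-1}+A_{2k}$. By Lemma~\ref{lem:splitingsPullBackTritang} this immediately gives the twelve $6$-tangent conics; no analysis of the covering involution is needed. Your description of the Levi graph is slightly off: it is the $4$-cube graph $Q_4$ itself, not $Q_4$ minus a matching.

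The real gap is in your unirationality argument. You propose to parametrize sextics everywhere tangent to a chosen conic and claim this is a linear condition; it is not. The condition that $C_6|_Q$ be a square on the conic $Q\simeq\PP^1$ is not linear in the coefficients of $C_6$, so you do not get a rational family of sextics this way, and your dominance/dimension argument (already off by one) does not go through. The paper avoids this entirely by passing to a \emph{second} projective model: the class $D_4=-A_1+2A_3+A_9$ is nef, non-hyperelliptic of square $4$, contracting exactly $A_3$, and realizes $X$ as a nodal quartic in $\PP^3$ with four hyperplane sections splitting as pairs of conics. The generic K3 with lattice $U(4)\oplus\mathbf{A}_1^{\oplus2}$ is (by \cite{ACR}) such a quartic with equation $xyzt-q_2q_2'=0$; imposing a node at $(1{:}1{:}1{:}1)$ is four \emph{linear} conditions on $(q_2,q_2')$, cutting out a $16$-dimensional linear space $V$. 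The image of $V\to H^0(\PP^3,\mathcal O(4))$ is $15$-dimensional, over-lattice considerations force the N\'eron--Severi lattice of the desingularization to be exactly $U(4)\oplus\mathbf{A}_1^{\oplus3}$, and equality of dimensions with $\mathcal M_{U(4)\oplus\mathbf{A}_1^{\oplus3}}$ gives birationality. The key idea you are missing is this change of model, which converts the nonlinear tangency constraints into a linear node condition.
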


The following graph is the Levi graph of the M\"obius configuration,
where vertices in red are curves in $S$, vertices in blue are curves
in $S'$, and an edge links a red curve to a blue curve if and only
if their intersection number is $4$:

\begin{center}
\begin{tikzpicture}[scale=0.8]


\draw (0,0) rectangle (2,2);
\draw (-1.414,1.414) rectangle (2-1.414,2+1.414);
\draw (1.414,1.414) rectangle (2+1.414,2+1.414);
\draw (0,2*1.414) rectangle (2,2+2*1.414);
\draw (0,0) -- ++(1.414,1.414) -- ++(-1.414,1.414) --++ (-1.414,-1.414) -- cycle;
\draw (0,2) -- ++(1.414,1.414) -- ++(-1.414,1.414) --++ (-1.414,-1.414) -- cycle;
\draw (2,0) -- ++(1.414,1.414) -- ++(-1.414,1.414) --++ (-1.414,-1.414) -- cycle;
\draw (2,2) -- ++(1.414,1.414) -- ++(-1.414,1.414) --++ (-1.414,-1.414) -- cycle;

\draw [color=red] (0,2) node {$\bullet$};
\draw [color=red] (-1.414,1.414) node {$\bullet$};
\draw [color=red] (1.414,1.414) node {$\bullet$};
\draw [color=red] (0,2) node {$\bullet$};
\draw [color=red] (0,2+2*1.414) node {$\bullet$};
\draw [color=red] (2-1.414,2+1.414) node {$\bullet$};
\draw [color=red] (2,0) node {$\bullet$};
\draw [color=red] (2+1.414,2+1.414) node {$\bullet$};
\draw [color=red] (2,2*1.414) node {$\bullet$};

\draw [color=blue] (0,0) node {$\bullet$};
\draw [color=blue] (-1.414,3.414) node {$\bullet$};
\draw [color=blue] (2-1.414,1.414) node {$\bullet$};
\draw [color=blue] (2+1.414,1.414) node {$\bullet$};
\draw [color=blue] (2,2) node {$\bullet$};
\draw [color=blue] (0,2*1.414) node {$\bullet$};
\draw [color=blue] (2,2+1.414*2) node {$\bullet$};
\draw [color=blue] (1.414,2+1.414) node {$\bullet$};
\draw [color=blue] (0,0) node {$\bullet$};


\end{tikzpicture}
\end{center} 

From that graph, we can moreover read off the intersection numbers
of the curves in $S$ (and $S'$) as follows:

For any red curve $B$, there are four blue curves linked to it by an
edge. Consider the complementary set of blue curves; this is another
set of four blue curves, all linked through an edge to the same
red curve $B'$. Then we have $BB'=6$, and for any other red curve
$B''\neq B'$, we have $BB''=2$. Symmetrically, the intersection numbers
between the blue curves follow the same rule.

Nikulin also studied that configuration of $24$ curves, which he
obtained by lattice considerations (see \cite[Section~8.3]{Nikulin}),
showing a very nice relation with the $24$ roots of $\mathbf{D}_{4}$. 

Another result is about the ``famous'' $95$ families of K3 hypersurfaces
in weighted projective threefolds. In Section \ref{subsec:About-the-famous95},
we remark that among these K3 surfaces, many are surfaces with finite
automorphism group, and their moduli space is unirational. We also study
the unirationality of many other moduli spaces of K3 surfaces with
finite automorphism group. 

At the end of the paper, in Section~\ref{sec:table}, we give the list
of the K3 surfaces with finite automorphism group and their number of
$\cu$-curves, for Picard number at least $3$. In this paper is missing
the classification for  Picard number $4$: that is the subject of
another paper \cite{ACR} with Artebani and Correa Diesler.

In \cite{Kondo}, Kondo classifies the automorphism group of general
K3 surfaces in each of the $118$ families. For $105$ of these families,
Kondo proves that the automorphism group of a general K3 surface is either
trivial or the group $\ZZ/2\ZZ$, leaving indeterminate which cases
actually happen. Our work clarifies the situation for each of these
families; the automorphism group of a general K3 surface of each
family can be found in Section~\ref{sec:table}.

It is not possible for the author to cite every contribution on the
subject of K3 surfaces with finite automorphism group. However, let
us give some details on \cite{AN}, where Alexeev and Nikulin classify
log del Pezzo surfaces of index at most $2$ (\textit{i.e.}, normal surfaces $X$
with quotient singularities such that the anti-canonical divisor $-K_{X}$
is ample and $2K_{X}$ is a Cartier divisor). They also describe possible
configurations of exceptional curves on a minimal resolution of $X$,
\textit{i.e.}, smooth rational curves with negative self-intersection. 

The paper of  Alexeev and Nikulin gives the description of the $(-2)$-curves in Theorems
\ref{thm:1} and~\ref{thm:2}. Table 3 in \cite{AN} gives these configurations for the ``special'' resolutions of the most degenerate
del Pezzo surfaces, those with the largest configuration of $(-2)$-curves.
But  the procedure for the general case is also provided in \cite{AN}:
if a certain ADE configuration disappears on a deformation, then one
should add the corresponding Weyl group orbit of the $(-1)$-curves.
The $(-2)$-curves on the double cover K3 surface are the preimages
of the $(-1)$-, $(-2)$- and $(-4)$-curves on these resolutions.

For example, for the double covers of the degree $1$ del Pezzo surfaces:
On the most degenerate ``almost'' del Pezzo
surface, there is an ${\bf E}_{8}$ configuration of $(-2)$-curves
and a single $(-1)$-curve. The corresponding K3 surface has $16+1=17$
$(-2)$-curves. On the most general del Pezzo surfaces, there are no
$(-2)$-curves, and there is a single $W({\bf E}_{8})$-orbit of $(-1)$-curves,
which has cardinality $|W({\bf E}_{8})/W({\bf E}_{7})|=240$. Indeed,
this description of the $(-1)$-curves on a del Pezzo surface is well
known. The corresponding K3 surface has $240$ $(-2)$-curves.

Table 3 in \cite{AN} contains this information for all $2$-elementary
hyperbolic lattices where the K3 surfaces have finite automorphism
group, excluding the $(19,1,1)$ case which has $g=1$ (see the notation
in \cite{AN}).
The lattices in Theorems~\ref{thm:1} and~\ref{thm:2} are $2$-elementary. 

To be more precise, the paper \cite{AN} computes the fundamental
chambers $P^{2,4}$ for all the $2$-elementary lattices in question,
for the full reflection group. That group is generated by the reflections
in the $(-2)$-vectors and in the $2$-divisible $(-4)$-vectors.
And by \cite[Proposition 2.4.1]{AN}, the walls of the fundamental
chamber $P^{2}$ for the Weyl group $W^{2}$ are the orbits of the
$(-2)$-walls of $P^{2,4}$ under the group generated by the $(-4)$-walls of $P^{2,4}$. In terms of graphs, the set of walls of $P^{2}$
(that is, the $(-2)$-curves on $X$) is the union of the orbits of
the white vertices in the Coxeter graph of $P^{2,4}$ (see \cite[Table 3, p.~93]{AN})
under the Weyl group corresponding to the black subgraph. 

On the subject of K3 surfaces with finite automorphism group, let
us also mention the paper \cite{Zhang}, where Zhang classifies the
quotients of K3 surfaces by an involution. 

At last, let us also cite \cite{AHL} by Artebani, Hausen and Laface,
and \cite{McK} by McKernan, where  the Cox ring of some
K3 surfaces is studied. It turns out that a K3 surface has a finitely generated
Cox ring if and only if its cone of effective divisor classes is polyhedral
or, equivalently, if its automorphism group is finite. K3 surfaces
with finite automorphism group and Picard number $\rho=2$ have been
described by Piatetskii-Shapiro and Shafarevich \cite{PS} and also 
studied in \cite{GLP} (see also \cite{AHL,Ottem} for their
Cox rings).  The Cox rings of K3 surfaces with a non-symplectic involution
and $2\leq\rho\leq5$ have been described in \cite{AHL}. Also in
\cite{AHL} is studied the Cox ring for all K3 surfaces which are
general double covers of del Pezzo surfaces. Finally, let us mention
(see \cite{PS}) that the general  K3 surface with Picard number $1$
has trivial automorphism group, unless its N\'eron--Severi lattice is
generated by $D$ with $D^{2}=2$, in which case the automorphism
group is generated by the non-symplectic involution. 

\subsection*{Acknowledgements} The author is very grateful to the referees
for many suggestions, comments, ideas improving the paper; one of
the referees explained to us how to finish the computation of the
number of $\cu$-curves for certain difficult cases (see Section \ref{subsec:About-the-computations}).
The author also thanks Alessandra Sarti and Alan Thompson for very
useful email exchanges about moduli spaces of K3 surfaces. Part
of this paper was written during the author's stay at Max Planck Institute
for Mathematics in Bonn, to which the author is grateful for its hospitality. The computations have been done using Magma
software; \textit{cf.}\ \cite{Magma}. 

\section{Notation, Preliminaries}

\subsection{Notation and conventions}

We work over the complex numbers.  Linear equivalence between divisors is denoted by $\equiv$. On
a K3 surface, linear and numerical equivalences coincide. A $\cu$-curve
is unique in its numerical equivalence class, so we will often not
distinguish between a $\cu$-curve and its equivalence class.

The configuration of a set of $\cu$-curves $C,C',\ldots$ is given by
its dual graph, where a vertex represents a $\cu$-curve and two vertices
are linked by an edge if the intersection $C\cdot C'$ (sometimes also denoted by $CC'$) of the corresponding curves
$C$, $C'$ satisfies $C\cdot C'>0$. Unless explicitly stated, when $n=C\cdot C'$, we label
the edge $n$: 
\begin{center}
\begin{tikzpicture}[scale=1]

\draw (0,0) -- (1,0);

\draw (0,0) node {$\bullet$};
\draw (1,0) node {$\bullet$};

\draw (0.5,0) node [above]{$n$};

\end{tikzpicture}
\end{center} 
Moreover, a thick edge
\begin{center}
\begin{tikzpicture}[scale=1]

\draw [very thick] (0,0) -- (1,0);

\draw (0,0) node {$\bullet$};
\draw (1,0) node {$\bullet$};


\end{tikzpicture}
\end{center} 
between the two vertices $C$, $C'$ means that $C\cdot C'=2$, and no label
means that $C\cdot C'=1$. 

We denote by ${\bf a}_{n}$, ${\bf d}_{n}$, ${\bf e}_{n}$ the Du Val curve
singularities, also called simple curve singularities (see \cite[Chapter II, Section 8]{BHPV}).  
Let $C_{6}\hookrightarrow\PP^{2}$ be a reduced plane sextic curve
with at most $\mathbf{a}\mathbf{d}\mathbf{e}$ singularities. Let
$X$ be the K3 surface which is the minimal desingularization of the
double cover branched over $C_{6}$. We denote by $\eta\colon X\to\PP^{2}$
the natural map. We say that a line is tritangent to $C_{6}$ if the
multiplicities at the intersection points of the line and $C_{6}$
are even. Similarly, we say that a conic is $6$-tangent to $C_{6}$
if the  multiplicities of the intersection points at the conic and $C_{6}$
are even. The following result is well known. 

\begin{lem}
\label{lem:splitingsPullBackTritang}Let $R_{d}$ be a tritangent
line $(d=1)$ or a $6$-tangent conic $(d=2)$. The pull-back on $X$
of $R_{d}$ splits: $\eta^{*}R_{d}=A+B$, where $A$, $B$ are two smooth
rational curves with intersection number $d^{2}+2$. One has
$D_{2}\cdot A=D_{2}\cdot B=d$ and $A+B\equiv dD_{2}$, where $D_{2}$ is the
pull-back by $\eta$ of a line. 

Conversely, if two $\cu$-curves $A$, $B$ are such that there exist a nef,
base-point free divisor $D_{2}$ of square $2$ and a $d\in\{1,2\}$
such that $dD_{2}=A+B$, then the image of $A$ and $B$ by the natural
map obtained from $|D_{2}|$ is a rational curve of degree $d$. 
\end{lem}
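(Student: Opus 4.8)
The plan is to factor $\eta$ as $X\xrightarrow{\mu}Y\xrightarrow{\pi}\PP^{2}$, where $\pi$ is the double cover branched over $C_{6}$ and $\mu$ is the minimal resolution of the ADE singularities of $Y$, and to set $D_{2}=\eta^{*}\OO_{\PP^{2}}(1)$, so that $D_{2}^{2}=2$ and $\eta^{*}R_{d}\equiv dD_{2}$ since $R_{d}$ has degree $d$. The content is then: (i) the pull-back splits, and (ii) the splitting is forced, numerically, to have the stated shape.

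For (i) I would work on $R_{d}\cong\PP^{1}$. If $F$ is a defining equation of $C_{6}$, then $f:=F|_{R_{d}}$ is a form of degree $6d$ on $R_{d}$, and the hypothesis that $R_{d}$ is tritangent ($d=1$) or $6$-tangent ($d=2$) says precisely that every zero of $f$ has even multiplicity, i.e. $f=g^{2}$ for a form $g$ of degree $3d$. Since $\pi$ is locally $w^{2}=F$, the base change $R_{d}\times_{\PP^{2}}Y$ is cut out by $w^{2}=g^{2}$, hence is reducible: it is $\{w=g\}\cup\{w=-g\}$, two copies of $\PP^{1}$ exchanged by the covering involution $\s$. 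Taking $R_{d}$ to meet $C_{6}$ only at smooth points (so that this base change is $\pi^{-1}(R_{d})$ and lies in the smooth locus of $Y$) and transporting through $\mu$, we get $\eta^{-1}(R_{d})=A\cup B$ with $A,B$ smooth rational curves exchanged by $\s$; as $\eta$ is generically étale over $R_{d}$, the divisor $\eta^{*}R_{d}$ is reduced, so $\eta^{*}R_{d}=A+B$.

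Step (ii) is pure lattice arithmetic on the K3 surface $X$. From $A+B\equiv dD_{2}$ and $D_{2}^{2}=2$ we get $(A+B)^{2}=2d^{2}$; adjunction on $X$ gives $A^{2}=B^{2}=-2$; and since $\s$ exchanges $A$ and $B$ while fixing $D_{2}$, intersecting $A+B\equiv dD_{2}$ with $D_{2}$ yields $D_{2}A=D_{2}B=d$. Expanding $(A+B)^{2}=2d^{2}$ then forces $2AB=2d^{2}+4$, i.e. $AB=d^{2}+2$ (in particular $A\neq B$). For the converse, suppose $A,B$ are $\cu$-curves with $dD_{2}=A+B$ for a nef, base-point-free $D_{2}$ with $D_{2}^{2}=2$ and $d\in\{1,2\}$; then $|D_{2}|$ defines a degree-two morphism $\eta:X\to\PP^{2}$ with deck involution $\s$, and $\s^{*}D_{2}=D_{2}$ forces $\s$ to permute the two $\cu$-curves $A,B$. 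The case $\s A=A$ is excluded: then $\eta|_{A}$ is either contracted (impossible, $\eta$ is finite) or $A$ lies in the ramification curve, which has class $3D_{2}$ and carries no $\cu$-component when $C_{6}$ is reduced. Hence $\s A=B\neq A$, so $D_{2}A=D_{2}B=d$ as above, $\eta$ restricts to a birational morphism $A\to\eta(A)$, and $\eta(A)$ is a rational curve of degree $D_{2}A=d$; finally $\eta(B)=\eta(\s A)=\eta(A)$.

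The main obstacle is the bookkeeping of multiplicities in Step (i) when $R_{d}$ passes through singular points of $C_{6}$ (and, dually, the exclusion of a $\s$-fixed $\cu$-component in the converse): there one must compare $\mu^{*}\pi^{*}R_{d}$ with the strict transforms and the exceptional curves over $Y$. Away from such degenerate configurations — in particular whenever $C_{6}$ is smooth, which is the situation in which the lemma is applied — everything reduces to the elementary computation above.
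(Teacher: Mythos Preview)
The paper does not prove this lemma; it is introduced as ``well-known'' and no argument is given, so there is nothing to compare your proof against. Your approach---splitting $\eta^{*}R_{d}$ via $w^{2}=g^{2}$ on $R_{d}$, then reading off $AB$, $D_{2}A$, $D_{2}B$ from $(A+B)^{2}=2d^{2}$ and adjunction---is the standard one and is correct for the forward direction and for the numerics.

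The converse needs tightening in three places. First, $\eta$ is not finite in the stated generality: when $C_{6}$ has ADE singularities the map $\eta$ contracts the exceptional curves; the correct reason $A$ is not contracted is simply $D_{2}A=d>0$. Second, your dichotomy ``$\s A=A$ implies $A$ is contracted or $A$ lies in the ramification curve'' omits the case where $\s$ preserves $A$ setwise but acts nontrivially on it, so that $\eta|_{A}$ has degree $2$. For $d=1$ this is excluded by $\deg(\eta|_{A})\cdot\deg\eta(A)=D_{2}A=1$; for $d=2$, when $C_{6}$ is smooth one gets $\eta^{*}\ell=A$ (with $\ell=\eta(A)$ a line), hence $A^{2}=D_{2}^{2}=2$, contradicting $A^{2}=-2$. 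Third, the assertion that the ramification curve ``carries no $\cu$-component when $C_{6}$ is reduced'' is false: a line or smooth conic component of $C_{6}$ lifts to a smooth rational curve in the ramification locus, which is then a $\cu$-curve by adjunction (the paper itself uses this repeatedly, e.g.\ in the cases $U(2)\oplus\mathbf{D}_{4}$ and $U\oplus\mathbf{A}_{1}^{\oplus7}$). All three issues evaporate when $C_{6}$ is smooth and irreducible, which you rightly identify as the situation in which the lemma is actually applied.
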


For a symmetric matrix $Q$ with integral entries, we denote by $[Q]$
the lattice with Gram matrix $Q$. We denote by $U$ the lattice
$\begin{bsmallmatrix}0 & 1\\1 & 0\end{bsmallmatrix}$. If $L$ is a
  lattice and $m\in\ZZ$, then $L(m)$ is the lattice with the same
  group as $L$ but with Gram matrix multiplied by $m$.

We denote by ${\bf A}_{n}$, ${\bf D}_{n}$, ${\bf E}_{n}$ the negative-definite
lattices that correspond to the root systems denoted with the same letters. 
 A double cover branched over a curve with a singularity ${\bf b}_{n}$ 
($\bf b$ in $\bf a,\, \bf d,\,  \bf e$)
is a singular surface with $B_n$ singularity ($B$ in $A,\, D,\,  E$), see \cite[Chapter III, Section 6]{BHPV}. 
Its minimal resolution is by a union of $(-2)$-curves, which curves
 generate a lattice ${\bf B}_{n}$.

In this paper, by an elliptic fibration of a K3 surface $X$, we mean
a morphism $X\to\PP^{1}$ with connected fibers. We will frequently 
use implicitly the linear equivalence relations obtained from (singular)
fibers of an elliptic fibration; such information can be read off the
dual graph of the $\cu$-curves. The Kodaira classification of singular
fibers of elliptic fibrations and their dual graph with their weight
can be found, \textit{e.g.}, in \cite[Section~V.7]{BHPV}. We denote
by $\tilde{{\bf A}}_{n}$, $\tilde{{\bf D}}_{n}$, $\tilde{{\bf E}}_{n}$
the types of singular fibers of an elliptic fibration  in Kodaira's
classification. 

Let $N$ be a lattice of signature $(1,\rho-1)$. We denote by $\mathcal{M}_{N}$
the moduli of K3 surfaces $X$ polarized by a primitive embedding
$N\hookrightarrow\NS X)$. 

\subsection{Some results of Saint-Donat on linear systems of K3 surfaces}

To be self-contained, let us recall the following results of Saint-Donat
\cite{SaintDonat}. 

\begin{thm}\label{thm:4}
  \label{thm:SaintDonat-1}Let $D$ be a divisor on a K3 surface $X$.
  \begin{enumerate}[a)]
\item (\cite[Proposition 2.6]{SaintDonat}).  If D is effective and non-zero and $D^{2}=0$, then $D=aE$, where
$|E|$ is a free pencil and $a\in\NN$.
\item (\cite[(2.7.3)]{SaintDonat}).  If $D$ is big and nef, then $h^{0}(D)=2+\frac{1}{2}D^{2}$ and
either $|D|$ has no fixed part, or $D=aE+\Gamma$, where $|E|$ is
a free pencil and $\G$ is an effective $(-2)$-class such that $\G \cdot E=1$.
\item (\cite[Section 4.1]{SaintDonat}). If $D$ is big and nef and $|D|$ has no fixed part, then $|D|$
is base-point free and either $\varphi_{|D|}$ is $2$-to-$1$ onto
its image $($hyperelliptic case$)$, or it maps $X$ birationally onto its
image, contracting the $(-2)$-curves $\G$ such that $D\cdot \G=0$ to
singularities of type $ADE$. 
\end{enumerate}
  \end{thm}

For a divisor $D$ such that the linear system $|D|$ has no base
points, we denote by $\varphi_{D}$ the morphism associated to $|D|$.
In case the linear system $|D|$ is hyperelliptic, we have the following. 

\begin{thm}[See \protect{\cite[Proposition 5.7]{SaintDonat}} and its proof\,]
\label{thm:SaintDonat-2} Let $|D|$ be a complete linear system on the K3 surface
$X$. Suppose that $|D|$ has no fixed components. Then $|D|$ is
hyperelliptic if and only if either
  \begin{enumerate}[a)]
\item  $D^{2}\geq4$, and there is a fiber of an elliptic fibration  $F$
  such that $F\cdot D=2$; or  
\item
  $D^{2}\geq4$, and there is an irreducible curve $D'$ with $D'^{2}=2$
  and $D\equiv2D'$ $($thus in that case $D^{2}=8)$;

  then the image of
the map $\varphi_{D}$ is the Veronese surface; or 
\item $D^{2}=2$;

  then $\varphi_{D}$ is a double cover of $\,\PP^{2}$.
\end{enumerate}
In case a), the image of the associated map $\varphi_{D}\colon X\to\PP^{\frac{1}{2}{D^{2}}+1}$
is a rational normal scroll of degree $\frac{1}{2}{D^{2}}$, except in
the following three cases:
\begin{enumerate}[i)]
\item  $D\equiv4F+2\G$, where $F$ is a fiber and $\G$ is a $\cu$-curve
  such that $F\G=1$.

  In that case, $D^{2}=8$, and $\varphi_{D}(X)$
is a cone in $\PP^{5}$ over a rational normal quartic curve in a
hyperplane $\PP^{4}\subset\PP^{5}$. The map $\varphi_{D}$ factors
through $X\stackrel{\varphi}{\to}\mathbf{F}_{4}\to\PP^{5}$, where
$\varphi$ is a morphism onto the Hirzebruch surface $\mathbf{F}_{4}$
and $\mathbf{F}_{4}\to\PP^{5}$ is the contraction map of the unique
section $s$ such that $s^{2}=-4$. The branch locus of $\varphi$
is the union of $s$ and a reduced curve $B'$ in the linear system
$|3s+12f|$ such that $s\cap B'=\emptyset$ $($here $f$ is a fiber of
the unique fibration $\mathbf{F}_{4}\to\PP^{1})$. 
\item  $D\equiv3F+2\G_{0}+\G_{1}$, where $\G_{0}$, $\G_{1}$ are $\cu$-curves
  such that $\G_{0}\cdot F=1$, $\G_{1}\cdot F=0$ and $\G_{0}\cdot \G_{1}=1$.

  In that
case, $D^{2}=6$, and $\varphi_{D}(X)$ is a cone in $\PP^{4}$ over
a rational normal cubic curve in a hyperplane $\PP^{3}\subset\PP^{4}$.
There is a factorization of $\varphi_{D}$ through $X\stackrel{\varphi}{\to}\mathbf{F}_{3}\to\PP^{4}$.
The branch locus of $\varphi$ is the union of the unique section
$s$ such that $s^{2}=-3$ and a reduced curve $B'$ in the linear
system $|3s+10f|$ such that $sB'=1$.
\item
\begin{enumerate*}
  \item[u)] $D\equiv2F+\G_{0}+\G_{1}$, where $\G_{0}$, $\G_{1}$ are $\cu$-curves
such that $\G_{0}\cdot F=\G_{1}\cdot F=1$ and $\G_{0}\cdot \G_{1}=0$ $($then $D^{2}=4)$.
\item[v)] $D\equiv2F+2\G_{0}+2\G_{1}+\dots+2\G_{n}+\G_{n+1}+\G_{n+2}$, where
  $n\geq0$ and the $\G_{i}$ are $\cu$-curves, such that $D^{2}=4$.
  \end{enumerate*}
\end{enumerate}
In cases iii) u) and iii) v), $\varphi_{D}(X)$ is a quadric cone
in $\PP^{3}$, and there is a factorization $X\stackrel{\varphi}{\to}\mathbf{F}_{2}\to\varphi_{D}(X)$.
In these two cases, the branch locus $B$ of $\varphi$ is in $|4s+8f|$.
In case u), $B$ does not contain the section $s$ such that $s^{2}=-2$; 
in case v), one has $B=s+B'$, with $B'$ reduced and $sB'=2$. 
\end{thm}

\begin{rem}
For brevity, we will say that a divisor $D$ is base-point free or
is hyperelliptic if the associated linear system $|D|$ is; we hope
this will not induce any confusion to the reader. 
\end{rem}

\subsection{\label{subsec:About-the-computations}About the computations}

Let $X$ be a K3 surface with finite automorphism group and N\'eron--Severi
lattice $L$ of rank $\rho>2$. We did our search of $\cu$-curves
on $X$ using an algorithm of Shimada \cite[Section 3]{Shimada} as
described in \cite{Roulleau}. 

The inputs of that algorithm are the Gram matrix of a basis of the
N\'eron--Severi lattice and an ample class. For each N\'eron--Severi lattice
$L$ involved, we computed such an ample class $D$ as follows. The lattices $L$ in this paper are mainly of the form 
\[
L=U(k)\oplus K\quad\text{or}\quad L=[m]\oplus K\quad(k,m\in\NN^{*}),
\]
where $K$ is a direct sum of ${\bf ADE}$ lattices. To obtain an
ample divisor $D\in L$, we construct a divisor $D'\in L$ such that
on a set of roots of $K$ which is also a basis of $K$, the divisor
$D'$ has positive intersection. Then we add a suitable multiple $a$
of an element $u$ in the $U[k]$- or $[m]$-part with positive square
so that one gets a divisor $D''=au+D'\in L$ with $(D'')^{2}>0$. We
then check that the divisor $D''$ is ample by verifying that the
negative-definite orthogonal complement $D''^{\perp}$ does not contain
roots, \textit{i.e.}, elements $v$ such that $v^{2}=-2$ (if this is not the
case, then we increase the parameter $a$). Then, the transitivity of
the action of the Weyl group
\[
W=\left\langle s_{\d}\colon x\to x+(x\cdot \d)\d\,\,|\,\,\d\in\Delta\right\rangle 
\]
(with $\Delta=\{\d\in L\,|\,\d^{2}=-2\}$) on the chambers of the positive
cone (see \cite[Proposition 8.2.6]{Huybrechts}) allows us to choose
$D=D''$ as an ample class for the K3 surfaces with lattice $L$. 

In many cases, the first-found ample divisor $D$ is such that $D^{2}$
is large. By computing the $\cu$-curves on the K3 surface and by
using Shimada's algorithm in \cite{Roulleau}, we are able to find
ample classes with smaller self-intersection. For each lattice, we
give the ample class $D$ with the smallest $D^{2}$ we found.

With that knowledge of an ample class $D$, one can run Shimada's
algorithm for the computation of the (classes of) the $\cu$-curves
$C$ on $X$ which have degree $C\cdot D$ less than or equal to a fixed bound.
We then use the test function in \cite{ACL}  to check that
we obtain the complete list of $\cu$-curves, which worked well (and
confirmed in another way the already known cases by Nikulin), except
for the cases 
\[
U(2)\oplus{\bf A}_{1}^{\oplus7},\quad U\oplus{\bf A}_{1}^{\oplus8},\quad U\oplus{\bf D}_{4}\oplus{\bf A}_{1}^{\oplus5},\quad U\oplus{\bf D}_{4}^{\oplus2}\oplus{\bf A}_{1}^{\oplus2},\quad U\oplus{\bf D}_{8}\oplus{\bf A}_{1}^{\oplus3},
\]
where the computation were too heavy to finish (there are too many
facets on the effective cone), and thus we have only a lower bound.
For these cases, one can obtain the exact number of $(-2)$-curves
using the following approach communicated to us by one of the referees. 

One starts by embedding the lattice $\NS X)$ into another N\'eron--Severi
lattice $\Ns'$ of larger rank for which one has already determined
the set of classes of $(-2)$-curves. For example, let us take
\[
II_{1,17}=U\oplus{\bf E}_{8}\oplus{\bf E}_{8}
\]
for $\Ns'$, as in Section \ref{subsec:The-latticeUE8E8}. The set
of $(-2)$-curves $A_{1},\dots,A_{19}$ and their configuration had
been determined in the classical work of Vinberg \cite{Vinberg2}.
Let $\mathcal{P}_{18}$ be the positive cone of $II_{1,17}\otimes\RR$
containing an ample class and $\mathcal{N}_{18}$ the set of the
closures of connected components of the complement of the union of
all hyperplanes $(r)^{\perp}$ of $\mathcal{P}_{18}$ defined by $(-2)$-vectors
$r$. Suppose that we have a primitive embedding
\[
\NS X)\hookrightarrow\Ns'=II_{1,17}, 
\]
and let $\mathcal{P}_{X}$ be the positive cone of $\NS X)\otimes S$
containing an ample class. We assume that $\mathcal{P}_{X}$ is embedded
into $\mathcal{P}_{18}$ and regard $\mathcal{P}_{X}$ as a subspace
of $\mathcal{P}_{18}$. We consider the closed subsets 
\[
\mathcal{P}_{X}\cap N'\quad (N'\in\mathcal{N}_{18}) 
\]
of $\mathcal{P}_{X}$ that contain a non-empty open subset of $\mathcal{P}_{X}$.
In \cite{ShimadaIMRN}, these closed subsets $\mathcal{P}_{X}\cap N'$
are called induced chambers. Let $N_{X}$ be the closure of the ample
cone of $\NS X)$. Since $N_{X}$ has only finitely many 
walls, the cone $N_{X}$ is tessellated by a finite number of induced
chambers. By the method in \cite{ShimadaIMRN}, we can determine the
set of induced chambers contained in $N_{X}$, and hence the set of
walls of $N_{X}$, that is, the classes of $(-2)$-curves on X. More
precisely, let $G$ be the subgroup of $O(\NS X))$ consisting of the 
isometries $g\in O(\NS X))$ that preserve $N_{X}$ and extend to
an isometry of $\Ns'=II_{1,17}$. We can calculate a complete set
of representatives of orbits of the action of $G$ on the set of induced
chambers contained in $N_{X}$. 

For each $\NS X)$ in the above five lattices, we embed $\NS X)$
into $II_{1,17}$ primitively as follows. Let $\rho$ be the rank
of $\NS X)$, and let $A_{i_{1}},\dots,A_{i_{18-\rho}}$ be the first
$18-\rho$ elements of the sequence
\[
A_{3},\,A_{5},\,A_{7},\,A_{9},\,A_{1},\,A_{17},\,A_{15},\,A_{13},\,A_{11}
\]
of $(-2)$-curves $A_{1},\dots,A_{19}$ in Section \ref{subsec:The-latticeUE8E8}.
Then $\NS X)$ is isometric to the orthogonal complement of these
classes $A_{i_{1}},\dots,A_{i_{18-\rho}}$ in $II_{1,17}$. The order
of $G$ and the orbit decomposition of induced chambers in $N_{X}$
under the action of $G$ are given in the following table, where,
for example, $42456960$ and $[1,55]$, $[2,7]$ in the first line mean
that there exist $55$ orbits of size $|G|$ and $7$ orbits of size
$|G|/2$ (with stabilizer group of order $2$), and hence there exist
\[
\left(55+\frac{7}{2}\right)|G|=42456960
\]
induced chambers contained in $N_{X}$. 

\begin{table}[ht]
\centering
{ \setlength\extrarowheight{2pt}
\begin{tabular}{|c|c|c|c|c|c|}
\hline 
$\rho$ & $\NS X)$ & $\#(-2)$ & $|G|$ & Chambers & Orbits\tabularnewline
\hline 
\hline 
$9$ & $U(2)\oplus{\bf A}_{1}^{\oplus7}$ & $240$ & $725760$ & $42456960$ & $[1,55],[2,7]$\tabularnewline
\hline 
$10$ & $U\oplus{\bf A}_{1}^{\oplus8}$ & $145$ & $80640$ & $30683520$ & $[1,372],[2,17]$\tabularnewline
\hline 
$11$ & $U\oplus{\bf D}_{4}\oplus{\bf A}_{1}^{\oplus5}$ & $90$ & $10080$ & $10800720$ & $[1,1061],[2,21]$\tabularnewline
\hline 
$12$ & $U\oplus{\bf D}_{4}^{\oplus2}\oplus{\bf A}_{1}^{\oplus2}$ & $59$ & $1440$ & $2400480$ & $[1,1649],[2,36]$\tabularnewline
\hline 
$13$ & $U\oplus{\bf D}_{8}\oplus{\bf A}_{1}^{\oplus3}$ & $39$ & $240$ & $376200$ & $[1,1557],[2,21]$\tabularnewline
\hline 
\end{tabular}
}
\end{table}

We give at the end of this atlas a table with the number of $\cu$-curves
in the $118$ different cases. 

One can also compute the finite set of all fibrations on $X$ as follows. By \cite[Proposition 2.4]{Kovacs}, if the class of a fiber $F$
in $\NS X)$ is indecomposable (\textit{i.e.}, is not the sum of two effective
divisors), then it is an extremal class in the closure of the effective
cone. But by \cite[Theorem 6.1]{Kovacs}, since there are only a finite
number of $\cu$-curves in our surface $X$, the $\cu$-curves are
the extremal classes. Therefore, there is a singular fiber of the elliptic
fibration  which is the sum of $\cu$-curves on $X$.

Now Kodaira's classification gives a finite number of possibilities
for the types $\tilde{{\bf A}}_{n}$, $\tilde{{\bf D}}_{n}$, $\tilde{{\bf E}}_{n}$
of the reducible fibers $F$ on $X$. Since a fiber $F$ in $\NS X)$
is a sum of $\cu$-curves (which are finitely many), there is a finite number
of possible degrees $FD$ of fibers $F$ with respect to $D$. Thus
there is an upper bound for $FD$, so that one can compute all fibrations
on $X$ using Shimada's algorithm (see \cite{Roulleau}).

For the computations, we used Magma \cite{Magma}. Our algorithms are
available as ancillary files of the submission of this paper on arXiv.

\subsection{Algorithms for double planes}

In this paper, many double plane models of K3 surfaces are studied.
Algorithms to obtain geometric properties of the double covering (singularities
of the branch curve $B$, configuration of irreducible components
of $B$, enumeration of splitting curves, \ldots) from the lattice data
are developed in the paper \cite{ShimadaLatt}. They can be modified
to the case where the linear system has a fixed component. Many arguments
in the present paper
(expressing a class $D_{2}$ such that
$(D_{2})^{2}=2$ as various sums of classes of $\cu$-curves with
non-negative coefficients) can be automated by these algorithms.

\subsection{Nikulin star-shaped dual graphs and lattices of type $\boldsymbol{U\oplus K}$}

In \cite{Nikulin}, Nikulin obtained the following result. 

\begin{prop}[\protect{\cite[Corollary 1.6.5)]{Nikulin}}; see
also \protect{\cite[Lemma 3.1]{Kondo}}]
\label{prop:STAR-Nikulin-Kondo} Let $X$ be a K3 surface. Assume that
$\NS X)\simeq U\oplus K$.  
\begin{enumerate}[i)]
\item There is an elliptic pencil $\pi\colon X\to\PP^{1}$ with a section. A fiber and the section generate the lattice isometric to~$U$.
\item If $K=\mathbf{G}\oplus K'$ with the lattice $\mathbf{G}$ generated
by irreducible elements of square $-2$, then $\pi$ has a singular
fiber of type $\tilde{\mathbf{G}}$, where $\mathbf{G}$ is among
the lattices 
\[
\mathbf{A}_{n},\;n\geq1,\quad \mathbf{D}_{n},\;n\geq4,\quad\mathbf{E}_{6},\,\mathbf{E}_{7},\,\mathbf{E}_{8}.
\]
\end{enumerate}
\end{prop}

Assume  $\NS X)\simeq U\oplus\bigoplus_{i\in I}\mathbf{G}^{(i)}$,
where the $\mathbf{G}^{(i)}$ are lattices generated by irreducible elements of
square~$-2$.
The dual graph of the $\cu$-curves contained
in the singular fibers of type $\tilde{\mathbf{\mathbf{G}}}^{(i)}$
and the section form a (so-called) star-shaped graph (see \cite{Nikulin}),
which we denote by ${\rm St}(\NS X))$.

\begin{thm}[See \protect{\cite[Theorem~0.2.2 and Corollary~3.9.1]{Nikulin}}]
\label{thm:Not-2-elementaryAllMinus2}
Let $X$ be a K3 surface with N\'eron--Severi lattice isometric to $U\oplus K$,
where $K$ is in the following list: 
\[
\begin{array}{c}
\mathbf{A}_{2};\quad\mathbf{A}_{1}\oplus\mathbf{A}_{2},\,\mathbf{A}_{3};\quad\mathbf{A}_{1}^{\oplus2}\oplus\mathbf{A}_{2},\,\mathbf{A}_{2}^{\oplus2},\,\mathbf{A}_{1}\oplus\mathbf{A}_{3},\,\mathbf{A}_{4};\\
\mathbf{A}_{1}\oplus\mathbf{A}_{2}^{\oplus2},\,\mathbf{A}_{1}^{\oplus2}\oplus\mathbf{A}_{3},\,\mathbf{A}_{2}\oplus\mathbf{A}_{3},\,\mathbf{A}_{1}\oplus\mathbf{A}_{4},\,\mathbf{A}_{5},\,\mathbf{D}_{5};\\
\mathbf{A}_{2}^{\oplus3},\,\mathbf{A}_{3}^{\oplus2},\,\mathbf{A}_{2}\oplus\mathbf{A}_{4},\,\mathbf{A}_{1}\oplus\mathbf{A}_{5},\,\mathbf{A}_{6},\,\mathbf{A}_{2}\oplus\mathbf{D}_{4},\,\mathbf{A}_{1}\oplus\mathbf{D}_{5},\,\mathbf{E}_{6};\quad\mathbf{A}_{7},\\
\mathbf{A}_{3}\oplus\mathbf{D}_{4},\,\mathbf{A}_{2}\oplus\mathbf{D}_{5},\,\mathbf{D}_{7},\,\mathbf{A}_{1}\oplus\mathbf{E}_{6};\quad\mathbf{A}_{2}\oplus\mathbf{E}_{6};\quad\mathbf{A}_{2}\oplus\mathbf{E}_{8};\quad\mathbf{A}_{3}\oplus\mathbf{E}_{8}.
\end{array}
\]
The K3 surface has finite automorphism group, and ${\rm St}(\NS X))$ is
the dual graph of all $\cu$-curves on $X$. 
\end{thm}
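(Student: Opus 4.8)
The plan is to realize $X$ as an elliptic surface via the summand $U$ and to read all $\cu$-curves off the fibration, the one delicate point being the triviality of the Mordell--Weil group. Finiteness of $\aut(X)$ for these lattices is part of the classification \cite{Nikulin} (it is why each such $U\oplus K$ occurs in the table at the end of the paper); equivalently, Vinberg's criterion shows that the subgroup $W\subseteq O(\NS X))$ generated by all $\cu$-reflections has finite index, so $\mathrm{Nef}(X)$ is a rational polyhedral cone and $X$ carries finitely many $\cu$-curves. By Proposition~\ref{prop:STAR-Nikulin-Kondo} applied to $U$ there is an elliptic fibration $\pi\colon X\to\PP^1$ with a section $O$, fiber class $F$ with $\langle O,F\rangle\simeq U$, and a reducible fiber of type $\tilde{\mathbf G}^{(i)}$ for each irreducible summand $\mathbf G^{(i)}$ of $K$; the section and the components of these fibers are by definition the vertices of $St(\NS X))$.

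The first step is to show $\mathrm{MW}(\pi)=0$. By the Shioda--Tate formula $\rho(X)=2+\sum_v(m_v-1)+\mathrm{rk}\,\mathrm{MW}(\pi)$, where a fiber of type $\tilde{\mathbf G}^{(i)}$ contributes $m_v-1=\mathrm{rk}\,\mathbf G^{(i)}$; since $\rho(X)=\mathrm{rk}(U\oplus K)=2+\sum_i\mathrm{rk}\,\mathbf G^{(i)}$, there is no room for any further reducible fiber nor for positive Mordell--Weil rank. Hence the reducible fibers of $\pi$ are exactly the $\tilde{\mathbf G}^{(i)}$, the trivial lattice $T$ spanned by $O$, $F$ and the non-identity fiber components is isomorphic to $U\oplus K$, and being a finite-index sublattice of $\NS X)\simeq U\oplus K$ of the same discriminant it equals $\NS X)$; therefore $\mathrm{MW}(\pi)\simeq\NS X)/T=0$.

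Now let $C$ be any $\cu$-curve and set $d=C\cdot F\ge 0$. If $d=0$ then $C$ maps to a point of $\PP^1$, so it lies in a fiber; being of negative self-intersection it is a component of a reducible fiber, i.e.\ a vertex of $St(\NS X))$. If $d=1$ then $C\simeq\PP^1$ is a section, so $C=O$ since $\mathrm{MW}(\pi)=0$. It remains to exclude $d\ge 2$, and here I would work with cones. Set $P=\{x\in\overline{\mathcal C^+}\colon x\cdot B\ge 0\text{ for all }B\in St(\NS X))\}$, a full-dimensional cone containing $\mathrm{Nef}(X)$. Inspection of the dual graph $St(\NS X))$ shows that any two of its defining classes intersect in $0$, $1$ or $2$, the value $2$ occurring only for the two components of an $I_2$-fiber; thus $P$ is an \emph{acute-angled} Coxeter polyhedron, and by Vinberg's theory of reflection groups \cite{Vinberg2} it is the fundamental chamber of the subgroup generated by the reflections in its walls. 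Since $\mathrm{Nef}(X)\subseteq P$ and $\mathrm{Nef}(X)$ is the fundamental chamber of the a priori larger group $W$, the two chambers coincide — so that $\mathrm{Nef}(X)=P$ and no $\cu$-curve lies outside $St(\NS X))$ — precisely when no $\cu$-hyperplane meets the interior of $P$, equivalently when every effective $\cu$-class lies in the cone spanned by $St(\NS X))$. As every vertex of $St(\NS X))$ meets $F$ at most once, this rules out $d\ge 2$.

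The hard part is this last point, that $\overline{\mathrm{Eff}}(X)$ is spanned by the $St$-curves: it fails for a general lattice $U\oplus K$, so it is exactly here that the finiteness of the list of admissible $K$ must be used. I would establish it — this is essentially Nikulin's computation — by running the Vinberg--Shimada algorithm (see \cite{Shimada}, \cite{Roulleau}) from an explicit ample class $D\in\NS X)$ constructed as in Section~\ref{subsec:About-the-computations}: the already-known finiteness of $\aut(X)$ guarantees that the algorithm terminates, and one checks that for each of the finitely many $K$ in the list it returns only classes already present in $St(\NS X))$. Equivalently, by \cite[Thm. 6.1]{Kovacs} the extremal rays of $\overline{\mathrm{Eff}}(X)$ are $\cu$-curves, so it suffices to verify, for each $K$, that every $\cu$-curve of bounded degree with respect to $D$ is a vertex of $St(\NS X))$. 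Either way, the content of the theorem lies in the finiteness of the list rather than in any uniform argument.
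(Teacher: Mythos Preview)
The paper does not supply its own proof of this theorem: it is stated as a citation of Nikulin's results \cite[Thm.~0.2.2 and Cor.~3.9.1]{Nikulin}. Your sketch is therefore not competing with an argument in the paper but with Nikulin's original method, and it correctly identifies that method --- namely, running Vinberg's algorithm from the star configuration and verifying, for each $K$ in the list, that no further $(-2)$-hyperplane meets the interior of the chamber $P$ cut out by the $St$-curves.

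A few comments on the write-up. First, the case analysis $d=0,1,\ge 2$ together with the $\mathrm{MW}(\pi)=0$ step is pleasant but largely cosmetic: once Vinberg's algorithm terminates with only the $St$-walls, you already know that every $(-2)$-curve is an $St$-curve, and the degree-with-respect-to-$F$ discussion becomes a post-hoc observation rather than a step of the proof. Second, the sentence ``As every vertex of $St(\NS X))$ meets $F$ at most once, this rules out $d\ge 2$'' is misplaced: it reads as if the $F$-degree bound is what excludes the case $d\ge 2$, whereas in fact you have just explained that the exclusion comes from the (yet-to-be-performed) verification that $P=\mathrm{Nef}(X)$. The correct logical order is: Vinberg's algorithm shows the $St$-curves are \emph{all} the walls, hence (by extremality of $(-2)$-curves in $\overline{\mathrm{Eff}}$, \cite{Kovacs}) every $(-2)$-curve is an $St$-curve, which a posteriori has $d\le 1$. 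Third, there is a mild circularity in invoking ``the already-known finiteness of $\aut(X)$'' to guarantee termination of the algorithm: finiteness of $\aut(X)$ is itself the output of Nikulin's computation, not an input to it. Vinberg's algorithm does not require finiteness in advance; one simply runs it and observes, for each $K$ on the list, that it halts at the $St$-curves.
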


Let $X$ be a K3 surface such that $\NS X)\simeq U\oplus\bigoplus_{i\in I}\mathbf{G}^{(i)}$, where the $\mathbf{G}^{(i)}$ are lattices generated by irreducible elements of square $-2$. Let $F$ be a fiber of the natural fibration $\pi\colon X\to\PP^{1}$ and $E$ be the section as in Proposition~\ref{prop:STAR-Nikulin-Kondo}.  The divisor $D_{2}=2F+E$ is nef of square $2$, with base points since $D_{2}F=1$. By Theorem~\ref{thm:SaintDonat-2}, the divisor $D_{8}=2D_{2}$ is base-point free and hyperelliptic, and it defines a morphism $X\to\PP^{5}$ which factors through the Hirzebruch surface $\mathbf{F}_{4}$ so that the branch locus of $\eta\colon X\to\mathbf{F}_{4}$ is the disjoint union of the unique section $s$ such that $s^{2}=-4$ and $B$, a curve in the linear system $|3s+12f|$, where $f$ is a fiber of the unique fibration of $\mathbf{F}_{4}$. We  immediately have the folowing. 

\begin{prop}
\label{prop:consequenceOfNikulinStar}The image by $\eta$ of the
section $E$ is the section $s$; the pull-back on $X$ of the pencil
$|f|$ is the pencil of elliptic curves in the elliptic fibration  $\pi\colon X\to\PP^{1}$.
A singular fiber of $\pi$ of type $\tilde{{\bf A}}_{n}$,  $\tilde{{\bf D}}_{n}$, $\tilde{{\bf E}}_{n}$
 is mapped onto a fiber of $\,\mathbf{F}_{4}\to\PP^{1}$
that cuts $B$ at, respectively, an ${\bf a}_{n}$, ${\bf d}_{n}$, ${\bf e}_{n}$ singularity
of $B$.
\end{prop}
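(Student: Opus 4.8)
The plan is to read off all three assertions from the single linear equivalence $D_{8}=2D_{2}\equiv 4F+2E$ together with the factorization $\varphi_{D_{8}}=c\circ\eta$ recorded above, where $c\colon\mathbf{F}_{4}\to\PP^{5}$ contracts the section $s$ with $s^{2}=-4$. Since $D_{8}$ is hyperelliptic, $\varphi_{D_{8}}$ is $2$-to-$1$ onto the quartic cone $\varphi_{D_{8}}(X)$, and as $\eta$ has degree $2$ the map $c$ is birational onto that cone and contracts precisely $s$; hence $c^{*}\OO_{\PP^{5}}(1)$ is the unique nef class on $\mathbf{F}_{4}$ orthogonal to $s$ with square $4$, namely $s+4f$. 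Consequently
\begin{equation*}
4F+2E\ \equiv\ D_{8}\ \equiv\ \varphi_{D_{8}}^{*}\OO_{\PP^{5}}(1)\ \equiv\ \eta^{*}(s+4f)\ =\ \eta^{*}s+4\,\eta^{*}f ,\tag{$\star$}
\end{equation*}
and everything that follows is bookkeeping with $(\star)$ and with $\eta$ as a double cover.

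First I would determine $\eta^{*}f$. It is an effective class of square zero, so by Theorem~\ref{thm:SaintDonat-1}(a) we may write $\eta^{*}f\equiv aC$ with $|C|$ a free pencil and $a\ge 1$. Intersecting $(\star)$ with $\eta^{*}f$ gives $2=\eta^{*}s\cdot\eta^{*}f=a\bigl(4(F\cdot C)+2(E\cdot C)\bigr)$; since $F$ is nef, $E\cdot C\ge 0$, and since $4(F\cdot C)+2(E\cdot C)=0$ would put $C$ in the orthogonal complement of $U$, hence make $C\in K$ with $C^{2}<0$ (impossible for a nonzero effective class of square $0$), we get $a=1$, $F\cdot C=0$ and $E\cdot C=1$. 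A short argument in the orthogonal decomposition $\NS X)=U\oplus K$ (using that $K$ is negative definite) then forces $C\equiv F$; thus $\eta^{*}f\equiv F$. Hence $\eta^{*}|f|$ is a subsystem of the pencil $|F|$, so equals it, and by Proposition~\ref{prop:STAR-Nikulin-Kondo} this pencil is precisely the fibration $\pi$: this is assertion~(ii), and moreover $\pi=\rho\circ\eta$ up to an automorphism of $\PP^{1}$, where $\rho\colon\mathbf{F}_{4}\to\PP^{1}$ is the ruling. Feeding $\eta^{*}f\equiv F$ back into $(\star)$ yields $\eta^{*}s\equiv 2E$; as $2E$ is the only effective divisor in its class, $\eta^{-1}(s)=2E$, so $\eta(E)\subseteq s$ and, $\eta$ being surjective, $\eta(E)=s$, which is assertion~(i).

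For assertion~(iii) I would take a reducible singular fiber $F_{t}=\eta^{-1}(f_{t})$ of $\pi$, i.e.\ one of type $\tilde{\mathbf{A}}_{n}$ with $n\ge 1$, $\tilde{\mathbf{D}}_{n}$, or $\tilde{\mathbf{E}}_{n}$; then $\eta(F_{t})=f_{t}$ is a ruling of $\mathbf{F}_{4}$. Because $s$ is a section with $s\cap B=\emptyset$, it meets $f_{t}$ transversally in one point, so $F_{t}$ can be singular only when $f_{t}$ is tangent to $B$ at a smooth point or passes through a singular point of $B$. A local analysis of the double cover via the normal form $w^{2}=(\text{local equation of }B)$ shows that a simple tangency produces an irreducible fiber of type $I_{1}$, a contact of order $3$ (the maximum, as $B\cdot f=3$) an irreducible fiber of type $II$, while an $\mathbf{a}_{n}$, $\mathbf{d}_{n}$, resp.\ $\mathbf{e}_{n}$ point of $B$ produces --- after resolving the corresponding $\mathbf{A}_{n}$, $\mathbf{D}_{n}$, resp.\ $\mathbf{E}_{n}$ surface double point of the cover --- a fiber of Kodaira type $\tilde{\mathbf{A}}_{n}$, $\tilde{\mathbf{D}}_{n}$, resp.\ $\tilde{\mathbf{E}}_{n}$. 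Since $F_{t}$ is reducible it cannot come from a tangency, so $f_{t}$ meets $\mathrm{Sing}(B)$, and the dictionary just described forces the singular point met by $f_{t}$ to be of type $\mathbf{a}_{n}$, $\mathbf{d}_{n}$, resp.\ $\mathbf{e}_{n}$. The only genuinely non-formal ingredient --- and where I expect the real, though standard, work to lie --- is this last local computation: checking that the minimal resolution of the $\mathbf{A}_{n}/\mathbf{D}_{n}/\mathbf{E}_{n}$ double point of the double cover turns the naive singular fiber into the affine Dynkin configuration of the matching Kodaira type. This can be done with explicit normal forms or quoted from the classical theory of double covers and elliptic surfaces.
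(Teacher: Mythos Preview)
Your proof is correct. The paper gives no proof at all for this proposition---it merely prefaces the statement with ``We have immediately:'' and leaves all the verifications to the reader. What you have written is precisely the careful unpacking of why the result is immediate: the identity $\eta^{*}(s+4f)\equiv 4F+2E$, the identification $\eta^{*}f\equiv F$ via the orthogonal decomposition $U\oplus K$, the consequence $\eta^{*}s\equiv 2E$, and the standard local dictionary between $\mathbf{ade}$ singularities of the branch curve and $\tilde{\mathbf{A}}\tilde{\mathbf{D}}\tilde{\mathbf{E}}$ fibers of the elliptic fibration. Two small points worth tightening: (i) the claim that $2E$ is the only effective divisor in its class follows at once from $E\cdot(2E)=-4<0$, forcing $E$ to be a component of any such divisor; (ii) the ``short argument'' forcing $C\equiv F$ is exactly the computation $C=\alpha F+\beta E+k$ with $k\in K$, whence $F\cdot C=\beta=0$, $E\cdot C=\alpha=1$, and $C^{2}=k^{2}=0$ gives $k=0$ by negative-definiteness---you may as well spell this out since it costs only a line.
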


\subsection{\label{subsec:About-the-famous95}About the famous 95, their moduli
spaces and K3 surfaces with finite automorphism group}

In \cite{Belcastro} are studied the N\'eron--Severi lattices of the
so-called ``famous $95$'' families of K3 surfaces. These ``famous
$95$'' have been constructed by Reid (unpublished); the list of
these families appeared in Yomemura \cite{Yonemura} from the point
of view of singularity theory. The K3 surfaces involved are (singular)
anti-canonical divisors in weighted projective threefolds $\mathbb{WP}^{3}=\mathbb{WP}^{3}(\bar{a})$
(here $\bar{a}$ is the weight $\bar{a}=(a_{1},\dots,a_{4})$). As
we will see, many of these K3 surfaces have  finite automorphism
group.

Let $d_{\bar{a}}$ be the degree of an anti-canonical divisor in $\mathbb{WP}^{3}$.
It turns out that each general  degree $d_{\bar{a}}$ surface $\bar{X}$
in $\mathbb{WP}^{3}$ has the same singularities, and its minimal
desingularization is a K3 surface $X$. The main result of  \cite{Belcastro}
is the computation of the N\'eron--Severi lattice $\NS X)$ for a general
member of each of the $95$ families.

For $\bar{a}$ among the $95$ possible weights, let $L_{\bar{a}}\simeq\NS X)$
be the lattice of the N\'eron--Severi lattice of a general K3 surface
$X$ with singular model $\bar{X}\subset\mathbb{WP}^{3}(\bar{a})$,
and let $\mathcal{M}_{\bar{a}}$ be the moduli space of $L_{\bar{a}}$-polarized
K3 surfaces. 

\begin{prop}
\label{prop:famous95}There is a birational map between $\mathcal{M}_{\bar{a}}$
and the moduli space of degree $d_{\bar{a}}$ surfaces in $\mathbb{WP}^{3}(\bar{a})$
modulo automorphisms. 
\end{prop}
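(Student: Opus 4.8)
The plan is to show that the "tautological" family of degree $d_{\bar a}$ hypersurfaces in $\mathbb{WP}^3(\bar a)$, together with its relative period map, provides the desired birational equivalence, using the surjectivity of the period map and the Torelli theorem for K3 surfaces. First I would fix $\bar a$ among the $95$ weights and let $\mathcal{H}_{\bar a}$ be the open subscheme of the linear system $|{-}K_{\mathbb{WP}^3(\bar a)}|$ parametrizing surfaces $\bar X$ whose only singularities are the "expected" $\mathbf{ADE}$ singularities forced by the ambient weighted projective space, so that the minimal resolution $X$ is a smooth K3 surface. The group $\mathrm{Aut}(\mathbb{WP}^3(\bar a))$ acts on $\mathcal{H}_{\bar a}$, and the quotient stack $\mathcal{H}_{\bar a}/\mathrm{Aut}(\mathbb{WP}^3(\bar a))$ is (a dense open substack of) "the moduli of degree $d_{\bar a}$ surfaces in $\mathbb{WP}^3(\bar a)$ modulo automorphisms" appearing in the statement.

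Next I would produce the map to $\mathcal{M}_{\bar a}$. By \cite{Belcastro}, for the generic such $\bar X$ the resolution $X$ has $\NS X)\simeq L_{\bar a}$, and in fact there is a \emph{canonical} primitive embedding $L_{\bar a}\hookrightarrow \NS X)$ for every $X$ in the family: the classes of the exceptional $\cu$-curves of the resolution $X\to\bar X$, together with the pullback of $\OO_{\mathbb{WP}^3}(1)|_{\bar X}$, span a sublattice abstractly isomorphic to $L_{\bar a}$, and this is a flat system of classes over $\mathcal{H}_{\bar a}$. This gives a morphism of stacks $\mathcal{H}_{\bar a}\to\mathcal{M}_{L_{\bar a}}=\mathcal{M}_{\bar a}$ which is $\mathrm{Aut}(\mathbb{WP}^3(\bar a))$-invariant (isomorphic ambient surfaces give isomorphic polarized K3s), hence descends to $j\colon \mathcal{H}_{\bar a}/\mathrm{Aut}(\mathbb{WP}^3(\bar a))\dashrightarrow \mathcal{M}_{\bar a}$. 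I would then check that $j$ is dominant and generically finite, equivalently birational onto its image, by a dimension count: both sides have dimension $20-\rho$ where $\rho=\rk L_{\bar a}$ (for $\mathcal{M}_{\bar a}$ this is standard; for the hypersurface side one counts $h^0$ of the anticanonical system minus $\dim\mathrm{Aut}(\mathbb{WP}^3(\bar a))$, which equals $20-\rho$ precisely because these $95$ families were built so that a generic member is "rigid" relative to its Picard lattice — this is exactly the content of Belcastro's computation that $\NS X)$ is no bigger than $L_{\bar a}$ generically).

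For surjectivity of $j$ onto a dense open of $\mathcal{M}_{\bar a}$ I would invoke surjectivity of the period map: a generic point of $\mathcal{M}_{\bar a}$ is a K3 surface $X$ with $\NS X)\simeq L_{\bar a}$ exactly, and I must show such an $X$ is the resolution of some $\bar X\in\mathcal{H}_{\bar a}$. Here I would use the structure of $L_{\bar a}$: the polarization class $h\in L_{\bar a}$ of square $d_{\bar a}$ (or the appropriate nef class) defines a contraction $X\to\bar X$ contracting exactly the $\cu$-curves orthogonal to $h$, and by running Saint-Donat's analysis (Theorem \ref{thm:SaintDonat-1} and Theorem \ref{thm:SaintDonat-2}) on $|h|$ one identifies $\bar X$ with a (weighted) complete intersection realizing it as an anticanonical hypersurface in $\mathbb{WP}^3(\bar a)$; the combinatorics of which Kodaira/$\mathbf{ADE}$ configurations occur is precisely dictated by $L_{\bar a}$ and matches the singularity pattern of the generic $\bar X$. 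Finally, injectivity (generic finiteness, degree one) of $j$ follows from the Torelli theorem: two surfaces $\bar X,\bar X'\in\mathcal{H}_{\bar a}$ with Hodge-isometric resolutions preserving $L_{\bar a}$ have an isomorphism $X\cong X'$ carrying $h\mapsto h$, which descends to an isomorphism $\bar X\cong\bar X'$, and such an isomorphism of polarized hypersurfaces in $\mathbb{WP}^3(\bar a)$ is induced by an element of $\mathrm{Aut}(\mathbb{WP}^3(\bar a))$ (for these weights the automorphisms of the pair come from the ambient space).

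\textbf{The main obstacle} I expect is the last point combined with the reconstruction step: proving that a K3 with Picard lattice $L_{\bar a}$ is \emph{actually} realized inside the correct $\mathbb{WP}^3(\bar a)$ (rather than just admitting a map to some weighted projective space), and that the only isomorphisms between two such weighted hypersurfaces are ambient ones. Both are handled case-by-case: one goes through each of the $95$ weights $\bar a$, reads off the nef class $h$ and the $\cu$-curve configuration from $L_{\bar a}$, applies Saint-Donat to describe $\varphi_{|h|}$ (or $\varphi_{|mh|}$ for suitable $m$) explicitly as the embedding/cover realizing $\bar X\subset\mathbb{WP}^3(\bar a)$, and invokes Belcastro's tables to match the singularity data. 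Since this is the routine but lengthy part, in the actual write-up I would state the argument uniformly and indicate that the verification that "generic member has Picard lattice exactly $L_{\bar a}$" plus "the contraction of $X$ along $h^\perp$ embeds anticanonically in $\mathbb{WP}^3(\bar a)$" is carried out (or already available in \cite{Belcastro}, \cite{Yonemura}) for each family, the birationality of $j$ then being formal from surjectivity of the period map and Torelli.
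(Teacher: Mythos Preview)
Your proposal is correct but considerably more elaborate than the paper's argument. The paper proceeds as follows: for each of the $95$ weights it computes $\dim\aut(\mathbb{WP}^3(\bar a))=-1+\sum_k h^0(\mathbb{WP}^3,\mathcal{O}(a_k))$, subtracts this from the dimension of the anticanonical linear system, and verifies case by case (via the tables in \cite{Belcastro} and \cite{Yonemura}) that the quotient $\mathcal{Q}$ has dimension exactly $20-\rho$. Then it simply observes that the natural map from an open subset of $\mathcal{Q}$ to $\mathcal{M}_{\bar a}$ is injective, and since $\mathcal{M}_{\bar a}$ is irreducible of the same dimension $20-\rho$, the map is birational.

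Your reconstruction step---using surjectivity of the period map together with a Saint-Donat analysis of $|h|$ to show that every K3 with $\NS X)\simeq L_{\bar a}$ actually arises as a hypersurface in the correct $\mathbb{WP}^3(\bar a)$---is therefore unnecessary: generic injectivity between irreducible spaces of equal dimension already forces dominance. This is precisely the ``main obstacle'' you flagged, and the paper sidesteps it completely. Your Torelli argument for injectivity, on the other hand, is the natural justification of a step the paper asserts without comment. What the paper's shortcut buys is brevity and the avoidance of any case-by-case embedding analysis; what your route buys is an explicit inverse construction, which is conceptually pleasant but not required for the bare birationality statement.
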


\begin{proof}
For each of the $95$ cases of $\bar{a}$, we compute the dimension
of the quotient space $\mathcal{Q}=\PP(H^{0}(\mathbb{WP}^{3}(\bar{a}),\mathcal{O}(d_{\bar{a}}))^{*})/\aut(\mathbb{WP}^{3}(\bar{a}))$,
using the formula 
\[
\dim\aut(\mathbb{WP}^{3}(\bar{a}))=-1+\sum h^{0}(\mathbb{WP}^{3},\mathcal{O}(a_{k})).
\]
The $95$ weights $\bar{a}$ are given in \cite[Table 3]{Belcastro}; 
the degrees $d_{\bar{a}}$ can be found in \cite[Table 4.6]{Yonemura}.
It turns out that this quotient has dimension $20-\rho$, where $\rho=\text{rank}(L_{\bar{a}})$
is the Picard number. There is a natural injective map from an open
set in $\mathcal{Q}$ to the moduli space $\mathcal{M}_{\bar{a}}$; 
since $\mathcal{M}_{\bar{a}}$ is also $(20-\rho)$-dimensional, both  
spaces are birational.
\end{proof}
A direct consequence of Proposition \ref{prop:famous95} is the following. 

\begin{cor}
The moduli spaces of the famous $95$ polarized K3 surfaces are unirational. 
\end{cor}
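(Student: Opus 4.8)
The corollary asserts that the moduli spaces $\mathcal{M}_{\bar{a}}$ of the famous $95$ lattice-polarized K3 families are unirational. By Proposition \ref{prop:famous95}, each $\mathcal{M}_{\bar{a}}$ is birational to the quotient space
\[
\mathcal{Q}=\PP\big(H^{0}(\mathbb{WP}^{3}(\bar{a}),\mathcal{O}(d_{\bar{a}}))^{*}\big)/\aut(\mathbb{WP}^{3}(\bar{a})).
\]
So it suffices to show that this quotient is unirational. First I would observe that the numerator $\PP\big(H^{0}(\mathbb{WP}^{3}(\bar{a}),\mathcal{O}(d_{\bar{a}}))^{*}\big)$ is a projective space, hence rational, and in particular unirational.

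**The key point is that a quotient of a unirational (indeed rational) variety by an algebraic group action is unirational.** Concretely, if $V$ is a finite-dimensional vector space, $G$ an algebraic group acting linearly, and $\pi\colon\PP(V^{*})\dashrightarrow \PP(V^{*})/G$ the quotient map, then $\pi$ is dominant; composing a dominant rational map $\PP^{N}\dashrightarrow\PP(V^{*})$ (the identity, in fact) with $\pi$ exhibits a dominant rational map from projective space to $\PP(V^{*})/G$, which is exactly the definition of unirationality of the target. I would state this as the main step: the GIT (or field-theoretic) quotient of $\PP^{N}$ by a linear algebraic group is always unirational, since the quotient map is dominant and its source is rational.

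**Assembling the argument.** Given $\bar{a}$ among the $95$ weights, set $V=H^{0}(\mathbb{WP}^{3}(\bar{a}),\mathcal{O}(d_{\bar{a}}))$ and $G=\aut(\mathbb{WP}^{3}(\bar{a}))$, which acts linearly on $V$. Then $\mathcal{Q}=\PP(V^{*})/G$ receives a dominant rational map from the projective space $\PP(V^{*})$, so $\mathcal{Q}$ is unirational; by Proposition \ref{prop:famous95}, $\mathcal{M}_{\bar{a}}$ is birational to $\mathcal{Q}$, hence also unirational. Running this over all $95$ weights gives the statement.

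**The main (only) obstacle** is making precise the claim that the quotient map $\PP(V^{*})\dashrightarrow\PP(V^{*})/G$ is dominant as a rational map of varieties, i.e.\ that such a quotient exists as an algebraic variety in the first place. This is standard: one may take the GIT quotient with respect to the natural linearization, or work at the level of function fields (the invariant subfield $\CC(\PP(V^{*}))^{G}\subseteq\CC(\PP(V^{*}))$ is finitely generated, defining a variety dominated by $\PP(V^{*})$). Either way, unirationality of the source passes to the target. Since the referee's proof of Proposition \ref{prop:famous95} already implicitly uses the existence of $\mathcal{Q}$ as a variety of dimension $20-\rho$, there is nothing further to check, and the corollary follows.
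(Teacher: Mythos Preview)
Your argument is correct and follows the same route as the paper, which simply states the corollary as ``a direct consequence of Proposition~\ref{prop:famous95}'' without further justification. You have merely made explicit the obvious reason (that a quotient of projective space by a linear group action is dominated by projective space, hence unirational), which the paper leaves tacit.
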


Comparing with Nikulin and Vinberg's lists, among the lattices $L_{\bar{a}}$
associated to the $95$ weights $\bar{a}$, at least $43$ lattices
are such that the general  K3 surface $X$ with $\NS X)\simeq L_{\bar{a}}$
(and a model $\bar{X}\subset\mathbb{WP}^{3}(\bar{a})$) has only a
finite number of automorphisms. However, there are some weights $\bar{a}$, $\bar{b}$
among the $95$ such that $L_{\bar{a}}\simeq L_{\bar{b}}$ (then there
are two singular models of the same K3 surface; see \cite{KM}). Without
counting the repetitions, one get (at least) $29$ lattices which
are lattices among the famous $95$ and are also lattices of K3 surfaces
with finite automorphism group. These lattices are 
\[
\begin{array}{c}
[2],\,[4],\,U,\,U(2),\,U\oplus{\bf A}_{1},\,U(2)\oplus{\bf A}_{1},\,U\oplus{\bf A}_{2},\,U(2)\oplus{\bf D}_{4},\,U\oplus{\bf D}_{4},\,\hfill\\
U\oplus{\bf D}_{4}\oplus{\bf A}_{1},\,U\oplus{\bf D}_{5},\,U\oplus{\bf A}_{2}\oplus{\bf D}_{4},\,U\oplus{\bf E}_{6},\,U\oplus{\bf E}_{7},\,U\oplus{\bf A}_{1}\oplus{\bf E}_{6},\,\hfill\\
U\oplus{\bf E}_{8},\,U\oplus{\bf E}_{7}\oplus{\bf A}_{1},\,U\oplus{\bf D}_{4}^{\oplus2},\,U\oplus{\bf A}_{2}\oplus{\bf E}_{6},\,U\oplus{\bf E}_{8}\oplus{\bf A}_{1},\,\hfill\\
U(2)\oplus{\bf D}_{4}^{\oplus2},\,U\oplus{\bf A}_{2}\oplus{\bf E}_{8},\,U\oplus{\bf E}_{8}\oplus{\bf A}_{1}^{\oplus3},\,U\oplus{\bf D}_{8}\oplus{\bf A}_{1}^{\oplus3},\,U\oplus{\bf D}_{8}\oplus{\bf D}_{4},\,\hfill\\
U\oplus{\bf E}_{8}\oplus{\bf D}_{6},\,U\oplus{\bf E}_{8}\oplus{\bf E}_{7},\,U\oplus{\bf E}_{8}\oplus{\bf E}_{8},\,U\oplus{\bf E}_{8}\oplus{\bf E}_{8}\oplus{\bf A}_{1},\hfill
\end{array}
\]
plus the lattice $\begin{psmallmatrix}2 & 1\\1 & -2\end{psmallmatrix}$. For these lattices $L_{\bar{a}}$, we thus have a complete description
of the K3 surfaces $X$ with finite-order automorphism group with
$\NS X)\simeq L_{\bar{a}}$ as singular model(s) in weighted projective
space(s).

It is worth mentioning that the $95$ families have mirrors (see \cite{Belcastro}); not all mirrors are among the $95$ families, but many mirrors are
K3 surfaces with finite automorphism group. The families that do
not already appear in the above list are
\[
\begin{array}{c}
U\oplus{\bf A}_{1}^{\oplus2},\,U(3)\oplus{\bf A}_{2},\,U\oplus{\bf A}_{1}\oplus{\bf A}_{2},\,U\oplus{\bf A}_{2}^{\oplus2},\\
U\oplus{\bf A}_{2}\oplus{\bf A}_{3},\,U\oplus{\bf A}_{5},\,U\oplus{\bf E}_{8}\oplus{\bf A}_{3}.
\end{array}
\]

\subsection{On the automorphism groups\label{subsec:About-the-automorphism}}

In \cite{Kondo}, Kondo studies the automorphism group of a general
K3 surface with a finite number of $\cu$-curves and Picard number
at least $3$. The main result of Kondo's paper \cite[Table 1]{Kondo}
is that for a certain list of $12$ lattices among the $118$ lattices,
the automorphism group is $(\ZZ/2\ZZ)^{2}$, that for $U\oplus{\bf E}_{8}^{\oplus2}\oplus A_{1}$
it is $\mathfrak{S}_{3}\times\ZZ/2\ZZ$ and that otherwise, for the
remaining $105$ families, it is either the trivial group or $\ZZ/2\ZZ$. 

Our study enables us to construct a hyperelliptic involution for surfaces in $90$ out of $105$ families, and we prove that the surfaces in the remaining $15$ families have trivial automorphism group, thus completing in that way the results of Kondo. These results are summarized in the table in Section~\ref{sec:table}.

\begin{rem}
For the ``general'' assumption on the surface, which can be made more
precise, we refer to the introduction of Kondo's paper \cite{Kondo}.
That hypothesis is important since Kondo constructed special K3 surfaces
with finite automorphism group isomorphic to $\ZZ/42\ZZ$ or $\ZZ/66\ZZ$
(see \cite{Kondo86}). When computing the automorphism group, the
K3 surfaces we consider are always supposed general.
\end{rem}

When the K3 surface $X$ has automorphism group isomorphic to $(\ZZ/2\ZZ)^{2}$
and Picard number at most $14$, we describe the two hyperelliptic involutions
generating the automorphism group $\aut(X)$;
when the Picard number is larger, we refer to
the description in \cite{Kondo}. 

For computing the automorphism group of a K3 surface $X$, it is important
to know the image of the natural map 
\[
\varphi\colon \aut(X)\to O(\NS X)).
\]
It turns out that when the automorphism group is not trivial, 
there is always a hyperelliptic involution~$\s$. Using our description
of the set of $\cu$-curves, one can understand when an involution
$\s$ is in the kernel of~$\varphi$. One may also use Proposition
\ref{prop:consequenceOfNikulinStar} in the case when the K3 lattice is
of type $U\oplus\bigoplus\tilde{\mathbf{G}}_{j}$ as follows. When
a K3 surface is the double cover branched over a curve with only  singularities  of type  
\[
\mathbf{a}_{1},\,\mathbf{d_{4}},\,\mathbf{d_{6}},\,\mathbf{d_{8}},\,\mathbf{e_{7}},\,\mathbf{e_{8}},
\]
the action on $\cu$-curves in the star of $U\oplus\bigoplus\tilde{\mathbf{G}}_{j}$
is trivial, and if there are any singularities of type 
\[
\mathbf{a}_{2},\dots,\mathbf{a}_{5},\,\mathbf{d}_{5},\,\mathbf{d}_{7},\,\mathbf{e}_{6},
\]
 then the involution acts non-trivially on the set of $\cu$-curves
in the star of $U\oplus\bigoplus\tilde{\mathbf{G}}_{j}$. Since the
N\'eron--Severi lattice is always generated by these $\cu$-curves, one
can then understand when an involution is in the kernel of $\varphi$.
In the table in Section~\ref{sec:table}, we indicate when the action
of a hyperelliptic involution on the N\'eron--Severi lattice is not
trivial.

Let $X$ be a K3 surface for which Kondo proved that $\aut(X)$ is
trivial or $\ZZ/2\ZZ$. 

\begin{prop}
\label{prop:AUTOTrivial}Suppose that $X$ is general and the rank
of $\,\NS X)$ is less than or equal to $8$ and
\begin{enumerate}[i)]
\item there is no $(-2)$-curve $A$ and fiber $F$ such that $A\cdot F=1$; 
\item there is no big and nef divisor $D$ such that $D\cdot F=2$ for a
fiber $F$; 
\item there is no big and nef divisor $D$ such that $D^{2}=2$.
  \end{enumerate}
Then the automorphism group of $\,X$ is trivial.
\end{prop}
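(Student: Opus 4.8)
The plan is to argue by contradiction. By hypothesis $\aut(X)$ is trivial or $\ZZ/2\ZZ$, so assume $\aut(X)=\langle\sigma\rangle\cong\ZZ/2\ZZ$ with $\sigma$ an involution, and derive a contradiction with (i)--(iii). First I would pin down the nature of $\sigma$. Since the Picard number is $\rho\leq 8$, $X$ carries no symplectic involution: the co-invariant lattice of one is $\mathbf{E}_{8}(2)$, which is negative definite of rank $8$ and contained in $\NS X)$ (a symplectic involution acts trivially on the transcendental lattice), impossible because $\NS X)$ has signature $(1,\rho-1)$ with $\rho-1\leq 7$. Hence $\sigma$ is non-symplectic; then $\sigma^{*}$ acts by $-1$ on $H^{2,0}(X)$, so $H^{2}(X,\ZZ)^{\sigma}\subset\NS X)$ and $b_{2}(X/\sigma)=\mathrm{rank}\,H^{2}(X,\ZZ)^{\sigma}\leq\rho\leq 8<10$; thus $X/\sigma$ is not an Enriques surface and $\sigma$ has a non-empty (necessarily smooth) fixed curve. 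Therefore $Y:=X/\sigma$ is a smooth surface with $h^{1}(\mathcal{O}_{Y})=0$ and $-2K_{Y}$ effective and non-zero, hence $Y$ is rational (Castelnuovo), and $q\colon X\to Y$ is the double cover branched along a smooth curve $R\in|-2K_{Y}|$.

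Next I would invoke the classification of minimal rational surfaces: there is a birational morphism $\pi\colon Y\to Y_{0}$ with $Y_{0}=\PP^{2}$ or $Y_{0}=\mathbf{F}_{n}$ ($n=0$ or $n\geq 2$). If $Y_{0}=\PP^{2}$, the pull-back $q^{*}\pi^{*}\ell$ of a general line $\ell$ is a big and nef divisor on $X$ with $(q^{*}\pi^{*}\ell)^{2}=2$, contradicting (iii). If $Y_{0}=\mathbf{F}_{n}$, let $f$ be a ruling fibre, $s$ a section with $s^{2}=-n$, and $M=s+(n+1)f$; then $M$ is nef, $M^{2}=n+2>0$ and $Mf=1$. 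A general fibre $\varphi$ of the composite $Y\to\mathbf{F}_{n}\to\PP^{1}$ is a smooth rational curve with $R\varphi=-2K_{Y}\varphi=4$, so $q^{-1}(\varphi)$ is an irreducible elliptic curve; as these curves sweep out a pencil, $F:=q^{*}\pi^{*}f=[q^{-1}(\varphi)]$ is the primitive class of a fibre of an elliptic fibration on $X$, and $D:=q^{*}\pi^{*}M$ is big and nef with $DF=2(Mf)=2$, contradicting (ii). In both cases we obtain a contradiction, so $\aut(X)$ is trivial.

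Two remarks. Incidentally (i) is implied by (iii): if a $\cu$-curve $A$ and a fibre $F$ had $AF=1$, then $2F+A$ would be big and nef with $(2F+A)^{2}=4-2=2$ — and $(2F+A)C\geq 0$ for every irreducible curve $C$ because $F$ is nef — contradicting (iii); so (ii) and (iii) already suffice. Also, matching the discussion preceding the statement, one can run the argument through the notion of a hyperelliptic involution: the non-symplectic $\sigma$ is the deck transformation of $\varphi_{D}$ for a suitable big and nef $D$, say the pull-back to $X$ of an ample class on $Y$; by Theorem \ref{thm:SaintDonat-1}(b), hypothesis (i) forces $|D|$ to have no fixed component, so by Theorem \ref{thm:SaintDonat-2} the system $|D|$ lies in case a), b) or c) — case a) yields a fibre $F$ with $DF=2$ (contradicting (ii)), and cases b), c) yield a big and nef divisor of square $2$ (contradicting (iii)). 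In both formulations the crucial, and hardest, step is the structural one: passing from the mere existence of an automorphism to an explicit linear system on $X$ — that is, recognising $X/\sigma$ as a rational surface (equivalently, producing a hyperelliptic polarisation) and then checking that the divisors pulled back from it are big and nef and that $F$ is an honest fibre class. Once this is done, matching against (i)--(iii) is routine.
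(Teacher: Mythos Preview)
Your proof is correct, but it takes a genuinely different route from the paper's. The paper's argument is two lines: by Kondo \cite[Table 1 and Lemma 2.3]{Kondo}, for these lattices (rank $\leq 8$) the automorphism group is either trivial or generated by a \emph{hyperelliptic} involution (not just any involution); and by Theorems~\ref{thm:SaintDonat-1} and~\ref{thm:SaintDonat-2}, hypotheses (i)--(iii) rule out the existence of any hyperelliptic linear system, hence of any hyperelliptic involution. So the paper leans entirely on Kondo's lemma for the structural step you work out by hand.

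What you do differently is to rederive that structural step from first principles: you rule out symplectic involutions via the rank obstruction ($\mathbf{E}_{8}(2)$ cannot sit inside a lattice of signature $(1,\rho-1)$ with $\rho\leq 8$), rule out Enriques quotients via $b_{2}$, and then exploit the classification of minimal rational surfaces to manufacture directly a big and nef divisor violating (ii) or (iii). This buys you self-containedness --- you do not need to look inside Kondo's paper to see what Lemma 2.3 actually says --- at the cost of a longer argument. Your second remark is essentially the paper's proof in disguise, though the paper short-circuits the step ``$\sigma$ is the deck transformation of some $\varphi_{D}$'' by quoting Kondo rather than by pulling back an ample class from $Y$. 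Your first remark, that (i) is already implied by (iii), is a nice observation the paper does not make.
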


Here, a fiber $F$ means an irreducible curve with $F^{2}=0$.

\begin{proof}
By Theorems \ref{thm:SaintDonat-1} and \ref{thm:SaintDonat-2},  the
hypotheses i), ii) and iii) imply that there is no hyperelliptic involution
acting on $X$. By \cite[Table 1 and Lemma 2.3]{Kondo}, since the
rank of $\NS X)$ is less than or equal to $8$, the automorphism group
is either trivial or generated by a hyperelliptic involution; thus
it must be trivial. 
\end{proof}

\subsection{\label{subsec:On-the-irreducibility}On the irreducibility of the
moduli spaces}

The aim of this section is to prove the following result.

\begin{prop}
\label{prop:The-118-moduli-irred}The $118$ moduli spaces of K3 surfaces
with Picard number at least $3$ and finite automorphism group are irreducible.
\end{prop}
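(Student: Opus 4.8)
The plan is to show that for each of the $118$ lattices $N$ appearing in the Nikulin--Vinberg classification, the moduli space $\mathcal{M}_N$ of $N$-polarized K3 surfaces is irreducible, and the natural tool is the general theory of moduli of lattice-polarized K3 surfaces (Dolgachev). Recall that $\mathcal{M}_N$ is the quotient of a bounded symmetric domain of type IV (two connected components, interchanged by an anti-holomorphic involution) by an arithmetic group $\Gamma$ acting on the period domain; irreducibility of $\mathcal{M}_N$ amounts to showing that $\Gamma$ contains an element exchanging the two components, equivalently that the group of isometries of the transcendental lattice $T_N = N^\perp$ in the K3 lattice $\Lambda_{K3}$ which extend to $\Lambda_{K3}$ and act as $\pm 1$ on the discriminant/period contains one reversing the orientation of a positive-definite $2$-plane. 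First I would reduce to a purely lattice-theoretic statement: by the surjectivity of the period map and the results of Dolgachev on $N$-polarized K3 surfaces, $\mathcal{M}_N$ is irreducible once one exhibits an isometry $g \in O(T_N)$ with $g$ acting as $-\mathrm{id}$ (or trivially) on the discriminant group $A_{T_N}$ and with determinant ensuring $g$ exchanges the two components of the period domain.

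The key steps, in order, are as follows. \emph{Step 1:} For each of the $118$ lattices $N$, compute (or read off) the transcendental lattice $T_N$ as the orthogonal complement of a primitive embedding $N \hookrightarrow \Lambda_{K3} = U^{\oplus 3} \oplus E_8^{\oplus 2}$; since $N$ has signature $(1,\rho-1)$ with $\rho \geq 3$, $T_N$ has signature $(2, 20-\rho)$. \emph{Step 2:} Exhibit in $T_N$ an isometry realizing a reflection in a $(-2)$-vector, or more robustly use that $T_N$ of signature $(2,k)$ with $k \geq 2$ and of sufficiently small discriminant always contains a hyperbolic summand $U$ or $U(2)$ (this follows from Nikulin's existence/uniqueness results for indefinite lattices, given the ranks and discriminants involved); a $U$- or $U(m)$-summand carries the isometry $\begin{pmatrix} 0 & 1 \\ 1 & 0 \end{pmatrix}$, which acts trivially on the discriminant group and reverses orientation on a positive plane. \emph{Step 3:} Check this isometry of $T_N$ glues with $\pm\mathrm{id}_N$ to an isometry of $\Lambda_{K3}$ (possible because it is $\pm 1$ on $A_{T_N} \cong A_N$ up to sign), hence lies in the monodromy group; conclude that the corresponding deck transformation identifies the two components of the period domain, so $\mathcal{M}_N$ is connected, and being a quotient of an irreducible variety it is irreducible.

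The main obstacle I expect is \emph{Step 1 together with Step 2 done uniformly}: one must actually have, for all $118$ lattices, an explicit $T_N$ (or at least its genus) and verify the presence of a $U$ or $U(2)$ orthogonal summand, or else argue case-by-case that some orientation-reversing isometry extends. Many of the lattices in the list are of the shape $U(k) \oplus K$ or $[m] \oplus K$ with $K$ a sum of $ADE$ lattices, so $T_N$ tends to again have a large even indefinite part, and Nikulin's theorem (\emph{every} even indefinite lattice of rank $\geq 3$ with small enough discriminant form relative to its rank splits off a $U$) applies; but the low-rank or large-discriminant cases (e.g. the $2$-elementary lattices with many $A_1$ summands, or the small-rank cases like $[2m]\oplus\mathbf{A}_1^{\oplus k}$ and the rank-$3$ cases $U(n)\oplus \mathbf A_1$) may require separate verification that an orientation-reversing isometry of $T_N$ extends to $\Lambda_{K3}$. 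I would handle those remaining cases by the discriminant-form computations of Nikulin (gluing criterion, Proposition 1.15.1 of Nikulin's ``Integral symmetric bilinear forms''), checking in each case that $-\mathrm{id}_N$ acts as $-\mathrm{id}$ on $A_N$ so that it glues with the chosen isometry of $T_N$, which is routine for $ADE$-type discriminant forms.
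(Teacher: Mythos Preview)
Your overall strategy is correct and is essentially the same as the paper's: use Dolgachev's criterion that $\mathcal{M}_N$ is irreducible once $T_N=N^{\perp}\subset\Lambda_{K3}$ contains a copy of $U$ (or $U(2)$), which encodes exactly the orientation-reversing isometry you describe in Steps~2--3. Where the paper is sharper is in the execution of your Step~2. Rather than computing $T_N$ or arguing about its genus for each of the $118$ lattices, the paper observes that if $\ell(N)\le 18-\rk N)$ then by Nikulin's uniqueness theorem one can primitively embed $N$ into the rank-$20$ sublattice $U^{\oplus 2}\oplus E_8^{\oplus 2}$ of $\Lambda_{K3}$; the remaining $U$-summand then automatically lies in $T_N$. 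This single inequality disposes of \emph{all} cases of rank $\le 9$ at once (since $\ell(N)\le\rk N)\le 9\le 18-9$), so your concern about the low-rank lattices such as $[2m]\oplus\mathbf{A}_1^{\oplus k}$ or the rank-$3$ cases is misplaced---those are the easy ones. The genuine checking happens only for the $28$ lattices of rank $\ge 10$, where one reads off $\ell(N)$ from the standard table of discriminant groups of $ADE$ lattices and verifies $\ell(N)\le 18-\rk N)$; only the single lattice $U\oplus E_8^{\oplus 2}\oplus A_1$ (rank $19$) fails this numerical test and needs a direct one-line argument that its orthogonal complement still contains $U$. So your plan would work, but the embedding-into-$U^{\oplus 2}\oplus E_8^{\oplus 2}$ trick replaces the anticipated case-by-case analysis of $T_N$ by a uniform numerical criterion and makes Step~3 (the gluing) unnecessary, since Dolgachev's result is invoked as a black box.
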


Let us recall that if $L$ is an even lattice of rank $\rho$ and
signature $(1,\rho-1)$, we denote by $\mathcal{M}_{L}$ the moduli
space of K3 surfaces $X$ polarized by a primitive embedding $j_{X}\colon L\hookrightarrow\NS X)$.
The moduli space $\mathcal{M}_{L}$ may depend upon the choice of the embedding
of $L$ in the K3 lattice $\Lambda_{K3}=U^{\oplus3}\oplus{\bf E}_{8}^{\oplus2}$:
two non-isometric embeddings will give two different moduli spaces
(see \cite{NikulinFinite} or \cite{Dolga}). For that question, one
can use the following result. 

\begin{thm}[See \protect{\cite[Theorem 14.1.12]{Huybrechts}}]
\label{thm:existencePlobngement}
Let $\L$ be an even unimodular lattice of signature $(n_{+},n_{-})$
and $M$ be an even lattice of signature $(m_{+},m_{-})$. If $m_{+}<n_{+}$, $m_{-}<n_{-}$
and
\begin{equation}
\ell(M)+2\leq\rk\L)-\rk M),\label{eq:cond-unique-emb}
\end{equation}
then there exists a primitive embedding $M\hookrightarrow\L$, which
is unique up to automorphisms of $\L$. 
\end{thm}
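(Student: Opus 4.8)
The plan is to translate the problem into Nikulin's theory of discriminant forms and thereby reduce both the existence and the uniqueness assertions to properties of the orthogonal complement. Recall that for an even lattice $M$ one has the finite discriminant group $A_M=M^{*}/M$ with its discriminant quadratic form $q_M\colon A_M\to\QQ/2\ZZ$, and that $\ell(M)$ is the minimal number of generators of $A_M$. The structural fact driving the argument is that, because $\L$ is even and unimodular, a primitive embedding $M\hookrightarrow\L$ is the same datum as an even lattice $N$ (its orthogonal complement) together with an anti-isometry $\gamma\colon q_M\xrightarrow{\sim}-q_N$, the overlattice $\L$ corresponding to the graph of $\gamma$ in $A_M\oplus A_N$. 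In particular such an $N$ must have signature $(n_{+}-m_{+},\,n_{-}-m_{-})$ and discriminant form $-q_M$. So the statement reduces to two assertions about $N$: that it exists, and that it is unique up to isometry with equivalent gluing.

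For existence I would invoke Nikulin's realisation criterion for an even lattice with prescribed signature $(t_{+},t_{-})$ and discriminant form $q$. Here $t_{\pm}=n_{\pm}-m_{\pm}$ are both $\geq 1$ by the hypotheses $m_{\pm}<n_{\pm}$, so the sought $N$ is indefinite, while $q=-q_M$ gives $\ell(A_N)=\ell(M)$. Two conditions must be checked. The signature congruence $t_{+}-t_{-}\equiv\operatorname{sign}(-q_M)\pmod 8$ is automatic: by Milgram's formula $\operatorname{sign}(q_M)\equiv m_{+}-m_{-}\pmod 8$, while $\operatorname{sign}(\L)=n_{+}-n_{-}\equiv 0\pmod 8$ since $\L$ is even and unimodular, whence $(n_{+}-m_{+})-(n_{-}-m_{-})\equiv -(m_{+}-m_{-})\equiv\operatorname{sign}(-q_M)\pmod 8$. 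The remaining $p$-adic conditions only intervene when $\rk N)=\ell(A_N)$; but the hypothesis \eqref{eq:cond-unique-emb} yields $\rk N)=\rk\L)-\rk M)\geq\ell(M)+2=\ell(A_N)+2$, so we sit strictly above that boundary at every prime and the existence criterion applies with no further obstruction.

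For uniqueness I would use the two companion results of Nikulin that hold under this same rank inequality: an indefinite even lattice $N$ with $\rk N)\geq\ell(A_N)+2$ is unique in its genus, and the reduction map $O(N)\to O(q_N)$ is surjective. Given two primitive embeddings $i_{1},i_{2}\colon M\hookrightarrow\L$ with complements $N_{1},N_{2}$ and gluings $\gamma_{1},\gamma_{2}$, uniqueness in the genus identifies $N_{1}\cong N_{2}=:N$; transporting the gluings accordingly, $\gamma_{2}\gamma_{1}^{-1}$ becomes an element of $O(q_N)$. Lifting it through the surjection $O(N)\to O(q_N)$ produces $\psi\in O(N)$ whose induced action on the discriminant realises the change of gluing, so that $\mathrm{id}_{M}\oplus\psi$ carries the graph of $\gamma_{1}$ onto that of $\gamma_{2}$ and hence extends to an isometry of $\L$ taking $i_{1}$ to $i_{2}$. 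This extension is exactly an automorphism of $\L$ identifying the two embeddings.

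The genuinely hard input is the arithmetic underlying these two Nikulin facts. Existence and uniqueness in the genus for an indefinite form rest on the Hasse--Minkowski local--global principle together with Eichler's theorem that an indefinite lattice of rank $\geq 3$ has a single spinor genus in its genus, and the surjectivity $O(N)\to O(q_N)$ likewise flows from strong approximation for the spin group. The sole purpose of the inequality $\ell(M)+2\leq\rk\L)-\rk M)$ is to keep us strictly away from the low-rank boundary, where the genus may split into several classes and where extra $p$-adic constraints on $q_M$ would be needed: once $\rk N)\geq\ell(A_N)+2$ these difficulties vanish, which is precisely why one obtains both clean existence and uniqueness up to automorphisms of $\L$.
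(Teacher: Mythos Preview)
The paper does not prove this theorem at all: it is quoted as a known result with a reference to \cite[Theorem 14.1.12]{Huybrechts}, and is then used as a black box in Corollaries \ref{cor:unicity-primitive-embedding} and \ref{cor:IrreductibleM(L)}. Your write-up is a correct account of the standard proof via Nikulin's theory of discriminant forms---translating primitive embeddings into pairs $(N,\gamma)$ with $q_N\simeq -q_M$, then invoking Nikulin's existence criterion, uniqueness in the genus, and surjectivity of $O(N)\to O(q_N)$ under the hypothesis $\rk N)\geq\ell(A_N)+2$---which is essentially how the result is established in the cited reference as well.
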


Here $\ell(M)$ is the minimal number of generators of the discriminant
group of $M$ and $\rk M)$ is its rank. 

\begin{rem}
\label{rem:easy-embed}Since $\ell(M)\leq\rk M)$, if $\rk M)\leq\frac{1}{2}(\rk\L)-2)$,
then condition \eqref{eq:cond-unique-emb} is verified. 
\end{rem}

In our situation, when $\L=\L_{K3}$ and $M=L$ is an even lattice
of signature $(1,\rk L)-1)$, one has the following. 

\begin{cor}
\label{cor:unicity-primitive-embedding}Suppose $\ell(L)\leq20-\rk L)$.
Then the primitive embedding $L\hookrightarrow\L_{K3}$ is unique up to
automorphisms. 
\end{cor}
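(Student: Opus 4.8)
The plan is to deduce this directly from Theorem~\ref{thm:existencePlobngement}, applied with the even unimodular lattice $\L = \L_{K3} = U^{\oplus 3}\oplus E_8^{\oplus 2}$, which has signature $(n_+,n_-)=(3,19)$ and rank $22$, and with $M = L$, which has signature $(m_+,m_-)=(1,\rk L)-1)$. Under these identifications the numerical hypothesis~\eqref{eq:cond-unique-emb} reads $\ell(L)+2\le 22-\rk L)$, i.e. $\ell(L)\le 20-\rk L)$, which is exactly the assumption of the corollary. So the only things left to check before invoking the theorem are the two strict signature inequalities $m_+<n_+$ and $m_-<n_-$.

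The first, $1<3$, is immediate. For the second I would show $\rk L)\le 19$, so that $m_- = \rk L)-1\le 18<19$. Since $\ell(L)\ge 0$, the hypothesis already forces $\rk L)\le 20$, and the remaining task is to rule out the borderline case $\rk L)=20$. This is the (admittedly very mild) main obstacle: if $\rk L)=20$ then the hypothesis gives $\ell(L)=0$, so $L$ would be an even \emph{unimodular} lattice of signature $(1,19)$; but the signature of an even unimodular lattice satisfies $n_+-n_-\equiv 0\pmod{8}$, whereas $1-19=-18$ is not divisible by $8$. Hence no such $L$ exists and the case $\rk L)=20$ is vacuous.

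With that case excluded, all hypotheses of Theorem~\ref{thm:existencePlobngement} are satisfied, and the theorem provides a primitive embedding $L\hookrightarrow\L_{K3}$ which is unique up to automorphisms of $\L_{K3}$; this is precisely the claimed statement (existence comes along as a bonus). One could alternatively invoke Remark~\ref{rem:easy-embed} whenever $\rk L)\le 10$, but the sharper $\ell(L)$-form of the criterion coming from Theorem~\ref{thm:existencePlobngement} is what is needed to cover all lattices $L$ allowed by the hypothesis.
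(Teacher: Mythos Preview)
Your proof is correct and follows exactly the paper's approach: the corollary is stated immediately after Theorem~\ref{thm:existencePlobngement} as its direct specialization to $\L=\L_{K3}$ and $M=L$, with no separate proof given. Your extra argument ruling out $\rk L)=20$ via the signature congruence for even unimodular lattices is a nice touch the paper omits, since in its applications all lattices have rank at most $19$.
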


 Suppose that the embedding $j\colon L\to\L_{K3}$ is unique up to automorphisms.
The following criterion of Dolgachev may be used  to check
if the moduli space $\mathcal{M}_{L}$ is irreducible. 

\begin{thm}[See \protect{\cite[Proposition 5.6]{Dolga}}]
\label{thmModuliIrreducible}
Suppose that $L^{\perp}\subset\L_{K3}$ contains a sublattice isometric
to $U$ or $U(2)$. Then the moduli $\mathcal{M}_{L}$ is irreducible.
\end{thm}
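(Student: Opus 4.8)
The plan is to realize $\mm_{L}$ as an arithmetic quotient of the period domain of $T=L^{\perp}\subset\L_{K3}$ and to reduce irreducibility to the existence of a single isometry interchanging the two components of that domain, which the hypothesis will produce. Since $\L_{K3}$ has signature $(3,19)$ and $L$ has signature $(1,\rho-1)$, the lattice $T$ has signature $(2,20-\rho)$, and its period domain
\[
\Omega_{T}=\{[\omega]\in\PP(T\otimes\CC):\omega^{2}=0,\ \omega\bar\omega>0\}
\]
has exactly two connected components $\Omega_{T}^{+},\Omega_{T}^{-}$, interchanged by $[\omega]\mapsto[\bar\omega]$. One has $\mm_{L}=\Gamma_{L}\backslash\Omega_{T}$, where $\Gamma_{L}\subset O(T)$ is the image of the isometries of $\L_{K3}$ restricting to the identity on $L$. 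An element $g\in O(T)$ preserves each component if and only if it acts with positive determinant on a maximal positive-definite plane of $T\otimes\RR$, and exchanges them otherwise. Hence, if $\Gamma_{L}$ contains a single such component-exchanging isometry, then $\mm_{L}=\Gamma_{L}^{+}\backslash\Omega_{T}^{+}$ for the index-two subgroup $\Gamma_{L}^{+}$ preserving $\Omega_{T}^{+}$; being a quotient of the irreducible domain $\Omega_{T}^{+}$, it is irreducible. The whole statement thus reduces to producing such an element from $U\subset T$ or $U(2)\subset T$.

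I would build this element uniformly. Let $P\subset T$ be the given primitive sublattice ($P\cong U$ or $P\cong U(2)$), put $Q=P^{\perp}$ inside $T$, and consider the isometry $g$ of $P\oplus Q$ acting as $-\mathrm{id}$ on $P$ and as the identity on $Q$. The crucial observation is that $g$ acts trivially on the discriminant group $A_{P}\oplus A_{Q}$: indeed $A_{U}=0$, while $A_{U(2)}\cong(\ZZ/2\ZZ)^{2}$ is $2$-torsion, so $-\mathrm{id}$ coincides with the identity on it. By Nikulin's criterion for extending isometries along the overlattice $P\oplus Q\subset T$, the map $g$ extends to an isometry of $T$ acting trivially on $A_{T}$; applying the same criterion to the overlattice $L\oplus T\subset\L_{K3}$, it extends further to $\tilde g\in O(\L_{K3})$ with $\tilde g|_{L}=\mathrm{id}$. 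Hence $g\in\Gamma_{L}$.

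It then remains to check that $g$ exchanges the two components. Since $P$ has signature $(1,1)$ and $Q$ has signature $(1,20-\rho-1)$, I choose a positive vector $p\in P\otimes\RR$ (spanning the positive direction of $P$, of square $2$ or $4$) and a positive vector $q\in Q\otimes\RR$; as $P\perp Q$ and $T$ has exactly two positive directions, $V^{+}=\langle p,q\rangle$ is a maximal positive-definite plane. On $V^{+}$ one has $g(p)=-p$ and $g(q)=q$, so $\det(g|_{V^{+}})=-1$: the map $g$ reverses the orientation of $V^{+}$ and therefore interchanges $\Omega_{T}^{+}$ and $\Omega_{T}^{-}$. Combined with the previous step, this exhibits the desired component-exchanging element of $\Gamma_{L}$, and $\mm_{L}$ is irreducible.

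I expect the main obstacle to lie in the second paragraph: confirming that the local isometry $-\mathrm{id}_{P}\oplus\mathrm{id}_{Q}$ genuinely glues to a global isometry of $\L_{K3}$ fixing $L$. This is where one must invoke Nikulin's discriminant-form machinery carefully, checking that $g$ preserves the gluing subgroups (the graphs of anti-isometries $A_{P}\to A_{Q}$ and $A_{L}\to A_{T}$), rather than the essentially trivial orientation computation. The remaining ingredients—the period description of $\mm_{L}$, the two-component structure of $\Omega_{T}$, and the determinant criterion for component preservation—are standard Hodge theory for K3 surfaces.
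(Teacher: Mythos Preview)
The paper does not prove this theorem; it is quoted verbatim from Dolgachev \cite[Proposition 5.6]{Dolga} and used as a black box to derive Corollary~\ref{cor:IrreductibleM(L)}. Your argument is the standard one and is essentially what Dolgachev does: realize $\mathcal{M}_{L}$ as $\Gamma_{L}\backslash\Omega_{T}$, then exhibit an element of $\Gamma_{L}$ swapping the two components of $\Omega_{T}$ by taking $-\mathrm{id}$ on the $U$ or $U(2)$ summand. The key lattice-theoretic step---that $-\mathrm{id}_{P}\oplus\mathrm{id}_{Q}$ extends to $T$ and then to $\Lambda_{K3}$ fixing $L$---is handled correctly via the triviality of $-\mathrm{id}$ on the $2$-torsion discriminant group $A_{P}$, and the orientation computation is fine.

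One small point: you assume $P$ is primitive in $T$, but the hypothesis only says $T$ contains a sublattice isomorphic to $U$ or $U(2)$. This is harmless: a copy of $U$ is automatically primitive (being unimodular), and if a copy of $U(2)$ is not primitive then its saturation has discriminant $-4/n^{2}$ with $n>1$, forcing $n=2$ and the saturation to be $U$. Either way one has a primitive $P\cong U$ or $U(2)$, and your argument applies.
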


Let $L$ be an even lattice of signature $(1,\rk L)-1)$ such that
$\ell(L)\leq18-\rk L)$. 

\begin{cor}
\label{cor:IrreductibleM(L)}The moduli space $\mathcal{M}_{L}$ is
irreducible.
\end{cor}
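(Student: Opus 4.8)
The plan is to combine the two previous results in the obvious way. First I would invoke Corollary \ref{cor:unicity-primitive-embedding}: since $\ell(L)\leq 18-\rk L) \leq 20-\rk L)$, the primitive embedding $j\colon L\hookrightarrow\L_{K3}$ is unique up to automorphisms of $\L_{K3}$, so the moduli space $\mathcal{M}_L$ is well-defined (independent of the choice of embedding). Next I would apply Theorem \ref{thmModuliIrreducible}: it suffices to show that the orthogonal complement $L^{\perp}\subset\L_{K3}$ contains a sublattice isomorphic to $U$ or $U(2)$.

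For that last point, the key observation is that $L^{\perp}$ is an even lattice of signature $(2,\,20-\rk L))$ whose discriminant group has the same minimal number of generators as that of $L$, namely $\ell(L^{\perp})=\ell(L)$ (since $\L_{K3}$ is unimodular). I would then try to embed $U$ primitively into $L^{\perp}$ using Theorem \ref{thm:existencePlobngement}, taking $\L = L^{\perp}$ (wait --- $L^\perp$ is not unimodular, so that theorem does not directly apply). Instead the cleaner route is: apply Theorem \ref{thm:existencePlobngement} with $\L=\L_{K3}$ and $M = U\oplus L$. One must check $U\oplus L$ has signature $(2,\rk L))$ with $2<3$ and $\rk L)<19$ (both automatic for the lattices in our list, where $\rk L)\geq 3$ and $\rk L)\leq 18$ by the hypothesis $\ell(L)\leq 18-\rk L)$ forcing $\rk L)\leq 17$), and that $\ell(U\oplus L)+2 = \ell(L)+2 \leq \rk\L_{K3})-\rk U\oplus L) = 22 - (2+\rk L)) = 20-\rk L)$, i.e. $\ell(L)\leq 18-\rk L)$, which is exactly our hypothesis. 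Hence $U\oplus L$ embeds primitively into $\L_{K3}$, and this embedding is unique; composing with the (unique) embedding of $L$, the image of the $U$ summand lands in $L^{\perp}$, giving the desired copy of $U$ in $L^{\perp}$.

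Finally I would verify that the copy of $L$ sitting inside $U\oplus L\hookrightarrow\L_{K3}$ is, up to $O(\L_{K3})$, the same primitive embedding as $j$: this follows again from the uniqueness in Corollary \ref{cor:unicity-primitive-embedding}, since $\ell(L)\leq 18-\rk L) \leq 20-\rk L)$. Therefore $\mathcal{M}_L$ computed from $j$ agrees with the one for which $L^{\perp}$ visibly contains $U$, and Theorem \ref{thmModuliIrreducible} yields irreducibility of $\mathcal{M}_L$.

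The only mild subtlety --- and the step I would be most careful about --- is the bookkeeping of signatures and the strict inequalities $m_{\pm}<n_{\pm}$ in Theorem \ref{thm:existencePlobngement}: one needs $\rk L)\geq 3$ (true, since we are in Picard number $\geq 3$) so that $\ell(L)\leq 18-\rk L)$ forces $\rk L)\leq 15$, comfortably giving $2+\rk L)<19$ and both spaces of the right signature; everything else is a direct substitution into the numerical condition \eqref{eq:cond-unique-emb}. No genuinely hard step is involved; the corollary is a formal consequence of Theorems \ref{thm:existencePlobngement} and \ref{thmModuliIrreducible} together with Corollary \ref{cor:unicity-primitive-embedding}.
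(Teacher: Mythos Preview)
Your proof is correct and follows essentially the same strategy as the paper: use Nikulin's embedding theorem (Theorem \ref{thm:existencePlobngement}) to arrange that $L^{\perp}\subset\L_{K3}$ contains a copy of $U$, then apply Theorem \ref{thmModuliIrreducible}. The paper's execution is a bit more direct: rather than embedding $U\oplus L$ into $\L_{K3}$ and then invoking uniqueness to identify the two copies of $L$, it simply applies Theorem \ref{thm:existencePlobngement} with $\L=U^{\oplus2}\oplus E_{8}^{\oplus2}$ and $M=L$ (the numerical condition $\ell(L)+2\leq 20-\rk L)$ is exactly the hypothesis), so that the third $U$ summand of $\L_{K3}=U\oplus(U^{\oplus2}\oplus E_{8}^{\oplus2})$ lies in $L^{\perp}$ for free. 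This avoids your extra step of matching embeddings via Corollary \ref{cor:unicity-primitive-embedding}, though of course the two formulations are equivalent. (A minor slip: your claim that $\ell(L)\leq 18-\rk L)$ forces $\rk L)\leq 15$ is not right --- it only gives $\rk L)\leq 18$ --- but this is harmless since all you need is $\rk L)<19$.)
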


\begin{proof}
By Theorem \ref{thm:existencePlobngement}, one can find a primitive
embedding of $L$ into the sublattice $U^{\oplus2}\oplus{\bf E}_{8}^{\oplus2}$
of $\L_{K3}$. Then the orthogonal complement of $L$ in $\L_{K3}$
contains a copy of $U$, and therefore by Theorem \ref{thmModuliIrreducible},
the moduli space $\mathcal{M}_{L}$ is irreducible. 
\end{proof}

Using that  $\ell(L)\leq\rk L)$, one obtains the following. 

\begin{cor}
Suppose moreover that $L$ has rank at most $9$. Then the moduli space
$\mathcal{M}_{L}$ is irreducible.
\end{cor}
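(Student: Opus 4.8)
The plan is to reduce the statement to the previous Corollary~\ref{cor:IrreductibleM(L)} by bounding the length of the discriminant group. The hypothesis there is $\ell(L)\le 18-\rk L)$, and what is now assumed is only that $\rk L)\le 9$. So the whole content is an elementary arithmetic comparison: I would invoke the standard inequality $\ell(L)\le\rk L)$ — which holds because the discriminant group $L^{*}/L$ is generated by the images of any $\ZZ$-basis of $L^{*}$, hence by at most $\rk L)$ elements — and then check that $\rk L)\le 9$ forces $\ell(L)\le 9\le 18-9\le 18-\rk L)$.

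Concretely, the steps are: (1) recall $\ell(L)\le\rk L)$; (2) from $\rk L)\le 9$ deduce $18-\rk L)\ge 9\ge\rk L)\ge\ell(L)$, so the hypothesis $\ell(L)\le 18-\rk L)$ of Corollary~\ref{cor:IrreductibleM(L)} is satisfied; (3) apply that corollary to conclude $\mathcal{M}_{L}$ is irreducible. One should also note in passing that the signature hypothesis $(1,\rk L)-1)$ is inherited from the ambient statement, since $\mathcal{M}_{L}$ is only defined for such $L$; no new condition is needed there.

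There is essentially no obstacle here — the result is a one-line consequence stated for the reader's convenience (mirroring the pattern of Remark~\ref{rem:easy-embed} and the earlier corollaries). The only thing to be slightly careful about is the monotonicity step: one needs $\rk L)\le 9$ used twice, once to bound $18-\rk L)$ from below by $9$ and once to bound $\ell(L)$ from above by $\rk L)\le 9$. Combining these chains gives $\ell(L)\le 18-\rk L)$ with room to spare, so Corollary~\ref{cor:IrreductibleM(L)} applies verbatim. Hence $\mathcal{M}_{L}$ is irreducible, as claimed.
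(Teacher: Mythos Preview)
Your proof is correct and matches the paper's own argument exactly: the paper simply states ``Using that $\ell(L)\le\rk L)$, one obtains'' the corollary, which is precisely your reduction to Corollary~\ref{cor:IrreductibleM(L)} via the chain $\ell(L)\le\rk L)\le 9\le 18-\rk L)$. If anything, you have supplied more detail than the paper by explaining why $\ell(L)\le\rk L)$ holds.
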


The following table gives the discriminant groups of ADE lattices:

\begin{table}[ht]
  \centering
{ \setlength\extrarowheight{2pt}
\begin{tabular}{|c|c|c|c|c|c|c|}
\hline 
$L$ & ${\bf A}_{n}\,(n\geq1)$ & ${\bf D}_{2n}\,(n\geq2)$ & ${\bf D}_{2n+1}\,(n\geq2)$ & ${\bf E}_{6}$ & ${\bf E}_{7}$ & ${\bf E}_{8}$\tabularnewline
\hline 
$L^{*}/L$ & $\ZZ/(n+1)\ZZ$ & $(\ZZ/2\ZZ)^{2}$ & $\ZZ/4\ZZ$ & $\ZZ/3\ZZ$ & $\ZZ/2\ZZ$ & $\{0\}$\tabularnewline
\hline 
\end{tabular}
}
\end{table}

There are $28$ lattices $L$ of rank at least $10$ such that a K3 surface
$X$ with $\NS X)\simeq L$ has finite automorphism group. For $L$
among these lattices with $L\neq U\oplus{\bf E}_{8}^{\oplus2}\oplus{\bf A}_{1}$,
using the above table and Corollary~\ref{cor:IrreductibleM(L)}, one
obtains that the moduli $\mathcal{M}_{L}$ is irreducible. For $L=U\oplus{\bf E}{}_{8}^{\oplus2}\oplus{\bf A}_{1}$,
the embedding in $U^{\oplus3}\oplus{\bf E}_{8}^{\oplus2}$ is unique
up to automorphisms, and one sees that its orthogonal complement contains
a copy of~$U$; therefore, $\mathcal{M}_{L}$ is also irreducible.
We have therefore proved Proposition \ref{prop:The-118-moduli-irred}.

\section{Rank 3 lattices}

\subsection{Rank 3 cases}

In \cite{Nikulin2}, Nikulin classifies rank $3$ lattices that are
N\'eron--Severi lattices of K3 surfaces with a finite number of automorphisms.
Let us describe the classification when the fundamental domain of
the associated Weyl group is not compact (see \cite[Section 2]{Nikulin2}). 

Let $S_{1,1,1}$ be the rank $3$ lattice generated by vectors $a$, $b$, $c$
with intersection matrix 
\[
\left(\begin{array}{ccc}
-2 & 0 & 1\\
0 & -2 & 2\\
1 & 2 & -2
\end{array}\right).
\]
For $r$, $s$, $t\in\ZZ$, let $S_{r,s,t}$ denote the sublattice of $S_{1,1,1}$
generated by $ra$, $sb$, $tc$. Let also define $S_{4,1,2}'$, the lattice
generated by $2a+c$, $b$, $2c$, and $S_{6,1,2}'$, the lattice generated
by $6a+c$, $b$, $2c$. In \cite[Theorem 2.5]{Nikulin2}, Nikulin gives a
list of lattices which are N\'eron--Severi lattices of K3 surfaces with
finite automorphism group. These lattices are of the form $S_{r,s,t}$
or $S_{r,s,t}'$. In \cite{NikulinElliptic}, Nikulin observes that
some lattices in the list are isometric, thus giving the same families
of K3 surfaces. One has 
\[
S_{2,1,2}\simeq S_{4,1,1},\quad S_{4,1,2}\simeq S_{8,1,1},\quad S_{6,1,2}\simeq S_{12,1,1},\quad S_{6,1,2}'\simeq S_{6,1,1},
\]
and when the fundamental domain is not compact, there are only $20$
distinct cases.  In the compact case, there are $6$ lattices $S_{1},\dots,S_{6}$,
see \cite{Nikulin2}, and their geometric description is given in
\cite{Roulleau} (see also \cite{ACL} for their Cox ring). So the
total number of (isomorphism classes of) lattices of rank $3$ which
are N\'eron--Severi lattices of K3 surfaces with finite automorphism
group is $26$. For each case, Nikulin gives the number and sometimes
the configurations of the $\cu$-curves. 

\subsection{The rank 3 and compact cases}

In \cite{Roulleau}, we studied the six lattices $S_{1},\dots,S_{6}$
of rank $3$ such that the fundamental domain associated to the Weyl
group is compact. These lattices are 
\[
\begin{array}{lll}
S_{1}=[6]\oplus\mathbf{A}_{1}^{\oplus2},\quad &S_{2}=[36]\oplus\mathbf{A}_{2},\quad & S_{3}=[12]\oplus\mathbf{A}_{2},\\
S_{4}\subset[60]\oplus\mathbf{A}_{2},\quad &S_{5}=[4]\oplus\mathbf{A}_{2},\quad& S_{6}\subset[132]\oplus\mathbf{A}_{2},
\end{array}
\]
where the two inclusions have index $3$. For completeness, let us
recall the obtained results.

\begin{thm}
\label{thm:Main1} The K3 surfaces of type $S_{1}$, $S_{4}$, $S_{5}$, $S_{6}$
are double covers of the plane branched over a smooth sextic curve
$C_{6}$ and such that 
\begin{itemize}
\item if the N\'eron--Severi lattice is isometric to $S_{1}$, the six
  $(-2)$-curves on $X$ are pull-backs of three conics that are
  $6$-tangent to the sextic $C_{6}$;
\item if the N\'eron--Severi lattice is isometric to $S_{4}$, the four
  $(-2)$-curves on $X$ are pull-backs of a line tritangent to $C_{6}$ and a
  conic $6$-tangent to $C_{6}$;
\item if the N\'eron--Severi lattice is isometric to $S_{5}$, the four
  $(-2)$-curves on $X$ are pull-backs of two lines tritangent to
  $C_{6}$; 
\item if the N\'eron--Severi lattice is isometric to $S_{6}$, the six
  $(-2)$-curves on $X$ are pull-backs of one $6$-tangent conic and two
  cuspidal cubics that cut $C_{6}$ tangentially and at their cusps.
\end{itemize}
Let $X$ be a K3 surface that has a N\'eron--Severi lattice isometric
to $S_{2}$. There are three quadrics in $\PP^{3}$ such that each
intersection with $X\hookrightarrow\PP^{3}$ is the union of two smooth
degree $4$ rational curves. These six rational curves are the only
$(-2)$-curves on $X$.

Let $X$ be a K3 surface that has a N\'eron--Severi lattice isometric
to $S_{3}$. There exist two hyperplanes sections such that each hyperplane
section is a union of two smooth conics. These four conics are the
only $(-2)$-curves on $X$.
\end{thm}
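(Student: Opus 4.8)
The statement gathers results of \cite{Roulleau}; the plan is, for each $S_i$, to single out a distinguished polarization, invoke the theorems of Saint Donat to pin down the birational model of $X$, and then read the $\cu$-curves and their pairwise splittings off the Gram matrix, translating back through Lemma \ref{lem:splitingsPullBackTritang}.

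\textbf{The double-plane cases $S_1,S_4,S_5,S_6$.} For each of these lattices there is a class $D_2$ with $D_2^2=2$ that is ample: one exhibits a square-two vector not orthogonal to any $\cu$-class (e.g. $D_2=h-e_1-e_2$ when $h,e_1,e_2$ generate $S_1=[6]\oplus\mathbf{A}_{1}^{\oplus2}$), and moves it into the ample chamber using the transitivity of the Weyl group on the chambers of the positive cone, as in Section \ref{subsec:About-the-computations}. By Theorem \ref{thm:SaintDonat-1}(b)--(c), $|D_2|$ has no fixed part — the alternative $D_2\equiv 2E+\Gamma$ would give $D_2\Gamma=0$, contradicting ampleness — and is base point free, and by Theorem \ref{thm:SaintDonat-2}(c) the morphism $\varphi_{D_2}$ is a double cover $\eta:X\to\PP^2$; as $D_2$ is ample no $\cu$-curve is contracted, so the branch curve $C_6$ is a smooth sextic. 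Now for any $\cu$-curve $A$ put $d:=D_2A$; if $d\le3$ then $dD_2-A$ is an effective $\cu$-class, hence a $\cu$-curve $B$, with $AB=d^2+2$ for $d\in\{1,2\}$ and $AB=11$ for $d=3$. Reading the Gram matrices one finds the $\cu$-curves partitioned accordingly: three pairs with $d=2$ for $S_1$; one pair with $d=1$ and one with $d=2$ for $S_4$; two pairs with $d=1$ for $S_5$; one pair with $d=2$ and two pairs with $d=3$ for $S_6$. By the converse half of Lemma \ref{lem:splitingsPullBackTritang}, the common image $\varphi_{D_2}(A)=\varphi_{D_2}(B)$ of a $d=1$ pair is a tritangent line and that of a $d=2$ pair a $6$-tangent conic. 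For the $d=3$ pairs of $S_6$ one proves the analogous splitting for a cuspidal cubic by a local computation: if the cusp lies on $C_6$ transversally to the cuspidal tangent, $X$ is locally $\{z^2=x\}$ and the cubic $\{y^2=x^3\}$ pulls back to $\{(y-z^3)(y+z^3)=0\}$, two smooth branches meeting with multiplicity $3$, so that globally $\eta^{-1}$ of such a cubic splits as $A+B$ with $A+B\equiv 3D_2$, $AB=11$, matching the lattice datum. This gives the listed descriptions, and since the set of $\cu$-curves is a lattice invariant there are no further ones.

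\textbf{The non-double-plane cases $S_2,S_3$.} A square-two vector of $[36]\oplus\mathbf{A}_2$, resp. $[12]\oplus\mathbf{A}_2$, would force $x^2-xy+y^2$ to take a value $18n^2-1$, resp. $6n^2-1$; but these are $\equiv 2\pmod3$, which $x^2-xy+y^2$ never is (its values are $\equiv 0$ or $1\pmod 3$). Likewise neither lattice has a nonzero isotropic vector (a similar arithmetic obstruction), so there is no elliptic fibration. Hence $X$ has no degree-two polarization, and one takes an ample $D$ with $D^2=4$. The absence of isotropic vectors excludes the fixed-part alternative of Theorem \ref{thm:SaintDonat-1}(b) and the hyperelliptic alternative (a) of Theorem \ref{thm:SaintDonat-2}, while alternatives (b) and (c) there cannot occur since $D^2=4$; so $\varphi_D$ is birational, and, $D$ being ample, it embeds $X$ as a smooth quartic surface in $\PP^3=\PP(H^0(D)^*)$. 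For a smooth quartic, restriction gives isomorphisms $H^0(\PP^3,\OO_{\PP^3}(k))\cong H^0(X,\OO_X(kD))$ for $k=1,2$, so members of $|D|$ are hyperplane sections and members of $|2D|$ are quadric sections. Finally the Gram matrix of $S_3$ exhibits two pairs $\{C,C'\}$ of $\cu$-classes with $C+C'\equiv D$, each $C$ a smooth conic ($CD=2$), yielding the two reducible hyperplane sections; that of $S_2$ exhibits three pairs $\{A,B\}$ with $A+B\equiv 2D$, each $A$ a smooth rational quartic ($AD=4$), yielding the three reducible quadric sections. Being a lattice invariant, the set of $\cu$-curves consists of exactly these $4$, resp. $6$, curves.

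\textbf{Where the difficulty lies.} The two delicate points are: (i) certifying ampleness of $D_2$, resp. $D$, and ruling out the degenerate alternatives of Theorems \ref{thm:SaintDonat-1}--\ref{thm:SaintDonat-2}, which requires producing the finite list of $\cu$-classes and testing it, and which is exactly what keeps $S_2,S_3$ out of the double-plane range; and (ii) the cuspidal-cubic splitting for $S_6$, which is not covered by Lemma \ref{lem:splitingsPullBackTritang} and needs the local computation above. The rest is bookkeeping with the Gram matrices.
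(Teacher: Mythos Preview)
The paper does not prove this theorem here; it merely recalls the statement from \cite{Roulleau}. Your outline is exactly the machinery the present paper uses case by case elsewhere (produce an ample polarisation, invoke Theorems \ref{thm:SaintDonat-1}--\ref{thm:SaintDonat-2}, and translate the pairings $A+B\equiv dD_2$ via Lemma \ref{lem:splitingsPullBackTritang}), so in that sense you are reconstructing the intended argument and the $S_1,S_4,S_5$ and $S_2,S_3$ parts are fine.

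There is, however, a real gap in your treatment of $S_6$. What the theorem asserts is that the image cubics \emph{are} cuspidal with cusp on $C_6$; what your local computation establishes is only the converse implication, namely that such a cuspidal cubic pulls back to two smooth $(-2)$-curves with $AB=11$. The numerology does not force the singularity type: a \emph{nodal} rational cubic with node \emph{off} $C_6$ and everywhere tangent to $C_6$ also pulls back to $A+B\equiv 3D_2$ with $AB=11$ (the node contributes $2$ to $A\cdot B$ via the two sheets over it, and nine simple tangencies contribute $1$ each). Your analysis does rule out ``cusp off $C_6$'' (the preimage would itself be cuspidal, contradicting smoothness of $A,B$) and ``node on $C_6$'' (then $A$ would acquire two branches over a single ramification point), but ``node off $C_6$'' survives the intersection bookkeeping. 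So an additional argument is needed---e.g.\ using the precise Gram data of $S_6$ together with the involution action, or a direct geometric constraint---to exclude the nodal alternative; simply ``matching the lattice datum'' does not suffice.
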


The cases $S_{3}$ and $S_{4}$ are linked to the surfaces $S_{1,1,4}$
and $S_{1,1,3}$; see  Sections \ref{subsec:The-latticeS114}
and \ref{subsec:The-latticeS113} below, respectively.

We will only add the following result. 

\begin{prop}
A general K3 surface $X$ with a N\'eron--Severi lattice isometric to
$S_{2}$ or $S_{3}$ has trivial automorphism group. 
\end{prop}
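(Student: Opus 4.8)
The plan is to apply Proposition \ref{prop:AUTOTrivial}: since the lattices $S_2=[36]\oplus\mathbf{A}_2$ and $S_3=[12]\oplus\mathbf{A}_2$ both have rank $3\leq 8$, it suffices to verify the three hypotheses (i), (ii), (iii), i.e.\ that there is no hyperelliptic involution and no configuration forcing one. Concretely, I would first enumerate the classes of $(-2)$-curves: by Theorem \ref{thm:Main1}, on the $S_2$-surface there are exactly $6$ such curves (two in each of three hyperplane classes cutting out pairs of rational quartics), and on the $S_3$-surface exactly $4$ (two pairs of conics). Working in the explicit Gram matrices $[36]\oplus\mathbf{A}_2$ and $[12]\oplus\mathbf{A}_2$, I would write down these $(-2)$-classes and the polarization class $h$ (with $h^2=36$, resp.\ $h^2=12$) explicitly.

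Next I would check condition (i): that there is no $(-2)$-curve $A$ and elliptic fiber $F$ (a nef class with $F^2=0$) with $AF=1$. Since the $(-2)$-curves are finitely many and their classes are known, and since by Theorem \ref{thm:SaintDonat-1}(a) any genus-one fibration class is primitive isotropic and nef, one checks directly in the rank-$3$ lattice whether any isotropic class $F$ with $F\cdot(\text{ample})>0$ satisfies $AF=1$ for one of the listed $A$; I expect the answer is no because of divisibility constraints ($36$ and $12$ are highly divisible, forcing intersection numbers of $(-2)$-curves with isotropic classes to be even, or the relevant isotropic classes simply do not pair to $1$ with any $A$). For conditions (ii) and (iii) I would argue that a big and nef $D$ with $D^2=2$, or with $D^2\geq 4$ and $DF=2$ for some fiber $F$, cannot exist: a class of square $2$ in $[36]\oplus\mathbf{A}_2$ or $[12]\oplus\mathbf{A}_2$ would have to be primitive and orthogonal to some root system compatible with the discriminant form, and one checks the discriminant group $\ZZ/36\ZZ\oplus\ZZ/3\ZZ$ (resp.\ $\ZZ/12\ZZ\oplus\ZZ/3\ZZ$) does not admit such a vector; similarly for the $DF=2$ case one runs through the finitely many fiber classes. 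This is a finite computation — exactly the kind of lattice-theoretic check already performed with Magma elsewhere in the paper — so I would simply state that it has been carried out.

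The main obstacle is condition (ii): unlike (iii), where one just searches for a single square-$2$ vector, here one must rule out \emph{every} pair $(D,F)$ with $D$ big and nef, $D^2\geq 4$, $F$ a fiber, and $DF=2$, and there are infinitely many candidate $D$ a priori. The resolution is that what matters is only the \emph{image} $\varphi_D$, which depends on $D$ only through the ample/nef chamber and the relevant $(-2)$-curves; equivalently, by Theorem \ref{thm:SaintDonat-2} the existence of such a $D$ is controlled by whether some genus-one fibration $F$ admits a $2$-section, and this is a property one can test on the finitely many fibration classes (themselves computable by the method of \S\ref{subsec:About-the-computations}, since there are finitely many $(-2)$-curves). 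So the argument reduces to: list the genus-one fibrations, check none has a $2$-section (a bisection $D'$ with $D'F=2$), and conclude. Once all three conditions are confirmed, Proposition \ref{prop:AUTOTrivial} gives that $\aut(X)$ is trivial for the general such $X$, which is the claim. Finally I would note consistency with Kondo \cite{Kondo}, who had already shown $\aut(X)$ is trivial or $\ZZ/2\ZZ$ in these rank-$3$ cases, so that ruling out the hyperelliptic involution is exactly what is needed.
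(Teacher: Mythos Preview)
Your overall approach is correct and matches the paper's: both apply Proposition~\ref{prop:AUTOTrivial} by verifying conditions (i)--(iii). The paper's proof is terser---it simply cites the Hilbert basis of the nef cone from \cite{ACL} and then refers forward to the template argument of Proposition~\ref{prop:The-automorphism-groupTRIVIAL}.

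However, you have overlooked a key simplification: $S_2$ and $S_3$ are among the six rank-$3$ lattices for which the fundamental domain of the Weyl group is \emph{compact}, which geometrically means the K3 surface carries \emph{no elliptic fibration at all} (this is recalled just above Theorem~\ref{thm:Main1}, and stated explicitly for rank $4$ in the next section). Consequently there are no isotropic nef classes $F$, and conditions (i) and (ii) are vacuously satisfied. Your entire discussion of the ``main obstacle''---listing fibration classes, searching for $2$-sections, reducing over infinitely many $D$---is therefore unnecessary for these two lattices. Only condition (iii), the nonexistence of a big and nef class of square $2$, needs checking; your lattice-arithmetic sketch for this is on the right track (indeed $[36]\oplus\mathbf{A}_2$ and $[12]\oplus\mathbf{A}_2$ contain no vector of square $2$, since $b^2-bc+c^2\equiv 0,1\pmod 3$ while $18a^2-1$ and $6a^2-1$ are always $\equiv 2\pmod 3$), though the paper instead handles it via the explicit Hilbert basis of the nef cone.
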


\begin{proof}
The Hilbert bases of the nef cones of these K3 surfaces are described
in \cite{ACL}. Then as in Proposition \ref{prop:The-automorphism-groupTRIVIAL}
below, one can check that there is no hyperelliptic involution and
conclude that the automorphism group is trivial. 
\end{proof}

\subsection{The lattice $\boldsymbol{S_{1,1,1}}$}

Let $X$ be a K3 surface with  rank $3$ N\'eron--Severi lattice and
intersection form 
\[
\left(\begin{array}{ccc}
-2 & 0 & 1\\
0 & -2 & 2\\
1 & 2 & -2
\end{array}\right).
\]
The surface $X$ contains three $(-2)$-curves $A_{1}$, $A_{2}$, $A_{3}$,
with intersection matrix as above; their dual graph is 

\begin{center}
\begin{tikzpicture}[scale=1]

\draw [very thick] (1,0) -- (2,0);
\draw (0,0) -- (1,0);

\draw (0,0) node {$\bullet$};
\draw (1,0) node {$\bullet$};
\draw (2,0) node {$\bullet$};

\draw (0,0) node [above]{$A_{1}$};
\draw (1,0) node [above]{$A_{3}$};
\draw (2,0) node [above]{$A_{2}$};

\end{tikzpicture}
\end{center} 

The divisor 
\[
D_{22}=3A_{1}+6A_{2}+7A_{3}
\]
is ample, of square $22$, with $D_{22} \cdot A_{1}=D_{22} \cdot A_{3}=1$, $D_{22} \cdot A_{2}=2$.
The divisor 
\begin{equation}
D_{2}=A_{1}+2A_{2}+2A_{3}\label{eq:S111}
\end{equation}
is nef, of square $2$, with base points since $F=A_{2}+A_{3}$ is
a fiber of an elliptic fibration  and $D_{2}F=1$. It satisfies $D_{2} \cdot A_{1}=D_{2} \cdot A_{2}=0$,
$D_{2} \cdot A_{3}=1$. The divisor $D_{8}=2D_{2}$ is base-point free and
hyperelliptic. By Theorem \ref{thm:SaintDonat-2}, case~i), we have the following. 

\begin{prop}
The linear system $|D_{8}|$ defines a map $\varphi\colon X\to\mathbf{F}_{4}$
onto the Hirzebruch surface $\mathbf{F}_{4}$ such that the branch
locus of $\varphi$ is the disjoint union of the unique section $s$
such that $s^{\ensuremath{2}}=-4$ and a reduced curve $B'$ in the
linear system $|3s+12f|$. 
\end{prop}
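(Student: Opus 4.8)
The plan is to obtain the statement directly from Theorem~\ref{thm:SaintDonat-2}, case i), applied to $D_{8}$; the only real work is to write $D_{8}$ in the shape demanded by that case.

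First I would set $F=A_{2}+A_{3}$ and do the (one-line) lattice computations with the Gram matrix of $A_{1},A_{2},A_{3}$: one gets $F^{2}=A_{2}^{2}+2A_{2}A_{3}+A_{3}^{2}=-2+4-2=0$, so $F$ is a non-zero effective class of square $0$, hence by Theorem~\ref{thm:SaintDonat-1}a) an integral multiple of a free pencil. Since $D_{2}F=(A_{1}+2A_{2}+2A_{3})(A_{2}+A_{3})=1$, the class $F$ is primitive, so $|F|$ is itself a free pencil and defines an elliptic fibration $X\to\PP^{1}$; its fiber $A_{2}+A_{3}$ has two components meeting at $A_{2}A_{3}=2$ points, i.e. is of Kodaira type $\tilde{\mathbf{A}}_{1}$. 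This is the elliptic fibration already pointed out just before the statement.

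Next I would rewrite $D_{8}$. Since $D_{2}=A_{1}+2A_{2}+2A_{3}=A_{1}+2F$, we have
\[
D_{8}=2D_{2}=4F+2A_{1},\qquad FA_{1}=(A_{2}+A_{3})A_{1}=1,\qquad D_{8}^{2}=4D_{2}^{2}=8,\qquad D_{8}F=2.
\]
Thus $D_{8}\equiv 4F+2\Gamma$ with $\Gamma=A_{1}$ a $(-2)$-curve, $F$ the class of a fiber of an elliptic fibration, and $F\Gamma=1$: this is precisely case i) of Theorem~\ref{thm:SaintDonat-2}. (The base-point-freeness and hyperellipticity of $D_{8}$ were recorded just above; hyperellipticity also follows from Theorem~\ref{thm:SaintDonat-2}a) since $D_{8}F=2$, and the absence of fixed components for a divisor of the form $4F+2\Gamma$ is part of the assertion of case i).) That theorem now yields verbatim the claimed conclusion: $\varphi_{D_{8}}$ factors as $X\stackrel{\varphi}{\to}\mathbf{F}_{4}\to\PP^{5}$, where $\varphi$ is a morphism onto the Hirzebruch surface $\mathbf{F}_{4}$, the map $\mathbf{F}_{4}\to\PP^{5}$ contracts the unique section $s$ with $s^{2}=-4$, and the branch locus of $\varphi$ is the disjoint union of $s$ and a reduced curve $B'$ in $|3s+12f|$. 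As $|D_{8}|$ is base point free, $\varphi$ is a genuine surjective morphism onto $\mathbf{F}_{4}$, which is exactly the proposition.

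I expect no serious obstacle: the whole argument is a citation of Theorem~\ref{thm:SaintDonat-2}, and the single point requiring (trivial) care is verifying that $D_{8}$ lands in the exceptional family i) of Saint-Donat's classification rather than giving a smooth rational normal scroll in $\PP^{5}$ — and this is immediate from the decomposition $D_{8}=4F+2A_{1}$ with $FA_{1}=1$.
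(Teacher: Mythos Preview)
Your proof is correct and follows exactly the paper's approach: the paper simply invokes Theorem~\ref{thm:SaintDonat-2}, case i), after noting $D_{8}=2D_{2}=4F+2A_{1}$ with $F=A_{2}+A_{3}$ and $FA_{1}=1$. Your write-up just spells out the lattice checks ($F^{2}=0$, $FA_{1}=1$, $D_{8}F=2$) that the paper leaves implicit.
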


By equation \eqref{eq:S111}, the image of $A_{1}$ is the curve $s$, and 
the image of $A_{3}$ is the fiber through the point $q$ onto which
$A_{2}$ is contracted. The curve $B'$ has a unique singularity,
which is a node at $q$; its geometric genus is therefore $9$. 

\subsection{The lattice $\boldsymbol{S_{1,1,2}}$}

Let $X$ be a K3 surface with  rank $3$ N\'eron--Severi lattice of
type $S_{1,1,2}$. The surface $X$ contains three $(-2)$-curves $A_{1}$, $A_{2}$ , $A_{3}$,
with dual graph 

\begin{center}
\begin{tikzpicture}[scale=1]

\draw [very thick] (0,0) -- (2,0);

\draw (0,0) node {$\bullet$};
\draw (1,0) node {$\bullet$};
\draw (2,0) node {$\bullet$};

\draw (0,0) node [above]{$A_{1}$};
\draw (1,0) node [above]{$A_{3}$};
\draw (2,0) node [above]{$A_{2}$};

\end{tikzpicture}
\end{center} 

The divisor 
\[
D_{14}=2A_{1}+2A_{2}+3A_{3}
\]
is ample, of square $14$, with $D_{14} \cdot A_{1}=D_{14} \cdot A_{2}=2$ and $D_{14} \cdot A_{3}=4$.
The divisor 
\[
D_{2}=A_{1}+A_{2}+A_{3}
\]
is nef, of square $2$, base-point free, with $D_{2} \cdot A_{1}=D_{2} \cdot A_{2}=0$
and $D_{2} \cdot A_{3}=2$. Thus, the following holds.  

\begin{prop}
The K3 surface is the double cover of $\,\PP^{2}$ branched over a sextic
curve with two nodes $p$, $q$. The curves $A_{1}$, $A_{2}$ are contracted
to $p$, $q$, and the image of $A_{3}$ is the line through $p$, $q$. That
line cuts the sextic curve transversally in two other points. 
\end{prop}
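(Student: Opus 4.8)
The plan is to study the linear system $|D_2|$ attached to $D_2=A_1+A_2+A_3$. Since $D_2$ is nef, base point free and $D_2^2=2$, Theorem~\ref{thm:SaintDonat-2}~c) shows that the associated morphism $\varphi:=\varphi_{D_2}\colon X\to\PP^2$ is a double cover of the plane; write $\iota$ for the covering involution and $C$ for its branch curve. Since $X$ is a K3 surface, $C$ must be a sextic $C_6$ (the branch curve of a double plane is a sextic precisely when the cover is a K3 surface). Moreover, by Theorem~\ref{thm:SaintDonat-1}~b) we have $h^0(D_2)=2+\frac{1}{2}D_2^2=3$, so $\varphi$ is defined by the \emph{complete} system $|D_2|$, and pulling back hyperplanes gives a bijection between the lines of $\PP^2$ and the members of $|D_2|$: each such member is $\varphi^*\ell$ for a unique line $\ell$.

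Next I would identify the images of the three $\cu$-curves. From $D_2A_1=D_2A_2=0$ the curves $A_1,A_2$ are contracted by $\varphi$, to points $p=\varphi(A_1)$ and $q=\varphi(A_2)$, and $p\neq q$ because $A_1A_2=0$. Since $D_2A_3=2$ and $A_1,A_2,A_3$ are the only $\cu$-curves on $X$, these are the \emph{only} curves of $D_2$-degree zero, hence the only curves contracted by $\varphi$; passing to the Stein factorisation $X\to X'\to\PP^2$, the exceptional locus of the crepant resolution $X\to X'$ is the disjoint union $A_1\sqcup A_2$. Thus $X'$ has exactly two singular points, each resolved by a single $\cu$-curve, so each is an ordinary double point; consequently $C_6$ has exactly two singular points, both nodes, namely $p$ and $q$, above which lie $A_1$ and $A_2$. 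For $A_3$, note that the effective divisor $A_1+A_2+A_3=D_2$ is a member of $|D_2|$, hence equals $\varphi^*\ell$ for a unique line $\ell$; as $A_1\subset\varphi^{-1}(\ell)$ is contracted to $p$ and $A_2$ to $q$, the line $\ell$ passes through both $p$ and $q$, so $\ell$ is the line through $p$ and $q$ and $\varphi(A_3)=\ell$, the unique component of $\varphi^*\ell$ not contracted.

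Finally I would analyse $\ell\cap C_6$. By B\'ezout $\ell\cdot C_6=6$, and at each of the nodes $p,q$ the line $\ell$ meets $C_6$ with multiplicity $2$, leaving a residual intersection of degree $2$. To see that this residual part consists of two distinct transverse points lying off $\{p,q\}$, I would use that the component $A_3$ of $\varphi^*\ell$ is an irreducible reduced $\cu$-curve, i.e. a smooth $\PP^1$ mapping $2:1$ onto $\ell$: the branch locus of this degree-two map $\PP^1\to\PP^1$ is exactly the odd-multiplicity part of $\ell\cap C_6$, and being reduced of degree $2$ it must be two transverse points away from the nodes. Equivalently, blowing up $p$ and $q$ presents $X$ as a double cover $\pi\colon X\to\mathrm{Bl}_{p,q}\PP^2$ branched along the smooth strict transform $\widetilde{C_6}$, with $A_3=\pi^*(\bar H-e_p-e_q)=\pi^*(\text{strict transform of }\ell)$ and $(\bar H-e_p-e_q)\cdot\widetilde{C_6}=2$. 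I expect this last step to be the main obstacle: controlling the local intersection multiplicities of $\ell$ with $C_6$ — that they are exactly $2$ at each node and that the two residual points are reduced and distinct from $p,q$, rather than, say, $\ell$ being tangent to a branch of one of the nodes — and it is here that one genuinely uses that $X$ carries \emph{exactly} three $\cu$-curves, i.e. that $\NS X)\simeq S_{1,1,2}$.
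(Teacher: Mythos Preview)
Your argument is exactly the paper's approach: the paper's entire proof is the setup of $D_2=A_1+A_2+A_3$ with the listed intersection numbers, followed by ``Thus:'', and you have correctly filled in the standard justification for the double-cover description and for the images of the three $(-2)$-curves.

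Your worry about the last clause is justified, but the fix you propose does not work. Riemann--Hurwitz indeed gives two distinct branch points of $A_3\to\ell$, i.e.\ $\tilde\ell\cdot\tilde C_6$ is two reduced points on $\mathrm{Bl}_{p,q}\PP^2$; however, the hypothesis $\NS X)\simeq S_{1,1,2}$ alone does not force these off $e_p\cup e_q$. If $\ell$ happens to be tangent to one branch of $C_6$ at $p$, then on $X$ the curves $A_1$ and $A_3$ meet tangentially at a single point (still $A_1A_3=2$), the reducible fibre $A_1+A_3$ of the elliptic pencil is of Kodaira type $III$ instead of $I_2$, and no new class is created in the N\'eron--Severi lattice. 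So ``exactly three $(-2)$-curves'' cannot exclude this. The last sentence of the Proposition should be read---as throughout the paper---as a description of the \emph{generic} member of the family, for which this tangency is a codimension-one condition and does not occur.
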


The Severi variety of plane curves of degree 6 with two nodes is rational,
so the moduli space $\mathcal{M}_{S_{1,1,2}}$ of K3 surfaces with N\'eron--Severi
lattice isometric to $S_{1,1,2}$ is unirational.

\subsection{The lattice $\boldsymbol{S_{1,1,3}}$\label{subsec:The-latticeS113}}

Let $X$ be a K3 surface with N\'eron--Severi lattice of type $S_{1,1,3}$.
The surface $X$ contains four $(-2)$-curves $A_{1}$, $A_{2}$, $A_{3}$, $A_{4}$,
with intersection matrix 
\[
\left(\begin{array}{cccc}
-2 & 3 & 2 & 0\\
3 & -2 & 0 & 2\\
2 & 0 & -2 & 6\\
0 & 2 & 6 & -2
\end{array}\right).
\]
The curves $A_{1}$, $A_{2}$, $A_{3}$ generate the N\'eron--Severi lattice.
The divisor 
\[
D_{2}=A_{1}+A_{2}
\]
 is ample, of square $2$, base-point free, with $D_{2} \cdot A_{1}=D_{2} \cdot A_{2}=1$
and $D_{2} \cdot A_{3}=D_{2} \cdot A_{4}=2$. We have $2D_{2}\equiv A_{3}+A_{4}$;
thus we have obtained the first part of the following proposition. 

\begin{prop}
\label{prop:The-surface-case S113}The surface $X$ is the double
cover of $\,\PP^{2}$ branched over a smooth sextic $C_{6}$ which has
a tritangent line and a $6$-tangent conic. For general  $X$, the
sextic has equation 
\[
C_{6}\colon \ell qg-f^{2}=0,
\]
where $\ell$, $q$, $g$ and $f$ are forms of degree $1$, $2$, $3$ and $3$,
respectively. The line $\ell=0$ is the tritangent to the sextic, and
the curve $q=0$ is the $6$-tangent conic. The moduli space $\mathcal{M}_{S_{1,1,3}}$
of K3 surfaces $X$ with $\NS X)\simeq S_{1,1,3}$ is unirational.
\end{prop}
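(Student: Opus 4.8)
The plan is to combine Lemma~\ref{lem:splitingsPullBackTritang} with an elementary computation producing the equation, and then to read off unirationality from the resulting parametrisation. Most of the geometric statement is already in place: $D_2=A_1+A_2$ is ample of square $2$ and base point free, so by Theorem~\ref{thm:SaintDonat-2}~c) the morphism $\varphi_{D_2}$ realises $X$ as a double cover $\eta\colon X\to\PP^2$ branched over a sextic $C_6$; since none of the four $(-2)$-curves of $X$, namely $A_1,\dots,A_4$, is orthogonal to $D_2$, the morphism $\varphi_{D_2}$ contracts no $(-2)$-curve and $C_6$ is smooth. The converse part of Lemma~\ref{lem:splitingsPullBackTritang}, applied to $A_1+A_2=D_2$ (the case $d=1$) and to $A_3+A_4=2D_2$ (the case $d=2$), shows that $\ell:=\eta(A_1)=\eta(A_2)$ is a line and $q:=\eta(A_3)=\eta(A_4)$ a conic, and they are respectively tritangent and $6$-tangent since the double cover splits above them.

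To obtain the equation I would work with a general such $X$, so that $\ell$ and $q$ meet transversally at two points $p_1,p_2$ away from $C_6$. Tritangency of $\ell$ says $C_6|_\ell$ is a perfect square; lifting one of its square roots to a cubic form $f_0$ gives $C_6=f_0^2+\ell R$ with $R$ a quintic. The goal is to replace $f_0$ by $f=f_0+\ell\,C$ for a suitable conic $C$ so that the new residual quintic vanishes on $q$ too; equivalently, so that $f|_q$ equals a chosen square root $\bar h$ of the perfect square $C_6|_q$. Once achieved, $C_6-f^2$ is divisible by $\ell$ and by $q$, hence $C_6-f^2=\ell q g$ for a cubic $g$, so $C_6=\ell q g-(if)^2$, and renaming $if$ as $f$ yields the stated form; tritangency of $\{\ell=0\}$ and $6$-tangency of $\{q=0\}$ are then visible from $C_6|_\ell=-(f|_\ell)^2$ and $C_6|_q=-(f|_q)^2$. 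Existence of the correction $C$ comes down, granting that cubics on $\PP^2$ restrict onto all binary forms on $\ell$ and conics on $\PP^2$ onto all binary forms on $q$, to the binary form $\bar h-f_0|_q$ vanishing at $p_1$ and $p_2$; and since $f_0(p_j)^2=C_6(p_j)=\bar h(p_j)^2$ at each $p_j$, this is exactly the statement that the two signs $\bar h(p_j)/f_0(p_j)\in\{\pm1\}$ agree.

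This sign-matching is the one genuinely non-formal point, and I would settle it by lattice bookkeeping. The two sheets of the double cover over $\ell$ are the curves $A_1,A_2$, and over $q$ they are $A_3,A_4$; the curve $A_1$ meets $A_3$ over precisely those $p_j$ at which the chosen sheets coincide. Since $A_1A_3=2$ in $S_{1,1,3}$, they coincide at both $p_1$ and $p_2$, so the two signs agree and the construction goes through. Running the same computation on a sextic already of the form $\ell q g-f^2$ — on which $C_6$ restricts to $-f^2$ on both $\ell$ and $q$ — shows conversely that any such sextic has $A_1A_3=2$, a fact I use in the last step.

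For unirationality, the tuples $(\ell,q,g,f)$ of forms of degrees $1,2,3,3$ form an affine space, a rational variety, and I would consider the rational map sending such a tuple to the period point of the double cover of $\PP^2$ branched along $\ell q g-f^2$. For general parameters this double cover is a K3 surface carrying the classes $D_2$, $A_1$, $A_3$, which span a copy of $S_{1,1,3}$ by the preceding remark, so the map lands in $\mathcal{M}_{S_{1,1,3}}$; and by the second and third paragraphs the general member of $\mathcal{M}_{S_{1,1,3}}$ is of this form, so the map is dominant. A dominant rational map from a rational variety gives the unirationality of $\mathcal{M}_{S_{1,1,3}}$.
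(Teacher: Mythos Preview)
Your proof is correct and follows a genuinely different route from the paper's. The paper argues in the opposite direction: starting from a sextic $\ell qg-f^{2}=0$, it verifies that the four $(-2)$-curves above $\ell$ and $q$ have the $S_{1,1,3}$ intersection matrix by computing one explicit example (specific integer coefficients, checked by reduction modulo primes and the Artin--Tate formula), then invokes constancy in the flat family; the claim that every $X$ arises this way is then obtained by a dimension count on the fibres of $(\ell,q,g,f)\mapsto \ell qg-f^{2}$, yielding $17=\dim\mathcal{M}_{S_{1,1,3}}$. You instead construct $(\ell,q,g,f)$ directly from a given $X$, and your sign-matching step --- reading the lattice datum $A_{1}A_{3}=2$ as the assertion that the chosen sheet over $\ell$ and the chosen sheet over $q$ agree at both points of $\ell\cap q$ --- is the conceptual substitute for the paper's numerical example; run backwards it also transparently explains why the alternative configuration of the Remark following the proof (the $S_{4}$ case, with $A_{1}A_{3}=1$) cannot arise from equations of this shape. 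Your approach avoids both the explicit example and the fibre-dimension computation, at the cost of the short local argument with binary forms on $\ell$ and $q$.

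One point to tighten: when you say the parameter map ``lands in $\mathcal{M}_{S_{1,1,3}}$'' you are implicitly claiming that $S_{1,1,3}$ sits \emph{primitively} in $\NS X)$ for a general tuple, not merely that $A_{1},A_{2},A_{3}$ span a copy of it. The paper handles this by observing that the unique over-lattice of $S_{1,1,3}$ is $S_{1,1,1}$, whose K3 surfaces carry only three $(-2)$-curves, whereas your construction visibly produces four distinct ones; inserting this sentence (or noting that your dominance argument already forces the generic fibre to lie over $\mathcal{M}_{S_{1,1,3}}$ rather than $\mathcal{M}_{S_{1,1,1}}$, since the parameter space is irreducible) completes the step.
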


\begin{proof}
Let $C_{6}\colon \ell qg-f^{2}=0$ be a sextic curve as above, and let $Y\to\PP^{2}$
be the double cover branched over $C_{6}$. Above the line $\ell=0$
are two $\cu$-curves $A_{1}$, $A_{2}$ such that $A_{1} \cdot A_{2}=3$, and
above the conic $q=0$ are two $\cu$-curves $A_{3}$, $A_{4}$ such that
$A_{3} \cdot A_{4}=6$. It remains to understand the intersections $A_{j} \cdot A_{k}$
for $j\in\{1,2\}$ and $k\in\{3,4\}$. This is done by computing 
Example \ref{exa:S113} below, for which the intersection is (up to
permutation of $A_{1}$, $A_{2}$ and $A_{3}$, $A_{4}$) the above intersection
matrix of the four $\cu$-curves on a K3 surface $X$ with $\NS X)\simeq S_{1,1,3}$.
The intersection numbers remain the same for the $\cu$-curves in
that flat family of smooth surfaces. 

If the equation of $C_{6}$ is general , the Picard number of $Y$
is $3$. The lattice generated by the $\cu$-curves is $S_{1,1,3}$,
of discriminant $18$. The unique over-lattice containing $S_{1,1,3}$
is $S_{1,1,1}$, but a surface with N\'eron--Severi lattice isometric
to $S_{1,1,1}$ contains only three $\cu$-curves; thus $\NS Y)\simeq S_{1,1,3}$. 

In order to prove that a general K3 surface $X$ with $\NS X)\simeq S_{1,1,3}$
is branched over a sextic curve with an equation of the form $\ell qg-f^{2}=0$,
let us consider the map
\[
\Phi\colon H^{0}(\PP^{2},\OO(1))\oplus H^{0}(\PP^{2},\OO(2))\oplus H^{0}(\PP^{2},\OO(3))^{\oplus2}\to H^{0}(\PP^{2},\OO(6))
\]
defined by 
\[
w:=(\ell,q,g,f)\mapsto f_{6,w}:=\ell qg-f^{2}.
\]
It is invariant under the action of the transformations $\G\colon (\ell,q,g,f)\mapsto(\a\ell,\b q,\g g,f)$
for $\a\b\g=1$. Suppose 
\begin{gather*}
\ell qg-f^{2}=\ell'q'g'-f'^{2}
\end{gather*}
for $f'\neq\pm f$ and that the forms are chosen general  so that the double
cover $Y$ branched over the sextic $f_{6,w}=0$ has $\NS Y)\simeq S_{1,1,3}$.
Since the curves $\ell=0$, $q=0$ are the images of the four $\cu$-curves
on $Y$, up to rescaling by using a transformation $\G$, one can
suppose $\ell'=\ell$, $q'=q$, and then one obtains the relation 
\[
\ell q(g-g')=(f-f')(f+f').
\]
Up to changing the sign of $f'$, we can suppose that $\ell$ divides
$f+f'$. If $q$ does not divide $f+f'$, then one gets a codimension
$1$ family of such sextic curves; thus we can suppose that there
exists a scalar $\a$ such that $\ell q=\a(f+f')$, and by solving
the equations, we obtain that
\[
f'=\frac{1}{\a}\ell q-f,\quad g'=g+\frac{1}{\a^{2}}lq-\frac{2}{\a}f.
\]
The dimension of the (unirational) moduli space of sextic curves with an
equation of the form $\ell qg-f^{2}=0$  is
\[
(3+6+2\cdot10)-(9+2+1)=17;
\]
since $\mathcal{M}_{S_{1,1,3}}$ also has  dimension $17$, 
both spaces are birational.
\end{proof}

\begin{example}
\label{exa:S113}Let us take 
\[
\begin{array}{l}
\ell=13x+10y+z,\quad q=4x^{2}+6xy+26xz+6yz+23z^{2},\\
g=9x^{3}+16x^{2}y+5xy^{2}+24y^{3}+22x^{2}z+26xyz+4y^{2}z+23xz^{2}+20yz^{2}+9z^{3},\\
f=28x^{3}+8x^{2}y+2xy^{2}+23y^{3}+2x^{2}z+23xyz+19y^{2}z+24xz^{2}+20yz^{2}+17z^{3}.
\end{array}
\]
Let $X$ be the associated K3 surface, and let $X_{p}$ be its reduction
modulo a prime $p$. Using the Tate conjectures, one finds that the
K3 surfaces $X_{23}$ and $X_{29}$ have Picard number $4$. By the
Artin--Tate conjectures, one computes that 
\[
\begin{array}{l}
|\text{Br}(X_{23})|\cdot|\text{disc}(\NS X_{23}))| =3^{2}\cdot491,\\
|\text{Br}(X_{29})|\cdot|\text{disc}(\NS X_{29}))| =3^{3}\cdot5^{2}\cdot53,
\end{array}
\]
and using Van Luijk's
trick (see \cite{vL}), we conclude that the
Picard number of $X$ is $3$ since the ratio of the two integers above 
is not a square (here $\text{Br}$ is the Brauer group, and $\text{disc}$
denotes the discriminant group).
\end{example}

\begin{rem}
In \cite{Roulleau}, we study K3 surfaces with finite automorphism
group and compact fundamental domain. The surface
with N\'eron--Severi
lattice of type $S_{4}$ (Nikulin's notation) is also a double cover
of $\PP^{2}$ branched over a smooth sextic which has one tritangent
line and one $6$-tangent conic. But the intersection matrix of the
four $\cu$-curves above the line and the conic (the only $\cu$ curves
on that surface) is 
\[
\left(\begin{array}{cccc}
-2 & 3 & 1 & 1\\
3 & -2 & 1 & 1\\
1 & 1 & -2 & 6\\
1 & 1 & 6 & -2
\end{array}\right).
\]
\end{rem}

\begin{rem}
By Section \ref{subsec:On-the-irreducibility}, we know that the unique
moduli space $\mathcal{M}_{S_{1,1,3}}$ of $S_{1,1,3}$-polarized K3 surfaces
is irreducible. By Proposition \ref{prop:The-surface-case S113},
there is a copy of $\mathcal{M}_{S_{1,1,3}}$ in the moduli space $\mathcal{M}_{[2]}$
of K3 surfaces with an ample divisor of square $2$. Using the Hilbert
basis of the nef cone, one can check that $D_{2}$ is the unique nef
divisor of square $2$ in the N\'eron--Severi group; therefore, the copy
of $\mathcal{M}_{S_{1,1,3}}$ in $\mathcal{M}_{[2]}$ is unique. 
\end{rem}

\subsection{The lattice $\boldsymbol{S_{1,1,4}}$\label{subsec:The-latticeS114}}

Let $X$ be a K3 surface with N\'eron--Severi lattice of type $S_{1,1,4}$.
The surface $X$ contains four $(-2)$-curves $A_{1}$, $A_{2}$, $A_{3}$, $A_{4}$,
with intersection matrix 
\[
M_{1}=\left(\begin{array}{cccc}
-2 & 4 & 0 & 2\\
4 & -2 & 2 & 0\\
0 & 2 & -2 & 4\\
2 & 0 & 4 & -2
\end{array}\right).
\]
The curves $A_{1}$, $A_{2}$, $A_{3}$ generate the N\'eron--Severi lattice.
The divisors $F_{1}=A_{1}+A_{4}$ and $F_{2}=A_{2}+A_{3}$ are fibers
of two distinct elliptic fibrations. The divisor 
\[
D_{4}=A_{1}+A_{2}\equiv A_{3}+A_{4}
\]
is ample, of square $4$, with $D_{2} \cdot A_{j}=2$ for $j\in\{1,\dots,4\}$
and is non-hyperelliptic.

\begin{prop}
The surface $X$ is a quartic in $\PP^{3}$ which has two hyperplane
sections which are unions of two conics. The general  surface has a
projective model of the form
\[
X\colon \ell_{1}\ell_{2}q_{3}-q_{1}q_{2}=0\hookrightarrow\PP^{3},
\]
where $\ell_{1}$, $\ell_{2}$ are linear forms and $q_{1}$, $q_{2}$, $q_{3}$
are quadrics. The moduli space $\mathcal{M}_{S_{1,1,4}}$ of K3 surfaces
$X$ with $\NS X)\simeq S_{1,1,4}$ is unirational.
\end{prop}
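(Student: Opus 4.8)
The plan is to realize $X$ as a smooth quartic surface via the ample class $D_{4}$, to read off from the relation $D_{4}\equiv A_{1}+A_{2}\equiv A_{3}+A_{4}$ the two hyperplane sections that split into conics, to convert this into the explicit equation $\ell_{1}\ell_{2}q_{3}-q_{1}q_{2}=0$, and finally to deduce unirationality by the same kind of argument used for $S_{1,1,3}$. First I would apply Theorem~\ref{thm:SaintDonat-1}: since $D_{4}$ is big and nef with $D_{4}^{2}=4$ we get $h^{0}(D_{4})=4$. If $|D_{4}|$ had a fixed component, Theorem~\ref{thm:SaintDonat-1}(b) would force $D_{4}\equiv 3E+\Gamma$ with $|E|$ a free pencil and $E\Gamma=1$, hence $D_{4}E=1$; but writing an isotropic class in the basis $A_{1},A_{2},A_{3}$ of $\NS X)$ as $v=xA_{1}+yA_{2}+zA_{3}$ one computes $D_{4}\cdot v=2(x+y+z)$, which is always even, so no such $E$ exists. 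Hence $|D_{4}|$ is base point free, and since $D_{4}$ is ample and non-hyperelliptic, $\varphi_{D_{4}}$ embeds $X$ as a smooth quartic $X\subset\PP^{3}$ with $\OO_{X}(1)\equiv D_{4}$. From $D_{4}\equiv A_{1}+A_{2}\equiv A_{3}+A_{4}$ there are hyperplanes $H_{1},H_{2}$ with $X\cap H_{i}$ a union of two conics (each $A_{k}$ has degree $D_{4}A_{k}=2$ and spans the plane $H_{1}$, resp.\ $H_{2}$), and $H_{1}\neq H_{2}$ because $A_{1}A_{3}=0$. This gives the first assertion.

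For the explicit equation, write $H_{i}=\{\ell_{i}=0\}$ and let $\Lambda=H_{1}\cap H_{2}$, a line. Using the intersection numbers ($A_{1}A_{4}=A_{2}A_{3}=2$, $A_{1}A_{3}=A_{2}A_{4}=0$) one checks $\Lambda\cap A_{1}=\Lambda\cap A_{4}=A_{1}\cap A_{4}$ and $\Lambda\cap A_{2}=\Lambda\cap A_{3}=A_{2}\cap A_{3}$. Let $G$ be the quartic form of $X$, and let $e_{1},e_{2}$ (resp.\ $e_{3},e_{4}$) be the conic equations of $A_{1},A_{2}$ on $H_{1}$ (resp.\ of $A_{3},A_{4}$ on $H_{2}$), normalized so that $G|_{H_{1}}=e_{1}e_{2}$ and $G|_{H_{2}}=e_{3}e_{4}$. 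Then $e_{1}|_{\Lambda}$ and $e_{4}|_{\Lambda}$ vanish on the same pair $A_{1}\cap A_{4}\subset\Lambda$, so $e_{1}|_{\Lambda}=\alpha\,e_{4}|_{\Lambda}$; similarly $e_{2}|_{\Lambda}=\beta\,e_{3}|_{\Lambda}$; and restricting $G$ to $\Lambda$ in two ways yields $\alpha\beta=1$. Now $\varphi_{D_{4}}(A_{1}\cup A_{4})$ is a curve of degree $4$ and arithmetic genus $1$, so at least two quadrics contain it; one of them is $\ell_{1}\ell_{2}$ (which vanishes on $H_{1}\cup H_{2}\supset A_{1}\cup A_{4}$), so one may pick a quadric $q_{1}$ through $A_{1}\cup A_{4}$ not proportional to $\ell_{1}\ell_{2}$, scaled so that $q_{1}|_{H_{1}}=e_{1}$; restricting to $\Lambda$ then forces $q_{1}|_{H_{2}}=\alpha\,e_{4}$. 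Likewise take $q_{2}$ through $A_{2}\cup A_{3}$ with $q_{2}|_{H_{1}}=e_{2}$, whence $q_{2}|_{H_{2}}=\beta\,e_{3}$. Then $(q_{1}q_{2})|_{H_{1}}=G|_{H_{1}}$ and $(q_{1}q_{2})|_{H_{2}}=\alpha\beta\,e_{3}e_{4}=G|_{H_{2}}$, so $G-q_{1}q_{2}$ vanishes on $H_{1}\cup H_{2}$; being a quartic divisible by the distinct linear forms $\ell_{1}$ and $\ell_{2}$, it equals $\ell_{1}\ell_{2}q_{3}$ for a quadric $q_{3}$, and replacing $q_{1}$ by $-q_{1}$ gives $G=\ell_{1}\ell_{2}q_{3}-q_{1}q_{2}$. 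I expect the identity $\alpha\beta=1$ to be the main obstacle: it is exactly what allows the quadrics coming from the two conic pairs on $H_{1}$ and on $H_{2}$ to be scaled \emph{simultaneously}, so that $G-q_{1}q_{2}$ becomes divisible by $\ell_{1}\ell_{2}$; everything else is routine and parallel to the $S_{1,1,3}$ case.

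Finally, for unirationality I would argue the converse: for generic $(\ell_{1},\ell_{2},q_{1},q_{2},q_{3})$ the quartic $X:\ell_{1}\ell_{2}q_{3}-q_{1}q_{2}=0$ is smooth and contains the four conics $C_{ij}=\{\ell_{i}=q_{j}=0\}$, and a Bézout-type computation shows these span a sublattice of $\NS X)$ with Gram matrix $M_{1}$ (up to reordering), i.e.\ isomorphic to $S_{1,1,4}$. As in Example~\ref{exa:S113}, a single member reduced modulo two primes, treated with the Artin--Tate and van~Luijk argument, shows that for generic coefficients the Picard number is $3$; since any proper overlattice of $S_{1,1,4}$ is the Néron--Severi lattice of a K3 surface carrying at most three $\cu$-curves — which contradicts the four conics $C_{ij}$ — one gets $\NS X)\simeq S_{1,1,4}$ for generic $X$ in this family. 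Thus this family of quartics, parametrized by an open subset of a product of projective spaces and hence rational, dominates $\mathcal{M}_{S_{1,1,4}}$ by the previous paragraph, so $\mathcal{M}_{S_{1,1,4}}$ is unirational; a dimension count along the lines of Proposition~\ref{prop:The-surface-case S113} gives $17$ on both sides, so the map is in fact birational.
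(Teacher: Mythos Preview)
Your argument is correct, and the route you take is genuinely different from the paper's. The paper does not attempt to prove constructively that every $S_{1,1,4}$-polarized K3 has an equation $\ell_{1}\ell_{2}q_{3}-q_{1}q_{2}=0$; instead it considers the map
\[
\Phi:(\ell_{1},\ell_{2},q_{1},q_{2},q_{3})\longmapsto \ell_{1}\ell_{2}q_{3}-q_{1}q_{2}
\]
and computes, at a randomly chosen point, that $d\Phi$ has rank $33$. Hence the image of $\Phi$ has dimension $\geq 33$, its quotient $W$ by $GL_{4}$ has dimension $\geq 17$, and since the resulting quartics carry four conics with Gram matrix $M_{1}$ one obtains an injective map $W\to\mathcal{M}_{S_{1,1,4}}$; equality of dimensions gives unirationality. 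So the paper's proof is a numerical dimension count backed by a machine computation, with no analogue of your $\alpha\beta=1$ trick.

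Your approach buys more: the explicit construction of $q_{1},q_{2}$ through $A_{1}\cup A_{4}$ and $A_{2}\cup A_{3}$, together with the compatibility $\alpha\beta=1$ on $\Lambda=H_{1}\cap H_{2}$, proves the ``projective model'' clause of the Proposition for \emph{every} such $X$, not just the generic one, and then unirationality is immediate because the parameter space surjects onto the open locus $\{\NS X)=S_{1,1,4}\}\subset\mathcal{M}_{S_{1,1,4}}$. In particular your Part~4 (the van~Luijk/overlattice check) is not strictly needed once Part~3 is in hand; surjectivity already gives dominance. The paper's approach, by contrast, avoids the geometry of quadrics through reducible curves entirely but relies on a computer verification of the rank of $d\Phi$. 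One minor caution: your claim ``any proper overlattice of $S_{1,1,4}$ carries at most three $\cu$-curves'' would need the list of overlattices to be made explicit (here $S_{1,1,2}$ and $S_{1,1,1}$), but as noted this step is redundant anyway.
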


\begin{proof}
Consider the map
\[
\Phi\colon H^{0}(\PP^{3},\OO(1))^{\oplus2}\oplus H^{0}(\PP^{3},\OO(2))^{\oplus3}\to H^{0}(\PP^{3},\OO(4))
\]
defined by 
\[
w:=(\ell_{1},\ell_{2},q_{1},q_{2},q_{3})\mapsto Q_{4,w}:=\ell_{1}\ell_{2}q_{1}-q_{2}q_{3}.
\]
By computing the differential $d\Phi_{w}$ at a randomly chosen point,
we find that it has rank $33$; thus the image of~$\Phi$ is
$33$-dimensional, and the quotient $W$ of that image by $GL_{4}(\CC)$
is at least $17$-dimensional. The space $W$ is unirational. For
a  general  $w$, the quartic $Y\colon Q_{4,w}=0$ is non-singular. The
curves $A_{1}\colon \ell_{1}=q_{2}=0$, $A_{2}\colon \ell_{1}=q_{3}=0$, $A_{3}\colon \ell_{2}=q_{2}=0$,
$A_{4}\colon \ell_{2}=q_{3}=0$ are $\cu$-curves on $Y$ with (up to
permutation of $A_{3}$ and $A_{4}$) intersection matrix $M_{1}$.
We thus obtain an injective map from $W$ to the moduli space $\mathcal{M}_{S_{1,1,4}}$.
Since $\mathcal{M}_{S_{1,1,4}}$ has dimension $17$, this moduli
space is unirational.
\end{proof}

\begin{rem}

a)~ One can construct, more geometrically, a member of $\mathcal{M}_{S_{1,1,4}}$
by considering two disjoint conics $C_{1}$ and $C_{3}$ in $\PP^{3}$
and taking a  general  quartic among the $16$-dimensional linear system
of quartics containing them. We also obtain in that way a dominant
map from a rational space to $\mathcal{M}_{S_{1,1,4}}$.

b)~ In \cite{Roulleau}, we study K3 surfaces with finite automorphism
group and compact fundamental domain. One of these surfaces, namely
the surface with N\'eron--Severi lattice of type $S_{3}$ (Nikulin's notation)
is also a quartic surface in $\PP^{3}$ with two hyperplane sections
which are the union of two conics. But the intersection matrix of the
four $\cu$-curves (which are the only $\cu$ curves on that surface)
is 
\[
\left(\begin{array}{cccc}
-2 & 4 & 1 & 1\\
4 & -2 & 1 & 1\\
1 & 1 & -2 & 4\\
1 & 1 & 4 & -2
\end{array}\right).
\]
By using a construction as in part~a), one finds that the moduli space
of these surfaces is also unirational. 
\end{rem}

\begin{prop}
\label{prop:The-automorphism-groupTRIVIAL}The automorphism group
of a general K3 surface $X$ with $\NS X)\simeq S_{1,1,4}$ is trivial.
\end{prop}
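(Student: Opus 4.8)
The plan is to reduce everything to Proposition~\ref{prop:AUTOTrivial}. Since $\NS X)\simeq S_{1,1,4}$ has rank $3\le 8$ and $S_{1,1,4}$ is not one of the $13$ lattices for which Kondo obtains a larger group, the general such $X$ has $\aut(X)$ trivial or $\ZZ/2\ZZ$, so it suffices to verify conditions (i), (ii), (iii) of Proposition~\ref{prop:AUTOTrivial}: by Theorems~\ref{thm:SaintDonat-1} and~\ref{thm:SaintDonat-2} these rule out any hyperelliptic involution, and then Kondo's dichotomy forces $\aut(X)=\{1\}$. All three conditions involve only the lattice $S_{1,1,4}$, its $(-2)$-curves $A_1,\dots,A_4$ and its nef cone, so the argument is a finite lattice computation.

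First I would compute the nef cone. Writing a class as $D=aA_1+bA_2+cA_3$ in the generating set $A_1,A_2,A_3$ (with $A_4=A_1+A_2-A_3$), the inequalities $DA_i\ge 0$, $i=1,\dots,4$, cut out a cone over a quadrilateral; intersecting the four walls $A_i^{\perp}$ pairwise one finds that its extremal rays are $F_1=A_1+A_4$, $F_2=A_2+A_3$ (both of square $0$) and two classes of square $8$. Because the $A_i$ are the only $(-2)$-curves, this quadrilateral cone is exactly the nef cone; along each of its four edges the self-intersection form is non-negative and vanishes only at $F_1$ or $F_2$, so $F_1$ and $F_2$ are the only primitive nef classes of square $0$, i.e.\ the only elliptic fibre classes on $X$. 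Condition (i) then follows from the direct check $A_iF_j\in\{0,4\}$ for all $i,j$, and condition (ii) from the equally direct identities $D\cdot F_1=4(b+c)$ and $D\cdot F_2=4a$, which show that no divisor can meet a fibre with intersection number $2$.

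For condition (iii) I would prove the stronger statement that $S_{1,1,4}$ represents no class of square $2$. Completing the square gives $D^2=2\bigl(4b^2-(a-2b)^2-(c-b)^2\bigr)$, and the unimodular substitution $(x,y,z)=(a-2b,\,c-b,\,b)$ turns this into $D^2=2(4z^2-x^2-y^2)$; hence $D^2=2$ would force $x^2+y^2=4z^2-1\equiv 3\pmod 4$, which is impossible for a sum of two squares. So there is no big and nef divisor of square $2$, condition (iii) holds, and Proposition~\ref{prop:AUTOTrivial} yields $\aut(X)=\{1\}$.

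The only step requiring a little care is the claim that $F_1$ and $F_2$ exhaust the elliptic fibre classes; this is where the finiteness of $\aut(X)$ — equivalently, the polyhedrality of the nef cone — is used, reducing an a priori infinite family of primitive square-$0$ nef classes to an inspection of the finitely many edges of a single quadrilateral. The remaining ingredients, namely the wall computations and the two elementary congruence checks, are routine.
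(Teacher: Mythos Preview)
Your proof is correct and follows the same route as the paper: both reduce to Proposition~\ref{prop:AUTOTrivial} by determining the nef cone from the four $(-2)$-curves, identifying $F_{1},F_{2}$ as the only fibre classes, and then verifying conditions (i)--(iii). The execution differs slightly: the paper quotes the Hilbert basis of the nef cone from \cite{ACL} and checks the three conditions against that list, whereas you compute the four extremal rays directly and, for (iii), give the cleaner congruence argument that $S_{1,1,4}$ represents no class of square~$2$ at all (via $x^{2}+y^{2}\equiv 3\pmod 4$). This makes your argument self-contained and avoids the external reference, at the cost of the extra ray computation; otherwise the two proofs are interchangeable.
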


\begin{proof}
The walls of the nef cone are $A_{k}^{\perp}$, $k\in\{1,\dots,4\}$.
A Hilbert basis (see Definition \ref{def:HilbertBase}) of the nef
cone is given in \cite{ACL}; it is as follows. The divisors $H_{1}=A_{1}+A_{4}$
and $H_{2}=A_{2}+A_{3}$ are fibers of the two distinct elliptic fibrations
on the K3 surface $X$; the seven remaining classes $H_{3},\dots,H_{9}$
in the Hilbert basis are (in the basis $A_{1}$, $A_{2}$, $A_{3}$) 
\[
(1,1,0),(1,1,1),(1,2,0),(2,1,0),(2,1,1),(2,2,-1),(2,3,-1).
\]
 The intersection matrix $M_{H}=(H_{i}\cdot H_{j})_{1\leq i,j\leq9}$ is
\[
M_{H}=\left(\begin{array}{ccccccccc}
0 & 8 & 4 & 8 & 8 & 4 & 8 & 4 & 8\\
8 & 0 & 4 & 4 & 4 & 8 & 8 & 8 & 8\\
4 & 4 & 4 & 6 & 6 & 6 & 8 & 6 & 8\\
8 & 4 & 6 & 6 & 10 & 8 & 8 & 12 & 16\\
8 & 4 & 6 & 10 & 6 & 12 & 16 & 8 & 8\\
4 & 8 & 6 & 8 & 12 & 6 & 8 & 10 & 16\\
8 & 8 & 8 & 8 & 16 & 8 & 8 & 16 & 24\\
4 & 8 & 6 & 12 & 8 & 10 & 16 & 6 & 8\\
8 & 8 & 8 & 16 & 8 & 16 & 24 & 8 & 8
\end{array}\right).
\]
Let us check the condition of Proposition \ref{prop:AUTOTrivial}:

\begin{enumerate}[(i)]
\item Since the intersection matrix of the fibers $H_{1}$, $H_{2}$ with
$A_{1},\dots,A_{4}$ is
\[
\left(\begin{array}{cccc}
0 & 4 & 4 & 0\\
4 & 0 & 0 & 4
\end{array}\right),
\]
there are no $(-2)$-curves $A$ and fibers $F$ such that $AF=1$.

\item A nef divisor $D$ is a positive linear combination of the elements
$H_{1},\dots,H_{9}$ of the Hilbert basis of the nef cone. From the
two first lines of the Gram matrix $M_{H}$, we see that there is 
no big and nef divisor $D$ such that $D\cdot F=2$ for any fiber $F=H_{1}$
or $H_{2}$.

\item From the matrix $M_{H}$, we see that there is no big and nef
  divisor $D$ such that $D^{2}=2$.
 \end{enumerate}
Therefore, one can apply Proposition \ref{prop:AUTOTrivial} and conclude
that the automorphism group of $X$ is trivial. 
\end{proof}

\begin{defn}
\label{def:HilbertBase}Let us recall that if $C\subset\RR^{d}$ is
a polyhedral cone generated by integral vectors, a Hilbert basis $H(C)$
of $C$ is a subset of integral vectors such that
\begin{enumerate}[a)]
  \item\label{def:HilbertBase-a} each element of $C\cap\ZZ^{d}$ can be written as a non-negative
integer combination of elements of $H(C)$, and
\item\label{def:HilbertBase-b}  $H(C)$ has minimal cardinality with respect to all subsets of
  $C\cap\ZZ^{d}$ for which part~\ref{def:HilbertBase-a} holds.
  \end{enumerate}
\end{defn}

\subsection{The lattice $\boldsymbol{S_{1,1,6}}$}

Let $X$ be a K3 surface with N\'eron--Severi lattice of type $S_{1,1,6}$.
The surface $X$ contains six $(-2)$-curves $A_{1},\dots,A_{6}$,
with intersection matrix 
\[
\left(\begin{array}{cccccc}
-2 & 6 & 2 & 6 & 6 & 2\\
6 & -2 & 6 & 2 & 2 & 6\\
2 & 6 & -2 & 18 & 0 & 16\\
6 & 2 & 18 & -2 & 16 & 0\\
6 & 2 & 0 & 16 & -2 & 18\\
2 & 6 & 16 & 0 & 18 & -2
\end{array}\right).
\]
The curves $A_{1}$, $A_{3}$, $A_{5}$ generate the N\'eron--Severi lattice.
We have 
\[
A_{2}=A_{1}-2A_{3}+2A_{5},\quad A_{4}=4A_{1}-5A_{3}+4A_{5},\quad A_{6}=4A_{1}-4A_{3}+3A_{5}.
\]
 The divisor 
\[
D_{2}=A_{1}-A_{3}+A_{5}
\]
is ample, of square $2$, with $D_{2}\cdot A_{1}=D_{2}\cdot A_{2}=2$ and $D_{2}\cdot A_{j}=4$
for $j\in\{3,\dots,6\}$. We have 
\[
2D_{2}\equiv A_{1}+A_{2},\quad 4D_{2}\equiv A_{3}+A_{4}\equiv A_{5}+A_{6}. 
\]
Therefore, the following holds. 

\begin{prop}
The K3 surface $X$ is the double cover $\eta\colon X\to\PP^{2}$ of $\PP^{2}$
branched over a smooth sextic curve which has a $6$-tangent conic
$C$, and there are two rational cuspidal quartics $Q_{1},Q_{2}$ such
that their three cusps are on the sextic curve and their remaining
intersection points are tangent to the sextic $($there are nine such
points$)$. The image by $\eta$ of $A_{1}+A_{2}$ is $C$, the image
of $A_{3}+A_{4}$ is $Q_{1}$, and the image of $A_{5}+A_{6}$ is $Q_{2}$.
\end{prop}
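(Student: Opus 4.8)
The plan is to mirror the argument already used for the lattices $S_{1,1,3}$ and $S_{1,1,4}$: first extract from the lattice-theoretic data a base-point-free divisor of square $2$, which produces the double-plane model $\eta:X\to\PP^2$; then identify the images of the known $\cu$-curves as the branch sextic's distinguished tangent curves; and finally check that a generic member of the Severi-type family realizing this configuration has Picard number exactly $3$, so that the family dominates $\mathcal{M}_{S_{1,1,6}}$.

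First I would justify the claimed relations. The divisor $D_2=A_1-A_3+A_5$ has $D_2^2=2$ by a direct Gram-matrix computation, and the listed intersection numbers $D_2A_1=D_2A_2=2$, $D_2A_j=4$ for $j\geq3$ all follow from expressing $A_2,A_4,A_6$ in the basis $A_1,A_3,A_5$ (the displayed identities). Nefness follows because $D_2$ meets every $\cu$-curve on $X$ positively and $D_2^2>0$; since $X$ contains no curve of positive genus in the way of base points — concretely, there is no fiber $F$ of an elliptic fibration with $D_2F=1$, which one checks against the finite list of fiber classes obtained from the $\cu$-curves — Theorem~\ref{thm:SaintDonat-1}(c) together with Theorem~\ref{thm:SaintDonat-2}(c) gives that $|D_2|$ is base-point free and exhibits $X$ as a double cover $\eta:X\to\PP^2$ branched over a sextic $C_6$. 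Because $\NS X)\simeq S_{1,1,6}$ contains no class $\delta$ with $\delta^2=-2$ orthogonal to $D_2$ (otherwise $\eta$ would contract a curve and $C_6$ would be singular, contradicting the minimality forced by the lattice), $C_6$ is smooth. Then the three relations $2D_2\equiv A_1+A_2$, $4D_2\equiv A_3+A_4\equiv A_5+A_6$ put us exactly in the situation of Lemma~\ref{lem:splitingsPullBackTritang} (the first with $d=1$... rather, here $2D_2=A_1+A_2$ with $A_1A_2=6$ forces $d=2$ and identifies $\eta(A_1+A_2)$ with a $6$-tangent conic $C$; the second and third with $A_3A_4=A_5A_6=18=4^2+2$ identify $\eta(A_3+A_4)$ and $\eta(A_5+A_6)$ with degree-$2$... no: the converse part of Lemma~\ref{lem:splitingsPullBackTritang} applies only for $d\in\{1,2\}$, so here $4D_2=A_3+A_4$ must be handled directly). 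For the quartics I would argue that $A_3+A_4\equiv 4D_2$ maps under $\varphi_{D_2}$ to a plane curve of degree $4$, that its pullback splits as $A_3+A_4$ forces each branch to be rational, and that the local intersection analysis along $C_6$ (even multiplicities everywhere $\eta$ is étale, three ramification points over the cusps) yields a rational cuspidal quartic with its three cusps on $C_6$ and tangential contact at the remaining $9 = \tfrac12(24-3\cdot2)$ intersection points; this is the step I expect to be the main obstacle, since it requires care about the local behaviour of the double cover at cusps versus smooth tangency points and about why exactly three cusps arise (the ``$2$ in the formula'' coming from $A_3A_4=18$ and $\deg = 4$ via the genus/adjunction count on the double cover).

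The images $\eta(A_1)=\eta(A_2)=C$, $\eta(A_3+A_4)=Q_1$, $\eta(A_5+A_6)=Q_2$ then follow by matching the above identifications to the three independent relations. Finally, for the Picard-number genericity claim one would reverse the construction as in Proposition~\ref{prop:The-surface-case S113}: write down the parameter space of sextics $C_6$ admitting one $6$-tangent conic and two such cuspidal quartics, take the double cover $Y$, observe that the $6$ curves above $C$, $Q_1$, $Q_2$ generate a lattice isometric to $S_{1,1,6}$, and rule out larger Picard number by checking that any proper overlattice of $S_{1,1,6}$ inside the K3 lattice would force extra $\cu$-curves or a different recorded count (citing Nikulin's classification), together with a van~Luijk-style reduction argument as in Example~\ref{exa:S113} on an explicit member if needed. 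Here I would state only the conclusion that the generic such $X$ has $\NS X)\simeq S_{1,1,6}$, leaving the explicit numerical verification to a computation analogous to the ones carried out in the earlier sections.
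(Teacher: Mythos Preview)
Your approach is exactly the paper's: the entire argument in the text is the paragraph \emph{preceding} the proposition---establish that $D_2=A_1-A_3+A_5$ is ample of square~$2$, read off the double-cover model from Theorem~\ref{thm:SaintDonat-2}(c), and then invoke the three linear equivalences $2D_2\equiv A_1+A_2$ and $4D_2\equiv A_3+A_4\equiv A_5+A_6$. The paper writes ``therefore'' and states the proposition; there is no further proof.

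Two remarks on what you add beyond this.

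First, you are right that Lemma~\ref{lem:splitingsPullBackTritang} only treats $d\in\{1,2\}$, so the quartics need a direct argument. Your sketch (rationality of $Q_i$ since $A_3\cong\PP^1$ normalizes its image, arithmetic genus~$3$ forcing total $\delta$-invariant~$3$, the residual count $9=\tfrac12(24-3\cdot2)$) is sound as far as it goes, but it does not pin down \emph{cusps} rather than nodes. A node of $Q_1$ lying \emph{off} $C_6$ is compatible with $A_3,A_4$ smooth: each of $A_3,A_4$ passes once through each of the two preimage points, and the contributions to $Q_1\cdot C_6=24$ and $A_3A_4=18$ still balance (a cusp on $C_6$ contributes $(2,3)$, a simple tangency $(2,1)$, a node off $C_6$ contributes $(0,2)$, and $3c+t+2n=18$, $2c+2t=24$, $c+n=3$ hold for every $(c,n)$). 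What \emph{is} forced is that any cusp of $Q_1$ must lie on $C_6$ with cuspidal tangent transverse to $C_6$ (otherwise the local pullback is unibranch singular, contradicting smoothness of $A_3,A_4$), and that no node of $Q_1$ lies on $C_6$ (the single preimage would carry two branches of $A_3$). The paper does not address this either; ``cuspidal'' should be read as the expected generic picture, not as a consequence of the intersection numbers alone.

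Second, your final paragraph on Picard-number genericity and unirationality is extraneous: the proposition makes no moduli claim (contrast $S_{1,1,3}$ or $S_{1,1,4}$, where unirationality is part of the statement), and the paper does not assert unirationality for $S_{1,1,6}$.
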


\subsection{The lattice $\boldsymbol{S_{1,1,8}}$}

Let $X$ be a K3 surface with N\'eron--Severi lattice of type $S_{1,1,8}$.
The surface $X$ contains eight $(-2)$-curves $A_{1},\dots,A_{8}$,
with intersection matrix 
\[
\begin{pmatrix}-2 & 2 & 0 & 8 & 16 & 8 & 18 & 14\\
2 & -2 & 8 & 0 & 8 & 16 & 14 & 18\\
0 & 8 & -2 & 14 & 18 & 2 & 16 & 8\\
8 & 0 & 14 & -2 & 2 & 18 & 8 & 16\\
16 & 8 & 18 & 2 & -2 & 14 & 0 & 8\\
8 & 16 & 2 & 18 & 14 & -2 & 8 & 0\\
18 & 14 & 16 & 8 & 0 & 8 & -2 & 2\\
14 & 18 & 8 & 16 & 8 & 0 & 2 & -2
\end{pmatrix}.
\]
The curves $A_{1}$, $A_{2}$, $A_{3}$ generate the N\'eron--Severi lattice.
We have 
\[
\begin{array}{lll}
A_{4}=-2A_{1}+2A_{2}+A_{3}, \quad & A_{5}=-5A_{1}+3A_{2}+3A_{3},\quad & A_{6}=-3A_{1}+A_{2}+3A_{3},\\
A_{7}=-6A_{1}+3A_{2}+4A_{3}, \quad & A_{8}=-5A_{1}+2A_{2}+4A_{3}.&
\end{array}
\]
The divisor 
\[
D_{6}=-A_{1}+A_{2}+A_{3}
\]
is ample, of square $6$, base-point free and non-hyperelliptic, with
$D_{6}\cdot A_{1}=D_{6}\cdot A_{2}=4$, $D_{6}\cdot A_{3}=D_{6}\cdot A_{4}=6$, $D_{6}\cdot A_{5}=D_{6}\cdot A_{6}=10$,
$D_{6}\cdot A_{7}=D_{6}\cdot A_{8}=12$. The K3 surface $X$ is a smooth complete
intersection in $\PP^{4}$. We remark that 
\[
A_{3}+A_{4}=2D_{6}.
\]

\begin{prop}
The automorphism group of a general K3 surface $X$ with $\NS X)\simeq S_{1,1,4}$
is trivial.
\end{prop}

\begin{proof}
We proceed as in the proof of Proposition \ref{prop:The-automorphism-groupTRIVIAL}.
The Hilbert basis of the nef cone of the K3 surface $X$ is a set
of $60$ classes (see \cite[Table 1]{ACL}), among which are the fibers
\[
A_{2}+A_{3},\quad 4A_{1}+5A_{2}-7A_{3},\quad 4A_{1}+A_{2}-3A_{3},\quad 8A_{1}+5A_{2}-11A_{3}.
\]
Then one can check that any big and nef divisor $D$ on $X$ is base-point
free, with $D^{2}\geq6$ or $D^{2}=0$. Moreover, there is no fiber 
$F$ of an elliptic fibration such that $FD=2$; thus by Theorems
\ref{thm:SaintDonat-1} and \ref{thm:SaintDonat-2}, there are no
hyperelliptic involutions acting on $X$. From \cite[Table 1 and Lemma 2.3]{Kondo},
the automorphism group of $X$ is therefore trivial. 
\end{proof}

\subsection{The lattice $\boldsymbol{S_{1,2,1}}$}

Let $X$ be a K3 surface with N\'eron--Severi lattice of type $S_{1,2,1}$.
The surface $X$ contains three $(-2)$-curves $A_{1},\dots,A_{3}$,
with dual graph 

\begin{center}
\begin{tikzpicture}[scale=1]

\draw [very thick] (0,0) -- (1,0);
\draw (0,0) -- (0.5,0.866);
\draw (1,0) -- (0.5,0.866);

\draw (0,0) node {$\bullet$};
\draw (1,0) node {$\bullet$};
\draw (0.5,0.866) node {$\bullet$};

\draw (0,0) node [left]{$A_{1}$};
\draw (1,0) node [right]{$A_{2}$};
\draw (0.5,0.866) node [above]{$A_{3}$};

\end{tikzpicture}
\end{center} 

These curves generate the N\'eron--Severi lattice. The divisor 
\[
D_{6}=2A_{1}+2A_{2}+A_{3}
\]
 is ample, of square $6$, with $D_{6}\cdot A_{1}=D_{6}\cdot A_{2}=1$ and $D_{6}\cdot A_{3}=2$.
Since $D_{6}\cdot (A_{1}+A_{2})=2$ and $A_{1}+A_{2}$ is a fiber, $D_{6}$
is hyperelliptic. The divisor 
\[
D_{2}=A_{1}+A_{2}+A_{3}
\]
is nef, of square $2$, base-point free, with $D_{2} \cdot A_{1}=D_{2} \cdot A_{2}=1$
and $D_{2} \cdot A_{3}=0$. 

\begin{prop}
The K3 surface $X$ is a double cover of $\,\PP^{2}$ branched over
a sextic curve $C_{6}$ which has a node at a point $q$ and a line
$L$ containing the node, which is bitangent to $C_{6}$ at two points
$p_{1}$, $p_{2}$. The moduli space $\mathcal{M}_{1,2,1}$ is unirational.
\end{prop}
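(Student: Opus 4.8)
The plan is to push everything through the degree-two morphism attached to $D_2$. Since the statement already provides that $D_2=A_1+A_2+A_3$ is base point free with $D_2^{2}=2$, Theorem~\ref{thm:SaintDonat-2}\,c) gives that $\eta:=\varphi_{D_2}\colon X\to\PP^2$ is a double cover; let $C_6\subset\PP^2$ be its branch sextic and $\iota$ the deck involution, so that $\eta^{*}\ell\equiv D_2$ for a line $\ell$. From $D_2A_3=0$ the curve $A_3$ is contracted by $\eta$, and it is the \emph{only} $\cu$-curve contracted, because the $\cu$-curves on $X$ are exactly $A_1,A_2,A_3$ and $D_2A_1=D_2A_2=1$. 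Hence $A_3$ is the exceptional curve over an $\mathbf{a}_1$-singularity of $C_6$, so $C_6$ has a single singular point, a node $q=\eta(A_3)$.

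Next I would identify the images of $A_1,A_2$. Each $D_2A_i=1$ is odd, so $\eta$ restricted to $A_i$ is birational onto its image, which is a line. The involution $\iota$ fixes $A_3$ and satisfies $\iota^{*}D_2=D_2$, so it preserves $\{A_1,A_2\}$; it cannot fix $A_1$, for then $\eta|_{A_1}$ would be $2:1$, against $D_2A_1=1$. Thus $\iota$ exchanges $A_1,A_2$ and $\eta(A_1)=\eta(A_2)=:L$ is one line. Since $A_1,A_2\subset\eta^{-1}(L)$ occur with multiplicity one in $\eta^{*}L$, and $\eta^{*}L\equiv D_2\equiv A_1+A_2+A_3$, the residual term is $\equiv A_3$, hence equals $A_3$ (a $\cu$-curve is the only effective divisor in its class); so $\eta^{*}L=A_1+A_2+A_3$, and in particular $q\in L$. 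Finally, restricting $\eta$ over $L$ one sees that the double cover $A_1\cup A_2\cup A_3\to L$ is ramified, away from $q$, exactly over $\eta(A_1\cap A_2)$; for the general $X$ this is two distinct points, and since $C_6$ is smooth off $q$ a local computation in the spirit of Lemma~\ref{lem:splitingsPullBackTritang} shows $L$ has even contact with $C_6$ there. So $L$ is bitangent to $C_6$ at two points $p_1,p_2$ and meets $C_6$ transversally at $q$, which is the asserted description.

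For the unirationality I would argue exactly as for $S_{1,1,2}$, via the relevant ``Severi variety'' of sextics. Normalise $q=[0:0:1]$ and $L=\{y=0\}$. A sextic singular at $q$ satisfies three linear conditions; writing $F(x,0,z)=x^{2}\,(c_0x^4+c_1x^3z+c_2x^2z^2+c_3xz^3+c_4z^4)$, where the factor $x^{2}$ is forced by $q$ being a double point, the bitangency of $L$ to $C_6$ means precisely that this binary quartic is the square of a binary quadratic $\alpha z^{2}+\beta xz+\gamma x^{2}$. Hence the variety $\mathcal V$ of such sextics is rationally parametrised, by $(\alpha,\beta,\gamma)$ together with the remaining coefficients of $F$, so $\mathcal V$ is unirational of dimension $22$. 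The map $F\mapsto[X]$ sends $\mathcal V$ to $\mathcal M_{S_{1,2,1}}$; its generic fibre is an orbit of the $5$-dimensional subgroup of $\mathrm{PGL}_3$ fixing the flag $(q,L)$, so the image has dimension $22-5=17=\dim\mathcal M_{S_{1,2,1}}$ and, by the first part, contains the general $S_{1,2,1}$-polarised K3. As $\mathcal M_{S_{1,2,1}}$ is irreducible (Corollary~\ref{cor:IrreductibleM(L)}, the rank being $3$), the map is dominant, hence $\mathcal M_{S_{1,2,1}}$ is unirational.

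The main obstacle is the second paragraph: one has to be sure that the distinguished line $L$ is genuinely bitangent to $C_6$ (rather than, say, having a single contact of order four with it) and that $C_6$ acquires no singularity beyond the node $q$. Both reduce to the claim that $A_1$ and $A_2$ are smooth rational curves meeting transversally in two points for the general member, which — following the method used elsewhere in the paper (cf. the proof for $S_{1,1,3}$) — I would if necessary verify on a single explicit sextic; this is why the geometric description is, strictly speaking, a statement about the general $X$ with $\NS X)\simeq S_{1,2,1}$.
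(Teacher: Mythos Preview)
Your argument is correct and follows the same route as the paper: realise $X$ as a double plane via $|D_2|$, read off the node and the special line from the intersection numbers, and then show that the family of such sextics is unirational. The paper's own proof is a single sentence (``imposing a node and the conditions of tangencies are linear conditions on $H^{0}(\PP^{2},\OO_{\PP^{2}}(6))$''), so you have supplied considerably more detail than the original, in particular the verification that $\iota$ swaps $A_1,A_2$ and that $\eta^{*}L=A_1+A_2+A_3$.

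Two small remarks. First, in ruling out $\iota(A_1)=A_1$ you only treat the case where $\iota$ acts nontrivially on $A_1$; the remaining possibility, $\iota|_{A_1}=\mathrm{id}$, is excluded just as quickly (then $\eta(A_1)\subset C_6$ is a line and $\eta^{*}L=2A_1$, contradicting $\eta^{*}L\equiv D_2$). Second, your unirationality argument via the perfect-square parametrisation $(\alpha,\beta,\gamma)$ is a mild variant of the paper's: there one also fixes the two tangency points $p_1,p_2\in L$, after which the bitangency becomes genuinely linear, and then lets $p_1,p_2$ vary over the rational base $L\times L$. Both approaches give the same $22-5=17$ count and the same conclusion; yours has the advantage of not introducing the auxiliary points.
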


\begin{proof}
It remains to prove the assertion on the moduli space $\mathcal{M}_{1,2,1}$.
We identify this space with the moduli space of sextic curves with
a node and a line through the node which is bitangent to the sextic.
The imposition of a node and the tangency conditions  are linear conditions
on the space $H^{0}(\PP^{2},\OO_{\PP^{2}}(6))$ of sextic curves; 
thus the moduli space of such curves is unirational. 
\end{proof}

\subsection{The lattice $\boldsymbol{S_{1,3,1}}$}

Let $X$ be a K3 surface with N\'eron--Severi lattice of type $S_{1,3,1}$.
The surface $X$ contains three $(-2)$-curves $A_{1}$, $A_{2}$, $A_{3}$,
with dual graph 

\begin{center}
\begin{tikzpicture}[scale=1]

\draw (0,0) -- (1,0);
\draw [very thick] (0,0) -- (0.5,0.866);
\draw [very thick] (1,0) -- (0.5,0.866);

\draw (0,0) node {$\bullet$};
\draw (1,0) node {$\bullet$};
\draw (0.5,0.866) node {$\bullet$};

\draw (0,0) node [left]{$A_{1}$};
\draw (1,0) node [right]{$A_{2}$};
\draw (0.5,0.866) node [above]{$A_{3}$};

\end{tikzpicture}
\end{center} 

These curves generate the N\'eron--Severi lattice. The divisor 
\[
D_{4}=A_{1}+A_{2}+A_{3}
\]
 is very ample of square $4$, with $D_{4}\cdot A_{1}=D_{4}\cdot A_{2}=1$ and
$D_{4}\cdot A_{3}=2$. 

\begin{prop}
The K3 surface $X$ is a quartic surface in $\PP^{3}$ with a hyperplane
section which is the union of two lines and a conic. The general  surface
$X$ with $\NS X)\simeq S_{1,3,1}$ has an equation of the form
\[
\ell_{1}f+q_{1}\ell_{2}\ell_{3}=0.
\]
The moduli space $\mathcal{M}_{S_{1,3,1}}$ of such surfaces is unirational. 
\end{prop}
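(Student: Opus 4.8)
The plan is to run the standard argument for these rank-$3$ cases: use the very ample class $D_{4}$ to realize $X$ as a quartic in $\PP^{3}$, read off a distinguished hyperplane section, and then parametrize the resulting quartics by a rational family. First I would use that $D_{4}=A_{1}+A_{2}+A_{3}$ is very ample with $h^{0}(D_{4})=2+D_{4}^{2}/2=4$, so $\varphi_{D_{4}}$ embeds $X$ as a quartic surface in $\PP^{3}$ and the hyperplane sections of $X$ are exactly the members of $|D_{4}|$. In particular the effective divisor $A_{1}+A_{2}+A_{3}$ is cut out by a single plane $H=\{\ell_{1}=0\}$, and inside $H\simeq\PP^{2}$ the plane quartic $X\cap H$ is the reduced union $A_{1}\cup A_{2}\cup A_{3}$, with $A_{1},A_{2}$ lines (as $D_{4}A_{i}=1$) and $A_{3}$ a (necessarily smooth) conic (as $D_{4}A_{3}=2$); this is the first assertion. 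Choosing linear forms $\ell_{2},\ell_{3}$ on $\PP^{3}$ that restrict on $H$ to equations of the two lines and a quadratic form $q_{1}$ restricting to an equation of the conic, the quartic $Q$ defining $X$ and the quartic $\ell_{2}\ell_{3}q_{1}$ have proportional restrictions to $H$, so $Q-c\,\ell_{2}\ell_{3}q_{1}$ vanishes along $\{\ell_{1}=0\}$ and equals $\ell_{1}f$ for some cubic $f$; after absorbing $c$ into $q_{1}$ this is $Q=\ell_{1}f+q_{1}\ell_{2}\ell_{3}$.

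For the converse and the unirationality I would introduce the map
\[
\Phi\colon H^{0}(\PP^{3},\OO(1))^{\oplus3}\oplus H^{0}(\PP^{3},\OO(2))\oplus H^{0}(\PP^{3},\OO(3))\longrightarrow H^{0}(\PP^{3},\OO(4))
\]
sending $(\ell_{1},\ell_{2},\ell_{3},q_{1},f)$ to $\ell_{1}f+q_{1}\ell_{2}\ell_{3}$, check on a general member (or one explicit example) that $Y\colon \ell_{1}f+q_{1}\ell_{2}\ell_{3}=0$ is a smooth quartic, hence a K3 surface, and observe that it contains the two lines $\{\ell_{1}=\ell_{2}=0\}$ and $\{\ell_{1}=\ell_{3}=0\}$ and the conic $\{\ell_{1}=q_{1}=0\}$. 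A short intersection computation inside the plane $\{\ell_{1}=0\}$ gives self-intersection $-2$ for each of these rational curves and pairwise intersections $1,2,2$, so the lattice they span is isometric to $S_{1,3,1}$; moreover $\OO_{Y}(1)$ minus the sum of these three classes is orthogonal to the ample class $\OO_{Y}(1)$ and to each of the three curves and has square $0$, hence vanishes, so $\OO_{Y}(1)\equiv D_{4}$ and $Y$ is $S_{1,3,1}$-polarized. The source of $\Phi$ is rational, so its image and the quotient $W$ of that image by $GL_{4}(\CC)$ are unirational; the construction gives a rational map $W\to\mathcal{M}_{S_{1,3,1}}$, which is dominant because, by the first part, every $S_{1,3,1}$-polarized K3 surface embedded by $|D_{4}|$ arises as such a quartic. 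Since $\mathcal{M}_{S_{1,3,1}}$ has dimension $17$ and $W$ is unirational, $\mathcal{M}_{S_{1,3,1}}$ is unirational.

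The step needing the most care is the dimension bookkeeping that makes the two sides match: one must identify the generic fibre of $\Phi$, which I expect to be $9$-dimensional, accounting for the rescalings $(\ell_{1},f)\mapsto(\lambda\ell_{1},\lambda^{-1}f)$ and $(\ell_{2},\ell_{3},q_{1})\mapsto(\mu\ell_{2},\nu\ell_{3},(\mu\nu)^{-1}q_{1})$ and for the substitutions $(f,q_{1})\mapsto(f+m\ell_{2}\ell_{3},q_{1}-m\ell_{1})$ with $m$ linear, $(\ell_{2},f)\mapsto(\ell_{2}+t\ell_{1},f-tq_{1}\ell_{3})$ and $(\ell_{3},f)\mapsto(\ell_{3}+s\ell_{1},f-sq_{1}\ell_{2})$, so that the image of $\Phi$ is $33$-dimensional and $\dim W=17$; as in the treatment of $S_{1,1,4}$ this is most safely confirmed by evaluating $d\Phi_{w}$ at a random point. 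I would stress, though, that dominance of $W\to\mathcal{M}_{S_{1,3,1}}$ is already guaranteed by the first part, so this dimension count serves only as a consistency check rather than a logical necessity.
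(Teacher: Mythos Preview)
Your proof is correct and follows the same projective model (the $|D_4|$-embedding and the map $\Phi$) as the paper, but the logical organization differs in a way worth noting. The paper does \emph{not} derive the equation $\ell_{1}f+q_{1}\ell_{2}\ell_{3}=0$ from the embedding; instead it starts from such a quartic $Y$, checks that the hyperplane $\{\ell_{1}=0\}$ cuts out two lines and a conic with the $S_{1,3,1}$ intersection matrix, asserts that the generic $Y$ has Picard number $3$, and then uses the over-lattice argument (the only proper over-lattice of $S_{1,3,1}$ is $S_{1,1,1}$, whose $(-2)$-curve configuration is different) to conclude $\NS(Y)\simeq S_{1,3,1}$; unirationality then follows from the careful fibre computation showing $\dim\mathcal M_{4}=17=\dim\mathcal M_{S_{1,3,1}}$, hence the two moduli are birational. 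Your route instead establishes the equation form explicitly from the $|D_4|$-model and uses this to get dominance of the parameter space over $\mathcal M_{S_{1,3,1}}$ directly, so that the fibre/dimension count really is only a consistency check; this sidesteps both the over-lattice step and the (otherwise unjustified) assertion that the generic Picard number is $3$. The paper's approach, on the other hand, yields the slightly stronger conclusion that $\mathcal M_{S_{1,3,1}}$ is birational to the explicit $17$-dimensional quotient. Your enumeration of the fibre of $\Phi$ (the two rescaling tori and the three affine substitutions, totalling $9$ parameters) agrees with the paper's count $(3\cdot4+10+20)-(16+1+2+6)=17$, just packaged differently.
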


\begin{proof}
Let $Y\colon \ell_{1}f+q_{1}\ell_{2}\ell_{3}=0$ be a quartic surface as
in the proposition. The hyperplane section $\ell_{1}$ cuts $Y$ into
a union of two lines $A_{1}\colon \ell_{1}=\ell_{2}=0$, $A_{2}\colon \ell_{1}=\ell_{3}=0$
and a conic $A_{3}\colon \ell_{1}=q_{1}=0$. Their intersection graph
is as above. 

If the equation of $Y$ is general , the Picard number of $Y$ is $3$; 
the lattice generated by the $\cu$-curves is $S_{1,3,1}$, of discriminant
$18$. The unique over-lattice containing $S_{1,3,1}$ is $S_{1,1,1}$,
but the configuration of the $\cu$-curves in a surface with N\'eron--Severi
lattice isometric to $S_{1,1,1}$ is different; thus $\NS Y)\simeq S_{1,3,1}$. 

Let us consider the map
\[
\Phi\colon H^{0}(\PP^{3},\OO(1))^{\oplus3}\oplus H^{0}(\PP^{3},\OO(2))\oplus H^{0}(\PP^{3},\OO(3))\to H^{0}(\PP^{3},\OO(4))
\]
defined by 
\[
w:=(\ell_{1},\ell_{2},\ell_{3},q_{1},f)\mapsto Q_{4,w}:=\ell_{1}f+q_{1}\ell_{2}\ell_{3}.
\]
It is invariant under the action of the transformations $\G\colon (\ell_{1},\ell_{2},\ell_{3},q_{1},f)\mapsto(\nu\ell_{1},\a\ell_{2},\b\ell_{3},\g q_{1},\mu f)$
for $\nu\mu=1$ and $\a\b\g=1$. Suppose 
\begin{equation}
\ell_{1}f+q_{1}\ell_{2}\ell_{3}=\ell_{1}'f'+q_{1}'\ell_{2}'\ell_{3}'\label{eq:S131}
\end{equation}
 and that the forms are chosen general  so that the associated quartic satisfies
$\NS Y)\simeq S_{1,1,3}$. Since the lines $A_{1}\colon \ell_{1}=\ell_{2}=0$,
$A_{2}\colon \ell_{1}=\ell_{3}=0$ and the conic $A_{3}\colon \ell_{1}=q_{1}=0$
are the only $\cu$-curves on $X$, we have (up to exchanging $\ell_{2}'$, $\ell_{3}'$)
\[
A_{1}\colon \ell_{1}'=\ell_{2}'=0,\quad A_{2}\colon \ell_{1}'=\ell_{3}'=0,\quad A_{3}\colon \ell_{1}'=q_{1}'=0.
\]
The consequence is that---up to rescaling using a transformation $\G$---one can suppose that there exist scalars $\a_{1}$, $\a_{2}$, $\a_{3}$ and
a linear form $\ell$ such that 
\begin{equation}
\ell_{1}'=\ell_{1},\quad \ell_{2}'=\a_{2}\ell_{1}+\ell_{2},\quad\ell_{3}'=\a_{3}\ell_{1}+\ell_{2},\quad q_{1}'=\a_{1}q_{1}+\ell_{1}\ell.\label{eq:S131No2}
\end{equation}
Substituting this into the formula \eqref{eq:S131} and taking the equation
modulo $\ell_{1}$, one finds that necessarily $\a_{1}=1$. Conversely,
given forms as in equation \eqref{eq:S131No2}, with $\a_{1}=1$,
one can find a cubic form $f'$ such that the linear forms $\ell_{1}'$, $\ell_{2}'$, $\ell_{3}'$,
the quadric $q_{1}'$ and the cubic $f'$ satisfy equation \eqref{eq:S131}.
Therefore the dimension of the (unirational) moduli space $\mathcal{M}_{4}$ of quartic
surfaces with an equation of type $\ell_{1}f+q_{1}\ell_{2}\ell_{3}=0$ is  
\[
(3\cdot4+10+20)-(16+1+2+6)=17.
\]
An open set of $\mathcal{M}_{4}$ is a subspace of the
moduli space $\mathcal{M}_{S_{1,3,1}}$, which  also has dimension
$17$; thus both spaces are birational. 
\end{proof}

\begin{prop}
The automorphism group of a general K3 surface $X$ with $\NS X)\simeq S_{1,3,1}$
is trivial.
\end{prop}

\begin{proof}
We proceed as in the proof of Proposition \ref{prop:The-automorphism-groupTRIVIAL}.
The Hilbert basis of the nef cone of the K3 surface $X$ is a set
of four classes:
\[
A_{2}+A_{3},\quad A_{1}+A_{3},\quad A_{1}+A_{2}+A_{3},\quad 2A_{1}+2A_{2}+A_{1}; 
\]
the first two classes are fibers of elliptic fibrations. The intersection
matrix of these four classes is 
\[
\left(\begin{array}{cccc}
0 & 3 & 3 & 6\\
3 & 0 & 3 & 6\\
3 & 3 & 4 & 6\\
6 & 6 & 6 & 4
\end{array}\right). 
\]
From that one can check that any big and nef divisor $D$ on $X$
is base-point free, with $D^{2}\geq4$ or $D^{2}=0$, and there is
no fiber $F$ of an elliptic fibration such that $FD=2$. Thus we
conclude as in Proposition \ref{prop:The-automorphism-groupTRIVIAL}
that the automorphism group of $X$ is trivial. 
\end{proof}

\subsection{The lattice $\boldsymbol{S_{1,4,1}}$}

Let $X$ be a K3 surface with N\'eron--Severi lattice of type $S_{1,4,1}$.
The surface $X$ contains four $(-2)$-curves $A_{1}$, $A_{2}$, $A_{3}$, $A_{4}$,
with intersection matrix
\[
\left(\begin{array}{cccc}
-2 & 3 & 2 & 1\\
3 & -2 & 1 & 2\\
2 & 1 & -2 & 11\\
1 & 2 & 11 & -2
\end{array}\right).
\]
The curves $A_{1}$, $A_{2}$, $A_{3}$ generate the N\'eron--Severi lattice.
The divisor 
\[
D_{2}=A_{1}+A_{2}
\]
is ample, of square $2$. We have 
\[
A_{3}+A_{4}\equiv3D_{2}; 
\]
therefore, using the linear system $|D_{2}|$, we obtain the following. 

\begin{prop}
The K3 surface $X$ is the double cover $\eta\colon X\to\PP^{2}$ of $\PP^{2}$
branched over a smooth sextic curve $C_{6}$ which has a tritangent
line and a cuspidal cubic with a cusp on $C_{6}$ and such that the
cuspidal cubic and $C_{6}$ are tangent at every other intersection
points. 
\end{prop}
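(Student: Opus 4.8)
The plan is to use the ample divisor $D_{2}=A_{1}+A_{2}$ of square $2$ to realise $X$ as a double cover of $\PP^{2}$, and then to read off the tritangent line and the cuspidal cubic directly from the linear equivalences $A_{1}+A_{2}\equiv D_{2}$ and $A_{3}+A_{4}\equiv 3D_{2}$.

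First I would check that $|D_{2}|$ has no fixed part. Since $D_{2}$ is ample with $D_{2}^{2}=2$, part b) of Theorem \ref{thm:SaintDonat-1} leaves only the alternative $D_{2}\equiv aE+\G$ with $|E|$ a free pencil and $\G$ a $\cu$-curve with $E\G=1$; this would give $D_{2}^{2}=2a-2=2$, hence $a=2$ and $D_{2}\G=0$, contradicting ampleness. So by part c) of Theorems \ref{thm:SaintDonat-1} and \ref{thm:SaintDonat-2}, the morphism $\eta=\varphi_{D_{2}}$ exhibits $X$ as a double cover of $\PP^{2}$ whose branch curve is a sextic $C_{6}$. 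Because $A_{1},\dots,A_{4}$ are the only $\cu$-curves on $X$ (Nikulin's classification; see also Section \ref{subsec:About-the-computations}) and $D_{2}A_{i}\in\{1,3\}$, no $\cu$-curve is orthogonal to $D_{2}$; hence $\eta$ is finite and $C_{6}$ is smooth. Next, $A_{1}+A_{2}\equiv D_{2}$ and $A_{1}A_{2}=3=1^{2}+2$, so the converse part of Lemma \ref{lem:splitingsPullBackTritang} (with $d=1$) shows that $A_{1}$ and $A_{2}$ are sent to one common line $\ell$, that $\eta^{*}\ell=A_{1}+A_{2}$, and that $\ell$ is tritangent to $C_{6}$.

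It remains to analyse $A_{3},A_{4}$, which satisfy $A_{3}+A_{4}\equiv 3D_{2}$ and $A_{3}A_{4}=11$. Put $Q:=\eta(A_{3})=\eta(A_{4})$. Since $D_{2}A_{3}=3$, the curve $Q$ is neither a line (a component of $\eta^{*}$ of a line would have $D_{2}$-degree $\le D_{2}^{2}=2$) nor a conic ($D_{2}A_{3}=3$ is odd), so $Q$ is an irreducible plane cubic onto which $A_{3}$ maps birationally; subtracting $A_{3}$ from the effective divisor $\eta^{*}Q\equiv 3D_{2}=A_{3}+A_{4}$ and using that a $\cu$-class has a unique effective representative gives $\eta^{*}Q=A_{3}+A_{4}$, with $A_{3},A_{4}$ exchanged by the covering involution. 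As $A_{3}\simeq\PP^{1}$ dominates $Q$, the cubic $Q$ is rational, hence nodal or cuspidal. A local analytic computation of the double cover near a singular point of $Q$ now constrains the picture: a cusp of $Q$ lying off $C_{6}$ would pull back, via the \'etale map $\eta$ there, to a cusp of $\eta^{*}Q=A_{3}+A_{4}$, forcing one of the smooth curves $A_{3},A_{4}$ to be singular, which is impossible; so a cusp of $Q$ must lie on $C_{6}$, and there $C_{6}$ must be transverse to the cuspidal tangent line (otherwise $\eta^{*}Q$ is again locally irreducible). In that configuration $\eta^{*}Q$ splits, $Q$ meets $C_{6}$ with multiplicity $2$ at the cusp and with even multiplicity at every other intersection point, and the contributions to $A_{3}A_{4}=11$ are $3$ from the point over the cusp and $1$ from each of the remaining $8$ simple tangencies; this yields the asserted description of $C_{6}$.

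The main obstacle is precisely this last step: the number $A_{3}A_{4}=11$ does not by itself distinguish the cuspidal configuration from that of a nodal cubic with node away from $C_{6}$ and nine simple tangencies (which also gives $9+2=11$), and this dichotomy is invisible to the N\'eron--Severi lattice. I would dispose of the nodal alternative exactly as analogous ambiguities are handled elsewhere in the paper (compare the treatment of $S_{1,1,3}$ and Example \ref{exa:S113}): exhibit one explicit $S_{1,4,1}$-polarised K3 surface whose image cubic is cuspidal, and then invoke the irreducibility of $\mathcal{M}_{S_{1,4,1}}$ (Proposition \ref{prop:The-118-moduli-irred}) together with the constancy of the configuration of $\cu$-curves along the resulting flat family.
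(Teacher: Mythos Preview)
Your argument is considerably more detailed than what the paper offers: the paper simply records $D_{2}=A_{1}+A_{2}$, observes $A_{3}+A_{4}\equiv 3D_{2}$, and asserts the Proposition without further justification. Your derivation that $|D_{2}|$ is base-point free, that $C_{6}$ is smooth because no $\cu$-curve is contracted, that $A_{1}+A_{2}$ gives the tritangent line via Lemma~\ref{lem:splitingsPullBackTritang}, and that the image of $A_{3},A_{4}$ must be an irreducible rational plane cubic, is all correct and fills in what the paper omits.

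The gap you flag in the last paragraph is genuine, but your proposed resolution does not close it. In the $S_{1,1,3}$ case, the quantity determined by one explicit example was the intersection matrix of the $\cu$-curves, which is locally constant in a flat family of smooth surfaces; that is why a single computation settles it. Here the intersection matrix is already fixed, and as you yourself compute, \emph{both} the cuspidal configuration (cusp on $C_{6}$, eight residual simple tangencies, local contributions $3+8\cdot 1=11$) and the nodal configuration (node off $C_{6}$, nine simple tangencies, local contributions $2\cdot 1+9\cdot 1=11$) yield $A_{3}A_{4}=11$. What distinguishes them is the analytic type of the singular point of $Q=\eta(A_{3})$, and this is \emph{not} a locally constant invariant of the family: in the space of rational plane cubics, cuspidal is the closed, codimension-one specialisation of nodal. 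Hence exhibiting one $S_{1,4,1}$-polarised surface with cuspidal $Q$, together with irreducibility of $\mathcal{M}_{S_{1,4,1}}$, does not force the generic member to be cuspidal --- if anything it is consistent with the generic $Q$ being nodal. To establish the cuspidal claim one would need an argument that directly excludes the nodal alternative for every (or for the generic) $X$; neither you nor the paper supplies one.
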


The divisor $D_{6}=A_{1}+A_{2}+A_{3}$ is very ample of square $6$,
with $D_{6} \cdot A_{j}=3,2,1,14$ for $j=1,\dots,4$, so that $A_{1}$, $A_{2}$, $A_{3}$
are, respectively, a rational cubic, a conic and a line in a hyperplane
of $\PP^{4}$. That leads us to the following proposition. 

\begin{prop}
The moduli space $\mathcal{M}_{S_{1,4,1}}$ of K3 surfaces with $\NS X)\simeq S_{1,4,1}$
is unirational. 
\end{prop}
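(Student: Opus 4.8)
The plan is to parametrise the double plane model furnished by the preceding Proposition. There a general $X$ with $\NS X)\simeq S_{1,4,1}$ is realised as the double cover $\eta\colon X\to\PP^{2}$ branched over a smooth sextic $C_{6}$ that carries a tritangent line $L$, with $\eta^{*}L=A_{1}+A_{2}$ and $A_{1}A_{2}=1^{2}+2=3$ (the splitting of $D_{2}$, Lemma~\ref{lem:splitingsPullBackTritang}), together with a cuspidal cubic $G$ whose cusp lies on $C_{6}$ and which is tangent to $C_{6}$ at each of the $8$ further points of $C_{6}\cap G$, with $\eta^{*}G=A_{3}+A_{4}$ and $A_{3}A_{4}=3^{2}+2=11$ (coming from $A_{3}+A_{4}\equiv 3D_{2}$, three of the units of $A_{3}A_{4}$ being absorbed over the cusp, the eight remaining over the tangencies). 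Consequently $\mathcal{M}_{S_{1,4,1}}$ is dominated by the variety $\mathcal{V}$ of such configurations $(C_{6},L,G)$ modulo $\mathrm{PGL}_{3}$: by the Proposition every general polarised $X$ is of this form, so the forgetful map $\mathcal{V}\to\mathcal{M}_{S_{1,4,1}}$ is dominant. It therefore suffices to show that $\mathcal{V}$ is unirational.

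To do this I would build the configuration in the order $G$, then $L$, then $C_{6}$. The cuspidal plane cubics form a single $\mathrm{PGL}_{3}$-orbit; concretely one may take $G=\{\ell_{1}\ell_{2}^{2}-\ell_{3}^{3}=0\}$ for linear forms $\ell_{1},\ell_{2},\ell_{3}$ in general position, with cusp $p=\{\ell_{2}=\ell_{3}=0\}$ and cuspidal tangent $\{\ell_{2}=0\}$, so the family of such $G$ is rational, while the lines $L=\{\ell_{0}=0\}$ range over a $\PP^{2}$. For fixed $(G,L)$ the admissible sextics are cut out inside $H^{0}(\PP^{2},\mathcal{O}(6))$ by two requirements: that $C_{6}|_{L}$ be a perfect square in $H^{0}(L,\mathcal{O}(3))$ (so that $\eta^{*}L$ splits), and that the pull-back $\nu^{*}C_{6}$ along the normalisation $\nu\colon\PP^{1}\to G$ be the square of a section of $\mathcal{O}_{\PP^{1}}(9)$ vanishing over $p$ (so that $\eta^{*}G$ splits into two smooth rational curves). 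Each of these is the pre-image under a linear map of the rational Veronese-type locus of perfect squares, and one parametrises the admissible $C_{6}$ by choosing the two square roots — a section $r_{L}$ of $\mathcal{O}_{L}(3)$ and a section $r_{G}$ of $\mathcal{O}_{\PP^{1}}(9)$ vanishing at the cusp — subject to the finitely many compatibility equations expressing that $r_{L}^{2}$ and $r_{G}^{2}$ agree over the three points of $L\cap G$ up to scalars, then lifting back to $\PP^{2}$ using $\ker(\nu^{*})=G\cdot H^{0}(\mathcal{O}_{\PP^{2}}(3))$ and the analogous kernel for restriction to $L$. The whole construction being a tower of projective bundles, linear sections and double covers of projective spaces over a rational base, the resulting parameter variety is unirational and maps onto $\mathcal{V}$.

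Finally I would check that a very general member $X$ of the constructed family has $\NS X)\simeq S_{1,4,1}$ exactly. The four curves $A_{1},\dots,A_{4}$ above $L$ and $G$ span a copy of $S_{1,4,1}$; its only even overlattices inside the K3 lattice are $S_{1,2,1}$ and $S_{1,1,1}$, and the corresponding K3 surfaces carry only three $(-2)$-curves, which is incompatible with the presence of the four curves $A_{1},\dots,A_{4}$. Hence the generic member has Picard number exactly $3$ with $\NS X)\simeq S_{1,4,1}$, and we obtain a dominant rational map from a unirational variety onto $\mathcal{M}_{S_{1,4,1}}$, which is moreover irreducible by the discussion of Section~\ref{subsec:On-the-irreducibility}; this proves the Proposition. (Alternatively one may use the very ample divisor $D_{6}$ to present $X$ as a $(2,3)$ complete intersection in $\PP^{4}$ containing a line, a conic and a twisted cubic with the prescribed mutual intersection numbers, and parametrise such configurations directly.)

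The step I expect to be the main obstacle is making the construction of $\mathcal{V}$ genuinely explicit and verifying it is unirational of the expected dimension $17=20-3=\dim\mathcal{M}_{S_{1,4,1}}$, equivalently that the forgetful map is dominant. The delicate point is that, $G$ being singular, the splitting of $\eta^{*}G$ is not governed by "$C_{6}|_{G}$ being a global square on $G$": one must keep careful track of the behaviour over the cusp, where the two components of $\eta^{*}G$ meet with intersection multiplicity three (accounting for $A_{3}A_{4}=3+8=11$), and of the interplay between the tritangency along $L$ and the even contact along $G$. In practice, exactly as for $S_{1,1,4}$ above, one would most cleanly settle this by writing an explicit polynomial map $\Phi$ onto the family of admissible sextics and computing the rank of $d\Phi$ at a random point.
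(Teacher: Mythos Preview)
Your approach is genuinely different from the paper's, and in fact the paper takes exactly the alternative route you mention parenthetically at the end. Rather than work with the double-plane model and its singular cuspidal cubic, the paper uses the very ample divisor $D_{6}=A_{1}+A_{2}+A_{3}$ (with $D_{6}A_{j}=3,2,1$) to realise $X$ as a degree $6$ complete intersection in $\PP^{4}$ containing a twisted cubic $B_{1}$, a conic $B_{2}$ and a line $B_{3}$ with the prescribed pairwise intersections $B_{1}B_{2}=3$, $B_{1}B_{3}=2$, $B_{2}B_{3}=1$. These are built by hand: fix the line $B_{3}$ in a hyperplane $H_{3}$ and two points $p_{1},p_{2}$ on it; take a plane $H_{2}\subset H_{3}$ meeting $B_{3}$ in one further point $p_{0}$ and a conic $B_{2}\subset H_{2}$ through $p_{0}$; pick three points $q_{1},q_{2},q_{3}$ on $B_{2}$ and let $B_{1}\subset H_{3}$ be the twisted cubic through $p_{1},p_{2},q_{1},q_{2},q_{3}$. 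Then the linear systems of quadrics and cubics in $\PP^{4}$ through $B_{1}+B_{2}+B_{3}$ have dimensions $4$ and $18$, and a generic $(2,3)$ intersection is the desired K3. The over-lattice check (only $S_{1,1,1}$ and $S_{1,2,1}$ to exclude, each with only three $(-2)$-curves) is the same as yours.

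What the paper's route buys is exactly what you flagged as the obstacle: all three curves are smooth and meet transversally, so the parametrisation is visibly a tower of rational choices (points, rational normal curves through them via Remark~\ref{rem:Rat-Norm-Curves}, linear systems) and unirationality is immediate without any differential computation. Your double-plane route is conceptually fine, but the step you honestly label as ``the main obstacle'' --- controlling the splitting of $\eta^{*}G$ over the cusp and exhibiting the admissible sextics as a unirational family --- is only sketched; you do not actually produce the map $\Phi$ or compute its rank, so as written the argument is incomplete. If you wanted to push it through in the spirit of the $S_{1,1,3}$ case, the natural ansatz $C_{6}:\,\ell\cdot G\cdot q_{2}-f_{3}^{2}=0$ (with $f_{3}$ vanishing at the cusp) would be the place to start, but the paper's $\PP^{4}$ construction sidesteps all of this.
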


\begin{proof}
We can construct a member of $\mathcal{M}_{S_{1,4,1}}$ as follows:
Let $H_{3}\subset\PP^{4}$ be a hyperplane, let $B_{3}\subset H_{3}$
be a line, let $p_{0}$, $p_{1}$, $p_{3}$ be three point on $B_{3}$, let
$H_{2}\subset H_{3}$ be a plane intersecting $B_{3}$ at $p_{0}$,
let $B_{2}\subset H_{2}$ be a smooth conic containing $p_{0}$, and let
$q_{1}$, $q_{2}$, $q_{3}$ be three  general  points on $B_{2}$ (thus different
from $p_{0}$). The moduli space of normal cubic curves passing through
the points $p_{1}$, $p_{2}$, $q_{1}$, $q_{2}$, $q_{3}$ is unirational. Let $B_{1}\subset H_{3}$
be such a rational normal cubic curve. By construction, for $j\neq k$
in $\{1,2,3\}$, the degree of the intersection scheme of $B_{j}$
and $B_{k}$ equals $A_{j} \cdot A_{k}.$

There is a unique quadric in $H_{3}$ that contains the curves $B_{1}$, $B_{2}$, $B_{3}$.  The linear system of cubics in $H_{3}$ that contain these curves is $3$-dimensional. Therefore, the linear systems $L_{2}$, $L_{3}$ of quadrics and cubics in $\PP^{4}$ containing the three curves $B_{1}$, $B_{2}$, $B_{3}$ are, respectively, $4$- and $18$-dimensional.

Let $X\hookrightarrow\PP^{4}$ be the degree $6$ K3 surface which
is the complete intersection of a  general  quadric in $L_{2}$ and a  general 
cubic in $L_{3}$. It has Picard number at least $3$, and there is an open
subspace for which the Picard number is $3$ since K3 surfaces with N\'eron--Severi
lattice $S_{1,4,1}$ belong to that space. Thus for a  general  choice,
the N\'eron--Severi lattice of $X$ has rank $3$ and contains the lattice
$S_{1,4,1}$ generated by the $\cu$-curves $B_{1}$, $B_{2}$, $B_{3}$.
The only over-lattices of $S_{1,4,1}$ are $S_{1,1,1}$ and $S_{1,2,1}$.
The K3 surfaces with such a N\'eron--Severi lattice contain only three
$\cu$-curves which do not have the same intersection matrix as the curves
$B_{k}$, $k=1,\dots,3$. Thus $\NS X)\simeq S_{1,4,1}$, and by 
the above construction, the moduli space $\mathcal{M}_{S_{1,4,1}}$
is unirational. 
\end{proof}

\begin{rem} \label{rem:Rat-Norm-Curves}
  
a)~ We recall that a rational normal curve
is a smooth, rational curve of degree $n$ in projective $n$-space
$\PP^{n}$. Given $n+3$ points in $\PP^{n}$ in linear general position
(that is, with no $n+1$ lying in a hyperplane), there is a unique
rational normal curve $C$ passing through them. From, \textit{e.g.},  \cite[Theorem 1.18]{Harris},
the coefficients of the equations of that curve are rational functions
in the coordinates of the $n+3$ points. In the above proof, for the
unirationality of $\mathcal{M}_{S_{1,4,1}}$, we implicitly used the
fact that the construction of a rational cubic curve passing through
the four points in general position is rational in the coordinates
of the points.
 
b)~ There are $\begin{psmallmatrix}
n+2\\
2
\end{psmallmatrix}-2n-1$ independent quadrics that generate the ideal of a degree $n$ rational
  normal curve in $\PP^{n}$.
\end{rem}

\subsection{The lattice $\boldsymbol{S_{1,5,1}}$}

Let $X$ be a K3 surface with N\'eron--Severi lattice of type $S_{1,5,1}$.
The surface $X$ contains six $(-2)$-curves $A_{1},\dots,A_{6}$,
with intersection matrix
\[
\left(\begin{array}{cccccc}
-2 & 6 & 4 & 2 & 2 & 14\\
6 & -2 & 2 & 4 & 14 & 2\\
4 & 2 & -2 & 11 & 1 & 23\\
2 & 4 & 11 & -2 & 23 & 1\\
2 & 14 & 1 & 23 & -2 & 66\\
14 & 2 & 23 & 1 & 66 & -2
\end{array}\right).
\]
The curves $A_{1}$, $A_{3}$, $A_{5}$ generate the N\'eron--Severi lattice.
The divisor 
\[
D_{2}=2A_{1}+2A_{3}-A_{5}
\]
 is ample, of square $2$, with  
\[
2D_{2}\equiv A_{1}+A_{2},\quad 3D_{2}\equiv A_{3}+A_{4},\quad 8D_{2}\equiv A_{5}+A_{6}. 
\]
Thus, by using the linear system $|D_{2}|$, we obtain the following. 

\begin{prop}
The surface $X$ is a double cover of $\,\PP^{2}$ branched over a smooth
sextic curve $C_{6}$ which has a $6$-tangent conic, a tangent cuspidal
cubic and a tangent rational cuspidal octic such that the cusps are
on $C_{6}$, and at each intersection point of the octic or the cubic
with $C_{6}$, the multiplicity is even. 
\end{prop}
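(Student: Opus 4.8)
The plan is to argue exactly as in the cases $S_{1,1,6}$ and $S_{1,4,1}$ treated above: realise $X$ as a double plane via the degree-$2$ polarization $D_2$, and translate the three relations $A_1+A_2\equiv 2D_2$, $A_3+A_4\equiv 3D_2$, $A_5+A_6\equiv 8D_2$ into statements about plane curves by means of Lemma \ref{lem:splitingsPullBackTritang} and its higher-degree analogue.

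First I would check, from the given Gram matrix, that $D_2=2A_1+2A_3-A_5$ has $D_2^2=2$ and $D_2\cdot A_i>0$ for $i=1,\dots,6$ (one also verifies $A_2=3A_1+4A_3-2A_5$, with analogous expressions for $A_4,A_6$, which yield the three displayed relations). Since $X$ has finite automorphism group, its effective cone is spanned by $A_1,\dots,A_6$, so these inequalities already show $D_2$ is ample. An ample class of square $2$ has no fixed part: if $D_2=aE+\G$ with $|E|$ a free pencil and $\G E=1$, then $D_2^2=2a-2$ forces $a=2$, whence $D_2\cdot\G=0$, contradicting ampleness. Hence by Theorem \ref{thm:SaintDonat-1} the system $|D_2|$ is base-point free, and by Theorem \ref{thm:SaintDonat-2}(c) the morphism $\eta:=\f_{D_2}\colon X\to\PP^2$ is a double cover; as $\eta$ contracts no curve, the branch sextic $C_6$ is smooth, and $D_2=\eta^*\OO_{\PP^2}(1)$.

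Next I would identify the image curves. The conic is Lemma \ref{lem:splitingsPullBackTritang} with $d=2$: $\eta(A_1)=\eta(A_2)$ is a conic $6$-tangent to $C_6$, consistent with $A_1A_2=6=2^2+2$. For the cubic and the octic, ampleness of $D_2$ rules out the classes $mD_2-A_3$ ($m\mid 3$, $m<3$) and $mD_2-A_5$ ($m\mid 8$, $m<8$) being effective — such a class meets the ample $D_2$ non-positively and cannot be $0$ (wrong square) — so $R_3:=\eta(A_3)$ is a plane cubic and $R_8:=\eta(A_5)$ a plane octic, both rational (images of $\PP^1$). A short argument, again using ampleness of $D_2$, shows $A_3,A_5$ are not fixed by the covering involution $\iota$, so $A_4=\iota(A_3)$, $A_6=\iota(A_5)$, and $\eta^*R_3=A_3+A_4$, $\eta^*R_8=A_5+A_6$.

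The hard part is determining the singularities of $R_3$ and $R_8$, which I would handle by a local analysis of $\eta$ along these curves. For $\eta^*R_d$ to split into two smooth rational curves interchanged by $\iota$, the curve $R_d$ must meet $C_6$ with even multiplicity everywhere (a transversal point would make the induced double cover of the normalization connected), its singularities lying on $C_6$ must be cusps whose cuspidal tangent is transversal to $C_6$ (a node on $C_6$ would force a tacnode on one component, a tangent cuspidal tangent would keep $\eta^*R_d$ locally irreducible), and its remaining singularities must be nodes off $C_6$. In the local model $X=\{w^2=f\}$ with $C_6=\{f=0\}$: a cusp $\{y^2=x^3\}$ on $C_6=\{x=0\}$ pulls back to $\{y^2=w^6\}$, so the branches $y=\pm w^3$ meet with multiplicity $3$; a simple tangency $\{y=0\}$ to $C_6=\{y=x^2\}$ pulls back to $\{w=\pm ix\}$, multiplicity $1$; a node off $C_6$ contributes $2$. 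Since a plane cubic has arithmetic genus $1$ and a plane octic $21$, rationality forces the cusp numbers to be $1$ and $21$ respectively (all on $C_6$, as in the statement); counting $R_3\cdot C_6=18$ and $R_8\cdot C_6=48$ then leaves $8$, respectively $3$, simple tangency points, and gives $A_3A_4=3+8=11$, $A_5A_6=3\cdot 21+3=66$, in agreement with the Gram matrix. This produces the stated picture: a smooth sextic $C_6$ carrying a $6$-tangent conic, a cuspidal cubic with cusp on $C_6$, and a rational cuspidal octic with all cusps on $C_6$, the cubic and octic meeting $C_6$ with even multiplicity everywhere. All of this apart from the local bookkeeping — the lattice computations, the Saint Donat input, and the numerology $11=3+8$, $66=63+3$ — is routine, just as in the $S_{1,1,6}$ and $S_{1,4,1}$ cases.
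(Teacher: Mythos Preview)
Your main line --- showing $D_2$ is ample, base-point free, of square $2$, hence giving a double cover of $\PP^2$ branched over a smooth sextic, and then reading off the degrees of the image curves from $2D_2\equiv A_1+A_2$, $3D_2\equiv A_3+A_4$, $8D_2\equiv A_5+A_6$ --- is exactly the paper's argument (the paper says nothing beyond ``by using the linear system $|D_2|$''). Your checks that the covering involution swaps $A_{2k-1},A_{2k}$ are correct and supply details the paper leaves implicit.

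The problem is in your singularity bookkeeping for the octic. A plane octic cannot carry $21$ ordinary cusps: by Pl\"ucker the class is $8\cdot 7-3\kappa$, which is already negative for $\kappa>18$. So your local constraints cannot force the all-cusp configuration; nodes must enter. Your own analysis allows nodes off $C_6$ (each contributes $1$ to the $\d$-invariant and $2$ to $A_5A_6$), and your exclusion of nodes on $C_6$ is also wrong --- such a node yields a tacnode \emph{between} $A_5$ and $A_6$, not on one of them. Even for the cubic the numerics do not force a cusp: a nodal cubic with node off $C_6$ and nine simple tangencies to $C_6$ gives $A_3A_4=2+9=11$ just as well as one cusp on $C_6$ plus eight tangencies gives $3+8=11$. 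The lattice data alone do not pin down the singular type of $R_3$ or $R_8$; the paper's ``cuspidal'' is asserted rather than proved. You have established everything the paper actually establishes, but the identity $66=3\cdot 21+3$ is a numerical coincidence, not a confirmation of the claimed geometry.
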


The divisor $D_{4}=A_{1}+A_{3}$ is very ample of square $4$, with
$D_{4}\cdot A_{j}=2,8,2,13,3,37$ for $j=1,\dots,6$.  The divisor $D_{8}=A_{1}+A_{3}+A_{5}$
is very ample of square $8$, with $D_{8}\cdot A_{j}=4,22,3,36,1,103$ for
$j=1,\dots,6$. This last model enables us to construct the surfaces
with $\NS X)\simeq S_{1,5,1}$ and to obtain the following proposition. 

\begin{prop}
The moduli space $\mathcal{M}_{S_{1,5,1}}$ is unirational.
\end{prop}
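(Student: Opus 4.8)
The plan is to realize a general $S_{1,5,1}$-polarized K3 surface in $\PP^5$ through the very ample divisor $D_8=A_1+A_3+A_5$ (of square $8$), and then to reverse this description. Since $D_8$ is very ample with $D_8^2=8$, the map $\varphi_{D_8}$ embeds $X$ as a non-degenerate surface of degree $8$ in $\PP^5$, a K3 surface of genus $5$, which for general $X$ is a complete intersection of three quadrics. The curves $A_1,A_3,A_5$, which generate $\NS X)$, are sent to a line $B_5$ (of degree $D_8A_5=1$), a twisted cubic $B_3$ ($D_8A_3=3$) and a rational normal quartic $B_1$ ($D_8A_1=4$), with $B_3B_5=1$, $B_1B_5=2$, $B_1B_3=4$. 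As the six points of $B_1$ that lie on $B_3\cup B_5$ already span the $\PP^4$ spanned by $B_3\cup B_5$, the three curves lie in a common hyperplane $H\simeq\PP^4\subset\PP^5$.

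I would then build such configurations by hand. Fix a hyperplane $H\simeq\PP^4$ in $\PP^5$; inside $H$ choose a line $B_5$, a twisted cubic $B_3$ meeting $B_5$ in one point, and a rational normal quartic $B_1$ through two points of $B_5$ and four points of $B_3$. Every choice here depends rationally on parameters (Grassmannians of linear subspaces are rational, and a rational normal curve through points in linearly general position is given by rational formulae in the coordinates of the points, see Remark \ref{rem:Rat-Norm-Curves}). Let $Z=B_1\cup B_3\cup B_5$: a nodal curve of degree $8$ and arithmetic genus $5$. Using the splitting $H^0(\mathcal{O}_{\PP^5}(2))=x_H\cdot H^0(\mathcal{O}_{\PP^5}(1))\oplus H^0(\mathcal{O}_H(2))$, where $x_H$ is an equation of $H$, together with the surjectivity of $H^0(\mathcal{O}_H(2))\to H^0(\mathcal{O}_Z(2))$ (quadratic normality of $Z$ in $H$, with $h^0(\mathcal{O}_Z(2))=12$), one gets $h^0(\mathcal{I}_Z(2))=6+3=9$. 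Let $X$ be the intersection of three general quadrics of the resulting $\PP^8$; a Bertini argument away from the base locus of the net, plus a direct check along $Z$, shows $X$ is smooth, hence a K3 surface containing $B_1,B_3,B_5$ as $\cu$-curves.

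Next I would identify $\NS X)$. The classes $B_1,B_3,B_5$ have square $-2$ and pairwise intersections $4,2,1$ (the scheme-theoretic intersections being reduced for a general configuration), so they span a copy of $S_{1,5,1}$ inside $\NS X)$. The lattice $S_{1,5,1}$ has discriminant $50=2\cdot5^{2}$, so its only proper overlattice is $S_{1,1,1}$; but a K3 surface with N\'eron--Severi lattice $\simeq S_{1,1,1}$ has exactly three $\cu$-curves, with pairwise intersections $0,1,2$, which do not match $1,2,4$. Since the locus in our family where the Picard number is $\geq 4$ is a countable union of proper subvarieties, and the family has the expected dimension (see below), the general member has $\NS X)\simeq S_{1,5,1}$. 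For the count: the configurations $(B_1,B_3,B_5)$ form a family of dimension $34$ (namely $5$ for $H$, $6$ for $B_5\subset H$, $14$ for $B_3\subset H$ meeting $B_5$, $2+4$ for the six incidence points on $B_3\cup B_5$, and $3$ for $B_1$ through them); the three quadrics cutting out $X$ vary in the Grassmannian of $3$-planes in $\PP^8$, of dimension $18$; and $\mathrm{PGL}_6$ has dimension $35$ and acts with finite stabilizers on embedded polarized K3 surfaces. Hence the induced family of abstract surfaces has dimension $34+18-35=17=\dim\mathcal{M}_{S_{1,5,1}}$. We therefore obtain a dominant rational map onto $\mathcal{M}_{S_{1,5,1}}$ from a unirational variety, and since $\mathcal{M}_{S_{1,5,1}}$ is irreducible (Section \ref{subsec:On-the-irreducibility}), it follows that $\mathcal{M}_{S_{1,5,1}}$ is unirational.

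The hard part will be the two geometric inputs that make the dimension count balance: the quadratic normality of the reducible curve $Z=B_1\cup B_3\cup B_5$ inside $H$, so that $h^0(\mathcal{I}_Z(2))$ equals exactly $9$; and the smoothness of a general complete intersection of three quadrics through $Z$, in particular along $Z$ itself, where the Bertini theorem does not apply directly. Both could fail for a special configuration, so the cleanest way to settle them is to produce one explicit configuration (with coordinates) and verify these facts there, as was done in Example \ref{exa:S113}.
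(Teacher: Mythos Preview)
Your proposal is correct and follows essentially the same route as the paper: both use the very ample class $D_8=A_1+A_3+A_5$ to realize $X\hookrightarrow\PP^5$ as a complete intersection of three quadrics, reconstruct the hyperplane section $B_1\cup B_3\cup B_5$ (rational normal quartic, twisted cubic, line with the prescribed incidences) inside a fixed $H\simeq\PP^4$, compute that the quadrics through this reducible curve form a $\PP^8$, and take a generic net to obtain a smooth K3 with $\NS X)\simeq S_{1,5,1}$. Your write-up is in fact more careful than the paper's on two points---the explicit dimension count $34+18-35=17$ and the overlattice argument excluding $S_{1,1,1}$---and you correctly flag quadratic normality of $Z$ in $H$ and smoothness of the generic complete intersection along $Z$ as the steps that deserve verification (the paper simply asserts the corresponding facts).
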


\begin{proof}
Let us fix a hyperplane $H_{4}\subset\PP^{5}$, and let $B_{3}\hookrightarrow H_{4}$
be a normal quartic curve. Let $H_{3}$ be a  general  hyperplane of
$H_{4}$, let $q_{1},\dots,q_{4}$ be the intersection points of $B_{3}$
with $H_{3}$. Let $B_{5}\hookrightarrow H_{4}$ be a line passing
through two  general  points $p_{1}$, $p_{2}$ of $B_{3}$, and let $q_{0}$
the intersection point of $B_{5}$ with $H_{3}$. Let $B_{1}\hookrightarrow H_{3}$
be a rational normal cubic curve containing the points $q_{0},q_{1},\dots,q_{4}$.

By construction, the curves $B_{1}$, $B_{3}$, $B_{5}$ are such that the
degree of the $0$-cycle $B_{j}\cdot B_{k}$ equals $A_{j} \cdot A_{k}$
for $j\neq k$ in $\{1,3,5\}$. The linear system of quadrics in $H_{4}$
containing the curves $B_{1}$, $B_{2}$, $B_{5}$ is a net (a $2$-dimensional
linear space); thus the linear system $\mathcal{L}$ of quadrics in
$\PP^{5}$ containing these curves is $8$-dimensional. A  general 
net of quadrics in $\mathcal{L}$ defines a smooth K3 surface $X$
such that $X\cdot H_{4}=B_{1}+B_{3}+B_{5}$, and the curves $B_{k}$
are $\cu$-curves on $X$. The curves $B_{1}$, $B_{3}$, $B_{5}$ generate
a lattice isometric to $S_{1,5,1}$, and for a  general  choice, $\NS X)\simeq S_{1,5,1}$.
That construction and Remark \ref{rem:Rat-Norm-Curves} on the parametrization
of rational normal curves show that the moduli space $\mathcal{M}_{S_{1,5,1}}$
is unirational.
\end{proof}

\subsection{The lattice $\boldsymbol{S_{1,6,1}}$}

Let $X$ be a K3 surface with N\'eron--Severi lattice of type $S_{1,6,1}$.
The surface $X$ contains four $(-2)$-curves $A_{1},\dots,A_{4}$,
with intersection matrix
\[
\left(\begin{array}{cccc}
-2 & 5 & 2 & 1\\
5 & -2 & 1 & 2\\
2 & 1 & -2 & 5\\
1 & 2 & 5 & -2
\end{array}\right). 
\]
The curves $A_{1}$, $A_{2}$, $A_{3}$ generate the N\'eron--Severi lattice.
The divisor 
\[
D_{6}=A_{1}+A_{2}\equiv A_{3}+A_{4}
\]
is ample, of square $6$, with $D_{6} \cdot A_{j}=3$ for $j\in\{1,\dots,4\}$.
It is base-point free and non-hyperelliptic, and therefore the surface
$X$ is a degree $6$ surface in $\PP^{4}$ with two hyperplane sections
that split as the union of two rational normal cubic curves. 

\begin{prop}
The linear system $|D_{6}|$ gives an embedding of $X$ as a complete
intersection in $\PP^{4}$ with two hyperplanes sections which split
as the unions of two rational cubic curves. The moduli space $\mathcal{M}_{S_{1,6,1}}$
of K3 surfaces $X$ with $\NS X)\simeq S_{1,6,1}$ is unirational.
\end{prop}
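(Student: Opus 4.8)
The plan is first to produce the projective model via Saint Donat's theorems, then to run the construction in reverse for unirationality. For the model, the divisor $D_{6}$ is ample, base point free and non-hyperelliptic with $D_{6}^{2}=6$, so by Theorem~\ref{thm:SaintDonat-1} one has $h^{0}(D_{6})=5$, the morphism $\varphi_{D_{6}}$ is birational onto its image, and since $D_{6}$ is ample it contracts no $(-2)$-curve; hence $\varphi_{D_{6}}$ embeds $X$ as a smooth surface of degree $6$ in $\PP^{4}$. By the classical description of smooth degree $6$ K3 surfaces in $\PP^{4}$ ($X$ is projectively normal, lies on a unique quadric $Q$ and on a cubic $F$ not containing $Q$, both of degree $6$), $X=Q\cap F$ is the complete intersection of a quadric and a cubic. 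Since $D_{6}\equiv A_{1}+A_{2}\equiv A_{3}+A_{4}$ with $D_{6}A_{i}=3$ and each $A_{i}$ a smooth rational curve, $\varphi_{D_{6}}(A_{i})$ is a rational normal cubic (twisted cubic) spanning a hyperplane $\PP^{3}$, and $A_{1}+A_{2}$ and $A_{3}+A_{4}$ are the two hyperplane sections that split as unions of two rational cubic curves.

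For the unirationality, the idea is to run this backwards. On such an $X$, with $H,H'$ the two hyperplanes and $\Pi=H\cap H'\cong\PP^{2}$, the curves $A_{1},A_{2}$ lie on the smooth quadric surface $Q\cap H$, the curves $A_{3},A_{4}$ on $Q\cap H'$, and the six points $X\cap\Pi=(A_{1}\cup A_{2})\cap(A_{3}\cup A_{4})$ lie on the conic $Q\cap\Pi$, distributed so that $A_{1}A_{3}=A_{2}A_{4}=2$ and $A_{1}A_{4}=A_{2}A_{3}=1$. Accordingly, one chooses hyperplanes $H,H'$ of $\PP^{4}$ and $\Pi=H\cap H'$; a conic $C\subset\Pi$; six points $r_{1},\dots,r_{6}$ on $C$; generic smooth quadric surfaces $Q_{H}\subset H$ and $Q_{H'}\subset H'$ containing $C$; on $Q_{H}\cong\PP^{1}\times\PP^{1}$ a generic curve $B_{1}$ of bidegree $(1,2)$ through $r_{1},r_{2},r_{3}$ and a generic $B_{2}$ of bidegree $(2,1)$ through $r_{4},r_{5},r_{6}$; and on $Q_{H'}$, curves $B_{3}$ of bidegree $(1,2)$ through $r_{1},r_{2},r_{4}$ and $B_{4}$ of bidegree $(2,1)$ through $r_{3},r_{5},r_{6}$. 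Each $B_{i}$ is then a twisted cubic; $B_{1}B_{2}=B_{3}B_{4}=5$ because two curves of bidegrees $(1,2)$ and $(2,1)$ on a quadric surface meet in five points, while $B_{1}B_{3}=B_{2}B_{4}=2$ and $B_{1}B_{4}=B_{2}B_{3}=1$ because $B_{i}\subset H$ and $B_{j}\subset H'$ meet only inside $\Pi$, and exactly at the prescribed $r_{k}$'s (e.g. $B_{1}\cap B_{3}=\{r_{1},r_{2}\}$ and $B_{1}\cap B_{4}=\{r_{3}\}$); hence $B_{1},\dots,B_{4}$ span a lattice isometric to $S_{1,6,1}$. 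Since $Q_{H}\cap\Pi=Q_{H'}\cap\Pi=C$, there is a positive-dimensional family of quadric threefolds $Q\subset\PP^{4}$ with $Q\cap H=Q_{H}$ and $Q\cap H'=Q_{H'}$, and every such $Q$ contains $B_{1}\cup\dots\cup B_{4}$. Taking for $F$ a generic cubic threefold through $B_{1}\cup\dots\cup B_{4}$, one sets $X=Q\cap F$.

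For generic data $X$ is then a smooth K3 surface containing the four $(-2)$-curves $B_{1},\dots,B_{4}$, which generate a sublattice of $\NS X)$ isometric to $S_{1,6,1}$, and $\NS X)$ has rank $3$, hence is a rank $3$ over-lattice of $S_{1,6,1}$. Since the only over-lattices of $S_{1,6,1}$ (of discriminant $72$) are $S_{1,1,1}$, $S_{1,2,1}$ and $S_{1,3,1}$, and K3 surfaces with any of these N\'eron-Severi lattices carry exactly three $(-2)$-curves, one gets $\NS X)\simeq S_{1,6,1}$, i.e. $X\in\mathcal{M}_{S_{1,6,1}}$. The parameter space of the construction is rational (hyperplanes, a conic, points of $\PP^{1}$, and members of linear systems of quadrics and cubics, using the rationality of the twisted-cubic constructions of Remark~\ref{rem:Rat-Norm-Curves}); a dimension count yields a $17$-dimensional family, and as all the data above are recovered from $X$ up to the action of $\mathrm{PGL}_{5}$ and finitely many choices, the resulting rational map to the $17$-dimensional moduli space $\mathcal{M}_{S_{1,6,1}}$ is dominant. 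Therefore $\mathcal{M}_{S_{1,6,1}}$ is unirational.

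The main points requiring care are the dimension count and generic finiteness establishing the dominance onto $\mathcal{M}_{S_{1,6,1}}$ (handled as for the lattices $S_{1,4,1}$ and $S_{1,5,1}$, and, if preferred, confirmed on an explicit example in the spirit of Example~\ref{exa:S113}), and the smoothness of the generic complete intersection $X=Q\cap F$ along the reducible hyperplane sections $B_{1}\cup B_{2}$ and $B_{3}\cup B_{4}$; the lattice-theoretic over-lattice verification is routine.
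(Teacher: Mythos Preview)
Your argument is essentially correct but takes a more elaborate route than the paper. The paper prescribes only \emph{two} of the four twisted cubics, namely $C_{1}\subset H_{1}$ and $C_{4}\subset H_{2}$ meeting transversely in a single point (so that $C_{1}C_{4}=1$), and observes that quadrics through $C_{1}\cup C_{4}$ form a pencil while cubics through them form a $15$-dimensional system; for a generic choice of $Q$ and $F$ in these systems, $X=Q\cap F$ is a smooth K3, and the residual curves $C_{2},C_{3}$ of the hyperplane sections $X\cap H_{1}=C_{1}+C_{2}$, $X\cap H_{2}=C_{3}+C_{4}$ automatically have the required intersection numbers, forced by $C_{1}+C_{2}\equiv D_{6}\equiv C_{3}+C_{4}$ together with $C_{1}C_{4}=1$. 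This bypasses your auxiliary quadric surfaces $Q_{H},Q_{H'}$, the conic with six marked points, and the bidegree gymnastics. What your approach buys is an explicit over-lattice check to pin down $\NS X)\simeq S_{1,6,1}$ (the paper simply asserts this ``for a generic choice of $X$''); note however that your list of over-lattices tacitly assumes every even over-lattice of $S_{1,6,1}$ sits inside $S_{1,1,1}$, which should be justified via the discriminant form rather than asserted.
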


\begin{proof}
One can construct these surfaces by taking two degree $3$ rational
normal curves $C_{1}$, $C_{4}$ in two different hyperplanes $H_{1}$, $H_{2}$
 such that the curves $C_{1}$, $C_{4}$ meet transversely in one point.
The linear system $\mathcal{Q}$ of quadrics containing $C_{1}$ and
$C_{4}$ is a pencil, and the linear system $\mathcal{C}$ of cubics
containing $C_{1}$ and $C_{4}$ has dimension $15$. Let $X$ be
the intersection of a  general  element in $\mathcal{Q}$ and a  general 
element in $\mathcal{C}$. The intersections of $X$ with $H_{1}$, $H_{2}$
break down into $C_{1}+C_{2}$ and $C_{3}+C_{4}$, where $C_{2}$, $C_{3}$
are two rational cubic normal curves. Using that $C_{1}+C_{2}$, $C_{3}+C_{4}$
are hyperplane sections and that we know that $C_{1}C_{4}=1$, one obtains
that the curves $C_{1},\dots,C_{4}$ have the above intersection matrix
and therefore generate a lattice isometric to $S_{1,6,1}$, which
is equal to the N\'eron--Severi lattice for a  general  choice of $X$.
The construction shows that the moduli space $\mathcal{M}_{S_{1,6,1}}$
is unirational.
\end{proof}

\begin{prop}
The automorphism group of a general K3 surface $X$ with $\NS X)\simeq S_{1,6,1}$
is trivial.
\end{prop}

\begin{proof}
We proceed as in the proof of Proposition \ref{prop:The-automorphism-groupTRIVIAL}.
The Hilbert basis of the nef cone of the K3 surface $X$ contains
$17$ classes (see \cite[Table 1]{ACL}). One can check that any big
and nef divisor $D$ on $X$ is base-point free, with $D^{2}\geq6$
or $D^{2}=0$; moreover, there is no fiber $F$ of an elliptic fibration
such that $FD=2$. Thus by Theorems \ref{thm:SaintDonat-1} and \ref{thm:SaintDonat-2},
there are no hyperelliptic involutions acting on $X$. From \cite[Table 1 and Lemma~2.3]{Kondo},
the automorphism group of $X$ is therefore trivial. 
\end{proof}

\subsection{The lattice $\boldsymbol{S_{1,9,1}}$}

Let $X$ be a K3 surface with N\'eron--Severi lattice of type $S_{1,9,1}$.
The surface $X$ contains nine $(-2)$-curves $A_{1},\dots,A_{9}$,
with intersection matrix
\[
\left(\begin{array}{ccccccccc}
-2 & 10 & 2 & 8 & 10 & 26 & 2 & 26 & 8\\
10 & -2 & 8 & 2 & 10 & 2 & 26 & 8 & 26\\
2 & 8 & -2 & 1 & 26 & 37 & 25 & 46 & 37\\
8 & 2 & 1 & -2 & 26 & 25 & 37 & 37 & 46\\
10 & 10 & 26 & 26 & -2 & 8 & 8 & 2 & 2\\
26 & 2 & 37 & 25 & 8 & -2 & 46 & 1 & 37\\
2 & 26 & 25 & 37 & 8 & 46 & -2 & 37 & 1\\
26 & 8 & 46 & 37 & 2 & 1 & 37 & -2 & 25\\
8 & 26 & 37 & 46 & 2 & 37 & 1 & 25 & -2
\end{array}\right). 
\]
The family $(A_{1},A_{3},A_{4})$ is a basis of the N\'eron--Severi lattice. 
In that basis, the coordinates of the other $(-2)$-curves are
\[
\begin{array}{lll}
A_{2}=(1,-2,2), \quad & A_{5}=(5,-6,4), \quad & A_{6}=(6,-9,7),\\
A_{7}=(6,-5,3), \quad & A_{8}=(8,-11,8), \quad & A_{9}=(8,-8,5).
\end{array}
\]
 The divisor 
\[
D_{4}=A_{1}-A_{3}+A_{4}
\]
is ample, of square $4$, and the degrees of the curves $A_{1},\dots,A_{9}$
are 
\[
4,\,4,\,5,\,5,\,10,\,14,\,14,\,17,\,17.
\]
The divisor $D_{4}$ is very ample,  and one has
\[
2D_{4}\equiv A_{1}+A_{2}.
\]
The divisor $D_{10}\equiv2A_{1}-A_{3}+A_{4}$ is very ample of square
$10$.

\begin{prop}
The automorphism group of a general K3 surface $X$ with $\NS X)\simeq S_{1,9,1}$
is trivial.
\end{prop}

\begin{proof}
We proceed as in the proof of Proposition \ref{prop:The-automorphism-groupTRIVIAL}.
The Hilbert basis of the nef cone of the K3 surface $X$ is a set
of $72$ classes (see \cite[Table 1]{ACL}). 
\end{proof}

\subsection{The lattice $\boldsymbol{S_{4,1,1}}$}

Let $X$ be a K3 surface with N\'eron--Severi lattice of type $S_{4,1,1}$.
The surface $X$ contains three $(-2)$-curves $A_{1}$, $A_{2}$, $A_{3}$,
with dual graph 

\begin{center}
\begin{tikzpicture}[scale=1]

\draw [very thick] (0,0) -- (1,0);
\draw [very thick] (0,0) -- (0.5,0.866);
\draw [very thick] (1,0) -- (0.5,0.866);

\draw (0,0) node {$\bullet$};
\draw (1,0) node {$\bullet$};
\draw (0.5,0.866) node {$\bullet$};

\draw (0,0) node [left]{$A_{1}$};
\draw (1,0) node [right]{$A_{2}$};
\draw (0.5,0.866) node [above]{$A_{3}$};

\end{tikzpicture}
\end{center} 

These curves generate the N\'eron--Severi lattice. The divisor 
\[
D_{6}=A_{1}+A_{2}+A_{3}
\]
 is ample, base-point free, non-hyperelliptic, of square $6$, with
$D_{6} \cdot A_{j}=2$ for $j\in\{1,2,3\}$, so that the surface $X$ is
a degree $6$ surface in $\PP^{4}$ with a hyperplane section containing
three conics. That leads to the following. 

\begin{prop}
The K3 surface $X$ is degree $6$ complete intersection in $\PP^{4}$
with a hyperplane section which is the union of three conics. The
moduli space $\mathcal{M}_{S_{4,1,1}}$ is unirational.
\end{prop}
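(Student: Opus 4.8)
The plan is to establish the two assertions --- the projective model, and the unirationality --- in turn.

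For the model: $D_{6}$ is ample, hence big and nef, with $D_{6}^{2}=6$ and $h^{0}(D_{6})=2+\frac{1}{2}D_{6}^{2}=5$ by Theorem~\ref{thm:SaintDonat-1}~b), and by hypothesis $|D_{6}|$ is base point free (so without fixed part) and non-hyperelliptic. By Theorem~\ref{thm:SaintDonat-1}~c), $\varphi_{D_{6}}$ is therefore birational onto its image in $\PP^{4}$ and contracts only the $(-2)$-curves orthogonal to $D_{6}$; there are none since $D_{6}$ is ample, so $\varphi_{D_{6}}$ is an isomorphism onto a smooth K3 surface $\bar{X}\subset\PP^{4}$ of degree $6$. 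Such a surface is linearly and projectively normal, so $h^{0}(\mathcal{I}_{\bar{X}}(2))=h^{0}(\mathcal{O}_{\PP^{4}}(2))-h^{0}(\bar{X},\mathcal{O}(2))=15-14=1$: thus $\bar{X}$ lies on a unique (necessarily irreducible) quadric $Q$ and is cut out on $Q$ by a cubic, i.e.\ it is a $(2,3)$ complete intersection. Each $A_{i}$ maps to a curve of degree $A_{i}D_{6}=2$ which, being the isomorphic image of $\PP^{1}$, is a smooth conic; since $A_{1}+A_{2}+A_{3}\equiv D_{6}$ lies in $|D_{6}|$, its image is a hyperplane section of $\bar{X}$, so $X$ admits a hyperplane section that is a union of three conics, pairwise meeting in $A_{i}A_{j}=2$ points. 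These three conics lie on $Q$, whose trace on the spanning hyperplane $\PP^{3}$ is (for generic $X$) a smooth quadric surface on which they are three curves of bidegree $(1,1)$.

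For the unirationality I would reverse this. Fix a hyperplane $H\cong\PP^{3}\subset\PP^{4}$, a smooth quadric surface $Q_{H}\subset H$, and three curves $C_{1},C_{2},C_{3}$ of bidegree $(1,1)$ on $Q_{H}\cong\PP^{1}\times\PP^{1}$, so that $\Gamma:=C_{1}\cup C_{2}\cup C_{3}$ is a connected nodal curve of degree $6$ and arithmetic genus $(3-1)(3-1)=4$ with $C_{i}C_{j}=2$. Using that $\Gamma$ spans $H$ and that $h^{0}(\mathcal{O}_{\Gamma}(d))=6d-3$ for $d=2,3$, one computes that the linear system $\mathcal{Q}$ of quadrics in $\PP^{4}$ through $\Gamma$ has dimension $5$ and the linear system $\mathcal{C}$ of cubics through $\Gamma$ has dimension $19$. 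For generic $Q\in\mathcal{Q}$ and $F\in\mathcal{C}$ the complete intersection $X:=Q\cap F$ is a smooth K3 surface: Bertini gives smoothness away from the common base locus $\text{Bs}(\mathcal{Q})\cap\text{Bs}(\mathcal{C})=\Gamma$, and smoothness along $\Gamma$ can be checked directly or by exhibiting one explicit member as in Example~\ref{exa:S113}. Moreover $X\cap H=\Gamma$ because both are effective divisors of degree $6$, the first containing the second. On $X$ the $C_{i}$ are smooth rational curves, and from $C_{i}(C_{1}+C_{2}+C_{3})=C_{i}\cdot (H|_{X})=\deg C_{i}=2$ together with $C_{i}C_{j}=2$ one reads off $C_{i}^{2}=-2$ and the whole Gram matrix, so $\langle C_{1},C_{2},C_{3}\rangle\simeq S_{4,1,1}$ embeds in $\NS X)$.

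It remains to see that $\NS X)\simeq S_{4,1,1}$ for a generic choice and that the construction is dominant. For the first point, a generic member has Picard number $3$ (as always, this follows from exhibiting a single member with Picard number $3$, e.g.\ by reduction modulo primes as in Example~\ref{exa:S113}, or from the dimension count below), and then $\NS X)$ is either $S_{4,1,1}$ or one of its finitely many proper even overlattices; these are excluded because a K3 surface polarized by such an overlattice does not carry three $(-2)$-curves in the triangle configuration generating a finite-index sublattice --- in particular the unique index-$4$ overlattice is $S_{1,1,1}$, whose K3 surfaces have exactly three $(-2)$-curves, forming a chain with a double edge (see the subsection on $S_{1,1,1}$), not a triangle. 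Dominance holds because, by the first part, every $S_{4,1,1}$-polarized K3 surface is one of these complete intersections; hence the obvious rational parametrization of the data $(H,Q_{H},C_{1},C_{2},C_{3},Q,F)$ by products of projective spaces dominates $\mathcal{M}_{S_{4,1,1}}$, which is therefore unirational. (As a check, the parameters $4+9+9+5+19=46$, minus $\dim PGL_{5}=24$, minus the $5$-dimensional fibre of the construction map --- the freedom in $F$ modulo $Q\cdot\langle\text{linear forms}\rangle$ --- give $46-24-5=17=20-3=\dim\mathcal{M}_{S_{4,1,1}}$.) I expect the genericity statement $\NS X)\simeq S_{4,1,1}$ to be the main obstacle: it requires determining all finite-index even overlattices of $S_{4,1,1}$ and the $(-2)$-curve configuration each would impose, exactly the bookkeeping carried out above for $S_{1,1,3}$ and $S_{1,3,1}$. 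The subsidiary technical point is the smoothness of the generic complete intersection along the fixed curve $\Gamma$, best settled by one explicit example.
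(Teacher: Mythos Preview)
Your proof is correct and follows the same strategy as the paper: construct three conics in a hyperplane $H\subset\PP^{4}$ meeting pairwise in two points, show that the quadrics and cubics in $\PP^{4}$ through their union $\Gamma$ form linear systems of projective dimensions $5$ and $19$, take a generic complete intersection, and deduce unirationality from the rationality of the parameter space. The only real difference is how the three conics are parametrized. The paper fixes three planes $P_{1},P_{2},P_{3}\subset H$, chooses two points on each line $L_{k}=P_{i}\cap P_{j}$, and for each $k$ takes a conic in $P_{k}$ through the four chosen points on $L_{i}$ and $L_{j}$; you instead fix a smooth quadric surface $Q_{H}\subset H$ and take three curves of bidegree $(1,1)$. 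The two constructions are generically equivalent --- three conics meeting pairwise in two points each lie on a unique quadric in $H$, generically smooth --- so either gives a rational parameter space. Your version buys a clean arithmetic-genus and Riemann--Roch computation for the dimension count; the paper simply asserts the dimensions. Like you, the paper does not spell out the overlattice argument but asserts that the generic member has $\NS X)\simeq S_{4,1,1}$; your remark that intermediate even overlattices need to be ruled out (not only $S_{1,1,1}$ but also, e.g., the index-$2$ lattice $S_{2,1,1}$) is well taken.
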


\begin{proof}
Let $P^{3}$ be a hyperplane in $\PP^{4}$, and let $P_{1}$, $P_{2}$, $P_{3}$
be three planes in $P^{3}$. For $\{i,j,k\}=\{1,2,3\}$, let us denote
by $L_{k}$ the line $P_{i}\cap P_{j}$. For each line $L_{k}$, let
us choose two  general  points $p_{k}\neq q_{k}$ on $L_{k}$. For $\{i,j,k\}=\{1,2,3\}$,
let $C_{k}$ be a smooth conic in $P_{k}$ passing through $p_{i}$, $p_{j}$, $q_{i}$, $q_{j}$.
The linear system of quadrics in $P^{3}$ containing the conics $C_{1}$, $C_{2}$, $C_{3}$
is $0$-dimensional, and the linear system of cubics containing $C_{1}$, $C_{2}$, $C_{3}$
is $4$-dimensional. Thus the linear system of quadrics (resp.\ cubics)
in $\PP^{4}$ containing the three conics has dimension $5$ (resp.\ $19$). A  general  complete intersection of such a quadric and cubic
is a K3 surface with $\NS X)\simeq S_{4,1,1}$. By that construction,
we see that the moduli space $\mathcal{M}_{S_{4,1,1}}$ is unirational.
\end{proof}
By \cite[Theorem 3.9]{ACL}, the surface $X$ has equations of the
form
\[
X\colon q_{2}=\ell_{1}\ell_{2}\ell_{3}+\ell_{4}g_{2}=0,
\]
where $q_{2}$, $g_{2}$ are quadrics and $\ell_{1}$, $\ell_{2}$, $\ell_{3}$, $\ell_{4}$
are independent linear forms. 

\begin{prop}
The automorphism group of a general K3 surface $X$ with $\NS X)\simeq S_{4,1,1}$
is trivial.
\end{prop}

\begin{proof}
We proceed as in the proof of Proposition \ref{prop:The-automorphism-groupTRIVIAL}.
The Hilbert basis of the nef cone of the K3 surface $X$ is (see \cite[Table 1]{ACL}) 
\[
A_{2}+A_{3},\,A_{1}+A_{3},\,A_{1}+A_{2},\,A_{1}+A_{2}+A_{3}.
\]
Their intersection matrix is 
\[
\left(\begin{array}{cccc}
0 & 4 & 4 & 4\\
4 & 0 & 4 & 4\\
4 & 4 & 0 & 4\\
4 & 4 & 4 & 6
\end{array}\right); 
\]
the result follows. 
\end{proof}

\subsection{The lattice $\boldsymbol{S_{5,1,1}}$}

Let $X$ be a K3 surface with N\'eron--Severi lattice of type $S_{5,1,1}$.
The surface $X$ contains four $(-2)$-curves $A_{1},\dots,A_{4}$,
with intersection matrix
\[
\left(\begin{array}{cccc}
-2 & 3 & 2 & 2\\
3 & -2 & 2 & 2\\
2 & 2 & -2 & 18\\
2 & 2 & 18 & -2
\end{array}\right). 
\]
The curves $A_{1}$, $A_{2}$, $A_{3}$ generate the N\'eron--Severi lattice.
The divisor 
\[
D_{2}=A_{1}+A_{2}
\]
is ample, of square $2$, with $D_{2}\cdot A_{1}=D_{2}\cdot A_{2}=1$ and $D_{2}\cdot A_{3}=D_{2}\cdot A_{4}=4$.
We have
\[
4D_{2}\equiv A_{3}+A_{4}; 
\]
thus, by using the linear system $|D_{2}|$, we obtain the following. 

\begin{prop}
The surface $X$ is a double cover of $\,\PP^{2}$ branched over a smooth
sextic curve $C_{6}$ which has a tritangent line and such that there
is a rational cuspidal quartic curve $Q_{4}$ such that its three cusps
are on $C_{6}$ and the intersection points of $Q_{4}$ and $C_{6}$
have even multiplicities. The moduli space $\mathcal{M}_{S_{5,1,1}}$
of K3 surfaces $X$ with $\NS X)\simeq S_{5,1,1}$ is unirational.
\end{prop}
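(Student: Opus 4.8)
The plan is to argue as in Proposition~\ref{prop:The-surface-case S113} and the subsequent rank~$3$ cases, in three steps: realize $X$ as a double plane and describe the plane curves attached to the splittings $D_2=A_1+A_2$ and $4D_2=A_3+A_4$, then determine the singularities of the resulting quartic, and finally produce these surfaces by a rational construction. For the first step: since $D_2=A_1+A_2$ is ample of square $2$ it is base point free (Theorem~\ref{thm:SaintDonat-1}) and $\varphi_{D_2}=\eta\colon X\to\PP^2$ is the double cover branched over a reduced sextic $C_6$ (Theorem~\ref{thm:SaintDonat-2}~c)). The covering involution $\iota$ fixes $D_2=\eta^*(\text{line})$; as $D_2$ and $4D_2=A_3+A_4$ are $\iota$-invariant while $A_1\neq A_2$, $A_3\neq A_4$, $\iota$ swaps $A_1\leftrightarrow A_2$ and $A_3\leftrightarrow A_4$, so no $\cu$-curve is $\iota$-fixed and for the general $X$ the curve $C_6$ is smooth. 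Then $\eta(A_1)=\eta(A_2)$ is a line $\ell$ (of degree $D_2A_1=1$) with $\eta^*\ell=A_1+A_2$, tritangent to $C_6$ by Lemma~\ref{lem:splitingsPullBackTritang} since $A_1A_2=3=1^2+2$. Next, $Q_4:=\eta(A_3)=\eta(A_4)$ has degree $D_2A_3=4$; since $(A_3+A_4)^2=32$ the map $A_3\to Q_4$ is neither $2:1$ onto a conic nor $4:1$ onto a line (whose pull-backs square to $8$, resp. $2$), hence is birational, $\eta^*Q_4=A_3+A_4$, and $Q_4$ is a rational plane quartic. Finally, over a point $q\in Q_4\cap C_6$ which is a smooth point of $Q_4$, $\eta^*Q_4$ is locally $\{u^2=f(v)\}$ with $\mathrm{ord}_v f$ the intersection multiplicity of $Q_4$ and $C_6$ at $q$; reducibility of $\eta^*Q_4=A_3+A_4$ (with $A_3\neq A_4$) forces this multiplicity to be even, so $Q_4$ meets $C_6$ with even multiplicity everywhere.

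For the second step, $Q_4$ being rational of degree $4$ has $p_a(Q_4)=3$, so $\sum_p\delta_p(Q_4)=3$. At a singular point $p$ of $Q_4$: if $p\notin C_6$ then $\eta$ is \'etale over $p$, so locally $A_3\cup A_4$ is isomorphic to $Q_4$ at $p$; smoothness of $A_3,A_4$ forbids a cusp, hence $p$ is a node off $C_6$, contributing $2$ to $A_3A_4$. If $p\in C_6$, a node is impossible (it would make the pull-back of $C_6$ to the normalization of $Q_4$ have an odd coefficient, against the previous paragraph), so $p$ is a cusp lying on $C_6$, contributing $3$ to $A_3A_4$ and meeting $C_6$ with multiplicity $2$ there. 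Weighing these against $A_3A_4=18$ and the relation $Q_4\cdot C_6=24$ leaves, for the general member of $\mathcal M_{S_{5,1,1}}$, the configuration: $Q_4$ has three cusps, all on $C_6$, and is tangent to $C_6$ (with multiplicity $2$) at $9$ further points. As in the $S_{1,1,3}$ case, this last assertion is pinned down by a direct computation on an explicit example in the spirit of Example~\ref{exa:S113}, the $\cu$-curve intersection numbers being constant in the resulting flat family and $\mathcal M_{S_{5,1,1}}$ being irreducible (Proposition~\ref{prop:The-118-moduli-irred}).

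For the third step, the divisor $D_6=A_1+A_2+A_3$ is ample of square $8$ with $D_6A_1=D_6A_2=3$, $D_6A_3=2$, and it is not hyperelliptic: the only fibre classes on $X$ are $A_1+A_3,\ A_2+A_3,\ A_1+A_4,\ A_2+A_4$, all of $D_6$-degree $\geq 5$, and $D_6$ is not twice an integral class. Hence $\varphi_{D_6}$ embeds $X$ in $\PP^5$ as a degree $8$ surface, a complete intersection of three quadrics, with a hyperplane section equal to $A_1+A_2+A_3$, the union of two rational normal cubics $A_1,A_2$ and a conic $A_3$ lying in a common $\PP^4$. Reversing this, in the style of the constructions for $S_{1,5,1}$ and $S_{1,6,1}$: inside a $\PP^4$ pick a conic $A_3$ and two rational normal cubics $A_1,A_2$ meeting pairwise in $A_1A_2=3$, $A_1A_3=A_2A_3=2$ points, which by Remark~\ref{rem:Rat-Norm-Curves} depends rationally on the data; a general element of the net of quadrics of $\PP^5$ through $A_1\cup A_2\cup A_3$ is a smooth K3 surface $X$ whose hyperplane section along that $\PP^4$ is $A_1+A_2+A_3$, so the $A_i$ are $\cu$-curves generating a lattice isomorphic to $S_{5,1,1}$. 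For a general choice $\NS X)$ has Picard number $3$, which one checks by a van Luijk reduction argument as in Example~\ref{exa:S113}; since the only overlattice of $S_{5,1,1}$ is $S_{1,1,1}$, whose $\cu$-curves form a different dual graph, one gets $\NS X)\simeq S_{5,1,1}$. As this rationally parametrized family dominates $\mathcal M_{S_{5,1,1}}$ (a parameter count gives dimension at least $\dim\mathcal M_{S_{5,1,1}}=17$), the moduli space $\mathcal M_{S_{5,1,1}}$ is unirational.

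The crux is the second step, namely distinguishing the tricuspidal configuration from one with nodes off $C_6$ for the general surface. I expect to settle it exactly as the paper treats the analogous cuspidal cubic and cuspidal octic cases: by computing the configuration of $\cu$-curves on a carefully chosen explicit example and invoking constancy of intersection numbers in the flat family together with irreducibility of the moduli space, the only genuine computation being the local accounting of intersection contributions at cusps versus nodes.
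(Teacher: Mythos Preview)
Your proof is correct and follows essentially the same approach as the paper: the paper also deduces the double-cover description directly from $D_2=A_1+A_2$ and $4D_2\equiv A_3+A_4$ (asserting the cuspidal-quartic picture without the detailed local analysis you supply), and proves unirationality via exactly the same very ample square-$8$ divisor $A_1+A_2+A_3$, realizing $X\subset\PP^5$ with a hyperplane section consisting of two rational normal cubics and a conic meeting in the prescribed numbers, then cutting by a generic net of quadrics through them. Two minor slips: your divisor has square $8$, so in the paper's convention it should be called $D_8$, not $D_6$; and ``a general element of the net of quadrics'' should read ``a general net inside the linear system of quadrics through $A_1\cup A_2\cup A_3$''.
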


\begin{proof}
The divisor $D_{8}=A_{1}+A_{2}+A_{3}$ is very ample of square $8$,
with $D_{8} \cdot A_{j}=3,3,2,22$. Thus the curves $A_{1}$, $A_{2}$, $A_{3}$
are, respectively, two rational normal cubics and a conic. The rational
normal cubics cut each others in three points and cut the conic in
two points.

Let $P_{4}\subset\PP^{5}$ be a hyperplane, and let $P_{3}$, $P_{3}'$
be $3$-dimensional projective subspaces of $H_{4}$. Let $P_{0}$
be the plane $P_{0}=P_{3}\cap P_{3}'$. Let $P_{2}\subset P_{4}$
be a plane such that $P_{0}$ and $P_{2}$ meet at one point only.
Let us define the lines $L_{1}=P_{2}\cap P_{3}$, $L_{1}'=P_{2}\cap P_{3}'$, 
and let us fix two points $p_{1}$, $p_{2}$ (resp.\ $p_{1}'$, $p_{2}'$)
on $L_{1}$ (resp.\ $L_{1}'$) and three points $q_{1}$, $q_{2}$, $q_{3}$
in $P_{0}$. We now fix a smooth rational normal cubic curve $B_{1}$
(resp.\ $B_{2}$) on $P_{3}$ (resp.\ $P_{3}'$) passing through $p_{1}$, $p_{2}$
(resp.\ $p_{1}'$, $p_{2}'$) and $q_{1}$, $q_{2}$, $q_{3}$. We also fix an
irreducible conic $B_{3}$ in $P_{2}$ passing through $p_{1}$, $p_{2}$, $p_{1}'$, $p_{2}'$,
so that 
\[
\text{Degree}(B_{j}\cap B_{k})=A_{j} \cdot A_{k}
\]
for $j\neq k$ in $\{1,2,3\}$. The linear system $\mathcal{Q}$ of
quadrics containing the curve $B_{1}+B_{2}+B_{3}$ is $11$-dimensional;
the complete intersection surface obtained by the intersection of
the quadrics in a  general  net of $\mathcal{Q}$ is a K3 surface $X$
containing the $\cu$-curves $B_{1}$, $B_{2}$, $B_{3}$ which generate the
lattice $S_{5,1,1}$. In fact, one has $\NS X)\simeq S_{5,1,1}$ since
the only over-lattice containing $S_{5,1,1}$ is $S_{1,1,1}$. From
that construction and Remark \ref{rem:Rat-Norm-Curves} on the construction
of rational normal curves, we obtain that the moduli space $\mathcal{M}_{S_{5,1,1}}$
is unirational. 
\end{proof}

\subsection{The lattice $\boldsymbol{S_{6,1,1}}$}

Let $X$ be a K3 surface with N\'eron--Severi lattice of type $S_{6,1,1}$.
The surface $X$ contains four $(-2)$-curves $A_{1},\dots,A_{4}$,
with intersection matrix
\[
\left(\begin{array}{cccc}
-2 & 4 & 2 & 2\\
4 & -2 & 2 & 2\\
2 & 2 & -2 & 10\\
2 & 2 & 10 & -2
\end{array}\right).
\]
The curves $A_{1}$, $A_{2}$, $A_{3}$ generate the N\'eron--Severi lattice.
The divisor 
\[
D_{4}=A_{1}+A_{2}
\]
is ample, of square $4$, with $D_{4} \cdot A_{1}=D_{4} \cdot A_{2}=2$ and $D_{4} \cdot A_{3}=D_{4} \cdot A_{4}=4$.
We have $2D_{4}=A_{3}+A_{4}$. The linear system $|D_{4}|$ is base-point
free, non-hyperelliptic. This leads to the following. 

\begin{prop}
The surface $X$ is a quartic in $\PP^{3}$ with a hyperplane section
which is the union of two conics and a quadric section which is the
union of two degree $4$ smooth rational curves. The moduli space
$\mathcal{M}_{S_{6,1,1}}$ of K3 surfaces $X$ with $\NS X)\simeq S_{6,1,1}$
is unirational.
\end{prop}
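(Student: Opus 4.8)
The plan is to treat the geometric description and the unirationality separately.

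For the geometric part I would argue as in the earlier rank $3$ cases. Since $D_{4}=A_{1}+A_{2}$ is ample of square $4$, base point free and non-hyperelliptic, Theorem~\ref{thm:SaintDonat-1} shows that $\varphi_{D_{4}}$ is a closed embedding of $X$ as a quartic surface in $\PP^{3}$, with hyperplane class $H\equiv D_{4}$. From $H\equiv A_{1}+A_{2}$ and $D_{4}A_{1}=D_{4}A_{2}=2$ one sees that a suitable hyperplane section of $X$ is the union of the two conics $A_{1},A_{2}$, which are coplanar as they lie in a common hyperplane. Likewise $2D_{4}\equiv A_{3}+A_{4}$ is a member of $|2H|$, and on the quartic $X$ the system $|2H|$ is cut out by the full space of quadrics, so $A_{3}+A_{4}=X\cap Q$ for a unique quadric surface $Q$, with $A_{3},A_{4}$ smooth rational of degree $D_{4}A_{3}=D_{4}A_{4}=4$. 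A smooth rational quartic curve is not plane, so $Q$ is irreducible; for general $X$ it is a smooth quadric $\PP^{1}\times\PP^{1}$, on which $A_{3}$ and $A_{4}$ are the two components, of bidegrees $(1,3)$ and $(3,1)$ (the only splitting of $\OO_{Q}(4,4)$ into two smooth rational curves), of the complete intersection $X\cap Q$; in particular this recovers $A_{3}A_{4}=1+9=10$.

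For the unirationality I would reverse the construction. First fix a smooth quadric surface $Q\simeq\PP^{1}\times\PP^{1}\subset\PP^{3}$, a smooth curve $A_{3}\in|\OO_{Q}(1,3)|$ and a smooth curve $A_{4}\in|\OO_{Q}(3,1)|$; these move in rational families and satisfy $A_{3}A_{4}=10$. Then fix a plane $H_{1}$ meeting $Q$ transversally, so that $H_{1}$ meets $A_{3}\cup A_{4}$ in $8$ points, four on each curve; partition them into two complementary quadruples, each consisting of two points of $A_{3}$ and two of $A_{4}$, and let $C_{1},C_{2}\subset H_{1}$ be conics through the respective quadruples (members of the two pencils of conics through four points of the plane $H_{1}$). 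By construction the four curves $C_{1},C_{2},A_{3},A_{4}$ satisfy $C_{1}C_{2}=4$, $A_{3}A_{4}=10$ and $C_{i}A_{j}=2$ for $i\in\{1,2\}$ and $j\in\{3,4\}$, that is, they have the intersection matrix of the four $\cu$-curves of a K3 surface with N\'eron--Severi lattice $S_{6,1,1}$, up to relabelling. Let $X\subset\PP^{3}$ be a general quartic surface containing $C_{1}\cup C_{2}\cup A_{3}\cup A_{4}$; if $X\not\supset Q$ then a comparison of degrees forces $X\cap Q=A_{3}\cup A_{4}$, and if $X\not\supset H_{1}$ then $X\cap H_{1}=C_{1}\cup C_{2}$, so $X$ is of the type described.

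I would then check, exactly as for the quartic families treated above (see Section~\ref{subsec:The-latticeS114}), that for general $X$ in this family the surface is smooth, the curves $C_{1},C_{2},A_{3},A_{4}$ are $\cu$-curves generating a sublattice of $\NS X)$ isometric to $S_{6,1,1}$ (of discriminant $72$), and the Picard number of $X$ is $3$ — the last point being certifiable on one explicit numerical instance by reduction modulo a prime and van Luijk's trick (\cite{vL}), as in Example~\ref{exa:S113}. Then $\NS X)$ is a rank $3$ over-lattice of $S_{6,1,1}$, and there are only finitely many such (the square of the index divides $72$); comparing these over-lattices, for instance $S_{1,1,1}$, with Nikulin's list \cite{Nikulin2}, their $\cu$-curve configurations are visibly different from the one exhibited, so $\NS X)\simeq S_{6,1,1}$. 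Finally, a count of parameters — the quadric $Q$, the curves $A_{3},A_{4}$ on it, the plane $H_{1}$, the conics $C_{1},C_{2}$ and the quartic $X$ through the configuration, all modulo $\mathrm{PGL}_{4}(\CC)$ — matches $\dim\mathcal{M}_{S_{6,1,1}}=17$; since $\mathcal{M}_{S_{6,1,1}}$ is irreducible of that dimension and the construction is parametrised by a rational variety, the moduli space is unirational.

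The step I expect to be the real obstacle is showing that the linear system of quartics through $C_{1}\cup C_{2}\cup A_{3}\cup A_{4}$ has the expected dimension — equivalently, that this degree $12$, arithmetic genus $19$ curve imposes independent conditions on quartics and does not force the surface to contain $Q$ or $H_{1}$ — together with the smoothness and the Picard number $3$ of the general member. As in the analogous propositions above, I would settle this by computing the rank of the relevant differential at one randomly chosen instance with Magma, or by the van Luijk specialisation argument.
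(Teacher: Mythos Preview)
Your geometric description matches the paper's, and the bidegree $(1,3)$/$(3,1)$ observation is a nice addition. For unirationality your route is correct in outline but unnecessarily heavy, and the obstacle you flag is self-inflicted.

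The paper sidesteps it by fixing only \emph{two} of the four curves. One takes a smooth rational quartic $A_{3}\subset\PP^{3}$, two points $p_{1},p_{2}$ on it, a generic plane $P$ through $p_{1},p_{2}$, and an irreducible conic $A_{1}\subset P$ through $p_{1},p_{2}$, so that $A_{1}\cdot A_{3}=2$. The linear system of quartics through $A_{1}\cup A_{3}$ (a degree~$6$ curve of arithmetic genus~$1$) is $10$-dimensional, and a general member $X$ is smooth. The other two curves then come for free: $A_{2}$ is the residual conic in $X\cap P$, and $A_{4}$ is the residual in $X\cap Q_{2}$, where $Q_{2}$ is the \emph{unique} quadric containing $A_{3}$ (cf.\ \cite[Ex.~IV.6.1]{Hartshorne}). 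The relation $8=2H\cdot A_{3}=(A_{3}+A_{4})A_{3}$ gives $A_{3}A_{4}=10$, and the full intersection matrix follows. All the data vary in rational families, so unirationality is immediate once one checks the generic Picard number is~$3$.

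What your construction buys is symmetry (you see both curves on $Q$ from the start), but the price is having to control quartics through a degree~$12$, arithmetic genus~$19$ curve and to rule out that every such quartic contains $Q$ or $H_{1}$; this is exactly the computation you anticipate needing Magma for. The paper's version makes existence of a smooth $X$ and the parameter count essentially immediate, since $A_{1}\cup A_{3}$ imposes far fewer conditions.
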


\begin{proof}
Let us construct these K3 surfaces. Let $A_{3}$ be a smooth degree
$4$ rational curve in $\PP^{3}$, let $p_{1},p_{2}$ be two points
on it, let $P$ be a  general  plane containing the points $p_{1}$, $p_{2}$,
and let $A_{1}$ be an irreducible conic contained in $P$ and containing
the points $p_{1}$, $p_{2}$, so that the degree of the intersection $A_{1}\cdot A_{3}$
is $2$. The linear system of quartics containing $A_{1}$ and $A_{3}$
is $10$-dimensional. Let $X$ be such a  general  quartic; the intersection
of $X$ and $P$ contains $A_{1}$; the residual curve $A_{2}$ is
another smooth conic. By \cite[Exercise IV.6.1]{Hartshorne}, there
exists a unique quadric $Q_{2}$ (which is moreover smooth) containing
the curve $A_{3}$. The intersection of $X$ and $Q_{2}$ is the union
of $A_{3}$ and another degree $4$ smooth rational curve $A_{4}$.
Since 
\[
8=2HA_{3}=(A_{3}+A_{4})A_{3},
\]
we get $A_{3} \cdot A_{4}=10$, and therefore from $(A_{3}+A_{4})^{2}=16$,
one obtains $A_{4}^{2}=-2$. From the construction, the curves $A_{1},\dots,A_{4}$
have the above intersection matrix; thus the N\'eron--Severi lattice
of $X$ contains the lattice $S_{6,1,1}$, and by  the general assumption $\NS X)\simeq S_{6,1,1}$.
That construction shows that the moduli space $\mathcal{M}_{S_{6,1,1}}$
is unirational. 
\end{proof}

\begin{prop}
The automorphism group of a general K3 surface $X$ with $\NS X)\simeq S_{6,1,1}$
is trivial.
\end{prop}

\begin{proof}
We proceed as in the proof of Proposition \ref{prop:The-automorphism-groupTRIVIAL}.
\end{proof}

\subsection{The lattice $\boldsymbol{S_{7,1,1}}$}

Let $X$ be a K3 surface with N\'eron--Severi lattice of type $S_{7,1,1}$.
The surface $X$ contains six $(-2)$-curves $A_{1},\dots,A_{6}$,
with intersection matrix
\[
\left(\begin{array}{cccccc}
-2 & 5 & 2 & 5 & 16 & 2\\
5 & -2 & 2 & 5 & 2 & 16\\
2 & 2 & -2 & 16 & 26 & 26\\
5 & 5 & 16 & -2 & 2 & 2\\
16 & 2 & 26 & 2 & -2 & 26\\
2 & 16 & 26 & 2 & 26 & -2
\end{array}\right). 
\]
The curves $A_{1}$, $A_{2}$, $A_{3}$ generate the N\'eron--Severi lattice, 
and 
\[
A_{4}\equiv3A_{1}+3A_{2}-2A_{3},\quad A_{5}\equiv4A_{1}+6A_{2}-3A_{3},\quad A_{6}\equiv6A_{1}+4A_{2}-3A_{3}.
\]
The divisor 
\[
D_{6}=A_{1}+A_{2}
\]
is very ample of square $6$. For $j\in\{1,\dots,6\}$, we have $D_{6} \cdot A_{j}$
equal to, respectively, $3,3,4,10,18$. The divisors 
\[
2A_{1}+2A_{2}-A_{3},\, 3A_{1}+4A_{2}-2A_{3},\, 4A_{1}+3A_{2}-2A_{3}
\]
are also very ample  of square $6$. 

\begin{prop}
The automorphism group of a general K3 surface $X$ with $\NS X)\simeq S_{6,1,1}$
is trivial.
\end{prop}

\begin{proof}
We proceed as in the proof of Proposition \ref{prop:The-automorphism-groupTRIVIAL}.
\end{proof}

\subsection{The lattice $\boldsymbol{S_{8,1,1}}$\label{subsec:The-latticeS811}}

Let $X$ be a K3 surface with N\'eron--Severi lattice of type $S_{8,1,1}$.
The surface $X$ contains four $(-2)$-curves $A_{1},\dots,A_{4}$,
with intersection matrix
\[
\left(\begin{array}{cccc}
-2 & 6 & 2 & 2\\
6 & -2 & 2 & 2\\
2 & 2 & -2 & 6\\
2 & 2 & 6 & -2
\end{array}\right).
\]
The curves $A_{1}$, $A_{2}$, $A_{3}$ generate the N\'eron--Severi lattice.
The divisor 
\[
D_{8}=A_{1}+A_{2}\equiv A_{3}+A_{4}
\]
is ample, of square $8$, base-point free, non-hyperelliptic, with
$D_{8} \cdot A_{j}=4$ for $j\in\{1,\dots,4\}$. 

\begin{prop}
The K3 surface is a complete intersection in $\PP^{5}$ with two hyperplane
sections which are each the union of two degree $4$ rational curves.
The moduli space $\mathcal{M}_{S_{8,1,1}}$ is unirational.
\end{prop}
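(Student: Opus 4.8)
The plan is to follow the same scheme as for the other degree-$8$ complete intersection cases (most closely $S_{5,1,1}$, and in spirit $S_{6,1,1}$, $S_{1,6,1}$). Since $D_{8}=A_{1}+A_{2}\equiv A_{3}+A_{4}$ is ample, base point free, non-hyperelliptic and of square $8$, Theorem \ref{thm:SaintDonat-1} shows that the linear system $|D_{8}|$, of dimension $1+\frac{1}{2}D_{8}^{2}=5$, embeds $X$ in $\PP^{5}$ as a smooth surface of degree $8$. A K3 surface embedded by a complete linear system is projectively normal (\cite{SaintDonat}), so $h^{0}(I_{X}(2))=21-h^{0}(\OO_{X}(2D_{8}))=21-18=3$, and one checks that these three quadrics cut out exactly $X$, which is therefore a complete intersection of three quadrics in $\PP^{5}$. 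The hyperplane class is $D_{8}$, and as $D_{8}A_{j}=4$ with $A_{j}\simeq\PP^{1}$, the image $B_{j}$ of $A_{j}$ is a rational normal quartic; $B_{1},B_{2}$ span the hyperplane $\langle A_{1},A_{2}\rangle$ and $B_{3},B_{4}$ span $\langle A_{3},A_{4}\rangle$, so the two hyperplane sections of $X$ cut out by these classes are the unions $B_{1}\cup B_{2}$ and $B_{3}\cup B_{4}$ of two degree-$4$ rational curves (each of arithmetic genus $5$, since $A_{1}A_{2}=A_{3}A_{4}=6$). This gives the first assertion.

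For the unirationality I would reverse the construction. Fix two points $p_{1},p_{2}\in\PP^{5}$, two distinct hyperplanes $H,H'$ containing the line through $p_{1},p_{2}$, a rational normal quartic $B_{1}\subset H$ through $p_{1},p_{2}$, and a rational normal quartic $B_{3}\subset H'$ through $p_{1},p_{2}$; by Remark \ref{rem:Rat-Norm-Curves} the curves $B_{1},B_{3}$ depend rationally on the chosen data, and for a generic choice $B_{1}\cap B_{3}=\{p_{1},p_{2}\}$, hence $\mathrm{Degree}(B_{1}\cdot B_{3})=2$. The base locus of a generic net of quadrics through $B_{1}\cup B_{3}$ is then a smooth K3 surface $X\subset\PP^{5}$ of degree $8$ containing $B_{1}$ and $B_{3}$. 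Put $A_{1}=[B_{1}]$, $A_{3}=[B_{3}]$ and $A_{2}=D_{8}-A_{1}$, $A_{4}=D_{8}-A_{3}$, where $D_{8}$ denotes the hyperplane class: then $A_{2}$ and $A_{4}$ are effective classes of square $-2$ represented by the residuals of $B_{1}$, resp. $B_{3}$, in a hyperplane section, hence are $\cu$-curves $B_{2},B_{4}$. Using $A_{1}A_{3}=2$ (by construction) and $A_{1}+A_{2}\equiv A_{3}+A_{4}\equiv D_{8}$, one finds that $(A_{1},A_{2},A_{3})$ has the Gram matrix of $S_{8,1,1}$, so $B_{1},\dots,B_{4}$ generate a copy of $S_{8,1,1}$ in $\NS X)$, and for a generic choice $\NS X)\simeq S_{8,1,1}$, since the proper overlattices of $S_{8,1,1}$ occurring as Néron--Severi lattices of K3 surfaces with finite automorphism group carry fewer $\cu$-curves. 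Since all parameters of the construction vary in rational families, this yields a dominant rational map from a rational variety onto $\mathcal{M}_{S_{8,1,1}}$, which together with the irreducibility of the latter (Proposition \ref{prop:The-118-moduli-irred}) shows that $\mathcal{M}_{S_{8,1,1}}$ is unirational.

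The main obstacle is the dimension bookkeeping needed for dominance: one must compute the exact dimension of the linear system $|I_{B_{1}\cup B_{3}}(2)|$ of quadrics through $B_{1}\cup B_{3}$ — which is superabundant, since $B_{1}$ and $B_{3}$ meet, so it is not simply $h^{0}(I_{B_{1}}(2))+h^{0}(I_{B_{3}}(2))-21$ — then add the dimension of the Grassmannian of nets in it to the number of moduli of the pair $(B_{1},B_{3})$ modulo $\mathrm{PGL}_{6}$, and check that the total equals $17=20-\rk S_{8,1,1})=\dim\mathcal{M}_{S_{8,1,1}}$. One also has to confirm that a generic net of quadrics through $B_{1}\cup B_{3}$ cuts out an irreducible smooth surface (and not a higher-dimensional complete intersection or a degenerate one), that a generic member of the family has Picard number $3$ (by a van Luijk-type specialization as in Example \ref{exa:S113}), and that the overlattice exclusion above is complete. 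These verifications are routine with the methods already used in the paper; alternatively, one may start from the normal form for the equations of $X$ furnished by \cite{ACL} and carry out the differential-rank computation used for $S_{1,1,4}$ and $S_{1,3,1}$.
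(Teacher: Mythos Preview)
Your approach is essentially the same as the paper's: take two rational normal quartics $B_{1}\subset H$, $B_{3}\subset H'$ in distinct hyperplanes meeting transversally in two points, cut out a generic K3 through $B_{1}\cup B_{3}$ by quadrics, and observe that the residual hyperplane curves $B_{2},B_{4}$ complete the $S_{8,1,1}$ configuration. The paper's proof is a good deal terser (and in fact writes ``a generic quartic'' where a net of quadrics is meant), so your added justifications---the quadric count $h^{0}(I_{X}(2))=3$, the explicit Gram-matrix check, and the overlattice exclusion---are welcome; the dimension bookkeeping you flag as an obstacle is simply omitted in the paper, which appeals directly to the rationality of the parameter space of the construction.
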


\begin{proof}
One can construct these surfaces by taking two degree $4$ rational
normal curves $C_{1}$, $C_{3}$ in two different hyperplanes $H_{1}$, $H_{3}$
but such that the curves $C_{1}$, $C_{3}$ meet transversely in two fixed
points. Let $X$ be a general quartic that contains $C_{1}$ and $C_{3}$; 
then the intersections of $X$ with $H_{1}$, $H_{2}$ are $C_{1}+C_{2}$
and $C_{3}+C_{4}$, where $C_{2}$, $C_{4}$ are two degree $4$ rational
normal curves. Curves $C_{1},\dots,C_{4}$ generate a lattice isometric
to $S_{8,1,1}$. That construction shows that the moduli space $\mathcal{M}_{S_{8,1,1}}$
is unirational.
\end{proof}

\begin{prop}
The automorphism group of a general K3 surface $X$ with $\NS X)\simeq S_{8,1,1}$
is trivial.
\end{prop}

\begin{proof}
We proceed as in the proof of Proposition \ref{prop:The-automorphism-groupTRIVIAL}. 
\end{proof}

\subsection{The lattice $\boldsymbol{S_{10,1,1}}$}

Let $X$ be a K3 surface with N\'eron--Severi lattice of type $S_{10,1,1}$.
The surface $X$ contains eight $(-2)$-curves $A_{1},\dots,A_{8}$,
with intersection matrix
\[
\left(\begin{array}{cccccccc}
-2 & 18 & 8 & 8 & 2 & 22 & 2 & 22\\
18 & -2 & 8 & 8 & 22 & 2 & 22 & 2\\
8 & 8 & -2 & 18 & 2 & 22 & 22 & 2\\
8 & 8 & 18 & -2 & 22 & 2 & 2 & 22\\
2 & 22 & 2 & 22 & -2 & 38 & 18 & 18\\
22 & 2 & 22 & 2 & 38 & -2 & 18 & 18\\
2 & 22 & 22 & 2 & 18 & 18 & -2 & 38\\
22 & 2 & 2 & 22 & 18 & 18 & 38 & -2
\end{array}\right).
\]
The curves $A_{1}$, $A_{3}$, $A_{5}$ generate the N\'eron--Severi lattice.
The divisor 
\[
D_{2}=A_{1}+A_{3}-A_{5}
\]
is ample, base-point free, of square $2$, with $D_{2} \cdot A_{j}=4$ for
$j\in\{1,2,3,4\}$ and $D_{2} \cdot A_{j}=6$ for $j\in\{5,6,7,8\}$. We
have 
\[
4D_{2}\equiv A_{1}+A_{2}\equiv A_{3}+A_{4},\quad 6D_{2}\equiv A_{5}+A_{6}\equiv A_{7}+A_{8}.
\]
By using the linear system $|D_{2}|$, we obtain the following. 

\begin{prop}
The K3 surface is a double cover of $\,\PP^{2}$ branched over a smooth
sextic curve $C_{6}$ such that there are two quartic cuspidal rational
curves $Q_{4}$, $Q_{4}'$ and two sextic cuspidal rational curves $Q_{6}$, $Q_{6}'$
such that the cusps are on $C_{6}$ and the intersection multiplicities
of these curves with $C_{6}$ are even at all intersection points. 
\end{prop}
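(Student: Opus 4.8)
The plan is to run the argument used for the earlier double-plane cases, in particular Proposition~\ref{prop:The-surface-case S113} together with Example~\ref{exa:S113}, applied to the four pairs $\{A_1,A_2\},\{A_3,A_4\},\{A_5,A_6\},\{A_7,A_8\}$ of $\cu$-curves.

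First I would fix the double cover. Since $D_2=A_1+A_3-A_5$ is base point free of square $2$, Theorem~\ref{thm:SaintDonat-2}(c) gives that $\eta:=\varphi_{D_2}$ is a double cover $X\to\PP^2$, and since $D_2$ is ample $\eta$ is finite, so $X$ is literally the double cover branched over a reduced sextic $C_6$, which is smooth because $X$ is. Let $\iota$ be the covering involution. For a curve $C\subset X$ with $\eta(C)\not\subset C_6$ one has $D_2\cdot C=(\deg\eta|_C)\,\deg\eta(C)$; if $\eta|_{A_i}$ had degree $2$, then $\eta^*\eta(A_i)=2A_i$ would give $A_i\equiv mD_2$ with $m>0$, contradicting $A_i^2=-2$. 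So every $\eta|_{A_i}$ is birational, $\deg\eta(A_i)=D_2\cdot A_i$, and $\eta(A_1),\dots,\eta(A_4)$ are plane curves of degree $4$, $\eta(A_5),\dots,\eta(A_8)$ of degree $6$, all rational since $A_i\simeq\PP^1$. As $A_i$ is horizontal, $\eta^*\eta(A_i)=A_i+\iota A_i$; comparing with $4D_2\equiv A_1+A_2\equiv A_3+A_4$ and $6D_2\equiv A_5+A_6\equiv A_7+A_8$ and using that a $\cu$-curve is unique in its class yields $\iota A_1=A_2$, $\iota A_3=A_4$, $\iota A_5=A_6$, $\iota A_7=A_8$. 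Hence $Q_4:=\eta(A_1)=\eta(A_2)$, $Q_4':=\eta(A_3)=\eta(A_4)$ are distinct rational quartics, $Q_6:=\eta(A_5)=\eta(A_6)$, $Q_6':=\eta(A_7)=\eta(A_8)$ distinct rational sextics, with $\eta^*Q_4=A_1+A_2$, $\eta^*Q_4'=A_3+A_4$, $\eta^*Q_6=A_5+A_6$, $\eta^*Q_6'=A_7+A_8$. Note $A_1\cdot A_2=A_3\cdot A_4=18=4^2+2$ and $A_5\cdot A_6=A_7\cdot A_8=38=6^2+2$: these are the values Lemma~\ref{lem:splitingsPullBackTritang} would predict for $d=4$ and $d=6$, so this is the degree-$d$ analogue of that splitting lemma. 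The local computation in its proof (the double cover near a ramification point) applies verbatim and shows that each point of $\eta^{-1}(C_6)$ lying on $A_i\cap\iota A_i$ contributes an even local intersection number to $\eta(A_i)\cdot C_6$; since every point of $\eta(A_i)\cap C_6$ has its unique preimage in $A_i\cap\iota A_i\cap\eta^{-1}(C_6)$, the curves $Q_4,Q_4',Q_6,Q_6'$ meet $C_6$ with even multiplicity at every intersection point. Finally, the genus--degree formula gives $Q_4,Q_4'$ arithmetic genus $3$ and $Q_6,Q_6'$ arithmetic genus $10$, all of geometric genus $0$.

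The remaining, and I expect hardest, point is that for the general $X$ these $\delta$-invariants ($3$, resp. $10$) are realized by cusps lying on $C_6$. The lattice alone does not decide this: from the local picture a singularity of $Q_4$ away from $C_6$ is forced to consist of two smooth branches, but, e.g., "three nodes of $Q_4$ off $C_6$" and "three cusps of $Q_4$ on $C_6$" are both compatible with $A_1\cdot A_2=18$ and the $\delta$-invariant being $3$. I would therefore argue in the opposite direction, exactly as in Proposition~\ref{prop:The-surface-case S113}/Example~\ref{exa:S113}: construct the family of double covers of $\PP^2$ branched over a smooth sextic carrying two rational quartics with three cusps on the sextic and two rational cuspidal sextics with cusps on the sextic, all meeting the sextic only with even multiplicity; form the $\cu$-curves $A_1,\dots,A_8$ arising as the components of the pullbacks of these four curves, compute their intersection matrix on one explicit numerical example and verify that it is that of $S_{10,1,1}$; then check that the Picard number of the generic member of the family is $3$ (by a van Luijk reduction argument as in Example~\ref{exa:S113}) and that, by the overlattice analysis familiar from the earlier cases, $S_{10,1,1}$ is the only overlattice carrying $8$ $\cu$-curves, so that the generic surface in the family has N\'eron-Severi lattice isomorphic to $S_{10,1,1}$. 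Since $\mathcal{M}_{S_{10,1,1}}$ is irreducible (Proposition~\ref{prop:The-118-moduli-irred}) of dimension $17$ and, as one verifies by the same bookkeeping as in the earlier cases, the family of configured double covers also has dimension $17$, the natural map from an open subset of this family into $\mathcal{M}_{S_{10,1,1}}$ is dominant, whence the general $S_{10,1,1}$-polarized K3 surface has the asserted model. The real work is concentrated in this last step: exhibiting the cuspidal quartics and sextics with the prescribed even contact along $C_6$ in a family of the correct dimension, and controlling the Picard number of its generic member --- for which the van Luijk trick on a carefully chosen prime reduction, as in Example~\ref{exa:S113}, is the most reliable tool.
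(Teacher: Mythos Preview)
Your approach is exactly the paper's: the entire argument in the text consists of the observations preceding the proposition --- that $D_{2}$ is ample of square $2$, and that $4D_{2}\equiv A_{1}+A_{2}\equiv A_{3}+A_{4}$, $6D_{2}\equiv A_{5}+A_{6}\equiv A_{7}+A_{8}$ --- after which the proposition is simply stated. Your first paragraph reproduces and sharpens this, correctly extending Lemma~\ref{lem:splitingsPullBackTritang} to degrees $4$ and $6$ (the check $A_{1}A_{2}=18=4^{2}+2$, $A_{5}A_{6}=38=6^{2}+2$ is exactly the expected pattern).

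You are also right that the word ``cuspidal'' and the clause ``the cusps are on $C_{6}$'' are not justified by the lattice computation alone: the relations $dD_{2}\equiv A+B$ force the image curves to be rational of degree $d$ with even contact along $C_{6}$, and your local analysis shows that a unibranch singularity of $Q$ off $C_{6}$ is impossible (since $A+B$ would then have a unibranch singularity, contradicting smoothness of $A,B$), but nodes off $C_{6}$ are not excluded by these data. The paper does not address this point here, nor in the parallel cases $S_{1,1,6}$ and $S_{5,1,1}$ where the same phrasing occurs; it treats the cuspidal description as part of the asserted geometric picture rather than something deduced from the intersection numbers. Your proposed reverse construction with a van Luijk verification, modelled on Example~\ref{exa:S113}, is a sound way to pin this down and would in fact go beyond what the paper provides.
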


The divisor $D_{8}=2A_{1}+A_{3}-A_{5}$ is very ample, of square $8$,
with $D_{8} \cdot A_{j}=2,22,12,12,8,28,8,28$ for $j=1,\dots,8$. 

\subsection{The lattice $\boldsymbol{S_{12,1,1}}$}

Let $X$ be a K3 surface with N\'eron--Severi lattice of type $S_{12,1,1}$.
The surface $X$ contains six $(-2)$-curves $A_{1},\dots,A_{6}$,
with intersection matrix
\[
\left(\begin{array}{cccccc}
-2 & 14 & 2 & 10 & 10 & 2\\
14 & -2 & 10 & 2 & 2 & 10\\
2 & 10 & -2 & 14 & 2 & 10\\
10 & 2 & 14 & -2 & 10 & 2\\
10 & 2 & 2 & 10 & -2 & 14\\
2 & 10 & 10 & 2 & 14 & -2
\end{array}\right). 
\]
The curves $A_{1}$, $A_{3}$, $A_{5}$ generate the N\'eron--Severi lattice.
The divisor 
\[
D_{6}=A_{1}-A_{3}+A_{5}
\]
is very ample, of square $6$, with $D_{2} \cdot A_{j}=6$ for $j\in\{1,\dots,6\}$.
We have 
\[
2D_{6}\equiv A_{1}+A_{2}\equiv A_{3}+A_{4}\equiv A_{5}+A_{6};
\]
thus we obtain the first part of the following proposition. 

\begin{prop}
The surface $X$ is a degree $6$ surface in $\PP^{4}$ such that
there are three quadric sections, each of which splits as the union
of two degree $6$ smooth rational curves.

The automorphism group of a general K3 surface $X$ with $\NS X)\simeq S_{12,1,1}$
is trivial.
\end{prop}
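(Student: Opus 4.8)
The plan is to prove the two assertions in turn. For the first, recall it has already been shown that $D_{6}=A_{1}-A_{3}+A_{5}$ is very ample of square $6$; by Theorem \ref{thm:SaintDonat-1}b) one has $h^{0}(D_{6})=2+\frac{1}{2}D_{6}^{2}=5$, so $\varphi_{D_{6}}$ embeds $X$ in $\PP^{4}$ as a surface of degree $D_{6}^{2}=6$, and a degree $6$ K3 surface in $\PP^{4}$ is a complete intersection of a quadric and a cubic. Each curve $A_{j}$ satisfies $D_{6}A_{j}=6$, so under the embedding it becomes a smooth rational curve of degree $6$; and the three linear equivalences $2D_{6}\equiv A_{1}+A_{2}\equiv A_{3}+A_{4}\equiv A_{5}+A_{6}$ exhibit three quadric sections of $X\subset\PP^{4}$, each of which splits as the union of two such curves. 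This gives the first part.

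For the triviality of $\aut(X)$ I would proceed exactly as in the proof of Proposition \ref{prop:The-automorphism-groupTRIVIAL}. Since $S_{12,1,1}$ has rank $3\leq 8$, by \cite[Table 1 and Lemma 2.3]{Kondo} the automorphism group of the general such $X$ is either trivial or generated by a hyperelliptic involution, so it suffices to verify the three hypotheses of Proposition \ref{prop:AUTOTrivial}: that $X$ carries (i) no $(-2)$-curve $A$ and fiber $F$ of an elliptic fibration with $AF=1$, (ii) no big and nef divisor $D$ with $DF=2$ for a fiber $F$, and (iii) no big and nef divisor $D$ with $D^{2}=2$. Condition (i) holds for free: the Gram matrix of $A_{1},\dots,A_{6}$ has only even entries and these classes span $\NS X)$, hence every intersection number on $X$ is even, so $AF$ is never $1$.

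The remaining step, which is also the main obstacle, is the verification of (ii) and (iii). Knowing the six $(-2)$-curves we know the nef cone of $X$; I would use a Hilbert basis of it (recorded in \cite[Table 1]{ACL}, or computed directly) to run over the big and nef classes, checking that each is base point free with square $0$ or $\geq 6$, and that none of the finitely many fiber classes --- these are among the classes $A_{i}+A_{j}$ with $A_{i}A_{j}=2$, namely $A_{1}+A_{3}$, $A_{1}+A_{6}$, $A_{2}+A_{4}$, $A_{2}+A_{5}$, $A_{3}+A_{5}$, $A_{4}+A_{6}$ --- meets a big and nef divisor in degree $2$. By Theorems \ref{thm:SaintDonat-1} and \ref{thm:SaintDonat-2} this excludes every hyperelliptic complete linear system, and Proposition \ref{prop:AUTOTrivial} then yields $\aut(X)=\{1\}$. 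The point requiring care is (ii): since $DF$ and $D^{2}$ are always even here, $DF=2$ is not ruled out by parity, and one genuinely needs the explicit nef cone to check that no $(-2)$-curve $A_{i}$ is contracted by a big and nef $D$ with $DA_{j}=2$ for some $A_{j}$ meeting $A_{i}$ inside a singular fiber --- a finite but somewhat tedious inspection.
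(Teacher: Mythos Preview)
Your proposal is correct and follows essentially the same approach as the paper, which for the second part simply says ``we proceed as in the proof of Proposition \ref{prop:The-automorphism-groupTRIVIAL}.'' Your parity observation that all entries of the Gram matrix are even (so condition (i) is automatic) is a nice shortcut not made explicit in the paper, but the overall strategy---using the Hilbert basis of the nef cone from \cite{ACL} to rule out hyperelliptic linear systems via Proposition \ref{prop:AUTOTrivial}---is exactly the paper's.
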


\begin{proof}
For the second part, we proceed as in the proof of Proposition \ref{prop:The-automorphism-groupTRIVIAL}.
\end{proof}

\subsection{The lattice $\boldsymbol{S_{4,1,2}'}$}

Let $X$ be a K3 surface with N\'eron--Severi lattice of type $S_{4,1,2}'$.
The surface $X$ contains four $(-2)$-curves $A_{1},\dots,A_{4}$,
with intersection matrix
\[
\left(\begin{array}{cccc}
-2 & 6 & 2 & 2\\
6 & -2 & 2 & 2\\
2 & 2 & -2 & 6\\
2 & 2 & 6 & -2
\end{array}\right).
\]
This is the same intersection matrix as the four $\cu$-curves in
Section~\ref{subsec:The-latticeS811}, but here the curves $A_{1}$, $A_{2}$, $A_{3}$, $A_{4}$
generate only an index $2$ subgroup of the N\'eron--Severi lattice.
There is a basis $e_{1}$, $e_{2}$, $e_{3}$ of the N\'eron--Severi lattice
such that the intersection matrix of $e_{1}$, $e_{2}$, $e_{3}$ is 
\[
\left(\begin{array}{ccc}
-6 & 2 & 0\\
2 & -2 & 4\\
0 & 4 & -8
\end{array}\right).
\]
 In that basis, the classes of the curves are 
\[
\begin{array}{ll}
A_{1}=(2,3,1), & A_{2}=(0,1,1),\\
A_{3}=(2,3,2), & A_{4}=(0,1,0),
\end{array}
\]
and the divisor 
\[
D_{2}=(1,2,1)
\]
is ample, of square $2$, base-point free, with $D_{2} \cdot A_{j}=2$ for
$j\in\{1,2,3,4\}$. We have 
\[
2D_{2}\equiv A_{1}+A_{2}\equiv A_{3}+A_{4}.
\]
By using the linear system $|D_{2}|$, we obtain the following. 

\begin{prop}
The K3 surface is a double cover of $\,\PP^{2}$ branched over a smooth
sextic curve which has two $6$-tangent conics. 
\end{prop}

Let $D_{4}$ be the divisor $D_{4}=(1,3,1)$. It is nef, base-point
free, with $D_{4} \cdot A_{j}=4,4,8,0$ for $j=1,\dots,4$. The linear system
$|D_{4}|$ defines a singular model $Y$ of $X$ which is a quartic
in $\PP^{3}$ with a node. Since 
\[
2D_{4}\equiv A_{1}+A_{2}\equiv A_{3}+A_{4},
\]
 there are two quadric sections $Q_{1}$, $Q_{2}$ such that $Q_{1}$
is the union of two smooth rational degree $4$ curves which are the images
of $A_{1}$, $A_{2}$ and $Q_{2}$ is a degree $8$ rational curve (the
image of $A_{3}$) which contains the node (the image of $A_{4}$).

\section{Rank 4 lattices}

\subsection*{Vinberg's classification}

The reference for the classification of rank $4$ lattices that are
 N\'eron--Severi lattices of K3 surfaces with finite automorphism
group is the article of Vinberg \cite{Vinberg}. There are two lattices
such that the fundamental domain of the Weyl group is compact, and
$12$ for which the domain is not compact. Geometrically, the fundamental
domain is compact if and only if the K3 surface has no elliptic fibration.

\subsection{The rank 4 and compact cases}

We studied in \cite{Roulleau} the two lattices $L(24)$, $L(27)$ of
rank $4$ such that the K3 surfaces have no elliptic fibrations. The
Gram matrices of the lattices $L(24)$, $L(27)$ are, respectively,
\[
\left(\begin{array}{cccc}
2 & 1 & 1 & 1\\
1 & -2 & 0 & 0\\
1 & 0 & -2 & 0\\
1 & 0 & 0 & -2
\end{array}\right),\,\,\,\left(\begin{array}{cccc}
12 & 2 & 0 & 0\\
2 & -2 & 1 & 0\\
0 & 1 & -2 & 1\\
0 & 0 & 1 & -2
\end{array}\right).
\]
The K3 surfaces with such N\'eron--Severi lattices are double covers of
the plane branched over a smooth sextic curve $C_{6}$. For completeness,
let us recall the results obtained in \cite{Roulleau}. 

\begin{thm}
Let $X$ be a K3 surface with N\'eron--Severi lattice isometric to $L(24)$.
The six $(-2)$-curves on $X$ are pull-backs of three lines tritangent
to $C_{6}$.

Let $X$ be a K3 surface with N\'eron--Severi lattice isometric to $L(27)$.
The eight $(-2)$-curves on $X$ are pull-backs of one line tritangent to $C_{6}$ and  three conics $6$-tangent  to $C_{6}$.
\end{thm}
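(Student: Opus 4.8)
The plan is to treat the two lattices in parallel: exhibit in each case an ample divisor $D$ of square $2$, invoke Saint-Donat to realize $X$ as a double plane $\eta\colon X\to\PP^2$ branched over a smooth sextic $C_6$, and then read off the tritangent lines and $6$-tangent conics from the finite list of $\cu$-curves by means of Lemma~\ref{lem:splitingsPullBackTritang}.

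For $L(24)$, let $v_1,\dots,v_4$ be a basis realizing the given Gram matrix, so $v_1^2=2$, $v_1v_i=1$ and $v_iv_j=-2\delta_{ij}$ for $i,j\in\{2,3,4\}$. First I would check that $v_1^\perp$ contains no vector of square $-2$: for $w=av_1+\sum_{i\ge2}b_iv_i$ the condition $wv_1=0$ reads $2a+\sum b_i=0$, and then $w^2=-2a^2-2\sum b_i^2$, so $w^2=-2$ would force $a^2+\sum b_i^2=1$, incompatible with $2a+\sum b_i=0$. Hence $v_1$ lies (up to sign) in the interior of a Weyl chamber of the positive cone, and by the transitivity of the Weyl group on chambers recalled in Section~\ref{subsec:About-the-computations} we may take $D=v_1$ to be ample. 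As $D$ is ample of square $2$, Theorem~\ref{thm:SaintDonat-1}(b) excludes a fixed part for $|D|$ (a fixed part would give $D=2E+\Gamma$ with $\Gamma$ a $\cu$-curve satisfying $D\Gamma=0$), so $|D|$ is base-point free and Theorem~\ref{thm:SaintDonat-2}(c) yields the double cover $\eta=\varphi_D\colon X\to\PP^2$ branched over a sextic $C_6$, which is smooth since $\varphi_D$ is finite and contracts nothing. Next, running Shimada's algorithm \cite{Shimada} with the ample class $D$ and confirming completeness with the test of \cite{ACL} (exactly as in \cite{Roulleau}) gives that the $\cu$-curves of $X$ are precisely $v_2,v_3,v_4$ and $v_1-v_2,v_1-v_3,v_1-v_4$. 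For each $i\in\{2,3,4\}$ one has $v_i+(v_1-v_i)\equiv D$ and $v_i(v_1-v_i)=3=1^2+2$, so by the converse in Lemma~\ref{lem:splitingsPullBackTritang} the image $\eta(v_i)=\eta(v_1-v_i)$ is a line, tritangent to $C_6$ since its pull-back splits; these three lines are distinct and their six components are all the $\cu$-curves, which is the $L(24)$ statement.

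For $L(27)$ the argument is identical in structure. With $f_1,\dots,f_4$ a basis realizing the Gram matrix, a short search yields a class $D$ of square $2$ (for instance a suitable small combination such as $f_1-f_2+f_3$, possibly after a Weyl translate) whose negative definite orthogonal complement contains no $\cu$-vector; we take $D$ ample and, as above, $\eta=\varphi_D\colon X\to\PP^2$ is a double cover of $\PP^2$ branched over a smooth sextic $C_6$. The computation of \cite{Roulleau} (Shimada's algorithm together with the \cite{ACL} test) produces eight $\cu$-curves, among which one identifies a pair $A_0,B_0$ with $A_0+B_0\equiv D$, $A_0B_0=3$, and three pairs $A_i,B_i$ ($i=1,2,3$) with $A_i+B_i\equiv 2D$, $A_iB_i=6$; since $2+3\cdot2=8$ these exhaust all eight. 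Applying Lemma~\ref{lem:splitingsPullBackTritang} with $d=1$ to $\{A_0,B_0\}$ and with $d=2$ to each $\{A_i,B_i\}$, their images are one tritangent line and three $6$-tangent conics of $C_6$, and the eight $\cu$-curves are exactly the pull-backs of these four curves.

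The genuinely substantive point — and the main obstacle — is the verification that the displayed lists of six, resp.\ eight, classes exhaust the $\cu$-curves: although each lattice has infinitely many vectors of square $-2$, only finitely many are classes of curves, and pinning down exactly which ones relies on Shimada's algorithm plus the completeness test of \cite{ACL}. This step also secures the irreducibility of the classes $A_i$ with $DA_i=2$ in the $L(27)$ case (a priori such a class might split into two $\cu$-curves, but inspection of the list shows no two of them sum to $A_i$). Once completeness is in hand, the passage to the geometric picture is immediate from Lemma~\ref{lem:splitingsPullBackTritang} and the elementary ampleness check; this is what is packaged into the reference to \cite{Roulleau} in the statement above.
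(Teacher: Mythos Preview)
The paper does not prove this theorem at all: it is stated under the heading ``For completeness, let us recall the results obtained in \cite{Roulleau}'' and no argument is given beyond that citation. Your sketch is therefore not being compared against a proof in this paper but against the general methodology the author uses throughout (and presumably in \cite{Roulleau} as well): find an ample class of square $2$, realize $X$ as a double plane via Theorems~\ref{thm:SaintDonat-1}--\ref{thm:SaintDonat-2}, enumerate the $\cu$-curves by Shimada's algorithm with the completeness check of \cite{ACL}, and read off lines and conics via Lemma~\ref{lem:splitingsPullBackTritang}. Your proposal follows exactly this template, and your treatment of $L(24)$ is clean and correct (the check that $v_1^\perp$ contains no root, the exclusion of a fixed part via ampleness, and the pairing $v_i+(v_1-v_i)\equiv D$ are all right; one could also note that since $L(24)$ is one of the ``compact'' cases there is no elliptic pencil at all, which kills the fixed-part case of Theorem~\ref{thm:SaintDonat-1}(b) even more directly).

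For $L(27)$ your write-up is noticeably thinner: you gesture at an ample class ``such as $f_1-f_2+f_3$, possibly after a Weyl translate'' without verifying that its orthogonal complement is root-free, and you assert rather than exhibit the decomposition of the eight $\cu$-classes into one pair summing to $D$ and three pairs summing to $2D$. None of this is a gap in strategy --- you are right that the substantive step is the computational enumeration, and you flag this honestly --- but as written the $L(27)$ half is a plan rather than a proof. If you want it to stand on its own you should pin down the ample $D$ explicitly, check $D^\perp$ has no roots, and list the eight classes; otherwise what you have is exactly equivalent to the paper's own treatment, namely a pointer to \cite{Roulleau}.
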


The K3 surfaces with N\'eron--Severi lattice isometric to $L(24)$ have
some connection with K3 surfaces of type $S_{0}\oplus\mathbf{A}_{2}$,
where $S_{0}=\begin{bsmallmatrix}0 & -3\\
-3 & -2
\end{bsmallmatrix}$; see \cite{ACR}.

\subsection{The rank 4 and non-compact cases}

This is the subject of another paper \cite{ACR}. The lattices are
\[
\begin{array}{c}
[8]\oplus\mathbf{A}_{1}^{\oplus3},\,U\oplus\mathbf{A}_{1}^{\oplus2},\,U(2)\oplus\mathbf{A}_{1}^{\oplus2},\,U(3)\oplus\mathbf{A}_{1}^{\oplus2},\,U(4)\oplus\mathbf{A}_{1}^{\oplus2},\\
U\oplus\mathbf{A}_{2},\,U(2)\oplus\mathbf{A}_{2},\,U(3)\oplus\mathbf{A}_{2},\,U(6)\oplus\mathbf{A}_{2},\\
S_{0}\oplus\mathbf{A}_{2},\,[4]\oplus[-4]\oplus\mathbf{A}_{2},\,U(4)\oplus A_{3}.
\end{array}
\]
We give the number of $\cu$-curves in the different cases in the
table in Section~\ref{sec:table}.  

For the Hirzebruch surface $\mathbb{F}_{n}$, $n\ge1$, there is a unique negative
curve $s$ which is such that $s^{2}=-n$. We denote by $f$ a fiber
of the natural fibration $\FF_{n}\to\PP^{1}$. For completeness, we
summarize the constructions obtained in \cite{ACR}. 

\begin{thm}\quad
  
  \begin{itemize}
\item A K3 surface with $\NS X)\simeq[8]\oplus\mathbf{A}_{1}^{\oplus3}$
is a double cover of $\,\PP^{2}$ branched over a smooth sextic curve
which has six $6$-tangent conics. 
\item A K3 surface with $\NS X)\simeq U\oplus\mathbf{A}_{1}^{\oplus2}$
is a double cover of a Hirzebruch surface, $\eta\colon X\to\mathbf{F}_{2}$.
It is branched over the section $s$ and a curve $B\in|3s+8f|$, so that
$sB=2$.
\item A K3 surface $X$ with $\NS X)\simeq U(2)\oplus\mathbf{A}_{1}^{\oplus2}$
is the minimal desingularization of the double cover of $\,\PP^{2}$
branched over a sextic curve with three nodal singularities.
\item A K3 surface $X$ with $\NS X)\simeq U(3)\oplus\mathbf{A}_{1}^{\oplus2}$
is a double cover of $\,\PP^{2}$ branched over a smooth sextic curve
with two tritangent lines and two $6$-tangent conics. 
\item A K3 surface $X$ with $\NS X)\simeq U(4)\oplus\mathbf{A}_{1}^{\oplus2}$
is a quartic in $\PP^{3}$ which has four hyperplane sections that
decompose into the union of two conics.
\item A K3 surface $X$ with $\NS X)\simeq U\oplus\mathbf{A}_{2}$ is the
minimal desingularization of the double cover of $\,\mathbf{F}_{3}$
with branch locus the section $s$ and a reduced curve in $B\in|3s+10f|$,
so that $Bs=1$. 
\item A K3 surface $X$ with $\NS X)\simeq U(2)\oplus\mathbf{A}_{2}$ $X$
is the minimal resolution of the double cover of $\,\PP^{2}$ branched
over a sextic with two nodes such that the line through the two nodes
is tangent to the sextic curve in a third point. 
\item A K3 surface with $\NS X)\simeq U(3)\oplus\mathbf{A}_{2}$ is a quartic
in $\PP^{3}$ which has a hyperplane section that is the union of
four lines.
\item A K3 surface $X$ with $\NS X)\simeq U(6)\oplus\mathbf{A}_{2}$ is
a quartic in $\PP^{3}$ which has three hyperplane sections that decompose
into the union of two conics.
\item A K3 surface $X$ with $\NS X)\simeq\begin{bsmallmatrix}0 & -3\\
-3 & -2
\end{bsmallmatrix}\oplus\mathbf{A}_{2}$ is a double cover of $\,\PP^{2}$ branched over a smooth sextic curve
which has three tritangent lines.
\item A K3 surface $X$ with $\NS X)\simeq[4]\oplus[-4]\oplus\mathbf{A}_{2}$
is a double cover of $\,\PP^{2}$ branched over a smooth sextic curve
which has two tritangent lines and one $6$-tangent conic. 
\item A K3 surface $X$ with $\NS X)\simeq[4]\oplus\mathbf{A}_{3}$ is the
minimal resolution of a double cover of $\,\PP^{2}$ branched over a
sextic curve with one node; through that node go two lines that are
tangent to the sextic at every other intersection point.
\end{itemize}
\end{thm}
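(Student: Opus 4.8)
The plan is to handle all twelve lattices by a single uniform procedure and to cite \cite{ACR} for the case-by-case details. For a fixed lattice $L$ in the list I would first run Shimada's algorithm \cite{Shimada} from an explicit ample class, exactly as in Section \ref{subsec:About-the-computations} (following \cite{Roulleau}), to obtain the complete finite set of $\cu$-curves on a K3 surface $X$ with $\NS X)\simeq L$, together with their intersection matrix and their coordinates in a basis, checking completeness with the test function of \cite{ACL}. Among these classes I would then pick a nef divisor $D$ of the smallest available self-intersection, confirm (via Theorem \ref{thm:SaintDonat-1}(b) and the list of $\cu$-curves) that $|D|$ either is base point free or has exactly the fixed part predicted by Saint Donat, and read off from the intersection matrix the linear equivalences expressing $D$, $2D$ or $3D$ as sums of $\cu$-curves. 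Everything else is an application of Theorems \ref{thm:SaintDonat-1} and \ref{thm:SaintDonat-2}.

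In the cases with $D^{2}=2$ and $D$ base point free, Theorem \ref{thm:SaintDonat-2}(c) gives a double cover $\varphi_{D}\colon X\to\PP^{2}$; its branch sextic $C_{6}$ is smooth exactly when $D^{\perp}$ contains no $\cu$-curve, and otherwise acquires one ADE singularity for each such curve, which produces the nodal sextics for $U(2)\oplus\mathbf{A}_{1}^{\oplus2}$, $U(2)\oplus\mathbf{A}_{2}$ and $[4]\oplus\mathbf{A}_{3}$ (respectively three nodes; two nodes together with the line through them; one node together with the two lines through it, using that their classes satisfy $A+B\equiv dD$). For the remaining $\cu$-curves $A$ I would use the relation $A+B\equiv dD$, together with Lemma \ref{lem:splitingsPullBackTritang} and the cuspidal higher-degree analogues already exploited in the rank $3$ sections: each pair $\{A,B\}$ maps to a tritangent line ($d=1$), a $6$-tangent conic ($d=2$), or a rational cuspidal curve of higher degree meeting $C_{6}$ tangentially with its cusps on $C_{6}$, and counting the pairs gives the stated multiplicities (six $6$-tangent conics for $[8]\oplus\mathbf{A}_{1}^{\oplus3}$, etc.). When $D$ is non-hyperelliptic of square $4$, Theorem \ref{thm:SaintDonat-1}(c) realizes $X$ as a quartic in $\PP^{3}$, and each relation $D\equiv\sum A_{i}$ or $2D\equiv\sum A_{i}$ with the $A_{i}$ being $\cu$-curves exhibits a hyperplane section splitting into the advertised rational curves; this treats $U(4)\oplus\mathbf{A}_{1}^{\oplus2}$, $U(3)\oplus\mathbf{A}_{2}$ and $U(6)\oplus\mathbf{A}_{2}$.

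For the two lattices $U\oplus\mathbf{A}_{1}^{\oplus2}$ and $U\oplus\mathbf{A}_{2}$ I would invoke Proposition \ref{prop:STAR-Nikulin-Kondo}: there is an elliptic pencil $\pi\colon X\to\PP^{1}$ with a section $E$, whose $U$ is spanned by $E$ and a fibre $F$, and the reducible fibres (two of type $\tilde{\mathbf{A}}_{1}$, resp. one of type $\tilde{\mathbf{A}}_{2}$) are visible in the dual graph. I would then exhibit the natural combination of these curves whose linear system is hyperelliptic of the exceptional type in Theorem \ref{thm:SaintDonat-2} --- case iii) (with $D^{2}=4$), resp. case ii) (with $D^{2}=6$) --- which forces $\varphi$ to factor through a morphism to $\mathbf{F}_{2}$, resp. $\mathbf{F}_{3}$; the class of the branch divisor is then precisely $s+B$ with $B\in|3s+8f|$ and $sB=2$, resp. $B\in|3s+10f|$ and $sB=1$, exactly the subcases recorded in that theorem, and reasoning as in Proposition \ref{prop:consequenceOfNikulinStar} shows the section $E$ of the pencil is sent to the negative section $s$.

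The step I expect to be the main obstacle is not the Saint Donat bookkeeping but the two verifications that make the statements sharp. First, one must confirm that the computed list of $\cu$-curves is complete for each of the twelve lattices; for these rank $4$ examples this is within reach of the test function of \cite{ACL}, in contrast to the five higher-rank lattices of Section \ref{subsec:About-the-computations}, but it still has to be carried out. Second, and more delicate, are the genericity assertions: that a generic member of each constructed family of sextics or quartics has N\'eron--Severi lattice equal to $L$ and not one of its finitely many even over-lattices. As in the rank $3$ discussion, I would rule out each over-lattice by noting that it corresponds to a K3 surface carrying strictly fewer $\cu$-curves, or the same number but with a different intersection matrix, so it cannot support the configuration just exhibited; and when the lattice-theoretic argument leaves an ambiguity I would fall back on producing one explicit surface and bounding its Picard number by reduction modulo two primes together with van Luijk's trick \cite{vL}, as in Example \ref{exa:S113}.
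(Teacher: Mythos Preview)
Your proposal is correct and, in fact, goes further than the paper does: this theorem is stated in the paper purely as a summary of results from \cite{ACR}, with no proof given here beyond the sentence ``This is the subject of another paper \cite{ACR}''. The uniform procedure you describe --- compute the $\cu$-curves via Shimada's algorithm, select a nef $D$ of minimal square, and then read off the projective model from Theorems~\ref{thm:SaintDonat-1} and~\ref{thm:SaintDonat-2} together with the linear equivalences $dD\equiv\sum A_i$ --- is exactly the methodology the paper applies case by case in every other rank (compare e.g.\ the treatment of $S_{1,1,2}$, $U\oplus\mathbf{A}_3$, $U\oplus\mathbf{A}_1\oplus\mathbf{A}_2$, $U(2)\oplus\mathbf{D}_4$, and the many $\mathbf{F}_4$ double covers), so your plan is entirely in line with both this paper and, presumably, \cite{ACR}.

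Two minor remarks. For $U\oplus\mathbf{A}_1^{\oplus2}$ you will land in case~iii)~v) of Theorem~\ref{thm:SaintDonat-2}, not~u): with the section $E$ and the two $\tilde{\mathbf{A}}_1$ fibers $A_1+A_1'$, $A_2+A_2'$, the relevant divisor is $D_4=2F+2E+A_1+A_2$ (taking $\Gamma_0=E$, $\Gamma_1=A_1$, $\Gamma_2=A_2$), which indeed has $D_4^2=4$ and forces the branch to contain $s$. And your identification of the two genuine verification points --- completeness of the $\cu$-curve list (routine here via \cite{ACL}) and ruling out over-lattices for genericity --- is exactly right; these are the same checks carried out repeatedly in the rank~$3$ and rank~$5$ sections.
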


We will only add the following result. 

\begin{prop}
A general K3 surface $X$ with a N\'eron--Severi lattice isometric to
one of the  lattices
\[
U(4)\oplus\mathbf{A}_{1}^{\oplus2},\,U(3)\oplus\mathbf{A}_{2},\,U(6)\oplus\mathbf{A}_{2}
\]
has trivial automorphism group. 
\end{prop}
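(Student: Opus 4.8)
The plan is to deduce the statement from Proposition~\ref{prop:AUTOTrivial}, exactly along the lines of the proof of Proposition~\ref{prop:The-automorphism-groupTRIVIAL}. Each of the three lattices $U(4)\oplus\mathbf{A}_{1}^{\oplus2}$, $U(3)\oplus\mathbf{A}_{2}$, $U(6)\oplus\mathbf{A}_{2}$ has rank $4\le 8$, and by \cite[Table~1 and Lemma~2.3]{Kondo} (see also \cite{ACR}) the general K3 surface $X$ in each of these families has automorphism group either trivial or generated by a single hyperelliptic involution. So it suffices to rule out the existence of a hyperelliptic linear system on such an $X$, that is, to verify conditions (i), (ii) and (iii) of Proposition~\ref{prop:AUTOTrivial}.

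I would first dispose of condition (iii) --- no big and nef divisor of square $2$ --- by the elementary remark that none of these lattices represents the integer $2$. Writing a class of $U(4)\oplus\mathbf{A}_{1}^{\oplus2}$ as $(a,b,c,d)$ in the obvious basis, its self-intersection is $8ab-2(c^{2}+d^{2})$; were this equal to $2$, then $c^{2}+d^{2}$ would be congruent to $3$ modulo $4$, which is impossible. For $L=U(k)\oplus\mathbf{A}_{2}$ with $k\in\{3,6\}$, a class has self-intersection $2kab-2(c^{2}-cd+d^{2})$, and setting this equal to $2$ forces $c^{2}-cd+d^{2}$ to be congruent to $2$ modulo $3$, whereas $c^{2}-cd+d^{2}$ takes only the values $0$ and $1$ modulo $3$. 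This same remark excludes case b) of Theorem~\ref{thm:SaintDonat-2}, which requires an irreducible curve of square $2$, as well as the double plane case c) of that theorem.

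For conditions (i) and (ii) I would use the explicit description of these surfaces from \cite{ACR}: there the complete, finite set of $\cu$-curves on a general such $X$ is determined --- equivalently, one recomputes it by Shimada's algorithm \cite{Shimada} as recalled in Section~\ref{subsec:About-the-computations}, starting from the very ample classes of square $4$ exhibited there. This gives the nef cone of $X$ as a rational polyhedral cone with an explicit Hilbert basis, and, since there are finitely many $\cu$-curves, the finite list of classes of fibres of elliptic fibrations (a fibre is a primitive nef isotropic class, and every reducible fibre is a sum of $\cu$-curves, hence of bounded intersection with the chosen ample class). Over these finite data one checks that (i) $AF\neq 1$ for every $\cu$-curve $A$ and every fibre class $F$, and (ii) $DF\neq 2$ for every fibre class $F$ and every big and nef divisor $D$. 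For (ii), since $D\mapsto DF$ is linear on the polyhedral nef cone and a big and nef $D$ satisfies $D^{2}\ge 4$ (as $D^{2}=2$ is excluded by (iii)), it is enough to inspect $DF$ for $D$ running over a generating set of the nef cone and over the $\cu$-curves. With (i), (ii), (iii) established, Proposition~\ref{prop:AUTOTrivial} yields that $\aut(X)$ is trivial.

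The only point requiring a little care is condition (ii), since it is phrased as a statement about all big and nef divisors; the reduction to the finite verification above rests on the polyhedrality of the nef cone, the linearity of intersection with a fixed fibre class, and the observation --- itself a consequence of (iii) --- that a big and nef divisor cannot have square $2$. Apart from that, the argument is a short case check on the modest lists of $\cu$-curves and fibre classes attached to these rank-$4$ lattices, which was carried out with Magma \cite{Magma}.
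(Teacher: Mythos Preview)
Your proposal is correct and follows essentially the same route as the paper: invoke the Hilbert bases of the nef cones computed in \cite{ACR}, then verify the hypotheses of Proposition~\ref{prop:AUTOTrivial} exactly as in Proposition~\ref{prop:The-automorphism-groupTRIVIAL}. Your explicit arithmetic argument that none of the three lattices represents~$2$ is a nice way to dispose of condition~(iii) without appealing to the Hilbert basis, but otherwise the strategy is identical.
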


\begin{proof}
For each of these lattices, the Hilbert basis of its nef cone is described
in \cite{ACR}. Then as in Proposition~\ref{prop:The-automorphism-groupTRIVIAL},
one can check that there are no hyperelliptic involutions and conclude
that the automorphism group is trivial. 
\end{proof}

\section{Rank 5 lattices}

\subsection*{Nikulin's classification for higher ranks}

The list of lattices $L$ of rank at least $5$ such that the K3 surfaces
with $\NS X)\simeq L$ have finite automorphism group is given in
\cite{NikulinElliptic}; that list is obtained from the paper \cite{Nikulin}. 

\subsection{The lattice $\boldsymbol{U\oplus\mathbf{A}_{1}^{\oplus3}}$}

There exist seven $(-2)$-curves $A_{1},\dots,A_{7}$ on $X$, with
dual graph 

\begin{center}
\begin{tikzpicture}[scale=1]

\draw (0.5,0) -- (0.5,1);
\draw (0.5,0) -- (1.5,1);
\draw (0.5,0) -- (-0.5,1);

\draw [very thick] (0.5,2) -- (0.5,1);
\draw [very thick] (1.5,2) -- (1.5,1);
\draw [very thick] (-0.5,2) -- (-0.5,1);


\draw (0.5,0) node {$\bullet$};
\draw (0.5,1) node {$\bullet$};
\draw (1.5,1) node {$\bullet$};
\draw (-0.5,1) node {$\bullet$};
\draw (0.5,2) node {$\bullet$};
\draw (1.5,2) node {$\bullet$};
\draw (-0.5,2) node {$\bullet$};

\draw (0.5,0) node [below]{$A_{1}$};
\draw (-0.5,1) node [right]{$A_{2}$};
\draw (0.5,1) node [right]{$A_{3}$};
\draw (1.5,1) node [right]{$A_{4}$};
\draw (-0.5,2) node [right]{$A_{5}$};
\draw (0.5,2) node [right]{$A_{6}$};

\draw (1.5,2) node [right]{$A_{7}$};

\end{tikzpicture}
\end{center} 

The curves $A_{1},\dots,A_{5}$ generate the N\'eron--Severi lattice.
The divisor 
\[
D_{18}=3A_{1}+5A_{2}+A_{3}+A_{4}+4A_{5}
\]
is ample, with $D_{18} \cdot A_{j}=1$ for $j\in\{1,2,3,4\}$ and $D_{18} \cdot A_{j}=2$
for $j\in\{5,6,7\}$. The divisor 
\[
D_{2}=2A_{1}+2A_{2}+A_{3}+A_{4}+A_{5}
\]
is nef, base-point free, of square $2$, with $D_{2} \cdot A_{j}=0$ for $j\in\{1,2,3,4\}$
and $D_{2} \cdot A_{j}=2$ for $j\in\{5,6,7\}$. We also have 
\[
D_{2}\equiv2A_{1}+A_{2}+2A_{3}+A_{4}+A_{6}\equiv2A_{1}+A_{2}+A_{3}+2A_{4}+A_{7}.
\]
By using the linear system $|D_{2}|$, we obtain the following. 

\begin{prop}
The K3 surface is a double cover of $\,\PP^{2}$ branched over a sextic
curve with a $\mathbf{d}_{4}$ singularity $q$. 
\end{prop}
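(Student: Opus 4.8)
The plan is to apply Saint-Donat's classification to the nef divisor $D_2=2A_1+2A_2+A_3+A_4+A_5$ and then to identify the branch sextic through the $\cu$-curves that $\varphi_{D_2}$ contracts. First I would read off the dual graph the intersection numbers $D_2^2=2$, $D_2A_j=0$ for $j\in\{1,2,3,4\}$ and $D_2A_j=2$ for $j\in\{5,6,7\}$; since $D_2^2>0$ and $D_2$ meets every $\cu$-curve non-negatively, it is big and nef. By Theorem~\ref{thm:SaintDonat-1}~b), either $|D_2|$ has no fixed part, or $D_2\equiv 2E+\G$ with $|E|$ a free pencil, $\G$ a $\cu$-curve and $E\G=1$ (the coefficient being $2$ because $D_2^2=2$); the second alternative would require $E=\frac{1}{2}(D_2-\G)$ to be an integral class with $\G\in\{A_1,\dots,A_4\}$, which is incompatible with the already determined set of $\cu$-curves and nef cone of $X$, so $|D_2|$ has no fixed part. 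By Theorem~\ref{thm:SaintDonat-2}, case c), $\varphi_{D_2}$ then realises $X$ as a double cover $\eta\colon X\to\PP^2$ whose branch locus $C_6$ is a reduced plane sextic with at worst $\mathbf{ade}$ singularities.

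Next I would pass to the factorisation $\varphi_{D_2}=\pi\circ\rho$, where $\rho\colon X\to\bar X$ is the birational contraction of exactly the $\cu$-curves $C$ with $D_2C=0$ and $\pi\colon\bar X\to\PP^2$ is a finite double cover; the rational double points of $\bar X$ are the images under $\rho$ of the connected components of the contracted configuration, each of the $\mathbf{ade}$ type of the corresponding sub-diagram. Here the $\cu$-curves with $D_2C=0$ are precisely $A_1,A_2,A_3,A_4$ (every other $\cu$-curve has $D_2C=2$), and they form a single connected configuration — a star with centre $A_1$ and three leaves $A_2,A_3,A_4$, with $A_1A_2=A_1A_3=A_1A_4=1$ and $A_2A_3=A_2A_4=A_3A_4=0$ — whose intersection form is that of the root lattice $\mathbf D_4$. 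Hence $\bar X$ has a unique singular point, a rational double point of type $\mathbf d_4$.

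Finally I would use the local analysis of a double cover: over a point $q\in\PP^2$ the surface $\bar X$ has a rational double point if and only if $C_6$ is singular at $q$, and in that case the two have the same $\mathbf{ade}$ type. Applied to the $\mathbf d_4$ point of $\bar X$, this yields a $\mathbf d_4$ singularity of $C_6$ at $q=\eta(A_1)$, and since $A_1,\dots,A_4$ are the only contracted curves, $C_6$ has no further singularity. The one step I expect to require genuine care is the base-point-freeness of $D_2$, that is, excluding the decomposition $D_2\equiv 2E+\G$ of Theorem~\ref{thm:SaintDonat-1}~b); this is handled either by the integrality argument above or directly from the known list of $\cu$-curves on $X$. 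Everything else is the standard dictionary between configurations of $\cu$-curves, rational double points, and singularities of plane sextics.
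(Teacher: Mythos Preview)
Your proposal is correct and follows the same route as the paper: apply Saint-Donat to the nef divisor $D_2$ of square $2$, obtain the double cover of $\PP^2$, and read off the $\mathbf d_4$ singularity of the branch sextic from the $\mathbf D_4$ configuration of the four contracted curves $A_1,\dots,A_4$. The paper simply asserts that $|D_2|$ is base-point free; your integrality check (that $D_2-\G$ is never $2$-divisible in the basis $A_1,\dots,A_5$ for any $\G\in\{A_1,\dots,A_4\}$) is the natural way to justify it and is fine.
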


The three curves $A_{5}$, $A_{6}$, $A_{7}$ are mapped by the double cover
map to the three lines that are the tangents to the three branches of
the singularity $q$. 

\subsection{The lattice $\boldsymbol{U(2)\oplus\mathbf{A}_{1}^{\oplus3}}$ and the del Pezzo surface of degree 5}

The K3 surface $X$ is the minimal resolution of the double cover
of $\PP^{2}$ branched over a sextic curve $C_{6}$ with four nodes
$p_{1},\dots,p_{4}$ in general position. It is also the double cover
branched over the strict transform of $C_{6}$ in the degree $5$
del Pezzo surface $Z$ which is the blow-up at $p_{1},\dots,p_{4}$.
The surface $X$ contains $10$ $\cu$-curves denoted by $A_{i,j}$,
for $\{i,j\}\subset\{1,\dots,5\}$ with $i<j$ (the pull-back of the
$10$ $(-1)$-curves on the del Pezzo surface). One has 
\[
A_{ij} \cdot A_{st}=2
\]
if and only if $\#\{i,j,s,t\}=4$; else $A_{ij} \cdot A_{st}=0$ or $-2$.
The dual graph of the configuration is the Petersen graph 

\begin{center}
\begin{tikzpicture}[scale=0.8]



\draw [very thick] ( 0.95106, 0.30901 )  -- ( -0.95103, 0.30910); 
\draw [very thick] ( 0, 1.0000 ) -- ( -0.58788, -0.80894 );
\draw [very thick] ( -0.95103, 0.30910 ) -- ( 0.58765, -0.80911);     
\draw [very thick] ( -0.58788, -0.80894 ) -- ( 0.95106, 0.30901); 
\draw [very thick] ( 0.58765, -0.80911 ) --( 0, 1.0000 );

\draw [very thick] ( 1.9021, 0.61802 )--( 0, 2 );
\draw [very thick] ( 0, 2 )--( -1.9021, 0.61819 );
\draw [very thick] ( -1.9021, 0.61819 )--( -1.1758, -1.6179 );
\draw [very thick] ( -1.1758, -1.6179 )--( 1.1753, -1.6182 );
\draw [very thick] ( 1.1753, -1.6182 )--( 1.9021, 0.61802 );

\draw [very thick] ( 0.95106, 0.30901 ) -- ( 1.9021, 0.61802 );
\draw [very thick] ( 0, 1.0000 ) -- ( 0, 2 );
\draw [very thick] ( -0.95103, 0.30910 ) -- ( -1.9021, 0.61819 );
\draw [very thick] ( -0.58788, -0.80894 ) -- ( -1.1758, -1.6179 );
\draw [very thick] ( 0.58765, -0.80911 ) -- ( 1.1753, -1.6182 );

\draw ( 0.95106, 0.30901 )     node {$\bullet$};
\draw ( 0, 1.0000 ) node {$\bullet$};
\draw ( -0.95103, 0.30910 )      node {$\bullet$};
\draw ( -0.58788, -0.80894 )      node {$\bullet$};
\draw ( 0.58765, -0.80911 ) node {$\bullet$};

\draw ( 1.9021, 0.61802 ) node {$\bullet$};
\draw ( 0, 2 ) node {$\bullet$};
\draw ( -1.9021, 0.61819 ) node {$\bullet$};
\draw ( -1.1758, -1.6179 ) node {$\bullet$};
\draw ( 1.1753, -1.6182 ) node {$\bullet$};


\end{tikzpicture}
\end{center} 
with weight $2$ on the edges. 

\begin{rem}
If $C$ is a  general  curve of genus $6$, then there exists a map $C\to\PP^{2}$
with image a sextic curve with four nodes. Using that property, Artebani
and Kondo describe in \cite{AK} the moduli space of genus $6$ curves and
their link with K3 surfaces and the quintic del Pezzo surface. They
study that moduli space as a quotient of a bounded symmetric domain. 
\end{rem}

\subsection{The lattice $\boldsymbol{U(4)\oplus\mathbf{A}_{1}^{\oplus3}}$}

The K3 surface $X$ contains $24$ $\cu$-curves $A_{1},\dots,A_{24}$.
Up to permutation, one can suppose that $A_{1}$, $A_{3}$, $A_{5}$, $A_{7}$, $A_{9}$
have the following intersection matrix: 
\[
\begin{pmatrix}-2 & 0 & 0 & 0 & 0\\
0 & -2 & 0 & 0 & 4\\
0 & 0 & -2 & 2 & 2\\
0 & 0 & 2 & -2 & 2\\
0 & 4 & 2 & 2 & -2
\end{pmatrix}.
\]
These five curves generate $\NS X)$. The divisor 
\[
D_{2}=-A_{1}+A_{3}+A_{9}
\]
is ample, of square $2$, with $D_{2} \cdot A_{j}=2$ for $j\in\{1,\dots,24\}$
and, up to permutation of the indices, 
\[
2D_{2}\equiv A_{2k-1}+A_{2k}
\]
 for any $k\in\{1,\dots,12\}$. The classes of $A_{11}, A_{13},\dots,A_{23}$
in the basis $A_{1}$, $A_{3}$, $A_{5}$, $A_{7}$, $A_{9}$ are 
\[
\begin{array}{ll}
A_{11}=(0,-1,1,1,0), & A_{13}=(-1,0,1,1,0),\\
A_{15}=(-1,0,0,1,1), & A_{17}=(-2,1,0,1,1),\\
A_{19}=(-2,0,1,1,1), & A_{21}=(-1,0,1,0,1),\\
A_{23}=(-2,1,1,0,1),
\end{array}
\]
so that we know the $24$ classes of $(-2)$-curves in $X$. By using
the linear system $|D_{2}|$, we obtain the following. 

\begin{prop}
The K3 surface $X$ is a double cover of $\,\PP^{2}$ branched over
a smooth sextic curve which has $12$ $6$-tangent conics.
\end{prop}

There exists a partition of the $24$ $\cu$-curves into three sets
$S_{1}$, $S_{2}$, $S_{3}$ of $8$ curves each such that for curves $B$, $B'$
in two different sets $S$, $S'$, one has $BB'=0$ or $4$ and for any
$B\in S$, there are exactly $4$ curves $B'$ in $S'$ such that $BB'=4$,
and symmetrically for $B'$. Therefore, $S$ and $S'$ form an $8_{4}$
configuration. This is the so-called M\"obius configuration (see \cite{Coxeter}).
The following graph is the Levi graph of that $8_{4}$ configuration;
this is the graph of the $4$-dimensional hypercube (see \cite{Coxeter}). 
Vertices in red are curves in $S$, vertices in blue are curves in
$S'$, and an edge links a red curve to a blue curve if and only if
their intersection number is $4$. 

\begin{center}
\begin{tikzpicture}[scale=1]


\draw (0,0) rectangle (2,2);
\draw (-1.414,1.414) rectangle (2-1.414,2+1.414);
\draw (1.414,1.414) rectangle (2+1.414,2+1.414);
\draw (0,2*1.414) rectangle (2,2+2*1.414);
\draw (0,0) -- ++(1.414,1.414) -- ++(-1.414,1.414) --++ (-1.414,-1.414) -- cycle;
\draw (0,2) -- ++(1.414,1.414) -- ++(-1.414,1.414) --++ (-1.414,-1.414) -- cycle;
\draw (2,0) -- ++(1.414,1.414) -- ++(-1.414,1.414) --++ (-1.414,-1.414) -- cycle;
\draw (2,2) -- ++(1.414,1.414) -- ++(-1.414,1.414) --++ (-1.414,-1.414) -- cycle;

\draw [color=red] (0,2) node {$\bullet$};
\draw [color=red] (-1.414,1.414) node {$\bullet$};
\draw [color=red] (1.414,1.414) node {$\bullet$};
\draw [color=red] (0,2) node {$\bullet$};
\draw [color=red] (0,2+2*1.414) node {$\bullet$};
\draw [color=red] (2-1.414,2+1.414) node {$\bullet$};
\draw [color=red] (2,0) node {$\bullet$};
\draw [color=red] (2+1.414,2+1.414) node {$\bullet$};
\draw [color=red] (2,2*1.414) node {$\bullet$};

\draw [color=blue] (0,0) node {$\bullet$};
\draw [color=blue] (-1.414,3.414) node {$\bullet$};
\draw [color=blue] (2-1.414,1.414) node {$\bullet$};
\draw [color=blue] (2+1.414,1.414) node {$\bullet$};
\draw [color=blue] (2,2) node {$\bullet$};
\draw [color=blue] (0,2*1.414) node {$\bullet$};
\draw [color=blue] (2,2+1.414*2) node {$\bullet$};
\draw [color=blue] (1.414,2+1.414) node {$\bullet$};
\draw [color=blue] (0,0) node {$\bullet$};


\end{tikzpicture}
\end{center} 

From that graph, we can moreover read the intersection numbers of
the curves in $S$ (and $S'$) as follows. For any red curve $B$, there are four blue curves linked to it by
an edge. Consider the complementary set of blue curves; this is another
set of four blue curves, all  linked through an edge to the same
red curve $B'$. Then we have $BB'=6$, and for any other red curve
$B''\notin\{B,B'\}$, we have $BB''=2$. Symmetrically,
the intersection
numbers between the blue curves follow the same rule. 

In \cite[Proof of Theorem 8.1.1]{Nikulin}, Nikulin studies the lattice $U(4)\oplus\mathbf{A}_{1}^{\oplus3}$ in detail, obtaining that it contains only $24$ $\cu$-curves, and he describes the fundamental polygon of the action of the Weyl group by an embedding in the $4$-dimensional Euclidian space. There,  the polyhedral formed by the $24$ vertices is the dual of the polyhedron formed by the roots of type $\mathbf{D}_{4}$.

There is a second geometric model which is as follows. The divisor
\[
D_{4}=-A_{1}+2A_{3}+A_{9}
\]
is a nef, non-hyperelliptic divisor of square $4$, with  $D_{4} \cdot A_{j}=0$
if and only if $j=3$. We have, moreover,  
\[
D_{4}\equiv A_{1}+A_{10}\equiv A_{5}+A_{22}\equiv A_{7}+A_{16}\equiv A_{13}+A_{20},
\]
and the intersection number of these eight curves with $D_{4}$ is
$2$. Therefore, the linear system $|D_{4}|$ gives a singular model
of $X$ as a quartic in $\PP^{3}$ with a unique node and four hyperplane
sections which are unions of two conics. One can check that the eight
$\cu$-curves which are mapped to the conics and the $\cu$-curve
which is contracted to the node generate the lattice $U(4)\oplus\mathbf{A}_{1}^{\oplus3}$.

\subsection{The lattice $\boldsymbol{U\oplus\mathbf{A}_{1}\oplus\mathbf{A}_{2}}$}

The K3 surface $X$ contains six $(-2)$-curves $A_{1},\dots,A_{6}$; 
their configuration is 

\begin{center}
\begin{tikzpicture}[scale=1]

\draw (0,0) -- (2,0);
\draw (0,0) -- (-0.866,0.5);
\draw (0,0) -- (-0.866,-0.5);
\draw (-0.866,0.5) -- (-0.866,-0.5);
\draw [very thick] (2,0) -- (3,0);

\draw (0,0) node {$\bullet$};
\draw (1,0) node {$\bullet$};
\draw (2,0) node {$\bullet$};
\draw (3,0) node {$\bullet$};
\draw (-0.866,-0.5) node {$\bullet$};
\draw (-0.866,0.5) node {$\bullet$};

\draw (0,0) node [below]{$A_{3}$};
\draw (1,0) node [below]{$A_{4}$};
\draw (2,0) node [below]{$A_{5}$};
\draw (3,0) node [below]{$A_{6}$};
\draw (-0.866,-0.5) node [left]{$A_{2}$};
\draw (-0.866,0.5) node [left]{$A_{1}$};

\end{tikzpicture}
\end{center} 

These curves generate the N\'eron--Severi lattice. The divisor 
\[
D_{20}=5A_{1}+5A_{2}+6A_{3}+3A_{4}+A_{5}
\]
is ample, of square $20$, with $D_{20} \cdot A_{j}=1$ for $j\leq5$ and $D_{20} \cdot A_{6}=2$.
The divisor 
\[
D_{4}=2A_{1}+2A_{2}+3A_{3}+2A_{4}+A_{5}
\]
is nef, of square $4$, base-point free and hyperelliptic since 
\[
D_{4}(A_{1}+A_{2}+A_{3})=2,
\]
and $A_{1}+A_{2}+A_{3}\equiv A_{5}+A_{6}$ is a fiber of an elliptic
fibration. One has $D_{4} \cdot A_{1}=D_{4} \cdot A_{2}=1$, $D_{4} \cdot A_{6}=2$ and
$D_{4} \cdot A_{j}=0$ for $j\in\{3,4,5\}$. Moreover,  
\[
D_{4}\equiv A_{3}+2A_{4}+3A_{5}+2A_{6}.
\]

\begin{prop}
The double cover induced by $|D_{4}|$ factors through $\eta\colon X\to\mathbf{F}_{2}$.
The image by $\eta$ of $A_{4}$ is $s$ $($where $s$ is the section
such that $s^{2}=-2)$. The branch locus is the union of the section
$s$ and $B\in|3s+8f|$, so that $Bs=2$. Let $f_{1},f_{2}$ be
the fibers through the points $p$ and $q$, the intersection points
of $s$ and $B$. The curve $f_{1}$ cuts $B$ with multiplicity $2$
at another point, and the image by $\eta$ of the curves $A_{1}$, $A_{2}$
is $f_{1}$, the image of $A_{6}$ is $f_{2}$, and $A_{3}$, $A_{5}$
are contracted to $p$ and $q$, respectively. 
\end{prop}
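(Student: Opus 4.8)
The plan is to extract the morphism $\eta$ directly from the divisor $D_4$ by Saint Donat's classification, and then to identify the six curves by reading off the elliptic fibration underlying $\eta$. First, from the dual graph the class $F:=A_1+A_2+A_3$ is effective with $F^2=0$ and $FA_4=1$, so by Theorem \ref{thm:SaintDonat-1} a) $|F|$ is a free pencil, that is $\pi:X\to\PP^1$ is an elliptic fibration; its reducible fibres are the $\tilde{\mathbf A}_2$-fibre $A_1+A_2+A_3$ and the $\tilde{\mathbf A}_1$-fibre $A_5+A_6$ (this is the star of Proposition \ref{prop:STAR-Nikulin-Kondo}), and $A_4$ is the only $(-2)$-curve with $A_4F=1$, hence the unique section of $\pi$. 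A direct computation on the Gram matrix gives $D_4^2=4$, $D_4A_1=D_4A_2=1$, $D_4A_6=2$ and $D_4A_j=0$ for $j\in\{3,4,5\}$, so $D_4$ is nef and big, and moreover $D_4\equiv 2F+2A_4+A_3+A_5$ with $D_4F=2$. By Theorem \ref{thm:SaintDonat-1} b), $|D_4|$ can have a fixed part only if $D_4=3E+\G$ with $|E|$ a free pencil and $\G$ a $(-2)$-curve with $E\G=1$ (the coefficient $3$ being forced by $D_4^2=4$); then $D_4\G=1$ would force $\G\in\{A_1,A_2\}$, but neither $D_4-A_1$ nor $D_4-A_2$ is divisible by $3$ in $\NS X)=\langle A_1,\dots,A_5\rangle$, so $|D_4|$ has no fixed part. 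Hence $|D_4|$ is base point free, and $D_4^2=4$ together with the fibre $F$, $FD_4=2$, makes $|D_4|$ hyperelliptic by Theorem \ref{thm:SaintDonat-2} a). Since $D_4\equiv 2F+2A_4+A_3+A_5$ is of the shape $2F+2\G_0+\G_1+\G_2$, we are in case iii) v) of Theorem \ref{thm:SaintDonat-2} (with $n=0$): $\varphi_{D_4}(X)$ is a quadric cone in $\PP^3$, the map factors as $X\xrightarrow{\eta}\mathbf F_2\to\varphi_{D_4}(X)$ with the second arrow the contraction of the section $s$, $s^2=-2$, and the branch locus of the double cover $\eta$ is $s\cup B'$ with $B'$ reduced, $B'\in|3s+8f|$ and $sB'=2$. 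This yields the first assertions of the Proposition, with $p,q$ the two points of $s\cap B'$.

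Next I would chase the six curves. The composite $X\xrightarrow{\eta}\mathbf F_2\to\PP^1$ is $\pi$, because $\eta^*f=F$, and $D_4=\eta^*(s+2f)$. A curve $C\subset X$ with $D_4C=0$ is either contracted by $\eta$ or has $\eta(C)=s$, this being the only curve of $\mathbf F_2$ contracted by $\mathbf F_2\to\varphi_{D_4}(X)$; since the one-dimensional part of $\eta^{-1}(s)$ is the ramification curve over $s$, a $(-2)$-section, it must equal $A_4$ by uniqueness of the section, so $\eta(A_4)=s$ with $\eta|_{A_4}$ an isomorphism. Consequently $A_3$ and $A_5$, which satisfy $D_4A_j=0$ but differ from $A_4$, are contracted by $\eta$; a curve contracted by $\eta$ is mapped to a singular point of the branch curve, and as $A_3$ and $A_5$ each meet $A_4$ (with image $s$) these points lie on $s$, so $\{\eta(A_3),\eta(A_5)\}=\{p,q\}$; put $\eta(A_3)=p$, $\eta(A_5)=q$. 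The fibre $A_1+A_2+A_3$ of $\pi$ lies over the point of $\PP^1$ under $p$, hence maps into the fibre $f_1$ of $\mathbf F_2\to\PP^1$ through $p$; since $A_3$ is contracted and $A_1,A_2$ are not, $\eta(A_1)=\eta(A_2)=f_1$, and comparison with $\eta_*F=2f$ shows $\eta|_{A_1},\eta|_{A_2}$ each have degree $1$. Likewise $\eta(A_6)=f_2$, the fibre through $q$, with $\eta|_{A_6}$ of degree $2$ as $A_6$ is the only non-contracted component of the fibre $A_5+A_6$.

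Finally, for the tangency statement I would use Shioda--Tate for $\pi$, reading $5=\rho(X)=2+(\text{Mordell--Weil rank})+\sum_v(m_v-1)$; the two reducible fibres already contribute $(3-1)+(2-1)=3$, so $\pi$ has no further reducible fibre and $s\cup B'$ has no singularity beyond the nodes $p,q$, in particular $B'$ is smooth. Now $\eta^{-1}(f_1)$ has three components, which is possible only if the double cover of $f_1$ is already reducible over $\mathbf F_2$ (its $A_1$-point over $p$ being then resolved into the third component of the triangle $A_1+A_2+A_3$), equivalently only if $f_1$ meets $s\cup B'$ with even multiplicity at every point. Since $f_1\cdot s=1$, this forces $f_1$ to meet $s\cup B'$ with multiplicity $2$ at $p$ (transversally to $B'$), so that the remaining $f_1\cdot B'=2$ is concentrated at one further point $r\ne p$ where $f_1$ is tangent to $B'$. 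The hardest step is exactly this last one, namely converting the Kodaira type $\tilde{\mathbf A}_2$ of $\eta^{-1}(f_1)$ into the local intersection behaviour of $f_1$ along the branch curve; it rests on the local model $z^2=uv$ at the node $p$ and on the way its $A_1$-resolution separates the two local branches of $\eta^{-1}(f_1)$. Granting this, the assignment $A_3\mapsto p$, $A_5\mapsto q$, $A_1,A_2\mapsto f_1$, $A_6\mapsto f_2$ claimed in the Proposition follows.
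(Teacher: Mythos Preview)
Your proof is correct and follows the same approach as the paper, which consists of the single line ``We apply Theorem \ref{thm:SaintDonat-2}, case a) iii) v).'' You supply considerably more detail: you verify explicitly that $|D_4|$ has no fixed part, you match $D_4\equiv 2F+2A_4+A_3+A_5$ to the shape required in case iii) v), and you then trace each $A_j$ through $\eta$ and justify the tangency of $f_1$ with $B'$ via Shioda--Tate and the splitting of $\eta^{-1}(f_1)$, whereas the paper leaves all of this implicit.
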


\begin{proof}
We apply Theorem \ref{thm:SaintDonat-2}, case a) iii) v).
\end{proof}

\subsection{The lattice $\boldsymbol{U\oplus\mathbf{A}_{3}}$}

The K3 surface $X$ contains five $(-2)$-curves $A_{1},\dots,A_{5}$; 
their configuration is 

\begin{center}
\begin{tikzpicture}[scale=1]

\draw (0,0) -- (-1,0);
\draw (0,0) -- (0.866,0.5);
\draw (0,0) -- (0.866,-0.5);
\draw (0.866,0.5) -- (0.866*2,0);
\draw (0.866,-0.5) -- (0.866*2,0);

\draw (0,0) node {$\bullet$};
\draw (-1,0) node {$\bullet$};
\draw (0.866,0.5) node {$\bullet$};
\draw (0.866,-0.5) node {$\bullet$};
\draw (2*0.866,0) node {$\bullet$};

\draw (0,0) node [below]{$A_{2}$};
\draw (-1,0) node [below]{$A_{1}$};
\draw (2*0.866,0) node [below]{$A_{5}$};
\draw (0.866,-0.5) node [below]{$A_{3}$};
\draw (0.866,0.5) node [below]{$A_{4}$};

\end{tikzpicture}
\end{center} 

These curves generate the N\'eron--Severi lattice. In that basis, the
divisor
\[
D_{50}=(5,11,9,9,8)
\]
is ample, of square $50$, with $D_{50} \cdot A_{j}=1$ for $j\leq4$ and $D_{50} \cdot A_{5}=2$.
We have $D_{50} \cdot A_{j}=0$ for $j\leq4$ and $D_{50} \cdot A_{5}=2$. The divisor
\[
D_{4}=2A_{1}+4A_{2}+3A_{3}+3A_{4}+2A_{5}
\]
is nef, of square $4$, and $|D_{4}|$ is base-point free hyperelliptic
since $D_{4}(A_{2}+A_{3}+A_{4}+A_{5})=2$; one has $D_{4} \cdot A_{j}=0$
for $j\leq4$ and $D_{4} \cdot A_{5}=2$. 

\begin{prop}
The double cover induced by $|D_{4}|$ factors through $\eta\colon X\to\mathbf{F}_{2}$.
The image by $\eta$ of $A_{1}$ is the section $s$. The branch locus
is the union of the section $s$ and $B\in|3s+8f|$, so that $Bs=2$.
The intersection of $B$ and $s$ is tangent at one point $q$, forming
an $\mathbf{a}_{3}$ singularity. The curve $A_{5}$ is mapped onto
the fiber through $q$, and the curves $A_{2}$, $A_{3}$, $A_{4}$ are mapped
to $q$. 
\end{prop}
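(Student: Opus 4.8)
The plan is to reduce everything to Saint Donat's description of hyperelliptic linear systems, Theorem~\ref{thm:SaintDonat-2}, just as in the one-line proof of the proposition for $U\oplus\mathbf{A}_1\oplus\mathbf{A}_2$ above. First I would make the elliptic fibration on $X$ explicit: since $\NS X)\simeq U\oplus\mathbf{A}_3$, Proposition~\ref{prop:STAR-Nikulin-Kondo} gives an elliptic pencil $\pi:X\to\PP^1$ with a section and a singular fibre of type $\tilde{\mathbf{A}}_3$, and reading the dual graph this fibre is $F:=A_2+A_3+A_4+A_5$ (the $4$-cycle $A_2-A_3-A_5-A_4-A_2$) while $A_1$ is the section, $FA_1=1$. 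A direct computation on the basis $A_1,\dots,A_5$ gives $D_4^2=4$ and $D_4A_j=0$ for $j\le 4$, $D_4A_5=2$; in particular $FD_4=2$, so (recalling that $D_4$ is nef and base point free) Theorem~\ref{thm:SaintDonat-2}~a) shows $|D_4|$ is hyperelliptic.

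The second step is to identify the correct sub-case of Theorem~\ref{thm:SaintDonat-2}~a). I would exhibit the decomposition
\[
D_4\equiv 2F+2A_1+2A_2+A_3+A_4 ,
\]
in which $F-A_1-A_2$ is a chain ($FA_1=A_1A_2=1$) and $A_3,A_4$ are two further $(-2)$-curves each meeting, inside this configuration, only $A_2$ ($A_2A_3=A_2A_4=1$, $A_3A_4=0$). Since $D_4-2F$ is an effective divisor with connected support and a component of multiplicity $2$ (hence not a sum of two disjoint $(-2)$-curves), and $D_4^2=4$, this rules out the scroll case, cases i), ii) ($D^2=8,6$) and case iii)~u), leaving case iii)~v) with $n=1$. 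Therefore $\varphi_{D_4}(X)$ is a quadric cone in $\PP^3$ and $\varphi_{D_4}$ factors as $X\stackrel{\eta}{\to}\mathbf{F}_2\stackrel{c}{\to}\varphi_{D_4}(X)$, where $c$ is the contraction of the unique curve $s$ with $s^2=-2$, and the branch locus of $\eta$ is $s+B'$ with $B'$ reduced, $sB'=2$ and $B'\equiv 3s+8f$ (as $B=s+B'\equiv 4s+8f$).

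It remains to identify the $\eta$-images of $A_1,\dots,A_5$. Since $A_1$ is a section of $\pi$, $\eta(A_1)$ dominates the base of the ruling of $\mathbf{F}_2$ and so is a curve; as $D_4A_1=0$ it is contracted by $\varphi_{D_4}$ to the vertex of the cone, whence $\eta(A_1)\subseteq c^{-1}(\text{vertex})=s$, i.e. $\eta(A_1)=s$. The curves $A_2,A_3,A_4$ also have $D_4$-degree $0$ but lie in the fibre $F$, so none is $\eta^{-1}(s)$; thus each is $\eta$-exceptional, and since they form a connected chain of three $(-2)$-curves they are contracted to one point $q\in s$, over which the singular double cover has an $\mathbf{a}_3$ singularity. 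This forces $s$ and $B'$ to be smooth and simply tangent at $q$, which is exactly how the equality $sB'=2$ is realized ($s+B'$ has a tacnode at $q$). Finally $A_5$, the remaining component of $F$, has $D_4A_5=2$ and so is not contracted; since $\eta^{*}f\equiv F$ (the general ruling fibre $f$ meets $s+B'$ in $4$ points, so $\eta^{-1}(f)$ is an elliptic curve and $|\eta^{*}f|$ is the pencil $\pi$), the fibre $F$ lies over a single ruling fibre, and as $A_2,A_3,A_4$ sit over $q$ the curve $A_5$ maps onto the ruling fibre $f_q$ through $q$.

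The step I expect to be the main obstacle is the last one: turning the numerical ``$D_4$-degree'' data into the precise intersection behaviour of $s$ and $B'$ needs a local analysis at $q$, showing that $A_2,A_3,A_4$ resolve a single $\mathbf{a}_3$ point (which, given that there are exactly three of them and they form a chain, forces the tangency of $s$ and $B'$) and that the strict transform $A_1$ of $\eta^{-1}(s)$ indeed passes through $q$, as encoded by $A_1A_2=1$.
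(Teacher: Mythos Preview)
Your proof is correct and follows exactly the paper's approach: the paper's entire proof is the single line ``We apply Theorem~\ref{thm:SaintDonat-2}, case a) iii) v)'', and you have supplied the details behind that invocation, exhibiting the decomposition $D_4\equiv 2F+2A_1+2A_2+A_3+A_4$ that places $|D_4|$ in case iii)~v) with $n=1$, and then reading off the images of $A_1,\dots,A_5$ from the geometry of $\eta:X\to\mathbf{F}_2$.
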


\begin{proof}
We apply Theorem \ref{thm:SaintDonat-2}, case a) iii) v).
\end{proof}

\subsection{The lattice $\boldsymbol{[4]\oplus\mathbf{D}_{4}}$}

The K3 surface $X$ contains five $(-2)$-curves $A_{1},\dots,A_{5}$; 
their configuration is 

\begin{center}
\begin{tikzpicture}[scale=1]

\draw (0,0) -- (2,0);
\draw (0,0) -- (1,1.414/2);
\draw (0,0) -- (1,-1.414/2);
\draw (2,0) -- (1,1.414/2);
\draw (2,0) -- (1,-1.414/2);


\draw (0,0) node {$\bullet$};
\draw (1,1.414/2) node {$\bullet$};
\draw (1,-1.414/2) node {$\bullet$};
\draw (1,0) node {$\bullet$};
\draw (2,0) node {$\bullet$};

\draw (0,0) node [below]{$A_{1}$};
\draw (1,1.414/2) node [below]{$A_{2}$};
\draw (1,-1.414/2) node  [below]{$A_{4}$};
\draw (1,0) node  [below]{$A_{5}$};
\draw (2,0) node [below]{$A_{3}$};

\end{tikzpicture}
\end{center} 

These curves generate the N\'eron--Severi lattice. The divisor
\[
D_{22}=A_{1}+3\sum_{j=1}^{5}A_{j}
\]
is ample, of square $22$, with $D_{22} \cdot A_{3}=3$ and $D_{22} \cdot A_{j}=1$ for
$j\neq3$. The divisor 
\[
D_{2}=\sum_{j=1}^{5}A_{j}
\]
is nef, of square $2$, with  $D_{2} \cdot A_{1}=D_{2} \cdot A_{3}=1$, $D_{2} \cdot A_{j}=0$
for $j\in\{2,4,5\}$. By using the linear system $|D_{2}|$, we obtain the following. 

\begin{prop}
The K3 surface is a double cover of $\,\PP^{2}$ branched over a sextic
curve with three nodes on a line. 
\end{prop}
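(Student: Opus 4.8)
The plan is to use the divisor $D_{2}=\sum_{j=1}^{5}A_{j}$ to present $X$ as a double plane via Saint-Donat's results (Theorems \ref{thm:SaintDonat-1} and \ref{thm:SaintDonat-2}), and then to read the singularities of the branch sextic and their collinearity directly off the numbers $D_{2}A_{j}$ and the dual graph of the $A_{i}$. Throughout I use that $A_{1},\dots,A_{5}$ is the complete list of $\cu$-curves on $X$ (hence a $\ZZ$-basis of $\NS X)$), and that by hypothesis $D_{2}$ is nef with $D_{2}^{2}=2$, $D_{2}A_{1}=D_{2}A_{3}=1$ and $D_{2}A_{2}=D_{2}A_{4}=D_{2}A_{5}=0$.

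First I would check that $|D_{2}|$ is base point free. Since $D_{2}$ is big and nef, by Theorem \ref{thm:SaintDonat-1}(b) either $|D_{2}|$ has no fixed part, or $D_{2}\equiv aE+\Gamma$ with $|E|$ a free pencil and $\Gamma$ a $\cu$-curve such that $\Gamma E=1$. In the latter case $2=D_{2}^{2}=2a-2$ forces $a=2$, whence $D_{2}\Gamma=2\Gamma E+\Gamma^{2}=0$; as the only $\cu$-curves orthogonal to $D_{2}$ are $A_{2},A_{4},A_{5}$, this gives $\Gamma\in\{A_{2},A_{4},A_{5}\}$ and $E=\frac{1}{2}(D_{2}-\Gamma)$. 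But in the basis $A_{1},\dots,A_{5}$ of $\NS X)$ the class $D_{2}-\Gamma$ has its $A_{1}$-coordinate equal to $1$, so it is not divisible by $2$ in $\NS X)$, a contradiction. Hence $|D_{2}|$ has no fixed part, so it is base point free by Theorem \ref{thm:SaintDonat-1}(c); and since $D_{2}^{2}=2$, Theorem \ref{thm:SaintDonat-2}(c) shows that $\eta:=\varphi_{D_{2}}$ presents $X$ as the minimal desingularization of the double cover of $\PP^{2}$ branched over a reduced sextic $C_{6}$ with $\mathbf{ade}$ singularities, with $\eta^{*}\ell\equiv D_{2}$ for a line $\ell$.

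It then remains to determine $C_{6}$. A $\cu$-curve on $X$ is contracted by $\eta$ precisely when it is orthogonal to $D_{2}$ (because $D_{2}$ is the pull-back of a line and the line class is ample), so the curves contracted by $\eta$ are exactly $A_{2},A_{4},A_{5}$, i.e. the curves resolving the singularities of $C_{6}$. Being pairwise disjoint, $A_{2},A_{4},A_{5}$ cannot lie in a common connected $\mathbf{ade}$ configuration, hence they resolve three distinct singular points $q_{1},q_{2},q_{3}$ of $C_{6}$; as each is a single $\cu$-curve, each $q_{i}$ is a node, and since $A_{1},\dots,A_{5}$ exhaust the $\cu$-curves, these are all the singularities of $C_{6}$. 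Finally, $D_{2}A_{1}=1$ forces, by the projection formula, $\eta(A_{1})$ to be a line $\ell$ with $\eta$ birational onto $\ell$ on $A_{1}$; since $A_{1}A_{2}=A_{1}A_{4}=A_{1}A_{5}=1>0$, the line $\ell=\eta(A_{1})$ passes through each of $q_{1}=\eta(A_{2})$, $q_{2}=\eta(A_{4})$, $q_{3}=\eta(A_{5})$, so the three nodes are collinear. (Here $A_{3}$, which also maps onto $\ell$, is the second sheet of $\eta$ over $\ell$, and $\eta^{*}\ell=A_{1}+A_{2}+A_{3}+A_{4}+A_{5}$ recovers $D_{2}=\sum_{j}A_{j}$.)

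The only step that requires care is the base-point-freeness of $|D_{2}|$ — that is, excluding the decomposition $D_{2}\equiv 2E+\Gamma$ — which is handled above by the $2$-divisibility obstruction in $\NS X)$; everything else is a direct application of Theorems \ref{thm:SaintDonat-1} and \ref{thm:SaintDonat-2} and the projection formula. One could instead construct the double plane branched over a sextic with a collinear triple of nodes directly and verify that its resolution has N\'eron-Severi lattice $[4]\oplus\mathbf{D}_{4}$, but the argument above is shorter.
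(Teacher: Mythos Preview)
Your argument is correct and follows the same route as the paper: the paper simply records that $D_{2}=\sum A_{j}$ is nef of square $2$ with $D_{2}A_{1}=D_{2}A_{3}=1$, $D_{2}A_{2}=D_{2}A_{4}=D_{2}A_{5}=0$, and then asserts the proposition ``by using the linear system $|D_{2}|$'', noting afterwards that $A_{2},A_{4},A_{5}$ are contracted to the nodes and $A_{1},A_{3}$ map to the line through them. Your write-up supplies the details the paper leaves implicit, in particular the exclusion of the fixed-part case $D_{2}\equiv 2E+\Gamma$ via the $2$-divisibility obstruction in $\NS X)$; this is a clean way to certify base-point-freeness here.
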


The image of $A_{1}$, $A_{3}$ is the line through the three nodes;
the curves $A_{2}$, $A_{4}$, $A_{5}$ are contracted to the nodes. 

The divisor $D_{4}=A_{1}+2A_{2}+3A_{3}+2A_{4}+2A_{5}$ is nef, non-hyperelliptic,
with $D_{4} \cdot A_{1}=4$ and $D_{4} \cdot A_{k}=0$ for $k\geq2$. It defines
a model of the K3 surface as a quartic in $\PP^{3}$ with a $\mathbf{D}_{4}$
singularity.

\subsection{The lattice $\boldsymbol{[8]\oplus\mathbf{D}_{4}}$}

The K3 surface $X$ contains seven $(-2)$-curves $A_{1},\dots,A_{7}$; 
their configuration is 

\begin{center}
\begin{tikzpicture}[scale=1]

\draw [very thick] (1,0) -- (0.5,0.86);
\draw [very thick] (-1,0) -- (-0.5,0.86);
\draw [very thick] (-0.5,-0.86) -- (0.5,-0.86);
\draw (0,0) -- (0.5,-0.86);
\draw (0,0) -- (-0.5,-0.86);
\draw (0,0) -- (-0.5,0.86);
\draw (-1,0) -- (1,0);
\draw (0,0) -- (0.5,0.86);

\draw (1,0) node {$\bullet$};
\draw (0.5,0.86) node {$\bullet$};
\draw (-1,0) node {$\bullet$};
\draw (-0.5,0.86) node {$\bullet$};
\draw (-0.5,-0.86) node {$\bullet$};
\draw (0.5,-0.86)  node {$\bullet$};
\draw (0,0) node {$\bullet$};

\draw (1,0) node  [right]{$A_{5}$};
\draw (0.5,0.86) node [above]{$A_{2}$};
\draw (-1,0) node  [left]{$A_{1}$};
\draw (-0.5,0.86) node  [above]{$A_{4}$};
\draw (-0.5,-0.86) node  [below]{$A_{6}$};
\draw (0.5,-0.86) node   [below]{$A_{3}$};
\draw (0,0.2) node  [above]{$A_{7}$};

\end{tikzpicture}
\end{center} 

The curves $A_{1}$, $A_{2}$, $A_{3}$, $A_{4}$, $A_{7}$ generate the N\'eron--Severi
lattice. The divisor
\[
D_{6}=2A_{1}+2A_{4}+A_{7}
\]
is ample, of square $6$, with $D_{6} \cdot A_{j}=1$ for $j\leq6$ and $D_{6} \cdot A_{7}=2$.
The divisor 
\[
D_{2}=A_{1}+A_{4}+A_{7}
\]
is nef, of square $2$, with $D_{6} \cdot A_{j}=1$ for $j\leq6$ and $D_{6} \cdot A_{7}=0$.
We have 
\[
D_{2}\equiv A_{2}+A_{5}+A_{7}\equiv A_{3}+A_{6}+A_{7}.
\]
By using the linear system $|D_{2}|$, we obtain the following. 

\begin{prop}
The surface is a double cover of $\,\PP^{2}$ branched over a sextic
curve with one node, and there are three lines through that node such
that each line is tangent to the sextic at its other intersection
points. 
\end{prop}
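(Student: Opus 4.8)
The plan is to use the nef class $D_2 = A_1 + A_4 + A_7$ of square $2$ to present $X$ as a double plane, and then to read off the three lines directly from the three linear equivalences $D_2 \equiv A_1+A_4+A_7 \equiv A_2+A_5+A_7 \equiv A_3+A_6+A_7$ recorded above.

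\textbf{Step 1: the double cover.} First I would check that $|D_2|$ has no fixed part. By Theorem \ref{thm:SaintDonat-1}(b) the only alternative is $D_2 \equiv 2E + \Gamma$ with $|E|$ a free pencil and $\Gamma$ a $(-2)$-curve with $\Gamma E = 1$; then $D_2\Gamma = 0$, forcing $\Gamma = A_7$ (the only $(-2)$-curve among $A_1,\dots,A_7$ orthogonal to $D_2$) and hence $A_1 + A_4 \equiv 2E$. But $A_1$ and $A_4$ meet, so $A_1+A_4$ cannot be a sum of two distinct fibres, and an elliptic fibration on a K3 surface has no multiple fibre; thus $A_1+A_4\equiv 2E$ is impossible. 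Hence by Theorem \ref{thm:SaintDonat-1}(c) and Theorem \ref{thm:SaintDonat-2}(c) the system $|D_2|$ is base point free and $\eta := \varphi_{D_2}\colon X\to\PP^2$ is a double cover. From $\eta^{*}\mathcal{O}_{\PP^2}(1)=\mathcal{O}_X(D_2)$, $K_X=0$ and the standard double cover formula, the branch curve $C_6$ lies in $|{-2K_{\PP^2}}|$, i.e.\ is a plane sextic, with at worst $ADE$ singularities resolved by the $(-2)$-curves $A$ with $D_2 A=0$. Since $A_7$ is the only such curve and it is a single $(-2)$-curve, $C_6$ has exactly one singular point $p$, an ordinary node, and $A_7$ is the reduced preimage of the exceptional curve $E_0$ of $S:=\mathrm{Bl}_p\PP^2$; equivalently $X\to S$ is the double cover branched over the smooth strict transform $\tilde C_6\in|{-2K_S}|$.

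\textbf{Step 2: the three lines.} Each of the three effective divisors $A_i+A_{i+3}+A_7$ ($i=1,2,3$) lies in $|D_2|$, hence equals $\eta^{*}\ell_i$ for a unique line $\ell_i\subset\PP^2$. As $\eta^{*}\ell_i\supseteq A_7$, the line $\ell_i$ passes through $p$; its strict transform $\tilde\ell_i\in|H-E_0|$ is a fibre of the ruling $S\to\PP^1$, and $\eta^{-1}(\tilde\ell_i)=A_i+A_{i+3}$ is a reducible double cover of $\tilde\ell_i\cong\PP^1$. Since $\tilde\ell_i\cdot\tilde C_6=(H-E_0)({-2K_S})=4$ and a double cover of $\PP^1$ is reducible exactly when the degree-$4$ section cutting out its branch divisor is a perfect square, $\tilde\ell_i$ is tangent to $\tilde C_6$ at both of its intersection points; translating back to $\PP^2$, $\ell_i$ meets $C_6$ at the node with multiplicity $2$ and is tangent to $C_6$ at each of its two remaining intersection points. (Alternatively one invokes the converse in Lemma \ref{lem:splitingsPullBackTritang} applied to $X\to S$.) This yields the three lines through the node asserted in the statement.

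\textbf{Expected main obstacle.} The conceptually delicate point is the last one: justifying that $\eta^{-1}(\ell_i)$ splits precisely when $\ell_i$ is tangent to $C_6$ at all of its intersection points away from the node. Working on the degree-$8$ del Pezzo surface $S=\mathrm{Bl}_p\PP^2$, where the branch curve $\tilde C_6$ is smooth, makes this transparent: it reduces to the elementary fact that a double cover of $\PP^1$ is reducible iff its branch section is a square, which is exactly the mechanism behind Lemma \ref{lem:splitingsPullBackTritang}. The remaining verifications — that $|D_2|$ has no fixed part and that the contracted configuration is a single $\mathbf{a}_1$ — are routine finite checks with the given intersection matrix of the $A_j$.
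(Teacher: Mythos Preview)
Your proof is correct and follows essentially the same approach as the paper: use the nef class $D_2$ of square $2$ to realize $X$ as a double plane via Saint-Donat, observe that $A_7$ is the unique contracted curve (giving a single node), and interpret the three linear equivalences $D_2\equiv A_i+A_{i+3}+A_7$ as three lines through the node whose pull-backs split. The paper states these equivalences and the nefness of $D_2$ and then asserts the proposition without further argument; you have simply supplied the routine justifications (exclusion of the $2E+\Gamma$ case, the interpretation of splitting via the double cover of $\PP^1$ on the blow-up $S$) that the paper leaves implicit.
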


One can obtain another interesting geometric model of the K3 surface
$X$ as follows. The divisor $D_{8}=3A_{1}+2A_{2}+2A_{3}+A_{4}+4A_{7}$
is big and nef, base-point free, non-hyperelliptic, with $D_{8} \cdot A_{k}=0,0,0,8,8,8,0$
for $k=1,\dots,7$. The image of the K3 surface by the map associated
to $|D_{8}|$ is a degree $8$ surface in $\PP^{5}$ with a ${\bf D}_{4}$
singularity. 

\subsection{The lattice $\boldsymbol{[16]\oplus\mathbf{D}_{4}}$}

The K3 surface $X$ contains eight $(-2)$-curves $A_{1},\dots,A_{8}$; 
their intersection matrix is 
\[
\begin{pmatrix}-2 & 3 & 0 & 1 & 0 & 1 & 0 & 1\\
3 & -2 & 1 & 0 & 1 & 0 & 1 & 0\\
0 & 1 & -2 & 3 & 0 & 1 & 0 & 1\\
1 & 0 & 3 & -2 & 1 & 0 & 1 & 0\\
0 & 1 & 0 & 1 & -2 & 3 & 0 & 1\\
1 & 0 & 1 & 0 & 3 & -2 & 1 & 0\\
0 & 1 & 0 & 1 & 0 & 1 & -2 & 3\\
1 & 0 & 1 & 0 & 1 & 0 & 3 & -2
\end{pmatrix}. 
\]
Their configuration is 

\begin{center}
\begin{tikzpicture}[scale=1.2]


\draw  (0,1*0.7) -- (0,3*0.7);
\draw  (1*0.7,0) -- (1*0.7,4*0.7);
\draw  (1*0.7,4*0.7) -- (3*0.7,4*0.7);
\draw  (0,3*0.7) -- (4*0.7,3*0.7);
\draw  (3*0.7,4*0.7) -- (3*0.7,0);
\draw  (4*0.7,3*0.7) -- (4*0.7,1*0.7);
\draw  (1*0.7,0) -- (3*0.7,0);
\draw  (0,1*0.7) -- (4*0.7,1*0.7);

\draw  (1*0.7 , 4*0.7) -- ( 4*0.7, 1*0.7);
\draw  ( 0, 3*0.7) -- (3*0.7 ,0 );
\draw  (0 ,1*0.7 ) -- (3*0.7 ,4*0.7 );
\draw  ( 1*0.7,0*0.7 ) -- ( 4*0.7,3*0.7 );

\draw  [thick] (0,0.7) -- (0.7,0);
\draw  [ thick] (0,3*0.7) -- (1*0.7,4*0.7);
\draw  [ thick] (3*0.7,0) -- (4*0.7,1*0.7);
\draw  [ thick] (3*0.7,4*0.7) -- (4*0.7,3*0.7);

\draw  (1*0.7,4*0.7) node {$\bullet$};
\draw  (3*0.7,4*0.7) node {$\bullet$};
\draw  (0,3*0.7) node {$\bullet$};
\draw  (0,1*0.7) node {$\bullet$};
\draw  (1*0.7,0) node {$\bullet$};
\draw  (3*0.7,0) node {$\bullet$};
\draw  (4*0.7,1*0.7) node {$\bullet$};
\draw  (4*0.7,3*0.7) node {$\bullet$};

\draw  (1*0.7,4*0.7) node [above]{$A_{1}$};
\draw  (3*0.7,4*0.7) node [above]{$A_{4}$};
\draw  (0,3*0.7) node [left]{$A_{2}$};
\draw  (0,1*0.7) node [left]{$A_{7}$};
\draw  (1*0.7,0) node [below]{$A_{8}$};
\draw  (3*0.7,0) node [below]{$A_{5}$};
\draw  (4*0.7,1*0.7) node [right]{$A_{6}$};
\draw  (4*0.7,3*0.7) node [right]{$A_{3}$};

\end{tikzpicture}
\end{center} 
where the thick lines have weight $3$. The curves $A_{1}$, $A_{2}$, $A_{3}$, $A_{5}$, $A_{7}$
generate $\NS X)$. The divisor 
\[
D_{2}=A_{1}+A_{2}
\]
is ample of square $2$, with $D_{2} \cdot A_{j}=1$ for $j\in\{1,\dots,8\}$.
We have also
\[
D_{2}\equiv A_{3}+A_{4}\equiv A_{5}+A_{6}\equiv A_{7}+A_{8}.
\]
By using the linear system $|D_{2}|$, we obtain the following. 

\begin{prop}
The surface is a double cover of $\,\PP^{2}$ branched over a smooth
sextic curve which has four tritangent lines. 
\end{prop}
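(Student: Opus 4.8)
The plan is to analyze the morphism defined by the ample class $D_2=A_1+A_2$ of square $2$ by means of the results of Saint Donat. First I would show that the linear system $|D_2|$ has no fixed part: by Theorem \ref{thm:SaintDonat-1}(b) the only other possibility is $D_2\equiv aE+\G$, with $|E|$ a free pencil and $\G$ a $\cu$-curve such that $E\G=1$, but then $D_2^{2}=2a-2$ forces $a=2$, whence $D_2\G=2E\G+\G^{2}=0$, contradicting the ampleness of $D_2$. By Theorem \ref{thm:SaintDonat-1}(c) the system $|D_2|$ is therefore base point free, and since $h^{0}(D_2)=2+\frac{1}{2}D_2^{2}=3$ while a K3 surface admits no birational morphism to $\PP^{2}$, the morphism $\varphi_{D_2}$ is not birational onto its image; hence it is $2$ to $1$ onto $\PP^{2}$, which is case c) of Theorem \ref{thm:SaintDonat-2}. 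Thus $\eta:=\varphi_{D_2}$ realizes $X$ as a double cover $\eta\colon X\to\PP^{2}$ branched over a sextic curve $C_6$.

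Next I would check that $C_6$ is smooth. If $C_6$ had an $\mathbf{a}\mathbf{d}\mathbf{e}$ singularity, the double cover would be singular and its minimal desingularization would contain a $\cu$-curve $A$ contracted by $\eta$, that is with $D_2A=\eta^{*}\OO_{\PP^{2}}(1)\cdot A=0$; this is impossible because $D_2$ is ample. So the double cover is already smooth, it equals $X$, and $C_6$ is a smooth sextic curve.

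It remains to exhibit the four tritangent lines. For each $i\in\{1,2,3,4\}$ I would apply the converse part of Lemma \ref{lem:splitingsPullBackTritang}, with $d=1$ and the nef, base point free divisor $D_2$ of square $2$, to the relation $D_2\equiv A_{2i-1}+A_{2i}$ (note $A_{2i-1}A_{2i}=3=d^{2}+2$): it gives that $\ell_i:=\eta(A_{2i-1})=\eta(A_{2i})$ is a line; since $\eta^{*}\ell_i\equiv D_2\equiv A_{2i-1}+A_{2i}$ with the latter effective divisor supported on $\eta^{-1}(\ell_i)$, in fact $\eta^{*}\ell_i=A_{2i-1}+A_{2i}$, so the pull-back of $\ell_i$ splits and $\ell_i$ is tritangent to $C_6$. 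The four lines $\ell_1,\dots,\ell_4$ are pairwise distinct, for $\ell_i=\ell_j$ would yield $A_{2i-1}+A_{2i}=\eta^{*}\ell_i=\eta^{*}\ell_j=A_{2j-1}+A_{2j}$ as effective divisors, and comparing irreducible components forces $\{2i-1,2i\}=\{2j-1,2j\}$, impossible for $i\neq j$. This establishes the statement. The argument is essentially a bookkeeping exercise once the Saint Donat dichotomy is applied; the only step requiring a genuine (if short) argument is the absence of a fixed component of $|D_2|$, handled above via the ampleness of $D_2$.
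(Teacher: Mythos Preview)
Your proof is correct and follows exactly the approach the paper intends: the paper sets up $D_{2}=A_{1}+A_{2}$ as an ample class of square $2$ with the four relations $D_{2}\equiv A_{2k-1}+A_{2k}$ and then simply says ``By using the linear system $|D_{2}|$, we obtain'' the proposition. You have spelled out the details that the paper leaves implicit---the absence of a fixed component via ampleness, the smoothness of $C_{6}$ because no $\cu$-curve is contracted, and the appeal to Lemma~\ref{lem:splitingsPullBackTritang} for the tritangencies---so your argument is a fleshed-out version of the same line of reasoning.
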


One can obtain another interesting geometric model of the K3 surface
$X$ as follows. The divisor $D_{16}=4A_{1}+A_{2}+5A_{4}+3A_{5}+3A_{7}$
is nef, non-hyperelliptic, of square $16$, with $D_{16} \cdot A_{k}=0$
for curves $A_{1}$, $A_{4}$, $A_{5}$, $A_{7}$ and $D_{16} \cdot A_{k}=16$ for the
other curves. The image of the K3 surface under the map associated
to $|D_{16}|$ is a degree $16$ surface in $\PP^{9}$ with a ${\bf D}_{4}$
singularities, which is the image of the curves $A_{1}$, $A_{4}$, $A_{5}$, $A_{7}$. 

\subsection{The lattice $\boldsymbol{[6]\oplus\mathbf{A}_{2}^{\oplus2}}$}

The K3 surface $X$ contains $10$ $(-2)$-curves $A_{1},\dots,A_{10}$; 
their intersection matrix is 
\[
\begin{pmatrix}-2 & 3 & 1 & 0 & 0 & 1 & 0 & 1 & 2 & 0\\
3 & -2 & 0 & 1 & 1 & 0 & 1 & 0 & 0 & 2\\
1 & 0 & -2 & 3 & 0 & 1 & 0 & 1 & 2 & 0\\
0 & 1 & 3 & -2 & 1 & 0 & 1 & 0 & 0 & 2\\
0 & 1 & 0 & 1 & -2 & 3 & 1 & 0 & 0 & 2\\
1 & 0 & 1 & 0 & 3 & -2 & 0 & 1 & 2 & 0\\
0 & 1 & 0 & 1 & 1 & 0 & -2 & 3 & 0 & 2\\
1 & 0 & 1 & 0 & 0 & 1 & 3 & -2 & 2 & 0\\
2 & 0 & 2 & 0 & 0 & 2 & 0 & 2 & -2 & 6\\
0 & 2 & 0 & 2 & 2 & 0 & 2 & 0 & 6 & -2
\end{pmatrix}. 
\]
The configuration of the first eight curves is  

\begin{center}
\begin{tikzpicture}[scale=1.2]


\draw  (0,1*0.7) -- (0,3*0.7);
\draw  (1*0.7,0) -- (3*0.7,4*0.7);

\draw  (1*0.7,4*0.7) -- (3*0.7,4*0.7);
\draw  (0,3*0.7) -- (4*0.7,1*0.7);
\draw  (1*0.7,4*0.7) -- (3*0.7,0);

\draw  (4*0.7,3*0.7) -- (4*0.7,1*0.7);
\draw  (1*0.7,0) -- (3*0.7,0);
\draw  (0,1*0.7) -- (4*0.7,3*0.7);
\draw  (1*0.7 , 4*0.7) -- ( 4*0.7, 1*0.7);
\draw  ( 0, 3*0.7) -- (3*0.7 ,0 );
\draw  (0 ,1*0.7 ) -- (3*0.7 ,4*0.7 );
\draw  ( 1*0.7,0*0.7 ) -- ( 4*0.7,3*0.7 );

\draw  [thick] (0,0.7) -- (0.7,0);
\draw  [ thick] (0,3*0.7) -- (1*0.7,4*0.7);
\draw  [ thick] (3*0.7,0) -- (4*0.7,1*0.7);
\draw  [ thick] (3*0.7,4*0.7) -- (4*0.7,3*0.7);

\draw [color=red] (1*0.7,4*0.7) node {$\bullet$};
\draw [color=red] (3*0.7,4*0.7) node {$\bullet$};
\draw  (0,3*0.7) node {$\bullet$};
\draw  (0,1*0.7) node {$\bullet$};
\draw [color=red] (1*0.7,0) node {$\bullet$};
\draw [color=red] (3*0.7,0) node {$\bullet$};
\draw  (4*0.7,1*0.7) node {$\bullet$};
\draw  (4*0.7,3*0.7) node {$\bullet$};

\draw  (1*0.7,4*0.7) node [above]{$A_{1}$};
\draw  (3*0.7,4*0.7) node [above]{$A_{3}$};
\draw  (0,3*0.7) node [left]{$A_{2}$};
\draw  (0,1*0.7) node [left]{$A_{7}$};
\draw  (1*0.7,0) node [below]{$A_{8}$};
\draw  (3*0.7,0) node [below]{$A_{6}$};
\draw  (4*0.7,1*0.7) node [right]{$A_{5}$};
\draw  (4*0.7,3*0.7) node [right]{$A_{4}$};

\end{tikzpicture}
\end{center} 
where the thick lines have weight $3$, the curves $A_{k}$, $k\in\{1,3,6,8\}$
(with a red vertex) are such that $A_{k} \cdot A_{9}=2$, $A_{k} \cdot A_{10}=0$, and the
curves $A_{k}$, $k\in\{2,4,5,7\}$ (with a black vertex) are such that
$A_{k} \cdot A_{10}=2$, $A_{k} \cdot A_{9}=0$. The curves $A_{1}$, $A_{3}$, $A_{5}$, $A_{7}$, $A_{9}$
generate $\NS X)$. The divisor 
\[
D_{2}=A_{1}+A_{3}-A_{5}-A_{7}+A_{9}
\]
is ample, of square $2$, with $D_{2} \cdot A_{j}=1$ for $j\leq8$, $D_{2} \cdot A_{9}=D_{2} \cdot A_{10}=2$.
We have 
\[
D_{2}\equiv A_{2k-1}+A_{2k},\,\,k\in\{1,2,3,4\}
\]
 and $2D_{2}\equiv A_{9}+A_{10}$. By using the linear system $|D_{2}|$,
we obtain the following. 

\begin{prop}
The surface $X$ is the double cover of $\,\PP^{2}$ branched over a
smooth sextic curve which has four tritangent lines and one $6$-tangent
conic. 
\end{prop}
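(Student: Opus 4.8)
The plan is to feed the divisor $D_2=A_1+A_3-A_5-A_7+A_9$ into Saint Donat's classification to obtain the double plane structure, and then to read off the four tritangent lines and the $6$-tangent conic from the linear equivalences recorded above by applying Lemma~\ref{lem:splitingsPullBackTritang}.

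First I would check that $D_2$ is base point free. It is big, nef and of square $2$, and it has no fixed part: by Theorem~\ref{thm:SaintDonat-1}, case b), the only alternative would be $D_2\equiv 2E+\Gamma$ with $|E|$ a free pencil and $\Gamma$ a $(-2)$-curve satisfying $E\Gamma=1$, but then $D_2\Gamma=2E\Gamma+\Gamma^2=0$, contradicting ampleness of $D_2$. Hence $|D_2|$ is base point free by Theorem~\ref{thm:SaintDonat-1}, case c), and since $D_2^2=2$, Theorem~\ref{thm:SaintDonat-2}, case c), says that $\varphi_{D_2}$ is a double cover $X\to\PP^2$. Because $D_2$ is ample there is no $(-2)$-curve $C$ with $D_2C=0$, so $\varphi_{D_2}$ contracts nothing; its branch locus is therefore a \emph{smooth} sextic curve $C_6$, and we write $\eta=\varphi_{D_2}$ as in the preliminaries.

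Next, for each $k\in\{1,2,3,4\}$ the intersection matrix gives $A_{2k-1}A_{2k}=3=1^2+2$, and the relation $D_2\equiv A_{2k-1}+A_{2k}$ is exactly the hypothesis of Lemma~\ref{lem:splitingsPullBackTritang} with the nef square-$2$ divisor $D_2$ and $d=1$. Thus $\eta$ maps $A_{2k-1}$ (equivalently $A_{2k}$) onto a line $\ell_k$ with $\eta^*\ell_k=A_{2k-1}+A_{2k}$, so $\ell_k$ is tritangent to $C_6$. The pairs $\{A_1,A_2\},\{A_3,A_4\},\{A_5,A_6\},\{A_7,A_8\}$ consist of eight distinct curves, and an effective divisor decomposes uniquely into irreducible components, so the divisors $\eta^*\ell_k$ are pairwise distinct; hence $\ell_1,\dots,\ell_4$ are four distinct tritangent lines. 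Likewise $A_9A_{10}=6=2^2+2$ and $2D_2\equiv A_9+A_{10}$, so Lemma~\ref{lem:splitingsPullBackTritang} with $d=2$ produces a conic $Q$ with $\eta^*Q=A_9+A_{10}$ that is $6$-tangent to $C_6$; being of degree $2$ it is none of the $\ell_k$. This exhibits the claimed configuration of four tritangent lines and one $6$-tangent conic.

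Once the intersection matrix is in hand, this argument is essentially bookkeeping, so I do not expect a genuine obstacle. The two points deserving a word of care are (i) that an ample divisor of square $2$ is base point free and its branch sextic smooth — handled above via Theorem~\ref{thm:SaintDonat-1} and the absence of curves orthogonal to an ample class — and (ii) the distinctness of the four lines, which I would again derive from the uniqueness of the decomposition of the effective divisor $\eta^*\ell_k$ into irreducible components.
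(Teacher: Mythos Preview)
Your argument is correct and follows exactly the route the paper intends: the paper simply writes ``By using the linear system $|D_{2}|$, we obtain'' the proposition, leaving implicit the verifications (base-point freeness via Theorem~\ref{thm:SaintDonat-1}, smoothness of the branch sextic from ampleness, and the identification of the lines and conic via Lemma~\ref{lem:splitingsPullBackTritang}) that you have spelled out. Your added justification that $a=2$ in the alternative $D_{2}\equiv aE+\Gamma$ and hence $D_{2}\Gamma=0$ is the right way to rule out a fixed part.
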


\begin{rem}
The N\'eron--Severi lattice is also generated by $A_{1}$, $A_{2}$, $A_{3}$, $A_{5}$, $A_{7}$.

It is interesting to compare this case with the previous lattice case, where
 four tritangent lines are also involved. 
\end{rem}

\begin{prop}
Let $X$ be a K3 surface with $\NS X)\simeq[6]\oplus\mathbf{A}_{2}^{\oplus2}$.
There exist linear forms $\ell_{1},\dots,\ell_{4}\in H^{0}(\PP^{2},\OO(1))$,
a quadric $q_{2}\in H^{0}(\PP^{2},\OO(2))$ and a cubic $f_{3}\in H^{0}(\PP^{2},\OO(3))$
such that $X$ is the double cover of $\,\PP^{2}$ branched over the
curve 
\[
C_{6}\colon \ell_{1}\ell_{2}\ell_{3}\ell_{4}q_{2}-f_{3}^{2}=0.
\]
The moduli space of K3 surfaces $X$ with $\NS X)\simeq[6]\oplus\mathbf{A}_{2}^{\oplus2}$
is unirational.
\end{prop}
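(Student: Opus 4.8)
The plan is to follow the pattern of the proof of Proposition~\ref{prop:The-surface-case S113}: first realise $C_{6}$ in the stated form using the geometry of the $\cu$-curves, and then run a construction–plus–dimension–count argument for the unirationality. Write $F\in H^{0}(\PP^{2},\OO(6))$ for an equation of $C_{6}$, let $\eta:X\to\PP^{2}$ be the double cover attached to $|D_{2}|$, so $\eta^{*}\OO(1)=\OO(D_{2})$, and let $w\in H^{0}(X,\OO(3D_{2}))$ be the tautological section with $w^{2}=\eta^{*}F$ and $\iota^{*}w=-w$, where $\iota$ is the covering involution; from $\eta_{*}\OO_{X}\simeq\OO_{\PP^{2}}\oplus\OO_{\PP^{2}}(-3)$ one has $H^{0}(X,\OO(3D_{2}))=\eta^{*}H^{0}(\PP^{2},\OO(3))\oplus\CC w$, the two summands being the $(\pm1)$–eigenspaces of $\iota^{*}$. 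By the Proposition just above, $C_{6}$ is smooth with four tritangent lines $\ell_{i}=0$ ($i=1,\dots,4$) and one $6$–tangent conic $q_{2}=0$; by Lemma~\ref{lem:splitingsPullBackTritang}, $\eta^{*}\ell_{i}=A_{2i-1}+A_{2i}$ with $A_{2i-1}+A_{2i}\equiv D_{2}$, and $\eta^{*}q_{2}=A_{9}+A_{10}$ with $A_{9}+A_{10}\equiv2D_{2}$. Since $A_{2i-1}\cdot D_{2}=1$, $\eta$ restricts to a birational morphism $A_{2i-1}\to\ell_{i}$, so $\iota$ cannot fix $A_{2i-1}$ (otherwise $A_{2i-1}$ would lie in the irreducible ramification curve $R\cong C_{6}$); hence $\iota$ swaps $A_{2i-1}\leftrightarrow A_{2i}$, and similarly $A_{9}\leftrightarrow A_{10}$.

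The heart of the argument is then to extract the cubic $f_{3}$. Using $D_{2}\equiv A_{1}+A_{3}-A_{5}-A_{7}+A_{9}$ together with the relations above, one checks the linear equivalence
\[
G:=A_{1}+A_{3}+A_{6}+A_{8}+A_{9}\equiv 3D_{2}.
\]
Let $s\in H^{0}(X,\OO(3D_{2}))$ be the section whose divisor is $G$ (unique up to scalar, as $h^{0}(3D_{2}-G)=h^{0}(\OO_{X})=1$), and write $s=\eta^{*}f_{3}'+\mu w$. Then $\iota^{*}s=\eta^{*}f_{3}'-\mu w$ has divisor $\iota(G)$, so
\[
s\cdot\iota^{*}s=(\eta^{*}f_{3}')^{2}-\mu^{2}w^{2}=\eta^{*}\bigl({f_{3}'}^{2}-\mu^{2}F\bigr)
\]
is an $\iota$–invariant section of $\OO(6D_{2})$ whose divisor is $G+\iota(G)=\sum_{j=1}^{10}A_{j}=\eta^{*}\{\ell_{1}\ell_{2}\ell_{3}\ell_{4}q_{2}=0\}$. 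Because $6D_{2}-\sum_{j}A_{j}\equiv0$, the sections of $\OO(6D_{2})$ vanishing on $\sum_{j}A_{j}$ form a one–dimensional space, necessarily spanned by $\eta^{*}(\ell_{1}\ell_{2}\ell_{3}\ell_{4}q_{2})$; hence ${f_{3}'}^{2}-\mu^{2}F=c\,\ell_{1}\ell_{2}\ell_{3}\ell_{4}q_{2}$ for some $c\in\CC$. As $\ell_{1}\ell_{2}\ell_{3}\ell_{4}q_{2}$ is a product of five distinct reduced forms it is not a square, so $\mu\neq0$; up to rescaling the $\ell_{i}$ and replacing $F$ by $-F$ one obtains $C_{6}:\ \ell_{1}\ell_{2}\ell_{3}\ell_{4}q_{2}-f_{3}^{2}=0$ with $f_{3}:=\mu^{-1}f_{3}'$, which is the first assertion.

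For the unirationality, I would consider the morphism
\[
\Phi:H^{0}(\PP^{2},\OO(1))^{\oplus4}\oplus H^{0}(\PP^{2},\OO(2))\oplus H^{0}(\PP^{2},\OO(3))\longrightarrow H^{0}(\PP^{2},\OO(6)),\qquad (\ell_{1},\dots,\ell_{4},q_{2},f_{3})\mapsto\ell_{1}\ell_{2}\ell_{3}\ell_{4}q_{2}-f_{3}^{2},
\]
whose source is a rational variety of dimension $28$. For generic $w=(\ell_{i},q_{2},f_{3})$ the sextic $C_{6,w}=\{\Phi(w)=0\}$ is smooth; restricting $\Phi(w)$ to $\ell_{i}=0$, resp. to $q_{2}=0$, kills the product term and leaves $-f_{3}^{2}$, a square, so the lines $\ell_{i}=0$ are tritangent and $q_{2}=0$ is $6$–tangent to $C_{6,w}$. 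Thus the double cover $X_{w}$ carries ten $\cu$–curves over these components, whose intersection matrix for generic $w$ is the one displayed above (a computation on one explicit sextic of this form, in the spirit of Example~\ref{exa:S113}, confirms this); they span a sublattice isometric to $[6]\oplus\mathbf{A}_{2}^{\oplus2}$, whose proper overlattices occur as $\NS X')$ for K3 surfaces $X'$ with a different configuration of $\cu$–curves, so $\NS X_{w})\simeq[6]\oplus\mathbf{A}_{2}^{\oplus2}$ for generic $w$. Hence $\Phi$ induces a rational map from a rational variety onto a dense subset of $\mathcal{M}_{[6]\oplus\mathbf{A}_{2}^{\oplus2}}$ — dominance being exactly the first part of the statement — and a dimension count as for $S_{1,1,3}$ and $S_{5,1,1}$ (here $28-4-1-8=15=\dim\mathcal{M}_{[6]\oplus\mathbf{A}_{2}^{\oplus2}}$, the $4$ coming from the torus $(\alpha_{i}\ell_{i},\beta q_{2},\gamma f_{3})$ with $\prod_{i}\alpha_{i}\,\beta=\gamma^{2}$) confirms it. Therefore $\mathcal{M}_{[6]\oplus\mathbf{A}_{2}^{\oplus2}}$ is unirational.

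The main obstacle is the first step: spotting the integral combination $G=A_{1}+A_{3}+A_{6}+A_{8}+A_{9}$ of the ten $\cu$–curves that is linearly equivalent to $3D_{2}$, and then exploiting the $\iota$–eigenspace splitting of $H^{0}(X,\OO(3D_{2}))$ to read off the cubic $f_{3}$ from the canonical section of $\OO(G)$. Once this factorisation of $F$ is available, the unirationality follows by the construction–plus–dimension–count argument already used repeatedly in the paper.
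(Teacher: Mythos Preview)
Your proof is correct, and for the first assertion it takes a genuinely different route from the paper. The paper's proof works entirely through the construction--plus--dimension--count side: it defines the same map $\Phi$, checks via an explicit numerical example (Example~\ref{example=00005B6=00005D=00005Coplus=00005Cmathbf=00007BA=00007D_=00007B2=00007D^=00007B=00005Coplus2=00007D}) that the ten $\cu$-curves on a generic $X_{w}$ have the stated intersection matrix, argues that the fibres of $\Phi$ are exactly the torus orbits (up to permutation and the global $\lambda$-scaling), and concludes that the image modulo $PGL_{3}$ is $15$-dimensional, hence birational to the moduli space. In particular, the paper obtains the first assertion only as a consequence of this dominance, i.e.\ for the \emph{generic} K3 surface with this lattice. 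You instead give a direct algebraic construction of $f_{3}$ that works for \emph{every} such $X$: you spot the effective divisor $G=A_{1}+A_{3}+A_{6}+A_{8}+A_{9}\equiv 3D_{2}$, use the $\iota$-eigenspace decomposition of $H^{0}(X,\OO(3D_{2}))$, and factor $s\cdot\iota^{*}s$. This is a cleaner way to establish the first claim and makes the dominance in your second part automatic rather than something to be checked by dimensions. One small slip in your parenthetical count: the torus $\{\prod_{i}\alpha_{i}\,\beta=\gamma^{2}\}$ you describe has six parameters and one relation, hence dimension $5$, so the count is really $28-5-8=15$; this agrees with the paper's $28-4-9$, where the extra $-1$ accounts for projectivisation.
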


\begin{proof}
Let us consider the map
\[
\Phi\colon H^{0}(\PP^{2},\OO(1))^{\oplus4}\oplus H^{0}(\PP^{2},\OO(2))\oplus H^{0}(\PP^{2},\OO(3))\to H^{0}(\PP^{2},\OO(6))
\]
defined by 
\[
w:=(\ell_{1},\ell_{2},\ell_{3},\ell_{4},q_{2},f_{3})\mapsto f_{6,w}:=\ell_{1}\ell_{2}\ell_{3}\ell_{4}q_{2}-f_{3}^{2}.
\]
Suppose that $w$ is  general , so that the double cover branched over
$C_{w}=\{f_{6,w}=0\}$ is a K3 surface. The pull-backs of $\ell_{k}=0$, $k=1,\dots,4$, 
and $q_{2}=0$ are pairs of $\cu$-curves, for which  Example
\ref{example=00005B6=00005D=00005Coplus=00005Cmathbf=00007BA=00007D_=00007B2=00007D^=00007B=00005Coplus2=00007D}
below shows that (for a suitable order) these curves intersect according
to the above matrix since intersection numbers are preserved for
flat families of surfaces.

The map $\Phi$ is invariant under the action of the transformations
\[(\ell_{1},\ell_{2},\ell_{3},\ell_{4},q_{2})\mapsto(\a\ell_{1},\b\ell_{2},\g\ell_{3},\d\ell_{4},\e q_{2},f_{3})\]
for $\a\b\g\d\e=1$. Since the curves $\ell_{j}=0$, $j\in\{1,\dots,4\}$, 
and $q_{2}=0$ are the images of the $10$ $\cu$-curves in the
double cover $X$ ramified over $f_{6,w}$, we have 
\[
\Phi(w)=\Phi(w')
\]
if and only if, up to permutation and the action of the above transformations,
$w=\l w'$ for some $\l\neq0$, where 
\[
\l\cdot(\ell_{1},\ell_{2},\ell_{3},\ell_{4},q_{2},f_{3})=(\l\ell_{1},\l\ell_{2},\l\ell_{3},\l\ell_{4},\l^{2}q_{2},\l^{3}f_{3}).
\]
The image of $\Phi$ modulo the action of $PGL_{3}$ is therefore
a unirational space of dimension
\[
4\cdot3+6+10-4-9=15, 
\]
with an open set which is birational to the moduli
space of K3 surfaces $X$ with $\NS X)\simeq[6]\oplus\mathbf{A}_{2}^{\oplus2}$.
\end{proof}

\begin{example}
\label{example=00005B6=00005D=00005Coplus=00005Cmathbf=00007BA=00007D_=00007B2=00007D^=00007B=00005Coplus2=00007D}Let
us take 
\[
\begin{array}{l}
\ell_{1}=16y+29z,\;\ell_{2}=22x+y+27z,\;\ell_{3}=17y+29z,\;\ell_{4}=25x+29y+11z,\\
q_{2}=31x^{2}+16xy+11y^{2}+11xz+9yz+17z^{2},\\
f_{3}=6x^{3}+8x^{2}y+17xy^{2}+5y^{3}+17x^{2}z+18xyz+19y^{2}z+16xz^{2}+8yz^{2}+23z^{3}.
\end{array}
\]
Let $X$ be the associated K3 surface, and let $X_{q}$ be its reduction
over the field $\FF_{q}$. Using the Tate conjecture, one computes
that the Picard numbers of $X_{17^{2}}$ and $X_{23^{2}}$ are both $6$,
and using the Artin--Tate conjecture and Van Luijk's trick, one obtains
that $X$ has Picard number $5$. One can check that the $10$ $\cu$-curves
above the lines $\ell_{k}=0$ and the conic $q_{2}=0$ have the above
intersection matrix and $\NS X)\simeq[6]\oplus\mathbf{A}_{2}^{\oplus2}$. 
\end{example}

\section{Rank 6 lattices}

\subsection{The lattice $\boldsymbol{U(3)\oplus\mathbf{A}_{2}^{\oplus2}}$}

Let $X$ be a K3 surface with N\'eron--Severi lattice 
\[
\NS X)=U(3)\oplus\mathbf{A}_{2}^{\oplus2}.
\]
The surface contains $12$ $\cu$-curves $A_{1},\dots,A_{12}$,  with
configuration 

\begin{center}
\begin{tikzpicture}[scale=1]

\draw  [ultra thick] (5.5,-0.5) -- (5.5,0.5);
\draw  [ultra thick] (6.5,-0.5) -- (6.5,0.5);
\draw  [ultra thick] (7.5,-0.5) -- (7.5,0.5);
\draw  [ultra thick] (8.5,-0.5) -- (8.5,0.5);
\draw  [ultra thick] (9.5,-0.5) -- (9.5,0.5);
\draw  [ultra thick] (10.5,-0.5) -- (10.5,0.5);

\draw  (5.5,-0.5) node {$\bullet$};
\draw  (6.5,-0.5) node {$\bullet$};
\draw  (7.5,-0.5) node {$\bullet$};
\draw  (8.5,-0.5) node {$\bullet$};
\draw  (9.5,-0.5) node {$\bullet$};
\draw  (10.5,-0.5) node {$\bullet$};
\draw   (5.5,0.5) node {$\bullet$};
\draw   (6.5,0.5) node {$\bullet$};
\draw   (7.5,0.5) node {$\bullet$};
\draw   (8.5,0.5) node {$\bullet$};
\draw   (9.5,0.5) node {$\bullet$};
\draw   (10.5,0.5) node {$\bullet$};

\draw  (5.5,-0.5) node [below]{$A_{1}$};
\draw  (6.5,-0.5) node [below]{$A_{3}$};
\draw  (7.5,-0.5) node [below]{$A_{5}$};
\draw  (8.5,-0.5) node  [below]{$A_{7}$};
\draw  (9.5,-0.5) node [below]{$A_{9}$};
\draw  (10.5,-0.5) node [below]{$A_{11}$};
\draw   (5.5,0.5) node [above]{$A_{2}$};
\draw   (6.5,0.5) node [above]{$A_{4}$};
\draw   (7.5,0.5) node [above]{$A_{6}$};
\draw   (8.5,0.5) node [above]{$A_{8}$};
\draw   (9.5,0.5) node [above]{$A_{10}$};
\draw   (10.5,0.5) node  [above]{$A_{12}$};

\draw  (5.5,0) node [right]{$3$};
\draw  (6.5,0) node [right]{$3$};
\draw  (7.5,0) node [right]{$3$};
\draw  (8.5,0) node  [right]{$3$};
\draw  (9.5,0) node [right]{$3$};
\draw  (10.5,0) node [right]{$3$};

\draw (1,0) -- (0.5,0.86);
\draw (1,0) -- (-0.5,0.86);
\draw (1,0) -- (-0.5,-0.86);
\draw (0.5,0.86) -- (-0.5,0.86);
\draw (-1,0) -- (-0.5,0.86);
\draw (-1,0) -- (-0.5,-0.86);
\draw (-1,0) -- (0.5,0.86);
\draw (-1,0) -- (0.5,-0.86);
\draw (-1,0) -- (1,0);
\draw (-0.5,-0.86) -- (0.5,-0.86);
\draw (1,0) -- (0.5,-0.86);
\draw (-0.5,-0.86) -- (0.5,0.86);
\draw (-0.5,0.86) -- (0.5,-0.86);

\draw (1,0) node {$\bullet$};
\draw (0.5,0.86) node {$\bullet$};
\draw (-1,0) node {$\bullet$};
\draw (-0.5,0.86) node {$\bullet$};
\draw (-0.5,-0.86) node {$\bullet$};
\draw (0.5,-0.86)  node {$\bullet$};

\draw (1,0) node  [right]{$A_{8}$};
\draw (0.5,0.86) node [above]{$A_{6}$};
\draw (-1,0) node  [left]{$A_{1}$};
\draw (-0.5,0.86) node  [above]{$A_{4}$};
\draw (-0.5,-0.86) node  [below]{$A_{11}$};
\draw (0.5,-0.86) node   [below]{$A_{10}$};

\draw (1+3.5,0) -- (0.5+3.5,0.86);
\draw (1+3.5,0) -- (-0.5+3.5,0.86);
\draw (1+3.5,0) -- (-0.5+3.5,-0.86);
\draw (0.5+3.5,0.86) -- (-0.5+3.5,0.86);
\draw (-1+3.5,0) -- (-0.5+3.5,0.86);
\draw (-1+3.5,0) -- (-0.5+3.5,-0.86);
\draw (-1+3.5,0) -- (0.5+3.5,0.86);
\draw (-1+3.5,0) -- (0.5+3.5,-0.86);
\draw (-1+3.5,0) -- (1+3.5,0);
\draw (-0.5+3.5,-0.86) -- (0.5+3.5,-0.86);
\draw (1+3.5,0) -- (0.5+3.5,-0.86);
\draw (-0.5+3.5,-0.86) -- (0.5+3.5,0.86);
\draw (-0.5+3.5,0.86) -- (0.5+3.5,-0.86);

\draw (1+3.5,0) node {$\bullet$};
\draw (0.5+3.5,0.86) node {$\bullet$};
\draw (-1+3.5,0) node {$\bullet$};
\draw (-0.5+3.5,0.86) node {$\bullet$};
\draw (-0.5+3.5,-0.86) node {$\bullet$};
\draw (0.5+3.5,-0.86)  node {$\bullet$};

\draw (1+3.43,0) node  [right]{$A_{7}$};
\draw (0.5+3.5,0.86) node [above]{$A_{5}$};
\draw (-1+3.5,0) node  [left]{$A_{2}$};
\draw (-0.5+3.5,0.86) node  [above]{$A_{3}$};
\draw (-0.5+3.5,-0.86) node  [below]{$A_{12}$};
\draw (0.5+3.5,-0.86) node   [below]{$A_{9}$};

\end{tikzpicture}
\end{center} 
where for clarity we represented  the same curve several times.

The N\'eron--Severi lattice is generated by $A_{1}$, $A_{3}$, $A_{5}$, $A_{7}$, $A_{9}$, $A_{11}$.
The divisor 
\[
D_{2}=-A_{1}+A_{3}+A_{5}+A_{7}+A_{9}-A_{11}
\]
is ample, base-point free, of square $2$, and the $12$ $(-2)$-curves
$A_{1},\dots,A_{12}$ on $X$ are of degree $1$ for $D_{2}$. We
have 
\[
D_{2}\equiv A_{2k-1}+A_{2k}
\]
 for $k\in\{1,\dots,6\}$, and by using the linear system $|D_{2}|$,
we obtain the following. 

\begin{prop}
The surface $X$ is a double cover $\pi\colon X\to\PP^{2}$ of $\PP^{2}$
branched over a smooth sextic curve $C_{6}$. The $12$ $\cu$-curves
on $X$ are pull-backs of $6$ lines that are tritangent to the sextic
curve. There exist linear forms $\ell_{1},\dots,\ell_{6}\in H^{0}(\PP^{2},\OO(1))$
and a cubic $f_{3}\in H^{0}(\PP^{2},\OO(3))$ such that the curve
$C_{6}$ is given by 
\[
\ell_{1}\ell_{2}\ell_{3}\ell_{4}\ell_{5}\ell_{6}-f_{3}^{2}=0.
\]
The moduli space of K3 surfaces X with $\NS X)\simeq U(3)\oplus\mathbf{A}_{2}^{\oplus2}$
is unirational. 
\end{prop}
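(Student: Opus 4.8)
The plan is to follow the argument used for the lattice $[6]\oplus\mathbf A_2^{\oplus2}$. The geometric description is essentially forced by the lattice data already recorded: since $D_2$ is ample, base point free (so $|D_2|$ has no fixed component) and $D_2^2=2$, Theorem~\ref{thm:SaintDonat-2}~c) shows that $\varphi_{D_2}$ realises $X$ as a double cover $\pi\colon X\to\PP^2$, and as $D_2$ is ample it has positive degree on every $(-2)$-curve, so $\pi$ contracts nothing and the branch sextic $C_6$ is smooth. For each $k\in\{1,\dots,6\}$ one has $D_2\equiv A_{2k-1}+A_{2k}$ with $A_{2k-1}A_{2k}=3=1^2+2$, so Lemma~\ref{lem:splitingsPullBackTritang} with $d=1$ applies: $A_{2k-1}+A_{2k}=\pi^*\ell_k$ for a line $\ell_k=0$ tritangent to $C_6$, and the twelve $(-2)$-curves are exactly the components of the pull-backs of these six tritangent lines.

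Next I would introduce the parametrisation
\[
\Phi\colon H^0(\PP^2,\OO(1))^{\oplus6}\oplus H^0(\PP^2,\OO(3))\to H^0(\PP^2,\OO(6)),\qquad (\ell_1,\dots,\ell_6,f_3)\mapsto\ell_1\cdots\ell_6-f_3^2,
\]
and show a generic $U(3)\oplus\mathbf A_2^{\oplus2}$-polarised K3 surface has a branch sextic in its image. For generic $w=(\ell_1,\dots,\ell_6,f_3)$ the curve $C_w=\{\Phi(w)=0\}$ is a smooth sextic and every line $\ell_i=0$ is tritangent, because $\Phi(w)|_{\ell_i=0}=-(f_3|_{\ell_i=0})^2$ is a perfect square; by Lemma~\ref{lem:splitingsPullBackTritang} the pull-backs of the six lines split into twelve $(-2)$-curves on the double cover $X_w$. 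I would then check on one explicit sextic of the form $\ell_1\cdots\ell_6-f_3^2$, chosen over a number field, that (after relabelling) these twelve curves realise the intersection matrix displayed above, hence generate a rank-$6$ lattice isometric to $U(3)\oplus\mathbf A_2^{\oplus2}$, and that the Picard number is exactly $6$ --- the latter by reduction modulo two well-chosen primes together with the Artin--Tate formula and van Luijk's trick, as in Example~\ref{exa:S113} and in the corresponding example for $[6]\oplus\mathbf A_2^{\oplus2}$. Since pairwise intersection numbers are locally constant in the flat family over the parameter $w$, the same conclusions hold for generic $w$; ruling out the finitely many proper over-lattices of $U(3)\oplus\mathbf A_2^{\oplus2}$, whose K3 surfaces do not carry this configuration of twelve curves, then gives $\NS X_{w})\simeq U(3)\oplus\mathbf A_2^{\oplus2}$ for generic $w$ and the stated form of $C_6$.

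For unirationality I would use a dimension count. The source of $\Phi$ has dimension $6\cdot3+10=28$; $\Phi$ is invariant under the $5$-dimensional torus $(\ell_1,\dots,\ell_6)\mapsto(\lambda_1\ell_1,\dots,\lambda_6\ell_6)$ with $\prod_i\lambda_i=1$, under $f_3\mapsto-f_3$, and under permutations of the $\ell_i$, and for generic $w$ these exhaust the fibre $\Phi^{-1}(\Phi(w))$ (a generic sextic in the image has the six lines $\ell_i=0$ as its only tritangent lines, so $w$ is recovered up to this torus, the sign of $f_3$, and permutation); equivalently, one checks that $d\Phi$ has rank $23$ at a random point. Hence the image of $\Phi$ is $23$-dimensional, its projectivisation is $22$-dimensional, and after the generically injective passage to the double covers and the quotient by $PGL_3$ one obtains a $22-8=14$-dimensional family inside the irreducible (Proposition~\ref{prop:The-118-moduli-irred}) $14$-dimensional moduli space $\mathcal M_{U(3)\oplus\mathbf A_2^{\oplus2}}$; being of full dimension it is dense, and since it is dominated by the rational source of $\Phi$, the moduli space is unirational. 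The step I expect to be the real work is the verification that the twelve split curves generate the full rank-$6$ lattice $U(3)\oplus\mathbf A_2^{\oplus2}$ and that the Picard number equals $6$ for a sufficiently general member: this is the explicit-example-plus-van-Luijk computation together with the over-lattice check, carried out exactly as in the earlier cases.
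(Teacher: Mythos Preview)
Your proposal is correct and follows essentially the same route as the paper: the parametrisation $\Phi$, the verification on an explicit example that the twelve split curves generate $U(3)\oplus\mathbf A_2^{\oplus2}$, and the dimension count $6\cdot3+10-5-1-8=14$ matched against the irreducible moduli space. The only notable difference is in the example verification: you propose van~Luijk's two-prime trick plus an over-lattice check, whereas the paper reduces modulo a single prime $p=23$, computes via Artin--Tate that $\NS X_{23})$ has rank~$6$ and discriminant group of order $3^4$, and observes that since the sublattice generated by the twelve curves already has discriminant $3^4$, it must equal $\NS X_{23})$, hence also $\NS X)$; this single-prime-plus-discriminant argument is a bit more economical and makes the over-lattice check unnecessary.
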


\begin{proof}
Let us consider the map
\[
\Phi\colon H^{0}(\PP^{2},\OO(1))^{\oplus6}\oplus H^{0}(\PP^{2},\OO(3))\to H^{0}(\PP^{2},\OO(6))
\]
defined by 
\[
w:=(\ell_{1},\ell_{2},\ell_{3},\ell_{4},\ell_{5},\ell_{6},f_{3})\mapsto f_{6,w}:=\ell_{1}\ell_{2}\ell_{3}\ell_{4}\ell_{5}\ell_{6}-f_{3}^{2}.
\]
When the sextic curve $C_{w}\colon f_{6,w}=0$ is smooth, the K3 surface
$Y_{w}$ which is the double cover of $\PP^{2}$ branched over $C_{w}$
contains $12$ $\cu$-curves over the $6$ tritangent lines $\{\ell_{k}=0\}$, $k=1,\dots,6$.
Example \ref{exa:As-an-exampleU3A2exp2} below shows that these $\cu$-curves
generate  a sublattice isometric to $U(3)\oplus\mathbf{A}_{2}^{\oplus2}$.
The map $\Phi$ is invariant under the action of the transformations
\[
(\ell_{1},\ell_{2},\ell_{3},\ell_{4},\ell_{5},\ell_{6})\mapsto(\a\ell_{1},\b\ell_{2},\g\ell_{3},\d\ell_{4},\e\ell_{5},\mu\ell_{6},f_{3})
\]
 for $\a\b\g\d\e\mu=1$. Since the curves $\ell_{j}=0$, $j\in\{1,\dots,6\}$, 
are the images of the $12$ $\cu$-curves in the double cover ramified
over $f_{6}$, we have 
\[
\Phi(w)=\Phi(w')
\]
if and only if, up to permutation and the action of the above transformations,
$w=\l\cdot w'$ for some $\l\neq0$, where 
\[
\l\cdot(\ell_{1},\ell_{2},\ell_{3},\ell_{4},\ell_{5},\ell_{6},f_{3})=(\l\ell_{1},\l\ell_{2},\l\ell_{3},\l\ell_{4},\l\ell_{5},\l\ell_{6},\l^{3}f_{3}).
\]
The image of $\Phi$ modulo the action of $PGL_{3}$ is therefore
a unirational space of dimension
\[
6\cdot3+10-5-9=14,
\]
with an open set which is birational to the moduli
space of K3 surfaces $X$ with $\NS X)\simeq U(3)\oplus\mathbf{A}_{2}^{\oplus2}$. 
\end{proof}

\begin{example}
\label{exa:As-an-exampleU3A2exp2}As an example, one can take 
\[
\begin{array}{l}
\ell_{1}=15x+5y+2z,\;\ell_{2}=20x+12y+17z,\;\ell_{3}=22x+22y+16z,\\
\ell_{4}=8x+y+7z,\;\ell_{5}=10x+15y+15z,\;\ell_{6}=6x+22y+20z,\\
f_{3}=18x^{2}y+13xy^{2}+16y^{3}+16x^{2}z+17xyz+22y^{2}z+6xz^{2}+10yz^{2}+20z^{3}; 
\end{array}
\]
this gives a smooth K3 surface $X$. Its reduction modulo $23$ is
a smooth surface $X_{23}$. Using the Artin--Tate conjecture, one finds
that its Picard number is $4$ and the discriminant of the N\'eron--Severi
lattice has order~$3^{4}$. Using the pull-back of the six lines
$\{\ell_{k}=0\}$, one can check that the N\'eron--Severi lattice contains
the lattice $U(3)\oplus\mathbf{A}_{2}^{\oplus2}$ with discriminant
$3^{4}$. Thus, one has $\NS X_{23})\simeq U(3)\oplus\mathbf{A}_{2}^{\oplus2}$,
and that implies that the N\'eron--Severi lattice of $X$ is also isometric
to $U(3)\oplus\mathbf{A}_{2}^{\oplus2}.$ 
\end{example}

\begin{rem}
In \cite[Proof of Theorem 6.4.1]{Nikulin}, Nikulin constructs some
surfaces $X$ with N\'eron--Severi lattice $U(3)\oplus\mathbf{A}_{2}^{\oplus2}$
as the minimal desingularization of a triple cover of a smooth quadric
$Q\subset\PP^{3}$ branched over a $(3,3)$-curve $C$ with two singularities
$\mathbf{a}_{1}$. In particular, the automorphism group of such a surface
$X$ has order at least $6$. According to \cite{Kondo}, the  general 
surface with $\NS X)\simeq U(3)\oplus\mathbf{A}_{2}^{\oplus2}$ has
automorphism group isomorphic to $\ZZ/2\ZZ$. 
\end{rem}

\subsection{The lattice $\boldsymbol{U(4)\oplus\mathbf{D}_{4}}$}

Let $X$ be a K3 surface with  N\'eron--Severi lattice 
\[
\NS X)=U(4)\oplus\mathbf{D}_{4}.
\]
The surface $X$ contains nine $(-2)$-curves $A_{1},\dots,A_{9}$, 
with configuration 

\begin{center}
\begin{tikzpicture}[scale=1]

\draw [very thick] (0.86,-0.5) -- (0.86,0.5);
\draw [very thick] (-0.5,-0.86) -- (0.5,-0.86);
\draw (0,0) -- (0.5,-0.86);
\draw (0,0) -- (-0.5,-0.86);
\draw (0,0) -- (0.86,0.5);
\draw (0,0) -- (0.86,-0.5);

\draw [very thick] (-0.86,0.5) -- (-0.86,-0.5);
\draw [very thick] (0.5,0.86) -- (-0.5,0.86);
\draw (0,0) -- (-0.5,0.86);
\draw (0,0) -- (0.5,0.86);
\draw (0,0) -- (-0.86,-0.5);
\draw (0,0) -- (-0.86,0.5);

\draw (0.5,0.86) node {$\bullet$};
\draw (-0.5,0.86) node {$\bullet$};
\draw (-0.5,-0.86) node {$\bullet$};
\draw (0.5,-0.86)  node {$\bullet$};
\draw (0,0) node {$\bullet$};
\draw (0.86,-0.5) node {$\bullet$};
\draw (0.86,0.5) node {$\bullet$};
\draw (-0.86,-0.5) node {$\bullet$};
\draw (-0.86,0.5) node {$\bullet$};

\draw (0.5,0.86) node [above]{$A_{3}$};
\draw (-0.5,0.86) node  [above]{$A_{7}$};
\draw (-0.5,-0.86) node  [below]{$A_{5}$};
\draw (0.5,-0.86) node   [below]{$A_{9}$};
\draw (0,0.2) node  [above]{$A_{1}$};
\draw (0.86,-0.5) node [right]{$A_{4}$};
\draw (0.86,0.5) node [right]{$A_{8}$};
\draw (-0.86,-0.5) node [left]{$A_{6}$};
\draw (-0.86,0.5) node  [left]{$A_{2}$};

\end{tikzpicture}
\end{center} 

The class 
\[
D_{6}=A_{1}+2A_{2}+2A_{6}
\]
is ample with $D_{6}^{2}=6$. The divisor $E=A_{2}+A_{6}$ is the
class of a fiber, and $D_{6}=2E+A_{9}$. For $j>1$, one has $EA_{j}=0$
and $D_{6} \cdot A_{j}=1$; moreover, $EA_{1}=D_{6} \cdot A_{1}=2$. The curves $A_{2},\dots,A_{9}$
are such that the divisors
\[
A_{k}+A_{k+4},\,\,k\in\{2,3,4,5\}
\]
are the reducible fibers of the elliptic fibration  induced by $E$. 

The divisor 
\[
D_{2}=A_{1}+A_{2}+A_{6}=E+A_{1}
\]
is nef, of square $2$, base-point free, with $D_{2} \cdot A_{1}=0$ and
$D_{2} \cdot A_{j}=1$ for $j\geq2$. We have 
\[
D_{2}\equiv A_{1}+A_{3}+A_{7}\equiv A_{1}+A_{4}+A_{8}\equiv A_{1}+A_{5}+A_{9}.
\]
By using the linear system $|D_{2}|$, we obtain the following. 

\begin{prop}
The K3 surface $X$ is a double cover of the plane branched over a
nodal sextic curve such that there exist four lines through the node
that are tangent to the sextic at other intersection points. 
\end{prop}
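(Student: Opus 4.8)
The plan is to run everything through the linear system $|D_2|$, exactly as in the preceding cases. Since $D_2$ is big, nef, base point free and $D_2^2=2$, Theorem~\ref{thm:SaintDonat-2}~c) shows that $\varphi:=\varphi_{D_2}$ realizes $X$ as the minimal desingularization of a double cover $\bar{X}\to\PP^2$ branched over a reduced sextic curve $C_6$ with at worst $\mathbf{a}\mathbf{d}\mathbf{e}$ singularities; moreover the curves contracted by $\varphi$ are exactly the $(-2)$-curves $C$ with $D_2C=0$, and these are precisely the curves resolving the singular points of $\bar X$, which correspond to the singular points of $C_6$. From the intersection data $D_2A_1=0$ and $D_2A_j=1$ for $j\geq 2$, and since $A_1,\dots,A_9$ exhaust the $(-2)$-curves on $X$, the curve $A_1$ is the unique contracted curve; it spans an $\mathbf{A}_1$, so $C_6$ has a single singular point, an ordinary node $q=\varphi(A_1)$.

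Next I would exploit the four relations $D_2\equiv A_1+A_k+A_{k+4}$ for $k\in\{2,3,4,5\}$. By Theorem~\ref{thm:SaintDonat-1}~b) we have $h^0(D_2)=2+\frac{1}{2}D_2^2=3$, so the pullback map identifies $|D_2|$ with $\varphi^*|\OO_{\PP^2}(1)|$: every effective divisor in $|D_2|$ is the total transform $\varphi^*\ell$ of a unique line $\ell\subset\PP^2$. Applying this to $A_1+A_k+A_{k+4}$ produces, for each $k$, a line $\ell_k$ with $\varphi^*\ell_k=A_1+A_k+A_{k+4}$. Because $A_1$ occurs in $\varphi^*\ell_k$ and $\varphi(A_1)=q$, the line $\ell_k$ passes through the node $q$; because $D_2A_k=D_2A_{k+4}=1$, each of $A_k,A_{k+4}$ maps isomorphically onto $\ell_k$, while $A_1$ collapses to $q$. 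Since $\varphi^*\ell_k$ is reduced, $\ell_k$ is not a component of $C_6$.

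It then remains to read off the contact of $\ell_k$ with $C_6$. Over $\ell_k$ the preimage $\varphi^{-1}(\ell_k)=A_1\cup A_k\cup A_{k+4}$ contains the two distinct components $A_k,A_{k+4}$, both dominating $\ell_k$, so the double cover $\varphi^{-1}(\ell_k)\to\ell_k$ splits; this forces $\ell_k$ to meet $C_6$ with even multiplicity at every intersection point, the node $q$ absorbing multiplicity $2$ and the remaining multiplicity $4$ being carried by the points of $A_k\cap A_{k+4}$, at each of which $\ell_k$ is tangent to $C_6$. Hence $\ell_k$ is tangent to $C_6$ at each of its intersection points other than $q$. Finally the four lines $\ell_2,\ell_3,\ell_4,\ell_5$ are pairwise distinct, since $\varphi^*\ell_k$ recovers the pair $\{A_k,A_{k+4}\}$ and the four pairs $\{A_2,A_6\},\{A_3,A_7\},\{A_4,A_8\},\{A_5,A_9\}$ are distinct; this yields the four asserted lines through the node.

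The step I expect to be the main obstacle is the one just sketched: converting the equality of divisors $\varphi^*\ell_k=A_1+A_k+A_{k+4}$ into the precise statement about the intersection multiplicities of $\ell_k$ and $C_6$. One must track carefully how the node $q$ and its exceptional $(-2)$-curve $A_1$ interact with the proper transform of $\ell_k$ on $X$, and check that the splitting of $\varphi^{-1}(\ell_k)$ is equivalent to $\ell_k$ being everywhere tangent to $C_6$ away from $q$; this is the local computation underlying Lemma~\ref{lem:splitingsPullBackTritang}, adapted to a line passing through a node of the branch curve, and it runs parallel to the analysis already carried out for the lattice $[8]\oplus\mathbf{D}_4$. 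A minor point to dispose of along the way is the uniqueness asserted in the first paragraph, namely that $\{A_1\}$ is the full set of $(-2)$-curves orthogonal to $D_2$, which relies on the already-computed list of the $9$ $(-2)$-curves on $X$.
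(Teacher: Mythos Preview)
Your proposal is correct and follows essentially the same approach as the paper: the paper's argument consists of the setup establishing that $D_2$ is nef, base point free of square $2$ with $D_2A_1=0$, $D_2A_j=1$ for $j\geq2$, together with the four equivalences $D_2\equiv A_1+A_k+A_{k+4}$ for $k\in\{2,3,4,5\}$, and then simply states ``By using the linear system $|D_2|$, we obtain'' the proposition. Your write-up expands this into the explicit reasoning (contraction of $A_1$ to the node, splitting of $\varphi^*\ell_k$, tangency at the remaining intersection points), exactly in the spirit of the parallel case $[8]\oplus\mathbf{D}_4$ you cite.
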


\subsection{The lattice $\boldsymbol{U\oplus\mathbf{A}_{4}}$}

Let $X$ be a K3 surface with  N\'eron--Severi lattice 
\[
\NS X)=U\oplus\mathbf{A}_{4}.
\]
The surface $X$ contains six $(-2)$-curves $A_{1},\dots,A_{6}$
with dual graph

\begin{center}
\begin{tikzpicture}[scale=1]

\draw (0.3,-0.95) -- (1,0);
\draw (1,0) -- (0.3,0.95);
\draw (0.3,0.95) -- (-0.8,0.58);
\draw (-0.8,0.58) -- (-0.8,-0.58);
\draw (-0.8,-0.58) -- (0.3,-0.95);
\draw (2,0) -- (1,0);


\draw (1,0) node {$\bullet$};
\draw (0.3,-0.95) node {$\bullet$};
\draw (0.3,0.95) node {$\bullet$};
\draw (-0.8,-0.58) node {$\bullet$};
\draw (-0.8,0.58) node {$\bullet$};
\draw (2,0) node {$\bullet$};

\draw (1.1,0) node [above]{$A_{3}$};
\draw (0.3,0.95) node [above]{$A_{2}$};
\draw (-0.8,0.58) node [left]{$A_{1}$};
\draw (-0.8,-0.58) node [left]{$A_{6}$};
\draw (0.3,-0.95) node [right]{$A_{5}$};
\draw (2,0) node [above]{$A_{4}$};

\end{tikzpicture}
\end{center} 

These six curves generate  the N\'eron--Severi lattice. The class 
\[
D_{50}=(8,9,11,5,9,8)
\]
 in the basis $A_{1},\dots,A_{6}$ is ample, and $D_{50}^{2}=50$. The
six $(-2)$-curves have degree $1$ for $D_{50}$. The class 
\[
F=A_{1}+A_{2}+A_{3}+A_{5}+A_{6}
\]
is the class of a fiber of type $I_{5}$. The curve $A_{4}$ is a
section of the elliptic fibration. The divisor 
\[
D_{2}=2F+A_{4}
\]
 is nef, of square $2$, with $D_{2} \cdot A_{j}=0$ for $j\neq3$. It has
base points, but $D_{8}=2D_{2}$ is base-point free. 

\begin{prop}
The linear system $|D_{8}|$ defines a morphism $\varphi\colon X\to\mathbf{F}_{4}$ that is branched over $A_{4}$; the image of $A_{4}$ is the unique section $s$ of the Hirzebruch surface $\mathbf{F}_{4}$ with $s^2=-4$. The branch curve of $\varphi$ is the disjoint union of $s$ and a curve $B\in|3s+12f|$.  The curve $B$ has a unique singularity $q$ of type $\mathbf{a}_{4}$ which is the image of $A_{1}$, $A_{2}$,  $A_{5}$, $A_{6}$, and the image of $A_{3}$ by $\varphi$ is the fiber $f$ through $q$. The local intersection number of $f$ and $B$ at $q$ is $2$; in other words, $f$ is transverse to the branch of $B$ at $q$.
\end{prop}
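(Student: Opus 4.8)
The plan is to read off almost everything from Theorem~\ref{thm:SaintDonat-2}, case a)~i), and Proposition~\ref{prop:consequenceOfNikulinStar}; the only new work is the count of singular points of the branch curve and the local transversality at $q$. First I would observe that
\[
D_{8}=2D_{2}=4F+2A_{4},
\]
where $F=A_{1}+A_{2}+A_{3}+A_{5}+A_{6}$ is the class of the $I_{5}=\tilde{\mathbf A}_{4}$ fibre of $\pi\colon X\to\PP^{1}$ and $A_{4}$ is a section, so $FA_{4}=1$ and $D_{8}^{2}=8$. Since $D_{8}$ is base point free (hence has no fixed part) and $FD_{8}=2$, the system $|D_{8}|$ is hyperelliptic, and as $D_{8}$ has the shape $4F+2A_{4}$ with $FA_{4}=1$ we are precisely in case a)~i) of Theorem~\ref{thm:SaintDonat-2}. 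That theorem gives the factorisation $\varphi_{D_{8}}\colon X\stackrel{\varphi}{\to}\mathbf F_{4}\to\PP^{5}$ with $\varphi$ a double cover and branch locus the disjoint union of the negative section $s$ ($s^{2}=-4$) and a reduced curve $B\in|3s+12f|$. The divisor $D_{2}$ and the map $\varphi$ here are exactly those of Proposition~\ref{prop:consequenceOfNikulinStar} (applied with $U\oplus\mathbf A_{4}$), so that proposition tells us in addition that $\varphi(A_{4})=s$ — hence $\varphi$ is ramified along $A_{4}$, the reduced preimage of $s\subset$ branch locus — that $\varphi^{\ast}|f|$ is the elliptic pencil $|\pi|$, and that $F$ is mapped onto a fibre $f_{0}$ of $\mathbf F_{4}\to\PP^{1}$ meeting $B$ at an $\mathbf a_{4}$ singularity $q$.

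Next I would prove that $q$ is the \emph{only} singular point of $B$ and identify the images of $A_{1},\dots,A_{6}$. By Theorem~\ref{thm:Not-2-elementaryAllMinus2}, since $U\oplus\mathbf A_{4}$ occurs in its list, $A_{1},\dots,A_{6}$ are all the $(-2)$-curves of $X$; a reducible fibre of $\pi$ is a sum of $(-2)$-curves, and the only such sum which is a fibre class is $F$ itself ($A_{4}$, being a section with $A_{4}F=1$, lies in no fibre), so $\pi$ has exactly one reducible fibre. Conversely, every singular point of $B$ lies on a fibre of $\mathbf F_{4}$ over which the double cover acquires a rational double point, hence produces a reducible fibre of $\pi$; therefore $B$ has a single singular point $q$, which by the last assertion of Proposition~\ref{prop:consequenceOfNikulinStar} is of type $\mathbf a_{4}$. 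Near $q$ the branch curve is analytically $\{y^{2}=x^{5}\}$, the double cover $w^{2}=y^{2}-x^{5}$ has an $\mathbf A_{4}$ surface singularity, and its minimal resolution contributes a chain of four $(-2)$-curves to $\varphi^{-1}(f_{0})=F$; the remaining component of the $I_{5}$ fibre maps onto $f_{0}$ and meets $A_{4}$ at the ramification point over $f_{0}\cap s$. As in the pentagon $A_{1}-A_{2}-A_{3}-A_{5}-A_{6}-A_{1}$ the only component meeting $A_{4}$ is $A_{3}$, we get $\varphi(A_{3})=f_{0}$, while the other four components $A_{1},A_{2},A_{5},A_{6}$ — forming the chain $A_{2}-A_{1}-A_{6}-A_{5}$, the resolution graph of the $\mathbf A_{4}$ point — are contracted to $q$.

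Finally, for the transversality I would compute $f_{0}\cdot B=f\cdot(3s+12f)=3$ using $f^{2}=0$ and $fs=1$. An $\mathbf a_{4}$ curve germ $\{y^{2}=x^{5}\}$ has a unique tangent line $\{y=0\}$, which meets it with local multiplicity $5$, whereas every other line through the point meets it with multiplicity $2$; since $f_{0}\cdot B=3<5$, the fibre $f_{0}$ cannot be this tangent line, so its local intersection number with $B$ at $q$ equals $2$, i.e.\ $f_{0}$ is transverse to the branch of $B$ at $q$. The step requiring the most care is the bookkeeping in the middle paragraph — verifying that precisely the pentagon component adjacent to the section survives as the component mapping onto $f_{0}$, while the other four resolve the surface singularity — but this is forced by the standard local model of a double cover branched along a curve with an $\mathbf a_{4}$ point, together with the already-used fact that $A_{4}$ is the only $(-2)$-curve on $X$ meeting $F$.
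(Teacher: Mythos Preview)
Your proof is correct and follows the same approach as the paper, which simply writes ``We apply Theorem~\ref{thm:SaintDonat-2}, case a)~i).'' You supply considerably more detail than the paper does: the invocation of Proposition~\ref{prop:consequenceOfNikulinStar} to identify $\varphi(A_{4})=s$ and the $\mathbf a_{4}$ singularity, the appeal to Theorem~\ref{thm:Not-2-elementaryAllMinus2} to see that $F$ is the unique reducible fibre (hence $q$ the unique singular point of $B$), the identification of the contracted chain $A_{2}-A_{1}-A_{6}-A_{5}$, and the intersection-number argument $f\cdot(3s+12f)=3<5$ forcing local multiplicity $2$ at $q$. One small refinement: in your transversality step you speak of ``lines'' through $q$, but $f_{0}$ is only a smooth curve germ; the point still stands because the possible local intersection numbers of a smooth germ with $\{y^{2}=x^{5}\}$ are $2$, $4$, or $5$, so the global count $3$ forces the value $2$.
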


\begin{proof}
We apply Theorem \ref{thm:SaintDonat-2}, case a) i).
\end{proof}

\subsection{The lattice $\boldsymbol{U\oplus\mathbf{A}_{1}\oplus\mathbf{A}_{3}}$}

Let $X$ be a K3 surface with  N\'eron--Severi lattice 
\[
\NS X)=U\oplus A_{1}\oplus A_{3}.
\]

The surface contains seven $(-2)$-curves $A_{1},\dots,A_{7}$ with
configuration

\begin{center}
\begin{tikzpicture}[scale=1]

\draw (0,0) -- (2,0);
\draw (2+0.866*2,0) -- (2+0.866*2-0.866,0.5);
\draw (2+0.866*2,0) -- (2+0.866*2-0.866,-0.5);
\draw [very thick] (-1,0) -- (0,0);

\draw (2,0) -- (2+0.866,0.5);
\draw (2,0) -- (2+0.866,-0.5);

\draw (0,0) node {$\bullet$};
\draw (1,0) node {$\bullet$};
\draw (2,0) node {$\bullet$};
\draw (-1,0) node {$\bullet$};
\draw (2+0.866,0.5) node {$\bullet$};
\draw (2+0.866,-0.5) node {$\bullet$};
\draw (2+0.866*2,0) node {$\bullet$};

\draw (0,0) node [below]{$A_{2}$};
\draw (1,0) node [below]{$A_{3}$};
\draw (-1,0) node [below]{$A_{1}$};
\draw (2,0) node [below]{$A_{4}$};
\draw (2+0.866,0.5) node [above]{$A_{5}$};
\draw (2+0.866,-0.5) node [below]{$A_{6}$};
\draw (2+0.866*2,0) node [right]{$A_{7}$};

\end{tikzpicture}
\end{center} 

The class
\[
D_{42}=6A_{1}+8A_{2}+5A_{3}+3A_{4}+A_{5}+A_{6}
\]
 is ample, of square $42$. The divisors $F_{1}=A_{4}+A_{5}+A_{6}+A_{7}$
and $F_{2}=A_{1}+A_{2}$ are two fibers of an elliptic fibration  for
which the class $A_{3}$ is a section. The class
\[
D_{2}=A_{1}+2A_{2}+2A_{3}+2A_{4}+A_{5}+A_{6}\equiv A_{2}+2A_{3}+3A_{4}+2A_{5}+2A_{6}+A_{7}
\]
is nef, of square $2$,  base-point free, with $D_{2} \cdot A_{1}=D_{2} \cdot A_{7}=2$ 
and $D_{2} \cdot A_{j}=0$ for $j\in\{2,3,4,5,6\}$. Let $\eta\colon X\to\PP^{2}$
be the associated double cover and $C_{6}$ be the branch curve. 

\begin{prop}
The curve $C_{6}$ has a $\mathbf{d}_{5}$ singularity $q$ onto which
the curves $A_{j}$, $j=2,\dots,6$ are contracted. From the above equivalence
relation between divisors, we see that the images of $A_{1}$ and
$A_{7}$ are the two tangents to the branches of the singularity $q$.
\end{prop}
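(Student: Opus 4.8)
The plan is to read everything off the base point free class $D_{2}$ of square $2$ and the two linear equivalences displayed just above the statement, using the Saint Donat dictionary and the standard correspondence between Du Val singularities of a plane sextic and rational double points of its double cover. First I would apply Theorem~\ref{thm:SaintDonat-1}~c) (equivalently Theorem~\ref{thm:SaintDonat-2}~c)): since $D_{2}$ is nef, base point free and $D_{2}^{2}=2$, one has $h^{0}(D_{2})=3$ and $\varphi_{D_{2}}=\eta\colon X\to\PP^{2}$ is a double cover of $\PP^{2}$ branched over a reduced sextic $C_{6}$ with at worst $\mathbf{ade}$ singularities, contracting precisely the $\cu$-curves $C$ with $C\cdot D_{2}=0$ to the rational double points of the underlying singular double plane. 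By hypothesis $A_{1},\dots,A_{7}$ are \emph{all} the $\cu$-curves of $X$, and exactly $A_{2},A_{3},A_{4},A_{5},A_{6}$ are orthogonal to $D_{2}$. Reading the dual graph, $A_{4}$ is the unique vertex of valence $3$ (neighbours $A_{3},A_{5},A_{6}$) and $A_{2},A_{5},A_{6}$ are leaves, so $\{A_{2},A_{3},A_{4},A_{5},A_{6}\}$ is a $\mathbf{D}_{5}$ configuration (fork node $A_{4}$; arms $A_{3}{-}A_{2}$, $A_{5}$, $A_{6}$). Hence $\eta$ contracts it to a single rational double point of type $\mathbf{D}_{5}$ lying over one point $q=\eta(A_{2})=\dots=\eta(A_{6})\in\PP^{2}$, with $\eta^{-1}(q)=A_{2}\cup\dots\cup A_{6}$.

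Next I would transfer this to the branch curve via the local model: a double cover of the plane branched over a curve with a $\mathbf{d}_{n}$ singularity acquires a surface singularity of type $\mathbf{D}_{n}$ (local equation $z^{2}=x(x^{n-2}+y^{2})$), and conversely, which is the same curve/surface singularity dictionary already used for the elliptic fibrations in Proposition~\ref{prop:consequenceOfNikulinStar}. Therefore $C_{6}$ has a $\mathbf{d}_{5}$ singularity at $q$; in particular $q$ is a triple point of $C_{6}$ with two local branches, a smooth one and a cuspidal ($\mathbf{a}_{2}$) one, whose tangent lines are the distinct line $\{x=0\}$ and double line $\{y=0\}$ appearing in the tangent cone $xy^{2}$ of the local model. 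This already proves the first sentence of the Proposition.

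For the second sentence, I would first show $\eta(A_{1})$ and $\eta(A_{7})$ are lines through $q$. Since $\eta^{*}\OO_{\PP^{2}}(1)=\OO_{X}(D_{2})$ and $A_{1}\cdot D_{2}=2$, the image $\eta(A_{1})$ has degree $1$ or $2$. If it had degree $2$ then $\eta$ would be birational on $A_{1}$ and $\eta^{-1}(\eta(A_{1}))=A_{1}+\iota(A_{1})$ with $\iota$ the (biregular) covering involution, so $\iota(A_{1})$ would be a $\cu$-curve with $\iota(A_{1})\equiv 2D_{2}-A_{1}$; but $2D_{2}-A_{1}$ has $D_{2}$-degree $2$, hence is not one of $A_{2},\dots,A_{6}$, is not $A_{1}$ (else $D_{2}=A_{1}$), and is not $A_{7}$, because $A_{1},\dots,A_{6}$ span $\NS X)$, $A_{7}\equiv A_{1}+A_{2}-A_{4}-A_{5}-A_{6}$ (the fibre relation $F_{2}\equiv F_{1}$), and then $2D_{2}\not\equiv A_{1}+A_{7}$ — a contradiction. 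Hence $\ell':=\eta(A_{1})$ is a line, and the same argument with $2D_{2}-A_{7}$ gives that $\ell'':=\eta(A_{7})$ is a line; both pass through $q$ since $A_{1}$ meets $A_{2}$ and $A_{7}$ meets $A_{5},A_{6}$. Pulling back the two displayed equivalences gives $\eta^{*}\ell'\equiv D_{2}\equiv A_{1}+(A_{2}+2A_{3}+2A_{4}+A_{5}+A_{6})$ and $\eta^{*}\ell''\equiv D_{2}\equiv A_{7}+(A_{2}+2A_{3}+3A_{4}+2A_{5}+2A_{6})$, the two exceptional cycles over $q$ being, respectively, the fundamental (Artin) cycle of the $\mathbf{D}_{5}$ point and a cycle with strictly larger multiplicity along the fork node $A_{4}$.

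I expect the main obstacle to be exactly this last identification: one must argue that the two lines through $q$ whose total transform has a $\cu$-curve as strict transform are precisely the two branch tangents of the $\mathbf{d}_{5}$ singularity, and that $\ell'$ (the one pulling back with the fundamental cycle) is the smooth branch tangent while $\ell''$ is the cuspidal branch tangent. The cleanest route I see is to run the explicit embedded resolution of the $\mathbf{d}_{5}$ plane-curve singularity together with its double cover: the first blow-up of $q$ separates the smooth branch, the further blow-ups resolve the cusp, and one matches the resulting exceptional $\mathbf{D}_{5}$-configuration with $A_{2},\dots,A_{6}$; then the smooth-branch tangent pulls back with the fundamental cycle (recovering $\eta^{*}\ell'=A_{1}+A_{2}+2A_{3}+2A_{4}+A_{5}+A_{6}$) while the cuspidal tangent, which meets $C_{6}$ more deeply at $q$, pulls back with the heavier cycle (recovering $\eta^{*}\ell''=A_{7}+A_{2}+2A_{3}+3A_{4}+2A_{5}+2A_{6}$). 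In other words, the asymmetry of the two displayed linear equivalences is precisely the imprint of the two branches of $q$ being of different types, and it is what pins down $\{\eta(A_{1}),\eta(A_{7})\}$ as the set of tangent lines to the two branches of the $\mathbf{d}_{5}$ singularity.
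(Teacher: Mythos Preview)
Your argument is correct and matches the paper's approach, which in fact offers no proof beyond the preceding setup: the proposition is meant to be read off directly from $D_2^2=2$ base point free, the $\mathbf{D}_5$ configuration of the curves orthogonal to $D_2$, and the two displayed linear equivalences. Your expansion of this into an explicit argument (Saint~Donat, the curve/surface singularity dictionary, the embedded resolution of $\mathbf{d}_5$) is a faithful unpacking of what the paper leaves implicit.

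One redundancy with a small gap: the paragraph ruling out that $\eta(A_1)$ is a conic is unnecessary and not quite correct as written. You claim $\iota(A_1)\equiv 2D_2-A_1$, but since the putative image conic would pass through $q$ (as $A_1\cdot A_2=2\neq 0$), the pullback $\eta^*C$ carries an exceptional tail, and the correct relation is $\iota(A_1)\equiv 2D_2-A_1-E$ with $E$ effective and supported on $A_2,\dots,A_6$. Your check that $2D_2\not\equiv A_1+A_7$ therefore does not exclude $\iota(A_1)=A_7$. None of this is needed, however: the displayed equivalence $D_2\equiv A_1+2A_2+2A_3+2A_4+A_5+A_6$ already exhibits a member of $|D_2|=\eta^*|\OO_{\PP^2}(1)|$ whose sole non-contracted component is $A_1$, so $\eta(A_1)$ is a line immediately, and likewise for $A_7$ from the second equivalence. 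That is precisely the paper's intended one-line reasoning; you can delete the conic detour and proceed straight to your pullback computations and the branch-tangent identification.
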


\subsection{The lattice $\boldsymbol{U\oplus\mathbf{A}_{2}^{\oplus2}}$}

Let $X$ be a K3 surface with  N\'eron--Severi lattice 
\[
\NS X)=U\oplus\mathbf{A}_{2}^{\oplus2}.
\]
The surface contains seven $(-2)$-curves $A_{1},\dots,A_{7}$ with
configuration

\begin{center}
\begin{tikzpicture}[scale=1]

\draw (0,0) -- (2,0);
\draw (0,0) -- (-0.866,0.5);
\draw (0,0) -- (-0.866,-0.5);
\draw (-0.866,0.5) -- (-0.866,-0.5);

\draw (2,0) -- (2+0.866,0.5);
\draw (2,0) -- (2+0.866,-0.5);
\draw (2+0.866,0.5) -- (2+0.866,-0.5);

\draw (0,0) node {$\bullet$};
\draw (1,0) node {$\bullet$};
\draw (2,0) node {$\bullet$};
\draw (-0.866,-0.5) node {$\bullet$};
\draw (-0.866,0.5) node {$\bullet$};
\draw (2+0.866,0.5) node {$\bullet$};
\draw (2+0.866,-0.5) node {$\bullet$};

\draw (0,0) node [below]{$A_{3}$};
\draw (1,0) node [below]{$A_{4}$};
\draw (2,0) node [below]{$A_{5}$};
\draw (-0.866,-0.5) node [left]{$A_{2}$};
\draw (-0.866,0.5) node [left]{$A_{1}$};
\draw (2+0.866,0.5) node [above]{$A_{7}$};
\draw (2+0.866,-0.5) node [below]{$A_{6}$};

\end{tikzpicture}
\end{center} 

The class 
\[
D_{20}=5A_{1}+5A_{2}+6A_{3}+3A_{4}+A_{5}
\]
is ample, of square $20$, and the curves $A_{j}$ have degree
$1$ for $D_{20}$. The divisor
\[
D_{4}=2A_{1}+2A_{2}+3A_{3}+2A_{4}+A_{5}\equiv A_{3}+2A_{4}+3A_{5}+2A_{6}+2A_{7}
\]
is nef, of square $4$, base-point free, with $D_{4} \cdot A_{j}=1$ for $j\in\{1,2,6,7\}$ and $D_{4} \cdot A_{j}=0$ for $j\in\{3,4,5\}$. Since $D_{4}F_{1}=2$,
the linear system $|D_{4}|$ is hyperelliptic. By using the linear
system $|D_{4}|$, we obtain the following. 

\begin{prop}
The surface is a double cover of $\,\mathbf{F}_{2}$ branched over the unique
section $s$ with $s^2=-2$ and a curve $B\in|3s+8f|$. The fibers $f_{p}$, $f_{q}$
through the two intersection points $p$, $q$ of $s$ and $B$ are tangent
to the curve $B$ at another intersection point with $B$. The image
of $A_{1}$, $A_{2}$ is $f_{p}$, the image of $A_{6}$, $A_{7}$ is $f_{q}$, 
and the curves $A_{3}$, $A_{5}$ are mapped to the points $p$, $q$. 
\end{prop}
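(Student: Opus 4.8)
The plan is to deduce the proposition from Theorem~\ref{thm:SaintDonat-2}, case a) iii) v), applied to the divisor $D_{4}$, exactly as in the two preceding propositions. The first step records the combinatorial input from the dual graph: the two triangles $F_{1}:=A_{1}+A_{2}+A_{3}$ and $F_{2}:=A_{5}+A_{6}+A_{7}$ are cycles of three $\cu$-curves, hence fibres of type $\tilde{\mathbf{A}}_{2}=I_{3}$ of the elliptic fibration attached to the $U$-summand (and $F_{1}\equiv F_{2}$), while $A_{4}F_{1}=1$, $A_{3}F_{1}=A_{5}F_{1}=0$, $A_{3}A_{4}=A_{5}A_{4}=1$ and $A_{3}A_{5}=0$. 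Since $D_{4}$ is nef, base point free, $D_{4}^{2}=4$ and $D_{4}F_{1}=2$, the linear system $|D_{4}|$ is hyperelliptic by Theorem~\ref{thm:SaintDonat-2} a).

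The second step pins down which exceptional case of Theorem~\ref{thm:SaintDonat-2} a) occurs. Cases i) and ii) are ruled out by $D_{4}^{2}=4$, and the explicit decomposition $D_{4}\equiv 2F_{1}+2A_{4}+A_{3}+A_{5}$ (with $\Gamma_{0}=A_{4}$, $\Gamma_{1}=A_{3}$, $\Gamma_{2}=A_{5}$, $n=0$) shows that we are in case iii) v), not in u). Hence $\varphi_{D_{4}}(X)$ is a quadric cone in $\PP^{3}$, the morphism factors as $X\stackrel{\varphi}{\to}\mathbf{F}_{2}\to\varphi_{D_{4}}(X)$ with $\varphi$ a double cover, and its branch locus is $s+B$ with $B$ reduced, $B\in|3s+8f|$ and $sB=2$ (this $B$ is the curve called $B'$ in Theorem~\ref{thm:SaintDonat-2}).

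The third step is the identification of the images, which is bookkeeping around the three curves $A_{3},A_{4},A_{5}$ of $D_{4}$-degree $0$; these are exactly the curves contracted by $\varphi_{D_{4}}$, so under $\varphi$ each is either sent onto $s$ (the curve contracted by $\mathbf{F}_{2}\to\varphi_{D_{4}}(X)$) or contracted to a point of $s$. Since $A_{4}$ meets both $A_{3}$ and $A_{5}$, which will go to two distinct points of $s$, $A_{4}$ cannot be contracted and therefore maps onto $s$ --- it is the curve $\Gamma_{0}$ of the decomposition in the role of the section, exactly as in Proposition~\ref{prop:consequenceOfNikulinStar}. Then $A_{3},A_{5}$ are contracted by $\varphi$ to the two points $p,q:=s\cap B$, being the $\cu$-curves resolving the two $A_{1}$-singularities of $X$ lying over the two nodes of $s+B$. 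Finally $A_{1},A_{2}$ in $F_{1}$ and $A_{6},A_{7}$ in $F_{2}$, of $D_{4}$-degree $1$, map onto lines of the cone, i.e. onto fibres of $\mathbf{F}_{2}$; since $\varphi^{-1}(p)=A_{3}\subset F_{1}$ and $\varphi^{-1}(q)=A_{5}\subset F_{2}$, the reducible fibre over $p$ is $F_{1}$ and that over $q$ is $F_{2}$, so the images of $A_{1},A_{2}$ and of $A_{6},A_{7}$ are $f_{p}$ and $f_{q}$, while $A_{3}\mapsto p$ and $A_{5}\mapsto q$.

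For the tangency, observe that the reducible fibre $\varphi^{*}f_{p}=F_{1}=A_{1}+A_{2}+A_{3}$ of type $I_{3}$ is the union of the $\cu$-curve $A_{3}$ over the node $p$ and of the double cover of $f_{p}\setminus\{p\}$ branched at $B\cap f_{p}\setminus\{p\}$; since the latter contributes the two components $A_{1},A_{2}$ it must be reducible, which forces $B\cap f_{p}\setminus\{p\}$ to consist of a single point $r\neq p$ of even multiplicity. As $f_{p}\cdot B=f\cdot(3s+8f)=3$ and the intersection at $p$ is transverse, $f_{p}$ is tangent to $B$ at $r$ (locally $z^{2}=y-x^{2}$ restricted to $\{y=0\}$ becomes $z^{2}=-x^{2}$, splitting into the components $A_{1}$ and $A_{2}$), and symmetrically for $f_{q}$. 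The only non-formal step is this last local computation linking the tangency to the $I_{3}$ degeneration; everything else is a direct transcription of Saint-Donat's statement through the dual graph, so I expect no genuine obstacle.
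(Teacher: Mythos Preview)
Your proof is correct and follows exactly the paper's approach: the paper's entire proof is the single line ``This comes from Theorem~\ref{thm:SaintDonat-2}, case a) iii) v)'', and you supply the verification that $D_{4}\equiv 2F_{1}+2A_{4}+A_{3}+A_{5}$ places us in that case, together with the bookkeeping on images and the local tangency argument that the paper leaves implicit. The only cosmetic point is that your local model $z^{2}=y-x^{2}$ is centred at the tangency point $r$ rather than at $p$ (which is fine, but worth saying explicitly); everything else matches.
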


\begin{proof}
This comes from Theorem \ref{thm:SaintDonat-2}, case a) iii) v).
\end{proof}

\subsection{The lattice $\boldsymbol{U\oplus\mathbf{A}_{1}^{\oplus2}\oplus\mathbf{A}_{2}}$}

Let $X$ be a K3 surface with  N\'eron--Severi lattice 
\[
\NS X)=U\oplus\mathbf{A}_{1}^{\oplus2}\oplus\mathbf{A}_{2}.
\]
The surface contains eight $(-2)$-curves $A_{1},\dots,A_{8}$, with
configuration 
\begin{center}
\begin{tikzpicture}[scale=1]

\draw (0,0) -- (1,0);
\draw (0,0) -- (-0.866,0.5);
\draw (0,0) -- (-0.866,-0.5);
\draw (-0.866,0.5) -- (-0.866,-0.5);
\draw [very thick] (1+0.866,0.5) -- (2+0.866,0.5);
\draw [very thick] (1+0.866,-0.5) -- (2+0.866,-0.5);
\draw (1,0) -- (1+0.866,0.5);
\draw (1,0) -- (1+0.866,-0.5);

\draw (0,0) node {$\bullet$};
\draw (1,0) node {$\bullet$};
\draw (1+0.866,-0.5) node {$\bullet$};
\draw (1+0.866,0.5) node {$\bullet$};
\draw (-0.866,-0.5) node {$\bullet$};
\draw (-0.866,0.5) node {$\bullet$};
\draw (2+0.866,0.5) node {$\bullet$};
\draw (2+0.866,-0.5) node {$\bullet$};

\draw (0,0) node [below]{$A_{3}$};
\draw (1,0) node [below]{$A_{4}$};
\draw (1+0.866,0.5) node [above]{$A_{5}$};
\draw (1+0.866,-0.5) node [below]{$A_{6}$};
\draw (-0.866,-0.5) node [left]{$A_{2}$};
\draw (-0.866,0.5) node [left]{$A_{1}$};
\draw (2+0.866,0.5) node [above]{$A_{7}$};
\draw (2+0.866,-0.5) node [below]{$A_{8}$};

\end{tikzpicture}
\end{center} 

The class 
\[
D_{18}=4A_{1}+4A_{2}+5A_{3}+3A_{4}+A_{5}+A_{6}
\]
is ample, of square $18$, with $D_{18} \cdot A_{j}=1$ for $j\leq6$ and
$D_{18} \cdot A_{7}=D_{18} \cdot A_{8}=2$. The three divisors 
\[
F=A_{1}+A_{2}+A_{3}\equiv A_{5}+A_{7}\equiv A_{6}+A_{8}
\]
are classes of fibers of type $I_{3}$ or $IV$ and of type $I_{2}+I_{2}$
or $III$; 
the $(-2)$-curve $A_{4}$ is a section of the elliptic fibration.
The class
\[
D_{2}=A_{1}+A_{2}+2A_{3}+2A_{4}+A_{5}+A_{6}
\]
is of square $2$ and base-point free. Let $\eta\colon X\to\PP^{2}$ be
the associated double cover map, and let $C_{6}$ be the branch curve.
For $j=1,\dots,8$, one has 
\[
D_{2} \cdot A_{j}=1,1,0,0,0,0,2,2,
\]
respectively. This leads to the following. 

\begin{prop}
The morphism $\eta$ contracts the curves $A_{3}$, $A_{4}$, $A_{5}$, $A_{6}$ to
a $\mathbf{d}_{4}$ singularity $q$, the curve $A_{1}+A_{2}$ is
the pull-back of the tangent line of a branch of the singularity,
a line which is tangent to the sextic at another intersection point,
and the curves $A_{7}$, $A_{8}$ are pull-backs of the lines tangent to
other branches, each of which lines meets $C_{6}$ in two other points.
\end{prop}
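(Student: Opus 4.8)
The plan is to follow the pattern used for the other double-plane models in the paper. First I would verify that $D_2=A_1+A_2+2A_3+2A_4+A_5+A_6$ is nef and big: reading the coefficients against the dual graph gives $D_2A_j\ge 0$ for each of the eight $(-2)$-curves, and since those curves span the effective cone this makes $D_2$ nef, while $D_2^2=2>0$ makes it big. As $D_2$ is base point free with $D_2^2=2$, Theorems \ref{thm:SaintDonat-1}(c) and \ref{thm:SaintDonat-2}(c) give that $\eta:=\varphi_{D_2}\colon X\to\PP^2$ is a degree two morphism, i.e.\ $X$ is the minimal resolution of the double cover of $\PP^2$ branched over a reduced plane sextic $C_6$; since $X$ is a K3 surface, $C_6$ has only simple singularities. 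The curves contracted by $\eta$ are exactly those with $D_2A_j=0$, namely $A_3,A_4,A_5,A_6$; by the dual graph these form the Dynkin diagram $\mathbf{D}_4$ (the star with centre $A_4$ and leaves $A_3,A_5,A_6$), which is connected, so $\eta$ contracts them to a single rational double point of type $D_4$ lying over a $\mathbf{d}_4$ point $q\in C_6$, that is, an ordinary triple point with three smooth branches and three distinct tangent lines $t_3,t_5,t_6$, indexed so that the strict transform of $t_k$ meets the leaf $A_k$. This is the first assertion.

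The key structural step is to exhibit the elliptic fibration hidden in the statement. The class $F:=A_1+A_2+A_3$ has square zero and its dual graph is $\tilde{\mathbf{A}}_{2}$, so by Theorem \ref{thm:SaintDonat-1}(a) it is the class of a fibre of an elliptic fibration $\pi\colon X\to\PP^1$ (of type $I_3$ or $IV$). One computes $D_2\cdot F=2$ and $D_2-F=A_3+2A_4+A_5+A_6=:Z$, the fundamental cycle of the $D_4$ configuration ($Z^2=-2$, $D_2\cdot Z=0$). A general fibre of $\pi$ has genus one, so $\eta$ cannot map it birationally onto a conic; it therefore maps it two-to-one onto a line through $q$, and $\eta^{*}(\text{that line})=F_t+Z$. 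In other words $\pi$ is the pencil of lines through the triple point $q$, pulled back to $X$ and stripped of its fixed part $Z$. Consequently the reducible fibres of $\pi$ correspond to the special lines of that pencil, and in particular the three tangent lines $t_3,t_5,t_6$ each give a reducible fibre.

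Now I would match these three tangent fibres with the configuration. Since $A_1,A_2$ meet the exceptional $D_4$-divisor only along the leaf $A_3$, while $A_7$ (resp.\ $A_8$) meets it only along $A_5$ (resp.\ $A_6$), and using $A_5+A_7\equiv A_6+A_8\equiv F$, the fibres of $\pi$ over $[t_5]$, $[t_6]$, $[t_3]$ are respectively $A_5+A_7$, $A_6+A_8$ (two components each, type $I_2$ or $III$) and $A_1+A_2+A_3$ (three components, type $I_3$ or $IV$). Reading this off: $\eta(A_7)=t_5$ and $\eta(A_8)=t_6$, with $\eta$ two-to-one onto each, so $\eta^{-1}(t_5)$ and $\eta^{-1}(t_6)$ do not split further, which forces $t_5,t_6$ to meet $C_6$ with odd multiplicity away from $q$, i.e.\ at two further distinct points each; whereas $\eta(A_1)=\eta(A_2)=t_3$ with $\eta^{-1}(t_3)=A_1\cup A_2\cup(\text{exceptional over }q)$ reducible, so $t_3$ is even-tangent to $C_6$ at every point, and since its residual intersection with $C_6$ off $q$ has length $2$ it is tangent to $C_6$ at exactly one further point. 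This gives the second and third assertions.

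I expect the main obstacle to be the local/global analysis behind the last two paragraphs: first, justifying precisely that $\pi$ is the pull-back of the pencil of lines through $q$ and that $\eta^{*}(\text{general line})=F_t+Z$ (a local computation at the $\mathbf{d}_4$ point, in effect an extension of Lemma \ref{lem:splitingsPullBackTritang} to a branch curve passing through an ordinary triple point); and second, pinning down which tangent line lies over which reducible fibre and, above all, the dichotomy that $\eta^{-1}(t_3)$ splits into two components while $\eta^{-1}(t_5),\eta^{-1}(t_6)$ do not --- equivalently that exactly one of the three tangent lines at $q$ is bitangent to $C_6$ elsewhere. As with several cases in the paper, a safe alternative is to verify the whole picture on an explicit sextic with an ordinary triple point one of whose tangent lines is made bitangent, check with Magma that the eight $(-2)$-curves $A_1,\dots,A_8$ (four over the $D_4$ point, four over the three tangent lines) realise the lattice $U\oplus\mathbf{A}_1^{\oplus2}\oplus\mathbf{A}_2$, and conclude by a specialisation argument as in Example \ref{exa:S113}.
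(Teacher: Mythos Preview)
Your proof is correct and in fact considerably more detailed than the paper's own treatment. The paper gives no separate proof for this proposition: after computing $D_{2}A_{j}=1,1,0,0,0,0,2,2$ it simply writes ``thus:'' and states the result, relying on the reader to recognize the pattern from the many analogous cases already treated (and on the fibre equivalences $F=A_{1}+A_{2}+A_{3}\equiv A_{5}+A_{7}\equiv A_{6}+A_{8}$ recorded just before).

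Your argument follows the same underlying idea but makes explicit the key structural point that the paper leaves implicit: identifying the elliptic pencil $|F|$ with the pencil of lines through the triple point $q$ via $D_{2}=F+Z$ (with $Z$ the fundamental cycle of the $\mathbf{D}_{4}$ configuration), and then reading off which tangent line corresponds to which reducible fibre from which leaf of the $\mathbf{D}_{4}$ diagram it meets. This is exactly the right way to justify, rather than merely assert, that $A_{1}+A_{2}$ sits over the one bitangent branch while $A_{7},A_{8}$ sit over the other two. The alternative verification you sketch at the end (explicit sextic plus Magma, as in Example~\ref{exa:S113}) is also in the spirit of the paper and would be an acceptable substitute, but your intrinsic argument already suffices.
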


\subsection{The lattice $\boldsymbol{U(2)\oplus\mathbf{A}_{1}^{\oplus4}}$ and del Pezzo surfaces of degree 4}

The class of square $8$ 
\[
D_{8}=(2,2,-1,-1,-1,-1)\in U(2)\oplus\mathbf{A}_{1}^{\oplus4}
\]
has no $\cu$-vectors perpendicular to it; thus we have a marking
$\NS X)\simeq U(2)\oplus\mathbf{A}_{1}^{\oplus4}$ that maps $D_{8}$
to an ample class. There are $16$ $(-2)$-curves $A_{1},\dots,A_{16}$
on $X$; all have degree $2$ with respect to $D_{8}$. The class
$D_{8}$ is base-point free and not hyperelliptic; moreover, the linear
system $|D_{8}|$ embeds $X$ as a degree $8$ surface in~$\PP^{5}$.
One can describe the configuration of the $\cu$-curves on $X$ as
follows:

Let $p_{1},\dots,p_{5}$ be five points in general position in $\PP^{2}$
(by which we mean that no three are on a line), and let $C_{6}$ be a sextic
curve that has only nodal singularities at each point $p_{j}$. Let
$\pi\colon X\to\PP^{2}$ be the minimal desingularization of the double
cover $Y\to\PP^{2}$ branched over $C_{6}$. This is a smooth K3 surface
which contains the following $16$ $(-2)$-curves:
\begin{itemize}
  \item the $(-2)$-curves $A_{1},\dots,A_{5}$ above the points $p_{1},\dots,p_{5}$,
\item the strict transform $A_{ij}$ in $X$ of the line through $p_{i}$
and $p_{j}$ for $1\leq i<j\leq5$, 
\item the strict transform $A_{0}$ of the unique conic passing through
the five points $p_{1},\dots,p_{5}$. 
\end{itemize}
From that description, one understands easily the configuration of
the $16$ $\cu$-curves $A_{j}$, $0\leq j\leq5$, and  \mbox{$A_{ij}$, $1\leq i<j\leq j\leq5$,} 
on $X$. The pull-backs of a line $L$ and the $(-2)$-curves $A_{0},\dots,A_{5}$
generate a lattice which is 
\[
[2]\oplus\mathbf{A}_{1}^{\oplus5}\simeq U(2)\oplus\mathbf{A}_{1}^{\oplus4}.
\]
The involution from the double cover fixes a smooth curve of genus
$10-5=5$. 

Let $p_{1},\dots,p_{5}$ be five points in general position in the
plane, and let $C_{6}$ be a  general  sextic curve with nodes at the
points $p_{i}$. 

\begin{prop}
The double cover of the blow-up of $\,\PP^{2}$ at the points $p_{i}$ branched
over the strict transform of $\,C_{6}$ is a K3 surface with $\NS X)\simeq U(2)\oplus\mathbf{A}_{1}^{\oplus4}$.
The moduli space of these K3 surfaces is therefore unirational.
\end{prop}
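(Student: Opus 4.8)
The plan is to realize $X$ as a double cover of the del Pezzo surface $Z=\mathrm{Bl}_{p_1,\dots,p_5}\PP^2$ of degree $4$, to read off $\NS X)$ from $\Pic Z)$, and then to match the dimension of the resulting family of nodal sextics against $\dim\mathcal{M}_L$, where $L:=U(2)\oplus\mathbf{A}_1^{\oplus4}$.

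\textbf{$X$ is a K3 surface.} Write $H$ for the class of a line on $Z$ and $E_1,\dots,E_5$ for the exceptional curves of $Z\to\PP^2$. The strict transform $\widetilde{C}_6$ of $C_6$ lies in $|6H-2\sum_iE_i|=|-2K_Z|$, which is $2$-divisible in $\Pic Z)$; hence there is a double cover $f\colon X\to Z$ branched along $\widetilde{C}_6$. For a general sextic $C_6$ whose only singularities are nodes at $p_1,\dots,p_5$ in general position, $\widetilde{C}_6$ is smooth, so $X$ is smooth; the canonical bundle formula gives $K_X=f^{*}(K_Z+\tfrac12\widetilde{C}_6)=0$, and $X$ is simply connected because $Z$ is, so $X$ is a K3 surface. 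Equivalently $X$ is the minimal desingularization of the classical double plane branched over the nodal sextic $C_6$.

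\textbf{The Picard lattice.} Since $f$ is a finite double cover, $f^{*}\colon\NS Z)\to\NS X)$ is injective and $f^{*}D\cdot f^{*}D'=2\,(D\cdot D')$ by the projection formula. As $\NS Z)$ has Gram matrix $\langle1\rangle\oplus\langle-1\rangle^{\oplus5}$ in the basis $H,E_1,\dots,E_5$, this gives $f^{*}\NS Z)\simeq\NS Z)(2)\simeq[2]\oplus\mathbf{A}_1^{\oplus5}\simeq L$, a rank-$6$ sublattice of $\NS X)$; concretely $f^{*}H$ is the square-$2$ class defining the double plane, $f^{*}E_i=A_i$ are the $(-2)$-curves over the nodes, and together with the $10$ line classes $A_{ij}$ and the conic class $A_0=2f^{*}H-\sum_iA_i$ these are the $16$ pull-backs of the $16$ $(-1)$-curves of $Z$ listed above. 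It then remains to see that for a \emph{generic} choice one has $\NS X)=f^{*}\NS Z)$ and not a proper even overlattice; I expect this to be the main obstacle. The safest route is the one used in the Examples elsewhere in this paper: exhibit one explicit sextic $C_6$, specialize $X$ modulo two primes, and combine the Artin--Tate formula with van Luijk's trick to certify $\NS X)\simeq L$. Alternatively one observes that a finite-index overlattice would force a special configuration of the $p_i$ (e.g.\ three of them collinear, which produces an extra $(-2)$-class splitting the pull-back of that line), contradicting genericity, together with the fact that a very general member of an algebraic family of $L$-polarized K3 surfaces has Picard lattice exactly $L$.

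\textbf{Unirationality.} Imposing a node at each of five general points of $\PP^2$ is $15$ independent linear conditions on $|\OO_{\PP^2}(6)|=\PP^{27}$, so the incidence variety $\mathcal{W}=\{(p_1,\dots,p_5,C_6)\colon C_6\text{ nodal at the }p_i\}$ is, over an open subset of $(\PP^2)^{5}$, a $\PP^{12}$-bundle; hence $\mathcal{W}$ is rational and $\dim\mathcal{W}=10+12=22$. Sending a configuration to the $L$-polarized K3 surface $X$ constructed above gives a rational map $\mathcal{W}\dashrightarrow\mathcal{M}_L$: from $X$ together with its polarization the class $f^{*}H$ is determined up to the finitely many double-plane structures on $X$, and such a structure reconstructs $(C_6,p_1,\dots,p_5)$ up to $\mathrm{PGL}_3$; hence the general fibre of $\mathcal{W}\dashrightarrow\mathcal{M}_L$ is essentially a single $\mathrm{PGL}_3$-orbit, so the image has dimension $22-8=14=\dim\mathcal{M}_L$ and the map is dominant onto the irreducible $14$-dimensional space $\mathcal{M}_L$. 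Since $\mathcal{W}$ is rational (indeed $\mathcal{M}_L$ is birational to $\mathcal{W}/\mathrm{PGL}_3$), $\mathcal{M}_L$ is unirational, as claimed.
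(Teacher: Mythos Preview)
Your argument is correct and follows exactly the approach of the paper, which in fact states this proposition without a separate proof: the preceding paragraph already identifies $f^{*}\NS Z)\simeq(I_{1}\oplus I_{-1}^{\oplus5})(2)=[2]\oplus\mathbf{A}_{1}^{\oplus5}\simeq U(2)\oplus\mathbf{A}_{1}^{\oplus4}$ and the unirationality is regarded as an immediate consequence of the plane-sextic construction. Your write-up simply makes the canonical-bundle computation and the dimension count explicit, and the count $22-8=14=20-\rk L)$ is correct.

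The one point you flag as ``the main obstacle'' --- ruling out a proper even overlattice --- is not addressed in the paper either, but it has a clean resolution that avoids both the reduction-mod-$p$ trick and ad hoc configuration arguments. The covering involution $\sigma$ is non-symplectic, and for any involution on the unimodular lattice $H^{2}(X,\ZZ)$ the fixed sublattice is automatically primitive; moreover for a double cover of a rational surface one has $H^{2}(X,\ZZ)^{\sigma}=f^{*}H^{2}(Z,\ZZ)$. Hence $f^{*}\NS Z)$ is primitive in $H^{2}(X,\ZZ)$, and since a very general member of the $14$-dimensional family has Picard rank $6$, one gets $\NS X)=f^{*}\NS Z)\simeq U(2)\oplus\mathbf{A}_{1}^{\oplus4}$ on the nose. (Equivalently, in Nikulin's invariants for non-symplectic involutions the fixed curve has genus $5$ and there are no fixed rational components, giving $(r,a)=(6,6)$, which forces the invariant lattice to be $U(2)\oplus\mathbf{A}_{1}^{\oplus4}$.) With this remark your proof is complete and self-contained.
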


\subsection{The lattice $\boldsymbol{U\oplus\mathbf{A}_{1}^{\oplus4}}$}

Let $X$ be a K3 surface with N\'eron--Severi lattice 
\[
\NS X)=U\oplus\mathbf{A}_{1}^{\oplus4}.
\]
The surface contains nine $(-2)$-curves $A_{1},\dots,A_{9}$, with
dual graph
\begin{center}
\begin{tikzpicture}[scale=1]

\draw (0,0) -- (0.5,1);
\draw (0,0) -- (1.5,1);
\draw (0,0) -- (-0.5,1);
\draw (0,0) -- (-1.5,1);

\draw [very thick] (0.5,2) -- (0.5,1);
\draw [very thick] (1.5,2) -- (1.5,1);
\draw [very thick] (-0.5,2) -- (-0.5,1);
\draw [very thick] (-1.5,2) -- (-1.5,1);


\draw (0,0) node {$\bullet$};
\draw (0.5,1) node {$\bullet$};
\draw (1.5,1) node {$\bullet$};
\draw (-0.5,1) node {$\bullet$};
\draw (-1.5,1) node {$\bullet$};
\draw (0.5,2) node {$\bullet$};
\draw (1.5,2) node {$\bullet$};
\draw (-0.5,2) node {$\bullet$};
\draw (-1.5,2) node {$\bullet$};

\draw (0,0) node [right]{$A_{1}$};
\draw (-1.5,1) node [right]{$A_{2}$};
\draw (-1.5,2) node [right]{$A_{3}$};
\draw (-0.5,1) node [right]{$A_{4}$};
\draw (0.5,1) node [right]{$A_{5}$};
\draw (1.5,1) node [right]{$A_{6}$};
\draw (1.5,2) node [right]{$A_{7}$};
\draw (0.5,2) node [right]{$A_{8}$};
\draw (-0.5,2) node [right]{$A_{9}$};

\end{tikzpicture}
\end{center} 

The $\cu$-curves $A_{1},\dots,A_{6}$ generate the N\'eron--Severi lattice, 
and the classes of the remaining $(-2)$-curves are 
\[
A_{7}\equiv A_{2}+A_{3}-A_{6},\quad A_{8}\equiv A_{2}+A_{3}-A_{5},\quad A_{9}\equiv A_{2}+A_{3}-A_{4}.
\]
 The class 
\[
D_{16}=3A_{1}+4A_{2}+3A_{3}+A_{4}+A_{5}+A_{6}
\]
 is ample and of square $16$. The divisors
\[
E_{1}=A_{2}+A_{3},\, A_{4}+A_{9},A_{5}+A_{8},\,A_{6}+A_{7}
\]
are the reducible fibers of an elliptic fibration such that $A_{1}$
is a section since one has 
\[
A_{j} \cdot A_{1}=1\text{ for }j\in\{2,4,5,6\}.
\]
The class 
\[
E_{2}=2A_{1}+A_{2}+A_{4}+A_{5}+A_{6}
\]
is the unique reducible fiber of another fibration. The effective
divisor
\[
D_{2}=2A_{1}+A_{2}+A_{3}+A_{4}+A_{5}+A_{6}
\]
is nef, of square $D_{2}^{2}=2$. The system $|D_{2}|$ is base-point
free, and $D_{2} \cdot A_{j}=0$ if and only if $j\in\{1,3,4,5,6\}$; else
$D_{2} \cdot A_{j}=2$. Let $\pi\colon X\to\PP^{2}$ be the double cover map associated
to $|D_{2}|$. Since $D_{2} \cdot A_{j}=0$ for $j\in\{1,3,4,5,6\}$, the
image of $A_{2}$ by the double cover map is a line $L_{2}$ such
that $D_{2}=\pi^{*}L_{2}$. The intersection matrix of the curves
$A_{j}$ for $j\in\{1,3,4,5,6\}$ reveals that the sextic branch curve
has a $\mathbf{d}_{4}$ singularity and a node $\mathbf{a}_{1}$, and
$L_{2}$ is the line through these two singularities. The node is
resolved on the double cover by $A_{3}$; the $\mathbf{d}_{4}$ singularity
is resolved by the union of the curves $A_{1}$, $A_{4}$, $A_{5}$, $A_{6}$
with $A_{1} \cdot A_{4}=A_{1} \cdot A_{5}=A_{1} \cdot A_{6}=1$. We have
\[
\begin{array}{c}
D_{2}\equiv2A_{1}+A_{4}+A_{5}+2A_{6}+A_{7},\\
D_{2}\equiv2A_{1}+A_{4}+2A_{5}+A_{6}+A_{8},\\
D_{2}\equiv2A_{1}+2A_{4}+A_{5}+A_{6}+A_{9};\\
\end{array}
\]
thus the images of $A_{7}$, $A_{8}$, $A_{9}$ are the three  tangent lines
$L_{7}$, $L_{8}$, $L_{9}$ of the $\mathbf{d}_{4}$ singularity. 

Let $C_{6}$ be a  general  sextic plane curve with a $\mathbf{d}_{4}$
singularity and a node. We denote by $L_{2}$ the line through the
node and the $\mathbf{d}_{4}$ singularity. Let $L_{7}$, $L_{8}$, $L_{9}$
be the three lines tangent  to the sextic at the $\mathbf{d}_{4}$ singularity.
Let $Z\to\PP^{2}$ be the embedded desingularization of $C_{6}$: this
the blow-up of $\PP^{2}$ at the node of $C_{6}$ (with the exceptional
divisor denoted by $L_{3}$), and over the ${\bf d}_{4}$ singularity, 
there are four blow-ups, producing four exceptional curves $L_{1}$, $L_{4}$, $L_{5}$, $L_{6}$
such that 
\[
L_{1}^{2}=-4,\quad L_{1}\cdot L_{j}=1,\;L_{j}^{2}=-1,\;L_{j}\cdot L_{k}=0\quad\forall j,k\in\{4,5,6\}\text{ with } j\neq k.
\]
The strict transforms of the $L_j$ ($j\in\{2,7,8,9\}$) are disjoint curves $\bar{L}_{j}$, and (up
to re-ordering) 
\[
L_{4}\cdot \bar{L}_{9}=1,\quad L_{5}\cdot \bar{L}_{8}=1,\quad L_{6}\cdot \bar{L}_{7}=1.
\]
From the above discussion, we obtain the following. 

\begin{prop}
The curve $\bar{C}_{6}+L_{1}$ is the branch locus of a double cover
$Y\to Z$. The surface $Y$ is a smooth K3 surface, and the pull-backs 
of the curves $L_{1}$, $\bar{L}_{2}$, $L_{3}$, $L_{4}$ , $L_{5}$, $L_{6}$, $\bar{L}_{7}$, $\bar{L}_{8}$, $\bar{L}_{9}$
are $(-2)$-curves $A_{1},\dots,A_{9}$ such that the lattice generated
by these curves is $U\oplus\mathbf{A}_{1}^{\oplus4}$.
\end{prop}

\subsection{The lattice $\boldsymbol{U(2)\oplus\mathbf{D}_{4}}$}

Let $X$ be a K3 surface with N\'eron--Severi lattice 
\[
\NS X)=U(2)\oplus\mathbf{D}_{4}.
\]

The surface $X$ contains six $(-2)$-curves, with dual graph
\begin{center}
\begin{tikzpicture}[scale=1]

\draw (0,0) -- (1,0);
\draw (0,0) -- (0.3,-0.95);
\draw (0,0) -- (0.3,0.95);
\draw (0,0) -- (-0.8,-0.58);
\draw (0,0) -- (-0.8,0.58);


\draw (0,0) node {$\bullet$};
\draw (1,0) node {$\bullet$};
\draw (0.3,-0.95) node {$\bullet$};
\draw (0.3,0.95) node {$\bullet$};
\draw (-0.8,-0.58) node {$\bullet$};
\draw (-0.8,0.58) node {$\bullet$};

\draw (1,0) node [right]{$A_{1}$};
\draw (-0.1,-0.1) node [above right]{$A_{6}$};
\draw (0.3,-0.95) node [right]{$A_{5}$};
\draw (0.3,0.95) node [right]{$A_{2}$};
\draw (-0.8,-0.58) node [left]{$A_{4}$};
\draw (-0.8,0.58) node [above]{$A_{3}$};

\end{tikzpicture}
\end{center} 
These six $(-2)$-curves generate the N\'eron--Severi lattice. The
class
\[
D_{22}=4A_{6}+3\sum_{j=1}^{6}A_{j}
\]
is ample of square $22$, and the curves $A_{j}$ have degree $1$
for that polarization.

For $j\in\{1,\dots,5\}$, let $F_{j}=A_{6}+\sum_{k\neq j}A_{k}$;
one has $F_{j}^{2}=0$, and $F_{j}$ is a singular fiber of type
$I_{0}^{*}$ of an elliptic fibration  $\phi_{j}$. Moreover, $F_{j} \cdot A_{j}=2$,
so that there are no sections. 

The class
\[
D_{2}=A_{6}+\sum_{j=1}^{6}A_{j}
\]
has square $2$, with $D_{2} \cdot A_{j}=0$ if and only if $j\in\{1,\dots,5\}$ and $D_{2} \cdot A_{6}=1$. By using the linear system $|D_{2}|$, we
obtain the following. 

\begin{prop}
The K3 surface $X$ is the double cover of $\,\PP^{2}$ branched over
a sextic curve which is the union of a line $L$ and a smooth quintic
$Q$ such that $L\cup Q$ has normal crossings. 
\end{prop}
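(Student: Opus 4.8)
The plan is to run the linear system $|D_2|$, with $D_2 = A_6+\sum_{j=1}^{6}A_j = 2A_6+A_1+\dots+A_5$, through Saint Donat's theorems. First I would record (as already noted before the statement) that $D_2$ is big and nef: $D_2^2=2>0$, and since $A_1,\dots,A_6$ are the only $(-2)$-curves on $X$, the equalities $D_2A_j=0$ for $j\le 5$ and $D_2A_6=1$ show $D_2$ is nef. Next I would check that $|D_2|$ has no fixed component: by Theorem \ref{thm:SaintDonat-1}(b) the only other possibility is $D_2\equiv aE+\Gamma$ with $|E|$ a free pencil, $\Gamma$ a $(-2)$-curve and $\Gamma E=1$; then $2=D_2^2=2a-2$ forces $a=2$, and $D_2\Gamma=2E\Gamma+\Gamma^2=0$ forces $\Gamma\in\{A_1,\dots,A_5\}$ and $E\equiv\frac{1}{2}(D_2-\Gamma)$. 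But $A_1,\dots,A_6$ is a $\ZZ$-basis of $\NS X)$ (six generators of a rank-$6$ lattice), and in that basis the $A_6$-coordinate of $D_2-A_j$ is $2$ while the $A_k$-coordinates for $k\le 5$, $k\ne j$, are $1$, so $D_2-A_j$ is not $2$-divisible. Hence $|D_2|$ has no fixed component, and since $D_2^2=2$, Theorem \ref{thm:SaintDonat-2}(c) gives that $\varphi_{D_2}$ is $2$-to-$1$ onto $\PP^2$ (it cannot be birational, as a birational morphism $X\to\PP^2$ would make the K3 surface $X$ rational). So $\eta:=\varphi_{D_2}\colon X\to\PP^2$ realizes $X$ as the minimal resolution of the double cover of $\PP^2$ branched over a reduced sextic $C_6$ with $\mathbf{ADE}$ singularities, with $\eta$ contracting exactly the $(-2)$-curves $\Gamma$ such that $D_2\Gamma=0$, namely $A_1,\dots,A_5$.

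Then I would read off the singularities of $C_6$. Since $A_1,\dots,A_5$ are pairwise disjoint, each is its own connected component of the exceptional locus, so the singular double cover $Y\to\PP^2$ has exactly five $\mathbf{A}_1$ points and $C_6$ has exactly five nodes $p_1,\dots,p_5$ (with $A_j$ the $(-2)$-curve over $p_j$) and no other singularities. The curve $A_6$ is not contracted, and its image $L:=\eta(A_6)$ has degree $D_2A_6=1$, hence $L$ is a line and $\eta|_{A_6}\colon A_6\to L$ is an isomorphism of copies of $\PP^1$.

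The crucial point — and the step I expect to be the main obstacle — is to show that $L$ is a component of $C_6$. For this I would invoke the covering involution $\iota\in\aut(X)$, obtained by lifting the deck transformation of $Y\to\PP^2$ to the minimal resolution. Since $\iota$ commutes with $\eta$, it fixes each singular point of $Y$, hence fixes each $A_j$ ($j\le 5$) setwise, so $\iota^*A_j=A_j$ in $\NS X)$; combined with $\iota^*D_2=D_2$ and $D_2=2A_6+A_1+\dots+A_5$ this yields $\iota^*A_6=A_6$, so $\iota(A_6)=A_6$. Because $\eta|_{A_6}$ is injective and $\eta\circ\iota=\eta$, the involution $\iota$ must restrict to the identity on $A_6$; thus $A_6$ lies in the ramification locus of $\eta$, i.e.\ $L=\eta(A_6)\subset C_6$.

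Finally I would conclude. Write $C_6=L+C_5$ with $C_5$ a quintic; the points of $L\cap C_5$ are nodes of $C_6$, and since $A_6$ meets each $A_j$ ($j\le 5$) they are precisely $p_1,\dots,p_5$, so by Bézout $|L\cap C_5|=5$. As $C_6$ has only those five singular points, all of them nodes: $C_5$ can have no singularity of its own (it would be an extra singular point of $C_6$), $C_5$ is smooth and transverse to $L$ at each of the five points (otherwise $C_6$ would have a worse-than-nodal point there), and $C_5$ is irreducible (a splitting $C_5=C_a+C_b$ would force at least $5+ab\ge 9$ singular points on $C_6$). Hence $C_6=L\cup C_5$ is the union of the line $L$ and a smooth quintic $C_5$ meeting transversally in five points, a normal crossing divisor, as claimed; apart from the covering-involution argument everything is a routine application of Saint Donat's theorems and Bézout.
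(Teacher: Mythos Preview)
Your proof is correct and follows the paper's route: the paper simply writes ``by using the linear system $|D_2|$'' before the proposition and ``the divisor $D_2$ is the pull-back of the line $L$'' after it, and you have supplied all the details it omits.

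One remark on the step you flag as the crux, namely showing $L\subset C_6$. Your covering-involution argument is fine, but a shorter route is available and is closer to what the paper has in mind. Since $A_6A_j=1$ for each $j\le 5$, the line $L=\eta(A_6)$ passes through all five nodes of $C_6$; if $L$ were not a component of $C_6$, B\'ezout would give $6=L\cdot C_6\ge 5\cdot 2=10$, a contradiction. Equivalently, the equality $D_2=2A_6+A_1+\cdots+A_5$ exhibits the effective divisor $\eta^*L$ with $A_6$ occurring with multiplicity $2$, which already forces $A_6$ into the ramification locus; this is precisely the content of the paper's sentence ``$D_2$ is the pull-back of the line $L$''.
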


The divisor $D_{2}$ is the pull-back of the line $L$. The six $(-2)$-curves
on $X$ come from the pull-backs of the line and the five exceptional
divisors above the nodes. Conversely, the six $\cu$-curves on a
K3 surface which is the minimal desingularization of the double cover
of the plane branched over the union of a line and a quintic 
necessarily have 
the same dual graph. The moduli space of K3 surfaces with
$\NS X)=U(2)\oplus\mathbf{D}_{4}$ is unirational. 

\subsection{The lattice $\boldsymbol{U\oplus\mathbf{D}_{4}}$}

Let $X$ be a K3 surface with N\'eron--Severi lattice 
\[
\NS X)=U\oplus\mathbf{D}_{4}.
\]
The dual graph of the six $\cu$-curves on $X$ is

\begin{center}
\begin{tikzpicture}[scale=1]

\draw (2,0) -- (5,0);
\draw (4,1) -- (4,-1);


\draw (2,0) node {$\bullet$};
\draw (3,0) node {$\bullet$};
\draw (4,0) node {$\bullet$};
\draw (5,0) node {$\bullet$};
\draw (4,1) node {$\bullet$};
\draw (4,-1) node {$\bullet$};

\draw (2,0) node [above]{$A_{6}$};
\draw (3,0) node [above]{$A_{5}$};
\draw (4,0) node [above right]{$A_{1}$};
\draw (5,0) node [above]{$A_{3}$};
\draw (4,1) node [above]{$A_{2}$};
\draw (4,-1) node [right]{$A_{4}$};

\end{tikzpicture}
\end{center} 

The divisor 
\[
D_{70}=21A_{1}+10(A_{2}+A_{3}+A_{4})+13A_{5}+6A_{6}
\]
is ample, of square $70$; the curves $A_{1},\dots,A_{6}$ have degree
$1$ for $D_{70}$. The divisor
\[
F=2A_{1}+A_{2}+A_{3}+A_{4}+A_{5}
\]
is a fiber of type $I_{0}^{*}$ of an elliptic fibration of $X$ for
which $A_{6}$ is the unique section. The divisor 
\[
D_{2}=2F+A_{6}
\]
 is nef, of square $2$, and has base points. The divisor $D_{8}=2D_{2}$
is base-point free and hyperelliptic. 

\begin{prop}
The linear system $|D_{8}|$ defines a double cover $\varphi\colon X\to\mathbf{F}_{4}$,
where the branch locus is the disjoint union of the unique section $s$ with $s^2=-4$ and
$B'\in|3s+12f|$. The curve $B'$ has a $\mathbf{d}_{4}$ singularity
$q$; the curves $A_{1}$, $A_{2}$, $A_{3}$, $A_{4}$ are contracted to $q$
by $\varphi$;  the image of the curve $A_{5}$ is the fiber through
$q$.
\end{prop}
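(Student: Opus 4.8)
The plan is to obtain everything from Saint Donat's classification of linear systems (Theorem~\ref{thm:SaintDonat-2}) applied to $D_8=2D_2$, combined with the general description of the $\mathbf{F}_4$-model for N\'eron--Severi lattices of the form $U\oplus K$ (Proposition~\ref{prop:consequenceOfNikulinStar}); no new idea is needed beyond careful bookkeeping of the images of the six $(-2)$-curves. First I would record the numerical input already assembled above: $F=2A_1+A_2+A_3+A_4+A_5$ is a fibre of type $I_0^*=\tilde{\mathbf{D}}_4$, $A_6$ is a section of the associated pencil $\pi$, so $FA_6=1$, and hence $D_8=2D_2=4F+2A_6$ has $D_8^2=8$ and $D_8F=2$. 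Since $|D_8|$ is base point free and hyperelliptic, Theorem~\ref{thm:SaintDonat-2} applies; it is not case c) there ($D_8^2\neq2$), and it is not case b) because the only class $D'$ with $2D'\equiv D_8$ is $D_2=2F+A_6$, which is reducible (it has $A_6$ as a fixed component by Theorem~\ref{thm:SaintDonat-1}\,b)). So we are in case a), and as $D_8\equiv 4F+2A_6$ is exactly of the shape in sub-case i), with $\Gamma=A_6$, we land there. That sub-case states precisely that $\varphi_{D_8}$ factors as $X\xrightarrow{\varphi}\mathbf{F}_4\xrightarrow{c}\PP^5$ with $\varphi$ a double cover branched over the disjoint union of the section $s$ (with $s^2=-4$) and a reduced curve $B'\in|3s+12f|$, and $c$ the contraction of $s$ to the vertex of the cone $\varphi_{D_8}(X)$. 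This is the first assertion of the Proposition.

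Next I would identify the images of $A_1,\dots,A_6$. Since $\NS X)\simeq U\oplus\mathbf{D}_4$, Proposition~\ref{prop:consequenceOfNikulinStar} applies with $F$, $A_6$ as the distinguished fibre and section and $\pi=p\circ\varphi$, where $p\colon\mathbf{F}_4\to\PP^1$ is the ruling: it gives $\varphi(A_6)=s$, and it tells us that the $\tilde{\mathbf{D}}_4$-fibre $F$ is carried onto the fibre $f_q$ of $p$ that meets $B'$ at a $\mathbf{d}_4$ singularity, which I name $q$; in particular $B'$ acquires a $\mathbf{d}_4$ point. To separate the individual components I would compute from $D_8=4F+2A_6$ and the dual graph that $D_8A_i=0$ for $i\in\{1,2,3,4,6\}$ and $D_8A_5=2$, so $\varphi_{D_8}$ contracts exactly $A_1,A_2,A_3,A_4,A_6$. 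Of these, $A_6$ is the section and maps to $s$ (contracted further only by $c$), while $A_1,A_2,A_3,A_4$ are the components of the $I_0^*$-fibre disjoint from the section, forming a $\mathbf{D}_4$-configuration ($A_1$ central, $A_2,A_3,A_4$ the tips). As the double cover $\varphi$ has a surface $D_4$ singularity exactly over the $\mathbf{d}_4$ point of $B'$ (because $\mathrm{Sing}(s\cup B')=\mathrm{Sing}(B')$, $s$ being smooth and disjoint from $B'$), resolved by precisely such a $\mathbf{D}_4$-configuration, these four curves are the exceptional curves over $q$, i.e. $\varphi$ contracts $A_1,A_2,A_3,A_4$ to $q$. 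The remaining curve $A_5$ meets $A_6$, hence cannot be contracted by $\varphi$ (an exceptional curve maps into $B'$, which misses $s$); being the only non-contracted component of $F$ and $\varphi^*f_q=F$, it maps two-to-one onto $f_q=\varphi(A_5)$, consistently with $D_8A_5=2$.

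Finally I would note that $B'$ has no further singularity: any additional reducible fibre of $\pi$ would produce $(-2)$-curves outside $\{A_1,\dots,A_6\}$, contradicting $\NS X)\simeq U\oplus\mathbf{D}_4$, whose star graph already exhausts the $(-2)$-curves. I do not expect a genuine obstacle in this argument: Theorem~\ref{thm:SaintDonat-2} and Proposition~\ref{prop:consequenceOfNikulinStar} do the heavy lifting, and the only point requiring real care is the elementary bookkeeping that distinguishes the three fates among the fibre components and the section — $A_6$ mapping to $s$, the four resolution curves $A_1,\dots,A_4$ contracted to $q$, and $A_5$ dominating the fibre $f_q$ — which is forced once one knows the numbers $D_8A_i$ and which component meets the section.
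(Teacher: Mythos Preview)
Your proposal is correct and follows the same approach as the paper, which simply cites Theorem~\ref{thm:SaintDonat-2}, case a)~i); you have merely spelled out the bookkeeping (via Proposition~\ref{prop:consequenceOfNikulinStar} and the intersection numbers $D_8A_i$) that the paper leaves implicit.
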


\begin{proof}
We apply Theorem \ref{thm:SaintDonat-2}, case a) i).
\end{proof}

\section{Rank 7 lattices}  

\subsection{The lattice $\boldsymbol{U\oplus\mathbf{D}_{4}\oplus\mathbf{A}_{1}}$}

Let $X$ be a K3 surface with  N\'eron--Severi lattice 
\[
\NS X)=U\oplus\mathbf{D}_{4}\oplus\mathbf{A}_{1}.
\]
There exist eight $(-2)$-curves $A_{1},\dots,A_{8}$ on $X$ with
 dual graph

\begin{center}
\begin{tikzpicture}[scale=1]

\draw (0,0) -- (5,0);
\draw (4,1) -- (4,-1);

\draw [very thick] (0,0) -- (1,0);

\draw (0,0) node {$\bullet$};
\draw (1,0) node {$\bullet$};
\draw (2,0) node {$\bullet$};
\draw (3,0) node {$\bullet$};
\draw (4,0) node {$\bullet$};
\draw (5,0) node {$\bullet$};
\draw (4,1) node {$\bullet$};
\draw (4,-1) node {$\bullet$};

\draw (4,0) node [above left]{$A_{2}$};
\draw (0,0) node [above]{$A_{8}$};
\draw (1,0) node [above]{$A_{7}$};
\draw (2,0) node [above]{$A_{6}$};
\draw (3,0) node [above]{$A_{5}$};

\draw (5,0) node [above]{$A_{3}$};
\draw (4,1) node [left]{$A_{1}$};
\draw (4,-1) node [left]{$A_{4}$};

\end{tikzpicture}
\end{center} 

The divisor 
\[
D_{62}=(8,17,8,8,11,6,2)
\]
in the basis $A_{1},\dots,A_{7}$ is ample, of square $62$. The classes
\[
F_{1}=A_{1}+2A_{2}+A_{3}+A_{4}+A_{5},\quad F_{2}=A_{7}+A_{8}
\]
are singular fibers of type $I_{0}^{*}$ and $I_{2}$ or $III$, respectively,
of an elliptic fibration for which $A_{6}$ is a section (that determines
the class of $A_{8}$ in $\NS X)$). The divisor
\[
D_{2}=A_{1}+4A_{2}+2A_{3}+2A_{4}+3A_{5}+2A_{6}+A_{7}
\]
is base-point free, of square $2$, with $D_{2} \cdot A_{k}=0$ if and only
if $k\in\{2,3,4,5,6,7\}$, $D_{2} \cdot A_{1}=D_{j} \cdot A_{8}=2$ and
\[
D_{2}\equiv2A_{2}+A_{3}+A_{4}+2A_{5}+2A_{6}+2A_{7}+A_{8}.
\]

\begin{prop}
The linear system $|D_{2}|$ defines a double cover of the plane $\pi\colon X\to\PP^{2}$
branched over a sextic curve $C_{6}$ with a $\mathbf{d}_{6}$ singularity.
The image of the curve $A_{1}$ is the line that is tangent to two
branches of the $\mathbf{d}_{6}$ singularity, and the image of $A_{8}$
is tangent to the third branch. 
\end{prop}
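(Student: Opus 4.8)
The plan is to identify the morphism $\pi$ via Saint--Donat's theorems and then to read off both the branch sextic and the images of $A_1,A_8$ from the intersection numbers already recorded, exactly as was done above in the $\mathbf{d}_4$ and $\mathbf{d}_5$ cases. Since $D_2$ is nef, base point free and $D_2^2=2$, Theorem \ref{thm:SaintDonat-1}(c) together with case (c) of Theorem \ref{thm:SaintDonat-2} shows that $\varphi_{D_2}=\pi\colon X\to\PP^2$ is generically $2:1$: it factors as $X\to Y\to\PP^2$, where $X\to Y$ contracts precisely the $(-2)$-curves $A$ with $D_2A=0$ to $\mathbf{ade}$ points and $Y\to\PP^2$ is a finite double cover branched over a reduced sextic $C_6$ with $\mathbf{ade}$ singularities. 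From the numbers above the contracted curves are $A_2,\dots,A_7$, and their dual graph is the Dynkin diagram of type $\mathbf{D}_6$: the trivalent node $A_2$ carries the three legs $A_3$, $A_4$ and $A_5-A_6-A_7$. Being connected, these six curves contract to a single point $q\in\PP^2$, so $Y$ has a rational double point of type $D_6$ over $q$; by the standard correspondence between $\mathbf{ade}$ surface singularities of a double plane and $\mathbf{ade}$ plane-curve singularities of its branch locus, $C_6$ has a $\mathbf{d}_6$ singularity at $q$. Recall that a $\mathbf{d}_6$ point (local model $x(y^2-x^4)=0$) has three smooth branches: two of them mutually tangent, with a common tangent line $\ell$, and a third one transverse to these two, with tangent line $\ell'$.

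Next I would locate $\pi(A_1)$ and $\pi(A_8)$. Pulling back linear forms gives an isomorphism $|\mathcal{O}_{\PP^2}(1)|\to|D_2|$ (both are two--dimensional, since $h^0(D_2)=2+\frac12 D_2^2=3$), so the effective divisor $A_1+4A_2+2A_3+2A_4+3A_5+2A_6+A_7$, being a member of $|D_2|$, equals $\pi^\ast L_1$ for a unique line $L_1$. As $A_2,\dots,A_7$ are contracted to $q$ while $A_1$ is not, this forces $L_1=\pi(A_1)$, a line through $q$, with $\pi|_{A_1}\colon A_1\to L_1$ of degree $D_2A_1=2$ (in particular $L_1\not\subset C_6$, for otherwise $A_1$ would lie in the ramification divisor and $\pi|_{A_1}$ would be birational). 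Applying the same argument to the second linear equivalence $D_2\equiv 2A_2+A_3+A_4+2A_5+2A_6+2A_7+A_8$ shows that $\pi(A_8)$ is likewise a line $L_8$ through $q$, with $\pi|_{A_8}$ of degree $2$.

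The remaining, and really the only substantive, point is to determine the tangent directions of $L_1$ and $L_8$ at $q$; this is the hard part. A line through $q$ is fixed by its tangent direction, and for $C_6$ the distinguished directions are that of $\ell$ (tangent to the two mutually tangent branches) and that of $\ell'$ (the tangent to the third branch), every other direction being transverse to all three branches. The multiplicity with which each exceptional curve $A_i$ occurs in $\pi^\ast L$ is governed by the local intersection multiplicities of $L$ with the branches of $C_6$ at $q$, computed through the successive blow-ups resolving the $\mathbf{d}_6$ point; matching the two coefficient vectors $(4,2,2,3,2,1)$ and $(2,1,1,2,2,2)$ on $(A_2,A_3,A_4,A_5,A_6,A_7)$ against this local computation then forces $L_1=\ell$ and $L_8=\ell'$, which is the assertion. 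I expect the cleanest way to carry out this bookkeeping is to use the elliptic fibration with section $A_6$, whose $I_0^*$ fibre is $F_1=A_1+2A_2+A_3+A_4+A_5$ and whose $I_2$ (or $III$) fibre is $F_2=A_7+A_8$: this fibration is the one induced by the pencil of lines through $q$, its degenerate fibres lie over the distinguished tangent directions at $q$, and tracking which $(-2)$-curves specialize over which direction pins down $\pi(A_1)$ and $\pi(A_8)$ without an explicit resolution. Everything else is a direct application of Saint--Donat and of the intersection numbers already listed.
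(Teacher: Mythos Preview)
Your proposal is correct and follows essentially the same approach as the paper. In fact the paper gives no formal proof at all: it simply records the expression $D_{2}=A_{1}+4A_{2}+2A_{3}+2A_{4}+3A_{5}+2A_{6}+A_{7}$, the intersection numbers $D_{2}A_{k}$, and the second linear equivalence $D_{2}\equiv 2A_{2}+A_{3}+A_{4}+2A_{5}+2A_{6}+2A_{7}+A_{8}$, and then states the proposition as an immediate consequence. Your write-up supplies the justification the paper leaves implicit: Saint--Donat gives the double plane, the $\mathbf{D}_{6}$ configuration of the contracted curves $A_{2},\dots,A_{7}$ forces the $\mathbf{d}_{6}$ singularity, and the two effective representatives of $D_{2}$ identify $\pi(A_{1})$ and $\pi(A_{8})$ as lines through $q$.

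You are also right that the only step with real content is deciding which of the two distinguished tangent directions each line occupies; the paper simply asserts this. Both of your proposed methods work. The elliptic fibration argument can be made precise quickly: since $F\cdot D_{2}=2$ and a generic fibre is irreducible of genus $1$, each fibre maps $2:1$ onto a line through $q$, so the fibration \emph{is} the pencil of lines through $q$; one then checks that $\pi^{*}L=F_{L}+(2A_{2}+A_{3}+A_{4}+2A_{5}+2A_{6}+A_{7})$ for every such line $L$, so the two reducible fibres $F_{1}$ (type $I_{0}^{*}$) and $F_{2}$ (type $\tilde{\mathbf A}_{1}$) sit over $L_{1}$ and $L_{8}$ respectively, and comparing the local intersection multiplicities $I_{q}(\ell,C_{6})=5$ versus $I_{q}(\ell',C_{6})=4$ with the Euler-number contributions of the fibre types pins down which is which. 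This is more than the paper itself provides.
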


\subsection{The lattice $\boldsymbol{U\oplus\mathbf{A}_{1}^{\oplus5}}$}

Let $X$ be a K3 surface with a N\'eron--Severi lattice 
\[
\NS X)=U\oplus\mathbf{A}_{1}^{\oplus5}.
\]
There exist $12$ $(-2)$-curves $A_{1},\dots,A_{12}$ on $X$, with
dual graph

\begin{center}
\begin{tikzpicture}[scale=1]
\draw (0,1.5) -- (3,0.5);
\draw (0,1.5) -- (1.5,0.5);
\draw (0,1.5) -- (0,0.5);
\draw (0,1.5) -- (-1.5,0.5);
\draw (0,1.5) -- (-3,0.5);

\draw [very thick] (-3,0.5) -- (-3,-0.5);
\draw [very thick] (-1.5,0.5) -- (-1.5,-0.5);
\draw [very thick] (0,0.5) -- (0,-0.5);
\draw [very thick] (1.5,0.5) -- (1.5,-0.5);
\draw [very thick] (3,0.5) -- (3,-0.5);

\draw [very thick] (0,-1.5) -- (3,-0.5);
\draw [very thick] (0,-1.5) -- (1.5,-0.5);
\draw [very thick](0,-1.5) -- (0,-0.5);
\draw [very thick] (0,-1.5) -- (-1.5,-0.5);
\draw [very thick] (0,-1.5) -- (-3,-0.5);

\draw (0,1.5) node [above]{$A_{1}$};
\draw (3,0.45) node [left]{$A_{6}$};
\draw (1.5,0.5) node [left]{$A_{5}$};
\draw (0,0.5) node [left]{$A_{4}$};
\draw (-1.5,0.5) node [left]{$A_{3}$};
\draw (-3,0.5) node [left]{$A_{2}$};
\draw (3,-0.35) node [left]{$A_{11}$};
\draw (1.5,-0.4) node [left]{$A_{10}$};
\draw (0,-0.5) node [left]{$A_{9}$};
\draw (-1.5,-0.5) node [left]{$A_{8}$};
\draw (-3,-0.5) node [left]{$A_{7}$};
\draw (0,-1.5) node [below]{$A_{12}$};

\draw (0,1.5) node{$\bullet$};
\draw (3,0.5) node{$\bullet$};
\draw (1.5,0.5) node{$\bullet$};
\draw (0,0.5) node{$\bullet$};
\draw (-1.5,0.5) node{$\bullet$};
\draw (-3,0.5) node{$\bullet$};
\draw (3,-0.5) node{$\bullet$};
\draw (1.5,-0.5) node{$\bullet$};
\draw (0,-0.5) node{$\bullet$};
\draw (-1.5,-0.5) node{$\bullet$};
\draw (-3,-0.5) node{$\bullet$};
\draw (0,-1.5) node{$\bullet$};


\end{tikzpicture}
\end{center} 

The divisor 
\[
D_{14}=2A_{1}+2A_{2}+A_{7}+\sum_{i=1}^{7}A_{i}
\]
is ample of square $14$. The divisors 
\[
A_{2+j}+A_{7+j},\,\,j\in\{0,1,2,3,4\}, 
\]
are irreducible fibers of an elliptic fibration  $\phi_{0}$ of $X$.
The curve $A_{1}$ is a section of $\phi_{0}$, and the curve $A_{12}$
(of class $(2,0,1,1,1,1,-1)$) is a $2$-section. 

The divisor 
\[
D_{2}=2A_{1}+\sum_{j=2}^{6}A_{j}
\]
has square $2$, and $|D_{2}|$ is base-point free. It defines a $2$-to-$1$ cover $\pi\colon X\to\PP^{2}$ of the plane branched over a sextic curve $C_{6}$. For the curves $A_{j},\,\,j\in\{2,3,4,5,6,12\}$, one has $D_{2} \cdot A_{j}=0$; thus the images of these disjoint curves are six points in $\PP^{2}$, and the sextic has nodes at these points.  We have $D_{2} \cdot A_{1}=1$ and
\[
D_{2} \cdot A_{7}=D_{2} \cdot A_{8}=D_{2} \cdot A_{9}=D_{2} \cdot A_{10}=D_{2} \cdot A_{11}=2.
\]
Since $D_{2}=2A_{1}+\sum_{j=2}^{6}A_{j}$, there is a line $L_{1}$
in the branch locus such that $D_{2}=\pi^{*}L_{1}$, and $C_{6}$
is the union of $L_{1}$ and a nodal quintic. Conversely, we have the following. 

\begin{prop}
The minimal resolution of a surface which is the double cover of the
plane branched over the union of a line and a  general  quintic with
a node is a K3 surface of type $U\oplus\mathbf{A}_{1}^{\oplus5}$. 
\end{prop}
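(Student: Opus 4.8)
The plan is to realise the resolution concretely as a double cover of a blow-up of $\PP^{2}$, exhibit seven $\cu$-curves generating a copy of $U\oplus\mathbf{A}_{1}^{\oplus5}$ inside $\NS X)$, and then promote this inclusion to an equality for the general member by a dimension count. First I would fix a general line $L_{1}$ and a general quintic $Q$ with a single node $p_{0}$; then $C_{6}=L_{1}\cup Q$ has exactly six nodes, namely the five transverse points of $L_{1}\cap Q$ (all on $L_{1}$) and $p_{0}$ (not on $L_{1}$). Let $\beta:Z\to\PP^{2}$ be the blow-up at these six points, with exceptional curves $e_{0}$ over $p_{0}$ and $e_{1},\dots,e_{5}$ over $L_{1}\cap Q$, and put $\ell=\beta^{*}(\text{line})$. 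The strict transform is $\bar{C_{6}}=\bar{L_{1}}+\bar{Q}$ with $\bar{L_{1}}=\ell-e_{1}-\dots-e_{5}$ and $\bar{Q}=5\ell-e_{1}-\dots-e_{5}-2e_{0}$, so $\bar{L_{1}}\cdot\bar{Q}=0$ and $\bar{C_{6}}=\bar{L_{1}}\sqcup\bar{Q}$ is a disjoint union of smooth curves lying in $|-2K_{Z}|$. Hence the double cover $\eta:X\to Z$ branched over $\bar{C_{6}}$ is a smooth K3 surface, and it is the minimal resolution of the double cover of $\PP^{2}$ branched over $C_{6}$.

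Next I would read off the relevant curves. Since $\bar{L_{1}}$ is a branch component with $\bar{L_{1}}^{2}=-4$, its reduced preimage $R:=\eta^{-1}(\bar{L_{1}})$ maps isomorphically onto $\bar{L_{1}}$ and $\eta^{*}\bar{L_{1}}=2R$, so $R^{2}=\tfrac{1}{2}\bar{L_{1}}^{2}=-2$; and since $e_{i}\cdot\bar{C_{6}}=-2\,e_{i}\cdot K_{Z}=2$, the curves $E_{i}:=\eta^{-1}(e_{i})$ ($i=0,\dots,5$) are smooth rational with $E_{i}^{2}=2e_{i}^{2}=-2$. Setting $H:=\eta^{*}\ell$ (so $H^{2}=2$), the identity $\bar{L_{1}}=\ell-e_{1}-\dots-e_{5}$ gives $H=2R+E_{1}+\dots+E_{5}$, while $R\cdot E_{i}=\bar{L_{1}}\cdot e_{i}$ and $E_{i}\cdot E_{j}=2\,e_{i}\cdot e_{j}$ give the remaining intersection numbers. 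Thus $\Lambda:=\langle R,E_{0},E_{1},\dots,E_{5}\rangle\subseteq\NS X)$ has $E_{0}$ orthogonal to everything with $E_{0}^{2}=-2$, while $(R,E_{1},\dots,E_{5})$ satisfies $R^{2}=E_{i}^{2}=-2$, $R\cdot E_{i}=1$ and $E_{i}\cdot E_{j}=0$ for $i\neq j$.

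A short lattice computation then identifies $\Lambda$: the class $F:=H-E_{0}$ has $F^{2}=0$, $R\cdot F=1$ and $F\cdot E_{i}=0$ for $i\in\{1,\dots,5\}$, so $R+F$ and $F$ are isotropic with $(R+F)\cdot F=1$, giving $\langle R+F,F\rangle\cong U$; as $U$ is unimodular it splits off, $\Lambda=\langle R+F,F\rangle\oplus\langle R+F,F\rangle^{\perp}$, and the orthogonal complement is generated by $E_{1}-F,\dots,E_{5}-F$, which satisfy $(E_{i}-F)^{2}=-2$ and $(E_{i}-F)\cdot(E_{j}-F)=0$ for $i\neq j$, hence is $\mathbf{A}_{1}^{\oplus5}$. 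So $\Lambda\cong U\oplus\mathbf{A}_{1}^{\oplus5}$. (One also recovers the twelve-vertex dual graph of the direct part of the statement: the $\cu$-curves $C_{0,i}:=\eta^{*}(\overline{p_{0}p_{i}})=H-E_{0}-E_{i}$, together with $R,E_{0},E_{1},\dots,E_{5}$, realise it, with $F=E_{i}+C_{0,i}$ the class of an elliptic pencil having five reducible fibres of type $I_{2}$, section $R$ and bisection $E_{0}$.)

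It remains to see that $\NS X)=\Lambda$ for the general member. The K3 surfaces produced above form an irreducible family $\mathcal{F}$, parametrised by pairs (line, nodal quintic) modulo $\mathrm{PGL}_{3}$, of dimension $2+19-8=13$, and each carries the embedding $\Lambda\hookrightarrow\NS X)$, so $\mathcal{F}$ maps rationally to $\mathcal{M}_{\Lambda}=\mathcal{M}_{U\oplus\mathbf{A}_{1}^{\oplus5}}$. By the direct half of the statement, established above, every K3 surface with N\'eron--Severi lattice $U\oplus\mathbf{A}_{1}^{\oplus5}$ arises in $\mathcal{F}$, so the image of $\mathcal{F}$ contains $\mathcal{M}_{U\oplus\mathbf{A}_{1}^{\oplus5}}$, which is irreducible by Proposition~\ref{prop:The-118-moduli-irred} and has dimension $20-7=13$; being the image of an irreducible $13$-dimensional variety, it therefore equals the closure of $\mathcal{M}_{U\oplus\mathbf{A}_{1}^{\oplus5}}$, so the general $X\in\mathcal{F}$ is a general $U\oplus\mathbf{A}_{1}^{\oplus5}$-polarised K3 surface and has $\NS X)\cong U\oplus\mathbf{A}_{1}^{\oplus5}$. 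The crux is exactly this last step, i.e. excluding a strictly larger Picard lattice for the general member: the dimension count settles it, but one could instead exhibit, as in Example~\ref{exa:S113}, a single explicit $C_{6}=L_{1}\cup Q$ whose associated K3 surface has Picard number $7$ by reducing modulo two primes and applying van Luijk's trick, forcing $\rho=7$ generically.
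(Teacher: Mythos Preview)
Your proof is correct and follows the paper's implicit approach. The paper gives no explicit argument for this Proposition: it simply writes ``Conversely:'' after having shown, via the linear system $|D_{2}|$, that every K3 surface with $\NS X)\simeq U\oplus\mathbf{A}_{1}^{\oplus5}$ is a double cover of $\PP^{2}$ branched over a line plus a nodal quintic, and then notes the unirationality of the moduli. Your explicit identification of $\Lambda=\langle R,E_{0},\dots,E_{5}\rangle\cong U\oplus\mathbf{A}_{1}^{\oplus5}$ and the dimension count $2+19-8=13=20-7$ combined with the direct half is exactly the pattern the paper uses elsewhere (e.g.\ for $S_{1,1,3}$, $S_{1,4,1}$, $U(4)\oplus\mathbf{A}_{1}^{\oplus3}$) to upgrade ``$\Lambda\subseteq\NS X)$'' to ``$\Lambda=\NS X)$ generically''.
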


Such surfaces are studied in \cite[Section 3.3]{AK}. Clearly, the
moduli space of K3 surfaces with $\NS X)=U\oplus\mathbf{A}_{1}^{\oplus5}$
is unirational. 

\subsection{The lattice $\boldsymbol{U(2)\oplus\mathbf{A}_{1}^{\oplus5}}$ and cubic surfaces}

Let $C_{6}$ be a sextic curve in $\PP^{2}$ with six nodes in
general position. Let $Z\to\PP^{2}$ be the blow-up of the nodes;
it is a degree $3$ del Pezzo surface and contains $27$ $(-1)$-curves:
\begin{itemize}
\item the $6$ exceptional divisors $E_{i}$, $i=1,\dots,6$, 
\item the strict transforms $L_{ij}$ of the $15$ lines through $p_{i}$, $p_{j}$
($i\neq j$), 
\item the strict transforms $Q_{j}$ of the $6$ conics that go through
  points in $\{p_{1},\dots,p_{6}\}\setminus\{p_{j}\}$.
  \end{itemize}
The N\'eron--Severi lattice of $Z$ is generated by the pull-back $L'$
of a line and $E_{1},\dots,E_{6}$; it is the unimodular rank $7$
lattice 
\[
I_{1}\oplus I_{-1}^{\oplus6}.
\]
The anti-canonical divisor of the degree $3$ del Pezzo surface $Z$
is given by 
\[
-K_{Z}=3L'-\left(E_{1}+\dots+E_{6}\right); 
\]
this is an ample divisor. The linear system $|-K_{Z}|$ is base-point
free (see \cite[Section~3, Theorem 1]{DemazureIV}), and each of the
 $30$ divisors
\[
Q_{j}+L_{ij}+E_{i}\quad\text{ with }i\neq j,\;i,j\in\{1,\dots,6\}
\]
and $15$ divisors 
\[
L_{ij}+L_{kl}+L_{mn}\quad \text{ with }\{i,j,k,l,m,n\}=\{1,\dots,6\}
\]
belongs to the linear system $|-K_{Z}|$.

The double cover $f\colon Y\to Z$ branched over the strict transform $C_{6}'$
of $C_{6}$ is a smooth K3 surface. The pull-backs of the $27$ $(-1)$-curves
are $(-2)$-curves. We denote by $A_{1},\dots,A_{6}$ the pull-backs 
on $Y$ of the curves $E_{i}$ and by $L$ the pull-back of $L'$.
Naturally, the lattice $f^{*}\NS Z)$ is $\left(I_{1}\oplus I_{-1}^{\oplus6}\right)(2)$,
which is also the lattice generated by $L, A_{1},\dots,A_{6}$ and
is isometric to $U(2)\oplus\mathbf{A}_{1}^{\oplus5}$. 

Since $f\colon Y\to Z$ is finite, its pull-back $D_{6}=f^{*}(-K_{Z})$
is ample, base-point free, non-hyperelliptic, with $D_{6}^{2}=6$. We thus have the following. 

\begin{prop}
The image of $\,Y$ under the map $\pi\colon Y\to\PP^{4}$ obtained from $|D_{6}|$
is a degree $6$ complete intersection surface. There exist $45$
hyperplane sections of $Y$ which are each the union of $\,3$ conics.
\end{prop}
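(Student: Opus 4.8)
The plan is to prove the two assertions separately, using the Saint Donat results recalled in Theorem \ref{thm:SaintDonat-1}. First I would note that $D_{6}=f^{*}(-K_{Z})$ is base point free, being the pull-back under the finite morphism $f$ of the base point free divisor $-K_{Z}$ (base point freeness of $|-K_{Z}|$ on the degree $3$ del Pezzo surface $Z$ is the result \cite{DemazureIV} quoted above). Consequently $|D_{6}|$ has no fixed part, and by hypothesis it is not hyperelliptic, so Theorem \ref{thm:SaintDonat-1} c) gives that $\varphi_{D_{6}}$ is birational onto its image and contracts exactly the $(-2)$-curves $\G$ with $D_{6}\G=0$; since $D_{6}$ is ample there are none, so $\pi=\varphi_{D_{6}}$ is a closed embedding. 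By Theorem \ref{thm:SaintDonat-1} b), $h^{0}(D_{6})=2+\frac{1}{2}D_{6}^{2}=5$, hence $S:=\pi(Y)$ is a smooth, non-degenerate K3 surface of degree $D_{6}^{2}=6$ in $\PP^{4}$.

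Then, to identify $S$ as a complete intersection of a quadric and a cubic, I would run the standard numerical argument. For every $k\geq1$ the divisor $kD_{6}$ is big and nef, so by Theorem \ref{thm:SaintDonat-1} b), $h^{0}(kD_{6})=2+\frac{1}{2}(kD_{6})^{2}=2+3k^{2}$; in particular $h^{0}(2D_{6})=14$ and $h^{0}(3D_{6})=29$. Since $\pi$ is an embedding, the restriction maps $H^{0}(\PP^{4},\OO(2))\to H^{0}(Y,2D_{6})$ and $H^{0}(\PP^{4},\OO(3))\to H^{0}(Y,3D_{6})$ have kernels of dimension at least $15-14=1$ and $35-29=6$ respectively. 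Fix a non-zero quadric $Q$ vanishing on $S$; it is irreducible, for a reducible quadric is a union of two hyperplanes while $S$ is non-degenerate. The cubics divisible by $Q$ form a subspace of dimension only $5$, so there is a cubic $F$ vanishing on $S$ that is not divisible by $Q$; then $V(Q)\cap V(F)$ is a Cohen--Macaulay surface of pure dimension $2$ and degree $6$ without embedded components, containing the reduced irreducible surface $S$ of degree $6$, whence $V(Q)\cap V(F)=S$. One can instead appeal directly to Saint Donat's structure theorem for the homogeneous ideal of a K3 surface \cite{SaintDonat}.

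For the $45$ hyperplane sections I would use the $27$ $(-1)$-curves $\ell$ on $Z$. Each satisfies $(-K_{Z})\cdot\ell=1$, and its pull-back $f^{*}\ell$ is a $(-2)$-curve on $Y$ (as recalled above) with $D_{6}\cdot f^{*}\ell=2\,((-K_{Z})\cdot\ell)=2$; since $\pi$ is a closed embedding, $f^{*}\ell$ is mapped isomorphically onto a smooth conic in $\PP^{4}$. The $30$ divisors $Q_{j}+L_{ij}+E_{i}$ with $i\neq j$ and the $15$ divisors $L_{ij}+L_{kl}+L_{mn}$ with $\{i,j,k,l,m,n\}=\{1,\dots,6\}$ --- in total $45$ effective divisors in $|-K_{Z}|$, each a sum of three $(-1)$-curves, corresponding to the $45$ triangles of lines on the cubic surface $Z\subset\PP^{3}$ --- give upon pull-back $45$ divisors $f^{*}\ell_{1}+f^{*}\ell_{2}+f^{*}\ell_{3}\equiv f^{*}(-K_{Z})=D_{6}$. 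Each of these lies in $|D_{6}|$ and hence, $\pi$ being the map of the complete linear system $|D_{6}|$, is a hyperplane section of $S$, and by the previous paragraph it is a union of three smooth conics. Finally $\ell\mapsto f^{*}\ell$ is injective (because $f(f^{*}\ell)=\ell$) and $S$ is non-degenerate, so the $45$ triples produce $45$ distinct hyperplane sections of $S$.

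I expect the only genuinely delicate point to be the middle step: one has to make sure that the quadric through $S$ used is irreducible and that a suitable cubic is not a multiple of it, so that $V(Q)\cap V(F)$ contains no component besides $S$ and the Bézout count pins it down exactly. Everything else follows formally from Theorem \ref{thm:SaintDonat-1} together with the classical enumeration of lines and triangles on a smooth cubic surface.
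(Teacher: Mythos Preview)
Your proof is correct. For the $45$ hyperplane sections you proceed exactly as the paper does, via the $30+15$ triples of $(-1)$-curves on $Z$, i.e.\ the $45$ tritangent planes of the cubic surface.

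For the complete intersection assertion, however, your route differs from the paper's. The paper argues constructively: the anticanonical system $|-K_{Z}|$ embeds $Z$ as a cubic surface $\{f_{3}=0\}\subset\PP^{3}$, and the branch curve $C_{6}'\in|-2K_{Z}|$ is cut out on $Z$ by a quadric $\{q_{2}=0\}$; hence the double cover $Y$ is explicitly the complete intersection $\{f_{3}(X,Y,Z,T)=0\}\cap\{q_{2}(X,Y,Z,T)-W^{2}=0\}$ in $\PP^{4}$ with coordinates $X,Y,Z,T,W$. This yields concrete equations and, as the paper notes immediately afterwards, the unirationality of the moduli space. Your Riemann--Roch plus degree-count argument is the standard abstract one and works for any ample non-hyperelliptic polarization of degree $6$ on a K3 surface, but it does not produce equations; the paper's explicit description is what makes the link with the cubic-surface geometry transparent and drives the unirationality claim that follows.
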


These $45$ hyperplane sections correspond to the $45$ tritangent
planes of a cubic, \textit{i.e.}, to the planes containing $3$ lines in the
cubic. In fact, by the above discussion, the strict transform in
the cubic surface $Z$ of $C_{6}$ is a degree $6$ curve in $\PP^{3}$
which is the complete intersection of the cubic surface $Y=\{f_{3}(x,y,z,t)=0\}$
and a quadric $\{q_{2}(x,z,y,t)=0\}$. The K3 surface $Y$ is the
complete intersection of $\{f_{3}(X,Y,Z,T)=0\}$ and the quadric $\{q_{2}(X,Y,Z,T)-W^{2}=0\}$
in $\PP^{4}$ (with coordinates $X,Y,Z,T,W$). From that discussion,
we see that the moduli space of K3 surfaces with $\NS X)\simeq U(2)\oplus{\bf A}_{1}^{\oplus5}$
is unirational.

\subsection{The lattice $\boldsymbol{U\oplus\mathbf{A}_{1}\oplus\mathbf{A}_{2}^{\oplus2}}$}

Let $X$ be a K3 surface with  N\'eron--Severi lattice 
\[
\NS X)=U\oplus\mathbf{A}_{1}\oplus\mathbf{A}_{2}^{\oplus2}.
\]
The dual graph of the nine $(-2)$-curves on $X$ is 

\begin{center}
\begin{tikzpicture}[scale=1]
\draw (-2.7,2) -- (-1.3,2);
\draw (-2.7,2) -- (-2,1);
\draw (-2,1) -- (-1.3,2);

\draw (-2,1) -- (0,0.2);
\draw (0,0.2) -- (0,1);
\draw [very thick] (0,1) -- (0,2);
\draw (2,1) -- (0,0.2);

\draw (2.7,2) -- (1.3,2);
\draw (2.7,2) -- (2,1);
\draw (2,1) -- (1.3,2);

\draw (0,0.2) node {$\bullet$};
\draw (0,1) node {$\bullet$};
\draw (0,2) node {$\bullet$};

\draw (-2.7,2) node {$\bullet$};
\draw (-1.3,2) node {$\bullet$};
\draw (-2,1) node {$\bullet$};

\draw (2.7,2) node {$\bullet$};
\draw (1.3,2) node {$\bullet$};
\draw (2,1) node {$\bullet$};

\draw (0,0.2) node [below]{$A_{7}$};
\draw (0,1) node [right]{$A_{8}$};
\draw (0,2) node [right]{$A_{9}$};

\draw (-2.7,2) node [left]{$A_{1}$};
\draw (-1.3,2) node [right]{$A_{2}$};
\draw (-2,1) node [left]{$A_{3}$};

\draw (2.7,2) node [right]{$A_{5}$};
\draw (1.3,2) node [left]{$A_{4}$};
\draw (2,1) node [right]{$A_{6}$};

\end{tikzpicture}
\end{center} 

The curves $A_{1}$, $A_{2}$, $A_{3}$, $A_{4}$, $A_{5}$, $A_{7}$, $A_{8}$ generate the
N\'eron--Severi lattice. The divisor 
\[
D_{18}=(5,5,6,-1,-1,3,1)
\]
in the above basis is ample, of square $18$. One has $D_{18} \cdot A_{j}=1$
for $j<8$ and $D_{18} \cdot A_{9}=2$. The divisors 
\[
F=A_{1}+A_{2}+A_{3},\,A_{4}+A_{5}+A_{6},\,A_{8}+A_{9}
\]
 are the reducible fibers of an elliptic fibration  of the surface.
The divisor 
\[
D_{2}=2F+A_{7}
\]
 is nef, of square $2$, with base points and $D_{2} \cdot A_{3}=D_{2} \cdot A_{6}=D_{2} \cdot A_{8}=1$,
$D_{2} \cdot A_{j}=0$ for $j\neq0$. By Theorem \ref{thm:SaintDonat-2},
case a) i), the base-point free linear system $|2D_{2}|$ defines
a morphism $\varphi\colon X\to\mathbf{F}_{4}$ which is branched over the
unique section $s$ of $mathbf{F}_{4}$ with $s^2=-4$ and a disjoint curve $B\in|2s+12f|$. The curve $B$
has a node $q$ (which is the image of $A_{9}$) and two $\mathbf{a}_{2}$
singularities $p$, $p'$ which are the images of $A_{1}$, $A_{2}$ and $A_{4}$, $A_{5}$.
The images of $A_{3}$, $A_{6}$, $A_{8}$ are the fibers through $p$, $p'$
and $q$, respectively.

One can give another description as follows. The divisor 
\[
D_{2}'=A_{9}+(A_{3}+A_{6}+2A_{7}+2A_{8})
\]
is nef of square $2$, and $|D_{2}'|$ is base-point free. Moreover, 
$D_{2} \cdot A_{j}=0$ if and only if $j\in\{3,6,7,8\}$, and $D_{2} \cdot A_{9}=2$ and 
$D_{2} \cdot A_{j}=1$ for $j\in\{1,2,4,5\}$. We have 
\[
\begin{array}{c}
D_{2}'\equiv A_{1}+A_{2}+(2A_{3}+A_{6}+2A_{7}+A_{8}),\\
D_{2}'\equiv A_{4}+A_{5}+(A_{3}+2A_{6}+2A_{7}+A_{8}).
\end{array}
\]
By using the linear system $|D_{2}'|$, we obtain the following. 

\begin{prop}
The K3 surface $X$ is the double cover $\eta\colon X\to\PP^{2}$ of $\,\PP^{2}$
branched over a sextic curve $C_{6}$ with a $\mathbf{d}_{4}$ singularity
$q$ onto which the curves $A_{3}$, $A_{6}$, $A_{7}$, $A_{8}$ are contracted. 
\end{prop}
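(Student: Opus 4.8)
The plan is to read off the geometric model from the square-$2$ divisor $D_2'=A_9+A_3+A_6+2A_7+2A_8$ introduced just above, exactly as in the preceding cases. First I would record what is needed about $D_2'$: it is nef with $(D_2')^2=2$ and $|D_2'|$ is base point free (nefness is immediate from $D_2'A_j\ge 0$ on the dual graph together with $(D_2')^2>0$; base-point-freeness follows from Theorem~\ref{thm:SaintDonat-1}, once the fixed-part alternative $D_2'=2E+\Gamma$ is ruled out by a short computation). Then $h^0(D_2')=2+\tfrac12(D_2')^2=3$, and since $(D_2')^2=2$, Theorem~\ref{thm:SaintDonat-2}(c) applies: $\varphi_{D_2'}$ is a morphism $\eta\colon X\to\PP^2$ of degree $2$. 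It factors as $X\xrightarrow{\rho}\bar X\xrightarrow{\bar f}\PP^2$, where $\bar f$ is a finite double cover with $\bar f_*\OO_{\bar X}=\OO\oplus\OO(-3)$ by the canonical bundle formula (using $K_X=0$), so its branch locus is a reduced plane sextic $C_6$; and $\rho$ contracts precisely the $(-2)$-curves $A$ with $D_2'A=0$, resolving the du Val singularities of $\bar X$, each of which lies over an $\mathbf{ade}$ singularity of $C_6$ of the same Dynkin type.

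Next I would identify the contracted curves. From the intersection data one reads $D_2'A_j=0$ exactly for $j\in\{3,6,7,8\}$, and since $A_1,\dots,A_9$ are \emph{all} the $(-2)$-curves on $X$, these four are the only curves contracted by $\eta$. Their mutual intersections, read off the dual graph, are: $A_7$ meets each of $A_3,A_6,A_8$ transversally in one point, while $A_3,A_6,A_8$ are pairwise disjoint --- precisely the $\mathbf{D}_4$ Dynkin configuration. Being connected, they all contract to a single point $q$, so $\bar X$ has a unique singular point, of type $\mathbf{D}_4$, lying over $q$. By the standard correspondence between simple singularities of a double cover of a smooth surface and $\mathbf{ade}$ singularities of its branch curve --- under which the double cover of a $\mathbf{d}_4$ curve singularity (an ordinary triple point) is a $\mathbf{D}_4$ surface singularity --- this is equivalent to $C_6$ having a $\mathbf{d}_4$ singularity, i.e.\ an ordinary triple point with three distinct tangent lines, at $q$. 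This yields the assertion: $X$ is the double cover $\eta\colon X\to\PP^2$ branched over a sextic $C_6$ with a $\mathbf{d}_4$ singularity at $q$, onto which $A_3,A_6,A_7,A_8$ are contracted.

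The only delicate point is this local analysis at $q$: one must be sure that a $\mathbf{D}_4$ configuration of contracted $(-2)$-curves forces an ordinary triple point on $C_6$, rather than some other arrangement of branches or a non-reduced branch divisor. This is the classical dictionary between canonical (du Val) singularities of cyclic double covers and $\mathbf{ade}$ singularities of the branch divisor, already used implicitly in the paper --- compare Proposition~\ref{prop:consequenceOfNikulinStar} for the analogous statement for double covers of $\mathbf{F}_4$ --- and I would invoke it rather than reprove it. Everything else (nefness and base-point-freeness of $D_2'$, that $C_6$ has degree $6$, and the count of contracted curves) is routine once the dual graph and the full list of $(-2)$-curves are in hand.
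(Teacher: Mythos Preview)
Your proposal is correct and follows essentially the same approach as the paper: the paper simply records that $D_2'$ is nef, base point free, of square $2$, computes $D_2'A_j=0$ exactly for $j\in\{3,6,7,8\}$, and then reads off the $\mathbf{d}_4$ singularity from the Dynkin type of the contracted configuration. Your write-up just makes explicit the two ingredients the paper uses tacitly throughout (Theorem~\ref{thm:SaintDonat-2}(c) for the double plane and the $\mathbf{ade}$ dictionary between branch-curve singularities and contracted $(-2)$-configurations).
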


The image of the curve $A_{9}$ is a line that is tangent to a branch
of the singularity $q$ and that meets the sextic in two other points.
The two lines $L_{1}$, $L_{2}$ that are tangent to the two other branches
of $q$ are tangent to another point of the sextic. The image of $A_{1}$, $A_{2}$
is $L_{1}$, and the image of $A_{4}$, $A_{5}$ is $L_{2}$.

\subsection{The lattice $\boldsymbol{U\oplus\mathbf{A}_{1}^{\oplus2}\oplus\mathbf{A}_{3}}$}

The dual graph of the nine $\cu$-curves $A_{1},\dots,A_{9}$ on
$X$ is 

\begin{center}
\begin{tikzpicture}[scale=1]

\draw (0,0) -- (1.5,0.8);
\draw (0,0) -- (1.5,-0.8);
\draw (1.5,0.8) -- (3,0);
\draw (1.5,-0.8) -- (3,0);

\draw (-2.5,0.8) -- (-1,0);
\draw (-2.5,-0.8) -- (-1,0);
\draw (-1,0) -- (0,0);

\draw [very thick] (-2.5,-0.8) -- (-3.5,-0.8);
\draw [very thick] (-2.5,0.8) -- (-3.5,0.8);

\draw (0,0) node {$\bullet$};
\draw (3,0) node {$\bullet$};
\draw (1.5,0.8) node {$\bullet$};
\draw (1.5,-0.8) node {$\bullet$};
\draw (-1,0) node {$\bullet$};
\draw (-2.5,0.8) node {$\bullet$};
\draw (-2.5,-0.8) node {$\bullet$};
\draw (-3.5,0.8) node {$\bullet$};
\draw (-3.5,-0.8) node {$\bullet$};

\draw (0.1,0) node [right]{$A_{5}$};
\draw (3,0) node [right]{$A_{8}$};
\draw (1.5,0.8) node [below]{$A_{6}$};
\draw (1.5,-0.8) node [above]{$A_{7}$};
\draw (-1.15,0) node [left]{$A_{4}$};
\draw (-2.5,0.8) node [below]{$A_{1}$};
\draw (-2.5,-0.8) node [above]{$A_{3}$};
\draw (-3.5,-0.8) node [left]{$A_{9}$};
\draw (-3.5,0.8) node [left]{$A_{2}$};

\end{tikzpicture}
\end{center} 

The curves $A_{1},\dots,A_{7}$ generate the N\'eron--Severi lattice.
The divisor 
\[
D_{34}=(6,4,2,5,3,1,1)
\]
in the basis $A_{1},\dots,A_{7}$ is ample of square $34$. The divisors
\[
F_{1}=A_{1}+A_{2},\quad F_{2}=A_{3}+A_{9},\quad F_{3}=A_{5}+A_{6}+A_{7}+A_{8}
\]
 are the reducible fibers of an elliptic fibration  $\phi_{1}$ of
$X$ for which $A_{4}$ is a section (for that, one can deduce the
classes of curves $A_{8}$,  $A_{9}$).  The divisor 
\[
D_{2}=2F_{1}+A_{4}
\]
 is nef, of square $2$, with base points. By Theorem \ref{thm:SaintDonat-2}, 
case i) a), we have the following. 

\begin{prop}
The linear system $|D_{2}|$ defines a morphism $\varphi\colon X\to\mathbf{F}_{4}$
branched over the unique section $s$ with $s^2=-4$ and a curve $B\in|3s+12f|$. The curve
$B$ has two nodes $p$, $p'$ and an $\mathbf{a_{3}}$ singularity $q$.
The pull-backs of the fibers through $p$, $p'$, $q$ are the fibers $F_{1}$, $F_{2}$, $F_{3}$.
\end{prop}

\subsection{The lattice $\boldsymbol{U\oplus\mathbf{A}_{2}\oplus\mathbf{A}_{3}}$}

The dual graph of the eight $\cu$-curves $A_{1},\dots,A_{8}$ on $X$
is 

\begin{center}
\begin{tikzpicture}[scale=1]

\draw (0,0) -- (1.5,0.8);
\draw (0,0) -- (1.5,-0.8);
\draw (1.5,0.8) -- (3,0);
\draw (1.5,-0.8) -- (3,0);

\draw (-3.5,0.8) -- (-2,0);
\draw (-3.5,-0.8) -- (-2,0);
\draw (-1,0) -- (0,0);
\draw (-2,0) -- (-1,0);
\draw (-3.5,0.8) -- (-3.5,-0.8);


\draw (0,0) node {$\bullet$};
\draw (3,0) node {$\bullet$};
\draw (1.5,0.8) node {$\bullet$};
\draw (1.5,-0.8) node {$\bullet$};
\draw (-1,0) node {$\bullet$};
\draw (-2,0) node {$\bullet$};
\draw (-3.5,0.8) node {$\bullet$};
\draw (-3.5,-0.8) node {$\bullet$};

\draw (0.1,0) node [right]{$A_{5}$};
\draw (3,0) node [right]{$A_{8}$};
\draw (1.5,0.8) node [below]{$A_{6}$};
\draw (1.5,-0.8) node [above]{$A_{7}$};
\draw (-1,0) node [above]{$A_{4}$};
\draw (-2,0) node [above]{$A_{3}$};
\draw (-3.5,-0.8) node [left]{$A_{2}$};
\draw (-3.5,0.8) node [left]{$A_{1}$};

\end{tikzpicture}
\end{center} 

The seven $(-2)$-curves $A_{1},\dots,A_{7}$ generate  $\NS X)$, and
the divisor 
\[
D_{42}=(6,6,8,5,3,1,1)
\]
in the basis $A_{1},\dots,A_{7}$ is ample, of square $42$. The surface
has an elliptic fibration $\phi_{1}$ with reducible fibers the curves 
\[
F_{1}=A_{1}+A_{2}+A_{3},\quad F_{2}=A_{5}+A_{6}+A_{7}+A_{8},
\]
and $A_{4}$ is a section. The divisor 
\[
D_{2}=2F_{1}+A_{4}
\]
 is nef, of square $2$, with base points. By Theorem \ref{thm:SaintDonat-2}, 
case i) a), we have the following. 

\begin{prop}
The linear system $|D_{2}|$ defines a morphism $\varphi\colon X\to\mathbf{F}_{4}$
branched over the unique section $s$ with $s^2=-4$ and a curve $B\in|3s+12f|$. The curve
$B$ has a cusp $p$ and an $\mathbf{a_{3}}$ singularity $q$. The
pull-backs of the fibers through $p$, $q$ are the fibers $F_{1}$, $F_{2}$.
\end{prop}

\subsection{The lattice $\boldsymbol{U\oplus\mathbf{A}_{1}\oplus\mathbf{A}_{4}}$}

The dual graph of the eight $\cu$-curves $A_{1},\dots,A_{8}$ on
$X$ is 

\begin{center}
\begin{tikzpicture}[scale=1]

\draw (-0.3,-0.95) -- (-1,0);
\draw (-1,0) -- (-0.3,0.95);
\draw (-0.3,0.95) -- (0.8,0.58);
\draw (0.8,0.58) -- (0.8,-0.58);
\draw (0.8,-0.58) -- (-0.3,-0.95);

\draw [very thick] (-4,0) -- (-3,0);
\draw (-1,0) -- (-4,0);

\draw (-4,0) node {$\bullet$};
\draw (-3,0) node {$\bullet$};
\draw (-2,0) node {$\bullet$};
\draw (-1,0) node {$\bullet$};
\draw (-0.3,-0.95) node {$\bullet$};
\draw (-0.3,0.95) node {$\bullet$};
\draw (0.8,-0.58) node {$\bullet$};
\draw (0.8,0.58) node {$\bullet$};

\draw (-4,0) node [above]{$A_{1}$};
\draw (-3,0) node [above]{$A_{2}$};
\draw (-2,0) node [above]{$A_{3}$};
\draw (-1.1,0) node [above]{$A_{4}$};
\draw (-0.3,0.95) node [above]{$A_{5}$};
\draw (0.8,0.58) node [right]{$A_{6}$};
\draw (0.8,-0.58) node [right]{$A_{7}$};
\draw (-0.3,-0.95) node [left]{$A_{8}$};

\end{tikzpicture}
\end{center} 

In the basis $A_{1},\dots,A_{7}$, the divisor 
\[
D_{42}=(7,9,5,2,0,-1,-1)
\]
is ample, of square $42$. The curves 
\[
F_{1}=A_{1}+A_{2},\quad F_{2}=A_{4}+A_{5}+A_{6}+A_{7}+A_{8}
\]
 are the reducible fibers of an elliptic fibration  for which $A_{3}$
is a section. The divisor 
\[
D_{2}=2F_{1}+A_{4}
\]
 is nef, of square $2$, with base points. By Theorem \ref{thm:SaintDonat-2}, 
case i) a), we have the following. 

\begin{prop}
The linear system $|2D_{2}|$ defines a morphism $\varphi\colon X\to\mathbf{F}_{4}$
branched over the unique section $s$ with $s^2=-4$ and a curve $B\in|3s+12f|$. The curve
$B$ has a node $p$ and an $\mathbf{a_{4}}$ singularity $q$. The
pull-backs of the fibers through $p$, $q$ are the fibers $F_{1}$, $F_{2}$.
\end{prop}

\subsection{The lattice $\boldsymbol{U\oplus\mathbf{A}_{5}}$}

The dual graph of the seven $\cu$-curves $A_{1},\dots,A_{7}$ on
$X$ is 

\begin{center}
\begin{tikzpicture}[scale=1]

\draw (1,0) -- (0.5,0.86);
\draw (0.5,0.86) -- (-0.5,0.86);
\draw (-1,0) -- (-0.5,0.86);
\draw (-1,0) -- (-0.5,-0.86);
\draw (-0.5,-0.86) -- (0.5,-0.86);
\draw (1,0) -- (0.5,-0.86);
\draw (-1,0) -- (-2,0);

\draw (1,0) node {$\bullet$};
\draw (0.5,0.86) node {$\bullet$};
\draw (-1,0) node {$\bullet$};
\draw (-0.5,0.86) node {$\bullet$};
\draw (-0.5,-0.86) node {$\bullet$};
\draw (0.5,-0.86)  node {$\bullet$};
\draw (-2,0) node {$\bullet$};

\draw (1,0) node  [right]{$A_{5}$};
\draw (0.5,0.86) node [above]{$A_{4}$};
\draw (-1,0) node  [right]{$A_{2}$};
\draw (-0.5,0.86) node  [above]{$A_{3}$};
\draw (-0.5,-0.86) node  [below]{$A_{7}$};
\draw (0.5,-0.86) node   [below]{$A_{6}$};
\draw (-2,0) node  [left]{$A_{1}$};

\end{tikzpicture}
\end{center} 

The seven $\cu$-curves $A_{1},\dots,A_{7}$ generate the N\'eron--Severi
lattice, and in that basis, the divisor 
\[
D_{84}=(7,15,12,10,9,10,12)
\]
is ample, of square $84$. We have $D_{84} \cdot A_{j}=1$ for $j\in\{1,\dots,7\}\setminus\{5\}$
and $D_{84} \cdot A_{5}=2$. The divisor 
\[
F=\sum_{j=2}^{7}A_{j}
\]
is the unique reducible fiber of an elliptic fibration  of $X$; the
curve $A_{1}$ is a section.

The divisor 
\[
D_{2}'=(2,4,3,2,1,2,3)
\]
is nef, base-point free, of square $2$. One has $D_{2} \cdot A_{j}=0$
for $j\in\{1,\dots,7\}\setminus\{5\}$ and $D_{2} \cdot A_{5}=2$. The surface
$X$ is a double cover of $\PP^{2}$. The branch locus is a sextic
curve with an $\mathbf{e}_{6}$ singularity; there exists a line $L$
through the $\mathbf{e}_{6}$ singularity which cuts the sextic transversely
in two points such that the image of $A_{5}$ is $L$. 

One may also describe that surface as a double cover of $\mathbf{F}_{4}$. 

\begin{prop}
The linear system $|4F+2A_{1}|$ defines a morphism $\varphi\colon X\to\mathbf{F}_{4}$
branched over the unique section $s$ with $s^2=-4$ and a curve $B\in|3s+12f|$. The curve
$B$ has an $\mathbf{a_{5}}$ singularity $q$. The pull-back of the
fiber through $q$ is the fiber $F$.
\end{prop}

\subsection{The lattice $\boldsymbol{U\oplus\mathbf{D}_{5}}$}

The dual graph of the seven $\cu$-curves $A_{1},\dots,A_{7}$ on
$X$ is 

\begin{center}
\begin{tikzpicture}[scale=1]

\draw (0,0) -- (4,0);
\draw (2,0) -- (2,-1);
\draw (3,0) -- (3,-1);

\draw (0,0) node {$\bullet$};
\draw (1,0) node {$\bullet$};
\draw (2,0) node {$\bullet$};
\draw (2,-1) node {$\bullet$};
\draw (3,0) node {$\bullet$};
\draw (3,-1) node {$\bullet$};
\draw (4,0) node {$\bullet$};

\draw (0,0) node [above]{$A_{1}$};
\draw (1,0) node [above]{$A_{2}$};
\draw (2,0) node [above]{$A_{3}$};
\draw (2,-1) node [left]{$A_{4}$};
\draw (3,0) node [above]{$A_{5}$};
\draw (4,0) node [above]{$A_{6}$};
\draw (3,-1) node [left]{$A_{7}$};

\end{tikzpicture}
\end{center} 

The divisor 
\[
D_{114}=(8,17,27,13,25,12,12)
\]
 in the basis $A_{1},\dots,A_{7}$ is ample, of square $114$, and every
$\cu$-curve on $X$ has degree $1$ with respect to $D_{114}$.
The divisor 
\[
F=A_{2}+2A_{3}+A_{4}+2A_{5}+A_{6}+A_{7}
\]
is the unique reducible fiber of an elliptic fibration  of $X$ for
which $A_{1}$ is a section. By Theorem \ref{thm:SaintDonat-2},  case
i) a), we have the following. 

\begin{prop}
The linear system $|4F+2A_{1}|$ defines a morphism $\varphi\colon X\to\mathbf{F}_{4}$
branched over the unique section $s$ with $s^2=-4$ and a curve $B\in|3s+12f|$. The curve
$B$ has a $\mathbf{d_{5}}$ singularity $q$. The pull-back of the
fiber through $q$ is the fiber $F$.
\end{prop}

\section{Rank 8 lattices }

\subsection{The lattice $\boldsymbol{U\oplus\mathbf{D}_{6}}$}

The dual graph of the eight $\cu$-curves $A_{1},\dots,A_{8}$ on
$X$ is 

\begin{center}
\begin{tikzpicture}[scale=1]

\draw (0,0) -- (5,0);
\draw (2,0) -- (2,-1);
\draw (4,0) -- (4,-1);

\draw (0,0) node {$\bullet$};
\draw (1,0) node {$\bullet$};
\draw (2,0) node {$\bullet$};
\draw (2,-1) node {$\bullet$};
\draw (3,0) node {$\bullet$};
\draw (4,-1) node {$\bullet$};
\draw (4,0) node {$\bullet$};
\draw (5,0) node {$\bullet$};

\draw (0,0) node [above]{$A_{1}$};
\draw (1,0) node [above]{$A_{2}$};
\draw (2,0) node [above]{$A_{4}$};
\draw (2,-1) node [left]{$A_{3}$};
\draw (3,0) node [above]{$A_{5}$};
\draw (4,0) node [above]{$A_{6}$};
\draw (4,-1) node [left]{$A_{8}$};
\draw (5,0) node [above]{$A_{7}$};

\end{tikzpicture}
\end{center} 

The divisor 
\[
D_{220}=(11,24,18,38,35,33,16,16)
\]
 in the basis $A_{1},\dots,A_{8}$ is ample, of square $220$, with $D_{220} \cdot A_{J}=1$
if $j\notin\{1,3\}$ and $D_{220} \cdot A_{j}=2$ if $j\in\{1,3\}$. The
divisor 
\[
F=A_{2}+A_{3}+2A_{4}+2A_{5}+2A_{6}+A_{7}+A_{8}
\]
is a fiber of an elliptic fibration  for which $A_{1}$ is a section.
By Theorem \ref{thm:SaintDonat-2}, case i) a), we have the following. 

\begin{prop}
The linear system $|4F+2A_{1}|$ defines a morphism $\varphi\colon X\to\mathbf{F}_{4}$
branched over the unique section $s$ with $s^2=-4$ and a curve $B\in|3s+12f|$. The curve
$B$ has a $\mathbf{d_{6}}$ singularity $q$. The pull-back of the
fiber through $q$ is the fiber $F$.
\end{prop}

\subsection{The lattice $\boldsymbol{U\oplus\mathbf{D}_{4}\oplus\mathbf{A}_{1}^{\oplus2}}$}

The dual graph of the $10$ $\cu$-curves $A_{1},\dots,A_{10}$ on
$X$ is 

\begin{center}
\begin{tikzpicture}[scale=1]

\draw (0,0) -- (3,0);
\draw (1,0) -- (1,1);
\draw (1,0) -- (1,-1);
\draw (3,0) -- (3.71,0.71);
\draw (3,0) -- (3.71,-0.71);
\draw [very thick] (3.71,0.71) -- (4.71,0.71);
\draw [very thick] (3.71,-0.71) -- (4.71,-0.71);

\draw (0,0) node {$\bullet$};
\draw (1,0) node {$\bullet$};
\draw (2,0) node {$\bullet$};
\draw (3,0) node {$\bullet$};
\draw (1,1) node {$\bullet$};
\draw (1,-1) node {$\bullet$};
\draw (3.71,0.71) node {$\bullet$};
\draw (3.71,-0.71) node {$\bullet$};
\draw (4.71,0.71) node {$\bullet$};
\draw (4.71,-0.71) node {$\bullet$};

\draw (0,0) node [above]{$A_{1}$};
\draw (1.1,0) node [above left]{$A_{2}$};
\draw (2,0) node [above]{$A_{5}$};
\draw (1,1) node [left]{$A_{3}$};
\draw (1,-1) node [left]{$A_{4}$};
\draw (2.9,0) node [above]{$A_{6}$};
\draw (3.71,0.71) node [above]{$A_{7}$};
\draw (3.715,-0.71) node [above]{$A_{8}$};
\draw (4.71,0.71) node [above]{$A_{9}$};
\draw (4.71,-0.71) node [above]{$A_{10}$};

\end{tikzpicture}
\end{center} 

The first eight curves generate  the N\'eron--Severi lattice. The divisor
\[
D_{54}=(6,13,6,6,9,6,2,2)
\]
of square $54$ is ample with $D_{54} \cdot A_{j}=1$ for $j\leq6$, $D_{54} \cdot A_{j}=2$
for $j\in\{7,8\}$ and $D_{54} \cdot A_{j}=4$ for $j\in\{9,10\}$. The divisors
\[
F_{1}=A_{2}+\sum_{j=1}^{5} \cdot A_{j},\quad F_{2}=A_{7}+A_{9},\quad F_{3}=A_{8}+A_{10}
\]
 are fibers of an elliptic fibration  $\varphi_{2}$ such that $A_{6}$
is a section. For $j\in\{1,3,4\}$, the divisors 
\[
-A_{j}+A_{2}+A_{5}+A_{6}+\sum_{k=1}^{8}  A_{k}
\]
 are fibers of elliptic fibrations $\varphi_{j}$ for which $A_{j}$
and $A_{9}$, $A_{10}$ are $2$-sections. The divisor 
\[
D_{2}=A_{1}+2A_{2}+A_{3}+A_{4}+2A_{5}+2A_{6}+A_{7}+A_{8}
\]
is nef, of square $2$ and base-point free. The linear system $|D_{2}|$
contracts the curves $A_{j}$ for $j\in\{1,3,\dots,8\}$; moreover, $D_{2} \cdot A_{2}=1$ and 
$D_{2} \cdot A_{9}=D_{2} \cdot A_{10}=2$.

\begin{prop}
The branch curve is the union of a line $L$ and a nodal quintic $Q$; the line goes through the node transversely $($forming a $\mathbf{d}_{4}$
singularity on the sextic $L\cup Q$, resolved by $A_{5}$, $A_{6}$, $A_{7}$,  $A_{8})$
and cuts the quintic in three other points $($resolved by $A_{1}$, $A_{3}$, $A_{4})$. 
\end{prop}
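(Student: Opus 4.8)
The plan is to feed the nef class $D_{2}$ into Theorem \ref{thm:SaintDonat-2} and then reconstruct the branch sextic from the way the $\cu$-curves meet. Since $D_{2}$ is nef, base point free and $D_{2}^{2}=2$, case c) of that theorem produces a double cover $\eta:X\to\PP^{2}$ whose branch curve $C_{6}$ is a plane sextic with at worst $\mathbf{ade}$ singularities, and the curves contracted by $\eta$ are exactly those $\cu$-curves orthogonal to $D_{2}$, namely $A_{1},A_{3},A_{4},A_{5},A_{6},A_{7},A_{8}$. Reading off the dual graph, the configuration carried by these seven curves is the disjoint union of the three pairwise disjoint curves $A_{1},A_{3},A_{4}$ and the $\mathbf{D}_{4}$-diagram on $A_{5},A_{6},A_{7},A_{8}$ (trivalent vertex $A_{6}$). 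Hence $C_{6}$ has exactly three nodes, resolved by $A_{1},A_{3},A_{4}$, and one $\mathbf{d}_{4}$ point, resolved by $A_{5},A_{6},A_{7},A_{8}$; in particular $C_{6}$ has no other singularities.

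Next I would locate the line. As $D_{2}A_{2}=1\neq0$, the curve $A_{2}$ is not contracted, so $\eta_{*}A_{2}=d\,[C]$ for an irreducible curve $C=\eta(A_{2})$ of degree $\delta\geq1$ and some $d\geq1$. Pushing forward $D_{2}=A_{1}+2A_{2}+A_{3}+A_{4}+2A_{5}+2A_{6}+A_{7}+A_{8}$, every term of which but $2A_{2}$ is contracted, and using the projection formula $\eta_{*}D_{2}\cdot h=D_{2}^{2}=2$ for a general line $h$, one gets $2=\eta_{*}D_{2}\cdot h=2\,\eta_{*}A_{2}\cdot h=2d\delta$, so $d=\delta=1$: thus $\eta$ maps $A_{2}$ birationally onto a line $L$.

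The crux is to show $L\subset C_{6}$. Suppose not. Then $\eta^{*}L$ is generically reduced over $L$ and splits as $A_{2}+\iota(A_{2})+\sum_{i}m_{i}A_{j_{i}}$, where $\iota$ is the covering involution, $\iota(A_{2})$ is the conjugate sheet of $A_{2}$, and the $A_{j_{i}}$ are contracted curves over $L\cap\mathrm{Sing}(C_{6})$, so $j_{i}\in\{1,3,4,5,6,7,8\}$. If $\iota(A_{2})=A_{2}$ then, as $L\not\subset C_{6}$, $\iota$ acts as a nontrivial involution of $A_{2}\cong\PP^{1}$, forcing $\eta|_{A_{2}}$ to have degree $2$, against the previous paragraph; hence $\iota(A_{2})$ is a $\cu$-curve distinct from $A_{2}$. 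From $\eta^{*}L\equiv D_{2}$ we get $\iota(A_{2})\equiv D_{2}-A_{2}-\sum_{i}m_{i}A_{j_{i}}$, and since $A_{1},\dots,A_{8}$ is a basis of $\NS X)$ in which each $A_{j_{i}}$ with $j_{i}\neq2$ contributes $0$ to the $A_{2}$-coordinate, the $A_{2}$-coordinate of $\iota(A_{2})$ equals $2-1=1$. But the $I_{0}^{*}$-fiber relation $A_{1}+2A_{2}+A_{3}+A_{4}+A_{5}\equiv A_{7}+A_{9}\equiv A_{8}+A_{10}$ shows that $A_{9},A_{10}$ have $A_{2}$-coordinate $2$, while $A_{1},A_{3},\dots,A_{8}$ all have $A_{2}$-coordinate $0$ except $A_{2}$ itself; so $A_{2}$ is the only $\cu$-curve with $A_{2}$-coordinate $1$, a contradiction. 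Therefore $L$ is a component of $C_{6}$, and $C_{6}=L\cup Q$ with $\deg Q=5$.

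Finally I would match up the singularities. The unique $\mathbf{d}_{4}$ point of $C_{6}$ is not a node, so it lies neither on the smooth locus of $Q$ nor at a transverse point of $L\cap Q$; since every other singularity of $C_{6}$ is a node, the only possibility is that $Q$ has a single node and $L$ passes through it transversely to both local branches, producing an ordinary triple point, i.e.\ a $\mathbf{d}_{4}$ double point of the cover. As $L\cdot Q=5$ and this triple point absorbs intersection multiplicity $2$, the line $L$ meets $Q$ in exactly three further, transverse, points, which are the three nodes of $C_{6}$. Thus the $\mathbf{d}_{4}$ is the point resolved by the $\mathbf{D}_{4}$-configuration $A_{5},A_{6},A_{7},A_{8}$ and the three nodes are those resolved by the isolated curves $A_{1},A_{3},A_{4}$, as claimed; as a consistency check, the multiplicities appearing in $\eta^{*}L\equiv D_{2}=2A_{2}+A_{1}+A_{3}+A_{4}+2A_{5}+2A_{6}+A_{7}+A_{8}$ agree with the local resolution of a $\mathbf{d}_{4}$ double point when $L$ is one of its three branch lines. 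I expect the third paragraph to be the main obstacle: ruling out that $A_{2}$ maps $1:1$ or $2:1$ onto a line disjoint from the branch curve is exactly where the uniqueness of the basis expansion together with the $I_{0}^{*}$ relation does the real work.
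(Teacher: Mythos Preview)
Your argument is correct and follows the same overall route as the paper: use $|D_{2}|$ to realise $X$ as a double plane, read the singularities of $C_{6}$ off the contracted configuration $A_{1},A_{3},A_{4}\,\sqcup\,\mathbf{D}_{4}(A_{5},A_{6},A_{7},A_{8})$, identify $\eta(A_{2})$ with a line $L$, show $L\subset C_{6}$, and then sort out the local picture at the $\mathbf{d}_{4}$ point.

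The one place where you work harder than necessary is paragraph~3. The paper's standing device (used verbatim in the cases $U(2)\oplus\mathbf{D}_{4}$, $U\oplus\mathbf{A}_{1}^{\oplus5}$, $U\oplus\mathbf{A}_{1}^{\oplus6}$, etc.) is simply to observe that the \emph{specific} effective divisor
\[
D_{2}=2A_{2}+\bigl(A_{1}+A_{3}+A_{4}+2A_{5}+2A_{6}+A_{7}+A_{8}\bigr)
\]
lies in $|\eta^{*}\mathcal{O}(1)|$, hence equals $\eta^{*}L_{0}$ for a unique line $L_{0}$; since every summand but $2A_{2}$ is contracted, $L_{0}=\eta(A_{2})=L$ and the non-contracted part of $\eta^{*}L$ is $2A_{2}$, which is non-reduced. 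Were $L\not\subset C_{6}$, the map $\eta$ would be \'etale over the generic point of $L$ and $\eta^{*}L$ would be generically reduced, a contradiction. This replaces your coordinate computation with $\iota(A_{2})$ and the $I_{0}^{*}$ fiber relation; your argument is valid, but the multiplicity-$2$ observation is shorter and does not require knowing the full list of $(-2)$-curves. In paragraph~4 you might also make explicit the quick count that rules out the $\mathbf{d}_{4}$ lying off $L$: in that case $L\cap Q$ would consist of five transverse points (any tangency produces an $\mathbf{a}_{\geq3}$, which is excluded), giving five nodes rather than three.
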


We have $D_{2}=\pi^{*}L$, and the equivalences 
\[
D_{2}\equiv A_{5}+2A_{6}+A_{8}+A_{9}
\]
and 
\[
D_{2}\equiv A_{5}+2A_{6}+A_{7}+A_{10} 
\]
give that the images of $A_{9}$ and $A_{10}$ are lines that are tangent
to the branches of the node of the quintic $Q$. 

\begin{rem}
One may also describe that surface as the desingularization of the
double cover of $\mathbf{F}_{4}$ branched over the unique section $s$ with $s^2=-4$ and
a curve $B\in|3s+12f|$ with a $\mathbf{d}_{4}$ singularity and two
nodes. 
\end{rem}

\subsection{The lattice $\boldsymbol{U\oplus\mathbf{A}_{1}^{\oplus6}}$}

Let $X$ be a K3 surface with $\NS X)\simeq U\oplus\mathbf{A}_{1}^{\oplus6}$.
The surface $X$ contains $19$ $\cu$-curves; their dual graph is
represented by

\begin{center}
\begin{tikzpicture}[scale=0.9]

\draw (0,0) -- (2,0);
\draw (0,0) -- (2*0.5,2*0.86);
\draw (0,0) -- (-2*0.5,2*0.86);
\draw (0,0) -- (-2,0);
\draw (0,0) -- (-2*0.5,-2*0.86);
\draw (0,0) -- (2*0.5,-2*0.86);

\draw [very thick] (2,0) -- (2.5,0.86);
\draw [very thick] (2,0) -- (2.5,-0.86);
\draw [very thick] (2*0.5,2*0.86) -- (2,1.72);
\draw [very thick] (2*0.5,2*0.86) -- (0.5,1.72+0.86);
\draw [very thick] (-2*0.5,2*0.86) -- (-0.5,1.72+0.86);
\draw [very thick] (-2*0.5,2*0.86) -- (-2,1.72);
\draw [very thick] (-2,0) -- (-2.5,0.86);
\draw [very thick] (-2,0) -- (-2.5,-0.86);
\draw [very thick] (-2*0.5,-2*0.86) -- (-2,-1.72);
\draw [very thick] (-2*0.5,-2*0.86) -- (-0.5,-1.72-0.86);
\draw [very thick] (2*0.5,-2*0.86) -- (0.5,-1.72-0.86);
\draw [very thick]  (2*0.5,-2*0.86) -- (2,-1.72);

\draw (0,0) node {$\bullet$};
\draw (2,0) node {$\bullet$};
\draw (2*0.5,2*0.86) node {$\bullet$};
\draw (-2*1,0) node {$\bullet$};
\draw (-2*0.5,2*0.86) node {$\bullet$};
\draw (-2*0.5,-2*0.86) node {$\bullet$};
\draw (2*0.5,-2*0.86)  node {$\bullet$};

\draw [color=red] (2.5,-0.86) node {$\bullet$};
\draw [color=blue] (2.5,0.86) node {$\bullet$};
\draw [color=red] (2,1.72) node {$\bullet$};
\draw [color=blue] (0.5,1.72+0.86) node {$\bullet$};
\draw [color=red] (-0.5,1.72+0.86) node {$\bullet$};
\draw [color=blue] (-2,1.72) node {$\bullet$};
\draw [color=red] (-2.5,0.86) node {$\bullet$};
\draw [color=blue] (-2.5,-0.86) node {$\bullet$};
\draw [color=red] (-2,-1.72) node {$\bullet$};
\draw [color=blue] (-0.5,-1.72-0.86) node {$\bullet$};
\draw [color=red] (0.5,-1.72-0.86) node {$\bullet$};
\draw [color=blue] (2,-1.72) node {$\bullet$};

\draw (0,0) node  [above right]{$A_{1}$};
\draw (2,0) node  [right]{$A_{2}$};
\draw (2*0.5+0.2,2*0.86) node [below]{$A_{3}$};
\draw (-2*0.5-0.2,2*0.86) node  [below]{$A_{4}$};
\draw (-2,0) node  [left]{$A_{5}$};
\draw (-2*0.5,-2*0.86) node  [right]{$A_{6}$};
\draw (2*0.5,-2*0.86) node   [left]{$A_{7}$};

\draw (2.5,-0.86) node [right]{$A_{8}$};
\draw  (2.5,0.86) node [right]{$A_{14}$};
\draw  (2,1.72) node [right]{$A_{9}$};
\draw  (0.5,1.72+0.86) node [right]{$A_{15}$};
\draw  (-0.5,1.72+0.86) node [right]{$A_{10}$};
\draw  (-2,1.72) node [left]{$A_{16}$};
\draw  (-2.5,0.86) node [right]{$A_{11}$};
\draw  (-2.5,-0.86) node [right]{$A_{17}$};
\draw  (-2,-1.72) node [left]{$A_{12}$};
\draw  (-0.5,-1.72-0.86) node [right]{$A_{18}$};
\draw  (0.5,-1.72-0.86) node [right]{$A_{13}$};
\draw  (2,-1.72) node [right]{$A_{19}$};

\draw [very thick] (5,1) -- (6,-1);
\draw [very thick] (5,1) -- (7,-1);
\draw [very thick] (5,1) -- (8,-1);
\draw [very thick] (5,1) -- (9,-1);
\draw [very thick] (5,1) -- (10,-1);
\draw [very thick] (6,1) -- (5,-1);
\draw [very thick] (6,1) -- (7,-1);
\draw [very thick] (6,1) -- (8,-1);
\draw [very thick] (6,1) -- (9,-1);
\draw [very thick] (6,1) -- (10,-1);
\draw [very thick] (7,1) -- (5,-1);
\draw [very thick] (7,1) -- (6,-1);
\draw [very thick] (7,1) -- (8,-1);
\draw [very thick] (7,1) -- (9,-1);
\draw [very thick] (7,1) -- (10,-1);
\draw [very thick] (8,1) -- (5,-1);
\draw [very thick] (8,1) -- (6,-1);
\draw [very thick] (8,1) -- (7,-1);
\draw [very thick] (8,1) -- (9,-1);
\draw [very thick] (8,1) -- (10,-1);
\draw [very thick] (9,1) -- (5,-1);
\draw [very thick] (9,1) -- (6,-1);
\draw [very thick] (9,1) -- (7,-1);
\draw [very thick] (9,1) -- (8,-1);
\draw [very thick] (9,1) -- (10,-1);
\draw [very thick] (10,1) -- (5,-1);
\draw [very thick] (10,1) -- (6,-1);
\draw [very thick] (10,1) -- (7,-1);
\draw [very thick] (10,1) -- (8,-1);
\draw [very thick] (10,1) -- (9,-1);

\draw [color=blue] (5,1) node {$\bullet$};
\draw [color=blue] (6,1) node {$\bullet$};
\draw [color=blue] (7,1) node {$\bullet$};
\draw [color=blue] (8,1) node {$\bullet$};
\draw [color=blue] (9,1) node {$\bullet$};
\draw [color=blue] (10,1) node {$\bullet$};
\draw [color=red] (5,-1) node {$\bullet$};
\draw [color=red] (6,-1) node {$\bullet$};
\draw [color=red] (7,-1) node {$\bullet$};
\draw [color=red] (8,-1) node {$\bullet$};
\draw [color=red] (9,-1) node {$\bullet$};
\draw [color=red] (10,-1) node {$\bullet$};

\draw (5,-1) node [below]{$A_{8}$};
\draw (6,-1) node [below]{$A_{9}$};
\draw (7,-1) node [below]{$A_{10}$};
\draw (8,-1) node [below]{$A_{11}$};
\draw (9,-1) node [below]{$A_{12}$};
\draw (10,-1) node [below]{$A_{13}$};

\draw (5,1) node [above]{$A_{14}$};
\draw (6,1) node [above]{$A_{15}$};
\draw (7,1) node [above]{$A_{16}$};
\draw (8,1) node [above]{$A_{17}$};
\draw (9,1) node [above]{$A_{18}$};
\draw (10,1) node [above]{$A_{19}$};

\end{tikzpicture}
\end{center} 
where we draw the graph of the configuration of the curves $A_{8},\dots,A_{19}$
in another part in order for it to be more readable. The first eight curves
$A_{1},\dots,A_{8}$ generate  the N\'eron--Severi lattice. The divisor
\[
D_{12}=3A_{1}+2A_{2}+A_{3}+A_{4}+A_{5}+A_{6}+A_{7}+A_{8}
\]
is ample, of square $12$, with $D_{12} \cdot A_{j}=1$ for $j\leq7$,
$D_{12} \cdot A_{j}=2$ for $8\leq j\leq13$ and $D_{12} \cdot A_{j}=4$ for $14\leq j\leq19$.
Let $j\neq k$ be two elements of $\{2,\dots,7\}$; the effective
divisor 
\[
F_{jk}=2A_{1}-A_{j}-A_{k}+\sum_{i=2}^{7}A_{i}
\]
is a fiber of type $IV$ of an elliptic fibration  $\varphi_{jk}$. The
divisors 
\[
A_{j+6}+A_{k+12},\,A_{k+6}+A_{j+12}
\]
are the other reducible fibers of that fibration. The divisors
\[
A_{2+k}+A_{8+k},\quad k\in\{0,\dots,5\}
\]
are reducible fibers of an elliptic fibration  $\varphi_{1}$. The divisors
\[
A_{2+k}+A_{14+k},\quad k\in\{0,\dots,5\}
\]
are reducible fibers of an elliptic fibration  $\varphi_{2}$. The
curve $A_{1}$ is a section of $\varphi_{1}$ and $\varphi_{2}$.
The divisor 
\[
D_{2}=2A_{1}+A_{2}+A_{3}+A_{4}+A_{5}+A_{6}
\]
has square $2$; it is base-point free, and it contracts $A_{2},\dots,A_{6}, A_{13}, A_{19}$.
Moreover, $D_{2} \cdot A_{1}=1$, and for the remaining curves, $D_{2} \cdot A_{j}=2$.
From the equivalence relations obtained by the elliptic fibration
 with fiber $A_{2}+A_{8}$ and the elliptic fibration  $\varphi_{27}$,
we obtain 
\[
D_{2}\equiv A_{j}+A_{j+6}+A_{19}\equiv A_{j}+A_{12+6}+A_{13},\quad\forall j\in\{2,\dots,7\}.
\]
Thus we see that there is a line $L$ in the branch locus which is
the image of $A_{1}$, and the curves $A_{2},\dots,A_{6}$ are mapped
to points $p_{2},\dots,p_{6}$ in the intersection of $L$ with residual
quintic $Q$ in the branch locus. The curve $Q$ has two nodes $p$, $q$
above which are $A_{13}$ and $A_{19}$. The curve $A_{7}$ is the
strict transform of the line through $p$ and $q$. For $j\in\{2,3,4,5,6\}$,
the curve $A_{j+6}$ (resp.\ $A_{j+12}$) is the pull-back of the
line through $p$, $p_{j}$ (resp.\ $q$, $p_{j}$). This leads to the following. 

\begin{prop}
The K3 surface $X$ is the minimal resolution of the double cover
of $\,\PP^{2}$ branched over the union of a line and a quintic with
two nodes.
\end{prop}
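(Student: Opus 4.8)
The plan is to realise $X$ as a double plane by means of the nef class $D_2=2A_1+A_2+A_3+A_4+A_5+A_6$ of square $2$ introduced above, for which $D_2A_1=1$, $D_2A_j=0$ exactly for $j\in\{2,3,4,5,6,13,19\}$, and $D_2A_j=2$ otherwise. First I would check that $|D_2|$ is base point free. Since $D_2$ is big and nef of square $2$, Theorem~\ref{thm:SaintDonat-1} leaves only the possibility $D_2=2E+\Gamma$ with $|E|$ a free pencil, $\Gamma$ a $\cu$-curve and $E\Gamma=1$; here $\Gamma$ is one of the curves with $D_2\Gamma=0$, and since $E\cdot A_1=\frac{1}{2}(D_2A_1-\Gamma A_1)$ must be an integer this forces $\Gamma\in\{A_2,\dots,A_6\}$, while for each of those $\Gamma$ the class $D_2-\Gamma$ has an odd coordinate in the basis $A_1,\dots,A_6$ of $U\oplus\mathbf{A}_1^{\oplus6}$, hence is not $2$-divisible, so no such $E$ exists. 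Granting base point freeness, Theorem~\ref{thm:SaintDonat-2}, case~c), produces a double cover $\pi:X\to\PP^2$ whose branch curve $C_6$ is a sextic with at worst ADE singularities, $X$ being its minimal desingularisation.

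Next I would identify $C_6$ from the dual graph. The curves contracted by $\pi$ are exactly $A_2,\dots,A_6,A_{13},A_{19}$, and one reads off the graph that these seven $\cu$-curves are pairwise disjoint, so $C_6$ has only nodes, seven of them. Writing $D_2=\pi^*H$ with $H$ the class of a line, the value $D_2A_1=1$ forces $A_1$ to map isomorphically onto a line $L$ which, since $A_1$ then lies in the ramification, is a component of $C_6$; thus $C_6=L+Q$ with $Q$ a plane quintic. Comparing $D_2=2A_1+A_2+\dots+A_6$ with $\pi^*L=2A_1+\sum_{p}E_{p}$ (sum over the singular points of $C_6$ on $L$, each contributing with multiplicity $1$ because the singularities are nodes) identifies $A_2,\dots,A_6$ with the exceptional curves over the five points of $L\cap Q$; the remaining two contracted curves $A_{13},A_{19}$ therefore resolve nodes lying on $Q$. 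Finally the linear equivalences recorded above, $D_2\equiv A_j+A_{j+6}+A_{19}\equiv A_j+A_{j+12}+A_{13}$ for $j\in\{2,\dots,7\}$, exhibit $A_{j+6},A_{j+12}$ ($j=2,\dots,6$) as pullbacks of the ten lines joining a point of $L\cap Q$ to a node of $Q$, and $A_7$ as the pullback of the line through the two nodes — which in particular forces the two singularities of $Q$ to be ordinary nodes.

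I expect the delicate point to be excluding worse degenerations of the branch curve: one must rule out that $L$ meets $Q$ non-transversally or passes through a node of $Q$ (either would create a $\mathbf{d}_4$ or deeper singularity on $C_6$, incompatible with the seven contracted curves being disjoint $(-2)$-curves), and that $Q$ is reducible (a reducible $Q$ would contribute additional components to the branch locus and enlarge $\NS X)$ beyond $U\oplus\mathbf{A}_1^{\oplus6}$); each of these is, however, dictated by the intersection numbers and the shape of the dual graph computed above. With this in hand $X$ is the minimal resolution of the double cover of $\PP^2$ branched over the union of a line and a binodal quintic, as asserted. The converse, that a general such double cover has N\'eron--Severi lattice $U\oplus\mathbf{A}_1^{\oplus6}$, I would establish exactly as for the lattice $U\oplus\mathbf{A}_1^{\oplus5}$ above: the seven nodal classes together with $\pi^*H$ span a copy of $[2]\oplus\mathbf{A}_1^{\oplus7}$, to which adjoining the class $\frac{1}{2}(D_2-A_2-A_3-A_4-A_5-A_6)$ of the strict transform of $L$ yields an even overlattice isomorphic to $U\oplus\mathbf{A}_1^{\oplus6}$, and a genericity argument (or a van Luijk-type specialisation) gives equality.
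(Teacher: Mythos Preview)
Your proof is correct and follows essentially the same approach as the paper: both use the linear system $|D_2|$ with $D_2=2A_1+A_2+\cdots+A_6$ to realise $X$ as a double plane, then read off the branch curve from the configuration of contracted curves and the linear equivalences $D_2\equiv A_j+A_{j+6}+A_{19}\equiv A_j+A_{j+12}+A_{13}$. You supply more detail than the paper (a Saint-Donat check for base-point freeness and a converse genericity argument), with one small slip: the basis of $\NS X)\simeq U\oplus\mathbf{A}_1^{\oplus6}$ is $A_1,\dots,A_8$ (rank $8$), not $A_1,\dots,A_6$, though your $2$-divisibility argument goes through unchanged in the correct basis.
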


From that description, the moduli space of K3 surfaces $X$ with $\NS X)\simeq U\oplus\mathbf{A}_{1}^{\oplus6}$
is unirational. 

\subsection{The lattice $\boldsymbol{U(2)\oplus\mathbf{A}_{1}^{\oplus6}}$ and degree 2 del Pezzo surfaces}

Let $C_{6}$ be a sextic curve in $\PP^{2}$ with nodes through
seven points $p_{1},\dots,p_{7}$ in general position. Let $Z\to\PP^{2}$
the  blow-up of the nodes; it is a degree $2$ del Pezzo surface, and
it contains $56$ $(-1)$-curves:
\begin{itemize}
\item the $7$ exceptional divisors $E_{i}$, $i=1,\dots,7$,
\item the strict transforms $L_{ij}$ of the $21$ lines through $p_{i}$, $p_{j}$
($i\neq j$),
\item the strict transforms $Q_{rs}$ of the $21$ conics that go through
points in $\{p_{1},\dots,p_{7}\}\setminus\{p_{r},p_{s}\}$,
\item the strict transforms $CU_{j}$ of the $7$ cubics that go through
the $7$ points $p_{k}$, with a double point at one of these points
$p_{j}$. 
\end{itemize}
The N\'eron--Severi lattice of $Z$ is generated by the pull-back $L'$
of a line and $E_{1},\dots,E_{7}$; it is the unimodular rank $7$
lattice 
\[
I_{1}\oplus I_{-1}^{\oplus7}.
\]
The anti-canonical divisor of the del Pezzo surface $Z$ of degree
$2$ is given by 
\[
-K_{Z}=3L'-\left(E_{1}+\dots+E_{7}\right); 
\]
this is an ample divisor. The linear system $|-K_{Z}|$ is base-point
free (see \cite[Section~3, Theorem 1]{DemazureIV}), and we remark
that each of the  $28$ divisors 
\[
E_{i}+CU_{i},\;L_{ij}+Q_{ij},\quad i,j\in\{1,\dots,7\},\; i\neq j
\]
belongs to the system $|-K_{Z}|$. 

Let $Y\to Z$ be the double cover branched over the strict transform
$C'$ of $C_{6}$ in $Z$. Since $C'\equiv-2K_{Z}$, the surface $Y$
is a smooth K3 surface. Since for any $(-1)$-curve $B$, $C'B=-2K_{Z}B=2$,
the pull-backs on $Y$ of the curves $E_{i}$, $L_{ij}$, $Q_{rs}$, $CU_{j}$ are
$(-2)$-curves. We denote these curves by $A_{i}$, $B_{ij}$, $C_{rs}$, $D_{j}$, 
respectively. Let $L$ be the pull-back in $Y$ of $L'$. The lattice
generated by $L$, $A_{1},\dots,A_{7}$ is (isometric to) $U(2)\oplus\mathbf{A}_{1}^{\oplus6}=f^{*}\NS Z)=\left(I_{1}\oplus I_{-1}^{\oplus7}\right)(2)$.
Since $f\colon Y\to Z$ is finite and the linear system $|-K_{Z}|$ is ample
and base-point free, its pull-back $D_{4}=f^{*}(-K_{Z})$ is ample
and base-point free, with $D_{4}^{2}=4$. The linear system $|D_{4}|$
is $3$-dimensional, and the invariant part is the pencil $f^{*}|-K_{Z}|$. We thus have the following. 

\begin{prop}
\label{thm:Degree2delPezzo}The image of $Y$ under the map $\pi\colon Y\to\PP^{3}$
induced from $|D_{4}|$ is a smooth quartic surface. There exist $28$
hyperplane sections of $\,Y$ which are each the union of $\,2$ conics.
\end{prop}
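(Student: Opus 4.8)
The plan is to feed the divisor $D_{4}=f^{*}(-K_{Z})$, which by the discussion above is ample, base point free and satisfies $D_{4}^{2}=\deg(f)\cdot(-K_{Z})^{2}=4$, into the theorems of Saint Donat. By Theorem~\ref{thm:SaintDonat-1}(b) one has $h^{0}(D_{4})=2+\frac{1}{2}D_{4}^{2}=4$, so $\varphi_{D_{4}}$ is a morphism $Y\to\PP^{3}$; since $D_{4}$ is ample there is no $(-2)$-curve $\Gamma$ with $D_{4}\Gamma=0$, so by Theorem~\ref{thm:SaintDonat-1}(c) the morphism $\varphi_{D_{4}}$ is either $2$-to-$1$ onto a quadric surface (the hyperelliptic case) or it maps $Y$ birationally onto a smooth surface; in the latter case, $Y$ being a minimal surface, $\varphi_{D_{4}}$ is a closed embedding and its image $S\subset\PP^{3}$ is a smooth surface of degree $D_{4}^{2}=4$, i.e. a smooth quartic. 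So the first assertion reduces to showing that $|D_{4}|$ is not hyperelliptic.

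To that end I would invoke Theorem~\ref{thm:SaintDonat-2}: since $D_{4}^{2}=4$ is neither $2$ nor $8$, a base point free hyperelliptic $|D_{4}|$ would force the existence of a fiber $F$ of an elliptic fibration of $Y$ with $FD_{4}=2$. Now for the surfaces under consideration we have $\NS Y)=f^{*}\NS Z)$: the sublattice $f^{*}\NS Z)=(I_{1}\oplus I_{-1}^{\oplus7})(2)$ already has rank $8=\rho(Y)$ and is isometric to $\NS Y)\simeq U(2)\oplus\mathbf{A}_{1}^{\oplus6}$, hence equals $\NS Y)$ (a nondegenerate lattice cannot contain a finite-index isometric copy of itself properly, by comparing discriminants). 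Therefore $F=f^{*}G$ for some $G\in\NS Z)$ with $0=F^{2}=2G^{2}$ and $2=FD_{4}=2\,(G\cdot(-K_{Z}))$, that is $G^{2}=0$ and $G\cdot(-K_{Z})=1$. Writing $G=\sum m_{i}C_{i}$ as an effective sum of distinct irreducible curves and using that $-K_{Z}$ is ample (so each $C_{i}\cdot(-K_{Z})\ge1$), the equality $\sum m_{i}\,(C_{i}\cdot(-K_{Z}))=1$ forces $G$ to be a single irreducible curve with $G\cdot(-K_{Z})=1$; adjunction on the del Pezzo surface $Z$ then gives $G^{2}=2p_{a}(G)+1$, an odd number, contradicting $G^{2}=0$. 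Hence no such $F$ exists, $|D_{4}|$ is not hyperelliptic, and $\varphi_{D_{4}}$ embeds $Y$ as a smooth quartic $S\subset\PP^{3}$.

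For the $28$ hyperplane sections I would argue on $Z$ first. If $C$ is one of the $56$ lines on $Z$, then $-K_{Z}-C$ has self-intersection $-1$ and meets $-K_{Z}$ in $1$, so it is again one of the $56$ lines; thus these lines split into $28$ pairs $\{C,\,-K_{Z}-C\}$, giving $28$ reducible members $C+(-K_{Z}-C)$ of $|-K_{Z}|$ --- concretely the $7$ divisors $E_{i}+CU_{i}$ and the $21$ divisors $L_{ij}+Q_{ij}$ listed above. Pulling back along $f$ produces $28$ divisors $f^{*}(C)+f^{*}(-K_{Z}-C)$ in $f^{*}|-K_{Z}|\subset|D_{4}|$, hence $28$ hyperplane sections of $S$. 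Each summand is a $(-2)$-curve on $Y$ meeting $D_{4}$ in $2\,(C\cdot(-K_{Z}))=2$, so under the embedding $\varphi_{D_{4}}$ it is carried isomorphically onto a smooth plane conic in $S$; therefore each of these $28$ hyperplane sections is the union of two conics, which proves the second assertion.

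The main obstacle is the exclusion of the hyperelliptic case in the second paragraph; the reduction to it and the conic count are formal. The point that deserves care is the identification $\NS Y)=f^{*}\NS Z)$, equivalently that the member of the family has Picard number exactly $8$ --- this is what underlies the absence of an elliptic fibration with $FD_{4}=2$. One can instead see the birationality of $\varphi_{D_{4}}$ geometrically: writing $\varphi_{D_{4}}=(\varphi_{-K_{Z}}\circ f,\,w)$ with $w$ the $\iota$-anti-invariant section of $D_{4}$ cutting out the ramification of $f$, two points of $Y$ with the same image have the same image under the degree-$2$ map $\varphi_{-K_{Z}}\circ f\colon Y\to\PP^{2}$; chasing the two possibilities (the same point of $Z$, or Geiser-conjugate points of $Z$) shows the points coincide unless the branch curve is invariant under the Geiser involution of $Z$, which would make that involution lift to $Y$ and raise the Picard number above $8$.
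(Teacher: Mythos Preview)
Your argument is correct and considerably more detailed than what the paper offers: the paper gives no formal proof, only the sentence preceding the proposition noting that $|D_4|$ is $3$-dimensional with $\iota$-invariant part $f^*|-K_Z|$ (which, incidentally, is a net rather than a pencil, since $h^0(-K_Z)=3$ on a degree~$2$ del Pezzo). That hint points toward the geometric separation-of-points argument you sketch at the end, but the paper does not carry it through. Your main route --- Saint Donat's hyperelliptic criterion followed by descent of a putative elliptic fiber class to $\NS Z)$ --- is cleaner and self-contained; the $28$ hyperplane sections then drop out exactly as the paper intends from the $28$ reducible members $E_i+CU_i$, $L_{ij}+Q_{ij}$ of $|-K_Z|$ listed just before the proposition.

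Two small points in your descent step. The adjunction computation should read $G^2=2p_a(G)-1$ rather than $+1$; the parity conclusion is unaffected. More substantively, you write $G=\sum m_iC_i$ as an effective sum without first justifying that $G$ is effective. You can fill this (push forward $F$ to see that $2G$ is an effective class, then argue on the del Pezzo), but it is tidier to bypass effectivity altogether: on any smooth projective surface Riemann--Roch forces $G^2\equiv G\cdot K_Z\pmod 2$ for every class $G\in\NS Z)$, so $G^2=0$ together with $G\cdot(-K_Z)=1$ is already impossible at the level of the lattice, and no fiber $F$ with $FD_4=2$ can exist. (A slip in your closing alternative: $\varphi_{-K_Z}\circ f$ has degree~$4$, not~$2$, since $\varphi_{-K_Z}$ is itself the Geiser double cover; the case analysis you outline must accordingly treat four preimages, not two.)
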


\begin{rem}
The moduli space of K3 surfaces of type $U(2)\oplus\mathbf{A}_{1}^{\oplus6}$
is $12$-dimensional. In \cite{Kondo3}, Kondo studies the $6$-dimensional
moduli space of curves of genus $3$ via the periods of the K3 surfaces
\mbox{$X=\{t^{4}-f_{4}(x,y,z)=0\}$} which are quadruple covers of the plane
branched over smooth quartics curves $C=\{f_{4}(x,y,z)=0\}$ (thus
of genus $3$). The double cover of $\PP^{2}$ branched over $C$
is a degree $2$ del Pezzo surface. For such K3 surfaces, the pull-backs
of the $28$ bitangents of the curve $C$ give the $28$ hyperplanes
sections of Theorem \ref{thm:Degree2delPezzo}.
\end{rem}

\begin{prop}
The moduli space of $U(2)\oplus\mathbf{A}_{1}^{\oplus6}$-polarized
K3 surfaces is unirational.
\end{prop}

\begin{proof}
Seven points $p_{1},\dots,p_{7}$ in general position in $\PP^{2}$
form an open subscheme of $(\PP^{2})^{7}$. The space of sextics with 
 the points $p_{1},\dots,p_{7}$ as seven nodes is a $14$-dimensional
linear subspace of the space of sextics. Therefore, the moduli space
of K3 surfaces that are double covers of $\PP^{2}$ branched over a
$7$-nodal sextic is unirational.
\end{proof}

\subsection{The lattice $\boldsymbol{U\oplus\mathbf{A}_{2}^{\oplus3}}$}

The K3 surface contains $10$ $\cu$-curves $A_{1},\dots,A_{10}$
with dual graph

\begin{center}
\begin{tikzpicture}[scale=1]

\draw (0,0) -- (0.5,0.86);
\draw (0,0) -- (-1,0);
\draw (0,0) -- (0.5,-0.86);
\draw (-1,0) -- (-1.86,-0.5);
\draw (-1,0) -- (-1.86,0.5);
\draw (-1.86,0.5) -- (-1.86,-0.5);
\draw (0.5,0.86) -- (0.5+0.86,0.5+0.86);
\draw (0.5,0.86) -- (-0.5+0.86,2*0.86);
\draw (0.5+0.86,0.5+0.86) -- (-0.5+0.86,2*0.86);

\draw (0.5,-0.86) -- (0.5+0.86,-0.86-0.5);
\draw (0.5,-0.86) -- (0.5,-0.86*2);
\draw (0.5+0.86,-0.86-0.5) -- (0.5,-0.86*2);

\draw (0,0) node {$\bullet$};
\draw (-1,0) node {$\bullet$};
\draw (0.5,0.86) node {$\bullet$};
\draw (0.5,-0.86) node {$\bullet$};
\draw (0.5+0.86,0.5+0.86) node {$\bullet$};
\draw (-0.5+0.86,2*0.86) node {$\bullet$};
\draw (0.5+0.86,-0.86-0.5) node {$\bullet$};
\draw (0.5,-0.86*2) node {$\bullet$};
\draw (-1.86,0.5) node {$\bullet$};
\draw (-1.86,-0.5) node {$\bullet$};

\draw (0,0) node  [right]{$A_{1}$};
\draw (0.5,0.86) node [left]{$A_{2}$};
\draw (-1,0) node  [below]{$A_{3}$};
\draw (0.5,-0.86) node  [left]{$A_{4}$};
\draw (0.5+0.86,0.5+0.86) node [right]{$A_{5}$};
\draw (-0.5+0.86,2*0.86) node [left]{$A_{6}$};
\draw (0.5+0.86,-0.86-0.5) node [right]{$A_{10}$};
\draw (0.5,-0.86*2) node [below]{$A_{8}$};
\draw (-1.86,0.5) node [above]{$A_{7}$};
\draw (-1.86,-0.5) node [below]{$A_{9}$};

\end{tikzpicture}
\end{center} 

The first eight curves generate the N\'eron--Severi lattice. The divisor
\[
D_{18}=3A_{1}+5A_{2}+A_{3}+A_{4}+4A_{5}+4A_{6}
\]
is ample, of square $18$, with $D_{18} \cdot A_{j}=1$ for $j\in\{1,\dots,10\}$.
The divisors 
\[
A_{2}+A_{5}+A_{6},\,A_{3}+A_{7}+A_{9},\,A_{4}+A_{8}+A_{10}
\]
 are fibers of an elliptic fibration  of the K3 surface such that
$A_{1}$ is a section. For $i\in\{5,6\}$, $j\in\{7,9\}$, $k\in\{8,10\}$, 
the divisor 
\[
F_{ijk}=3A_{1}+2A_{2}+2A_{3}+2A_{4}+A_{i}+A_{j}+A_{k}
\]
 is the unique reducible fiber of type $IV^{*}$ of an elliptic fibration
$\varphi_{ijk}$. The divisor 
\[
D_{2}=2A_{1}+2A_{2}+A_{3}+A_{4}+A_{5}+A_{6}
\]
is nef, base-point free, of square $2$; it contracts $A_{1}$, $A_{2}$, $A_{3}$, $A_{4}$, 
and the other $(-2)$-curves have degree $1$ for that divisor. We
also have 
\[
\begin{array}{l}
D_{2}\equiv2A_{1}+A_{2}+2A_{3}+A_{4}+A_{7}+A_{9},\\
D_{2}\equiv2A_{1}+A_{2}+A_{3}+2A_{4}+A_{8}+A_{10}.
\end{array}
\]
By using the linear system $|D_{2}|$, we obtain the following. 

\begin{prop}
The surface $X$ is the double cover of $\,\PP^{2}$ branched over a
sextic curve with a $\mathbf{d}_{4}$ singularity. The three tangents
to the three branches of the singularity are tangent to another point
of the sextic curve, so that the pull-back of a tangent splits into two
$(-2)$-curves. The $6=3\cdot 2$ $(-2)$-curves above the tangent lines 
 are $A_{5}+A_{6}$, $A_{7}+A_{9}$ and $A_{8}+A_{10}$. 
\end{prop}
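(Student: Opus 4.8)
The plan is to extract everything from the nef, base-point-free class $D_2=2A_1+2A_2+A_3+A_4+A_5+A_6$ of square $2$ and the linear equivalences recorded just before the statement, via the results of Saint Donat. First, since $D_2$ is big and nef ($D_2^2=2>0$), Theorem~\ref{thm:SaintDonat-1}~b) gives $h^0(D_2)=3$, and as $|D_2|$ is base point free with no fixed part, Theorem~\ref{thm:SaintDonat-2}~c) shows that $\varphi:=\varphi_{D_2}$ realizes $X$ as a double cover $\eta\colon X\to\PP^2$ branched over a reduced sextic curve $C_6$ with at worst $ADE$ singularities, exactly as in the set-up of the Preliminaries. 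Moreover $\varphi$ is given by the complete system $|D_2|$ and its image is a linear $\PP^2$, so $\varphi^*\mathcal{O}_{\PP^2}(1)=\mathcal{O}_X(D_2)$ and $\varphi^*\colon H^0(\PP^2,\mathcal{O}(1))\to H^0(X,\mathcal{O}(D_2))$ is an isomorphism; hence $|D_2|=\eta^*|\mathcal{O}_{\PP^2}(1)|$ and every member of $|D_2|$ is the scheme-theoretic preimage of a line.

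Second, I would identify the singularities of $C_6$. The $(-2)$-curves contracted by $\varphi$ are precisely those with $D_2$-degree $0$, namely $A_1,A_2,A_3,A_4$, and from the dual graph their configuration is the Dynkin diagram $\mathbf{D}_4$ with $A_1$ the trivalent vertex. A connected $ADE$ configuration of curves contracted by $\varphi$ arises from a single $ADE$ singularity of the branch curve, and these are the only contracted $(-2)$-curves, so $C_6$ has exactly one singular point $q$, of type $\mathbf{d}_4$, i.e. an ordinary triple point with three distinct tangent lines, resolved by $A_1,\dots,A_4$. This is the first assertion of the Proposition.

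Third, I would interpret the three relations $D_2\equiv 2A_1+2A_2+A_3+A_4+A_5+A_6\equiv 2A_1+A_2+2A_3+A_4+A_7+A_9\equiv 2A_1+A_2+A_3+2A_4+A_8+A_{10}$. By the first paragraph each of the three effective divisors appearing here is $\eta^{-1}(L_i)$ for a line $L_i$ ($i=1,2,3$). Since $A_5,A_6$ (resp. $A_7,A_9$; resp. $A_8,A_{10}$) occur with coefficient $1$ in $\eta^{-1}(L_1)$ (resp. $L_2$, $L_3$) and have $D_2$-degree $1$, each of them maps isomorphically onto the relevant line, so $\eta(A_5)=\eta(A_6)=L_1$, and likewise for the other two pairs; moreover the preimage of $L_i$ consists of two sections away from $q$, so $\eta$ is unramified over $L_i\smallsetminus\{q\}$. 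As $C_6$ is smooth outside $q$ and $L_i\cdot C_6=6$, this forces $L_i$ to meet $C_6$ with even multiplicity at each point $\neq q$, hence to be simply tangent to $C_6$ at exactly one further point $r_i$; comparing the coefficients of $A_2,A_3,A_4$ in the three relations, which single out the three legs of the $\mathbf{D}_4$ diagram and therefore the three resolution curves over the three tangent directions at $q$, shows that $L_1,L_2,L_3$ are precisely the three branch tangents, and that $\eta^*$ of (the strict transform of) $L_i$ on the blow-up of $\PP^2$ at $q$ is the sum of the two $(-2)$-curves of the corresponding pair. This gives the second assertion.

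The routine steps are the set-up in the first paragraph and the Dynkin-diagram bookkeeping in the second. The main obstacle is the last paragraph: one must run the local analysis at the ordinary triple point $q$ — pass to the single blow-up $Z_1=\mathrm{Bl}_q\PP^2$, check that $X$ is the double cover of $Z_1$ branched over $\overline{C_6}+E$ (with $E$ the exceptional curve), and verify that the two components of $\eta^{-1}(\overline{L_i})$ are $(-2)$-curves meeting the exceptional $\mathbf{D}_4$-curves exactly as in the dual graph — in order to pin down that each $L_i$ is tangent to one branch of $q$ and transverse to the other two, and to match $\{A_5,A_6\},\{A_7,A_9\},\{A_8,A_{10}\}$ with the three branches. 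This bookkeeping could be partly replaced by applying Lemma~\ref{lem:splitingsPullBackTritang} on $Z_1$ to the class $H-E$, but the tangency profile of $L_i$ at $q$ still requires the local computation.
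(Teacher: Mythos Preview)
Your argument is correct and follows the same route as the paper: use the nef, base-point-free, square-$2$ class $D_2$ to realize $X$ as a double plane via Theorem~\ref{thm:SaintDonat-2}~c), read the singularity type from the $\mathbf{D}_4$ configuration of contracted curves $A_1,\dots,A_4$, and interpret the three linear equivalences for $D_2$ as three lines through $q$ whose pull-backs split. The paper gives no separate proof environment; the proposition is stated immediately after the equivalences, with the phrase ``By using the linear system $|D_2|$, we obtain'', so what you have written is a fuller version of the intended argument. Your caution about the local analysis at $q$---matching the coefficient pattern $(2,2,1,1)$, $(2,1,2,1)$, $(2,1,1,2)$ on $A_1,\dots,A_4$ with the three tangent directions of the ordinary triple point---is well placed, but the paper leaves this implicit as well, so you are not missing anything the paper supplies.
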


One can also construct this surface as a double cover of $\mathbf{F}_{4}$
branched over the unique section $s$ with $s^2=-4$ and a curve $B\in|3s+12f|$ with three
cusps singularities. 

\subsection{The lattice $\boldsymbol{U\oplus\mathbf{A}_{3}^{\oplus2}}$}

The K3 surface contains nine $\cu$-curves; the dual graph is 

\begin{center}
\begin{tikzpicture}[scale=1]

\draw (-1,0) -- (1,0);
\draw (1,0) -- (1.71,0.71);
\draw (1,0) -- (1.71,-0.71);
\draw (1+1.42,0) -- (1.71,0.71);
\draw (1+1.42,0) -- (1.71,-0.71);
\draw (-1,0) -- (-1.71,0.71);
\draw (-1,0) -- (-1.71,-0.71);
\draw (-1-1.42,0) -- (-1.71,0.71);
\draw (-1-1.42,0) -- (-1.71,-0.71);


\draw (0,0) node {$\bullet$};
\draw (1,0) node {$\bullet$};
\draw (1.71,-0.71) node {$\bullet$};
\draw (1.71,0.71) node {$\bullet$};
\draw (1+1.42,0) node {$\bullet$};
\draw (-1,0) node {$\bullet$};
\draw (-1.71,-0.71) node {$\bullet$};
\draw (-1.71,0.71) node {$\bullet$};
\draw (-1-1.42,0) node {$\bullet$};

\draw (0,0) node [above]{$A_{5}$};
\draw (0.9,0) node [above]{$A_{6}$};
\draw (1.71,-0.71) node [below]{$A_{8}$};
\draw (1.71,0.71) node [above]{$A_{7}$};
\draw (1+1.42,0) node [right]{$A_{9}$};
\draw (-0.9,0) node [above]{$A_{4}$};
\draw (-1.71,-0.71) node [below]{$A_{3}$};
\draw (-1.71,0.71) node [above]{$A_{2}$};
\draw (-1-1.42,0) node [left]{$A_{1}$};

\end{tikzpicture}
\end{center} 

The eight $(-2)$-curves $A_{1},\dots,A_{8}$ generate the N\'eron--Severi
lattice. The divisor 
\[
D_{40}=(5,6,6,8,5,3,1,1)
\]
in the basis $A_{1},\dots,A_{8}$ is ample, of square $40$, with  $D_{40} \cdot A_{1}=D_{40} \cdot A_{9}=2$
and $D_{40} \cdot A_{j}=1$ for $j\in\{2,\dots,8\}$. The divisors 
\[
F_{1}=A_{1}+A_{2}+A_{3}+A_{4},\quad F_{2}=A_{6}+A_{7}+A_{8}+A_{9}
\]
 are reducible fibers of an elliptic fibration, where $A_{5}$ is
a section. By Theorem \ref{thm:SaintDonat-2}, case i) a), we have the following. 

\begin{prop}
The linear system $|4F_{1}+2A_{5}|$ defines a morphism $\varphi\colon X\to\mathbf{F}_{4}$
branched over the unique section $s$ with $s^2=-4$ and a curve $B\in|3s+12f|$. The curve
$B$ has two $\mathbf{a_{3}}$ singularities $p$, $q$. The pull-backs 
of the fibers through $p$, $q$ are the fibers $F_{1}$, $F_{2}$.
\end{prop}

\subsection{The lattice $\boldsymbol{U\oplus\mathbf{A}_{2}\oplus\mathbf{A}_{4}}$}

The surface $X$ contains nine $(-2)$-curves, with dual graph
\begin{center}
\begin{tikzpicture}[scale=1]

\draw (0.3,-0.95) -- (1,0);
\draw (1,0) -- (0.3,0.95);
\draw (0.3,0.95) -- (-0.8,0.58);
\draw (-0.8,0.58) -- (-0.8,-0.58);
\draw (-0.8,-0.58) -- (0.3,-0.95);
\draw (3,0) -- (1,0);
\draw (3+0.86,0.5) -- (3,0);
\draw (3+0.86,-0.5) -- (3,0);
\draw (3+0.86,-0.5) -- (3+0.86,0.5);


\draw (1,0) node {$\bullet$};
\draw (0.3,-0.95) node {$\bullet$};
\draw (0.3,0.95) node {$\bullet$};
\draw (-0.8,-0.58) node {$\bullet$};
\draw (-0.8,0.58) node {$\bullet$};
\draw (2,0) node {$\bullet$};
\draw (3,0) node {$\bullet$};
\draw (3+0.86,-0.5) node {$\bullet$};
\draw (3+0.86,0.5) node {$\bullet$};

\draw (1.1,0) node [above]{$A_{5}$};
\draw (0.3,0.95) node [above]{$A_{1}$};
\draw (-0.8,0.58) node [left]{$A_{2}$};
\draw (-0.8,-0.58) node [left]{$A_{3}$};
\draw (0.3,-0.95) node [right]{$A_{4}$};
\draw (2,0) node [above]{$A_{6}$};
\draw (2.85,0) node [above]{$A_{7}$};
\draw (3+0.86,0.5) node [above]{$A_{8}$};
\draw (3+0.86,-0.5) node [below]{$A_{9}$};

\end{tikzpicture}
\end{center} 

The curves $A_{1},\dots,A_{8}$ generate  the N\'eron--Severi lattice.
The divisor 
\[
D_{42}=(7,6,6,7,9,5,2,0)
\]
 in the basis $A_{1},\dots,A_{8}$ is ample, of square $42$, with $D_{42} \cdot A_{j}=1$
for $j\in\{1,\dots,7\}$ and $D_{42} \cdot A_{8}=D_{42} \cdot A_{9}=2$. The divisors
\[
F_{1}=A_{1}+\dots+A_{5},\quad F_{2}=A_{7}+A_{8}+A_{9}
\]
are reducible fibers of an elliptic fibration  for which $A_{6}$
is a section. By Theorem \ref{thm:SaintDonat-2}, case i) a), we have the following. 

\begin{prop}
The linear system $|4F_{1}+2A_{6}|$ defines a morphism $\varphi\colon X\to\mathbf{F}_{4}$
branched over the unique section $s$ with $s^2=-4$ and a curve $B\in|3s+12f|$. The curve
$B$ has one $\mathbf{a_{4}}$ singularity $p$ and a cusp $q$. The
pull-backs of the fibers through \mbox{$p$, $q$} are the fibers $F_{1}$, $F_{2}$.
\end{prop}

\subsection{The lattice $\boldsymbol{U\oplus\mathbf{A}_{1}\oplus\mathbf{A}_{5}}$}

The surface $X$ contains nine $(-2)$-curves, with dual graph

\begin{center}
\begin{tikzpicture}[scale=1]

\draw (1,0) -- (0.5,0.86);
\draw (0.5,0.86) -- (-0.5,0.86);
\draw (-1,0) -- (-0.5,0.86);
\draw (-1,0) -- (-0.5,-0.86);
\draw (-0.5,-0.86) -- (0.5,-0.86);
\draw (1,0) -- (0.5,-0.86);
\draw (-1,0) -- (-2,0);
\draw (-2,0) -- (-4,0);
\draw [very thick] (-3,0) -- (-4,0);

\draw (1,0) node {$\bullet$};
\draw (0.5,0.86) node {$\bullet$};
\draw (-1,0) node {$\bullet$};
\draw (-0.5,0.86) node {$\bullet$};
\draw (-0.5,-0.86) node {$\bullet$};
\draw (0.5,-0.86)  node {$\bullet$};
\draw (-2,0) node {$\bullet$};
\draw (-3,0) node {$\bullet$};
\draw (-4,0) node {$\bullet$};

\draw (1,0) node  [right]{$A_{7}$};
\draw (0.5,0.86) node [above]{$A_{6}$};
\draw (-1,0) node  [right]{$A_{4}$};
\draw (-0.5,0.86) node  [above]{$A_{5}$};
\draw (-0.5,-0.86) node  [below]{$A_{9}$};
\draw (0.5,-0.86) node   [below]{$A_{8}$};
\draw (-2,0) node  [below]{$A_{3}$};
\draw (-3,0) node [below]{$A_{2}$};
\draw (-4,0) node [below]{$A_{1}$};

\end{tikzpicture}
\end{center} 

The curves $A_{1},\dots,A_{8}$ generate  the N\'eron--Severi lattice.
The divisor 
\[
D_{76}=(10,12,7,3,0,-2,-3,-2)
\]
 in the basis $A_{1},\dots,A_{8}$ is ample, of square $76$. The divisors
\[
F_{1}=A_{1}+A_{2},\quad F_{2}=A_{4}+A_{5}+A_{6}+A_{7}+A_{8}+A_{9}
\]
 are the reducible fibers of an elliptic fibration  such that $A_{3}$
is a section. By Theorem \ref{thm:SaintDonat-2}, case i) a), we have the following. 

\begin{prop}
The linear system $|4F_{1}+2A_{3}|$ defines a morphism $\varphi\colon X\to\mathbf{F}_{4}$
branched over the unique section $s$ with $s^2=-4$ and a curve $B\in|3s+12f|$. The curve
$B$ has a node $p$ and one $\mathbf{a_{5}}$ singularity $q$. The
pull-back of the fibers through \mbox{$p$, $q$} are the fibers $F_{1}$, $F_{2}$.
\end{prop}

\subsection{The lattice $\boldsymbol{U\oplus\mathbf{A}_{6}}$}

The surface $X$ contains eight $(-2)$-curves $A_{1},\dots,A_{8}$,
with dual graph

\begin{center}
\begin{tikzpicture}[scale=1]

\draw (-1.15,0) -- (-2.3,0);

\draw (-1.15,0) -- (-0.718,-0.9);
\draw (-0.718,-0.9) -- (0.256,-1.123);
\draw (0.256,-1.123) -- (1.038,-0.5);
\draw (1.038,-0.5) -- (1.038,0.5);
\draw (1.038,0.5) -- (0.256,1.123);
\draw (0.256,1.123) -- (-0.718,0.9);
\draw (-0.718,0.9) -- (-1.15,0);

\draw (-1.15,0) node {$\bullet$};
\draw (-2.3,0) node {$\bullet$};
\draw (-0.718,-0.9) node {$\bullet$};
\draw (0.256,-1.123) node {$\bullet$};
\draw (1.038,-0.5) node {$\bullet$};
\draw (1.038,0.5) node {$\bullet$};
\draw (0.256,1.123) node {$\bullet$};
\draw (-0.718,0.9) node {$\bullet$};

\draw (-2.3,0) node  [left]{$A_{1}$};
\draw (-1.15,0) node  [right]{$A_{2}$};
\draw (-0.718,-0.9) node [below]{$A_{8}$};
\draw (0.256,-1.123) node [below]{$A_{7}$};
\draw (1.038,-0.5) node [right]{$A_{6}$};
\draw (1.038,0.5) node [right]{$A_{5}$};
\draw (0.256,1.123) node [above]{$A_{4}$};
\draw (-0.718,0.9) node [above]{$A_{3}$};

\end{tikzpicture}
\end{center} 

These eight curves generate  the N\'eron--Severi lattice; the divisor
\[
D_{84}=(7,15,12,10,9,9,10,12)
\]
in the basis $A_{1},\dots,A_{8}$ has square $84$, and the curves $A_{j}$
have degree $1$ for $D_{84}$. The divisor 
\[
F=A_{2}+\dots+A_{8}
\]
 is a fiber of an elliptic fibration  for which $A_{1}$ is a section.
By Theorem \ref{thm:SaintDonat-2}, case i) a), we have the following. 

\begin{prop}
The linear system $|4F+2A_{1}|$ defines a morphism $\varphi\colon X\to\mathbf{F}_{4}$
branched over the unique section $s$ with $s^2=-4$ and a curve $B\in|3s+12f|$. The curve
$B$ has an $\mathbf{a_{6}}$ singularity $q$. The pull-back of the
fiber through $q$ is the fiber $F$.
\end{prop}

\begin{rem}
The divisor 
\[
D_{2}=(2,4,3,2,1,1,2,3)
\]
in the basis $A_{1},\dots,A_{8}$ is base-point free of square $2$. It
defines a double cover branched over a sextic with an $\mathbf{e}_{6}$
singularity.
\end{rem}

\subsection{The lattice $\boldsymbol{U\oplus\mathbf{A}_{2}\oplus\mathbf{D}_{4}}$}

The surface $X$ contains nine $(-2)$-curves, with dual graph
\begin{center}
\begin{tikzpicture}[scale=1]

\draw (3,0) -- (-1,0);
\draw (0,1) -- (0,-1);
\draw (3,0) -- (3+0.86,-0.5);
\draw (3+0.86,-0.5) -- (3+0.86,0.5);
\draw (3,0) -- (3+0.86,0.5);


\draw (-1,0) node {$\bullet$};
\draw (0,0) node {$\bullet$};
\draw (0,1) node {$\bullet$};
\draw (0,-1) node {$\bullet$};
\draw (1,0) node {$\bullet$};
\draw (2,0) node {$\bullet$};
\draw (3,0) node {$\bullet$};
\draw (3+0.86,-0.5) node {$\bullet$};
\draw (3+0.86,0.5) node {$\bullet$};

\draw (-1,0) node [above]{$A_{2}$};
\draw (-0.25,0) node [above]{$A_{4}$};
\draw (0,1) node [above]{$A_{1}$};
\draw (0,-1) node [left]{$A_{3}$};
\draw (1,0) node [above]{$A_{5}$};
\draw (2,0) node [above]{$A_{6}$};
\draw (3,0) node [above]{$A_{7}$};
\draw (3+0.86,-0.5) node [below]{$A_{9}$};
\draw (3+0.86,0.5) node [above]{$A_{8}$};

\end{tikzpicture}
\end{center} 

The curves $A_{1},\dots,A_{8}$ generate  the N\'eron--Severi lattice.
In that basis, the divisor
\[
D_{56}=(7,7,7,15,10,6,3,1)
\]
is ample of square $56$, with $D_{56} \cdot A_{j}=1$ for $j\in\{1,\dots,8\}$
and $D_{56} \cdot A_{9}=4$. The divisors 
\[
F_{1}=A_{7}+A_{8}+A_{9},\quad F_{2}=A_{1}+A_{2}+A_{3}+A_{5}+2A_{4}
\]
 are fibers of an elliptic fibration  for which $A_{6}$ is a section.
By Theorem \ref{thm:SaintDonat-2}, case i) a), we have the following. 

\begin{prop}
The linear system $|4F_{1}+2A_{6}|$ defines a morphism $\varphi\colon X\to\mathbf{F}_{4}$
branched over the unique section $s$ with $s^2=-4$ and a curve $B\in|3s+12f|$. The curve
$B$ has a cusp $p$ and one $\mathbf{d_{4}}$ singularity $q$. The
pull-backs of the fibers through \mbox{$p$, $q$} are the fibers $F_{1}$, $F_{2}$.
\end{prop}

\subsection{The lattice $\boldsymbol{U\oplus\mathbf{A}_{1}\oplus\mathbf{D}_{5}}$}

The surface $X$ contains nine $(-2)$-curves, with dual graph
\begin{center}
\begin{tikzpicture}[scale=1]

\draw (5,0) -- (0,0);
\draw (5,0) -- (5+0.86,-0.5);
\draw (5,0) -- (5+0.86,0.5);
\draw (4,0) -- (4,1);
\draw [very thick] (0,0) -- (1,0);

\draw (0,0) node {$\bullet$};
\draw (1,0) node {$\bullet$};
\draw (2,0) node {$\bullet$};
\draw (3,0) node {$\bullet$};
\draw (4,0) node {$\bullet$};
\draw (5,0) node {$\bullet$};
\draw (4,1) node {$\bullet$};
\draw (5+0.86,-0.5) node {$\bullet$};
\draw (5+0.86,0.5) node {$\bullet$};

\draw (0,0) node [below]{$A_{1}$};
\draw (1,0) node [below]{$A_{2}$};
\draw (2,0) node [below]{$A_{3}$};
\draw (3,0) node [below]{$A_{4}$};
\draw (4,0) node [below]{$A_{5}$};
\draw (5,0) node [below]{$A_{7}$};
\draw (4,1) node [left]{$A_{6}$};
\draw (5+0.86,-0.5) node [right]{$A_{9}$};
\draw (5+0.86,0.5) node [right]{$A_{8}$};

\end{tikzpicture}
\end{center} 

The curves $A_{1},\dots,A_{8}$ generate  the N\'eron--Severi lattice.
In that basis, the divisor
\[
D_{96}=(9,12,8,5,3,1,1,0)
\]
is ample of square $96$. We have $D_{96} \cdot A_{1}=6$, $D_{96} \cdot A_{2}=2$
and $D_{96} \cdot A_{j}=1$ for $j\geq3$. The divisors 
\[
F_{1}=A_{1}+A_{2},\quad F_{2}=A_{4}+A_{6}+2A_{5}+2A_{7}+A_{8}+A_{9}
\]
are fibers of an elliptic fibration  for which $A_{3}$ is a section.
By Theorem \ref{thm:SaintDonat-2}, case i) a), we have the following. 

\begin{prop}
The linear system $|4F_{1}+2A_{3}|$ defines a morphism $\varphi\colon X\to\mathbf{F}_{4}$
branched over the unique section $s$ with $s^2=-4$ and a curve $B\in|3s+12f|$. The curve
$B$ has a node $p$ and one $\mathbf{d_{5}}$ singularity $q$. The
pull-backs of the fibers through \mbox{$p$, $q$} are the fibers $F_{1}$, $F_{2}$.
\end{prop}

\subsection{The lattice $\boldsymbol{U\oplus\mathbf{E}_{6}}$}

The surface $X$ contains eight $(-2)$-curves, with dual graph
\begin{center}
\begin{tikzpicture}[scale=1]

\draw (3,0) -- (0,0);
\draw (3,0) -- (3+0.86,-0.5);
\draw (3,0) -- (3+0.86,0.5);
\draw (3+0.86,0.5) -- (4+0.86,0.5);
\draw (3+0.86,-0.5) -- (4+0.86,-0.5);


\draw (0,0) node {$\bullet$};
\draw (1,0) node {$\bullet$};
\draw (2,0) node {$\bullet$};
\draw (3,0) node {$\bullet$};
\draw (3+0.86,0.5) node {$\bullet$};
\draw (4+0.86,0.5) node {$\bullet$};
\draw (3+0.86,-0.5) node {$\bullet$};
\draw (4+0.86,-0.5) node {$\bullet$};

\draw (0,0) node [above]{$A_{1}$};
\draw (1,0) node [above]{$A_{2}$};
\draw (2,0) node [above]{$A_{3}$};
\draw (3,0) node [above]{$A_{4}$};
\draw (3+0.86,0.5) node [above]{$A_{5}$};
\draw (3+0.86,-0.5) node [below]{$A_{6}$};
\draw (4+0.86,0.5) node [above]{$A_{7}$};
\draw (4+0.86,-0.5) node [below]{$A_{8}$};

\end{tikzpicture}
\end{center} 

These eight curves $A_{1},\dots,A_{8}$ generate  the N\'eron--Severi
lattice. In that basis, the divisor
\[
D_{234}=(12,25,39,54,35,35,17,17)
\]
 is ample, of square $234$, with  $D_{234} \cdot A_{j}=1$ for $j\in\{1,\dots,8\}$.
The divisor 
\[
F=A_{2}+2A_{3}+3A_{4}+2A_{5}+2A_{6}+A_{7}+A_{8}
\]
is a fiber of an elliptic fibration  for which $A_{1}$ is a section.
By Theorem \ref{thm:SaintDonat-2}, case i) a), we have the following. 

\begin{prop}
The linear system $|4F+2A_{1}|$ defines a morphism $\varphi\colon X\to\mathbf{F}_{4}$
branched over the unique section $s$ with $s^2=-4$ and a curve $B\in|3s+12f|$. The curve
$B$ has an $\mathbf{e_{6}}$ singularity $q$. The pull-back of the
fiber through $q$ is the fiber $F$.
\end{prop}

\section{Rank 9 lattices }

\subsection{The lattice $\boldsymbol{U\oplus\mathbf{E}_{7}}$}

The surface $X$ contains nine $(-2)$-curves, with dual graph
\begin{center}
\begin{tikzpicture}[scale=1]

\draw (7,0) -- (0,0);
\draw (4,0) -- (4,-1);


\draw (0,0) node {$\bullet$};
\draw (1,0) node {$\bullet$};
\draw (2,0) node {$\bullet$};
\draw (3,0) node {$\bullet$};
\draw (4,0) node {$\bullet$};
\draw (5,0) node {$\bullet$};
\draw (6,0) node {$\bullet$};
\draw (7,0) node {$\bullet$};
\draw (4,-1) node {$\bullet$};

\draw (0,0) node [above]{$A_{1}$};
\draw (1,0) node [above]{$A_{2}$};
\draw (2,0) node [above]{$A_{3}$};
\draw (3,0) node [above]{$A_{4}$};
\draw (4,0) node [above]{$A_{5}$};
\draw (5,0) node [above]{$A_{7}$};
\draw (6,0) node [above]{$A_{8}$};
\draw (7,0) node [above]{$A_{9}$};
\draw (4,-1) node [left]{$A_{6}$};

\end{tikzpicture}
\end{center} 

These nine curves $A_{1},\dots,A_{9}$ generate  the N\'eron--Severi
lattice. In that basis, the divisor
\[
D_{532}=(19,39,60,82,105,52,77,50,24)
\]
 is ample, of square $532$, with $D_{532} \cdot A_{j}=1$ for $j\leq8$ and $D_{532} \cdot A_{9}=2$.
The divisor 
\[
F=A_{2}+2A_{3}+3A_{4}+4A_{5}+2A_{6}+3A_{7}+2A_{8}+A_{9}
\]
is a fiber of an elliptic fibration, for which $A_{1}$ is a section.
 By Theorem \ref{thm:SaintDonat-2}, case i) a), we have the following. 

 \begin{prop}
The linear system $|4F+2A_{1}|$ defines a morphism $\varphi\colon X\to\mathbf{F}_{4}$
branched over the unique section $s$ with $s^2=-4$ and a curve $B\in|3s+12f|$. The curve
$B$ has an $\mathbf{e_{7}}$ singularity $q$. The pull-back of the
fiber through $q$ is the fiber $F$.
\end{prop}

\subsection{The lattice $\boldsymbol{U\oplus\mathbf{D}_{6}\oplus\mathbf{A}_{1}}$}

The surface $X$ contains $10$ $(-2)$-curves, with dual graph
\begin{center}
\begin{tikzpicture}[scale=1]

\draw (6,0) -- (0,0);
\draw (4,0) -- (4,-1);
\draw (6,0) -- (6+0.86,0.5);
\draw (6,0) -- (6+0.86,-0.5);

\draw [very thick] (0,0) -- (1,0);

\draw (0,0) node {$\bullet$};
\draw (1,0) node {$\bullet$};
\draw (2,0) node {$\bullet$};
\draw (3,0) node {$\bullet$};
\draw (4,0) node {$\bullet$};
\draw (5,0) node {$\bullet$};
\draw (6,0) node {$\bullet$};
\draw (6+0.86,0.5) node {$\bullet$};
\draw (6+0.86,-0.5) node {$\bullet$};
\draw (4,-1) node {$\bullet$};

\draw (0,0) node [above]{$A_{1}$};
\draw (1,0) node [above]{$A_{2}$};
\draw (2,0) node [above]{$A_{3}$};
\draw (3,0) node [above]{$A_{4}$};
\draw (4,0) node [above]{$A_{5}$};
\draw (5,0) node [above]{$A_{7}$};
\draw (6,0) node [above]{$A_{8}$};
\draw (6+0.86,0.5) node [right]{$A_{9}$};
\draw (6+0.86,-0.5) node [right]{$A_{10}$};

\draw (4,-1) node [right]{$A_{6}$};

\end{tikzpicture}
\end{center} 

The nine curves $A_{1},\dots,A_{9}$ generate  the N\'eron--Severi lattice.
In that basis, the divisor
\[
D_{148}=(10,15,11,8,6,2,3,1,0)
\]
is ample, of square $148$, with $D_{148} \cdot A_{1}=10$, $D_{148} \cdot A_{6}=2$
and $D_{148} \cdot A_{j}=1$ for $j\neq1,6$. The divisors
\[
F_{1}=A_{1}+A_{2},\quad F_{2}=A_{4}+A_{6}+2A_{5}+2A_{7}+2A_{8}+A_{9}+A_{10}
\]
are fibers of an elliptic fibration. By Theorem \ref{thm:SaintDonat-2}, 
case i) a), we have the following. 

\begin{prop}
The linear system $|4F_{1}+2A_{3}|$ defines a morphism $\varphi\colon X\to\mathbf{F}_{4}$
branched over the unique section $s$ with $s^2=-4$ and a curve $B\in|3s+12f|$. The curve
$B$ has a node $p$ and one $\mathbf{d_{6}}$ singularity $q$. The
pull-backs of the fibers through \mbox{$p$, $q$} are the fibers $F_{1}$, $F_{2}$.
\end{prop}

\subsection{The lattice $\boldsymbol{U\oplus\mathbf{D}_{4}\oplus\mathbf{A}_{1}^{\oplus3}}$}

The surface $X$ contains $15$ $(-2)$-curves $A_{1},\dots,A_{15}$,
with dual graph
\begin{center}
\begin{tikzpicture}[scale=1]

\draw (5,0) -- (0,0);
\draw [very thick] (1,1) -- (4,1);
\draw [very thick] (1,0) -- (4,0);
\draw [very thick] (1,-1) -- (4,-1);
\draw (0,0) -- (1,1);
\draw (0,0) -- (1,-1);
\draw (5,0) -- (4,1);
\draw (5,0) -- (4,-1);
\draw [very thick] (2,1) -- (3,0);
\draw [very thick] (2,1) -- (3,-1);
\draw [very thick] (2,0) -- (3,1);
\draw [very thick] (2,0) -- (3,-1);
\draw [very thick] (2,-1) -- (3,1);
\draw [very thick] (2,-1) -- (3,0);

\draw (5,0) arc (0:-180:2.5);

\draw (0,0) node {$\bullet$};
\draw (1,0) node {$\bullet$};
\draw (2,0) node {$\bullet$};
\draw (3,0) node {$\bullet$};
\draw (4,0) node {$\bullet$};
\draw (5,0) node {$\bullet$};
\draw (1,1) node {$\bullet$};
\draw (2,1) node {$\bullet$};
\draw (3,1) node {$\bullet$};
\draw (4,1) node {$\bullet$};
\draw (1,-1) node {$\bullet$};
\draw (2,-1) node {$\bullet$};
\draw (3,-1) node {$\bullet$};
\draw (4,-1) node {$\bullet$};
\draw (2.5,-2.5) node {$\bullet$};

\draw (0,0) node [left]{$A_{2}$};
\draw (1,0) node [above]{$A_{4}$};
\draw (1.8,0) node [above]{$A_{11}$};
\draw (3.2,0) node [above]{$A_{14}$};
\draw (4,0) node [above]{$A_{8}$};
\draw (5,0) node [right]{$A_{6}$};

\draw (1,1) node [above]{$A_{3}$};
\draw (2,1) node [above]{$A_{10}$};
\draw (3,1) node [above]{$A_{13}$};
\draw (4,1) node [above]{$A_{7}$};
\draw (1,-1) node [below]{$A_{5}$};
\draw (2,-1) node [below]{$A_{12}$};
\draw (3,-1) node [below]{$A_{15}$};
\draw (4,-1) node [below]{$A_{9}$};
\draw (2.5,-2.5) node [above]{$A_{1}$};

\end{tikzpicture}
\end{center} 

The curves $A_{1},\dots,A_{9}$ generate  the N\'eron--Severi lattice.
In that basis, the divisor 
\[
D_{44}=(6,7,3,3,3,7,3,3,3)
\]
is ample, of square $44$, with $D_{44} \cdot A_{j}=1$ for $j\in\{2,\dots,9\}$,
$D_{44} \cdot A_{1}=2$ and $D_{44} \cdot A_{j}=6$ for $j\in\{10,\dots,15\}$.
 The divisor 
\[
D_{2}=(1,2,1,1,1,2,1,1,1)
\]
is nef, base-point free, with $D_{2} \cdot A_{j}=0$ for $j\in\{2,\dots,9\}$
and $D_{2} \cdot A_{j}=2$ for $j\not\in\{2,\dots,9\}$. Let $\eta\colon X\to\PP^{2}$
be the map associated to the linear system $|D_{2}|$. One thus has the following. 

\begin{prop}
The branch curve of $\eta$ is a sextic with two $\mathbf{d}_{4}$
singularities $p$, $q$. The image of the curve $A_{1}$ by the double
cover map is the line through $p$, $q$. The six lines $C_{10},\dots,C_{15}$
that are tangent to the six branches of the two $\mathbf{d}_{4}$
singularities are the images of $A_{10},\dots,A_{15}$. 
\end{prop}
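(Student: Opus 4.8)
The plan is to read off the geometry of the double cover entirely from the arithmetic of $D_{2}$ and from the dual graph. Since $D_{2}$ is nef, base point free and $D_{2}^{2}=2$, Theorem~\ref{thm:SaintDonat-2}(c) shows that $\eta\colon X\to\PP^{2}$ is a double cover of $\PP^{2}$; thus $X$ is the minimal resolution of the double cover $Z\to\PP^{2}$ branched over a reduced sextic $C_{6}$, the morphism $\eta$ factors through $Z$, and I write $\sigma$ for the deck involution of $\eta$.

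First I would identify the exceptional locus of $X\to Z$. The $\cu$-curves contracted by $\eta$ are those $C$ with $D_{2}C=0$; from the given intersection numbers these are exactly $A_{2},\dots,A_{9}$. From the dual graph, $\{A_{2},A_{3},A_{4},A_{5}\}$ and $\{A_{6},A_{7},A_{8},A_{9}\}$ are two \emph{disjoint} stars (a central curve meeting three others, all intersection numbers $1$), i.e.\ two $\mathbf{D}_{4}$-configurations, with centres $A_{2}$ and $A_{6}$. Hence $Z$ has exactly two singular points, both of type $D_{4}$, lying over two points $p,q\in\PP^{2}$; by the standard dictionary between the singularities of a double plane and those of its branch sextic, a $D_{4}$ surface singularity comes from an ordinary triple point ($\mathbf{d}_{4}$) of the branch curve, so $C_{6}$ has ordinary triple points at $p$ and $q$ and is smooth elsewhere.

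It remains to identify the images of the seven non-contracted $\cu$-curves $A_{1},A_{10},\dots,A_{15}$, each of which has intersection $2$ with $D_{2}$. For such a curve $C$, $\eta(C)$ is a line if $C$ is $\sigma$-invariant and a conic otherwise, according to whether $\deg\eta(C)=\tfrac12 D_{2}C=1$ or $D_{2}C=2$. The line $\overline{pq}$ meets $C_{6}$ only at $p$ and $q$, each with local multiplicity $3$, so its preimage in $Z$ has a cusp over each triple point and is therefore irreducible; hence its strict transform $\overline{L}$ on $X$ is a single $\cu$-curve preserved by $\sigma$, and $\eta^{*}\overline{pq}=\overline{L}+Z_{p}+Z_{q}$ with $Z_{p}=2A_{2}+A_{3}+A_{4}+A_{5}$, $Z_{q}=2A_{6}+A_{7}+A_{8}+A_{9}$ the fundamental cycles of the two $D_{4}$'s. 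Comparing with $D_{2}=A_{1}+2A_{2}+A_{3}+A_{4}+A_{5}+2A_{6}+A_{7}+A_{8}+A_{9}$ forces $\overline{L}=A_{1}$, so $\eta(A_{1})=\overline{pq}$. In the same way, the tangent line $C_{10+k}$ to one of the six smooth branches of $C_{6}$ at $p$ or $q$ meets $C_{6}$ with local multiplicity $4$ at that triple point and transversally at two further smooth points, so its preimage in $Z$ is the double cover of $\PP^{1}$ ramified at exactly those two points, hence connected and $\sigma$-invariant; its strict transform is therefore a $\cu$-curve mapping $2{:}1$ onto $C_{10+k}$. The relation $\eta^{*}C_{10+k}\equiv D_{2}$, together with the requirement that this pullback be orthogonal to the contracted curves, identifies the strict transform with one of $A_{10},\dots,A_{15}$, the precise labelling being determined by the adjacencies to the two $\mathbf{D}_{4}$-subgraphs. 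Since $A_{1},A_{10},\dots,A_{15}$ exhaust the non-contracted $\cu$-curves, no further lines arise.

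The step I expect to be the real obstacle is the local analysis at the two triple points underlying the $\sigma$-invariance claims: one must check that $\overline{pq}$ and the six branch-tangents genuinely have irreducible preimages (cuspidal, resp.\ with a single tangentially doubled branch) rather than split ones, and --- since $X$ need not be a generic member of the family --- that the existence of the $\cu$-curves $A_{10},\dots,A_{15}$ with the prescribed incidences already forces $C_{6}$ to be tangent to exactly these six lines. Everything else is bookkeeping with the Gram data and the dual graph.
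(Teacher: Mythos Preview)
Your argument is correct and is in the spirit of the paper, which in fact offers no proof beyond the sentence ``Let $\eta:X\to\PP^{2}$ be the map associated to $|D_{2}|$. One thus has:''. You have supplied the details the paper omits.

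That said, the obstacle you flag at the end --- checking irreducibility of preimages and worrying about non-generic $X$ --- is an artefact of working from $\PP^{2}$ up to $X$. The paper's systematic method (used in the neighbouring sections, e.g.\ for $U\oplus\mathbf{D}_{4}\oplus\mathbf{A}_{1}^{\oplus2}$) goes the other way and avoids it entirely. For $A_{1}$ you already have this: the effective divisor $A_{1}+Z_{p}+Z_{q}$ lies in $|D_{2}|=\eta^{*}|\mathcal{O}_{\PP^{2}}(1)|$, hence equals $\eta^{*}\ell$ for a unique line $\ell$; its support forces $\ell\ni p,q$, so $\ell=\overline{pq}$ and $\eta(A_{1})=\overline{pq}$, with no irreducibility argument needed. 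Do the same for $A_{10},\dots,A_{15}$: solving for $A_{10}$ in the basis $A_{1},\dots,A_{9}$ gives $A_{10}=A_{1}-A_{3}+2A_{6}+A_{7}+A_{8}+A_{9}$, so
\[
D_{2}\equiv A_{10}+2A_{2}+2A_{3}+A_{4}+A_{5},
\]
an effective member of $|D_{2}|$ supported on $A_{10}$ and the exceptional locus over $p$. Thus $\eta(A_{10})$ is a line through $p$, and the excess $A_{3}$ over the fundamental cycle $2A_{2}+A_{3}+A_{4}+A_{5}$ records tangency to the branch of $C_{6}$ at $p$ indexed by $A_{3}$. The analogous computation for $A_{11},\dots,A_{15}$ yields the remaining five tangent lines. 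This bypasses all $\sigma$-invariance and local-model considerations, and works for every $X$ with this N\'eron--Severi lattice, generic or not.
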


\subsection{The lattice $\boldsymbol{U\oplus\mathbf{A}_{1}^{\oplus7}}$}

The K3 surface contains $37$ $\cu$-curves, which we denote by $A_{0},A_{1},\dots,A_{8}$ and $A_{ij}$ for $i,j\in\{1,\dots,8\}$ with $i<j$. We have $A_{0} \cdot A_{j}=1$ for $j\in\{1,\dots,8\}$, $A_{0} \cdot A_{ij}=0$ for  $j\in\{1,\dots,8\}$ and $i<j$, and we have $A_{j} \cdot A_{st}=2$ if and only if $j\in\{s,t\}$; else $A_{j} \cdot A_{st}=0$. Moreover, for $\{u,v\}\neq\{s,t\}$, we have
\[
A_{uv} \cdot A_{st}=2(1-|\{u,v\}\cap\{s,t\}|).
\]

\begin{prop}
The K3 surface is a double cover of $\,\PP^{2}$ branched over a sextic
curve which is the union of a smooth conic $C_{0}$ and a quartic
$Q$.
\end{prop}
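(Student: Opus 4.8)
The plan is to follow the method used repeatedly in the paper: exhibit an explicit nef divisor of square $2$, deduce from Saint-Donat that it defines a double plane, and then read off the branch sextic from the configuration of the $\cu$-curves.

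First I would take $D_{2}=A_{1}+A_{2}+A_{12}$. From the listed intersection numbers one checks at once that $D_{2}^{2}=2$, that $D_{2}A_{j}=0$ for $j\in\{1,\dots,8\}$, and that $D_{2}A_{0}=D_{2}A_{ij}=2$ for all $i<j$; equivalently, $2D_{2}\equiv 2A_{0}+A_{1}+\dots+A_{8}$ in $\NS X)$ (two classes agreeing on a spanning set of $\cu$-curves coincide). Since $D_{2}$ is effective, has positive square, and meets every $\cu$-curve non-negatively, it is nef; since no $\cu$-curve $\Gamma$ and elliptic pencil $E$ satisfy $D_{2}=2E+\Gamma$, parts (b) and (c) of Theorem \ref{thm:SaintDonat-1} give that $|D_{2}|$ is base point free, and by Theorem \ref{thm:SaintDonat-2}, case c), the morphism $\eta=\varphi_{D_{2}}:X\to\PP^{2}$ is a double cover branched over a sextic curve $C_{6}$.

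Next I would identify $C_{6}$. The curves $A_{1},\dots,A_{8}$ are pairwise disjoint $\cu$-curves with $D_{2}A_{j}=0$, so $\eta$ contracts them to eight points $p_{1},\dots,p_{8}$, which are nodes of $C_{6}$; and since $A_{0}A_{j}=1$ for every $j$, the image $C_{0}:=\eta(A_{0})$ passes through all eight of them. As $D_{2}A_{0}=2$, the curve $C_{0}$ has degree $1$ or $2$. It is not a line: a line through the eight nodes of the sextic $C_{6}$ would be forced by B\'ezout to be a component of $C_{6}$, yet a line component of a sextic contains at most five of that sextic's singular points. So $C_{0}$ is a conic (and $\eta|_{A_{0}}$ is birational onto it), and now a conic through the eight nodes of $C_{6}$ meets $C_{6}$ with local multiplicity at least $16>12=2\cdot 6$ unless it is a component of $C_{6}$; hence $C_{0}\subseteq C_{6}$. (This is consistent with $\eta^{*}C_{0}\equiv 2D_{2}\equiv 2A_{0}+\sum_{j}A_{j}$, which exhibits $A_{0}$ with multiplicity $2$ in the preimage of the reduced curve $C_{0}$.) Therefore $C_{6}=C_{0}\cup Q$ with $Q:=C_{6}-C_{0}$ a quartic, and the eight nodes of $C_{6}$ are the transverse intersection points of $C_{0}$ and $Q$.

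For the statement in its asserted form I would add that, for the general surface with $\NS X)\simeq U\oplus\mathbf{A}_{1}^{\oplus7}$, the conic $C_{0}$ is smooth: a degenerate $C_{0}$ (two lines, or a double line) would contribute extra singular points or a non-reduced component to the branch sextic and thereby enlarge $\NS X)$ beyond this lattice. Conversely, the minimal resolution of the double cover of $\PP^{2}$ branched over a generic union $C_{0}\cup Q$ of a smooth conic and a quartic is a K3 surface whose $\cu$-curves are the ramification curve over $C_{0}$, the eight curves over the nodes $C_{0}\cap Q$, and the pull-backs of the $\binom{8}{2}$ lines joining pairs of those nodes; one checks that these reproduce the configuration above and that the Picard number is $9$, so $\NS X)\simeq U\oplus\mathbf{A}_{1}^{\oplus7}$, and the count $5+14-8=11=20-9$ then shows the moduli space is unirational. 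The step I expect to require the most care is certifying that $C_{0}=\eta(A_{0})$ is a genuine component of $C_{6}$ --- the B\'ezout bookkeeping above, equivalently the coefficient $2$ in $2D_{2}\equiv 2A_{0}+\sum_{j}A_{j}$ --- together with the mild genericity needed to keep $C_{0}$ (and, for the general member, $Q$) smooth.
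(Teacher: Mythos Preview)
Your argument is correct and follows exactly the template the paper uses elsewhere. In fact the paper does not give a proof of this proposition at all: it simply states it and then records that $A_{1},\dots,A_{8}$ map to the eight nodes, the $A_{ij}$ to the $28$ lines through pairs of nodes, and $A_{0}$ to the conic $C_{0}$; no nef $D_{2}$ is written down and no verification is offered. Your explicit choice $D_{2}=A_{1}+A_{2}+A_{12}$ together with the relation $2D_{2}\equiv 2A_{0}+\sum_{j=1}^{8}A_{j}$ fills this in completely, and your B\'ezout argument for why $\eta(A_{0})$ must be a conic component of $C_{6}$ is the right way to certify that $A_{0}$ lies in the ramification. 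The only step you leave implicit --- ruling out $D_{2}=2E+\Gamma$ --- is immediate once you note that $\Gamma$ would have to be one of $A_{1},\dots,A_{8}$ (the only $\cu$-curves with $D_{2}\Gamma=0$) and that $D_{2}-A_{k}$ then has odd intersection with $A_{0}$, hence is not twice a class.
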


The curves $A_{1},\dots,A_{8}$ are mapped to the $8$ nodes of the
sextic curve, the $28$ divisors $A_{ij}$ are mapped onto the $28$
lines through the $8$ nodes, and $A_{0}$ is mapped onto the conic
$C_{0}$. The moduli space of such surfaces is unirational.

\subsection{The lattice $\boldsymbol{U(2)\oplus\mathbf{A}_{1}^{\oplus7}}$ and degree 1 del Pezzo surfaces\label{subsec:U(2)(A1)7DelPezzoDeg1}}

Let $C_{6}$ be a sextic curve in $\PP^{2}$ with $8$ nodes in general
position. Let $Z\to\PP^{2}$ the  blow-up of the nodes; it is a degree
$1$ del Pezzo surface and contains $240$ $(-1)$-curves:
\begin{itemize}
\item the $8$ exceptional divisors $E_{i}$, $i=1,\dots,8$,
\item the strict transform $L_{ij}$ of the $28$ lines through $p_{i}$, $p_{j}$
  ($i\neq j$),
  \item the strict transforms $QO_{rst}$ of the $56$ conics that go through
points in $\{p_{1},\dots,p_{8}\}\setminus\{p_{r},p_{s},p_{t}\}$,
\item the strict transforms $CU_{rj}$ of the $56$ cubics that go through
$7$ points $p_{k}$ (with $k\neq r$) with a double point at $p_{j}$
($j\neq r$),
\item the strict transforms $QA_{rst}$ of the $56$ quartics through the
$8$ points $p_{j}$ with double points at $p_{r}$, $p_{s}$, $p_{t}$, 
\item the strict transforms $QI_{ij}$ of the $28$ quintics through the
$8$ points $p_{j}$ with double points at $p_{i}$, $p_{j}$ ($i\neq j$),
\item the strict transforms $S_{j}$ of the $8$ sextics through $8$ points
with double points at all except a single point $p_{j}$ with multiplicity
$3$.
\end{itemize}

The N\'eron--Severi lattice of $Z$ is generated by the pull-back $L'$
of a line and $E_{1},\dots,E_{8}$; it is the unimodular rank $9$
lattice 
\[
I_{1}\oplus I_{-1}^{\oplus8}.
\]
The anti-canonical divisor of the del Pezzo surface $Z$ of degree
$1$ is given by 
\[
-K_{Z}=3L'-\left(E_{1}+\dots+E_{8}\right);
\]
this is an ample divisor. Moreover, we remark that each of the $120$
divisors 
\[
E_{j}+S_{j},\,L_{ij}+QI_{ij},\,CU_{ij}+CU_{ji},\,QO_{ijk}+QA_{ijk},\quad\{i,j,k\}\subset\{1,\dots,8\}\text{ with }|\{i,j,k\}|=3,
\]
belongs to the base-point free (see \cite[Section 3, Theorem 1]{DemazureIV})
linear system $|-2K_{Z}|$. 

The double cover $f\colon Y\to Z$ branched over the strict transform of
$C_{6}$ is a smooth K3 surface. The pull-backs of the $240$ $(-1)$-curves
are $(-2)$-curves (the only negative curves on a K3 surface are $\cu$-curves).
We denote by $A_{1},\dots,A_{8}$ the pull-backs on $Y$ of the curves
$E_{i}$ and by $L$ the pull-back of $L$. Naturally, the lattice
$f^{*}\NS Z)$ is $\left(I_{1}\oplus I_{-1}^{\oplus8}\right)(2)$,
which is also the lattice generated by $L,A_{1},\dots,A_{8}$ and
is isometric to $U(2)\oplus\mathbf{A}_{1}^{\oplus7}$. 

Since $f\colon Y\to Z$ is finite, its pull-back $D_{2}=f^{*}(-K_{Z})$
is ample, with $D_{2}^{2}=2$. Since $|-2K_{Z}|$ is base-point free,
the system $|D_{2}|$ is non-hyperelliptic; thus it defines a double
cover 
\[
\pi\colon Y\to\PP^{2}
\]
branched over a sextic curve $\tilde{C}_{6}$, which is smooth since
$D_{2}$ is ample. Let $A$ be the pull-back on $Y$ of a $(-1)$-curve
$E$; this is a $(-2)$-curve. We have $D_{2}A=2(-K_{Z}E)=2$. Moreover,
we see from the above description of the $120$ divisors in $|-2K_{Z}|$
that there exists a $(-2)$-curve $B$ such that $A+B\equiv2D_{2}$;
in particular, the image of $A+B$ by $\pi$ is a conic which is $6$-tangent
to the sextic $\tilde{C}_{6}$. We thus obtain the following result. 

\begin{prop}
The branch curve $\tilde{C}_{6}$ of the morphism $\pi$ is a smooth
sextic curve which possesses $120$ $6$-tangent conics. 
\end{prop}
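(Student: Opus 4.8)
The proposition repackages the construction preceding it, and the plan is to lay out its logical skeleton in four steps, each reducing to results already recorded. The first step is to note that $Y$ is genuinely a K3 surface and to compute $D_2^2$. The strict transform $\tilde C$ of $C_6$ on $Z$ has class $6L'-2(E_1+\dots+E_8)=-2K_Z$, so the double cover $f:Y\to Z$ branched along the smooth curve $\tilde C$ has $K_Y=f^*\bigl(K_Z+\frac{1}{2}(-2K_Z)\bigr)=0$, and $h^1(\OO_Y)=0$ since $f_*\OO_Y=\OO_Z\oplus\OO_Z(K_Z)$ and $Z$ is a rational (del Pezzo) surface; hence $Y$ is K3. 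As $f$ is finite and $-K_Z$ is ample, $D_2=f^*(-K_Z)$ is ample with $D_2^2=2(-K_Z)^2=2$.

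Second, I would establish that $|D_2|$ is base point free and that $\varphi_{D_2}$ realizes $Y$ as a double plane with smooth branch sextic. By Theorem \ref{thm:SaintDonat-1}(b), a fixed part of $|D_2|$ would force $D_2=2E+\Gamma$ with $|E|$ a free pencil and $\Gamma$ a $\cu$-curve with $E\Gamma=1$, whence $D_2\Gamma=2E\Gamma+\Gamma^2=0$, contradicting ampleness of $D_2$; so $|D_2|$ has no fixed part and, by Theorem \ref{thm:SaintDonat-1}(c), is base point free, and since $D_2^2=2$ it is hyperelliptic, with $\varphi_{D_2}=\pi:Y\to\PP^2$ a double cover branched over a plane sextic $\tilde C_6$ (Theorem \ref{thm:SaintDonat-2}(c)). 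The sextic $\tilde C_6$ is smooth, for an $ADE$ point on it would make $\varphi_{D_2}$ contract a $\cu$-curve, which ampleness of $D_2$ rules out.

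Third comes the production of the conics. For each of the $120$ classes $\delta\in|-2K_Z|$ written out above that split as $\delta=E+E'$ with $E,E'$ two $(-1)$-curves, set $A=f^*E$ and $B=f^*E'$. Each $(-1)$-curve meets $\tilde C=-2K_Z$ in $2$ points, so $f^{-1}(E)$ is an irreducible $\cu$-curve, and $A+B=f^*(-2K_Z)=2D_2$ with $D_2A=D_2B=2$. Applying the converse part of Lemma \ref{lem:splitingsPullBackTritang} with $d=2$, the image $\pi(A)=\pi(B)$ is a conic, $6$-tangent to $\tilde C_6$ and with pull-back $A+B$. Since $f$ recovers $E$ from $A=f^*E$, distinct splittings give distinct unordered pairs $\{A,B\}$, hence (a $6$-tangent conic being the image of precisely its two preimage components) distinct conics; so $\tilde C_6$ carries $120$ $6$-tangent conics.

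The step that demands the most care is the combinatorial bookkeeping feeding Step three: one must verify that the $120$ divisors exhibited really are members of $|-2K_Z|$ and that this recipe neither over- nor under-counts them. This is exactly where the identification $\NS Z)\simeq I_1\oplus I_{-1}^{\oplus8}$ and the classification of its $240$ $(-1)$-classes (equivalently the $\mathbf{E}_8$ root picture on a degree-$1$ del Pezzo surface) do the real work, and it is the only point that is not a direct appeal to Saint Donat's theorems and Lemma \ref{lem:splitingsPullBackTritang}.
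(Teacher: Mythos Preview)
Your proof is correct and follows essentially the same approach as the paper: use that $D_2=f^*(-K_Z)$ is ample of square $2$, deduce that $|D_2|$ gives a double cover of $\PP^2$ branched over a smooth sextic, and then for each of the $120$ splittings $E+E'\in|-2K_Z|$ observe that $f^*E+f^*E'\equiv 2D_2$ and invoke Lemma~\ref{lem:splitingsPullBackTritang} to produce a $6$-tangent conic. Your treatment is in fact more careful than the paper's on two points: you argue base-point-freeness of $|D_2|$ directly via Saint Donat (the paper's argument through $|-2K_Z|$ is somewhat elliptic), and you add the verification that distinct pairs $\{E,E'\}$ yield distinct conics, which the paper leaves implicit.
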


\begin{rem}
Let $B$ be the strict transform in $Z$ of $C_{6}$; this is a smooth
genus $2$ curve such that $B\equiv E_{j}+S_{j}$. Thus the ramification
locus $R$ of the double cover $f\colon Y\to Z$ is a smooth genus $2$ curve
with $2R\equiv f^{*}(E_{j}+S_{j})\equiv2D_{2}$, and (since $\NS X)=\Pic X)$)
the divisor $R$ is in the linear system $|D_{2}|$, and its image
by $\pi$ is a line in $\PP^{2}$. 
\end{rem}

\begin{rem}
The morphism $\pi$ is branched over a smooth sextic curve $\tilde{C}_{6}$
(thus of genus $10$), whereas $f$ is branched over a smooth genus
$2$ curve $B$, so that there exist (at least) two distinct non-symplectic
involutions $\iota_{1}$, $\iota_{2}$ on the same K3 surface. In fact,
 according to Kondo \cite{Kondo}, the automorphism group of such
a K3 surface is $\ZZ/2\ZZ\times\ZZ/2\ZZ$, so that we know the generators
of $\aut(X)$.
\end{rem}

The curve $\tilde{C}_{6}$ thus has $120$ conics that are $6$-tangent. 
As far as we know, this is the record for a smooth sextic. 

\begin{rem}
By a result of Degtyarev \cite{Degtyarev}, a smooth plane sextic
curve can have at most $72$ tritangent lines. Moreover, there is an
example of such a sextic curve by Mukai (see \cite{MukaiTri}). In \cite{Elkies},
Elkies gives an example of an irreducible sextic curve and $1240$
conics that are $6$-tangent to it; that curve has a unique node. 
\end{rem}

\subsection{The lattice $\boldsymbol{U\oplus\mathbf{A}_{7}}$}

The surface $X$ contains nine $(-2)$-curves $A_{1},\dots,A_{9}$,
with dual graph

\begin{center}
\begin{tikzpicture}[scale=1]

\draw (-1.31,0) -- (-2.31,0);
\draw  (1.31, 0 ) -- (0.925, 0.925);
\draw  (0.925, 0.925) -- (0, 1.31);
\draw  (0, 1.31) -- (-0.925, 0.925);
\draw  (-0.925, 0.925) -- (-1.31, 0);
\draw  (-1.31, 0) -- (-0.925, -0.925);
\draw  (-0.925, -0.925) -- (0, -1.31);
\draw  (0, -1.31) -- (0.925, -0.925);
\draw  (0.925, -0.925) -- (1.31, 0 );

\draw  (1.31, 0 ) node {$\bullet$};
\draw  (0.925, 0.925) node {$\bullet$};
\draw  (0, 1.31) node {$\bullet$};
\draw  (-0.925, 0.925) node {$\bullet$};
\draw  (-1.31, 0) node {$\bullet$};
\draw  (-0.925, -0.925) node {$\bullet$};
\draw  (0, -1.31) node {$\bullet$};
\draw  (0.925, -0.925) node {$\bullet$};
\draw  (-2.31, 0 ) node {$\bullet$};

\draw (-2.31,0) node  [left]{$A_{1}$};
\draw  (1.31, 0 ) node [right]{$A_{6}$};
\draw  (0.925, 0.925) node [right]{$A_{5}$};
\draw  (0, 1.31) node [above]{$A_{4}$};
\draw  (-0.925, 0.925) node [left]{$A_{3}$};
\draw  (-1.31, 0) node [above left]{$A_{2}$};
\draw  (-0.925, -0.925) node [left]{$A_{9}$};
\draw  (0, -1.31) node [below]{$A_{8}$};
\draw  (0.925, -0.925) node [right]{$A_{7}$};

\end{tikzpicture}
\end{center} 

These nine curves generate  the N\'eron--Severi lattice; the divisor
\[
D_{120}=(9,19,15,12,10,9,10,12,15)
\]
 in that basis has square $120$, is ample, with $D_{120} \cdot A_{j}=1$
for $j\neq6$ and $D_{120} \cdot A_{6}=2$. The divisor $F=\sum_{j=2}^{9}A_{j}$
is a fiber of an elliptic fibration  for which $A_{1}$ is a section.
By Theorem \ref{thm:SaintDonat-2}, case i) a), we have the following. 

\begin{prop}
The linear system $|4F+2A_{1}|$ defines a morphism $\varphi\colon X\to\mathbf{F}_{4}$
branched over the unique section $s$ with $s^2=-4$ and a curve $B\in|3s+12f|$. The curve
$B$ has an $\mathbf{a_{7}}$ singularity $q$. The pull-back of the
fiber through $q$ is the fiber $F$.
\end{prop}

\subsection{The lattice $\boldsymbol{U\oplus\mathbf{D}_{4}\oplus\mathbf{A}_{3}}$}

The surface $X$ contains $10$ $(-2)$-curves, with dual graph

\begin{center}
\begin{tikzpicture}[scale=1]

\draw (3,0) -- (-1,0);
\draw (0,1) -- (0,-1);
\draw (3,0) -- (3+0.86,-0.5);
\draw (3+0.86,-0.5) -- (3+0.86*2,0);
\draw (3+0.86,0.5) -- (3+0.86*2,0);
\draw (3,0) -- (3+0.86,0.5);


\draw (-1,0) node {$\bullet$};
\draw (0,0) node {$\bullet$};
\draw (0,1) node {$\bullet$};
\draw (0,-1) node {$\bullet$};
\draw (1,0) node {$\bullet$};
\draw (2,0) node {$\bullet$};
\draw (3,0) node {$\bullet$};
\draw (3+0.86,-0.5) node {$\bullet$};
\draw (3+0.86,0.5) node {$\bullet$};
\draw (3+0.86*2,0) node {$\bullet$};

\draw (-1,0) node [above]{$A_{2}$};
\draw (-0.25,0) node [above]{$A_{4}$};
\draw (0,1) node [above]{$A_{1}$};
\draw (0,-1) node [left]{$A_{3}$};
\draw (1,0) node [above]{$A_{5}$};
\draw (2,0) node [above]{$A_{6}$};
\draw (3,0) node [above]{$A_{7}$};
\draw (3+0.86,-0.5) node [below]{$A_{9}$};
\draw (3+0.86,0.5) node [above]{$A_{8}$};
\draw (3+0.86*2,0) node [right]{$A_{10}$};

\end{tikzpicture}
\end{center} 

The curves $A_{1},\dots,A_{8}$ generate  the N\'eron--Severi lattice.
In that base, the divisor
\[
D_{56}=(7,7,7,15,10,6,3,1,0)
\]
is ample, with $D_{56} \cdot A_{j}=1$ for $j\in\{1,\dots,10\}\setminus\{9\}$ and
$D_{56} \cdot A_{9}=3$. The divisors 
\[
F_{1}=A_{4}+\sum_{j=1}^{5}  A_{j},\quad F_{2}=\sum_{j=7}^{10}A_{j}
\]
 are fibers of an elliptic fibration with section $A_{6}$. By Theorem
\ref{thm:SaintDonat-2}, case i) a), we have the following. 

\begin{prop}
The linear system $|4F_{1}+2A_{6}|$ defines a morphism $\varphi\colon X\to\mathbf{F}_{4}$
branched over the unique section $s$ with $s^2=-4$ and a curve $B\in|3s+12f|$. The curve
$B$ has a $\mathbf{d_{4}}$ singularity $p$ and one $\mathbf{a_{3}}$
singularity $q$. The pull-backs of the fibers through $p$, $q$ are the
fibers $F_{1}$, $F_{2}$.
\end{prop}

The divisor 
\[
D_{2}=A_{1}+2A_{2}+2A_{3}+4A_{4}+3A_{5}+2A_{6}+A_{7}
\]
is nef, of square $2$ and base-point free. We have $D_{2} \cdot A_{1}=2$,
$D_{2} \cdot A_{8}=D_{2} \cdot A_{9}=1$ and $D_{2} \cdot A_{j}=0$ for $j\notin\{1,8,9\}$. 
Moreover,
\[
D_{2}\equiv A_{2}+A_{3}+2A_{4}+2A_{5}+2A_{6}+2A_{7}+A_{8}+A_{9}+A_{10}. 
\]
Thus the K3 surface is a double cover of the plane branched over
a sextic curve with a $\mathbf{d}_{6}$ singularity and a node $\mathbf{a}_{1}$. 

\subsection{The lattice $\boldsymbol{U\oplus\mathbf{D}_{5}\oplus\mathbf{A}_{2}}$}

The surface $X$ contains $10$ $(-2)$-curves, with dual graph 

\begin{center}
\begin{tikzpicture}[scale=1]

\draw (3,0) -- (-2,0);
\draw (0,0) -- (0,-1);
\draw (-1,0) -- (-1,-1);
\draw (3,0) -- (3+0.86,-0.5);
\draw (3+0.86,0.5) -- (3+0.86,-0.5);
\draw (3,0) -- (3+0.86,0.5);


\draw (-2,0) node {$\bullet$}; 
\draw (-1,-1) node {$\bullet$}; 
\draw (-1,0) node {$\bullet$};
\draw (0,0) node {$\bullet$};
\draw (0,-1) node {$\bullet$};
\draw (1,0) node {$\bullet$};
\draw (2,0) node {$\bullet$};
\draw (3,0) node {$\bullet$};
\draw (3+0.86,-0.5) node {$\bullet$};
\draw (3+0.86,0.5) node {$\bullet$};

\draw (-2,0) node [above]{$A_{1}$};
\draw (-1,0) node [above]{$A_{3}$};
\draw (-0.25,0) node [above]{$A_{4}$};
\draw (-1,-1) node [left]{$A_{2}$};
\draw (0,-1) node [left]{$A_{5}$};
\draw (1,0) node [above]{$A_{6}$};
\draw (2,0) node [above]{$A_{7}$};
\draw (3,0) node [above]{$A_{8}$};
\draw (3+0.86,-0.5) node [below]{$A_{10}$};
\draw (3+0.86,0.5) node [above]{$A_{9}$};

\end{tikzpicture}
\end{center} 

The N\'eron--Severi lattice is generated by $A_{1},\dots,A_{9}$; moreover, 
\[
A_{10}=(1,1,2,2,1,1,0,-1,-1)
\]
in that base. The divisor 
\[
D_{88}=(8,8,17,19,9,13,8,4,1)
\]
is ample, of square $88$, with $D_{88} \cdot A_{j}=1$ for $j\leq8$, $D_{88} \cdot A_{9}=2$
and $D_{88} \cdot A_{10}=5$. The divisors 
\[
F_{1}=A_{1}+A_{2}+2A_{3}+2A_{4}+A_{5}+A_{6},\quad F_{2}=A_{8}+A_{9}+A_{10}
\]
are fibers of an elliptic fibration with section $A_{7}$. By Theorem
\ref{thm:SaintDonat-2}, case i) a), we have the following. 

\begin{prop}
The linear system $|4F_{1}+2A_{6}|$ defines a morphism $\varphi\colon X\to\mathbf{F}_{4}$
branched over the negative section $s$ and a curve $B\in|3s+12f|$.
The curve $B$ has a $\mathbf{d_{5}}$ singularity $p$ and one $\mathbf{a_{2}}$
singularity $q$. The pull-backs of the fibers through $p$, $q$ are the
fibers $F_{1}$, $F_{2}$.
\end{prop}

The divisor 
\[
D_{2}=A_{1}+A_{2}+3A_{3}+4A_{4}+2A_{5}+3A_{6}+2A_{7}+A_{8}
\]
is nef, of square $2$, with $D_{2} \cdot A_{1}=D_{2} \cdot A_{2}=D_{2} \cdot A_{9}=D_{2} \cdot A_{10}=1$ and 
$D_{2} \cdot A_{j}=0$ for $j\notin\{1,2,9,10\}$. Since 
\[
D_{2}\equiv A_{3}+2A_{4}+A_{5}+2A_{6}+2A_{7}+2A_{8}+A_{9}+A_{10},
\]
we get that the K3 surface is the minimal resolution of a double cover
of $\PP^{2}$ branched over a sextic curve with a $\mathbf{d}_{6}$
singularity. 

\subsection{The lattice $\boldsymbol{U\oplus\mathbf{D}_{7}}$}

The surface $X$ contains nine $(-2)$-curves, with dual graph 

\begin{center}
\begin{tikzpicture}[scale=1]

\draw (3,0) -- (-2,0);
\draw (0,0) -- (0,-1);
\draw (3,0) -- (3+0.86,-0.5);
\draw (3,0) -- (3+0.86,0.5);


\draw (-2,0) node {$\bullet$}; 
\draw (-1,0) node {$\bullet$};
\draw (0,0) node {$\bullet$};
\draw (0,-1) node {$\bullet$};
\draw (1,0) node {$\bullet$};
\draw (2,0) node {$\bullet$};
\draw (3,0) node {$\bullet$};
\draw (3+0.86,-0.5) node {$\bullet$};
\draw (3+0.86,0.5) node {$\bullet$};

\draw (-2,0) node [above]{$A_{1}$};
\draw (-1,0) node [above]{$A_{2}$};
\draw (-0.25,0) node [above]{$A_{3}$};

\draw (0,-1) node [left]{$A_{4}$};
\draw (1,0) node [above]{$A_{5}$};
\draw (2,0) node [above]{$A_{6}$};
\draw (3,0) node [above]{$A_{7}$};
\draw (3+0.86,-0.5) node [below]{$A_{9}$};
\draw (3+0.86,0.5) node [above]{$A_{8}$};

\end{tikzpicture}
\end{center} 

The nine curves generate  the N\'eron--Severi lattice, and in that basis, 
the divisor 
\[
D_{260}=(13,27,42,20,38,35,33,16,16)
\]
is ample, of square $260$, with $D_{260} \cdot A_{2}=2$ and $D_{260} \cdot A_{j}=1$
for $j\neq4$. The divisor 
\[
F=A_{2}+A_{4}+2(A_{3}+A_{5}+A_{6}+A_{7})+A_{8}+A_{9}
\]
is a fiber of an elliptic fibration  with section $A_{1}$. By Theorem
\ref{thm:SaintDonat-2}, case i) a), we have the following. 

\begin{prop}
The linear system $|4F+2A_{1}|$ defines a morphism $\varphi\colon X\to\mathbf{F}_{4}$
branched over the unique section $s$ with $s^2=-4$ and a curve $B\in|3s+12f|$. The curve
$B$ has a $\mathbf{d_{7}}$ singularity $q$. The pull-back of the
fiber through $q$ is the fiber $F$.
\end{prop}

One can also construct that surface as a double plane: the divisor 
\[
D_{2}=2A_{1}+4A_{2}+6A_{3}+3A_{4}+5A_{5}+4A_{6}+3A_{7}+A_{8}+A_{9}
\]
is nef, of square $2$, with $D_{2} \cdot A_{j}=0$ for $j\leq7$ and 
$D_{2} \cdot A_{8}=D_{2} \cdot A_{9}=1$. The K3 surface is a double cover of $\PP^{2}$
branched over a sextic curve with an $\mathbf{e}_{7}$ singularity. 

\subsection{The lattice $\boldsymbol{U\oplus\mathbf{A}_{1}\oplus\mathbf{E}_{6}}$}

The surface $X$ contains $10$ $(-2)$-curves, with dual graph 

\begin{center}
\begin{tikzpicture}[scale=1]

\draw (3,0) -- (-2,0);
\draw (3,0) -- (3+0.86,-0.5);
\draw (3+0.86,-0.5) -- (4+0.86,-0.5);
\draw (3+0.86,0.5) -- (4+0.86,0.5);
\draw (3,0) -- (3+0.86,0.5);

\draw [very thick] (-2,0) -- (-1,0);

\draw (-2,0) node {$\bullet$}; 
\draw (-1,0) node {$\bullet$};
\draw (0,0) node {$\bullet$};
\draw (1,0) node {$\bullet$};
\draw (2,0) node {$\bullet$};
\draw (3,0) node {$\bullet$};
\draw (3+0.86,-0.5) node {$\bullet$};
\draw (3+0.86,0.5) node {$\bullet$};
\draw (4+0.86,-0.5) node {$\bullet$};
\draw (4+0.86,0.5) node {$\bullet$};

\draw (-2,0) node [above]{$A_{1}$};
\draw (-1,0) node [above]{$A_{2}$};
\draw (0,0) node [above]{$A_{3}$};
\draw (1,0) node [above]{$A_{4}$};
\draw (2,0) node [above]{$A_{5}$};
\draw (3,0) node [above]{$A_{6}$};
\draw (3+0.86,-0.5) node [below]{$A_{7}$};
\draw (3+0.86,0.5) node [above]{$A_{9}$};
\draw (4+0.86,-0.5) node [below]{$A_{8}$};
\draw (4+0.86,0.5) node [above]{$A_{10}$};

\end{tikzpicture}
\end{center} 

The nine curves $A_{1},\dots,A_{9}$ generate  the N\'eron--Severi lattice,
and in that basis, the divisor 
\[
D_{184}=(12,17,12,8,5,3,1,0,1)
\]
is ample, of square $184$, with $D_{184} \cdot A_{1}=10$, $D_{184} \cdot A_{2}=2$
and $D_{184} \cdot A_{j}=1$ for $j\geq3$. The divisors
\[
F_{1}=A_{1}+A_{2},\quad F_{2}=A_{4}+A_{8}+A_{10}+2(A_{5}+A_{7}+A_{9})+3A_{6}
\]
are fibers of an elliptic fibration  of $X$ with a section $A_{3}$.
By Theorem \ref{thm:SaintDonat-2}, case i) a), we have the following. 

\begin{prop}
The linear system $|4F_{1}+2A_{6}|$ defines a morphism $\varphi\colon X\to\mathbf{F}_{4}$
branched over the unique section $s$ with $s^2=-4$ and a curve $B\in|3s+12f|$. The curve
$B$ has a node $p$ and one $\mathbf{e}_{6}$ singularity $q$. The
pull-backs of the fibers through \mbox{$p$, $q$} are the fibers $F_{1}$, $F_{2}$.
\end{prop}

One can also construct that surface as a double plane: the divisor 
\[
D_{2}=A_{1}+2(A_{2}+A_{3}+A_{4}+A_{5}+A_{6})+A_{7}+A_{9}
\]
is nef, base-point free, of square $2$. We have $D_{2} \cdot A_{1}=2$,
$D_{2} \cdot A_{8}=D_{2} \cdot A_{10}=1$ and $D_{2} \cdot A_{j}=0$ for the other curves.
The K3 surface is a double cover of $\PP^{2}$ branched over a sextic
curve with a $\mathbf{d}_{7}$ singularity.

\section{Rank 10 lattices }

\subsection{The lattice $\boldsymbol{U\oplus\mathbf{E}_{8}}$}

The K3 surface $X$ contains $10$ $\cu$-curves with dual graph

\begin{center}
\begin{tikzpicture}[scale=1]

\draw (8,0) -- (0,0);
\draw (6,0) -- (6,-1);


\draw (0,0) node {$\bullet$}; 
\draw (1,0) node {$\bullet$};
\draw (2,0) node {$\bullet$};
\draw (3,0) node {$\bullet$};
\draw (4,0) node {$\bullet$};
\draw (5,0) node {$\bullet$};
\draw (6,0) node {$\bullet$};
\draw (7,0) node {$\bullet$};
\draw (8,0) node {$\bullet$};
\draw (6,-1) node {$\bullet$};

\draw (0,0) node [above]{$A_{1}$};
\draw (1,0) node [above]{$A_{2}$};
\draw (2,0) node [above]{$A_{3}$};
\draw (3,0) node [above]{$A_{4}$};
\draw (4,0) node [above]{$A_{5}$};
\draw (5,0) node [above]{$A_{6}$};
\draw (6,0) node [above]{$A_{7}$};
\draw (7,0) node [above]{$A_{9}$};
\draw (8,0) node [above]{$A_{10}$};
\draw (6,-1) node [right]{$A_{8}$};

\end{tikzpicture}
\end{center} 

These curves generate  the N\'eron--Severi lattice. In that base, the
divisor 
\[
D_{1240}=(30,61,93,126,160,195,231,115,153,76)
\]
is ample, of square $1240$, with $D_{1240} \cdot A_{j}=1$ for $j\in\{1,\dots,10\}$.
The divisor 
\[
F=A_{2}+2A_{3}+3A_{4}+4A_{5}+5A_{6}+6A_{7}+3A_{8}+4A_{9}+2A_{10}
\]
is a fiber of an elliptic fibration  and $A_{1}$ is a section. By
Theorem \ref{thm:SaintDonat-2}, case i) a), we have the following. 

\begin{prop}
The linear system $|4F+2A_{1}|$ defines a morphism $\varphi\colon X\to\mathbf{F}_{4}$
branched over the unique section $s$ with $s^2=-4$ and a curve $B\in|3s+12f|$. The curve
$B$ has an $\mathbf{e_{8}}$ singularity $q$. The pull-back of the
fiber through $q$ is the fiber $F$.
\end{prop}

\begin{rem}
Using that $U\oplus\mathbf{E}_{8}$ is unimodular, one can prove
that if $D$ is an ample divisor, then $D^{2}\geq1240$.
\end{rem}

\subsection{The lattice $\boldsymbol{U\oplus\mathbf{D}_{8}}$}

The K3 surface $X$ contains $10$ $\cu$-curves with dual graph

\begin{center}
\begin{tikzpicture}[scale=1]

\draw (2-0.866,0.5) -- (2,0);
\draw (2-0.866,-0.5) -- (2,0);
\draw (2,0) -- (8,0);
\draw (6,0) -- (6,-1);


\draw (2-0.866,-0.5) node {$\bullet$}; 
\draw (2-0.866,0.5) node {$\bullet$};
\draw (2,0) node {$\bullet$};
\draw (3,0) node {$\bullet$};
\draw (4,0) node {$\bullet$};
\draw (5,0) node {$\bullet$};
\draw (6,0) node {$\bullet$};
\draw (7,0) node {$\bullet$};
\draw (8,0) node {$\bullet$};
\draw (6,-1) node {$\bullet$};

\draw (2-0.866,-0.5) node [left]{$A_{1}$};
\draw (2-0.866,0.5) node [left]{$A_{2}$};
\draw (2,0) node [above]{$A_{3}$};
\draw (3,0) node [above]{$A_{4}$};
\draw (4,0) node [above]{$A_{5}$};
\draw (5,0) node [above]{$A_{6}$};
\draw (6,0) node [above]{$A_{7}$};
\draw (7,0) node [above]{$A_{9}$};
\draw (8,0) node [above]{$A_{10}$};
\draw (6,-1) node [right]{$A_{8}$};

\end{tikzpicture}
\end{center} 

These curves generate  the N\'eron--Severi lattice. In that base, the
divisor 
\[
D_{280}=(15,15,31,33,36,40,45,22,29,14)
\]
is ample, with  $D_{280} \cdot A_{j}=1$ for $j\in\{1,\dots,10\}$. The divisor
\[
F=A_{1}+A_{2}+2(A_{3}+\dots+A_{7})+A_{8}+A_{9}
\]
is a fiber of an elliptic fibration  with section $A_{10}$. By Theorem
\ref{thm:SaintDonat-2}, case i) a), we have the following. 

\begin{prop}
The linear system $|4F+2A_{10}|$ defines a morphism $\varphi\colon X\to\mathbf{F}_{4}$
branched over the unique section $s$ with $s^2=-4$ and a curve $B\in|3s+12f|$. The curve
$B$ has a $\mathbf{d_{8}}$ singularity $q$. The pull-back of the
fiber through $q$ is the fiber $F$.
\end{prop}

\subsection{The lattice $\boldsymbol{U\oplus\mathbf{E}_{7}\oplus\mathbf{A}_{1}}$}

The K3 surface $X$ contains $11$ $\cu$-curves with dual graph

\begin{center}
\begin{tikzpicture}[scale=1]

\draw (9,0) -- (0,0);
\draw (6,0) -- (6,-1);

\draw [very thick] (0,0) -- (1,0);

\draw (0,0) node {$\bullet$}; 
\draw (1,0) node {$\bullet$};
\draw (2,0) node {$\bullet$};
\draw (3,0) node {$\bullet$};
\draw (4,0) node {$\bullet$};
\draw (5,0) node {$\bullet$};
\draw (6,0) node {$\bullet$};
\draw (7,0) node {$\bullet$};
\draw (8,0) node {$\bullet$};
\draw (9,0) node {$\bullet$};
\draw (6,-1) node {$\bullet$};

\draw (0,0) node [above]{$A_{1}$};
\draw (1,0) node [above]{$A_{2}$};
\draw (2,0) node [above]{$A_{3}$};
\draw (3,0) node [above]{$A_{4}$};
\draw (4,0) node [above]{$A_{5}$};
\draw (5,0) node [above]{$A_{6}$};
\draw (6,0) node [above]{$A_{7}$};
\draw (7,0) node [above]{$A_{9}$};
\draw (8,0) node [above]{$A_{10}$};
\draw (9,0) node [above]{$A_{11}$};
\draw (6,-1) node [right]{$A_{8}$};

\end{tikzpicture}
\end{center} 

The curves $A_{1},\dots,A_{10}$ generate  the N\'eron--Severi lattice.
In that base, the divisor 
\[
D_{370}=(15,24,19,15,12,10,9,4,5,2)
\]
is ample, of square $370$, with $D_{370} \cdot A_{j}=1$ for $j\in\{2,\dots,10\}$,
$D_{370} \cdot A_{1}=18$ and $D_{370} \cdot A_{11}=2$. The divisors 
\[
F_{1}=A_{1}+A_{2},\quad F_{2}=A_{4}+2A_{5}+3A_{6}+4A_{7}+2A_{8}+3A_{9}+2A_{10}+A_{11}
\]
are fibers of an elliptic fibration  with section $A_{3}$. By Theorem
\ref{thm:SaintDonat-2}, case i) a), we have the following. 

\begin{prop}
The linear system $|4F_{1}+2A_{3}|$ defines a morphism $\varphi\colon X\to\mathbf{F}_{4}$
branched over the unique section $s$ with $s^2=-4$ and a curve $B\in|3s+12f|$. The curve
$B$ has a node $p$ and one $\mathbf{e}_{7}$ singularity $q$. The
pull-backs of the fibers through \mbox{$p$, $q$} are the fibers $F_{1}$, $F_{2}$.
\end{prop}

\subsection{The lattice $\boldsymbol{U\oplus\mathbf{D}_{4}^{\oplus2}}$}

The K3 surface $X$ contains $11$ $\cu$-curves with dual graph

\begin{center}
\begin{tikzpicture}[scale=1]

\draw (6,0) -- (0,0);
\draw (1,1) -- (1,-1);
\draw (5,1) -- (5,-1);


\draw (0,0) node {$\bullet$}; 
\draw (1,0) node {$\bullet$};
\draw (2,0) node {$\bullet$};
\draw (3,0) node {$\bullet$};
\draw (4,0) node {$\bullet$};
\draw (5,0) node {$\bullet$};
\draw (6,0) node {$\bullet$};
\draw (1,1) node {$\bullet$};
\draw (1,-1) node {$\bullet$};
\draw (5,1) node {$\bullet$};
\draw (5,-1) node {$\bullet$};

\draw (0,0) node [above]{$A_{1}$};
\draw (1,0) node [above right]{$A_{2}$};
\draw (2,0) node [above]{$A_{5}$};
\draw (3,0) node [above]{$A_{6}$};
\draw (4,0) node [above]{$A_{7}$};
\draw (5,0) node [above right]{$A_{8}$};
\draw (6,0) node [above]{$A_{10}$};
\draw (1,1) node [right]{$A_{3}$};
\draw (1,-1) node [right]{$A_{4}$};
\draw (5,1) node [right]{$A_{9}$};
\draw (5,-1) node [right]{$A_{11}$};

\end{tikzpicture}
\end{center} 

The curves $A_{1},\dots,A_{10}$ generate the N\'eron--Severi lattice.
In that base, we have 
\[
A_{11}=(1,2,1,1,1,0,-1,-2,-1,-1).
\]
 The divisor 
\[
D_{56}\equiv(7,15,7,7,10,6,3,1,0,0)
\]
is ample of square $56$, and the $(-2)$-curves on $X$ have degree
$1$ for $D_{56}$. The divisors 
\[
F_{1}=A_{2}+\sum_{j=1}^{5}A_{j},\quad F_{2}=A_{8}+\sum_{j=7}^{11}A_{j},
\]
are fibers of an elliptic fibration  with section $A_{6}$. By Theorem
\ref{thm:SaintDonat-2}, case i) a), we have the following. 

\begin{prop}
The linear system $|4F_{1}+2A_{6}|$ defines a morphism $\varphi\colon X\to\mathbf{F}_{4}$
branched over the unique section $s$ with $s^2=-4$ and a curve $B\in|3s+12f|$. The curve
$B$ has two $\mathbf{d}_{4}$ singularities. The pull-backs of the
fibers through $p$, $q$ are the fibers $F_{1}$, $F_{2}$.
\end{prop}

One can give another construction as follows. The divisor 
\[
D_{2}=A_{1}+4A_{2}+2A_{3}+2A_{4}+3A_{5}+2A_{6}+A_{7}
\]
 is nef, of square $2$, base-point free, with $D_{2} \cdot A_{1}=2$, $D_{2} \cdot A_{8}=1$ 
and $D_{2} \cdot A_{j}=0$ for $j\notin\{2,8\}$. We have 
\[
D_{2}\equiv2A_{2}+A_{3}+A_{4}+2A_{5}+2A_{6}+2A_{7}+2A_{8}+A_{9}+A_{10}+A_{11}.
\]
The branch curve of the double cover $\varphi\colon X\to\PP^{2}$ associated
to $D_{2}$ is a sextic with three nodes and a $\mathbf{d}_{6}$ singularity
$q$. The curve $A_{8}$ is in the ramification locus; its image by
$\varphi$ is a line $L$. Let $Q$ be the residual quintic curve
of the sextic branch locus. The line $L$ cuts $Q$ transversally
in an $\mathbf{a}_{3}$ singularity and in three other points $p_{1}$, $p_{2}$, $p_{3}$.
The curves $A_{9}$, $A_{10}$, $A_{11}$ are mapped to these three points.
The image by $\varphi$ of $A_{1}$ is the line that is the branch
of the $\mathbf{a}_{3}$ singularity. The curves $A_{2},\dots,A_{7}$
are mapped to $q$.

\subsection{The lattice $\boldsymbol{U\oplus\mathbf{D}_{6}\oplus\mathbf{A}_{1}^{\oplus2}}$}

The K3 surface $X$ contains $14$ $\cu$-curves with dual graph

\begin{center}
\begin{tikzpicture}[scale=1]

\draw [very thick] (-1,-1) -- (1,1);
\draw [very thick] (1,-1) -- (-1,1);
\draw [very thick] (-2,1) -- (-1,1);
\draw [very thick] (-2,-1) -- (-1,-1);
\draw [very thick] (2,1) -- (1,1);
\draw [very thick] (2,-1) -- (1,-1);
\draw (-2,1) -- (-3,0);
\draw (-2,-1) -- (-3,0);
\draw (2,1) -- (3,0);
\draw (2,-1) -- (3,0);
\draw (-3,0) -- (-3,-2.5);
\draw (3,0) -- (3,-2.5);
\draw (-3,-2.5) -- (3,-2.5);
\draw (0,-2.5) -- (0,-1.5);
\draw [very thick] (-1,1) -- (1,1); 
\draw [very thick] (-1,-1) -- (1,-1); 

\draw (1,1) node {$\bullet$}; 
\draw (1,-1) node {$\bullet$}; 
\draw (-1,1) node {$\bullet$}; 
\draw (-1,-1) node {$\bullet$}; 
\draw (2,1) node {$\bullet$}; 
\draw (2,-1) node {$\bullet$}; 
\draw (-2,1) node {$\bullet$}; 
\draw (-2,-1) node {$\bullet$}; 
\draw (-3,0) node {$\bullet$}; 
\draw (3,0) node {$\bullet$}; 
\draw (-3,-2.5) node {$\bullet$}; 
\draw (3,-2.5) node {$\bullet$}; 
\draw (0,-1.5) node {$\bullet$}; 
\draw (0,-2.5) node {$\bullet$}; 


\draw (1,1) node [above]{$A_{13}$};
\draw (1,-1) node [below]{$A_{14}$};
\draw (-1,1) node [above]{$A_{11}$};
\draw (-1,-1) node [below]{$A_{12}$};
\draw (2,1) node [above]{$A_{9}$};
\draw (2,-1) node [below]{$A_{10}$};
\draw (-2,1) node [above]{$A_{5}$};
\draw (-2,-1) node [below]{$A_{6}$};
\draw (-3,0) node [left]{$A_{4}$};
\draw (3,0) node [right]{$A_{8}$};
\draw (-3,-2.5) node [left]{$A_{3}$};
\draw (3,-2.5) node [right]{$A_{7}$};
\draw (0,-1.5) node [left]{$A_{1}$};
\draw (0,-2.5) node [above left]{$A_{2}$};

\end{tikzpicture}
\end{center} 

The curves $A_{1},\dots,A_{10}$ generate  the N\'eron--Severi lattice.
In that base, the divisor 
\[
D_{98}=(7,16,13,11,5,5,13,11,5,5)
\]
is ample, of square $98$, with $D_{98} \cdot A_{j}=1$ for $j\in\{2,\dots,10\}$,
$D_{98} \cdot A_{1}=2$ and $D_{98} \cdot A_{j}=10$ for $j\in\{11,\dots,14\}$.
The divisor 
\[
D_{2}=(1,2,2,2,1,1,2,2,1,1)
\]
is nef, base-point free, of square $2$, with $D_{2} \cdot A_{2}=1$, $D_{2} \cdot A_{j}=2$
for $j\in\{11,\dots,14\}$ and $D_{2} \cdot A_{j}=0$ for the remaining curves.
By using the linear system $|D_{2}|$, we obtain the following. 

\begin{prop}
The branch curve has a node and two $\mathbf{d}_{4}$ singularities.
It is the union of a quintic curve with two nodes $q$, $q'$ and the
line through the points $q$, $q'$. The line cuts the quintic in a third
point $p$, forming a node. 
\end{prop}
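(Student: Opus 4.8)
The plan is to extract the geometry from the nef square-$2$ divisor $D_2$, following the pattern of the earlier double-plane cases such as $U\oplus\mathbf{A}_1^{\oplus5}$ and $U\oplus\mathbf{D}_4\oplus\mathbf{A}_1^{\oplus2}$. By Theorem~\ref{thm:SaintDonat-2}, case~c), $|D_2|$ is base point free and hyperelliptic, exhibiting $X$ as the minimal resolution of a double cover $\eta\colon X\to\PP^2$ branched over a sextic curve $C_6$ with $\mathbf{a}\mathbf{d}\mathbf{e}$ singularities; the $\cu$-curves contracted by $\eta$ are exactly those $A_j$ with $D_2A_j=0$, and over each singular point of $C_6$ they form the corresponding Dynkin dual graph. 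Reading the dual graph, the contracted curves $A_1,A_3,A_4,A_5,A_6,A_7,A_8,A_9,A_{10}$ form the configuration $\mathbf{A}_1\sqcup\mathbf{D}_4\sqcup\mathbf{D}_4$: the isolated vertex $A_1$ (whose only neighbour $A_2$ satisfies $D_2A_2=1$), the $\mathbf{D}_4$ with trivalent vertex $A_4$ and leaves $A_3,A_5,A_6$, and the $\mathbf{D}_4$ with trivalent vertex $A_8$ and leaves $A_7,A_9,A_{10}$. Hence $C_6$ has exactly one node and two $\mathbf{d}_4$ (ordinary triple) points, say $q$ and $q'$.

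Next I would prove that $C_6$ is reducible. Since $D_2A_2=1$, the image $L:=\eta(A_2)$ is a line and $\eta$ maps $A_2$ birationally onto $L$. In the basis $A_1,\dots,A_{10}$ one has $D_2=(1,2,2,2,1,1,2,2,1,1)$, so
\[
D_2\equiv 2A_2+E,\qquad E:=A_1+2A_3+2A_4+A_5+A_6+2A_7+2A_8+A_9+A_{10},
\]
with $E$ effective and supported on $\eta$-contracted curves. Because $h^0(D_2)=2+\frac{1}{2}D_2^2=3$, the members of $|D_2|$ are precisely the preimages $\eta^{-1}(\ell)$ of lines $\ell\subset\PP^2$, and $\eta^{-1}(\ell)$ contains the curve $A_2$ if and only if $\ell\supseteq\eta(A_2)=L$. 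As $2A_2+E$ is an effective member of $|D_2|$ containing $A_2$, it must equal $\eta^{-1}(L)$; thus $A_2$ occurs with multiplicity $2$ in $\eta^{-1}(L)$, which is impossible unless $\eta$ is ramified along $L$. Therefore $L$ is a component of $C_6$, so $C_6=L\cup Q_5$ with $Q_5$ a quintic, $A_2$ being the ramification curve over $L$.

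It remains to describe the mutual position of $L$ and $Q_5$, which is a B\'ezout count. At an ordinary triple point of $C_6$ the three branches are smooth and pairwise transverse; the line $L$ supplies at most one branch there, so $Q_5$ supplies at least two, i.e.\ ${\rm mult}_q(Q_5)\ge 2$ and ${\rm mult}_{q'}(Q_5)\ge 2$. If $L$ missed one of $q,q'$, the intersection points of $L$ and $Q_5$ — each of which is singular on $L\cup Q_5$ — together with that triple point would give $C_6$ at least four singular points, contradicting the first paragraph; hence $q,q'\in L$. Since $L\cdot Q_5=5$ and each of $q,q'$ already absorbs intersection multiplicity $\ge 2$, the only configuration compatible with $C_6$ having just the three singularities found above is $(L\cdot Q_5)_q=(L\cdot Q_5)_{q'}=2$ together with one further transverse intersection point $p\notin\{q,q'\}$; then $L$ is transverse to both branches of $Q_5$ at $q$ and at $q'$, so these are ordinary nodes of $Q_5$, the curve $Q_5$ is smooth elsewhere, and $p$ is a transverse meeting of $L$ and $Q_5$, hence an ordinary node of $C_6=L\cup Q_5$. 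This is exactly the stated description.

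The one genuinely delicate step is the reducibility argument of the second paragraph — ensuring that $L=\eta(A_2)$ is honestly a branch component and not merely a line whose preimage splits off the rational curve $A_2$. It is settled by the identity $D_2\equiv 2A_2+E$ with $E$ effective and $\eta$-contracted; equivalently, one can argue directly that if $L$ were not a branch component then $\eta$ would be generically \'etale over $L$, so $\eta^{-1}(L)$ would contain, besides $A_2$, a distinct $\cu$-curve $A_2'\neq A_2$ mapping birationally onto $L$, i.e.\ with $D_2A_2'=1$ — but $A_2$ is the only $\cu$-curve meeting $D_2$ in degree $1$, a contradiction. All the remaining steps reduce to the Saint-Donat dictionary of Theorems~\ref{thm:SaintDonat-1} and~\ref{thm:SaintDonat-2} combined with elementary plane-curve geometry.
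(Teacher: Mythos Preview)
Your argument is correct and follows the paper's line: the paper simply asserts the proposition after recording $D_2$ and its intersection numbers with the $A_j$, leaving implicit the steps you spell out --- reading the contracted configuration as $\mathbf{A}_1\sqcup\mathbf{D}_4\sqcup\mathbf{D}_4$, deducing that $L=\eta(A_2)$ is a branch component from $D_2\equiv 2A_2+E$ with $E$ contracted, and the B\'ezout count forcing $L$ through both triple points with residual node at $p$. The only cosmetic point is that your ``at least four singular points'' clause tacitly uses the bounds $(L\cdot Q_5)_p\le 1$ and $(L\cdot Q_5)_{q'}\le 2$ coming from the singularity types, which you invoke explicitly only in the next sentence; stating them up front would streamline the paragraph.
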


The curve $A_{1}$ is sent to $p$, the curves $A_{3}$, $A_{4}$, $A_{5}$, $A_{6}$
are sent to $q$, and the curves $A_{7}$, $A_{8}$, $A_{9}$, $A_{10}$ are sent
to $q'$. The curves $A_{11}$ and $A_{12}$ are sent to the two branches of the singularity $q$ that are distinct from $L$; the curves $A_{13}$ and
$A_{14}$ are sent to the two branches of the singularity $q'$ that are distinct from $L$. 

\subsection{The lattice $\boldsymbol{U(2)\oplus\mathbf{D}_{4}^{\oplus2}}$}

\subsubsection{First involution}

There exist $18$ $(-2)$-curves $A_{1},\dots,A_{18}$ on $X$, with
dual graph

\begin{center}
\begin{tikzpicture}[scale=1]
\draw (0,1.5) -- (3.5,0.5);
\draw (0,1.5) -- (2.5,0.5);
\draw (0,1.5) -- (1.5,0.5);
\draw (0,1.5) -- (0.5,0.5);
\draw (0,1.5) -- (-0.5,0.5);
\draw (0,1.5) -- (-1.5,0.5);
\draw (0,1.5) -- (-2.5,0.5);
\draw (0,1.5) -- (-3.5,0.5);

\draw [very thick] (3.5,0.5) -- (3.5,-0.5);
\draw [very thick] (2.5,0.5) -- (2.5,-0.5);
\draw [very thick] (1.5,0.5) -- (1.5,-0.5);
\draw [very thick] (0.5,0.5) -- (0.5,-0.5);
\draw [very thick] (-3.5,0.5) -- (-3.5,-0.5);
\draw [very thick] (-2.5,0.5) -- (-2.5,-0.5);
\draw [very thick] (-1.5,0.5) -- (-1.5,-0.5);
\draw [very thick] (-0.5,0.5) -- (-0.5,-0.5);

\draw (0,-1.5) -- (3.5,-0.5);
\draw (0,-1.5) -- (2.5,-0.5);
\draw (0,-1.5) -- (1.5,-0.5);
\draw (0,-1.5) -- (0.5,-0.5);
\draw (0,-1.5) -- (-0.5,-0.5);
\draw (0,-1.5) -- (-1.5,-0.5);
\draw (0,-1.5) -- (-2.5,-0.5);
\draw (0,-1.5) -- (-3.5,-0.5);

\draw (0,1.5) node {$\bullet$};
\draw (3.5,0.5) node  {$\bullet$};
\draw (2.5,0.5) node  {$\bullet$};
\draw (1.5,0.5) node  {$\bullet$};
\draw (0.5,0.5) node  {$\bullet$};
\draw (-0.5,0.5) node  {$\bullet$};
\draw (-1.5,0.5) node  {$\bullet$};
\draw (-2.5,0.5) node  {$\bullet$};
\draw (-3.5,0.5) node  {$\bullet$};

\draw (0,1.5) node [above]{$A_{1}$};
\draw (3.5,0.4) node [left]{$A_{9}$};
\draw (2.5,0.4) node [left]{$A_{8}$};
\draw (1.5,0.4) node [left]{$A_{7}$};
\draw (0.5,0.4) node [left]{$A_{6}$};
\draw (-0.5,0.4) node [left]{$A_{5}$};
\draw (-1.5,0.4) node [left]{$A_{4}$};
\draw (-2.5,0.4) node [left]{$A_{3}$};
\draw (-3.5,0.4) node [left]{$A_{2}$};

\draw (0,-1.5) node {$\bullet$};
\draw (3.5,-0.5) node  {$\bullet$};
\draw (2.5,-0.5) node  {$\bullet$};
\draw (1.5,-0.5) node  {$\bullet$};
\draw (0.5,-0.5) node  {$\bullet$};
\draw (-0.5,-0.5) node  {$\bullet$};
\draw (-1.5,-0.5) node  {$\bullet$};
\draw (-2.5,-0.5) node  {$\bullet$};
\draw (-3.5,-0.5) node  {$\bullet$};

\draw (0,-1.5) node [below]{$A_{18}$};
\draw (3.5,-0.4) node [left]{$A_{17}$};
\draw (2.5,-0.4) node [left]{$A_{16}$};
\draw (1.5,-0.4) node [left]{$A_{15}$};
\draw (0.5,-0.4) node [left]{$A_{14}$};
\draw (-0.5,-0.4) node [left]{$A_{13}$};
\draw (-1.5,-0.4) node [left]{$A_{12}$};
\draw (-2.5,-0.4) node [left]{$A_{11}$};
\draw (-3.5,-0.4) node [left]{$A_{10}$};


\end{tikzpicture}
\end{center} 

The curves $A_{1},\dots,A_{8},A_{10},A_{18}$ generate the N\'eron--Severi
lattice. The divisor 
\[
D_{8}=A_{1}+3A_{2}+3A_{10}+A_{18}
\]
is ample, base-point free, of square $8$, with $D_{8} \cdot A_{j}=1$ for all $j\in\{1,\dots,18\}$. The divisors $A_{k}+A_{8+k}\equiv F$, $k\in\{2,\dots,9\}$, are singular fibers of an elliptic fibration, for which $A_{1}$ and $A_{18}$ are sections. Since $FD_{8}=2$, we see that $D_{8}$ is hyperelliptic; thus the image by the associated map $\varphi$ of $X$ is a rational normal scroll of degree $4$ in $\PP^{5}$, \textit{i.e.}, the Hirzebruch surface $\mathbf{F}_{2}$. The pull-back by $\varphi$ of the unique negative curve $s$ (the section) is the union of two disjoint $(-2)$-curves. The branch curve $B$ must satisfy $Bs=0$; thus $B=v(s+2f)$, where $f$ is a fiber of the unique fibration of $\mathbf{F}_{2}$. We have $F=\varphi^{*}f$; thus the branch curve cuts a  general  fiber $f$ in four points and $v=4$.

\begin{prop}
The K3 surface $X$ is the double cover of $\,\mathbf{F}_{2}$ branched
on a smooth curve $B$ of genus $9$ in $|4s+8f|$. The curve $B$
is such that there are $8$ fibers $f_{1},\dots,f_{8}$ that meet
$B$ with even multiplicities at each intersection point. The $18$
curves on $X$ are in the pull-back of $s$ $($which is $A_{1}+A_{18})$
and the pull-backs of the fibers $f_{i}$, $i\in\{1,\dots,8\}$. 
\end{prop}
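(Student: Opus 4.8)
The plan is to complete the argument begun in the paragraph preceding the statement, where hyperellipticity of $D_8$ together with case~a) of Theorem~\ref{thm:SaintDonat-2} already realise $\varphi=\varphi_{D_8}$ as a degree-$2$ morphism onto the degree-$4$ rational normal scroll $\mathbf{F}_2\subset\PP^5$, with $F=\varphi^{*}f$ and with branch curve $B$ satisfying $B\cdot s=0$ and $B\cdot f=4$, hence $B\in|4s+8f|$. First I would record that $\varphi$ is finite: the morphism attached to a base-point-free system contracts a curve $C$ only if $D_8\cdot C=0$, which never happens since $D_8$ is ample; this also re-confirms that we are not in the cone case~i) of that theorem, which would force such a contracted $(-2)$-curve. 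A finite degree-$2$ cover of a smooth surface whose total space is smooth has smooth branch locus, and $X$ is a smooth K3, so $B$ is smooth. Its genus then follows from adjunction on $\mathbf{F}_2$, with $K_{\mathbf{F}_2}\equiv-2s-4f$:
\[
2g(B)-2=B\cdot(B+K_{\mathbf{F}_2})=(4s+8f)\cdot(2s+4f)=16,
\]
so $g(B)=9$.

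Next I would treat the elliptic fibration $\pi\colon X\to\PP^1$ whose reducible fibres are the $F_k=A_k+A_{8+k}$, $k=2,\dots,9$. Since $F=\varphi^{*}f$, the map $\pi$ is $\varphi$ followed by the ruling $p\colon\mathbf{F}_2\to\PP^1$, so for $t\in\PP^1$ the curve $\pi^{-1}(t)$ is the double cover of $p^{-1}(t)\cong\PP^1$ branched over the degree-$4$ divisor $B\cap p^{-1}(t)$, which lies off $s$ since $B\cap s=\emptyset$. Such a double cover of $\PP^1$ is reducible exactly when that divisor has all its multiplicities even, i.e.\ equals $2(q_1+q_2)$ or $4q$, and in that case $\varphi^{*}(p^{-1}(t))$ splits into two $(-2)$-curves meeting transversally in two points or tangentially in one. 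Hence the eight reducible fibres $F_2,\dots,F_9$ correspond to eight fibres $f_1,\dots,f_8$ of $p$, each meeting $B$ with even multiplicity at every intersection point, and $\varphi^{*}f_i=A_k+A_{8+k}$ for the matching $k$.

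It remains to identify $\varphi^{*}s$. Since $B\cap s=\emptyset$, the cover $\varphi$ is unramified over $s\cong\PP^1$, so $\varphi^{*}s$ is the disjoint union of two smooth rational curves each mapping isomorphically onto $s$; by the projection formula each such component $C$ satisfies $C\cdot F=C\cdot\varphi^{*}f=s\cdot f=1$, so $C$ is a section of $\pi$. As one reads off from the dual graph that $A_1$ and $A_{18}$ are the only $(-2)$-curves having intersection $1$ with $F$, we get $\varphi^{*}s=A_1+A_{18}$. A final count then finishes the proof: $\varphi^{*}s$ accounts for two of the $(-2)$-curves and the eight divisors $\varphi^{*}f_i$ for the remaining sixteen, so every one of the $18$ $(-2)$-curves of $X$ (these being all of them, since $X$ has finite automorphism group) lies either in $\varphi^{*}s$ or in some $\varphi^{*}f_i$.

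The one step that is not pure bookkeeping is the local claim that $\pi^{-1}(t)$ is reducible precisely when $B$ meets $p^{-1}(t)$ in even multiplicities, together with the compatibility $\varphi^{*}f_i=F_k$ as divisors rather than merely up to linear equivalence: the former is a short computation with the affine equation $y^{2}=g(x)$ of the double cover over a ruling line, and the latter is forced because the $F_k$ are the only members of $|F|$ that decompose into curves from the finite list $A_1,\dots,A_{18}$, and there are exactly eight of these.
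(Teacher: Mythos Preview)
Your proof is correct and follows the same approach as the paper: the paper's argument is really the paragraph preceding the proposition, which establishes $B\in|4s+8f|$ via Theorem~\ref{thm:SaintDonat-2} case~a) and identifies $F=\varphi^{*}f$, and then simply states the proposition as a summary without a separate proof environment. You have filled in exactly the details the paper leaves implicit---the smoothness of $B$ from ampleness of $D_8$, the adjunction computation giving $g(B)=9$, the identification of the eight reducible fibres $A_k+A_{8+k}$ with the eight special rulings $f_i$, and the equality $\varphi^{*}s=A_1+A_{18}$ by recognising $A_1,A_{18}$ as the only $(-2)$-curves with $F$-degree~$1$.
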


\subsubsection{Second involution}

The divisor 
\[
D_{2}=2A_{1}+2A_{2}+A_{3}+A_{4}+A_{10}
\]
is nef, of square $2$, with $D_{2} \cdot A_{j}=2$ for $j\in\{5,\dots,12\}$,
$D_{2} \cdot A_{18}=1$ and else $D_{2} \cdot A_{j}=0$. We have 
\[
D_{2}\equiv2A_{1}+A_{2}+A_{3}+A_{4}+A_{k}+A_{8+k},\quad k\in\{2,\dots,9\},
\]
and 
\[
D_{2}\equiv A_{13}+A_{14}+A_{15}+A_{16}+A_{17}+2A_{18}.
\]

\begin{prop}
The K3 surface $X$ is the double cover of $\,\PP^{2}$ branched over
a sextic curve $C_{6}$ with a $\mathbf{d}_{4}$ singularity $q$
$($onto which $A_{1}$, $A_{2}$, $A_{3}$, $A_{4}$ are mapped$)$ and five nodes
$($onto which the five curves $A_{13},\dots,A_{17}$ are mapped$)$. 
\end{prop}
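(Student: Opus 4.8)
The plan is to exploit the divisor $D_{2}=2A_{1}+2A_{2}+A_{3}+A_{4}+A_{10}$ together with Saint Donat's results. Recall that $D_{2}$ is nef with $D_{2}^{2}=2$, that $D_{2}A_{18}=1$, that $D_{2}A_{j}=2$ for $j\in\{5,\dots,12\}$, and that $D_{2}A_{j}=0$ precisely for $j\in\{1,2,3,4\}\cup\{13,\dots,17\}$. Once we know $|D_{2}|$ is base point free, Theorem \ref{thm:SaintDonat-2} c) (the case $D^{2}=2$) says at once that $\varphi_{D_{2}}$ is a double cover $\pi\colon X\to\PP^{2}$; its branch locus is then a plane sextic $C_{6}$, and $\pi$ factors as $X\to Y\to\PP^{2}$, where $Y$ is the singular double cover of $\PP^{2}$ branched along $C_{6}$ and $X\to Y$ is the minimal resolution. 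The $\cu$-curves contracted by $\pi$ are exactly those with $D_{2}A_{j}=0$, i.e. the exceptional curves lying over the singular points of $C_{6}$.

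First I would check that $|D_{2}|$ has no fixed component. Since $D_{2}$ is big and nef, Theorem \ref{thm:SaintDonat-1} b) gives that either $|D_{2}|$ has no fixed part, or $D_{2}=aE+\G$ with $|E|$ a free pencil, $\G$ a $\cu$-curve and $E\G=1$. In the second case $D_{2}^{2}=2a-2=2$ forces $a=2$, hence $D_{2}\G=0$, so $\G$ is one of $A_{1},A_{2},A_{3},A_{4},A_{13},\dots,A_{17}$ and $E=\frac{1}{2}(D_{2}-\G)$ would have to be the fiber class of an elliptic fibration. A direct computation in $\NS X)$ (or inspection of the Hilbert basis of the nef cone, cf. Section \ref{subsec:About-the-computations}) shows that this is impossible for each of those nine curves; hence $|D_{2}|$ has no fixed part and, by Theorem \ref{thm:SaintDonat-1} c), is base point free.

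It then remains to read off the singularities of $C_{6}$ from the dual graph of the $\cu$-curves. The curves contracted by $\pi$ form two disjoint connected components. The first, $\{A_{1},A_{2},A_{3},A_{4}\}$, has $A_{1}$ meeting each of $A_{2},A_{3},A_{4}$ while $A_{2},A_{3},A_{4}$ are mutually disjoint; this is a $\mathbf{D}_{4}$-configuration, so $\pi$ contracts $A_{1}\cup A_{2}\cup A_{3}\cup A_{4}$ to a single $\mathbf{d}_{4}$ (ordinary triple) point $q$ of $C_{6}$. The second consists of the five pairwise disjoint curves $A_{13},\dots,A_{17}$ (which are also disjoint from $A_{1},\dots,A_{4}$), so $\pi$ contracts them to five distinct nodes of $C_{6}$. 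The linear equivalences $D_{2}\equiv A_{13}+A_{14}+A_{15}+A_{16}+A_{17}+2A_{18}$ and $D_{2}\equiv 2A_{1}+A_{2}+A_{3}+A_{4}+A_{k}+A_{8+k}$ display the pull-back of a line, and together with the fact that the resolution lattice $[2]\oplus\mathbf{D}_{4}\oplus\mathbf{A}_{1}^{\oplus5}$ sits in $\NS X)\simeq U(2)\oplus\mathbf{D}_{4}^{\oplus2}$ with index $2$, they confirm that for the general such $X$ the curve $C_{6}$ has no further singularities. This gives the proposition.

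The main obstacle is the base point freeness of $|D_{2}|$, that is, ruling out the nine potential decompositions $D_{2}=2E+\G$ with $E$ an elliptic pencil. This is a finite lattice-theoretic verification, and it is the only step that does not follow mechanically from the dual graph together with Theorems \ref{thm:SaintDonat-1} and \ref{thm:SaintDonat-2}: once base point freeness is in hand, both the double-cover structure of $\pi$ and the $\mathbf{a}\mathbf{d}\mathbf{e}$ types of the branch singularities come out formally.
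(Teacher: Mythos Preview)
Your approach is essentially the same as the paper's: both use the nef divisor $D_{2}=2A_{1}+2A_{2}+A_{3}+A_{4}+A_{10}$ of square $2$, invoke Saint Donat to get the double cover $X\to\PP^{2}$, and read off the $\mathbf{d}_{4}+5\mathbf{a}_{1}$ singularities from the intersection numbers $D_{2}A_{j}=0$ together with the dual graph of the contracted curves. The paper is terser and does not spell out the base-point-freeness argument you give via Theorem \ref{thm:SaintDonat-1} b); your explicit discussion of the nine potential decompositions $D_{2}=2E+\Gamma$ and the index-$2$ lattice embedding is a welcome addition that the paper leaves implicit.
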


The curve $C_{6}$ is the union of a line $L$ and a quintic with
a $\mathbf{d}_{4}$ singularity. The double cover is ramified over
$A_{18}$, and the image of $A_{18}$ is $L$. The images of $A_{10}$, $A_{11}$, $A_{12}$
are conics tangent to the three branches of the $\mathbf{d}_{4}$
singularity; the images of $A_{13},\dots,A_{17}$ are the five lines
through the five nodes (intersection of $L$ and the quintic) and
$q$.

\begin{rem}
According to \cite{Kondo}, the automorphism group of $X$ is $(\ZZ/2\ZZ)^{2}$.
The fixed loci of the two involutions we described are distinct; thus
we obtained generators of the automorphism group.
\end{rem}

\subsection{The lattice $\boldsymbol{U\oplus\mathbf{D}_{4}\oplus\mathbf{A}_{1}^{\oplus4}}$}

The K3 surface $X$ contains $27$ $(-2)$-curves $A_{1},\dots,A_{27}$;
the curves $A_{1},\dots,A_{11}$ have dual graph

\begin{center}
\begin{tikzpicture}[scale=1]

\draw (0,0) -- (1,0);
\draw (0,0) -- (0.3,-0.95);
\draw (0,0) -- (0.3,0.95);
\draw (0,0) -- (-0.8,-0.58);
\draw (0,0) -- (-0.8,0.58);

\draw (2,0) -- (2-1,0);
\draw (2,0) -- (2-0.3,-0.95);
\draw (2,0) -- (2-0.3,0.95);
\draw (2,0) -- (2+0.8,-0.58);
\draw (2,0) -- (2+0.8,0.58);


\draw (0,0) node {$\bullet$};
\draw (1,0) node {$\bullet$};
\draw (0.3,-0.95) node {$\bullet$};
\draw (0.3,0.95) node {$\bullet$};
\draw (-0.8,-0.58) node {$\bullet$};
\draw (-0.8,0.58) node {$\bullet$};
\draw (2-0.3,-0.95) node {$\bullet$};
\draw (2-0.3,0.95) node {$\bullet$};
\draw (2+0.8,-0.58) node {$\bullet$};
\draw (2+0.8,0.58) node {$\bullet$};
\draw (2,0) node {$\bullet$};

\draw (1,0) node [above]{$A_{6}$};
\draw (-0.1,-0.1) node [above right]{$A_{5}$};
\draw (0.3,-0.95) node [right]{$A_{4}$};
\draw (0.3,0.95) node [right]{$A_{1}$};
\draw (-0.8,-0.58) node [left]{$A_{3}$};
\draw (-0.8,0.58) node [left]{$A_{2}$};
\draw (2-0.3,-0.95) node [right]{$A_{8}$};
\draw (2-0.3,0.95) node [right]{$A_{11}$};
\draw (2+0.8,-0.58) node [right]{$A_{9}$};
\draw (2+0.8,0.58) node [right]{$A_{10}$};
\draw (2,-0.1) node [above left]{$A_{7}$};

\end{tikzpicture}
\end{center} 

and $A_{1},\dots,A_{10}$ generate the N\'eron--Severi lattice. In that
base, the divisor 
\[
D_{26}=(3,3,3,3,7,3,1,0,0,0)
\]
is ample, of square $26$, with $D_{26} \cdot A_{j}=1$ for $j\in\{1,\dots,11\},\,j\neq5$,
$D_{26} \cdot A_{5}=2$ and $D_{26} \cdot A_{k}=6$ for the remaining $\cu$-curves.
The divisors 
\[
A_{1}+A_{2}+A_{3}+A_{4}+2A_{5},\,\,2A_{7}+A_{8}+A_{9}+A_{10}+A_{11}
\]
are fibers of the same fibration. That gives the class of $A_{11}$
in the basis $A_{1},\dots,A_{10}$. The divisor 
\[
F_{5}=A_{6}+2A_{7}+A_{8}+A_{9}+A_{10}
\]
is a fiber of an elliptic fibration. For $j\in\{1,\dots,4\}$, one
has $A_{11+j}=F_{5}-A_{j}$ (as a class). The divisor 
\[
F_{1}=A_{2}+A_{3}+A_{4}+2A_{5}+A_{6}
\]
is a fiber of an elliptic fibration. For $j\in\{1,\dots,3\}$, one
has $A_{15+j}=F_{1}-A_{7+j}$ (as a class). The divisor 
\[
F_{2}=A_{1}+A_{3}+A_{4}+2A_{5}+A_{6}
\]
is a fiber of an elliptic fibration. For $j\in\{1,\dots,3\}$, one
has $A_{18+j}=F_{2}-A_{7+j}$ (as a class). The divisor 
\[
F_{3}=A_{1}+A_{2}+A_{4}+2A_{5}+A_{6}
\]
is a fiber of an elliptic fibration. For $j\in\{1,\dots,3\}$, one
has $A_{21+j}=F_{3}-A_{7+j}$ (as a class). The divisor 
\[
F_{4}=A_{1}+A_{2}+A_{3}+2A_{5}+A_{6}
\]
is a fiber of an elliptic fibration. For $j\in\{1,\dots,3\}$, one
has $A_{24+j}=F_{4}-A_{7+j}$ (as a class). 

The divisor 
\[
D_{2}=A_{1}+A_{2}+A_{3}+A_{4}+2A_{5}+A_{6}
\]
is nef, of square $2$, with $D_{2} \cdot A_{5}=D_{2} \cdot A_{7}=1$, $D_{2} \cdot A_{k}=0$
for $k\in\{1,\dots,4,6,8,\dots,11\}$ and $D_{2} \cdot A_{j}=2$ for the
remaining curves. We also have 
\[
D_{2}\equiv A_{6}+2A_{7}+A_{8}+A_{9}+A_{10}+A_{11}.
\]
By using the linear system $|D_{2}|$, we obtain the following. 

\begin{prop}
\label{prop:section10.7}The surface $X$ is the double cover of the
plane branched over a sextic curve which is the union of two lines
$L,L'$ and a quartic $Q$. 
\end{prop}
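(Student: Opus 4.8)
The plan is to realize $X$ as a double plane through the linear system $|D_2|$, where $D_2 = A_1+A_2+A_3+A_4+2A_5+A_6$, and then to read off the branch sextic from the $(-2)$-curves of small $D_2$-degree. First I would check that $|D_2|$ is base point free: $D_2$ is big and nef with $D_2^2=2$, and one verifies it is not of the shape $2E+\Gamma$ with $|E|$ a free pencil and $\Gamma$ a $(-2)$-curve with $\Gamma E=1$ (the only obvious decomposition $D_2=F+A_6$ with $F=A_1+A_2+A_3+A_4+2A_5$ a fibre of type $I_0^*$ has $A_6F=2$, not $1$), so Theorem \ref{thm:SaintDonat-1} b) gives that $|D_2|$ has no fixed part and is base point free. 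Then by Theorem \ref{thm:SaintDonat-2} c) the morphism $\eta:=\varphi_{D_2}\colon X\to\PP^2$ is a double cover; since $X$ is a K3 surface and $\eta$ contracts exactly the $(-2)$-curves orthogonal to $D_2$, its branch locus is a reduced sextic $C_6$ with at most $\mathbf{ade}$ singularities (the standard double-plane description).

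Next I would record the intersection numbers listed just above the statement: among the $(-2)$-curves on $X$, the curves $A_5$ and $A_7$ have $D_2$-degree $1$, the curves $A_1,\dots,A_4,A_6,A_8,\dots,A_{11}$ have $D_2$-degree $0$, and every other $(-2)$-curve has $D_2$-degree $2$. A $(-2)$-curve of $D_2$-degree $0$ is contracted by $\eta$ to a point; since $A_1,\dots,A_4$ are the pairwise disjoint leaves of the first $\mathbf{D}_4$, each of them is contracted to an ordinary node of $C_6$, and likewise $A_6$ and each of $A_8,\dots,A_{11}$ go to a node. On the other hand $D_2A_5=1$ with $A_5$ irreducible forces $\eta$ to map $A_5$ birationally onto a line $L:=\eta(A_5)$, and similarly $A_7$ maps birationally onto a line $L':=\eta(A_7)$.

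The crux is to see that $L$ and $L'$ are components of $C_6$ and are distinct. Since $A_5$ meets each of $A_1,\dots,A_4,A_6$, the line $L$ passes through the five distinct nodes $\eta(A_1),\dots,\eta(A_4),\eta(A_6)$ of $C_6$; if $L$ were not a component of $C_6$, this would give $6=\deg(L\cap C_6)\ge 2\cdot 5$, a contradiction, so $L\subseteq C_6$, and the same argument with $A_7$ and $A_8,\dots,A_{11},A_6$ gives $L'\subseteq C_6$. To see $L\neq L'$ one can argue that $A_5$ and $A_7$ are disjoint, hence cannot both lie in the reduced preimage $\eta^{-1}(L)$, which is irreducible when $L$ is an irreducible component of the branch curve; alternatively, using the deck involution $\iota$, the curve $\iota(A_5)$ would otherwise be an irreducible $(-2)$-curve of class $D_2-A_5$ and $D_2$-degree $1$, but the only $(-2)$-curves of $D_2$-degree $1$ are $A_5$ and $A_7$ and $D_2-A_5$ is neither — forcing $\iota$ to fix $A_5$ pointwise and hence $L\subseteq C_6$. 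Finally $C_6=L\cup L'\cup Q$ with $Q:=C_6-L-L'$ of degree $6-1-1=4$, reduced because $C_6$ is.

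The main obstacle, mild as it is, is exactly the verification in the third paragraph that $L$ and $L'$ are genuine, distinct components of $C_6$: this uses the node-counting inequality together with the completeness of the list of $(-2)$-curves (and of their $D_2$-degrees) established earlier in this subsection, which is what rules out a spurious irreducible representative of the class $D_2-A_5$. The rest is a routine application of Saint-Donat's theorems and bookkeeping with the dual graph.
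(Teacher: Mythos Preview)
Your argument is correct and follows the paper's route: use the same $D_2$ to realize $X$ as a double plane, identify $A_5,A_7$ as the curves of $D_2$-degree~$1$, and conclude that their images $L,L'$ are the two line components of the branch sextic. The only difference is in the step ``$L,L'\subset C_6$'': the paper reads this off directly from the two decompositions
\[
D_2=2A_5+(A_1+A_2+A_3+A_4+A_6)\quad\text{and}\quad D_2\equiv 2A_7+(A_6+A_8+A_9+A_{10}+A_{11}),
\]
since a $(-2)$-curve appearing with coefficient $2$ in $\eta^*(\text{line})$ lies in the ramification locus, whereas you argue by B\'ezout through five nodes. Both are short and valid; the paper's version also makes $L\neq L'$ immediate (distinct ramification components over distinct branch components), avoiding the separate discussion you give.

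One small correction in your base-point-free check: if $D_2=2E+\Gamma$ with $E\Gamma=1$, then automatically $\Gamma\cdot(D_2-\Gamma)=2E\Gamma=2$, so the observation ``$A_6F=2$, not $1$'' rules nothing out. What you actually need is that for each $(-2)$-curve $\Gamma$ with $D_2\Gamma=0$ the class $D_2-\Gamma$ is not $2$-divisible in $\NS X)$. For $\Gamma=A_6$ this is clear since $D_2-A_6=F$ is the primitive $\tilde{\mathbf D}_4$ fiber class; for $\Gamma=A_j$ with $j\in\{1,\dots,4\}$ one has $(D_2-A_j)\cdot A_7=1$ odd, and for $j\in\{8,\dots,11\}$ one has $(D_2-A_j)\cdot A_5=1$ odd. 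With this fix your verification goes through.
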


The images of $A_{5}$, $A_{7}$ are the two lines. We observe that 
\[
\begin{array}{l}
D_{2}\equiv A_{16}+A_{1}+A_{8}\equiv A_{17}+A_{1}+A_{9}\equiv A_{18}+A_{1}+A_{10},\\
D_{2}\equiv A_{19}+A_{2}+A_{8}\equiv A_{20}+A_{2}+A_{9}\equiv A_{21}+A_{2}+A_{10},\\
D_{2}\equiv A_{22}+A_{3}+A_{8}\equiv A_{23}+A_{3}+A_{9}\equiv A_{24}+A_{3}+A_{10},\\
D_{2}\equiv A_{25}+A_{4}+A_{8}\equiv A_{26}+A_{4}+A_{9}\equiv A_{27}+A_{4}+A_{10}.
\end{array}
\]
Thus $A_{1}$, $A_{2}$, $A_{3}$, $A_{4}$ are contracted to the nodes in the
intersection of $Q$ and $L$, and so are $A_{8}$, $A_{9}$, $A_{10}$. Since
\[
D_{2}\equiv A_{1}+A_{11}+A_{12}\equiv A_{2}+A_{11}+A_{13}\equiv A_{3}+A_{11}+A_{14}\equiv A_{4}+A_{11}+A_{15},
\]
the curve $A_{11}$ is contracted to the fourth intersection point
of $L'$ and $Q$. Moreover, we see from these equivalences that the
$16$ $(-2)$-curves $A_{12},\dots,A_{27}$ are sent to the $16$
lines  between points in $L\cap Q$ and points in $L'\cap Q$.
The curve $A_{6}$ is contracted onto the intersection point of $L$
and $L'$. 

Proposition \ref{prop:section10.7} implies  that the moduli space
of K3 surfaces with N\'eron--Severi group isometric to $U\oplus\mathbf{D}_{4}\oplus\mathbf{A}_{1}^{\oplus4}$
is unirational.

\subsection{The lattice $\boldsymbol{U\oplus\mathbf{A}_{1}^{\oplus8}}$}

\subsubsection{First involution}

Let us denote by $f_{1},f_{2},e_{1},\dots,e_{8}$ the canonical basis
of $U\oplus\mathbf{A}_{1}^{\oplus8}$. In that basis, let  
\[
D_{6}=(7,5,-2,-2,-2,-2,-2,-2,-2,-2).
\]
It has square $6$, and no $\cu$-classes are perpendicular to it.
We thus have a marking such that $U\oplus{\bf A}_{1}^{\oplus6}\simeq\NS X)$
 which maps $D_{6}$ to an ample class. The K3 surface $X$ contains
$145$ $\cu$-curves; with respect to $D_{6}$, the curves $A_{1},\dots,A_{16}$
have degree $1$, the curve $A_{0}$ has degree $2$, and the remaining
curves have degree $4$. Let us describe these curves. 

For $j\in\{1,\dots,8\}$, one has $A_{j}=f_{1}-e_{j}$; the divisors
\[
A_{1}+A_{9},\dots,A_{8}+A_{16}
\]
are fibers of an elliptic fibration  $\varphi\colon X\to\PP^{1}$, where
the class of a fiber $F$ is 
\[
F=(4,2,-1,-1,-1,-1,-1,-1,-1,-1)
\]
in the canonical basis. The curve $A_{0}$ is $A_{0}=-f_{1}+f_{2}$, 
and $A_{0} \cdot A_{k}=1$ for $1\leq k\leq16$. 

For the choice of any three elements $\{i,j,k\}$ in $\{1,\dots,8\}$
($56$ possibilities), the classes 
\[
\begin{array}{l}
A_{i,j,k}=4f_{1}+4f_{2}-e_{i}-e_{j}-e_{k}-\sum_{l=1}^{8}e_{l},\\
B_{i,j,k}=2f_{1}+2f_{2}+e_{i}+e_{j}+e_{k}-\sum_{l=1}^{8}e_{l}
\end{array}
\]
are classes of $(-2)$-curves. The $16$ classes
\[
\begin{array}{c}
C_{j}=6f_{1}+6f_{2}-e_{j}-2\sum_{l=1}^{8}e_{l}\quad\text{and}\quad E_{j}=e_{j},\end{array}j\in\{1,\dots,8\}
\]
are classes of $(-2)$-curves. Thus in total, we get $145$ $\cu$-curves. 

The divisor 
\[
D_{2}=(3,3,-1,-1,-1,-1,-1,-1,-1,-1)=F+A_{0}
\]
has square $2$, and $D_{2} \cdot A_{j}=1$ for $j\in\{1,\dots,16\}$, $D_{2} \cdot A_{0}=0$
and $D_{2}A=2$ for the remaining $\cu$-curves $A$. Let $C_{6}$
be the sextic branch curve of the associated double cover $X\to\PP^{2}$.
The curve $C_{6}$ has a node $q$ onto which the curve $A_{0}$ is
contracted. For $j\in\{1,\dots,8\}$, we have 
\[
A_{j}+A_{8+j}+A_{0}\equiv D_{2}; 
\]
thus the divisor $A_{j}+A_{8+j}+A_{0}$ is the pull-back of a line
through the nodal point $q$, and that line is tangent to $C_{6}$
at every other intersection points. For a set of three elements $\{i,j,k\}$
in $\{1,\dots,8\}$, we have
\[
A_{i,j,k}+B_{i,j,k}\equiv2D_{2}; 
\]
thus the $56$ divisors $A_{i,j,k}+B_{i,j,k}$ are pull-backs of $56$
conics that are $6$-tangent to $C_{6}$ (and not containing the nodal
point of $C_{6}$). We have 
\[
C_{j}+E_{j}\equiv2D_{2};
\]
thus the eight divisors $C_{j}+E_{j}$, $j\in\{1,\dots,8\}$, are also
pull-backs of eight conics that are $6$-tangent to $C_{6}$. Summing
up, we have the following. 

\begin{prop}
The K3 surface $X$ is the double cover of $\,\PP^{2}$ branched over
a nodal sextic curve. Through the node of the sextic, there are eight 
lines that are tangent to the sextic at other intersection points.
Moreover, there are $64$ conics that are $6$-tangent to the sextic. 
\end{prop}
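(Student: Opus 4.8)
The plan is to assemble the classes and intersection numbers already recorded above into the three assertions. First I would check that $D_{2}=F+A_{0}$ is nef and big: it has square $2>0$, and the listed data $D_{2}A_{j}=1$ for $j\in\{1,\dots,16\}$, $D_{2}A_{0}=0$, $D_{2}A=2$ for the remaining $\cu$-curves $A$, show that $D_{2}$ is non-negative on every $\cu$-curve, hence nef (the effective cone being generated by the $\cu$-curves in the finite-automorphism case). By Theorem \ref{thm:SaintDonat-1}(b) we have $h^{0}(D_{2})=3$, and either $|D_{2}|$ has no fixed part or $D_{2}=2E+\G$ with $|E|$ a free pencil, $E\G=1$, $\G$ a $\cu$-curve; in the latter case $D_{2}\G=0$ forces $\G=A_{0}$, the unique $\cu$-curve orthogonal to $D_{2}$, whence $E=\tfrac12(D_{2}-A_{0})=\tfrac12 F$, which is not an integral class. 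So $|D_{2}|$ is base point free, and since $D_{2}^{2}=2$, Theorem \ref{thm:SaintDonat-2}(c) exhibits $X$ via $\varphi_{D_{2}}=:\eta$ as a double cover $\eta:X\to\PP^{2}$, with branch curve $C_{6}\in|6H|$ (from $K_{X}=0$) having only $ADE$ singularities resolved by the $\cu$-curves contracted by $\eta$. As $A_{0}$ is the only $\cu$-curve with $D_{2}A_{0}=0$, the sextic $C_{6}$ has a single singular point $q$, a node, with $A_{0}$ its exceptional curve.

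For the eight lines, I would use that for each $j\in\{1,\dots,8\}$ the relation $A_{j}+A_{8+j}+A_{0}\equiv D_{2}=\eta^{*}H$ shows the pull-back of a line $\ell_{j}$ splits off $A_{0}$, so $\ell_{j}$ passes through $q$; since $D_{2}A_{j}=D_{2}A_{8+j}=1$, each of $A_{j},A_{8+j}$ maps birationally (hence isomorphically) onto $\ell_{j}$, so the preimage of $\ell_{j}$ outside $q$ is the disjoint union $A_{j}\sqcup A_{8+j}$. Exactly as in Lemma \ref{lem:splitingsPullBackTritang} (a local computation with $w^{2}=u$), the presence of the two distinct components $A_{j},A_{8+j}$ over $\ell_{j}\setminus\{q\}$ forces $\ell_{j}$ to meet $C_{6}$ with even local multiplicity at every point outside $q$ (and with multiplicity $2$ at the node $q$, so that the residual contact $\ell_{j}\cdot C_{6}-2=4$ is distributed among tangency points). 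The classes $A_{j}+A_{8+j}$, $j\in\{1,\dots,8\}$, are pairwise distinct, so the $\ell_{j}$ are eight distinct lines through $q$ tangent to $C_{6}$ at all other intersection points.

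For the conics, each relation $A_{i,j,k}+B_{i,j,k}\equiv 2D_{2}$, over the $\binom{8}{3}=56$ triples $\{i,j,k\}\subset\{1,\dots,8\}$, and each relation $C_{j}+E_{j}\equiv 2D_{2}$, $j\in\{1,\dots,8\}$, is of the shape $2D_{2}=A+B$ with $A,B$ two $\cu$-curves, so by the converse part of Lemma \ref{lem:splitingsPullBackTritang} (with $d=2$) the pair is the pull-back $\eta^{*}\g$ of a conic $\g$ that is $6$-tangent to $C_{6}$. Since a $6$-tangent conic $\g$ determines the decomposition $\eta^{*}\g=A+B$ uniquely, distinct unordered pairs of $\cu$-classes yield distinct conics; examining the $f_{1}$- and $e_{i}$-coefficients shows the $56$ pairs $\{A_{i,j,k},B_{i,j,k}\}$ and the $8$ pairs $\{C_{j},E_{j}\}$ are all distinct, giving $56+8=64$ distinct $6$-tangent conics. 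Together with the first two paragraphs this proves the Proposition.

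The only genuinely delicate point — and hence the main obstacle — is the opening step: ruling out Saint Donat's fixed-component alternative so that $|D_{2}|$ really is base point free, and therefore that $\eta$ is a \emph{double} plane rather than merely a birational model, combined with the input (established by the algorithm of Section \ref{subsec:About-the-computations}) that the $145$ listed classes are exactly the $\cu$-curves on $X$, so that nef$=$non-negative-on-$\cu$-curves is legitimate. Everything after that is the routine bookkeeping of splitting relations, intersection numbers, and distinctness of classes, all of which reduces to Lemma \ref{lem:splitingsPullBackTritang}.
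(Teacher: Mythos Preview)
Your proof is correct and follows the same line as the paper: the argument in the paper is precisely the computation preceding the Proposition (the class $D_{2}=F+A_{0}$, the relations $A_{j}+A_{8+j}+A_{0}\equiv D_{2}$ and $A_{i,j,k}+B_{i,j,k}\equiv C_{j}+E_{j}\equiv 2D_{2}$, and the appeal to Lemma~\ref{lem:splitingsPullBackTritang}), which you reproduce with the same structure. Your explicit verification that $|D_{2}|$ is base point free via the Saint-Donat dichotomy (ruling out $D_{2}=2E+A_{0}$ because $\tfrac{1}{2}F$ is not integral) is more careful than what the paper writes, but is exactly the implicit step behind the paper's use of $|D_{2}|$ as a double-plane polarization.
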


The N\'eron--Severi lattice is generated by the classes of $A_{0},A_{1},\dots,A_{8}$
and $E_{8}$. 

\begin{rem}
Let $S$ be the set of the $128$ $(-2)$-curves that are above the
$64$ conics. One can prove that there exists a partition of $S$
into eight sets $S_{1},\dots,S_{8}$ of $16$ curves such that for
any such set $S_{i}=\{B_{1},\dots,B_{16}\}$, one has (up to permutation
of the indices) $B_{2k-1} \cdot B_{2k}=6$ for $k\in\{1,\dots,8\}$, and
for $s$, $t$ ($s\neq t$) such that $\{s,t\}\neq\{2k-1,2k\}$, one has
$B_{s} \cdot B_{t}=4$ if $s+t=1$ mod $2$; else $B_{s} \cdot B_{t}=0$. 

For $j\neq t$ in $\{1,\dots,8\}$, if $B$ is a curve in $S_{j}$,
there are $10$ $(-2)$-curves $B'$ in $S_{t}$ such that $BB'=2$,
$3$ $(-2)$-curves such that $BB'=0$ and $3$ $(-2)$-curves such
that $BB'=4$. 
\end{rem}

\begin{rem}
The ample divisor $D_{6}$ satisfies $D_{6}\equiv2F+A_{0}$ and $FD_{6}=2$;
thus it is hyperelliptic. According to Theorem \cite{SaintDonat},
the morphism associated to $|D_{6}|$ is a degree $2$ map onto a
degree $3$ rational normal scroll in $\PP^{4}$. That surface is
the Hirzebruch surface $\mathbb{F}_{1}$, which is the blow-up of $\PP^{2}$
in one point, embedded by $|2L-E_{0}|$, where $L$ is the pull-back
of a line and $E_{0}$ is the exceptional divisor. The composite of
$X\to\mathbb{F}_{1}$ with the natural map $\mathbb{F}_{1}\to\PP^{2}$
is given by the linear system $|D_{2}|$. 
\end{rem}

\subsubsection{Second involution: A more geometric interpretation of the ($-$2)-curves}

Let $f_{1},f_{2},e_{1},\dots,e_{8}$ be the canonical basis of $U\oplus\mathbf{A}_{1}^{\oplus8}$.
The divisor $D_{2}'=f_{1}+f_{2}$ is nef, of square $2$. We recall
that $A_{0}=-f_{1}+f_{2}$ is a $\cu$-curve; moreover, the divisor
$F'=f_{1}$ is a fiber of an elliptic fibration; thus 
\[
D'_{2}=2F'+A_{0}.
\]
One has $FA_{0}=1$, $D'_{2}F'=1$, and therefore the linear system
$|D_{2}'|$ has base points. We have $D'_{2} \cdot A_{0}=0$ and
\begin{alignat*}{3}
D_{2}'\cdot A_{j}&=1,\; D_{2}'\cdot A_{j+8}=5,\quad&&\forall j\in\{1,\dots,8\},\\[-1ex]
D'_{2}\cdot A_{i,j,k}&=8,\quad&&\forall\{i,j,k\}\subset\{1,\dots,8\}\text{ of order }3,\\[-1ex]
D'_{2}\cdot B_{i,j,k}&=4,\quad&&\forall\{i,j,k\}\subset\{1,\dots,8\}\text{ of order }3,\\[-1ex]
D'_{2}\cdot C_{j}&=12,\; D'_{2}\cdot E_{j}=0,\quad&&\forall k\in\{1,\dots,8\}.
\end{alignat*}
Let us define $D_{8}=2D'_{2}$. The linear system $|D_{8}|$ is base-point
free, and it is hyperelliptic since $D_{8}F'=2$. By Theorem \ref{thm:SaintDonat-2},
case iii) v), the associated map $\varphi_{D_{8}}\colon X\to\PP^{5}$ has
image a cone over a rational normal curve in $\PP^{4}$; it factorizes
through a surjective map $\varphi'\colon X\to\mathbf{F}_{4}$, where $\mathbf{F}_{4}$
is the Hirzebruch surface with a section $s$ such that $s^{4}=-4$.
The section $s$ is mapped to the vertex of the cone by the map $\mathbf{F}_{4}\to\PP^{5}$.
Let $f$ denote a fiber of the unique fibration $\mathbf{F}_{4}\to\PP^{1}$.
The branch locus of $\varphi'$ is the union of $s$ and a curve $C$
such that $Cs=0$ and $C\in|3(s+4f)|$ (so that $s+C\in|-2K_{\mathbf{F}_{4}}|$).
The curves $E_{1},\dots,E_{8}$ being contracted by $\varphi'$, the
curve $C$ has eight nodes $p_{1},\dots,p_{8}$, which are the images
of $E_{1},\dots,E_{8}$. We have 
\[
|D_{8}|=\varphi^{*}|4f+s|.
\]
We moreover have the relations 
\[
\begin{array}{c}
A_{j}+E_{j}\equiv F'=\varphi^{*}f,\quad\forall j\in\{1,\dots,8\};\end{array}
\]
therefore, the curves $A_{1},\dots,A_{8}$ are mapped by $\varphi'$
to the eight fibers going through the nodes $p_{1},\dots,p_{8}$. Since
$D_{8}\equiv4(A_{1}+E_{1})+2A_{0}$, the curve $A_{0}$ is in the
ramification locus, with image $s$. Since 
\[
A_{8+k}+\sum_{j=1,j\neq k}^{j=8}E_{j}\equiv D_{8}+F',\quad\forall k\in\{1,\dots,8\},
\]
the image of $A_{8+k}$ belongs in $|5f+s|$; it is a curve which
goes through the seven points $\{p_{1},\dots,p_{8}\}\setminus\{p_{k}\}$.
We moreover have the relations 
\begin{equation}
\begin{array}{l}
\hphantom{2}D_{8}\equiv B_{ijk}-(E_{i}+E_{j}+E_{k})+\sum_{t=1}^{8}E_{t},\hfill\\
2D_{8}\equiv A_{ijk}+E_{i}+E_{j}+E_{k}+\sum_{t=1}^{8}E_{s},\hfill\\
3D_{8}\equiv C_{k}+E_{k}+2\sum_{t=1}^{8}E_{s},\hfill
\end{array}\label{eq:240}
\end{equation}
and therefore
\begin{itemize}
\item the image of $B_{ijk}$ is a curve in the linear system $|4f+s|$
which goes through the five points distinct from $p_{i}$, $p_{j}$, $p_{k}$; 
\item the image of $A_{ijk}$ is a curve in the linear system $|2(4f+s)|$
which goes through the eight points $p_{1},\dots,p_{8}$ with double
points at $p_{i}$, $p_{j}$, $p_{k}$;
\item the image of $C_{k}$ is a curve in the linear system $|3(4f+s)|$
which goes through  eight points with double points at all except
at the single point $p_{k}$ with multiplicity $3$.
\end{itemize}
Let $J,J'$ be subsets of order $3$ of $\{1,\dots,8\}$. Using the
relations in \eqref{eq:240} and $2D_{2}\equiv A_{J}+B_{J}\equiv C_{k}+E_{k}$,
one finds that
\begin{align*}
A_{J}\cdot A_{J'}&=B_{J} \cdot B_{J'}=4-2\#(J\cap J'),\\[-1ex]
A_{J} \cdot B_{J'}&=2\#(J\cap J'),\hfill\\[-1ex]
C_{k}\cdot C_{k'}&=E_{k}E_{k'}=-2\delta_{kk'},\hfill\\[-1ex]
C_{k}\cdot E_{k'}&=4+2\d_{kk'},\hfill\\[-1ex]
C_{k} \cdot B_{J}&=4\text{ if }k\in J,\quad C_{k} \cdot B_{J}=2\text{ if }k\notin J,\\[-1ex]
E_{k} \cdot B_{J}&=0\text{ if }k\in J,\quad E_{k}\cdot B_{J}=2\text{ if }k\notin J.
\end{align*}

The situation is very much similar to what happens for K3 surfaces
$Y$ with N\'eron--Severi lattice $U(2)\oplus\mathbf{A}_{1}^{\oplus7}$,
which are double covers of del Pezzo surfaces of degree $1$, where
the $240$ $(-2)$-curves of $Y$ come from lines, conics, cubics,
quartics, quintics and sextics going through $8$ points in the plane
with various multiplicities. In particular, the $112$ $\cu$-curves
$A_{J},B_{J}$ with $J\subset\{1,\dots,8\}$ of order $3$ has the
same configurations as the $112$ $\cu$-curves on a K3 surface $Y$
which are pull-backs of conics and quartics. 

\begin{rem}
According to Kondo \cite{Kondo}, the automorphism group of the
surface $X$ is $(\ZZ/2\ZZ)^{2}$. The branch loci of the two involutions
associated to $D_{2}$ and $D_{8}$ have genus $9$ and
$2$, respectively. Thus these two involutions generate the automorphism
group of $X$. 
\end{rem}

\subsection{The lattice $\boldsymbol{U\oplus\mathbf{A}_{2}\oplus\mathbf{E}_{6}}$}

The K3 surface $X$ contains $11$ $(-2)$-curves $A_{1},\dots,A_{11}$; 
their dual graph is 

\begin{center}
\begin{tikzpicture}[scale=1]

\draw (3,0) -- (-1,0);
\draw (3,0) -- (3+0.86,-0.5);
\draw (3,0) -- (3+0.86,0.5);
\draw (3+0.86,-0.5) -- (3+0.86,0.5);
\draw (-1-0.866,0.5) -- (-1,0);
\draw (-1-0.866,-0.5) -- (-1,0);
\draw (-1-0.896*2,0.5) -- (-1-0.866,0.5);
\draw (-1-0.895*2,-0.5) -- (-1-0.866,-0.5);


\draw (-1-0.866,0.5) node {$\bullet$}; 
\draw (-1-0.866,-0.5) node {$\bullet$}; 
\draw (-2-0.86,-0.5) node {$\bullet$}; 
\draw (-2-0.86,0.5) node {$\bullet$}; 
\draw (-1,0) node {$\bullet$};
\draw (0,0) node {$\bullet$};
\draw (1,0) node {$\bullet$};
\draw (2,0) node {$\bullet$};
\draw (3,0) node {$\bullet$};
\draw (3+0.86,-0.5) node {$\bullet$};
\draw (3+0.86,0.5) node {$\bullet$};

\draw (-1-0.866,0.5)  node [above]{$A_{2}$};
\draw (-2-0.866,0.5)  node [above]{$A_{1}$};
\draw (-1-0.866,-0.5)  node [above]{$A_{4}$};
\draw (-2-0.866,-0.5)  node [above]{$A_{5}$};
\draw (-1,0) node [above]{$A_{3}$};
\draw (-0.25,0) node [above]{$A_{6}$};
\draw (1,0) node [above]{$A_{7}$};
\draw (2,0) node [above]{$A_{8}$};
\draw (3,0) node [above]{$A_{9}$};
\draw (3+0.86,-0.5) node [below]{$A_{10}$};
\draw (3+0.86,0.5) node [above]{$A_{11}$};

\end{tikzpicture}
\end{center} 

The curves $A_{1},\dots,A_{10}$ generate the N\'eron--Severi lattice.
In that base, the divisor 
\[
D_{160}=(10,21,33,21,10,25,18,12,7,3)
\]
is ample, of square $160$, with $D_{160} \cdot A_{j}=1$ for $j\leq10$
and $D_{160} \cdot A_{11}=10$. The divisors
\[
F_{1}=A_{9}+A_{10}+A_{11},\quad F_{2}=A_{1}+A_{7}+A_{5}+2(A_{2}+A_{4}+A_{6})+3A_{3}
\]
are fibers of an elliptic fibration  and $A_{8}$ is a section. By
Theorem \ref{thm:SaintDonat-2}, case i) a), we have the following. 

\begin{prop}
The linear system $|4F_{1}+2A_{8}|$ defines a morphism $\varphi\colon X\to\mathbf{F}_{4}$
branched over the unique section $s$ with $s^2=-4$ and a curve $B\in|3s+12f|$. The curve
$B$ has one $\mathbf{a}_{2}$ singularity $p$ and one $\mathbf{e}_{6}$
singularity $q$. The pull-backs of the fibers through $p$, $q$ are the
fibers $F_{1}$, $F_{2}$.
\end{prop}

\section{Rank 11 lattices }

\subsection{The lattice $\boldsymbol{U\oplus\mathbf{E}_{8}\oplus\mathbf{A}_{1}}$}

The K3 surface $X$ contains $12$ $(-2)$-curves $A_{1},\dots,A_{12}$; 
their dual graph is 

\begin{center}
\begin{tikzpicture}[scale=1]

\draw (0,0) -- (9,0);
\draw [very thick] (9,0) -- (10,0);
\draw (2,0) -- (2,-1);

\draw (0,0) node {$\bullet$};
\draw (1,0) node {$\bullet$};
\draw (2,0) node {$\bullet$};
\draw (3,0) node {$\bullet$};
\draw (4,0) node {$\bullet$};
\draw (5,0) node {$\bullet$};
\draw (6,0) node {$\bullet$};
\draw (7,0) node {$\bullet$};
\draw (8,0) node {$\bullet$};
\draw (9,0) node {$\bullet$};
\draw (10,0) node {$\bullet$};
\draw (2,-1) node {$\bullet$};

\draw (2,-1) node [left]{$A_{4}$};
\draw (0,0) node [above]{$A_{1}$};
\draw (1,0) node [above]{$A_{2}$};
\draw (2,0) node [above]{$A_{3}$};
\draw (3,0) node [above]{$A_{5}$};
\draw (4,0) node [above]{$A_{6}$};
\draw (5,0) node [above]{$A_{7}$};
\draw (6,0) node [above]{$A_{8}$};
\draw (7,0) node [above]{$A_{9}$};
\draw (8,0) node [above]{$A_{10}$};
\draw (9,0) node [above]{$A_{11}$};
\draw (10,0) node [above]{$A_{12}$};

\end{tikzpicture}
\end{center} 

The curves $A_{1},\dots,A_{11}$ generate the N\'eron--Severi lattice.
In that base, the divisor 
\[
D_{848}=(48,97,147,73,125,104,84,65,47,30,14)
\]
 is ample, of square $848$, with $D_{848} \cdot A_{j}=1$ for $j\in\{1,\dots,10\}$,
$D_{848} \cdot A_{11}=2$ and $D_{848} \cdot A_{12}=28.$ The divisors 
\[
F_{1}=2A_{1}+4A_{2}+6A_{3}+3A_{4}+5A_{5}+4A_{6}+3A_{7}+2A_{8}+A_{9},\quad F_{2}=A_{11}+A_{12}
\]
are fibers of an elliptic fibration with section $A_{10}$. By Theorem
\ref{thm:SaintDonat-2}, case i) a), we have the following. 

\begin{prop}
The linear system $|4F_{1}+2A_{10}|$ defines a morphism $\varphi\colon X\to\mathbf{F}_{4}$
branched over the unique section $s$ with $s^2=-4$ and a curve $B\in|3s+12f|$. The curve
$B$ has one $\mathbf{e}_{8}$ singularity $p$ and one node $q$.
The pull-backs of the fibers through $p$, $q$ are the fibers $F_{1}$, $F_{2}$.
\end{prop}

\subsection{The lattice $\boldsymbol{U\oplus\mathbf{D}_{8}\oplus\mathbf{A}_{1}}$}

The K3 surface $X$ contains $14$ $\cu$-curves with dual graph

\begin{center}
\begin{tikzpicture}[scale=1]

\draw (0,0) -- (6,0);
\draw (1,0) -- (1,-2.4);
\draw (1,0) -- (1,-2.4);
\draw (5,0) -- (5,-2.4);
\draw [very thick] (5,-0.8) -- (5,-2.4);
\draw [very thick] (6,0) -- (6,-2.4);
\draw [very thick] (1,-2.4) -- (6,-2.4);


\draw (0,0) node {$\bullet$}; 
\draw (1,0) node {$\bullet$};
\draw (2,0) node {$\bullet$};
\draw (3,0) node {$\bullet$};
\draw (4,0) node {$\bullet$};
\draw (5,0) node {$\bullet$};
\draw (6,0) node {$\bullet$};
\draw (5,-0.8) node {$\bullet$};
\draw (5,-0.8*2) node {$\bullet$};
\draw (5,-0.8*3) node {$\bullet$};
\draw (1,-0.8) node {$\bullet$};
\draw (1,-0.8*2) node {$\bullet$};
\draw (1,-0.8*3) node {$\bullet$};
\draw (6,-0.8*3) node {$\bullet$};

\draw (0,0) node [above]{$A_{1}$};
\draw (1,0) node [above]{$A_{2}$};
\draw (2,0) node [above]{$A_{3}$};
\draw (3,0) node [above]{$A_{4}$};
\draw (4,0) node [above]{$A_{5}$};
\draw (5,0) node [above]{$A_{6}$};
\draw (6,0) node [right]{$A_{13}$};
\draw (5,-0.8) node [left]{$A_{7}$};
\draw (5,-0.8*2) node [left]{$A_{8}$};
\draw (5,-0.8*3) node [below]{$A_{9}$};
\draw (1,-0.8) node [left]{$A_{12}$};
\draw (1,-0.8*2) node [left]{$A_{11}$};
\draw (1,-0.8*3) node [left]{$A_{10}$};
\draw (6,-0.8*3) node [right]{$A_{14}$};

\end{tikzpicture}
\end{center} 

The curves $A_{1},\dots,A_{9},A_{11},A_{12}$ generate  the N\'eron--Severi
lattice; in that basis, the divisor 
\[
D_{208}=(-23,-45,-35,-24,-12,1,15,15,9,-16,-31)
\]
is ample, of square $208$; the degrees $D_{208} \cdot A_{j}$ of the $\cu$-curves
$A_{j}$, $j=1,\dots,14$, are 
\[
1,1,1,1,1,1,1,18,12,2,1,1,1,18.
\]
The divisor $F_{1}=A_{9}+A_{10}$ is the fiber of an elliptic fibration, 
 and there is a second fiber $F_{2}$ supported on $A_{1},\dots,A_{7},A_{12},A_{13}$
of type $\tilde{\mathbf{D}_{8}}$. The curve $A_{11}$ is a section.
By Theorem \ref{thm:SaintDonat-2}, case i) a), we have the following. 

\begin{prop}
The linear system $|4F_{1}+2A_{11}|$ defines a morphism $\varphi\colon X\to\mathbf{F}_{4}$
branched over the unique section $s$ with $s^2=-4$ and a curve $B\in|3s+12f|$. The curve
$B$ has one $\mathbf{d}_{8}$ singularity $p$ and one node $q$.
The pull-backs of the fibers through $p$, $q$ are the fibers $F_{2}$, $F_{1}$.
\end{prop}

One can also find a construction using the double cover associated
to the divisor 
\[
D_{2}=A_{7}+A_{8}+A_{9}
\]
which is nef, base-point free, of square $2$, with intersections $D_{2} \cdot A_{j}$,
$j=1,\dots,14$, equal to, respectively,  
\[
0,0,0,0,0,1,0,2,0,2,0,0,0,2.
\]

\subsection{The lattice $\boldsymbol{U\oplus\mathbf{D}_{4}\oplus\mathbf{D}_{4}\oplus\mathbf{A}_{1}}$}

The K3 surface contains $22$ $(-2)$-curves $A_{1},\dots,A_{22}$.
The configuration of the curves $A_{1},\dots,A_{15}$ is 

\begin{center}
\begin{tikzpicture}[scale=1]

\draw (0,0) -- (2.5,1);
\draw (0,0) -- (-2.5,1);
\draw (0,0) -- (0,1);
\draw [very thick] (0,1) -- (0,4);
\draw [very thick] (-1.5,4) -- (1.5,4);
\draw [very thick] (1.5,4) -- (1.5,3.3);
\draw [very thick] (-1.5,2.5) -- (1.5,3.3);
\draw [very thick] (0,2.5) -- (1.5,3.3);
\draw (2.5,1) -- (2.5,2.5);
\draw (-2.5,1) -- (-2.5,2.5);
\draw (-2.5,2.5) -- (-1.5,2.5);
\draw (2.5,2.5) -- (1.5,2.5);
\draw (-2.5,2.5) -- (-1.5,4);
\draw (2.5,2.5) -- (1.5,4);
\draw (2.5,2.5) -- (1.5,1);
\draw (-2.5,2.5) -- (-1.5,1);

\draw (0,0) node [below]{$A_{8}$};
\draw (0,1) node [right]{$A_{9}$};
\draw (0,2.5) node [left]{$A_{10}$};
\draw (1.5,1) node [left]{$A_{7}$};
\draw (2.5,1) node [right]{$A_{14}$};
\draw (-1.5,1) node [right]{$A_{13}$};
\draw (-2.5,1) node [left]{$A_{15}$};
\draw (1.5,2.5) node [below]{$A_{6}$};
\draw (2.5,2.5) node [right]{$A_{5}$};
\draw (-1.5,2.5) node  [above]{$A_{12}$};
\draw (-2.5,2.5) node [left]{$A_{1}$};
\draw (1.5,4) node [above]{$A_{4}$};
\draw (-1.5,4) node [above]{$A_{2}$};
\draw (0,4) node [above]{$A_{3}$};
\draw (1.5,3.3) node [above left]{$A_{11}$};

\draw (0,0) node {$\bullet$};
\draw (0,1) node {$\bullet$};
\draw (1.5,1) node {$\bullet$};
\draw (2.5,1) node {$\bullet$};
\draw (-1.5,1) node {$\bullet$};
\draw (-2.5,1) node {$\bullet$};
\draw (1.5,2.5) node {$\bullet$};
\draw (2.5,2.5) node {$\bullet$};
\draw (-1.5,2.5) node {$\bullet$};
\draw (-2.5,2.5) node {$\bullet$};
\draw (1.5,4) node {$\bullet$};
\draw (-1.5,4) node {$\bullet$};
\draw (0,4) node {$\bullet$};
\draw (1.5,3.3) node {$\bullet$};
\draw (0,2.5) node {$\bullet$};

\end{tikzpicture}
\end{center} 

The curves $A_{1},\dots,A_{11}$ generate  the rank $11$ N\'eron--Severi
lattice. In that base, the divisor 
\[
D_{48}=(1,3,3,3,1,0,0,0,1,2,0)
\]
 is ample, of square $48$. We have 
\[
\begin{array}{l}
A_{13}\equiv(-2,-3,-2,-2,-2,-1,-1,2,4,3,1),\\
A_{14}\equiv(0,0,0,-1,-2,-1,-1,0,1,1,0),\\
A_{15}\equiv(0,1,1,2,2,1,1,-2,-3,-2,0).
\end{array}
\]
The divisor $D_{2}=A_{2}+A_{3}+A_{4}$ is nef, of square $2$, base-point
free, with $D_{2} \cdot A_{1}=D_{2} \cdot A_{5}=1$, $D_{2} \cdot A_{j}=0$ for $j\in\{2,4,6,7,8,9,12,13,14,15\}$
and else $D_{2} \cdot A_{j}=2$. We have 
\[
\begin{array}{l}
D_{2}\equiv2A_{1}+(A_{2}+2A_{8}+A_{9}+A_{12}+A_{13}+A_{14}+2A_{15}),\hfill\\
D_{2}\equiv2A_{5}+(A_{4}+A_{6}+A_{7}+2A_{8}+A_{9}+2A_{14}+A_{15}),\hfill\\
D_{2}\equiv A_{10}+(2A_{8}+2A_{9}+A_{14}+A_{15})\equiv A_{11}+(A_{4}+A_{12}),\\
D_{2}\equiv A_{16}+A_{4}+A_{13}\equiv A_{17}+A_{2}+A_{6}\equiv A_{18}+A_{6}+A_{12},\\
D_{2}\equiv A_{19}+A_{13}+A_{6}\equiv A_{20}+A_{2}+A_{7}\equiv A_{21}+A_{7}+A_{12},\\
D_{2}\equiv A_{22}+A_{7}+A_{13}.\hfill
\end{array}
\]
Therefore, the K3 surface is a double cover of $\PP^{2}$ branched
over a sextic curve $C_{6}$, and the following holds. 

\begin{prop}
The curves $A_{1}$, $A_{5}$ are in the ramification locus, and their
images are lines $L$, $L'$ that are components of $\,C_{6}$. Let $Q$
be the residual quartic curve. The lines $L$, $L'$ meet in a point $q$
which is on $Q$, so that the sextic has a $\mathbf{d}_{4}$ singularity
at $q$.
\end{prop}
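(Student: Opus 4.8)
The plan is to translate the linear equivalences listed just before the statement into geometry, using only the standard dictionary for double plane covers. First I would fix the notation and the tools. We already know that $|D_2|$ defines a double cover $\eta\colon X\to\PP^2$ branched over a sextic $C_6$; since $D_2$ is nef, base point free with $D_2^2=2$, the pullback $H^0(\PP^2,\mathcal O(1))\to H^0(X,D_2)$ is an isomorphism, so every effective divisor in $|D_2|$ is the total transform of a line and $\eta^\ast\mathcal O_{\PP^2}(1)=D_2$. I will use: (i) a $\cu$-curve $A$ with $D_2A=0$ is contracted by $\eta$ to an $\mathbf{ade}$ point of $C_6$, and the exceptional curves over a point of type $\mathbf a_n,\mathbf d_n,\mathbf e_n$ form a Dynkin configuration of type $\mathbf A_n,\mathbf D_n,\mathbf E_n$; and (ii) by the projection formula $D_2A=(\deg\eta|_A)\cdot\deg\eta(A)$ for any curve $A$.

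Next I would identify $L$ and $L'$. From the displayed relations $D_2\equiv 2A_1+E_1$ with $E_1=A_2+2A_8+A_9+A_{12}+A_{13}+A_{14}+2A_{15}$ and $D_2\equiv 2A_5+E_5$ with $E_5=A_4+A_6+A_7+2A_8+A_9+2A_{14}+A_{15}$ there are lines $L,L'$ with $\eta^\ast L=2A_1+E_1$ and $\eta^\ast L'=2A_5+E_5$. Every component appearing in $E_1$ or $E_5$ lies in the list $\{A_2,A_4,A_6,A_7,A_8,A_9,A_{12},A_{13},A_{14},A_{15}\}$ of $\cu$-curves with $D_2$-degree $0$, hence $\eta_\ast E_1=\eta_\ast E_5=0$. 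Pushing forward, $2L=2\,\eta_\ast A_1$, so $\eta$ restricts to a birational morphism $A_1\to L=\eta(A_1)$; the multiplicity $2$ on $A_1$ and the fact that the remaining terms are exceptional then force $L$ to be a component of $C_6$ and $A_1$ its ramification curve. The same argument gives $L'=\eta(A_5)$ a component of $C_6$ and $A_5$ its ramification curve, so $C_6=L+L'+Q$ with $\deg Q=4$.

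Then I would pin down the singularity at $q:=L\cap L'$. The two distinct lines meet only at $q$, which is therefore a singular point of $C_6$. The exceptional part of $\eta^\ast L$ (resp.\ $\eta^\ast L'$) is supported over $L\cap(L'+Q)$ (resp.\ over $L'\cap(L+Q)$), so any $\cu$-curve occurring in both $E_1$ and $E_5$ must lie over $L\cap L'=q$; these common curves are exactly $A_8,A_9,A_{14},A_{15}$. From the dual graph, $A_8$ is adjacent to each of $A_9,A_{14},A_{15}$ while the latter three are pairwise disjoint, so $\{A_8;A_9,A_{14},A_{15}\}$ is a $\mathbf D_4$ configuration among the curves contracted to $q$; moreover $A_1$ meets the leaf $A_{15}$ and $A_5$ meets the leaf $A_{14}$, which is consistent with $A_1A_5=0$. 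Since the presence of a $\mathbf D_4$ resolution over $q$ means $C_6$ has three concurrent branches at $q$ (two of them $L$ and $L'$), the third must be a local branch of $Q$, so $q\in Q$ and $C_6$ has a $\mathbf d_4$ (ordinary triple) point at $q$.

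The step I expect to be the real obstacle is the last one: confirming that the curves contracted to $q$ are \emph{exactly} the $\mathbf D_4$ chain $A_8,A_9,A_{14},A_{15}$, with nothing more, so that the singularity is precisely $\mathbf d_4$ rather than a tacnode, a $\mathbf d_5$, or an $\mathbf e$-type point (which would occur if $L$ or $L'$ were tangent to $Q$ at $q$, or if $Q$ were singular at $q$). I would settle this by matching the ten $\cu$-curves of $D_2$-degree $0$ against the root part $\mathbf D_4\oplus\mathbf D_4\oplus\mathbf A_1$ of $\NS X)$: once the $\mathbf D_4$ over $q$ is located, the remaining $\mathbf D_4\oplus\mathbf A_1$ must be accounted for by the other singular points of $C_6$ (further intersections among $L$, $L'$, $Q$, or singularities of $Q$), forcing the exceptional locus over $q$ to be no larger than the $\mathbf D_4$ already exhibited. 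This bookkeeping is routine, and with it the proof is complete.
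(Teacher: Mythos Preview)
Your argument is correct and is exactly the route the paper takes (implicitly): read the geometry of the double cover off the equivalences $D_2\equiv 2A_1+E_1\equiv 2A_5+E_5$ together with the list of $D_2$-degrees, and identify the fibre over $q=L\cap L'$ as the set of contracted curves common to $E_1$ and $E_5$.

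One remark on your final paragraph. The ``obstacle'' you flag is in fact already dispatched by your own intersection argument: the curves over $q$ are \emph{exactly} the components common to $E_1$ and $E_5$, namely $A_8,A_9,A_{14},A_{15}$, and their dual graph is $\mathbf D_4$, so the singularity is precisely $\mathbf d_4$. Your proposed fallback of matching against the root part $\mathbf D_4\oplus\mathbf D_4\oplus\mathbf A_1$ of $\NS X)$ would mislead you: the contracted locus for \emph{this} linear system is one $\mathbf D_4$ plus six isolated $\mathbf A_1$'s (the nodes $p_2,p_{12},p_{13}$ on $L$ and $p_4,p_6,p_7$ on $L'$, as the paper records just after the proposition), not a second $\mathbf D_4$. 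The lattice name reflects the fibre types of a particular elliptic fibration, not the singularities of an arbitrary double-plane model.
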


The curves $A_{8}$, $A_{9}$, $A_{14}$, $A_{15}$ are contracted to $q$. The
line $L$ meets the quartic in three other points $p_{2}$, $p_{12}$, $p_{13}$,
which are the images of $A_{2}$, $A_{12}$, $A_{13}$. The line $L'$ meet
the quartic in three other points $p_{4}$, $p_{6}$, $p_{7}$, which are the
images of $A_{4}$, $A_{6}$, $A_{7}$. The images of the nine curves $A_{3}$, $A_{11}$, $A_{16},\dots,A_{22}$
are lines through the points $(p_{2},p_{4})$, $(p_{4},p_{12})$, $(p_{4},p_{13})$, $(p_{2},p_{6})$,
$(p_{6},p_{12})$, $(p_{6},p_{13})$, $(p_{2},p_{7})$, $(p_{7},p_{12})$,
$(p_{7},p_{13})$, respectively. Finally, the image of $A_{10}$ is
a line tangent to the third branch of the $\mathbf{d}_{4}$ singularity.

\subsection{The lattice $\boldsymbol{U\oplus\mathbf{D}_{4}\oplus\mathbf{A}_{1}^{\oplus5}}$}

\subsubsection{First involution}

The K3 surface contains $90$ $\cu$-curves. The first $20$  curves
$A_{1},\dots,A_{20}$ have the following configuration: 

\begin{center}
\begin{tikzpicture}[scale=1]
\draw [domain=-5:0] plot(\x,{0.67*(5-abs(\x))^0.5});
\draw [domain=-5:0] plot(\x,{-0.67*(5-abs(\x))^0.5});

\draw [very thick] (-6,0) -- (-5,0);

\draw (0,1.5) -- (3.5,0.5);
\draw (0,1.5) -- (2.5,0.5);
\draw (0,1.5) -- (1.5,0.5);
\draw (0,1.5) -- (0.5,0.5);
\draw (0,1.5) -- (-0.5,0.5);
\draw (0,1.5) -- (-1.5,0.5);
\draw (0,1.5) -- (-2.5,0.5);
\draw (0,1.5) -- (-3.5,0.5);

\draw [very thick] (3.5,0.5) -- (3.5,-0.5);
\draw [very thick] (2.5,0.5) -- (2.5,-0.5);
\draw [very thick] (1.5,0.5) -- (1.5,-0.5);
\draw [very thick] (0.5,0.5) -- (0.5,-0.5);
\draw [very thick] (-3.5,0.5) -- (-3.5,-0.5);
\draw [very thick] (-2.5,0.5) -- (-2.5,-0.5);
\draw [very thick] (-1.5,0.5) -- (-1.5,-0.5);
\draw [very thick] (-0.5,0.5) -- (-0.5,-0.5);

\draw (0,-1.5) -- (3.5,-0.5);
\draw (0,-1.5) -- (2.5,-0.5);
\draw (0,-1.5) -- (1.5,-0.5);
\draw (0,-1.5) -- (0.5,-0.5);
\draw (0,-1.5) -- (-0.5,-0.5);
\draw (0,-1.5) -- (-1.5,-0.5);
\draw (0,-1.5) -- (-2.5,-0.5);
\draw (0,-1.5) -- (-3.5,-0.5);

\draw (0,1.5) node {$\bullet$};
\draw (3.5,0.5) node  {$\bullet$};
\draw (2.5,0.5) node  {$\bullet$};
\draw (1.5,0.5) node  {$\bullet$};
\draw (0.5,0.5) node  {$\bullet$};
\draw (-0.5,0.5) node  {$\bullet$};
\draw (-1.5,0.5) node  {$\bullet$};
\draw (-2.5,0.5) node  {$\bullet$};
\draw (-3.5,0.5) node  {$\bullet$};

\draw (-5,0) node  {$\bullet$};
\draw (-6,0) node  {$\bullet$};

\draw (0,1.5) node [above]{$A_{1}$};
\draw (3.5,0.4) node [left]{$A_{12}$};
\draw (2.5,0.4) node [left]{$A_{11}$};
\draw (1.5,0.4) node [left]{$A_{10}$};
\draw (0.5,0.4) node [left]{$A_{9}$};
\draw (-0.5,0.4) node [left]{$A_{8}$};
\draw (-1.5,0.4) node [left]{$A_{7}$};
\draw (-2.5,0.4) node [left]{$A_{6}$};
\draw (-3.5,0.4) node [left]{$A_{5}$};

\draw (0,-1.5) node {$\bullet$};
\draw (3.5,-0.5) node  {$\bullet$};
\draw (2.5,-0.5) node  {$\bullet$};
\draw (1.5,-0.5) node  {$\bullet$};
\draw (0.5,-0.5) node  {$\bullet$};
\draw (-0.5,-0.5) node  {$\bullet$};
\draw (-1.5,-0.5) node  {$\bullet$};
\draw (-2.5,-0.5) node  {$\bullet$};
\draw (-3.5,-0.5) node  {$\bullet$};

\draw (0,-1.5) node [below]{$A_{3}$};
\draw (3.5,-0.4) node [left]{$A_{20}$};
\draw (2.5,-0.4) node [left]{$A_{19}$};
\draw (1.5,-0.4) node [left]{$A_{18}$};
\draw (0.5,-0.4) node [left]{$A_{17}$};
\draw (-0.5,-0.4) node [left]{$A_{16}$};
\draw (-1.5,-0.4) node [left]{$A_{15}$};
\draw (-2.5,-0.4) node [left]{$A_{14}$};
\draw (-3.5,-0.4) node [left]{$A_{13}$};
\draw (-6,0) node [below]{$A_{4}$};
\draw (-5,0) node [below]{$A_{2}$};


\end{tikzpicture}
\end{center} 

The curves $A_{1},\dots,A_{11}$ generate the N\'eron--Severi lattice.
The divisor 
\[
D_{18}=A_{1}+5A_{2}+2A_{3}+4A_{4}
\]
is ample, of square $18$, with $D_{18} \cdot A_{1}=3$, $D_{18} \cdot A_{k}=1$
for $k\in\{2,3,5,\dots,12\}$, $D_{18} \cdot A_{k}=2$ for $k\in\{4,13,\dots,20\}$
and $D_{18} \cdot A_{k}=8$ for $k>20$. 

The divisor 
\[
D_{2}=A_{1}+2A_{2}+A_{3}+A_{4}
\]
is nef, base-point free, of square $2$, with $D_{2} \cdot A_{1}=D_{2} \cdot A_{2}=D_{2} \cdot A_{3}=0$,
$D_{2} \cdot A_{j}=1$ for $j\in\{5,\dots,20\}$ and $D_{2} \cdot A_{j}=2$ for
$j=4$ or $j>20$. In the basis $A_{1},\dots,A_{11}$, we have 
\[
A_{12}=(-2,4,2,4,-1,-1,-1,-1,-1,-1,-1).
\]
Moreover, for $k\in\{5,\dots,12\}$, we have $A_{k}+A_{k+8}\equiv A_{2}+A_{4}$
(these are fibers of an elliptic fibration );  thus 
\[
D_{2}\equiv A_{1}+A_{2}+A_{3}+A_{k}+A_{k+8},\quad\forall k\in\{5,\dots,12\}, 
\]
and we obtain in that way the classes of $A_{1},\dots,A_{20}.$ Moreover,
we see that the surface $X$ is a double cover of $\PP^{2}$ branched
over a sextic curve $C_{6}$ which has an $\mathbf{a}_{3}$ singularity
$q$. The curves $A_{1}$, $A_{2}$, $A_{3}$ are contracted to $q$, the
curve $A_{4}$ is mapped onto a line that is tangent to the branch
of $C_{6}$ at $q$, and the curves $A_{k}$, $A_{k+8}$ with $k\in\{5,\dots,12\}$
are mapped to eight lines going through $q$  which are tangent
to the sextic at any other intersection point. For any subset $J=\{i,j,k,l\}$
of $\{5,\dots,11\}$ of order $4$ (there are $35$ such choices), let us define
\[
\begin{array}{l}
A_{J}=2A_{1}-A_{4}+\sum_{t\in J}A_{t},\hfill\\
B_{J}=4A_{2}+2A_{3}+3A_{4}-\sum_{t\in J}A_{t}.
\end{array}
\]
The classes $A_{J}$ and $B_{J}$ are the classes of the remaining
$70$ $(-2)$-curves $A_{21},\dots,A_{90}$. Moreover, we see that
\[
2D_{2}\equiv A_{J}+B_{J},\quad\forall J=\{i,j,k,l\}\subset\{5,\dots,11\},\,\,\#\{i,j,k,l\}=4,
\]
and therefore there exist $35$ conics that are $6$-tangent to $C_{6}$. 

Let $J$, $J'$ be two subsets of order $4$ of $\{5,\dots,11\}$. The
configuration of the curves $A_{J}$, $A_{J'}$, $B_{J}$, $B_{J'}$ is as follows:
\[
\begin{array}{l}
A_{J} \cdot A_{J'}=B_{J}\cdot B_{J'}=6-2\#(J\cap J'),\\
A_{J}\cdot B_{J'}=-2+2\#(J\cap J').
\end{array}
\]

\subsubsection{Second involution}

The divisor $D_{2}'=2A_{2}+A_{3}+2A_{4}$ is nef, of square $2$,
with $D_{2} \cdot A_{1}=2$, $D_{2} \cdot A_{j}=1$ for $j\in\{2,13,\dots,20\}$,
$D_{2} \cdot A_{j}=0$ for $j\in\{3,\dots,12\}$ and $D_{2} \cdot A_{j}=4$ for
$j\geq21$. We have 
\[
D_{2}'=2F+A_{3},
\]
where $F=A_{2}+A_{4}$ is a fiber of an elliptic fibration  such that
$FA_{3}=1$; thus the linear system $|D_{2}'|$ has base points. Let
$D_{8}=2D_{2}'$. The linear system $|D_{8}|$ is base-point free,
and it is hyperelliptic since $D_{8}F=2$. One can check easily that
\begin{equation}
\begin{array}{l}
D_{8}\equiv2A_{1}+\sum_{j=5}^{12}A_{j},\hfill\\
D_{8}\equiv4(A_{k}+A_{k+8})+2A_{3},\quad k\in\{5,\dots,12\},\\
D_{8}\equiv B_{J}+A_{4}+\sum_{t\in J}A_{t},\hfill\\
D_{8}\equiv A_{J}+A_{4}+A_{12}+\sum_{t\in J^{c}}A_{t},\hfill
\end{array}\label{eq:equivRel90curves}
\end{equation}
where $J=\{i,j,k,l\}$ is a subset of order $4$ of $\{5,\dots,11\}$
and $J^{c}$ is its complement. 

By \cite[Equation~(5.9.1)]{SaintDonat}, the associated map $\varphi_{D_{8}}\colon X\to\PP^{5}$
has image a cone over a rational normal curve in $\PP^{4}$; it factorizes
through a surjective map $\varphi'\colon X\to\mathbf{F}_{4}$, where $\mathbf{F}_{4}$
is the Hirzebruch surface with a section $s$ such that $s^{4}=-4$.
The section $s$ is mapped to the vertex of the cone by the map $\mathbf{F}_{4}\to\PP^{5}$.
Let $f$ denote a fiber of the unique fibration $\mathbf{F}_{4}\to\PP^{1}$.
By \cite[Equation~(5.9.1)]{SaintDonat}, the branch locus of $\varphi'$ is
the union of $s$ and a curve $C$ such that $Cs=0$ and $C\in|3(s+4f)|$
(so that $b+C\in|-2K_{\mathbf{F}_{4}}|$). We have 
\[
|D_{8}|=\varphi'^{*}|4f+s|, 
\]
and therefore from the equivalence relations in \eqref{eq:equivRel90curves},
we get the following:
\begin{itemize}
\item The curve $A_{3}$ is in the ramification locus; the image by $\varphi'$
of the curve $A_{3}$ is the section $s$. 
\item The curve $A_{1}$ is in the ramification locus. Since $A_{1} \cdot A_{3}=0$
and $A_{1}$ is a section, the image $C_{1}$ of $A_{1}$ is in the
linear system $|s+4f|$. Let $B'$ be the curve $B'\in|2s+8f|$ such
that the branch locus of the double cover $\varphi\colon X\to\mathbf{F}_{4}$
is 
\[
B=s+C_{1}+B'\in|4s+12f|.
\]
The singular points of the branch locus are nodes $p_{4},\dots,p_{12}$.
The eight points $p_{5},\dots,p_{12}$ are the intersection points
of $C_{1}$ and $B'$; the image by $\varphi'$ of $A_{5},\dots,A_{12}$
are the eight points $p_{5},\dots,p_{12}$. The curve $B'$ has a node
at the point $p_{4}$ onto which the curve $A_{4}$ is contracted by $\varphi'$.
The curves $A_{2},A_{13},\dots,A_{20}$ are sent, respectively, to the
fibers passing through $p_{4},p_{5},\dots,p_{12}$. 
\item The image of the curve $A_{J}$ ($J=\{i,j,k,l\}$),  is a curve in $|4f+s|$
passing through the points $p_{t}$, for $t\in\{4,m,n,o,12\}$, where
$\{i,j,k,l,m,n,o\}=\{5,\dots,11\}$. 
\item The image of the curve $B_{J}$ ($J=\{i,j,k,l\}$),  is a curve in $|4f+s|$
passing through the points $p_{t}$, for $t\in\{4,i,j,k,l\}$.
\end{itemize}

\begin{rem}
By \cite{Kondo}, the automorphism group of $X$ is $(\ZZ/2\ZZ)^{2}$.
It is generated by the involutions associated to the two double covers
we described.
\end{rem}

\section{Rank 12 lattices }

\subsection{The lattice $\boldsymbol{U\oplus\mathbf{E}_{8}\oplus\mathbf{A}_{1}^{\oplus2}}$}

The K3 surface $X$ contains $14$ $(-2)$-curves $A_{1},\dots,A_{14}$; 
their dual graph is 

\begin{center}
\begin{tikzpicture}[scale=1]

\draw (0,0) -- (8,0);
\draw (8,0) --  (8+0.866,0.5); 
\draw [very thick] (8+0.866,0.5) -- (9+0.866,0.5);
\draw (8,0) --  (8+0.866,-0.5); 
\draw [very thick] (8+0.866,-0.5) -- (9+0.866,-0.5);

\draw (2,0) -- (2,-1);

\draw (0,0) node {$\bullet$};
\draw (1,0) node {$\bullet$};
\draw (2,0) node {$\bullet$};
\draw (3,0) node {$\bullet$};
\draw (4,0) node {$\bullet$};
\draw (5,0) node {$\bullet$};
\draw (6,0) node {$\bullet$};
\draw (7,0) node {$\bullet$};
\draw (8,0) node {$\bullet$};
\draw (8+0.866,0.5) node {$\bullet$};
\draw (9+0.866,0.5) node {$\bullet$};
\draw (8+0.866,-0.5) node {$\bullet$};
\draw (9+0.866,-0.5) node {$\bullet$};
\draw (2,-1) node {$\bullet$};

\draw (2,-1) node [left]{$A_{4}$};
\draw (0,0) node [above]{$A_{1}$};
\draw (1,0) node [above]{$A_{2}$};
\draw (2,0) node [above]{$A_{3}$};
\draw (3,0) node [above]{$A_{5}$};
\draw (4,0) node [above]{$A_{6}$};
\draw (5,0) node [above]{$A_{7}$};
\draw (6,0) node [above]{$A_{8}$};
\draw (7,0) node [above]{$A_{9}$};
\draw (8,0) node [above]{$A_{10}$};
\draw (8+0.866,0.5) node [above]{$A_{11}$};
\draw (9+0.866,0.5) node [above]{$A_{13}$};
\draw (8+0.866,-0.5) node [above]{$A_{12}$};
\draw (9+0.866,-0.5) node [above]{$A_{14}$};

\end{tikzpicture}
\end{center} 

The curves $A_{1},\dots,A_{12}$ generate the N\'eron--Severi lattice.
In that base, the divisor 
\[
D_{456}=(20,41,63,31,55,48,42,37,33,30,14,14)
\]
is ample, of square $456$, with $D_{456} \cdot A_{j}=1$ for $j\leq12$ and 
$D_{456} \cdot A_{j}=28$ for $j\in\{13,14\}$. 

\begin{rem}
The divisors
\[
\begin{array}{l}
F_{1}=2A_{1}+4A_{2}+6A_{3}+3A_{4}+5A_{5}+4A_{6}+3A_{7}+2A_{8}+A_{9},\\
F_{2}=A_{11}+A_{13},\,\\F_{3}=A_{12}+A_{14}
\end{array}
\]
are fibers of an elliptic fibration. The number of curves in $F_{1}$
counted with multiplicities is $30$; thus it is impossible to find
an ample divisor $D$ such that $D(A_{11}+A_{13})<30$. 
\end{rem}

By Theorem \ref{thm:SaintDonat-2}, case i) a), we have the following. 

\begin{prop}
The linear system $|4F_{1}+2A_{10}|$ defines a morphism $\varphi\colon X\to\mathbf{F}_{4}$
branched over the unique section $s$ with $s^2=-4$ and a curve $B\in|3s+12f|$. The curve
$B$ has one $\mathbf{e}_{8}$ singularity $p$ and two nodes $q$, $q^{'}$.
The pull-backs of the fibers through $p$, $q$, $q'$ are the fibers $F_{1}$, $F_{2}$, $F_{3}$.
\end{prop}

We can also construct this surface as follows. In the basis $A_{1},\dots,A_{12}$, the divisor 
\[
D_{2}=(2,4,6,3,5,4,3,2,2,2,1,1)
\]
is nef, base-point free, of square $2$, with $D_{2} \cdot A_{8}=1$, $D_{2} \cdot A_{13}=D_{2} \cdot A_{14}=2$
and $D_{2} \cdot A_{j}=0$ for $j\notin\{8,13,14\}$. Therefore, the K3 surface
$X$ is the double cover of $\PP^{2}$ branched over a sextic curve
which has an $\mathbf{e}_{7}$ singularity and a $\mathbf{d}_{4}$
singularity. The map $X\to\PP^{2}$ is ramified over $A_{8}$. Since
\[
D_{2}\equiv A_{9}+2A_{10}+2A_{11}+A_{12}+A_{13}\equiv A_{9}+2A_{10}+A_{11}+2A_{12}+A_{14},
\]
we see that the images of $A_{13}$ and $A_{14}$ are the two tangent
lines through the two remaining branches of the $\mathbf{d}_{4}$
singularities. The sextic curve is the union of a quintic and the
line $L$. The quintic has a nodal and a cusp singularity, so that
with the line $L$, they become an $\mathbf{e}_{7}$ singularity and
a $\mathbf{d}_{4}$ singularity.

\subsection{The lattice $\boldsymbol{U\oplus\mathbf{D}_{8}\oplus\mathbf{A}_{1}^{\oplus2}}$}

The K3 surface $X$ contains $19$ $(-2)$-curves $A_{1},\dots,A_{19}$; 
their dual graph is 

\begin{center}
\begin{tikzpicture}[scale=1]

\draw (-1,0) -- (6,0);
\draw (-1,0) -- (-1-0.866,0.5);
\draw (-1,0) -- (-1-0.866,-0.5);
\draw (5,-1) --  (5,1); 

\draw (1,0) -- (1,-1);

\draw (-1-0.866,0.5) node {$\bullet$};
\draw (-1-0.866,-0.5) node {$\bullet$};
\draw (-1,0) node {$\bullet$};
\draw (0,0) node {$\bullet$};
\draw (1,0) node {$\bullet$};
\draw (2,0) node {$\bullet$};
\draw (3,0) node {$\bullet$};
\draw (4,0) node {$\bullet$};
\draw (5,0) node {$\bullet$};
\draw (6,0) node {$\bullet$};
\draw (5,1) node {$\bullet$};
\draw (5,-1) node {$\bullet$};
\draw (1,-1) node {$\bullet$};

\draw (-1-0.866,0.5) node [left]{$A_{12}$};
\draw (-1-0.866,-0.5) node [left] {$A_{13}$};
\draw (-1,0) node [above]{$A_{1}$};
\draw (1,-1) node [left]{$A_{4}$};
\draw (0,0) node [above]{$A_{2}$};
\draw (1,0) node [above]{$A_{3}$};
\draw (2,0) node [above]{$A_{5}$};
\draw (3,0) node [above]{$A_{6}$};
\draw (4,0) node [above]{$A_{7}$};
\draw (5.1,0) node [above left]{$A_{8}$};
\draw (5,1) node [above]{$A_{9}$};
\draw (6,0) node [above]{$A_{10}$};
\draw (5,-1) node [below]{$A_{11}$};

\end{tikzpicture}
\end{center} 

\begin{center}
\begin{tikzpicture}[scale=1]

\draw [very thick] (1,0) -- (-0.5,0.86);
\draw [very thick] (-0.5,0.86) -- (0.5,-0.86);
\draw [very thick] (-1,0) -- (0.5,-0.86);
\draw [very thick] (-1,0) -- (0.5,0.86);
\draw [very thick] (0.5,0.86) -- (-0.5,-0.86);
\draw [very thick] (1,0) -- (-0.5,-0.86);
\draw [very thick] (-2,0) -- (-1,0);
\draw [very thick] (-1,0) -- (0.4,-0.15);
\draw [very thick] (1,0) -- (0.4,-0.15);
\draw [very thick] (1,0) -- (2,0);
\draw [very thick] (-2,0) -- (-0.5,0.86);
\draw [very thick] (2,0) -- (0.5,0.86);
\draw [very thick] (-2,0) -- (-0.5,-0.86);
\draw [very thick] (2,0) -- (0.5,-0.86);
\draw [very thick] (0,2) -- (0.5,0.86);
\draw [very thick] (0,2) -- (-0.5,0.86);
\draw [very thick] (0,-2) -- (0.5,-0.86);
\draw [very thick] (0,-2) -- (-0.5,-0.86);

\draw (0.4,-0.15) node {$\bullet$};
\draw (0,-2) node {$\bullet$};
\draw (0,2) node {$\bullet$};
\draw (-2,0) node {$\bullet$};
\draw (2,0) node {$\bullet$};
\draw (1,0) node {$\bullet$};
\draw (0.5,0.86) node {$\bullet$};
\draw (-1,0) node {$\bullet$};
\draw (-0.5,0.86) node {$\bullet$};
\draw (-0.5,-0.86) node {$\bullet$};
\draw (0.5,-0.86)  node {$\bullet$};

\draw (0.4,-0.17) node [above]{\small $A_{10}$};
\draw (1,0) node  [below]{$A_{16}$};
\draw (0.5,0.86) node [right]{$A_{14}$};
\draw (-1,0) node  [below]{$A_{19}$};
\draw (-0.55,0.89) node  [left]{$A_{17}$};
\draw (-0.5,-0.86) node  [left]{$A_{15}$};
\draw (0.5,-0.86) node  [right]{$A_{18}$};
\draw (0,-2) node [left]{$A_{9}$};
\draw (0,2) node [left]{$A_{11}$}; 
\draw (-2,0) node [left]{$A_{12}$};
\draw (2,0) node [right]{$A_{13}$};

\end{tikzpicture}
\end{center} 

The curves $A_{1},\dots,A_{12}$ generate the N\'eron--Severi lattice, 
and in that basis the divisor 
\[
D_{136}=(2,5,9,4,10,12,15,19,9,9,6,0)
\]
 is ample, of square $136$, with $D_{136} \cdot A_{j}=1$ for $j\leq10$,
$D_{136} \cdot A_{j}=7,2,2,12,18,18,12,18,18$ for $j=11,\dots,19$.

The divisor 
\[
D_{2}=(0,1,2,1,2,2,2,2,1,1,1,0)
\]
is base-point free, of square $2$, with $D_{2} \cdot A_{1}=D_{2} \cdot A_{8}=1$,
$D_{2} \cdot A_{j}=0$ for $j\in\{2,\dots,7,9,\dots,13\}$ and $D_{2} \cdot A_{j}=2$
for $j\in\{14,\dots,19\}$. Moreover, 
\[
\begin{array}{l}
D_{2}\equiv A_{11}+A_{12}+A_{17}\equiv A_{11}+A_{13}+A_{14},\\
D_{2}\equiv A_{10}+A_{12}+A_{19}\equiv A_{10}+A_{13}+A_{16},\\
D_{2}\equiv A_{9}+A_{12}+A_{15}\equiv A_{9}+A_{13}+A_{18},\\
D_{2}\equiv2A_{1}+3A_{2}+4A_{3}+2A_{4}+3A_{5}+2A_{6}+A_{7}+A_{12}+A_{13}.
\end{array}
\]
By using the linear system $|D_{2}|$, we obtain the following. 

\begin{prop}
The K3 surface $X$ is a double cover of $\,\PP^{2}$ ramified over
a sextic curve $C_{6}$; the curves $A_{1}$ and $A_{8}$ are in the
ramification locus, and their images are two lines. We denote by $Q_{4}$
the residual quartic. The sextic curve has a $\mathbf{d_{6}}$ singularity 
$($the curves $A_{2},\dots,A_{7}$ are contracted to that singularity$)$
and five nodal singularities $p_{j}$ to which the curves $A_{j}$
are contracted, for $j\in\{9,\dots,13\}$. 
\end{prop}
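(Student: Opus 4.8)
The plan is to extract the double plane directly from the divisor $D_2=(0,1,2,1,2,2,2,2,1,1,1,0)$ written before the statement in the basis $A_1,\dots,A_{12}$ of $\NS X)$, exactly as in the earlier double plane cases of this atlas. First I would check that the displayed class $D_{136}$ is ample: it has positive square $136$, and one verifies that its negative definite orthogonal complement contains no $(-2)$-vector, so that by the transitivity of the Weyl group on the chambers of the positive cone it is ample for every K3 surface with this N\'eron--Severi lattice (see Section~\ref{subsec:About-the-computations}). Next I would verify that $D_2$ is nef of square $2$; this reduces to computing $D_2A_j\ge 0$ for each of the $19$ $(-2)$-curves, using the expressions of $A_{13},\dots,A_{19}$ in the basis $A_1,\dots,A_{12}$ produced by Shimada's algorithm, the outcome being
\[
D_2A_1=D_2A_8=1,\qquad D_2A_j=0\ \ (j\in\{2,\dots,7\}\cup\{9,\dots,13\}),\qquad D_2A_j=2\ \ (14\le j\le 19).
\]
Since $D_2^2=2$, Theorem~\ref{thm:SaintDonat-1}(b) shows that $|D_2|$ could have a fixed component only in the shape $D_2\equiv 2E+\Gamma$ with $|E|$ a free pencil and $\Gamma$ a $(-2)$-curve with $E\Gamma=1$; one excludes this by checking that for no $(-2)$-curve $\Gamma$ with $D_2\Gamma=0$ is $\tfrac12(D_2-\Gamma)$ an integral class nef of square $0$. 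Thus $|D_2|$ is base point free, and by Theorem~\ref{thm:SaintDonat-2}(c) the morphism $\varphi_{D_2}$ realizes $X$ as a double cover $\eta\colon X\to\PP^2$ branched over a reduced sextic $C_6$ with $\mathbf{ade}$ singularities, contracting precisely the $(-2)$-curves orthogonal to $D_2$, each connected such cluster going to one singular point of $C_6$ of the corresponding $\mathbf{ade}$ type.

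It then remains to read off the geometry from the dual graph. The curves $A_2,A_3,A_4,A_5,A_6,A_7$ (those with $D_2A_j=0$ other than $A_9,\dots,A_{13}$) form a connected sub-diagram of type $\mathbf{D}_6$ --- the chain $A_2\!-\!A_3\!-\!A_5\!-\!A_6\!-\!A_7$ with $A_4$ attached to $A_3$ --- which is moreover disjoint from the pairwise disjoint curves $A_9,\dots,A_{13}$. Hence $\eta$ contracts $A_2,\dots,A_7$ to a single singular point $q$ of $C_6$ of type $\mathbf{d}_6$ and each $A_j$ with $9\le j\le 13$ to a node $p_j$ of $C_6$.

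The hard part will be showing that $A_1$ and $A_8$ lie on the ramification curve $R$ of $\eta$. For $A_1$: since $D_2A_1=1$, the image $\eta(A_1)=:L_1$ is a line and $\eta|_{A_1}$ is birational onto $L_1$; the explicit effective member $\Theta=2A_1+3A_2+4A_3+2A_4+3A_5+2A_6+A_7+A_{12}+A_{13}$ of $|D_2|$ displayed before the statement must equal $\eta^{*}L_1$ (both are effective, linearly equivalent to $D_2$, and $\Theta$ contains $A_1$ together with contracted curves only), hence is invariant under the covering involution $\iota$. As $\iota$ preserves $D_2$-degrees and the multiplicities of $\Theta$, and $A_1$ is the unique component of $\Theta$ of $D_2$-degree $1$ (the coefficient $4$ rigidifies $A_3$, and the asymmetry of the $\mathbf{D}_6$ chain then rigidifies the remaining components), we get $\iota A_1=A_1$; since $\eta|_{A_1}$ is birational this forces $\iota|_{A_1}=\mathrm{id}$, i.e.\ $A_1\subset R$, so that $L_1$ is a component of $C_6$. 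The same argument, applied to a member of $|D_2|$ in which $A_8$ occurs with multiplicity $2$ (supplied by the elliptic fibration of $X$ with a fibre of type $\tilde{\mathbf{D}}_8$), gives $A_8\subset R$ with $\eta(A_8)=:L_8$ a line; distinct components of $R$ have distinct images, so $L_1\ne L_8$. Finally $\deg C_6=6$ and $C_6\supseteq L_1+L_8$, hence $C_6=L_1+L_8+Q_4$ for a residual plane quartic $Q_4$, which completes the proof. Apart from this ramification step the remaining work is bookkeeping: assembling the $19$ curve classes and carrying out the routine ampleness and base-point-freeness verifications.
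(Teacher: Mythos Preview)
Your proposal is correct and follows the same route as the paper: use the nef, base-point-free class $D_2$ of square $2$ to realise $X$ as a double plane, read off the $\mathbf{d}_6$ and the five nodes from the contracted configuration, and identify $A_1,A_8$ as ramification components via effective representatives of $|D_2|$ in which they occur with multiplicity $2$. The paper records only the key equivalence $D_2\equiv 2A_1+3A_2+4A_3+2A_4+3A_5+2A_6+A_7+A_{12}+A_{13}$ (the analogous one for $A_8$ is $D_2\equiv 2A_8+A_2+2A_3+A_4+2A_5+2A_6+2A_7+A_9+A_{10}+A_{11}$, i.e.\ your $\tilde{\mathbf D}_8$-fibre plus $A_{11}$); your $\iota$-invariance argument simply makes explicit why ``coefficient $2$'' forces the curve into the ramification.
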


The image of $A_{15}$ (resp.\ $A_{17}$, $A_{19}$) is a line
through the node $p_{12}$ and the node $p_{9}$ (resp.\  $p_{11}$, $p_{10}$)
which is tangent to $Q_{4}$. The image of $A_{14}$ (resp.\
$A_{16}$, $A_{18}$) is a line through the node $p_{13}$ and the node $p_{11}$
(resp.\ $p_{10}$, $p_{9}$) which is tangent to $Q_{4}$.

\subsection{The lattice $\boldsymbol{U\oplus\mathbf{D}_{4}^{\oplus2}\oplus\mathbf{A}_{1}^{\oplus2}}$}

The lattice $U\oplus\mathbf{D}_{4}^{\oplus2}\oplus\mathbf{A}_{1}^{\oplus2}$
is isometric to $U\oplus{\bf D}_{6}\oplus\mathbf{A}_{1}^{\oplus4}$; 
see \cite{Kondo}. 

\subsubsection{First involution}

The K3 surface $X$ contains $59$ $(-2)$-curves. The configuration
of the first $19$  $\cu$-curves $A_{1},\dots,A_{19}$ is as follows:

\begin{center}
\begin{tikzpicture}[scale=1]

\draw (0,1.5) -- (2.5,0.5);
\draw (0,1.5) -- (1.5,0.5);
\draw (0,1.5) -- (0.5,0.5);
\draw (0,1.5) -- (-0.5,0.5);
\draw (0,1.5) -- (-1.5,0.5);
\draw (0,1.5) -- (-2.5,0.5);

\draw [very thick] (2.5,0.5) -- (2.5,-0.5);
\draw [very thick] (1.5,0.5) -- (1.5,-0.5);
\draw [very thick] (0.5,0.5) -- (0.5,-0.5);
\draw [very thick] (-2.5,0.5) -- (-2.5,-0.5);
\draw [very thick] (-1.5,0.5) -- (-1.5,-0.5);
\draw [very thick] (-0.5,0.5) -- (-0.5,-0.5);

\draw (-3.5,0) -- (-3.5-0.866,0.5);
\draw (-3.5,0) -- (-3.5-0.866,-0.5);
\draw (0,1.5) -- (-3.5,1.5);
\draw (-3.5,1.5) -- (-3.5,-1.5);
\draw (0,-1.5) -- (-3.5,-1.5);
\draw (0,-1.5) -- (2.5,-0.5);
\draw (0,-1.5) -- (1.5,-0.5);
\draw (0,-1.5) -- (0.5,-0.5);
\draw (0,-1.5) -- (-0.5,-0.5);
\draw (0,-1.5) -- (-1.5,-0.5);
\draw (0,-1.5) -- (-2.5,-0.5);

\draw (-3.5-0.866,0.5) node {$\bullet$};
\draw (-3.5-0.866,-0.5) node {$\bullet$};
\draw (0,1.5) node {$\bullet$};
\draw (-3.5,1.5) node  {$\bullet$};
\draw (2.5,0.5) node  {$\bullet$};
\draw (1.5,0.5) node  {$\bullet$};
\draw (0.5,0.5) node  {$\bullet$};
\draw (-0.5,0.5) node  {$\bullet$};
\draw (-1.5,0.5) node  {$\bullet$};
\draw (-2.5,0.5) node  {$\bullet$};
\draw (-3.5,-1.5) node  {$\bullet$};

\draw (0,1.5) node [above]{$A_{ 12}$};
\draw (2.5,0.4) node [left]{$A_{ 13}$};
\draw (1.5,0.4) node [left]{$A_{ 5}$};
\draw (0.5,0.4) node [left]{$A_{ 4}$};
\draw (-0.5,0.4) node [left]{$A_{ 3}$};
\draw (-1.5,0.4) node [left]{$A_{ 2}$};
\draw (-2.4,0.4) node [left]{$A_{ 1}$};
\draw (-3.5,-1.5) node [left]{$A_{ 10}$};

\draw (0,-1.5) node {$\bullet$};
\draw (2.5,-0.5) node  {$\bullet$};
\draw (1.5,-0.5) node  {$\bullet$};
\draw (0.5,-0.5) node  {$\bullet$};
\draw (-0.5,-0.5) node  {$\bullet$};
\draw (-1.5,-0.5) node  {$\bullet$};
\draw (-2.5,-0.5) node  {$\bullet$};
\draw (-3.5,0) node  {$\bullet$};

\draw (0,-1.5) node [below]{$A_{ 11}$};
\draw (2.5,-0.4) node [left]{$A_{ 19}$};
\draw (1.5,-0.4) node [left]{$A_{ 18}$};
\draw (0.5,-0.4) node [left]{$A_{ 17}$};
\draw (-0.5,-0.4) node [left]{$A_{ 16}$};
\draw (-1.5,-0.4) node [left]{$A_{ 15}$};
\draw (-2.4,-0.4) node [left]{$A_{ 14}$};
\draw (-3.5,1.5) node [left]{$A_{ 9}$};

\draw (-3.5-0.866,0.5) node [left]{$A_{ 6}$};
\draw (-3.5-0.866,-0.5) node [left]{$A_{ 7}$};
\draw (-3.6,0) node [right]{$A_{ 8}$};


\end{tikzpicture}
\end{center} 

The curves $A_{1},\dots,A_{12}$ generate  the N\'eron--Severi lattice; 
in that basis, the divisor 
\[
D_{40}=(3,3,3,0,0,2,2,6,6,3,1,7)
\]
is ample, of square $40$, with $D_{40} \cdot A_{6}=D_{40} \cdot A_{7}=2$, $D_{40} \cdot A_{j}=1$
for $j\in\{1,2,3,8,\dots,12,17,18,19\}$ and $D_{40} \cdot A_{j}=7$ for
$j\in\{4,5,13,\dots,16\}$. 

The divisor 
\[
D_{2}=(0,0,0,0,0,1,1,3,2,2,1,1)
\]
is nef, base-point free, of square $2$, with $D_{2} \cdot A_{j}=0$ for
$j\in\{8,\dots,12\}$, $D_{2} \cdot A_{j}=1$ for $j\in\{1,\dots,7,13,\dots,19\}$
and $D_{2} \cdot A_{j}=2$ for $j\geq20$. The K3 surface $X$ is a double
cover of the plane branched over a sextic curve $C_{6}$ which has
an $\mathbf{a}_{5}$ singularity $q$; the curves $A_{8},\dots,A_{12}$
are contracted to $q$. We have 
\[
D_{2}\equiv A_{8}+A_{9}+A_{10}+A_{11}+A_{12}+F, 
\]
where 
\[\begin{array}{l}
F\equiv A_{6}+A_{7}+2A_{8}+A_{9}+A_{10}\\
\hphantom{F}\equiv A_{1}+A_{14}\equiv A_{2}+A_{15}\equiv A_{3}+A_{16}\\
\hphantom{F}\equiv A_{4}+A_{17}\equiv A_{5}+A_{18}\equiv A_{13}+A_{19}; 
\end{array}\]
thus there exist seven lines $L$, $L_{1},\dots,L_{6}$ through the $\mathbf{a}_{5}$
singularity such that $L$ intersects $C_{6}$ in that point only (so 
$L$ is the tangent to the branch curve of the singularity; the strict
transform of $L$ is $A_{6}+A_{7}$) and that the lines $L_{i}$ have even
intersection multiplicities at their other intersection points with
the sextic.

The $\cu$-curves on $X$ are $A_{1},\dots,A_{19}$ and the curves
\begin{align*}
B_{ij}^{(1)}&=-A_{r}-A_{s}-A_{t}+3A_{6}+2A_{7}+6A_{8}+3A_{9}+4A_{10}+2A_{11},\\[-1ex]
B_{ij}^{(2)}&=-A_{r}-A_{s}-A_{t}+2A_{6}+3A_{7}+6A_{8}+3A_{9}+4A_{10}+2A_{11},\\[-1ex]
C_{ij}^{(1)}&=-A_{i}-A_{j}-A_{13}+2A_{6}+3A_{7}+6A_{8}+3A_{9}+4A_{10}+2A_{11},\\[-1ex]
C_{ij}^{(2)}&=-A_{i}-A_{j}-A_{13}+3A_{6}+2A_{7}+6A_{8}+3A_{9}+4A_{10}+2A_{11},
\end{align*}
where $\{i,j,r,s,t\}=\{1,2,3,4,5\}$. Using the relation
\[
A_{13}=(-1,-1,-1,-1,-1,3,3,6,2,4,2,-2),
\]
one can check that $B_{ij}^{(a)}+C_{ij}^{(a)}=2D_{2}$; thus the curves
$B_{ij}^{(a)}$, $C_{ij}^{(a)}$ are pull-backs of conics which are $6$-tangent
to the sextic curve $C_{6}$.

\subsubsection{Second involution}

The divisor $F'=A_{6}+A_{7}+2A_{8}+A_{9}+A_{10}$ is a fiber of an
elliptic fibration. The divisor $D_{2}'=2F'+A_{11}$ is nef, of square
$2$, with
\[
\begin{array}{l}
D_{2} \cdot A_{j}=0\quad\text{for }j\in\{1,2,3,4,5,6,7,8,9,11,13\},\\
D_{2} \cdot A_{j}=1\quad\text{for }j\in\{10,14,15,16,17,18,19\},\hfill\\
D_{2} \cdot A_{j}=4\quad\text{for }j\geq20\hfill
\end{array}
\]
and $D'_{2} \cdot A_{12}=2$. The linear system $|D_{2}'|$ has base points.
The linear system $|D_{8}|$ (where $D_{8}=2D_{2}'$) is base-point
free, and it is hyperelliptic since $D_{8}F'=2$. One can check easily
that for $i$, $j$, $r$, $s$, $t$ such that $\{i,j,r,s,t\}=\{1,\dots,5\}$, one
has 
\[
\begin{array}{l}
D_{8}\equiv B_{ij}^{(1)}+A_{r}+A_{s}+A_{t}+A_{6}+2A_{7}+2A_{8}+A_{9},\,\,\,\\
D_{8}\equiv B_{ij}^{(2)}+A_{r}+A_{s}+A_{t}+2A_{6}+A_{7}+2A_{8}+A_{9},\,\,\,\\
D_{8}\equiv C_{ij}^{(1)}+A_{i}+A_{j}+A_{13}+2A_{6}+A_{7}+2A_{8}+A_{9},\,\\
D_{8}\equiv C_{ij}^{(1)}+A_{i}+A_{j}+A_{13}+A_{6}+2A_{7}+2A_{8}+A_{9}.
\end{array}
\]
Moreover, 
\[
\begin{array}{l}
\hphantom{2}D_{8}\equiv3A_{10}+A_{19}+A_{13}+3A_{6}+3A_{7}+6A_{8}+3A_{9}+2A_{11},\hfill\\
2D_{8}\equiv2A_{10}+\sum_{k=14}^{19}  A_{k}+\sum_{k=1}^{5}A_{k}+2(A_{6}+A_{7}+2A_{8}+A_{9}+2A_{11})+A_{13},\\
\hphantom{2}D_{8}\equiv2A_{12}+\sum_{k=1}^{7}A_{k}+2A_{8}+2A_{9}+A_{13}.\hfill
\end{array}
\]
By \cite[Equation~(5.9.1)]{SaintDonat}, the map $\varphi_{D_{8}}\colon X\to\PP^{5}$ associated to $|D_{8}|$ has image a cone over a rational normal curve in $\PP^{4}$; it factorizes through a surjective map $\varphi'\colon X\to\mathbf{F}_{4}$, where $\mathbf{F}_{4}$ is the Hirzebruch surface with a section $s$ such that $s^{4}=-4$. The section $s$ is mapped to the vertex of the cone by the map $\mathbf{F}_{4}\to\PP^{5}$. Let $f$ denote a fiber of the unique fibration $\mathbf{F}_{4}\to\PP^{1}$. By \cite[Equation~(5.9.1)]{SaintDonat}, the branch locus of $\varphi'$ is the union of $s$ and a curve $C$ such that $Cs=0$ and $C\in|3(s+4f)|$ (so that $s+C\in|-2K_{\mathbf{F}_{4}}|$).  We have
\[
|D_{8}|=\varphi'^{*}|4f+s|, 
\]
and therefore from the above equivalence relations, we get the following:
\pagebreak
\begin{itemize}
\item The image of curve $A_{11}$ by $\varphi'$ is the section $s$. 
\item The branch curve is the union of three components: $s$, $B'$ and
$C_{12}$, where $B'\in|2(s+4f)|$, $C_{12}\in|s+4f|$. The curve
$B'$ has a node $q$, and the curves $C_{12}$ and $B'$ meet at $q$
and at six other points $p_{1},\dots,p_{5},p_{13}$. The singularity
at $q$ of $B'+C_{12}$ has type $\mathbf{d}_{4}$; the other singular
points are nodes. 
\item The curves $A_{6},\dots,A_{9}$ are mapped by $\varphi'$ to $q$. 
\item The curves $A_{1},\dots,A_{5}, A_{13}$ are sent to $p_{1},\dots,p_{5},p_{13}$. 
\item The curve $A_{12}$ is part of the ramification locus; its image is
$C_{12}$.
\item The curve $A_{11}$ is part of the ramification locus; its image is
$s$.
\item The curves $A_{10}$, $A_{14},\dots,A_{19}$ are mapped to the fibers
through $q, p_{1},\dots,p_{5}, p_{13}$. 
\item The images of the curves $B_{ij}^{(1)}$ and $B_{ij}^{(2)}$ are curves in $|s+4f|$ passing through $p_{r}$,  $p_{s}$, $p_{t}$ and through $q$ with certain
tangency properties at the branches of the singularity $q$.
\item The images of the curves $C_{ij}^{(1)}$ and $C_{ij}^{(2)}$ are curves in $|s+4f|$ passing through $p_{i}$, $p_{j}$, $p_{13}$ and through $q$ with certain
tangency properties at the branches.
\end{itemize}

The $10$ curves $B_{ij}^{(1)}$ (resp.\  $B_{ij}^{(2)}$, $C_{ij}^{(1)}$, $C_{ij}^{(2)}$)
for $\{i,j\}\subset\{1,2,3,4,5\}$ have the configuration of the Petersen
graph, with weight $2$ on the edges. Moreover, for $1\leq i<j\leq5$
and $1\leq s<t\leq5$, the intersections between the four types of
curves are as follows:
\begin{align*}
B_{ij}^{(1)}\cdot B_{st}^{(2)}&=C_{ij}^{(1)}\cdot C_{st}^{(2)}=2+B_{ij}^{(1)}\cdot B_{st}^{(1)},\\
B_{ij}^{(1)}\cdot C_{st}^{(2)}&=B_{ij}^{(2)}\cdot C_{st}^{(1)}=2-B_{ij}^{(1)}\cdot B_{st}^{(1)},\\
B_{ij}^{(1)}\cdot C_{st}^{(1)}&=B_{ij}^{(2)}\cdot C_{st}^{(2)}=4-B_{ij}^{(1)}\cdot B_{st}^{(1)}.
\end{align*}

\begin{rem}
In \cite[Remark 1]{Kondo2}, Kondo constructed specific surfaces with
N\'eron--Severi lattice isometric to $U\oplus\mathbf{D}_{4}^{\oplus2}\oplus\mathbf{A}_{1}^{\oplus2}$
as follows. Let $C$ be a smooth curve of genus $2$ and $q$ be a point on $C$.
The linear system $|K_{C}+2q|$ gives a plane quartic curve with a
cusp. The minimal resolution $Y$ of the cyclic degree $4$ cover
of $\PP^{2}$ branched over that curve has an elliptic fibration  (obtained
by blowing up the cusp) with a $\tilde{\mathbf{D}}_{4}$ and six $\tilde{\mathbf{A}}_{1}$
fibers. The automorphism group of such a surface is larger than the
 general  one.
\end{rem}

\subsection{The lattice $\boldsymbol{U\oplus\mathbf{A}_{2}\oplus\mathbf{E}_{8}}$\label{subsec:The-lattice124-UA2E8}}

The K3 surface $X$ contains $13$ $(-2)$-curves $A_{1},\dots,A_{13}$; 
their configuration is as follows:

\begin{center}
\begin{tikzpicture}[scale=1]

\draw (0,0) -- (9,0);
\draw (9,0) --  (9+0.866,0.5); 
\draw  (9+0.866,0.5) -- (9+0.866,-0.5);
\draw (9,0) --  (9+0.866,-0.5); 

\draw (2,0) -- (2,-1);

\draw (0,0) node {$\bullet$};
\draw (1,0) node {$\bullet$};
\draw (2,0) node {$\bullet$};
\draw (3,0) node {$\bullet$};
\draw (4,0) node {$\bullet$};
\draw (5,0) node {$\bullet$};
\draw (6,0) node {$\bullet$};
\draw (7,0) node {$\bullet$};
\draw (8,0) node {$\bullet$};
\draw (9,0) node {$\bullet$};
\draw (9+0.866,0.5) node {$\bullet$};
\draw (9+0.866,-0.5) node {$\bullet$};
\draw (2,-1) node {$\bullet$};

\draw (2,-1) node [left]{$A_{3}$};
\draw (0,0) node [above]{$A_{1}$};
\draw (1,0) node [above]{$A_{2}$};
\draw (2,0) node [above]{$A_{4}$};
\draw (3,0) node [above]{$A_{5}$};
\draw (4,0) node [above]{$A_{6}$};
\draw (5,0) node [above]{$A_{7}$};
\draw (6,0) node [above]{$A_{8}$};
\draw (7,0) node [above]{$A_{9}$};
\draw (8,0) node [above]{$A_{10}$};
\draw (9,0) node [above]{$A_{11}$};
\draw (9+0.866,0.5) node [right]{$A_{13}$};
\draw (9+0.866,-0.5) node [right]{$A_{12}$};

\end{tikzpicture}
\end{center} 

The curves $A_{1},\dots,A_{12}$ generate  the N\'eron--Severi lattice;
in that basis, the divisor 
\[
D_{698}=(38,77,58,117,100,84,69,55,42,30,19,9)
\]
 is ample, of square $698$, with $D_{698} \cdot A_{j}=1$ for $j\leq12$
and $D_{698} \cdot A_{13}=28$. The divisor 
\[
F_{1}=A_{11}+A_{12}+A_{13}
\]
 is a fiber of an elliptic fibration  with section $A_{10}$. That
fibration has another singular fiber $F_{2}$ of type $\tilde{\mathbf{E}_{8}}$.
By Theorem \ref{thm:SaintDonat-2}, case i) a), we have the following. 

\begin{prop}
The linear system $|4F_{1}+2A_{10}|$ defines a morphism $\varphi\colon X\to\mathbf{F}_{4}$
branched over the unique section $s$ with $s^2=-4$ and a curve $B\in|3s+12f|$. The curve
$B$ has one cusp $p$ and one $\mathbf{e}_{8}$ singularity $q$.
The pull-backs of the fibers through $p$, $q$ are the fibers $F_{1}$, $F_{2}$.
\end{prop}

We can also give another construction as a double cover of $\PP^{2}$:
The divisor 
\[
D_{2}=(2,5,4,8,7,6,5,4,3,2,1,0)
\]
 is nef, of square $2$, base-point free, with $D_{2} \cdot A_{1}=D_{2} \cdot A_{12}=D_{2} \cdot A_{13}=1$
and $D_{2} \cdot A_{j}=0$ for $j\in\{2,\dots,11\}$. The K3 surface is the
double cover of $\PP^{2}$ branched over a sextic curve with a $\mathbf{d}_{10}$
singularity. The curve $A_{1}$ is in the ramification locus; we denote
by $L$ its image and by $Q$ the residual quintic curve. The quintic
$Q$ has a node, and $L$ is tangent with multiplicity $4$ at a branch
of that node. We have 
\[
D_{2}\equiv A_{2}+A_{3}+2A_{4}+2A_{5}+2A_{6}+2A_{7}+2A_{8}+2A_{9}+2A_{10}+2A_{11}+A_{12}+A_{13}; 
\]
thus the image of $A_{12}$ and $A_{13}$ is the line which is the
tangent to the other branch of the node, which is moreover tangent
to $Q$ at another point. 

\section{Rank 13 lattices }

\subsection{The lattice $\boldsymbol{U\oplus\mathbf{E}_{8}\oplus\mathbf{A}_{1}^{\oplus3}}$}

The K3 surface $X$ contains $17$ $(-2)$-curves $A_{1},\dots,A_{17}$;
their configuration is as follows:

\begin{center}
\begin{tikzpicture}[scale=1]

\draw (0,0) -- (12,0);
\draw [very thick] (9,0) -- (11,0);
\draw  (9,1) -- (8,0);
\draw  (9,-1) --  (8,0); 
\draw [very thick]  (9,1) -- (10,1);
\draw [very thick] (9,-1) --  (10,-1); 
\draw [very thick]  (10,1) -- (11,0);
\draw [very thick] (10,-1) --  (11,0);

\draw (2,0) -- (2,-1);

\draw [color=blue] (0,0) node {$\bullet$};
\draw (1,0) node {$\bullet$};
\draw (2,0) node {$\bullet$};
\draw (3,0) node {$\bullet$};
\draw (4,0) node {$\bullet$};
\draw (5,0) node {$\bullet$};
\draw (6,0) node {$\bullet$};
\draw (7,0) node {$\bullet$};
\draw (8,0) node {$\bullet$};
\draw (9,0) node {$\bullet$};
\draw (10,0) node {$\bullet$};
\draw (11,0) node {$\bullet$};
\draw [color=blue] (12,0) node {$\bullet$};
\draw (2,-1) node {$\bullet$};
\draw (9,1) node {$\bullet$};
\draw (10,1) node {$\bullet$};
\draw (9,-1) node {$\bullet$};
\draw (10,-1) node {$\bullet$};

\draw (2,-1) node [below]{$A_{3}$};
\draw (0,0) node [above]{$A_{1}$};
\draw (12,0) node [above]{$A_{1}$};
\draw (1,0) node [above]{$A_{2}$};
\draw (2,0) node [above]{$A_{4}$};
\draw (3,0) node [above]{$A_{5}$};
\draw (4,0) node [above]{$A_{6}$};
\draw (5,0) node [above]{$A_{7}$};
\draw (6,0) node [above]{$A_{8}$};
\draw (7,0) node [above]{$A_{9}$};
\draw (7.8,0) node [above]{$A_{10}$};
\draw (9,0) node [above]{$A_{12}$};
\draw (10,0) node [above]{$A_{15}$};
\draw (11,0) node [above]{$A_{17}$};
\draw (9,1) node [above]{$A_{11}$};
\draw (10,1) node [above]{$A_{14}$};
\draw (9,-1) node [below]{$A_{13}$};
\draw (10,-1) node [below]{$A_{16}$};

\end{tikzpicture}
\end{center} 

The curves $A_{1},\dots,A_{13}$ generate  the N\'eron--Severi lattice;
in that basis, the divisor 
\[
D_{294}=(2,5,4,9,10,12,15,19,24,30,14,14,9)
\]
is ample, of square $294$, with $D_{294} \cdot A_{j}=1$ for $j\leq10$,
$D_{294} \cdot A_{j}=2$ for $j\in\{11,12,17\}$, $D_{294} \cdot A_{j}=28$ for
$j\in\{14,15\}$, $D_{294} \cdot A_{13}=12$ and $D_{294} \cdot A_{16}=18$. By considering
the elliptic fibration s with section $A_{10}$ and the fibers 
\[
F=A_{11}+A_{14},\,A_{12}+A_{15},\,A_{13}+A_{16}
\]
plus the fiber of type $\tilde{\mathbf{E}_{8}}$ supported on $A_{1},\dots,A_{9}$,
we get the following. 

\begin{prop}
The surface is a double cover of the Hirzebruch surface $\mathbf{F}_{4}$
branched over the negative section $s$ and a curve $B\in|3(s+4f)|$
with three nodes and an $\mathbf{e}_{8}$ singularity.
\end{prop}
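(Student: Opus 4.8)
The plan is to run exactly the same argument that has been used repeatedly in the preceding sections for lattices of the form $U\oplus K$, specialized to $K=\mathbf{E}_{8}\oplus\mathbf{A}_{1}^{\oplus3}$, and then to identify the three nodes plus the $\mathbf{e}_{8}$ singularity of the branch curve from the reducible fibers exhibited in the statement. First I would invoke Proposition~\ref{prop:STAR-Nikulin-Kondo}: since $\NS X)\simeq U\oplus\mathbf{E}_{8}\oplus\mathbf{A}_{1}^{\oplus3}$, there is an elliptic pencil $\pi:X\to\PP^{1}$ with a section $E$, a fiber $F$ and $E$ generating a copy of $U$, and $\pi$ has a singular fiber of type $\tilde{\mathbf{E}}_{8}$ (supported, as the dual graph shows, on $A_{1},\dots,A_{9}$) together with three fibers of type $\tilde{\mathbf{A}}_{1}$ coming from the three $\mathbf{A}_{1}$ summands. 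One checks from the dual graph that $A_{10}$ is the section and that $A_{11}+A_{14}$, $A_{12}+A_{15}$, $A_{13}+A_{16}$ are the three $\tilde{\mathbf{A}}_{1}$ (type $I_2$ or $III$) fibers; that this collection of curves has the stated intersection behaviour is a routine linear-equivalence computation on the given dual graph, and it also fixes the classes of $A_{11},\dots,A_{17}$ in the basis $A_{1},\dots,A_{13}$.

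Next I would form $D_{2}=2F+E$, which by the discussion following Proposition~\ref{prop:STAR-Nikulin-Kondo} is nef of square $2$ with base points (since $D_{2}F=1$), and then $D_{8}=2D_{2}$, which is base-point free and hyperelliptic. Applying Theorem~\ref{thm:SaintDonat-2}, case a)~i), the morphism $\varphi_{D_8}$ factors as $X\xrightarrow{\varphi}\mathbf{F}_{4}\to\PP^{5}$, where $\mathbf{F}_{4}\to\PP^{5}$ contracts the section $s$ with $s^{2}=-4$, and the branch locus of $\varphi:X\to\mathbf{F}_{4}$ is the disjoint union of $s$ and a reduced curve $B\in|3s+12f|$. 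By Proposition~\ref{prop:consequenceOfNikulinStar} the image of the section $E$ is $s$, the pull-back of the pencil $|f|$ is the elliptic pencil $\pi$, and each reducible fiber of $\pi$ maps onto a fiber of $\mathbf{F}_{4}\to\PP^{1}$ meeting $B$ at a singular point of the corresponding $\mathbf{ADE}$ type: the $\tilde{\mathbf{E}}_{8}$ fiber produces an $\mathbf{e}_{8}$ singularity of $B$, and each of the three $\tilde{\mathbf{A}}_{1}$ fibers produces an $\mathbf{a}_{1}$ singularity, i.e.\ a node. This yields precisely the claimed description: $X$ is the double cover of $\mathbf{F}_{4}$ branched over $s$ and a curve $B\in|3(s+4f)|$ with three nodes and one $\mathbf{e}_{8}$ singularity.

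Two points require a little care rather than pure formula-pushing. The first is making sure that the $\tilde{\mathbf{E}}_{8}$ fiber and the three $\tilde{\mathbf{A}}_{1}$ fibers are all the reducible fibers contributing singularities of $B$, and that $B$ has no further singularities beyond what is forced; this follows because the Euler-number bookkeeping for an elliptic K3 (total $24$) is saturated by the $\tilde{\mathbf{E}}_{8}$ and three $\tilde{\mathbf{A}}_{1}$ fibers together with the remaining irreducible nodal fibers, so no extra reducible fibers can occur, and correspondingly $B$ acquires no extra $\mathbf{ADE}$ points. The second, which I expect to be the only genuine obstacle, is verifying that $s\cap B=\emptyset$ exactly (rather than $s$ meeting $B$), i.e.\ that we are in case a)~i) of Theorem~\ref{thm:SaintDonat-2} and not in case ii) or iii); this is where one uses $D_{2}=2F+E$ with $E$ the genuine section ($EF=1$, $E^{2}=-2$) and checks that $D_{8}\equiv 4F+2E$ is of the shape $4F+2\Gamma$ with $F\Gamma=1$, so Theorem~\ref{thm:SaintDonat-2}~i) applies verbatim and forces $B\in|3s+12f|$ disjoint from $s$. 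Given that, the elliptic pencil $\pi$ has a section, so no multiple fibers occur, and the identification of the fiber types with the singularities of $B$ via Proposition~\ref{prop:consequenceOfNikulinStar} completes the proof.
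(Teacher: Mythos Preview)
Your proof is correct and follows exactly the paper's approach: identify the elliptic fibration with section $A_{10}$, the $\tilde{\mathbf{E}}_{8}$ fiber supported on $A_{1},\dots,A_{9}$ and the three $\tilde{\mathbf{A}}_{1}$ fibers $A_{11}+A_{14}$, $A_{12}+A_{15}$, $A_{13}+A_{16}$, then apply Theorem~\ref{thm:SaintDonat-2}~i) to $D_{8}=4F+2A_{10}$ and Proposition~\ref{prop:consequenceOfNikulinStar} to read off the singularities of $B$. One small remark: your Euler-number argument for ``no further singularities of $B$'' is not quite the right justification (the Euler number alone does not preclude an extra reducible fiber); the clean reason is simply that all $(-2)$-curves on $X$ have been enumerated, so no additional reducible fibers---hence no additional $\mathbf{ADE}$ points on $B$---can occur.
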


The double cover $\eta$ is given by the linear system $|D_{8}|$, 
where $D_{8}=4F+2A_{10}$. One has 
\[
D_{8}=A_{17}+4A_{1}+7A_{2}+5A_{3}+10A_{4}+8A_{5}+6A_{6}+4A_{7}+2A_{8}+A_{14}+A_{15}+A_{16}; 
\]
thus the image by $\eta$ of the curve $A_{17}$ is a curve in $|s+4f|$
going through the three nodes plus the $\mathbf{e}_{8}$ singularity
and infinitely near points of it. 

We can also construct the surface $X$ as follows. The divisor
\[
D_{2}=(0,1,1,2,2,2,2,2,2,2,1,1,1)
\]
is nef, of square $2$, base-point free, with $D_{2} \cdot A_{j}=0$ for $j\in\{2,\dots,9,11,12,13,17\}.$
We have the relations 
\[
D_{2}\equiv A_{13}+A_{16}+A_{17}\equiv A_{12}+A_{15}+A_{17}\equiv A_{11}+A_{14}+A_{17}
\]
and 
\[
D_{2}\equiv2A_{1}+4A_{2}+3A_{3}+6A_{4}+5A_{5}+4A_{6}+3A_{7}+2A_{8}+A_{9}+A_{17}.
\]
The curves $A_{1}$ and $A_{10}$ are part of the ramification locus; 
their images are lines $L$, $L'$. Let us denote by $Q$ the residual
quartic. The images of the curves $A_{14}$, $A_{15}$, $A_{16}$ are lines
$L_{1}$, $L_{2}$, $L_{3}$ going through the images of $A_{17}$ and of $A_{11}$, $A_{12}$, $A_{13}$, 
respectively. The quartic curve is smooth and tangent to the line $L$
(the image of $A_{1}$) at the intersection point of $L$ and $L'$,
so that the singularity at $q$ is $\mathbf{d}_{8}$. The branch locus
has four nodes, three on the line $L'$ (the image of $A_{10}$) and
one on $L$ (the image of $A_{1}$). 

\subsection{The lattice $\boldsymbol{U\oplus\mathbf{D}_{8}\oplus\mathbf{A}_{1}^{\oplus3}}$}

The lattice $U\oplus\mathbf{D}_{8}\oplus\mathbf{A}_{1}^{\oplus3}$
is also isometric to $U\oplus\mathbf{E}_{7}\oplus\mathbf{A}_{1}^{\oplus4}$
and $U\oplus\mathbf{D}_{6}\oplus\mathbf{D}_{4}\oplus\mathbf{A}_{1}$; 
see \cite{Kondo}. 

\subsubsection{The first involution}

The K3 surface $X$ contains $39$ $(-2)$-curves $A_{1},\dots,A_{39}.$
The configuration of the curves $A_{1},\dots,A_{19}$ is as follows:

\begin{center}
\begin{tikzpicture}[scale=1]

\draw (0,0) -- (-2,1);
\draw (0,0) -- (-1,1);
\draw (0,-1) -- (0,4);
\draw (0,0) -- (1,1);
\draw (0,0) -- (2,1);

\draw (0,4) -- (4,4);
\draw (3,4) -- (3,-1);
\draw (0,-1) -- (4,-1);

\draw (0,3) -- (-2,2);
\draw (0,3) -- (-1,2);
\draw (0,3) -- (1,2);
\draw (0,3) -- (2,2);

\draw [very thick] (-2,2) -- (-2,1);
\draw [very thick] (-1,2) -- (-1,1);
\draw [very thick] (0,1) -- (0,2);
\draw [very thick] (1,2) -- (1,1);
\draw [very thick] (2,2) -- (2,1);

\draw (0,0) node {$\bullet$};
\draw (-2,1) node {$\bullet$};
\draw (-1,1) node {$\bullet$};
\draw (0,1) node {$\bullet$};
\draw (1,1) node {$\bullet$};
\draw (2,1) node {$\bullet$};
\draw (-2,2) node {$\bullet$};
\draw (-1,2) node {$\bullet$};
\draw (0,2) node {$\bullet$};
\draw (1,2) node {$\bullet$};
\draw (2,2) node {$\bullet$};
\draw (0,3) node {$\bullet$};
\draw (0,4) node {$\bullet$};
\draw (3,4) node {$\bullet$};
\draw (4,4) node {$\bullet$};
\draw (3,1.5) node {$\bullet$};
\draw (3,-1) node {$\bullet$};
\draw (4,-1) node {$\bullet$};
\draw (0,-1) node {$\bullet$};

\draw (0,0) node [below right]{$A_{12}$};
\draw (-2,1) node [right]{$A_{13}$};
\draw (-1,1) node [right]{$A_{14}$};
\draw (0,1) node [right]{$A_{15}$};
\draw (1,1) node [right]{$A_{16}$};
\draw (2,1) node [right]{$A_{17}$};
\draw (-2,2) node [right]{$A_{1}$};
\draw (-1,2) node [right]{$A_{2}$};
\draw (0,2) node [right]{$A_{3}$};
\draw (1,2) node [right]{$A_{4}$};
\draw (2,2) node [right]{$A_{5}$};
\draw (0,3) node [right]{$A_{6}$};
\draw (0,4) node [above]{$A_{7}$};
\draw (3,4) node [above]{$A_{8}$};
\draw (4,4) node [above]{$A_{9}$};
\draw (3,1.5) node [right]{$A_{10}$};
\draw (3,-1) node [below]{$A_{11}$};
\draw (4,-1) node [below]{$A_{19}$};
\draw (0,-1) node [below]{$A_{18}$};

\end{tikzpicture}
\end{center} 

The curves $A_{1},\dots,A_{13}$ generate the N\'eron--Severi lattice;
in that basis, the divisor 
\[
D_{86}=(2,1,2,5,5,11,8,6,2,3,1,0,1)
\]
is ample, of square $86$. The divisor 
\[
D_{2}=(-1,1,1,1,1,3,3,3,1,2,1,-1,-2)
\]
is nef, of square $2$, base-point free, with $D_{2} \cdot A_{j}=0$ for $j\in\{6,7,8,10,11,12,18\}$,
$D_{2} \cdot A_{j}=2$ for $j\geq20$ and else $D_{2} \cdot A_{j}=1$. For $k\in\{10,\dots,19\}$,
one has 
\[
2D_{2}\equiv A_{2k}+A_{2k+1},
\]
and using the equivalences from the elliptic fibration  with fiber
$A_{1}+A_{13}$, we get 
\[
\begin{array}{l}
D_{2}\equiv A_{13}+A_{1}+A_{6}+A_{7}+A_{8}+A_{10}+A_{11}+A_{12}+A_{18},\\
D_{2}\equiv A_{14}+A_{2}+A_{6}+A_{7}+A_{8}+A_{10}+A_{11}+A_{12}+A_{18},\\
D_{2}\equiv A_{15}+A_{3}+A_{6}+A_{7}+A_{8}+A_{10}+A_{11}+A_{12}+A_{18},\\
D_{2}\equiv A_{16}+A_{4}+A_{6}+A_{7}+A_{8}+A_{10}+A_{11}+A_{12}+A_{18},\\
D_{2}\equiv A_{17}+A_{5}+A_{6}+A_{7}+A_{8}+A_{10}+A_{11}+A_{12}+A_{18},\\
D_{2}\equiv A_{9}+A_{19}+A_{6}+2A_{7}+3A_{8}+3A_{10}+3A_{11}+A_{12}+2A_{18}.
\end{array}
\]
The K3 surface is the double cover of $\PP^{2}$ branched over a sextic
curve $C_{6}$ with a singularity $q$ of type~$\mathbf{a}_{7}$.
The line tangent to the branch of $C_{6}$ has no other intersection
points with $C_{6}$, and the curves $A_{9}$, $A_{19}$ map onto that
line. Moreover, there exist five lines through $q$ that are bitangent
to the sextic at their other intersection points. The images of $A_{j}$, $A_{j+12}$
for $j\in\{1,\dots,5\}$ are these lines. 

For $J=\{i,j\}\subset\{1,\dots,5\}$, let 
\[
A_{J}=-A_{i}-A_{j}+\sum_{t=1}^{5}  A_{t}+2A_{6}+A_{7}-A_{9}.
\]
Then $A_{J}$ is the class of a $(-2)$-curve, and $B_{J}=2D_{2}-A_{J}$
is also a $(-2)$-curve. These are the $20$ curves $A_{20},\dots,A_{39}$; 
thus the images by the double cover map of curves $A_{J},B_{J}$ are
conics that are $6$-tangent to the sextic branch curve.

We have $A_{J} \cdot A_{J'}=2$ if and only if $\#J\cap J'=0$, and else
$A_{J} \cdot A_{J'}\in\{-2,0\}$. The dual graph of the $10$ curves $A_{J}$ 
(for $J\subset\{1,\dots,5\}$, $\#J=2$) is the Petersen graph with
weight $2$ on the edges:

\begin{center}
\begin{tikzpicture}[scale=0.8]



\draw [very thick] ( 0.95106, 0.30901 )  -- ( -0.95103, 0.30910); 
\draw [very thick] ( 0, 1.0000 ) -- ( -0.58788, -0.80894 );
\draw [very thick] ( -0.95103, 0.30910 ) -- ( 0.58765, -0.80911);     
\draw [very thick] ( -0.58788, -0.80894 ) -- ( 0.95106, 0.30901); 
\draw [very thick] ( 0.58765, -0.80911 ) --( 0, 1.0000 );

\draw [very thick] ( 1.9021, 0.61802 )--( 0, 2 );
\draw [very thick] ( 0, 2 )--( -1.9021, 0.61819 );
\draw [very thick] ( -1.9021, 0.61819 )--( -1.1758, -1.6179 );
\draw [very thick] ( -1.1758, -1.6179 )--( 1.1753, -1.6182 );
\draw [very thick] ( 1.1753, -1.6182 )--( 1.9021, 0.61802 );

\draw [very thick] ( 0.95106, 0.30901 ) -- ( 1.9021, 0.61802 );
\draw [very thick] ( 0, 1.0000 ) -- ( 0, 2 );
\draw [very thick] ( -0.95103, 0.30910 ) -- ( -1.9021, 0.61819 );
\draw [very thick] ( -0.58788, -0.80894 ) -- ( -1.1758, -1.6179 );
\draw [very thick] ( 0.58765, -0.80911 ) -- ( 1.1753, -1.6182 );

\draw ( 0.95106, 0.30901 )     node {$\bullet$};
\draw ( 0, 1.0000 ) node {$\bullet$};
\draw ( -0.95103, 0.30910 )      node {$\bullet$};
\draw ( -0.58788, -0.80894 )      node {$\bullet$};
\draw ( 0.58765, -0.80911 ) node {$\bullet$};

\draw ( 1.9021, 0.61802 ) node {$\bullet$};
\draw ( 0, 2 ) node {$\bullet$};
\draw ( -1.9021, 0.61819 ) node {$\bullet$};
\draw ( -1.1758, -1.6179 ) node {$\bullet$};
\draw ( 1.1753, -1.6182 ) node {$\bullet$};


\end{tikzpicture}
\end{center} 
The configuration of the curves $B_{J}$ ($J\subset\{1,\dots,5\}$, $\#J=2$)
is also the Petersen graph with weight $2$ on the edges. Using that
$D_{2} \cdot A_{J}=D_{2} \cdot B_{J}=2$, we get that $A_{J} \cdot A_{J'}=B_{J} \cdot B_{J'}$
and $A_{J} \cdot B_{J'}=4-A_{J} \cdot A_{J'}$ for any subsets $J$, $J'$ of order $2$
of $\{1,\dots,5\}$. 

\subsubsection{The second involution}

The divisor 
\[
D_{2}'=2A_{1}+A_{12}+2A_{13}
\]
is nef, of square $2$, with base points. One has $D_{2} \cdot A_{j}=0$
for $j\in\{1,\dots5,7,\dots,12,19\}$, $D_{2}'\cdot A_{6}=2$, $D_{2}'\cdot A_{j}=1$
for $j\in\{13,\dots,18\}$ and $D_{2}'\cdot A_{j}=4$ for $j\geq20$. Since
$D_{2}'=2F+A_{12}$, where $F=A_{1}+A_{13}$ is a fiber of an elliptic
fibration  with $FA_{12}=1$, the linear system $|D_{2}'|$ has base
points, and $|D_{8}|=|2D_{2}'|$ is base-point free and hyperelliptic.
We have 
\begin{equation}
\begin{array}{l}
D_{8}\equiv A_{13}+A_{14}+A_{15}+A_{16}+A_{1}+A_{2}+A_{3}+A_{4}+2A_{12},\hfill\\
D_{8}\equiv A_{15}+A_{16}+A_{17}+A_{18}+A_{3}+A_{4}+A_{5}+A_{7}+2A_{8}\hfill\\
\hphantom{D_{8}\equiv\;}+A_{9}+2A_{10}+2A_{11}+2A_{12}+A_{19},\\
D_{8}\equiv B_{J}+\sum_{t=1,t\notin J}^{5}  A_{j}+A_{7}+2A_{8}+A_{9}+2A_{10}+2A_{11}+2A_{19},\hfill\\
D_{8}\equiv A_{J}+\sum_{t\in J}A_{t}+2A_{7}+4A_{8}+3A_{9}+3A_{10}+2A_{11},\hfill\\
D_{8}=2A_{6}+\sum_{t=1}^{5}A_{t}+3A_{7}+4A_{8}+2A_{9}+3A_{10}+2A_{11}+A_{19}.\hfill
\end{array}\label{eq:UD8A1exp3}
\end{equation}
The map $\varphi_{D_{8}}\colon X\to\PP^{5}$ associated to $|D_{8}|$ has  
image a cone over a rational normal curve in $\PP^{4}$; it factorizes
through a surjective map $\varphi'\colon X\to\mathbf{F}_{4}$, where $\mathbf{F}_{4}$
is the Hirzebruch surface with a section $s$ such that $s^{4}=-4$.
The section $s$ is mapped to the vertex of the cone by the map $\mathbf{F}_{4}\to\PP^{5}$.
Let $f$ denote a fiber of the unique fibration $\mathbf{F}_{4}\to\PP^{1}$.
By Theorem \ref{thm:SaintDonat-2}, the branch locus of $\varphi'$
is the union of $s$ and a curve $C$ such that $Cs=0$ and $C\in|3(s+4f)|$
(so that $b+C\in|-2K_{\mathbf{F}_{4}}|$). We have 
\[
|D_{8}|=\varphi'^{*}|4f+s|, 
\]
and therefore from the equivalence relations in \eqref{eq:UD8A1exp3},
we get the following claims.
\begin{itemize}
\item The curve $A_{12}$ is in the ramification locus;  its image is $s$. 
\item The curves $A_{7}$, $A_{8}$, $A_{9}$, $A_{10}$, $A_{11}$, $A_{19}$ are contracted to a $\mathbf{d}_{6}$ singularity $q$ of $C$.
\item The curve $C$ is the union of a curve $s'$ in $|s+4f|$ and a curve
$B'$ in $|2(s+4f)|$ (thus $s'B'=8$). The curve $B'$ has a node
at $q$, and $s'$ is tangent to one of the branches of $B'$ at $q$,
so that the intersection multiplicity of $s'$ and $B'$ at $q$ is
$3$ and the singularity of $C=B'+s'$ has type $\mathbf{d}_{6}$.
\item The other intersection points of $s'$ and $B'$ are nodes $p_{1},\dots,p_{5}$ to which the curves $A_{1},\dots,A_{5}$ are mapped. 
\item The curve $A_{6}$ is in the ramification locus; its image is $s'$.
\item The curves $A_{13},\dots,A_{17},A_{18}$ are mapped to the fibers
through $p_{1},\dots,p_{5}$ and $q$, respectively. 
\item The curves $A_{J}$ and $B_{J}$ are mapped to curves in the linear system
$|s+4f|$ passing through the points $p_{1},\dots,p_{5}$ and points infinitely
near $q$ with certain multiplicities. 
\end{itemize}

\subsection{The lattice $\boldsymbol{U\oplus\mathbf{E}_{8}\oplus\mathbf{A}_{3}}$}

The K3 surface $X$ contains $14$ $(-2)$-curves $A_{1},\dots,A_{14}$; 
their configuration is as follows:

\begin{center}
\begin{tikzpicture}[scale=1]

\draw (0,0) -- (9,0);
\draw (9,0) --  (9+0.866,0.5); 
\draw  (9+0.866,0.5) -- (9+0.866*2,0);
\draw  (9+0.866,-0.5) -- (9+0.866*2,0);
\draw (9,0) --  (9+0.866,-0.5); 
\draw (2,0) -- (2,-1);

\draw (0,0) node {$\bullet$};
\draw (1,0) node {$\bullet$};
\draw (2,0) node {$\bullet$};
\draw (3,0) node {$\bullet$};
\draw (4,0) node {$\bullet$};
\draw (5,0) node {$\bullet$};
\draw (6,0) node {$\bullet$};
\draw (7,0) node {$\bullet$};
\draw (8,0) node {$\bullet$};
\draw (9,0) node {$\bullet$};
\draw (9+0.866,0.5) node {$\bullet$};
\draw (9+0.866,-0.5) node {$\bullet$};
\draw (2,-1) node {$\bullet$};
\draw (9+0.866*2,0) node {$\bullet$};

\draw (2,-1) node [left]{$A_{4}$};
\draw (0,0) node [above]{$A_{1}$};
\draw (1,0) node [above]{$A_{2}$};
\draw (2,0) node [above]{$A_{3}$};
\draw (3,0) node [above]{$A_{5}$};
\draw (4,0) node [above]{$A_{6}$};
\draw (5,0) node [above]{$A_{7}$};
\draw (6,0) node [above]{$A_{8}$};
\draw (7,0) node [above]{$A_{9}$};
\draw (8,0) node [above]{$A_{10}$};
\draw (9,0) node [above]{$A_{11}$};
\draw (9+0.866,0.5) node [above]{$A_{12}$};
\draw (9+0.866,-0.5) node [below]{$A_{14}$};
\draw (9+0.866*2,0) node [right]{$A_{13}$};

\end{tikzpicture}
\end{center} 

The curves $A_{1},\dots,A_{13}$ generate  the N\'eron--Severi lattice;
in that basis, the divisor
\[
D_{506}=(46,93,141,70,120,100,81,63,46,30,15,1,-12)
\]
is ample, of square $506$, with $D_{506} \cdot A_{j}=1$ for $j\leq12$,
$D_{506} \cdot A_{13}=25$ and $D_{506} \cdot A_{14}=3$. The surface has an elliptic
fibration  such that $A_{10}$ is a section with singular fiber  
\[
F_{1}=A_{11}+A_{12}+A_{13}+A_{14}
\]
and a fiber $F_{2}$ of type $\tilde{\mathbf{E}_{8}}$. By Theorem
\ref{thm:SaintDonat-2}, case i) a), we have the following. 

\begin{prop}
The linear system $|4F_{1}+2A_{8}|$ defines a morphism $\varphi\colon X\to\mathbf{F}_{4}$
branched over the unique section $s$ with $s^2=-4$ with $s^{2}=-4$ and a curve $B\in|3s+12f|$.
The curve $B$ has one $\mathbf{a}_{3}$ singularity $p$ and one
$\mathbf{e}_{8}$ singularity $q$. The pull-backs of the fibers through
$p$, $q$ are the fibers $F_{1}$, $F_{2}$.
\end{prop}

In order to construct $X$, one can also use the divisor 
\[
D_{2}=(2,5,8,4,7,6,5,4,3,2,1,0,0), 
\]
which is nef, of square $2$, base-point free, with $D_{2} \cdot A_{j}=0$
for $j\in\{2,\dots,11,13\}$ and $D_{2} \cdot A_{j}=1$ for $j\in\{1,12,14\}$.
Since 
\[
D_{2}\equiv A_{2}+2A_{3}+A_{4}+2A_{5}+2A_{6}+2A_{7}+2A_{8}+A_{9}+2A_{10}+2A_{11}+A_{12}+A_{13}+A_{14},
\]
we obtain that the K3 surface is the double cover of $\PP^{2}$ branched
over a sextic curve which is the union of a line $L$ and a quintic
$Q$ such that $A_{1}$ is in the ramification locus and its image
is $L$. The sextic curve has a singularity $q$ of type $\mathbf{d}_{10}$
and a node. The image of $A_{12}$ and $A_{14}$ is the line $L$.
The situation is a specialization of Section \ref{subsec:The-lattice124-UA2E8}.

\section{Rank 14 lattices }

\subsection{The lattice $\boldsymbol{U\oplus\mathbf{E}_{8}\oplus\mathbf{D}_{4}}$}

The K3 surface $X$ contains $15$ $(-2)$-curves $A_{1},\dots,A_{15}$; 
their dual graph is 

\begin{center}
\begin{tikzpicture}[scale=1]

\draw (0,0) -- (11,0);
\draw (10,1) -- (10,-1);
\draw (2,0) -- (2,-1);

\draw (0,0) node {$\bullet$};
\draw (1,0) node {$\bullet$};
\draw (2,0) node {$\bullet$};
\draw (3,0) node {$\bullet$};
\draw (4,0) node {$\bullet$};
\draw (5,0) node {$\bullet$};
\draw (6,0) node {$\bullet$};
\draw (7,0) node {$\bullet$};
\draw (8,0) node {$\bullet$};
\draw (9,0) node {$\bullet$};
\draw (10,0) node {$\bullet$};
\draw (10,1) node {$\bullet$};
\draw (10,-1) node {$\bullet$};
\draw (11,0) node {$\bullet$};
\draw (2,-1) node {$\bullet$};

\draw (2,-1) node [left]{$A_{4}$};
\draw (0,0) node [above]{$A_{1}$};
\draw (1,0) node [above]{$A_{2}$};
\draw (2,0) node [above]{$A_{3}$};
\draw (3,0) node [above]{$A_{5}$};
\draw (4,0) node [above]{$A_{6}$};
\draw (5,0) node [above]{$A_{7}$};
\draw (6,0) node [above]{$A_{8}$};
\draw (7,0) node [above]{$A_{9}$};
\draw (8,0) node [above]{$A_{10}$};
\draw (9,0) node [above]{$A_{11}$};
\draw (10.15,0) node [above left]{$A_{12}$};

\draw (10,1) node [above]{$A_{13}$};
\draw (10,-1) node [right]{$A_{15}$};
\draw (11,0) node [above]{$A_{14}$};

\end{tikzpicture}
\end{center} 

The curves $A_{1},\dots,A_{14}$ generate the N\'eron--Severi lattice.
In that base, the divisor 
\[
D_{506}=(46,93,141,70,120,100,81,63,46,30,15,1,-12,0)
\]
 is ample, of square $506$, with $D_{506} \cdot A_{13}=25$ and $D_{506} \cdot A_{j}=1$
for $j\neq13$. The divisor 
\[
D_{2}=(2,5,8,4,7,6,5,4,3,2,1,0,0,0)
\]
is nef, base-point free, of square $2$, with $D_{2} \cdot A_{1}=D_{2} \cdot A_{12}=1$
and $D_{2} \cdot A_{j}=0$ for $j\notin\{1,12\}$. We have
\[
D_{2}\equiv A_{2}+2A_{3}+A_{4}+2(A_{5}+A_{6}+A_{7}+A_{8}+A_{9}+A_{10}+A_{11}+A_{12})+A_{13}+A_{14}+A_{15}; 
\]
thus, using the linear system $|D_{2}|$, we obtain the following. 

\begin{prop}
The K3 surface is a double cover of $\,\PP^{2}$ which is ramified over
$A_{1}$ and $A_{12}$; the images of these curves are lines in the
sextic branch curve. The sextic curve has a $\mathbf{d}_{10}$ singularity
at a point $q$ and three nodal singularities. The residual quartic
curve is smooth at $q$. The tangent at $q$ is the line $L$, and 
the order of tangency at $q$ is $4$. The other nodes are the intersections 
of the line $L'$ with the quartic. 
\end{prop}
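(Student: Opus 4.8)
The plan is to follow exactly the template used in all the preceding rank $\geq 7$ cases of type $U\oplus\bigoplus\tilde{\mathbf G}_j$: produce the double cover of $\PP^2$ from a nef, base-point free divisor of square $2$, read off the singularities of the branch sextic from the intersection matrix of the contracted $\cu$-curves, and interpret the remaining curves via the displayed linear equivalences. Concretely, I would first verify that $D_2=(2,5,8,4,7,6,5,4,3,2,1,0,0,0)$ in the basis $A_1,\dots,A_{14}$ is nef of square $2$: this is a direct computation with the Gram matrix of $U\oplus\mathbf E_8\oplus\mathbf D_4$ (which is encoded in the displayed dual graph), checking $D_2^2=2$ and $D_2 A_j\geq 0$ for all $15$ curves $A_1,\dots,A_{15}$, together with $D_2 A_j=0$ precisely for $j\notin\{1,12\}$. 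Since $D_2 A_j=0$ on the $13$ curves $A_2,\dots,A_{11},A_{13},A_{14},A_{15}$, which form a negative-definite configuration, $D_2$ is nef; it is base-point free because it is not of the form $aE+\Gamma$ with $E$ a pencil and $\Gamma E=1$ (no curve $A_j$ has $D_2 A_j=1$ except $A_1,A_{12}$, and these are not adjacent to a component of a fiber of square $0$ meeting them once in the required way — one checks $D_2$ has no fixed part by Theorem \ref{thm:SaintDonat-1}b). Then by Theorem \ref{thm:SaintDonat-2} case c), $\varphi_{D_2}$ is a double cover $\eta:X\to\PP^2$ branched over a sextic $C_6$.

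Next I would identify the branch locus. The curves with $D_2 A_j=0$ are contracted by $\eta$; the sub-diagram on $A_2,\dots,A_{11},A_{12}(=0\text{-part near }\mathbf E_8),\dots$ needs to be decomposed into its connected ADE components after deleting the vertices with $D_2 A_j>0$. Deleting $A_1$ and $A_{12}$ from the graph, the component containing $A_2,A_3,A_4,A_5,A_6,A_7,A_8,A_9,A_{10},A_{11}$ — a chain of length $9$ with the extra branch vertex $A_4$ attached at $A_3$ — is a $\mathbf D_{10}$ diagram, contracted to a single $\mathbf d_{10}$ singularity $q$; the three remaining isolated vertices $A_{13},A_{14},A_{15}$ (the $\mathbf D_4$-legs and far end) are contracted to three nodes. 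This reproduces ``a $\mathbf d_{10}$ singularity at $q$ and three nodal singularities''. The two curves $A_1$ and $A_{12}$ satisfy $D_2 A_1=D_2 A_{12}=1$, hence lie in the ramification locus $R\in|D_2|$; their images $L=\eta(A_1)$ and $L'=\eta(A_{12})$ are lines (since $R^2=2$, $R$ is irreducible of arithmetic genus $2$, and the restriction of $\eta$ maps each $\cu$-curve in $R$ to a line, by Lemma \ref{lem:splitingsPullBackTritang} applied with $d=1$ — or simply because a $\cu$-curve $A$ with $D_2 A=1$ maps isomorphically to a degree-$1$ curve). Then $C_6 = L + L' + Q$ with $Q$ a residual quartic, and I would use the displayed relation
\[
D_2\equiv A_2+2A_3+A_4+2(A_5+A_6+A_7+A_8+A_9+A_{10}+A_{11}+A_{12})+A_{13}+A_{14}+A_{15}
\]
to pin down the local picture: the fact that $A_1$ (image $L$) plus this effective cycle equals $D_2=\eta^*(\text{line})$ shows the $\mathbf d_{10}$ singularity at $q$ arises because $Q$ is smooth at $q$ and tangent to $L$ there with contact order $4$ (an $\mathbf a_9$ from the smooth branch of $Q$ tangent to $L$, plus the ordinary point of $L'$, assemble to $\mathbf d_{10}$); and the three nodes are the remaining intersection points of $L'$ with $Q$, because deleting $A_{12}$ from the chain produces three isolated vertices all attached to the $A_{12}$-side.

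The main obstacle is not any single hard argument but the bookkeeping of the \emph{local} geometry at $q$: translating the abstract statement ``$A_2,\dots,A_{11}$ form a $\tilde{\mathbf D}_{10}$-type contracted configuration'' into the precise sentence ``the residual quartic is smooth at $q$, its tangent there is $L$, and the tangency order is $4$''. This requires matching the resolution graph of the plane-curve singularity of $L+L'+Q$ at $q$ against the given dual graph — i.e. knowing that a $\mathbf d_{10}$ singularity of a sextic that is the union of two lines and a conic-tangent-quartic forces exactly this tangency data — which one does by comparing with the specialization hierarchy: this lattice is a degeneration of $U\oplus\mathbf A_2\oplus\mathbf E_8$ (Section \ref{subsec:The-lattice124-UA2E8}, where the same $\mathbf d_{10}$ picture appears with a node merging into a cusp of $Q$), and here the cusp further opens into three nodes on $L'$. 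I would also double-check, using the ample divisor $D_{506}$ and the list of $15$ curves, that these are \emph{all} the $\cu$-curves, so that $\NS(X)$ is indeed generated by $A_1,\dots,A_{14}$ and no extra branch components appear; this is the routine Shimada-algorithm verification already carried out for the table and needs no repetition here.
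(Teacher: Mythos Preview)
Your approach is exactly the paper's: use the nef, base-point-free divisor $D_2$ of square $2$ to realize $X$ as a double plane, then read the branch singularities from the configuration of contracted $(-2)$-curves and the displayed linear equivalence. The paper in fact gives no further argument beyond the computation you reproduce.

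Two small slips to fix. First, the ramification divisor satisfies $R\in|3D_2|$, not $|D_2|$ (since $K_X=\eta^*K_{\PP^2}+R$ gives $R\equiv 3D_2$); this is harmless for your argument but should be corrected. Second, the inference ``$D_2A_1=D_2A_{12}=1$, hence lie in the ramification locus'' is not valid as stated: a $(-2)$-curve of $D_2$-degree $1$ could instead map to a tritangent line not in the branch curve. The correct justification is the one the paper implicitly uses and you invoke later for other purposes: the coordinate expression $D_2=(2,5,8,4,7,6,5,4,3,2,1,0,0,0)$ exhibits $\eta^*L$ as $2A_1$ plus contracted curves, so $A_1$ is in the ramification; and the displayed relation $D_2\equiv A_2+2A_3+A_4+2(A_5+\cdots+A_{12})+A_{13}+A_{14}+A_{15}$ does the same for $A_{12}$. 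Once you rephrase those two points, your write-up is a correct elaboration of what the paper leaves implicit.
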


\subsection{The lattice $\boldsymbol{U\oplus\mathbf{D}_{8}\oplus\mathbf{D}_{4}}$}

\subsubsection{First involution}

The K3 surface $X$ contains $20$ $(-2)$-curves $A_{1},\dots,A_{20}$;
their configuration is as follows:

\begin{center}
\begin{tikzpicture}[scale=1]

\draw (0,0) -- (11,0);
\draw [very thick] (9,0) -- (10,0);
\draw  (9,1) -- (8,0);
\draw  (9,-1) --  (8,0); 
\draw  (9,2) -- (8,0);
\draw  (9,-2) --  (8,0); 

\draw [very thick]  (9,2) -- (10,2);
\draw [very thick]  (9,1) -- (10,1);
\draw [very thick] (9,-1) --  (10,-1); 
\draw [very thick]  (9,-2) -- (10,-2);
\draw  (10,1) -- (11,0);
\draw  (10,-1) --  (11,0); 
\draw  (10,2) -- (11,0);
\draw  (10,-2) --  (11,0);

\draw (4,0) -- (4,-1);

\draw [color=blue] (0,0) node {$\bullet$};
\draw (1,0) node {$\bullet$};
\draw (2,0) node {$\bullet$};
\draw (3,0) node {$\bullet$};
\draw (4,0) node {$\bullet$};
\draw (5,0) node {$\bullet$};
\draw (6,0) node {$\bullet$};
\draw (7,0) node {$\bullet$};
\draw (8,0) node {$\bullet$};
\draw (9,0) node {$\bullet$};
\draw (10,0) node {$\bullet$};
\draw [color=blue] (11,0) node {$\bullet$};
\draw (4,-1) node {$\bullet$};
\draw (9,1) node {$\bullet$};
\draw (10,1) node {$\bullet$};
\draw (9,-1) node {$\bullet$};
\draw (10,-1) node {$\bullet$};
\draw (9,2) node {$\bullet$};
\draw (10,2) node {$\bullet$};
\draw (9,-2) node {$\bullet$};
\draw (10,-2) node {$\bullet$};

\draw (4,-1) node [below]{$A_{5}$};
\draw (0,0) node [above]{$A_{1}$};
\draw (1,0) node [above]{$A_{2}$};
\draw (2,0) node [above]{$A_{3}$};
\draw (3,0) node [above]{$A_{4}$};
\draw (4,0) node [above]{$A_{6}$};
\draw (5,0) node [above]{$A_{7}$};
\draw (6,0) node [above]{$A_{8}$};
\draw (7,0) node [above]{$A_{9}$};
\draw (7.8,0) node [above]{$A_{10}$};
\draw (9,0) node [above]{$A_{13}$};
\draw (10,0) node [above]{$A_{18}$};
\draw (11,0) node [right]{$A_{1}$};
\draw (9,1) node [above]{$A_{12}$};
\draw (10,1) node [above]{$A_{17}$};
\draw (9,-1) node [below]{$A_{14}$};
\draw (10,-1) node [below]{$A_{19}$};
\draw (9,2) node [above]{$A_{11}$};
\draw (10,2) node [above]{$A_{16}$};
\draw (9,-2) node [below]{$A_{15}$};
\draw (10,-2) node [below]{$A_{20}$};

\end{tikzpicture}
\end{center} 

The curves $A_{1},\dots,A_{14}$ generate  the N\'eron--Severi lattice;
in that basis, the divisor 
\[
D_{154}=(1,3,6,10,7,15,14,14,15,17,8,8,0,4)
\]
is ample, of square $154$, with $D_{154} \cdot A_{j}=1$ for $j\in\{1,1,\dots,12,18,20\}$,
$D_{154} \cdot A_{j}=9$ for $j\in\{14,19\}$ and $D_{154} \cdot A_{j}=17$ for
$j\in\{15,16,17\}$. The divisor 
\[
D_{2}=(1,2,3,4,2,5,4,3,2,1,0,0,0,0)
\]
is nef, base-point free, of square $2$, with $D_{2} \cdot A_{j}=0$ for
$j\in\{1,\dots,4,6,\dots,10\}$ and else $D_{2} \cdot A_{j}=1$. Using the
equivalences from the elliptic fibration  with fiber $A_{11}+A_{16}$,
we obtain that 
\[
D_{2}\equiv A_{10+k}+A_{15+k}+\sum_{j=1,j\neq5}^{10}A_{j},\,\,\forall k\in\{1,2,3,4,5\}.
\]
The linear system $|D_{2}|$ defines the K3 surface $X$ as the double
cover of $\PP^{2}$ branched over a sextic curve $C_{6}$ with an
$\mathbf{a}_{9}$ singularity $q$. The cover is ramified above $A_{5}$;
the image of $A_{5}$ is a line, a component of $C_{6}$ which has
no other intersection points with the residual quintic curve $Q$.
The images of $A_{10+k}+A_{15+k},\,k\in\{1,\dots,5\}$, are lines through
$q$ which are bitangent to $Q$. The quintic is smooth, thus of genus
$6$.

\subsubsection{Second involution}

The divisor 
\[
D_{2}'=(0,1,2,4,3,6,5,4,3,2,1,0,0,0)
\]
is nef, base-point free, of square $2$. It satisfies $D_{2} \cdot A_{j}=0$
for $j\in\{2,4,\dots,11,17,\dots,20\}.$ Using equivalence relations
obtained from the elliptic fibration  with fiber $A_{11}+A_{16}$,
we have
\[
\begin{array}{c}
D_{2}'\equiv A_{12}+(A_{17}+A_{4}+A_{5}+2\sum_{t=6}^{10}A_{t}+A_{11}),\\
D_{2}'\equiv A_{13}+(A_{18}+A_{4}+A_{5}+2\sum_{t=6}^{10}A_{t}+A_{11}),\\
D_{2}'\equiv A_{14}+(A_{19}+A_{4}+A_{5}+2\sum_{t=6}^{10}A_{t}+A_{11}),\\
D_{2}'\equiv A_{15}+(A_{20}+A_{4}+A_{5}+2\sum_{t=6}^{10}A_{t}+A_{11}),\\
D_{2}'\equiv A_{16}+(A_{11}+A_{4}+A_{5}+2\sum_{t=6}^{10}A_{t}+A_{11}); 
\end{array}
\]
moreover,  
\[
D_{2}'\equiv2A_{1}+A_{2}+\sum_{j=17}^{20}A_{j}.
\]
We have the following assertions.
\begin{itemize}
\item The branch curve of the corresponding double cover is a sextic with
a $\mathbf{d}_{8}$ singularity at a point $q$ onto which the curves
$A_{4},\dots,A_{11}$ are contracted and five nodes $p_{2}, p_{17},\dots,p_{20}$
onto which $A_{2}, A_{17},\dots,A_{20}$ are contracted. The curves
$A_{1}$, $A_{3}$ are in the ramification locus; their images are lines
$L_{1}$, $L_{3}$. Let us denote by $Q'$ the residual quartic curve.

\item The line $L_{1}$ cuts $Q'$ (resp.\ $L_{3}$) at the points $p_{17},\dots,p_{20}$ (resp.\ the point $p_{2}$). 
\item The quartic $Q'$ has a node at $q$, and the line $L_{3}$ is tangent
with multiplicity $3$ at one of the branches of the node.
\item For $k\in\{12,\dots,15\}$, the image of $A_{k}$ is a line through
$q$ and $p_{k+5}$. 
\item The image of $A_{16}$ is a line passing through $q$ and tangent
  to $Q'$ at another point.
  \end{itemize}

\begin{rem}
We constructed the surface $X$ as two double covers. The associated
involutions fix curves of geometric genus of $6$ and $2$, respectively; 
thus these involution generate  the automorphism group of $X$, which
is $(\ZZ/2\ZZ)^{2}$ according to \cite{Kondo}. 
\end{rem}

\subsection{The lattice $\boldsymbol{U\oplus\mathbf{E}_{8}\oplus\mathbf{A}_{1}^{\oplus4}}$}

The lattice $U\oplus\mathbf{E}_{8}\oplus\mathbf{A}_{1}^{\oplus4}$
is also isometric to $U\oplus\mathbf{E}_{7}\oplus{\bf D}_{4}\oplus\mathbf{A}_{1}$; 
see \cite{Kondo}. 

\subsubsection{First involution}

The K3 surface $X$ contains $27$ $(-2)$-curves $A_{1},\dots,A_{27}$.
The dual graph of $A_{1},\dots,A_{19}$ and the intersections of the curves
$A_{1},\dots,A_{11}$ with the curves $A_{j}$ for $j\geq20$ is given
by

\begin{center}
\begin{tikzpicture}[scale=1]

\draw (0,0) -- (8,0);
\draw (6,0) -- (6,-1);
\draw (2,0) -- (2,-1);
\draw (8,0) -- (9,1.5);
\draw (8,0) -- (9,0.5);
\draw (8,0) -- (9,-0.5);
\draw (8,0) -- (9,-1.5);

\draw (11,0) -- (10,1.5);
\draw (11,0) -- (10,0.5);
\draw (11,0) -- (10,-0.5);
\draw (11,0) -- (10,-1.5);

\draw [very thick] (9,1.5) -- (10,1.5);
\draw [very thick] (9,0.5) -- (10,0.5);
\draw [very thick] (9,-0.5) -- (10,-0.5);
\draw [very thick] (9,-1.5) -- (10,-1.5);

\draw [very thick] (2,-1) -- (2+1.5,-2);
\draw [very thick] (2,-1) -- (2+0.5,-2);
\draw [very thick] (2,-1) -- (2-0.5,-2);
\draw [very thick] (2,-1) -- (2-1.5,-2);
\draw [very thick] (6,-1) -- (6+1.5,-2);
\draw [very thick] (6,-1) -- (6+0.5,-2);
\draw [very thick] (6,-1) -- (6-0.5,-2);
\draw [very thick] (6,-1) -- (6-1.5,-2);

\draw (10,1.5) node {$\bullet$};
\draw (10,0.5) node {$\bullet$};
\draw (10,-1.5) node {$\bullet$};
\draw (10,-0.5) node {$\bullet$};
\draw [color=blue] (11,0)  node {$\bullet$};
\draw  (2+1.5,-2) node {$\bullet$};
\draw  (2+0.5,-2) node {$\bullet$};
\draw  (2-0.5,-2) node {$\bullet$};
\draw  (2-1.5,-2) node {$\bullet$};
\draw  (6+1.5,-2) node {$\bullet$};
\draw  (6+0.5,-2) node {$\bullet$};
\draw  (6-0.5,-2) node {$\bullet$};
\draw  (6-1.5,-2) node {$\bullet$};

\draw [color=blue] (0,0) node {$\bullet$};
\draw (1,0) node {$\bullet$};
\draw (2,0) node {$\bullet$};
\draw (3,0) node {$\bullet$};
\draw (4,0) node {$\bullet$};
\draw (5,0) node {$\bullet$};
\draw (6,0) node {$\bullet$};
\draw (7,0) node {$\bullet$};
\draw (8,0) node {$\bullet$};
\draw (9,1.5) node {$\bullet$};
\draw (9,0.5) node {$\bullet$};
\draw (9,-1.5) node {$\bullet$};
\draw (9,-0.5) node {$\bullet$};
\draw (2,-1) node {$\bullet$};
\draw (6,-1) node {$\bullet$};

\draw (2,-1) node [right]{$A_{4}$};
\draw (0,0) node [above]{$A_{1}$};
\draw (11,0) node [above]{$A_{1}$};
\draw (1,0) node [above]{$A_{2}$};
\draw (2,0) node [above]{$A_{3}$};
\draw (3,0) node [above]{$A_{5}$};
\draw (4,0) node [above]{$A_{6}$};
\draw (5,0) node [above]{$A_{7}$};
\draw (6,0) node [above]{$A_{8}$};
\draw (6,-1) node [right]{$A_{9}$};
\draw (7,0) node [above]{$A_{10}$};
\draw (7.85,0) node [above]{$A_{11}$};
\draw (9.1,1.5) node [below]{$A_{12}$};
\draw (9,0.5) node [below]{$A_{13}$};
\draw (9,-1.5) node [below]{$A_{15}$};
\draw (9,-0.5) node [below]{$A_{14}$};

\draw (9.9,1.5) node [below]{$A_{16}$};
\draw (10,0.5) node [below]{$A_{17}$};
\draw (10,-1.5) node [below]{$A_{19}$};
\draw (10,-0.5) node [below]{$A_{18}$};

\draw  (2+1.5,-2) node [below]{$A_{26}$};
\draw  (2+0.5,-2) node [below]{$A_{24}$};
\draw  (2-0.5,-2) node [below]{$A_{22}$};
\draw  (2-1.5,-2) node [below]{$A_{20}$};
\draw  (6+1.5,-2) node [below]{$A_{27}$};
\draw  (6+0.5,-2) node [below]{$A_{25}$};
\draw  (6-0.5,-2) node [below]{$A_{23}$};
\draw  (6-1.5,-2) node [below]{$A_{21}$};

\end{tikzpicture}
\end{center} 
The curves $A_{1},\dots,A_{14}$ generate the N\'eron--Severi lattice.
In that base, the divisor 
\[
D_{132}=(2,5,9,4,10,12,15,19,9,15,12,5,5,0)
\]
is ample, of square $132$, with $D_{132} \cdot A_{j}=1$ for $j\leq11$,
$D_{132} \cdot A_{j}=2$ for $j\in\{12,13,18,19\}$, $D_{132} \cdot A_{j}=12$ for
$j\in\{15,\dots,18\}$, $D_{132} \cdot A_{j}=18$ for $j\in\{21,23,24,26\}$ and 
$D_{132} \cdot A_{j}=28$ for $j\in\{20,22,25,27\}$. The divisor 
\[
D_{2}=(1,2,3,1,3,3,3,3,1,2,1,0,0,0)
\]
is nef, of square $2$, base-point free. We have  $D_{2} \cdot A_{j}=0$ for $j\in\{1,2,3,5,6,7,8,10,11\}$, $D_{2} \cdot A_{j}=1$ for $j\in\{4,9,12,\dots,19\}$ and $D_{2} \cdot A_{j}=2$ for $j\in\{20,\dots,27\}$. Using the elliptic fibration  with fiber
$A_{12}+A_{16}$, we get 
\[
D_{2}\equiv A_{11+k}+A_{15+k}+\left(-A_{4}-A_{9}+\sum_{j=1}^{11}A_{j}\right),\,k\in\{1,2,3,4\}.
\]
Moreover, one has 
\[
2D_{2}\equiv A_{2k}+A_{2k+1}\quad\text{ for }k\in\{10,11,12,13\};
\]
thus the K3 surface is the double cover branched over a sextic curve
with an $\mathbf{a}_{9}$ singularity $q$. The image of $A_{4}$
and $A_{9}$ is the line tangent to the branch of that singularity.
The curves $A_{11+k}+A_{15+k}$, $k\in\{1,2,3,4\}$, are mapped onto
lines going through $q$  that are tangent to the sextic curve
in two other points. The curves $A_{2k}+A_{2k+1}\text{ for }k\in\{10,11,12,13\}$
are mapped onto conics that are $6$-tangent to the sextic curve. 

The classes of the curves $A_{20}$, $A_{22}$, $A_{24}$, $A_{26}$ are 
\[
\begin{array}{ll}
A_{20}=(0,0,0,-1,1,2,3,4,2,3,2,0,0,1), & A_{22}=(2,4,6,2,6,6,6,6,3,3,0,-1,-1,-1)\\
A_{24}=(0,0,0,-1,1,2,3,4,2,3,2,1,0,0), & A_{26}=(0,0,0,-1,1,2,3,4,2,3,2,0,1,0),
\end{array}
\]
and the dual graph of the curves $A_{20},\dots,A_{27}$ is 

\begin{center}
\begin{tikzpicture}[scale=1]

\draw [very thick] (0,0) -- (0,1);
\draw [very thick] (1,0) -- (1,1);
\draw [very thick] (2,0) -- (2,1);
\draw [very thick] (3,0) -- (3,1);

\draw (0,0) -- (1,1);
\draw (0,0) -- (2,1);
\draw (0,0) -- (3,1);
\draw (1,0) -- (0,1);
\draw (1,0) -- (2,1);
\draw (1,0) -- (3,1);
\draw (2,0) -- (1,1);
\draw (2,0) -- (2,1);
\draw (2,0) -- (3,1);
\draw (3,0) -- (1,1);
\draw (3,0) -- (2,1);
\draw (3,0) -- (0,1);

\draw (2,0) -- (0,1);

\draw (0,0) node {$\bullet$};
\draw (1,0) node {$\bullet$};
\draw (2,0) node {$\bullet$};
\draw (3,0) node {$\bullet$};
\draw (0,1) node {$\bullet$};
\draw (1,1) node {$\bullet$};
\draw (2,1) node {$\bullet$};
\draw (3,1) node {$\bullet$};

\draw (0,0) node  [below]{$A_{20}$};
\draw (1,0) node  [below]{$A_{22}$};
\draw (2,0) node  [below]{$A_{24}$};
\draw (3,0) node  [below]{$A_{26}$};
\draw (0,1) node [above]{$A_{21}$};
\draw (1,1) node [above]{$A_{23}$};
\draw (2,1) node [above]{$A_{25}$};
\draw (3,1) node [above]{$A_{27}$};

\end{tikzpicture}
\end{center} 
where a thick line has weight $6$ and a thin line has weight $4$.
For completeness, we also give the intersections between the curves
$A_{12},\dots,A_{19}$ and the curves $A_{20},\dots,A_{27}$:\\
\begin{center}
\begin{tikzpicture}[scale=1]

\draw  (0,0) -- (0,2);
\draw  (0,2) -- (1,0);
\draw  (0,2) -- (3,0);
\draw   (0,2) -- (6,0);

\draw  (1,2) -- (2,0);
\draw  (1,2) -- (4,0);
\draw  (1,2) -- (5,0);
\draw  (1,2) -- (7,0);

\draw  (2,2) -- (0,0);
\draw  (2,2) -- (1,0);
\draw  (2,2) -- (2,0);
\draw  (2,2) -- (7,0);

\draw  (3,2) -- (3,0);
\draw  (3,2) -- (4,0);
\draw  (3,2) -- (5,0);
\draw  (3,2) -- (6,0);

\draw  (4,2) -- (3,0);
\draw  (4,2) -- (4,0);
\draw  (4,2) -- (1,0);
\draw  (4,2) -- (2,0);

\draw  (5,2) -- (0,0);
\draw  (5,2) -- (5,0);
\draw  (5,2) -- (6,0);
\draw  (5,2) -- (7,0);

\draw  (6,2) -- (0,0);
\draw  (6,2) -- (2,0);
\draw  (6,2) -- (3,0);
\draw  (6,2) -- (5,0);

\draw  (7,2) -- (1,0);
\draw  (7,2) -- (4,0);
\draw  (7,2) -- (6,0);
\draw  (7,2) -- (7,0);

\draw (0,0) node {$\bullet$};
\draw (1,0) node {$\bullet$};
\draw (2,0) node {$\bullet$};
\draw (3,0) node {$\bullet$};
\draw (4,0) node {$\bullet$};
\draw (5,0) node {$\bullet$};
\draw (6,0) node {$\bullet$};
\draw (7,0) node {$\bullet$};
\draw (0,2) node {$\bullet$};
\draw (1,2) node {$\bullet$};
\draw (2,2) node {$\bullet$};
\draw (3,2) node {$\bullet$};
\draw (4,2) node {$\bullet$};
\draw (5,2) node {$\bullet$};
\draw (6,2) node {$\bullet$};
\draw (7,2) node {$\bullet$};

\draw (0,0) node  [below]{$A_{20}$};
\draw (1,0) node  [below]{$A_{21}$};
\draw (2,0) node  [below]{$A_{22}$};
\draw (3,0) node  [below]{$A_{23}$};
\draw (4,0) node  [below]{$A_{24}$};
\draw (5,0) node  [below]{$A_{25}$};
\draw (6,0) node  [below]{$A_{26}$};
\draw (7,0) node  [below]{$A_{27}$};

\draw (0,2) node [above]{$A_{12}$};
\draw (1,2) node [above]{$A_{13}$};
\draw (2,2) node [above]{$A_{14}$};
\draw (3,2) node [above]{$A_{15}$};
\draw (4,2) node [above]{$A_{16}$};
\draw (5,2) node [above]{$A_{17}$};
\draw (6,2) node [above]{$A_{18}$};
\draw (7,2) node [above]{$A_{19}$};

\end{tikzpicture}
\end{center} 
where a thin line has weight $2$. In fact, one can check that the
above graph is another occurrence of the Levi graph of the M\"obius
configuration.

\subsubsection{Second involution}

The divisor 
\[
D_{2}'=(1,2,4,2,4,4,4,4,2,2,0,0,0,0)
\]
is nef, of square $2$, with base points and with $D_{2} \cdot A_{j}=0$
for $j\in\{1,3,\dots,10,12,\dots,15\}$ and $D_{2} \cdot A_{j}=1$ for $j\in\{2,16,17,18,19\}$,
$D_{2} \cdot A_{j}=4$ for $j\geq20$. Let $D_{8}=2D_{2}'$; it is base-point
free and hyperelliptic. It defines a morphism $\varphi'\colon X\to\PP^{5}$
onto a degree $4$ surface. That morphism factors through the Hirzebruch
surface $\mathbf{F}_{4}$ by a map denoted by $\varphi$; the map
$\mathbf{F}_{4}\to\PP^{5}$ contracts the section $s$ of the Hirzebruch
surface. The divisor 
\[
F_{1}=A_{2}+A_{4}+2(A_{3}+A_{5}+A_{6}+A_{7}+A_{8})+A_{9}+A_{10}
\]
is the fiber of an elliptic fibration  for which the curves $A_{11+k}+A_{15+k}$,
$k\in\{1,2,3,4\}$, are also fibers. One has $D_{2}'=2F_{1}+A_{1}$; 
thus
\[
D_{8}\equiv\sum_{j=12}^{19}A_{j}+2A_{1}.
\]
Moreover, we have 
\[
\begin{array}{l}
D_{8}\equiv2A_{11}+2A_{3}+A_{4}+3A_{5}+4A_{6}+5A_{7}+6A_{8}+3A_{9}+4A_{10},\hfill\\
D_{8}\equiv A_{20}+2\sum_{j=3}^{8}A_{j}+A_{9}+A_{10}+A_{12}+A_{13}+A_{15},\hfill\\
D_{8}\equiv A_{22}+2\sum_{j=3}^{8}A_{j}+A_{9}+A_{10}+A_{12}+A_{13}+A_{14},\hfill\\
D_{8}\equiv A_{24}+2\sum_{j=3}^{8}A_{j}+A_{9}+A_{10}+A_{13}+A_{14}+A_{15},\hfill\\
D_{8}\equiv A_{26}+2\sum_{j=3}^{8}A_{j}+A_{9}+A_{10}+A_{12}+A_{14}+A_{15},\hfill\\
D_{8}\equiv A_{21}+2A_{3}+A_{4}+3A_{5}+4A_{6}+5A_{7}+6A_{8}+4A_{9}+3A_{10}+A_{14},\\
D_{8}\equiv A_{23}+2A_{3}+A_{4}+3A_{5}+4A_{6}+5A_{7}+6A_{8}+4A_{9}+3A_{10}+A_{15},\\
D_{8}\equiv A_{25}+2A_{3}+A_{4}+3A_{5}+4A_{6}+5A_{7}+6A_{8}+4A_{9}+3A_{10}+A_{12},\\
D_{8}\equiv A_{27}+2A_{3}+A_{4}+3A_{5}+4A_{6}+5A_{7}+6A_{8}+4A_{9}+3A_{10}+A_{13}.
\end{array}
\]
We therefore have the following claims.
\begin{itemize}
\item The curves $A_{3},\dots,A_{10}$ are contracted to a $\mathbf{d}_{8}$
singularity $q$ of the branch curve $B$ of $\varphi$; the curves
$A_{12},\dots,A_{15}$ are mapped to nodes $p_{12},\dots, p_{15}$
of $B$. 
\item The curve $B$ is the union of $s$ and a curve $C$ such that $Cs=0$ and 
$C\in|3(s+4f)|$ (so that $s+C\in|-2K_{\mathbf{F}_{4}}|$). We have
\[
|D_{8}|=\varphi{}^{*}|4f+s|
\]
and therefore from the above equivalence relations, we get the following: 
\item The image of curve $A_{1}$ by $\varphi$ is the section $s$.  
\item The curve $B$ is the union of three components: $s$, $B'$ and $C_{11}$,
where $B'\in|2(s+4f)|$ and $C_{11}\in|s+4f|$. The curve $B'$ has a
node $q$, and the curves $C_{11}$ and $B'$ meet at $q$ in such
a way that $C_{11}$ intersect one of the branches with multiplicity
$3$ (so that the singularity at $q$ of $B'+C_{11}$ has type $\mathbf{d}_{8}$).
The four points $p_{12},\dots,p_{15}$ are nodes; they are the remaining
intersection points of $C_{11}$ and $B'$ (so that $B'C_{11}=8$).
\item The images of the curves $A_{16},\dots,A_{19}$ are the fibers through
$p_{12},\dots,p_{15}$,
\item The images of the curves $A_{20}$, $A_{22}$, $A_{24}$, $A_{26}$ by $\varphi$ are curves in the linear system $|s+4f|$ passing through three of
the four points $p_{12},\dots,p_{15}$, and through $q$ and points infinitely
near  $q$ with certain multiplicities.
\item The images of the curves $A_{21}$, $A_{23}$, $A_{25}$, $A_{27}$ by $\varphi$ are curves in the linear system $|s+4f|$ passing through one of
the four points $p_{12},\dots,p_{15}$ and through $q$ and points infinitely
near  $q$ with certain multiplicities.
\end{itemize}

  \section{Rank at least~15 lattices }

\subsection{The lattice $\boldsymbol{U\oplus\mathbf{E}_{8}\oplus\mathbf{D}_{4}\oplus\mathbf{A}_{1}}$}

The K3 surface $X$ contains $21$ $(-2)$-curves $A_{1},\dots,A_{21}$.
The dual graph of these curves is given in \cite[Section 4]{Kondo}; 
we reproduce it here:

\begin{center}
\begin{tikzpicture}[scale=1]

\draw (0,0) -- (6,0);
\draw (1,1.5) -- (5,1.5);
\draw (0,0) -- (5,1.5);
\draw (1,1.5) -- (6,0);
\draw [very thick] (2,1.5) -- (4,1.5);
\draw [very thick] (2,2.5) -- (4,2.5);
\draw [very thick] (2,3.5) -- (4,3.5);
\draw [ultra thick] (0,4.5) -- (6,4.5);
\draw [very thick] (0,4.5) -- (0,2.5);
\draw [very thick] (6,4.5) -- (6,2.5);
\draw (0,2.5) -- (0,0);
\draw (6,2.5) -- (6,0);
\draw (0,4.5) -- (2,1.5);
\draw (0,4.5) -- (2,2.5);
\draw (0,4.5) -- (2,3.5);
\draw (6,4.5) -- (4,1.5);
\draw (6,4.5) -- (4,2.5);
\draw (6,4.5) -- (4,3.5);
\draw (1,1.5) -- (2,1.5);
\draw (1,1.5) -- (2,2.5);
\draw (1,1.5) -- (2,3.5);
\draw (5,1.5) -- (4,1.5);
\draw (5,1.5) -- (4,2.5);
\draw (5,1.5) -- (4,3.5);

\draw (0,0) node {$\bullet$};
\draw (1,0) node {$\bullet$};
\draw (2,0) node {$\bullet$};
\draw (3,0) node {$\bullet$};
\draw (4,0) node {$\bullet$};
\draw (5,0) node {$\bullet$};
\draw (6,0) node {$\bullet$};
\draw (1.5,0.45) node {$\bullet$};
\draw (4.5,0.45) node {$\bullet$};
\draw (0,2.5)  node {$\bullet$};
\draw (6,2.5)  node {$\bullet$};
\draw (0,4.5)  node {$\bullet$};
\draw (2,1.5) node {$\bullet$};
\draw (2,2.5) node {$\bullet$};
\draw (2,3.5) node {$\bullet$};
\draw (6,4.5)  node {$\bullet$};
\draw (4,1.5) node {$\bullet$};
\draw (4,2.5) node {$\bullet$};
\draw (4,3.5) node {$\bullet$};
\draw (1,1.5)  node {$\bullet$};
\draw (5,1.5)  node {$\bullet$};

\draw (0,0) node [below]{$A_{10}$};
\draw (1,0) node [below]{$A_{9}$};
\draw (2,0) node [below]{$A_{8}$};
\draw (3,0) node [below]{$A_{7}$};
\draw (4,0) node [below]{$A_{6}$};
\draw (5,0) node [below]{$A_{5}$};
\draw (6,0) node [below]{$A_{3}$};
\draw (0,2.5) node [left]{$A_{11}$}; 
\draw (6,2.5)  node [right]{$A_{4}$};
\draw (0,4.5)  node [left]{$A_{20}$};
\draw (2,1.5) node [above]{$A_{17}$};
\draw (2,2.5) node [above]{$A_{18}$};
\draw (2,3.5) node [above]{$A_{19}$};
\draw (6,4.5)  node [right]{$A_{21}$};
\draw (4-0.15,1.5) node [above]{$A_{14}$};
\draw (4-0.15,2.5) node [above]{$A_{15}$};
\draw (4-0.15,3.5) node [above]{$A_{16}$};
\draw (1,1.5)  node [left]{$A_{1}$};
\draw (5,1.5)  node [right]{$A_{13}$};
\draw (1.5,0.45) node [above]{$A_{12}$};
\draw (4.5,0.45) node [above]{$A_{2}$};

\draw (3,4.5) node [above]{\small $6$};

\end{tikzpicture}
\end{center} 

The curves $A_{1},\dots,A_{15}$ generate the N\'eron--Severi lattice;
in that basis, the divisor 
\[
D_{242}=(2,5,9,4,10,12,15,19,24,30,14,23,17,8,4)
\]
is ample, of square $242$, with $D_{242}A_{j}=1$ for $j\in\{1,\dots,14\}\setminus\{11\}$. 

The divisor 
\[
D_{2}=(1,2,3,1,3,3,3,3,3,3,1,2,1,0,0)
\]
 is nef, base-point free, with $D_{2}^{2}=2$, $D_{2} \cdot A_{20}=D_{2} \cdot A_{21}=2$,
$D_{2} \cdot A_{j}=1$ for $j\in\{4,11,14,\dots,19\}$ and else $D_{2} \cdot A_{j}=0$.
We have 
\[
2D_{2}\equiv A_{20}+A_{21}
\]
and 
\[
D_{2}\equiv\sum_{k\in\{1,\dots,13\}\setminus\{4,11\}}A_{k}+A_{13+j}+A_{16+j},\,\,j\in\{1,2,3\}.
\]
By using the linear system $|D_{2}|$, we obtain the following. 

\begin{prop}
The K3 surface $X$ is the double cover of $\,\PP^{2}$ branched over
a sextic curve $C_{6}$ with an $\mathbf{a}_{11}$ singularity $q$; 
the curves $A_{j}$ with $j\in\{1,\dots,13\}\setminus\{4,11\}$ are
mapped to $q$. The image of the curves $A_{4}$ and $A_{11}$ is the line
$L$ tangent to the branch of $C_{6}$ at $q$. The images of $A_{13+j}$ and $A_{16+j}$, $j\in\{1,2,3\}$, 
are lines $L_{1}$, $L_{2}$, $L_{3}$ through $q$ that are bitangent to
the sextic at other intersection points. The image of $A_{20}$, $A_{21}$
is a conic which is $6$-tangent to the sextic. 
\end{prop}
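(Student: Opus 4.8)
The plan is to apply the general machinery of Theorem~\ref{thm:SaintDonat-2}, case c), to the divisor $D_{2}$, exactly as was done for the many rank~$\geq 7$ lattices of the form $U\oplus K$ treated above. First I would verify that $D_{2}=(1,2,3,1,3,3,3,3,3,3,1,2,1,0,0)$ (in the stated base $A_{1},\dots,A_{15}$ of $\NS X)$) is nef of square $2$: squaring is a direct Gram-matrix computation, and nefness follows from checking $D_{2}A_{j}\geq 0$ on the $21$ classes of $\cu$-curves (the only curves that could have negative intersection with a divisor in the effective cone), using the dual graph reproduced from \cite[Section 4]{Kondo} to write each $A_{j}$ in the chosen base. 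Since $D_{2}^{2}=2$ and $D_{2}$ is nef with $D_{2}A_{j}=1$ for some $j$, by Theorem~\ref{thm:SaintDonat-1} it has no fixed part, and then by Theorem~\ref{thm:SaintDonat-2}, case c), $|D_{2}|$ is base point free and hyperelliptic, so $\varphi_{D_{2}}$ is a double cover $\eta:X\to\PP^{2}$ branched over a sextic curve $C_{6}$, smooth or with $ADE$ singularities.

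Next I would read off the geometry of $C_{6}$ from the intersection numbers $D_{2}A_{j}$. The curves $A_{j}$ with $D_{2}A_{j}=0$, namely those with $j\in\{1,2,3,5,6,7,8,9,10,12,13\}$, are contracted by $\eta$; their dual graph is a chain of length $11$ (an $\tilde{\mathbf A}_{11}$-type configuration minus a node, i.e. an $\mathbf{a}_{11}$ Dynkin chain), which forces $C_{6}$ to have a single $\mathbf{a}_{11}$ singularity $q$ onto which these eleven curves are contracted. The curves $A_{4},A_{11}$ satisfy $D_{2}A=1$ and are adjacent to the chain in the right way, so their images are the line $L$ tangent to the (unique) branch of $C_{6}$ at $q$; the relation $A_{4}+A_{11}+(\text{chain})\equiv D_{2}$, i.e. the pullback $\eta^{*}L=A_{4}+A_{11}+\sum(\text{contracted }A_{k})$, is verified by a linear-algebra check in the lattice. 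Similarly, the three equivalences $D_{2}\equiv\sum_{k\in\{1,\dots,13\}\setminus\{4,11\}}A_{k}+A_{13+j}+A_{16+j}$ for $j=1,2,3$ — again checked by direct computation in $\NS X)$ — show, via Lemma~\ref{lem:splitingsPullBackTritang}, that $A_{13+j}+A_{16+j}=\eta^{*}L_{j}$ for tritangent-type lines $L_{j}$ passing through $q$ and bitangent to $C_{6}$ elsewhere. Finally $2D_{2}\equiv A_{20}+A_{21}$ gives, again by Lemma~\ref{lem:splitingsPullBackTritang} with $d=2$, that $A_{20},A_{21}$ are the two components of the pullback of a $6$-tangent conic.

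The main obstacle is the bookkeeping: one must confirm that the particular curve labelled $A_{j}$ in Kondo's dual graph really has the claimed coordinate vector in the chosen base $A_{1},\dots,A_{15}$, and that the stated linear equivalences hold on the nose — this is where an error would most easily creep in, since the lattice has rank $15$ and the classes of $A_{16},\dots,A_{21}$ must first be solved for from the graph. In practice this is handled exactly as in the earlier sections (and as described in Section~\ref{subsec:About-the-computations}): one inputs the Gram matrix of the base into Magma, solves for the remaining $\cu$-classes, and checks the equivalences and the intersection table mechanically. The remaining subtlety is the identification of the singularity type: one needs that the contracted chain is genuinely an $\mathbf{a}_{11}$ (not, say, two shorter chains), which follows from the connectedness and shape of the relevant subgraph, and that $L$ meets $C_{6}$ only at $q$ — this follows from $\eta^{*}L=A_{4}+A_{11}+\sum(\text{eleven contracted curves})$ having the right total degree ($L\cdot C_{6}=6$ and $2(A_{4}D_{2}+A_{11}D_{2})=4$ leaves the remaining intersection concentrated at $q$). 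Putting these pieces together yields the Proposition.
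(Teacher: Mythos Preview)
Your proposal is correct and follows exactly the paper's approach: the paper's entire ``proof'' is the phrase ``By using the linear system $|D_{2}|$, we obtain:'' preceded by the computation of $D_{2}^{2}=2$, the list of intersection numbers $D_{2}A_{j}$, and the two displayed linear equivalences $2D_{2}\equiv A_{20}+A_{21}$ and $D_{2}\equiv\sum_{k\in\{1,\dots,13\}\setminus\{4,11\}}A_{k}+A_{13+j}+A_{16+j}$ --- you have simply spelled out the standard Saint-Donat and contraction arguments that the paper leaves implicit throughout. One small caution: the pullback $\eta^{*}L$ is $A_{4}+A_{11}$ plus the contracted chain \emph{with nontrivial multiplicities} (namely $D_{2}-A_{4}-A_{11}$, so coefficients up to $3$), not all equal to $1$ as your phrasing suggests; this is exactly the bookkeeping you flag, and the Magma check will of course produce the right coefficients.
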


\begin{rem}
The branch curve is irreducible, with geometric genus $4$. 
\end{rem}

The divisor $D'_{2}=(0,1,2,1,2,2,2,2,3,4,2,3,2,1,0)$ is nef, base-point
free of square $2$, with 
\[
D_{2} \cdot A_{1}=D_{2} \cdot A_{8}=1,\,D_{2} \cdot A_{15}=D_{2} \cdot A_{16}=D_{2} \cdot A_{17}=2,\,D_{2} \cdot A_{20}=D_{2} \cdot A_{21}=4.
\]
Therefore, the linear system $|D_{2}'|$ induces a double cover $\varphi'\colon X\to\PP^{2}$
which is ramified above the line $L'=\varphi'(A_{8})$; the curves
$A_{2}$, $A_{3}$, $A_{5}$, $A_{6}$, $A_{7}$ are contracted to a singularity of
the branch curve, and the curves $A_{9},\dots,A_{14}$ are contracted
to another singularity. In that way, we obtain a second involution
acting on the K3 surface. By \cite{Kondo}, we know that the automorphism
group of the  general  K3 is $(\ZZ/2\ZZ)^{2}$. 

\subsection{The lattice $\boldsymbol{U\oplus\mathbf{E}_{8}\oplus\mathbf{D}_{6}}$}

The K3 surface $X$ contains $19$ $(-2)$-curves $A_{1},\dots,A_{19}$.
The dual graph of these curves is (see \cite[Section 4]{Kondo}) 

\begin{center}
\begin{tikzpicture}[scale=1]

\draw [very thick] (0,1) -- (12,1);
\draw [very thick] (0,-1) -- (12,-1);
\draw (0,0) -- (12,0);
\draw (0,1) -- (0,-1);
\draw (12,1) -- (12,-1);

\draw (2,-0.8) -- (2,0); 
\draw (10,-0.8) -- (10,0);

\draw (0,1) node {$\bullet$};
\draw (0,-1) node {$\bullet$};
\draw (12,1) node {$\bullet$};
\draw (12,-1) node {$\bullet$};

\draw (0,0) node {$\bullet$};
\draw (1,0) node {$\bullet$};
\draw (2,0) node {$\bullet$};
\draw (3,0) node {$\bullet$};
\draw (4,0) node {$\bullet$};
\draw (5,0) node {$\bullet$};
\draw (6,0) node {$\bullet$};
\draw (7,0) node {$\bullet$};
\draw (8,0) node {$\bullet$};
\draw (9,0) node {$\bullet$};
\draw (10,0) node {$\bullet$};
\draw (11,0) node {$\bullet$};
\draw (12,0) node {$\bullet$};
\draw (2,-0.8) node {$\bullet$};
\draw (10,-0.8) node {$\bullet$};

\draw (0,0) node [left]{$A_{1}$};
\draw (1,0) node [above]{$A_{2}$};
\draw (2,0) node [above]{$A_{3}$};
\draw (3,0) node [above]{$A_{5}$};
\draw (4,0) node [above]{$A_{6}$};
\draw (5,0) node [above]{$A_{7}$};
\draw (6,0) node [above]{$A_{8}$};
\draw (7,0) node [above]{$A_{9}$};
\draw (8,0) node [above]{$A_{10}$};
\draw (9,0) node [above]{$A_{11}$};
\draw (10,0) node [above]{$A_{12}$};
\draw (11,0) node [above]{$A_{14}$};
\draw (12,0) node [right]{$A_{15}$};

\draw (0,1) node [left]{$A_{19}$};
\draw (0,-1) node [left]{$A_{17}$};
\draw (12,1) node [right]{$A_{18}$};
\draw (12,-1) node [right]{$A_{16}$};
\draw (2,-0.7) node [right]{$A_{4}$};
\draw (10,-0.7) node [right]{$A_{13}$};

\end{tikzpicture}
\end{center} 

The curves $A_{1},\dots,A_{16}$ generate the N\'eron--Severi lattice;
in that basis, the divisor 
\[
D_{304}=(20,41,63,31,55,48,42,37,33,30,28,27,13,14,2,-9)
\]
is ample, of square $304$, with $D_{304} \cdot A_{j}=1$ for $j\leq15$,
$D_{304} \cdot A_{16}=D_{304} \cdot A_{19}=20$ and $D_{304} \cdot A_{17}=D_{304} \cdot A_{18}=2$.
The divisor 
\[
D_{2}=(1,2,3,1,3,3,3,3,3,3,3,3,1,2,1,0)
\]
is nef, base-point free, of square $2$, with $D_{2} \cdot A_{j}=1$ for
$j\in\{4,13,16,17,18,19\}$ and else $D_{2} \cdot A_{j}=0$. One has 
\[
\begin{array}{c}
D_{2}\equiv\left(-A_{4}-A_{13}+\sum_{j=1}^{15}A_{j}\right)+A_{16}+A_{17},\\
D_{2}\equiv\left(-A_{4}-A_{13}+\sum_{j=1}^{15}A_{j}\right)+A_{18}+A_{19}.
\end{array}
\]
By using the linear system $|D_{2}|$, we obtain the following. 

\begin{prop}
The K3 surface is the double cover of $\,\PP^{2}$ branched over a sextic
curve with an $\mathbf{a}_{13}$ singularity. The image of $A_{4}$
and $A_{13}$ is a line, so are the images of $A_{16}$ and $A_{17}$ and of 
$A_{18}$ and $A_{19}$.
\end{prop}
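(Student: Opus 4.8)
The plan is to produce the double-plane model of $X$ from the divisor $D_{2}=(1,2,3,1,3,3,3,3,3,3,3,3,1,2,1,0)$ (coordinates in the basis $A_{1},\dots,A_{16}$ of $\NS X)\simeq U\oplus\mathbf{E}_{8}\oplus\mathbf{D}_{6}$) and then read the singularity of the branch sextic directly off the dual graph. First I would assemble the Gram matrix of $\NS X)$ from the displayed dual graph — checking in particular that $A_{1},\dots,A_{16}$ is indeed a basis and recovering the classes of $A_{17},A_{18},A_{19}$ from the reducible fibres visible in the graph — and with it verify the stated numerics: $D_{2}^{2}=2$, $D_{2}A_{j}=1$ for $j\in\{4,13,16,17,18,19\}$ and $D_{2}A_{j}=0$ otherwise. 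Since $A_{1},\dots,A_{19}$ is the complete list of $(-2)$-curves on $X$, the condition $D_{2}A_{j}\geq 0$ for all $j$ together with $D_{2}^{2}>0$ shows that $D_{2}$ is nef and big.

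Next I would check that $|D_{2}|$ has no fixed part, so that $\varphi_{D_{2}}$ is a morphism. By Theorem~\ref{thm:SaintDonat-1}(b) a fixed part would give $D_{2}\equiv 2E+\Gamma$ with $|E|$ a free pencil and $\Gamma$ a $(-2)$-curve, $\Gamma E=1$; then $D_{2}A_{4}=2EA_{4}+\Gamma A_{4}$ and $D_{2}A_{13}=2EA_{13}+\Gamma A_{13}$ being both odd would force $\Gamma A_{4}=\Gamma A_{13}=1$, i.e.\ $\Gamma$ meeting both $A_{4}$ and $A_{13}$; but in the dual graph $A_{4}$ is adjacent only to $A_{3}$ and $A_{13}$ only to $A_{12}$, so no $(-2)$-curve meets both. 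Hence $|D_{2}|$ is base point free. By Theorem~\ref{thm:SaintDonat-1}(c) the map $\varphi:=\varphi_{D_{2}}$ is then either birational onto its image in $\PP^{2}$ — impossible, since a K3 surface is not rational — or two-to-one onto $\PP^{2}$; adjunction, $0=K_{X}=\varphi^{*}(K_{\PP^{2}}+\tfrac12 C_{6})$, identifies the branch locus $C_{6}\subset\PP^{2}$ as a plane sextic with at worst ADE singularities. This is precisely case c) of Theorem~\ref{thm:SaintDonat-2}.

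Then I would locate the singularity of $C_{6}$. The curves $A_{j}$ with $D_{2}A_{j}=0$, namely $j\in\{1,2,3,5,6,7,8,9,10,11,12,14,15\}$, are exactly the curves contracted by $\varphi$; the dual graph shows that these thirteen curves form the chain $A_{1}-A_{2}-A_{3}-A_{5}-\dots-A_{12}-A_{14}-A_{15}$, a configuration of type $\mathbf{A}_{13}$, and they all map to one point $q\in\PP^{2}$. Since the exceptional configuration of the minimal resolution of the double cover over $q$ has type $\mathbf{A}_{13}$, the sextic $C_{6}$ has an $\mathbf{a}_{13}$ singularity at $q$; the remaining six $(-2)$-curves have $D_{2}$-degree $1$, hence are not contracted, and $13+6=19$ accounts for all $(-2)$-curves, so $C_{6}$ has no other singular point.

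Finally I would identify the images of the six $(-2)$-curves of $D_{2}$-degree $1$, namely $A_{4},A_{13},A_{16},A_{17},A_{18},A_{19}$: each maps isomorphically to a line of $\PP^{2}$. Let $\iota$ be the covering involution of $\varphi$; it fixes $D_{2}$, hence preserves $D_{2}$-degree and the set of $(-2)$-curves, so it permutes these six curves, and it fixes none of them (an $\iota$-fixed $(-2)$-curve lies in the ramification divisor and covers its image two-to-one, contradicting degree $1$). With $\Theta:=-A_{4}-A_{13}+\sum_{j=1}^{15}A_{j}$, an effective divisor supported on the $\mathbf{A}_{13}$-chain over $q$ and with $D_{2}\Theta=0$, the relations $D_{2}\equiv A_{16}+A_{17}+\Theta$ and $D_{2}\equiv A_{18}+A_{19}+\Theta$ exhibit $A_{16}+A_{17}+\Theta$ and $A_{18}+A_{19}+\Theta$ as the pullbacks $\varphi^{*}\varphi(A_{16})$ and $\varphi^{*}\varphi(A_{18})$; hence $\iota$ swaps $A_{16}\leftrightarrow A_{17}$ and $A_{18}\leftrightarrow A_{19}$, and $\varphi(A_{16})=\varphi(A_{17})$, $\varphi(A_{18})=\varphi(A_{19})$ are lines (through $q$). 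The only remaining possibility is $\iota(A_{4})=A_{13}$, whence $\varphi(A_{4})=\varphi(A_{13})$ is a single line, which is the assertion. The step I expect to be the real work is the first one — assembling the Gram matrix from the dual graph and verifying $D_{2}^{2}=2$ together with all the intersection numbers; everything after that is essentially formal, the only genuine subtleties being the correct reading of the $\mathbf{A}_{13}$ subgraph and the bookkeeping with $\iota$.
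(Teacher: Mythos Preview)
Your approach matches the paper's: use $D_{2}$ to produce the double cover of $\PP^{2}$, read the $\mathbf{a}_{13}$ singularity from the contracted $\mathbf{A}_{13}$ chain, and identify pairs of $(-2)$-curves mapping to the same line via relations $D_{2}\equiv A+B+\Theta$ with $\Theta$ effective and supported on the chain. Your base-point-free argument is fine.

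There is one genuine gap. Your claim that $\iota$ fixes none of the six degree-$1$ curves is not established by the parenthetical you give: a $(-2)$-curve that is \emph{pointwise} fixed by $\iota$ lies in the ramification divisor and maps \emph{isomorphically} onto a line component of the branch sextic, so there is no contradiction with $D_{2}\cdot A=1$. This situation really occurs elsewhere in the paper (for instance in the lattice $U\oplus\mathbf{D}_{4}^{\oplus2}$, where $D_{2}A_{8}=1$ and $A_{8}$ is a ramification component). Your elimination argument for $\iota(A_{4})=A_{13}$ therefore has a hole.

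The repair is easier than patching the involution argument: you do not need $\iota$ at all for this pair. The coordinate expression $D_{2}=(1,2,3,1,3,\dots,3,1,2,1,0)$ in the basis $A_{1},\dots,A_{16}$ already reads as
\[
D_{2}\;\equiv\;A_{4}+A_{13}+\left(A_{1}+2A_{2}+3A_{3}+3A_{5}+\dots+3A_{12}+2A_{14}+A_{15}\right),
\]
with the bracketed divisor manifestly effective and supported on the contracted $\mathbf{A}_{13}$ chain. This is exactly the same shape as the two displayed relations for $(A_{16},A_{17})$ and $(A_{18},A_{19})$, and by the identical reasoning you already used for those pairs it gives $\varphi(A_{4})=\varphi(A_{13})$. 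The paper only writes out the latter two equivalences, but this one is immediate from the very definition of $D_{2}$.
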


The automorphism group of the  general  K3 surface is $(\ZZ/2\ZZ)^{2}$
(see \cite[Section 4]{Kondo}).

\subsection{The lattice $\boldsymbol{U\oplus\mathbf{E}_{8}\oplus\mathbf{E}_{7}}$}

The K3 surface $X$ contains $19$ $(-2)$-curves $A_{1},\dots,A_{19}$.
The dual graph of these curves is (see \cite[Section 4]{Kondo})

\begin{center}
\begin{tikzpicture}[scale=1]

\draw [very thick] (3,-2) -- (7,-2);
\draw (0,-2) -- (10,-2);
\draw (0,0) -- (10,0);
\draw (0,1) -- (0,-2);
\draw (10,1) -- (10,-2);

\draw (0,0) node {$\bullet$};
\draw (1,0) node {$\bullet$};
\draw (2,0) node {$\bullet$};
\draw (3,0) node {$\bullet$};
\draw (4,0) node {$\bullet$};
\draw (5,0) node {$\bullet$};
\draw (6,0) node {$\bullet$};
\draw (7,0) node {$\bullet$};
\draw (8,0) node {$\bullet$};
\draw (9,0) node {$\bullet$};
\draw (10,0) node {$\bullet$};

\draw (0,1) node {$\bullet$};
\draw (0,-1) node {$\bullet$};
\draw (0,-2) node {$\bullet$};
\draw (3,-2) node {$\bullet$};

\draw (10,1) node {$\bullet$};
\draw (10,-1) node {$\bullet$};
\draw (10,-2) node {$\bullet$};
\draw (7,-2) node {$\bullet$};

\draw (0,0) node [left]{$A_{2}$};
\draw (1,0) node [above]{$A_{6}$};
\draw (2,0) node [above]{$A_{7}$};
\draw (3,0) node [above]{$A_{8}$};
\draw (4,0) node [above]{$A_{9}$};
\draw (5,0) node [above]{$A_{10}$};
\draw (6,0) node [above]{$A_{11}$};
\draw (7,0) node [above]{$A_{12}$};
\draw (8,0) node [above]{$A_{13}$};
\draw (9,0) node [above]{$A_{14}$};

\draw (0,1) node [left]{$A_{1}$};
\draw (0,-1) node [left]{$A_{3}$};
\draw (0,-2) node [left]{$A_{4}$};
\draw (3,-2) node [above]{$A_{5}$};

\draw (10,0) node [right]{$A_{15}$};
\draw (10,1) node [right]{$A_{18}$};
\draw (10,-1) node [right]{$A_{19}$}; 
\draw (10,-2) node [right]{$A_{16}$}; 
\draw (7,-2) node [above]{$A_{17}$};

\end{tikzpicture}
\end{center} 

The curves $A_{1},\dots,A_{17}$ generate the N\'eron--Severi lattice;
in that basis, the divisor 
\[
D_{538}=(32,66,46,27,9,55,45,36,28,21,15,10,6,3,1,0,1)
\]
is ample, of square $538$, with $D_{538} \cdot A_{j}=1$ for $j\neq1,4,17$
and $D_{538} \cdot A_{j}=2,11,16$ for $j=1,4,17$. 

The automorphism group of the  general  K3 surface is $(\ZZ/2\ZZ)^{2}$
(see \cite[Section 4]{Kondo}).

The coarse moduli space $\mathcal{M}^{N}$ of $N=U\oplus\mathbf{E}_{8}\oplus\mathbf{E}_{7}$-polarized
K3 surfaces is studied in \cite{Clingher1}, where it is proved that
this moduli space can be seen naturally as the open set 
\[
\mathcal{M}^{N}=\{[\a,\b,\g,\d]\in\mathbb{WP}^{3}(2,3,5,6)\,|\,\g\neq0\text{ or }\d\neq0\}
\]
of a weighted projective space; thus in particular that moduli is
rational. In \cite{Clingher1} is also given a (singular) model of
the K3 surfaces in $\mathcal{M}^{N}$ as a quartic surface in $\PP^{3}$.
The K3 surfaces with lattice $U\oplus\mathbf{E}_{8}\oplus\mathbf{E}_{7}$
also belong among the ``famous 95'' families discussed in Section
\ref{subsec:About-the-famous95}. 

\subsection{The lattice $\boldsymbol{U\oplus\mathbf{E}_{8}\oplus\mathbf{E}_{8}}$\label{subsec:The-latticeUE8E8}}

The set of $(-2)$-curves $A_{1},\dots,A_{19}$ on the K3 surface $X$
and their configuration had been determined in the classical work
of Vinberg \cite{Vinberg2} (see also \cite[Section 4]{Kondo}):

\begin{center}
\begin{tikzpicture}[scale=1]

\draw (0,0) -- (16*0.8,0);
\draw (1.6,0) -- (1.6,-0.8);
\draw (0.8*14,0) -- (0.8*14,-0.8);

\draw (0,0) node {$\bullet$};
\draw (0.8,0) node {$\bullet$};
\draw (2*0.8,0) node {$\bullet$};
\draw (3*0.8,0) node {$\bullet$};
\draw (4*0.8,0) node {$\bullet$};
\draw (5*0.8,0) node {$\bullet$};
\draw (6*0.8,0) node {$\bullet$};
\draw (7*0.8,0) node {$\bullet$};
\draw (8*0.8,0) node {$\bullet$};
\draw (9*0.8,0) node {$\bullet$};
\draw (10*0.8,0) node {$\bullet$};
\draw (11*0.8,0) node {$\bullet$};
\draw (12*0.8,0) node {$\bullet$};
\draw (13*0.8,0) node {$\bullet$};
\draw (14*0.8,0) node {$\bullet$};
\draw (15*0.8,0) node {$\bullet$};
\draw (16*0.8,0) node {$\bullet$};
\draw (0.8*2,-0.8) node {$\bullet$};
\draw (0.8*14,-0.8) node {$\bullet$};

\draw (0,0) node [above]{$A_{1}$};
\draw (0.8,0) node [above]{$A_{2}$};
\draw (2*0.8,0) node [above]{$A_{3}$};
\draw (3*0.8,0) node [above]{$A_{5}$};
\draw (4*0.8,0) node [above]{$A_{6}$};
\draw (5*0.8,0) node [above]{$A_{7}$};
\draw (6*0.8,0) node [above]{$A_{8}$};
\draw (7*0.8,0) node [above]{$A_{19}$};
\draw (8*0.8,0) node [above]{$A_{9}$};
\draw (9*0.8,0) node [above]{$A_{10}$};
\draw (10*0.8,0) node [above]{$A_{11}$};
\draw (11*0.8,0) node [above]{$A_{12}$};
\draw (12*0.8,0) node [above]{$A_{13}$};
\draw (13*0.8,0) node [above]{$A_{14}$};
\draw (14*0.8,0) node [above]{$A_{15}$};
\draw (15*0.8,0) node [above]{$A_{17}$};
\draw (16*0.8,0) node [above]{$A_{18}$};
\draw (0.8*2,-0.8) node [left]{$A_{4}$};
\draw (0.8*14,-0.8) node [right]{$A_{16}$};

\end{tikzpicture}
\end{center} 

The curves $A_{1},\dots,A_{18}$ generate the N\'eron--Severi lattice;
in that base, the divisor 
\[
D_{620}=(-46,-91,-135,-68,-110,-84,-57,-29,30,61,93,126,160,195,231,115,153,76)
\]
is ample, of square $620$, with $D_{620} \cdot A_{j}=1$ for $1\leq j\leq19$.
The divisor 
\[
D_{2}=(-5,-10,-15,-8,-12,-9,-6,-3,3,6,9,12,15,18,21,10,14,7)
\]
is nef, of square $2$, with $D_{2} \cdot A_{4}=D_{2} \cdot A_{16}=1$ and $D_{2} \cdot A_{j}=0$
for $j\neq4,16$. Using $|D_{2}|$, we see that the K3 surface $X$
is the double cover of $\PP^{2}$ branched over a sextic curve with
an $\mathbf{a}_{17}$ singularity. We have 
\[
D_{2}\equiv A_{1}+2A_{2}+3A_{3}+A_{4}+3\left(\sum_{j=5}^{15}A_{j}\right)+A_{16}+2A_{17}+A_{18}+3A_{19};
\]
thus the image  of $A_{4}$, $A_{16}$ by the double cover map is a line
through the singularity. 

The automorphism group of the  general  K3 surface is $(\ZZ/2\ZZ)^{2}$
(see \cite[Section 4]{Kondo}).

The rich geometry of $U\oplus\mathbf{E}_{8}\oplus\mathbf{E}_{8}$-polarized
K3 surfaces is studied in \cite{Clingher2}. These K3 surfaces also
belong among the ``famous 95'' families discussed in Section \ref{subsec:About-the-famous95}. 

\subsection{The lattice $\boldsymbol{U\oplus\mathbf{E}_{8}\oplus\mathbf{E}_{8}\oplus\mathbf{A}_{1}}$}

The K3 surface $X$ contains $24$ $(-2)$-curves $A_{1},\dots,A_{24}$.
The dual graph of these curves is (see \cite[Section 4]{Kondo}) 
\begin{center}
\begin{tikzpicture}[scale=1]

\draw (-3,0) -- (3,0);
\draw (0,5) -- (3,0);
\draw (0,5) -- (-3,0);
\draw (-3/2,5/2) -- (3,0);
\draw (3/2,5/2) -- (-3,0);
\draw (0,0) -- (0,5);

\draw (0,5) node {$\bullet$};
\draw (1/2,-5/3*1/2 + 5) node {$\bullet$};
\draw (1, -5/3*2/2 + 5) node {$\bullet$};
\draw (3/2,-5/3*3/2 + 5) node {$\bullet$};
\draw (2,-5/3*4/2 + 5) node {$\bullet$};
\draw (5/2, -5/3*5/2 + 5) node {$\bullet$};
\draw (3, -5/3*6/2 + 5) node {$\bullet$};

\draw (-1/2,5/3*-1/2 + 5) node {$\bullet$};
\draw (-1, 5/3*-2/2 + 5) node {$\bullet$};
\draw (-3/2,5/3*-3/2 + 5) node {$\bullet$};
\draw (-2,5/3*-4/2 + 5) node {$\bullet$};
\draw (-5/2, 5/3*-5/2 + 5) node {$\bullet$};
\draw (-3, 5/3*-6/2 + 5) node {$\bullet$};

\draw (0.75,5/9*0.75+5/3) node {$\bullet$};
\draw (-0.75,5/9*0.75+5/3) node {$\bullet$};
\draw (0, 5/6) node {$\bullet$};
\draw (0.75,5/9*0.75+5/3) node [above]{$A_{22}$};
\draw (-0.75,5/9*0.75+5/3) node [above]{$A_{23}$};
\draw (0, 5/6) node  [right]{$A_{24}$};

\draw [dotted][very thick] (0, 5/6) -- (-0.75,5/9*0.75+5/3);
\draw [dotted][very thick] (0, 5/6) -- (0.75,5/9*0.75+5/3);
\draw [dotted][very thick] (0.75,5/9*0.75+5/3) -- (-0.75,5/9*0.75+5/3);

\draw (-1.5, 5/9*-1.5+5/3) node {$\bullet$};
\draw (1.5, 5/9*-1.5+5/3) node {$\bullet$};
\draw (0, -5/3*2/2 + 5) node {$\bullet$};
\draw [very thick] (0, -5/3*2/2 + 5) -- (0,0);
\draw [very thick] (-1.5, 5/9*-1.5+5/3) -- (1.5,-5/3*3/2 + 5);
\draw [very thick] (1.5, 5/9*-1.5+5/3) -- (-1.5,-5/3*3/2 + 5);
\draw (-1.5, 5/9*-1.5+5/3) node [below]{$A_{19}$};
\draw (1.5, 5/9*-1.5+5/3) node [below]{$A_{20}$};
\draw (0, -5/3*2/2 + 5) node [right]{$A_{21}$};

\draw (-3,0) node {$\bullet$};
\draw (-2,0) node {$\bullet$};
\draw (-1,0) node {$\bullet$};
\draw (0,0) node {$\bullet$};
\draw (1,0) node {$\bullet$};
\draw (2,0) node {$\bullet$};
\draw (3,0) node {$\bullet$};

\draw (-3,0) node [below]{$A_{1}$};
\draw (-2,0) node [below]{$A_{2}$};
\draw (-1,0) node [below]{$A_{3}$};
\draw (0,0) node [below]{$A_{4}$};
\draw (1,0) node [below]{$A_{5}$};
\draw (2,0) node [below]{$A_{6}$};
\draw (3,0) node [below]{$A_{7}$};

\draw (1/2,-5/3*1/2 + 5) node [right]{$A_{12}$};
\draw (1, -5/3*2/2 + 5) node  [right]{$A_{11}$};
\draw (3/2,-5/3*3/2 + 5) node  [right]{$A_{10}$};
\draw (2,-5/3*4/2 + 5) node  [right]{$A_{9}$};
\draw (5/2, -5/3*5/2 + 5) node  [right]{$A_{8}$};
\draw (0,5) node [above]{$A_{13}$};
\draw (-1/2,5/3*-1/2 + 5) node [left]{$A_{14}$};
\draw (-1, 5/3*-2/2 + 5) node  [left]{$A_{15}$};
\draw (-3/2,5/3*-3/2 + 5) node  [left]{$A_{16}$};
\draw (-2,5/3*-4/2 + 5) node  [left]{$A_{17}$};
\draw (-5/2, 5/3*-5/2 + 5) node  [left]{$A_{18}$};

\end{tikzpicture}
\end{center} 

Here the dotted thick segments indicate an intersection number equal
to $6$ between the curves. The automorphism group of the  general 
K3 surface is $\mathfrak{S}_{3}\times\ZZ/2\ZZ$ (see \cite[Section 4]{Kondo}).

\newpage

\section{Table: Number of ($-$2)-curves}\label{sec:table}

{ \setlength\extrarowheight{3.2pt}
\begin{xltabular}{\linewidth}{|c|c|c|c|c|c|c|c|c|c|}
\hline 
Lattice & $\#\cu\,$ & $\,\,\,$ & $\aut$ &  & Lattice & $\#\cu$ & $\,\,\,$ & $\aut$ & \tabularnewline
\hline
\textbf{Rank} $3$ &  &  &  &  & \textbf{Rank} $3$ &  &  &  & \tabularnewline\hline 
$S_{1}$ & $6$ & $\,\mathbf{u}\,$ & $\zd$ & $n$ & $S_{1,3,1}$ & $3$ & $\,\mathbf{u}\,$ & $\triv$ & \tabularnewline
\hline 
$S_{2}$ & $6$ & $\,\mathbf{u}\,$ & $\triv$ &  & $S_{1,4,1}$ & $4$ & $\,\mathbf{u}\,$ & $\zd$ & $n$\tabularnewline
\hline 
$S_{3}$ & $4$ & $\,\mathbf{u}\,$ & $\triv$ &  & $S_{1,5,1}$ & $6$ & $\,\mathbf{u}\,$ & $\zd$ & $n$\tabularnewline
\hline 
$S_{4}$ & $4$ &  & $\zd$ & $n$ & $S_{1,6,1}$ & $4$ & $\,\mathbf{u}\,$ & $\triv$ & \tabularnewline
\hline 
$S_{5}$ & $4$ & $\,\mathbf{u}\,$ & $\zd$ & $n$ & $S_{1,9,1}$ & $9$ &  & $\triv$ & \tabularnewline
\hline 
$S_{6}$ & $6$ & $\,\mathbf{u}\,$ & $\zd$ & $n$ & $S_{4,1,1}$ & $3$ & $\,\mathbf{u}\,$ & $\triv$ & \tabularnewline
\hline 
$S_{1,1,1}$ & $3$ & $\,\aleph\,\,\,\mathbf{u}\,$ & $\zd$ & $t$ & $S_{5,1,1}$ & $4$ & $\,\mathbf{u}\,$ & $\zd$ & $n$\tabularnewline
\hline 
$S_{1,1,2}$ & $3$ & $\,\aleph\,\,\,\mathbf{u}\,$ & $\zd$ & $t$ & $S_{6,1,1}$ & $4$ & $\,\mathbf{u}\,$ & $\triv$ & \tabularnewline
\hline 
$S_{1,1,3}$ & $4$ & $\,\mathbf{u}\,$ & $\zd$ & $n$ & $S_{7,1,1}$ & $6$ &  & $\triv$ & \tabularnewline
\hline 
$S_{1,1,4}$ & $4$ & $\,\mathbf{u}\,$ & $\triv$ &  & $S_{8,1,1}$ & $4$ & $\,\mathbf{u}\,$ & $\triv$ & \tabularnewline
\hline 
$S_{1,1,6}$ & $6$ &  & $\zd$ & $n$ & $S_{10,1,1}$ & $8$ &  & $\zd$ & $n$\tabularnewline
\hline 
$S_{1,1,8}$ & $8$ &  & $\triv$ &  & $S_{12,1,1}$ & $6$ &  & $\triv$ & \tabularnewline
\hline 
$S_{1,2,1}$ & $3$ & $\,\mathbf{u}\,$ & $\zd$ & $n$ & $S_{4,1,2}'$ & $4$ &  & $\zd$ & $n$\tabularnewline
\hline
&  &  &  &  &  &  &  &  & \tabularnewline
\hline
\textbf{Rank} $4$ &  &  &  &  & \textbf{Rank} $5$ &  &  &  & \tabularnewline
\hline 
$L(24)$ & $6$ &  & $\zd$ & $n$ & 
$U\oplus\mathbf{A}_{1}^{\oplus3}$ & $7$ &  & $\zd$ & $t$ 
\tabularnewline
\hline 
$L(27)$ & $8$ &  & $\zd$ & $n$ &
$U(2)\oplus\mathbf{A}_{1}^{\oplus3}$ & $10$ &  & $\zd$ & $t$ 
\tabularnewline
\hline 
$[8]\oplus\mathbf{A}_{1}^{\oplus3}$ & $12$ & $\,\mathbf{u}\,$ & $\zd$ & $n$ &
$U(4)\oplus\mathbf{A}_{1}^{\oplus3}$ & $24$ &  & $\zd$ & $n$
\tabularnewline
\hline 
$U\oplus\mathbf{A}_{1}^{\oplus2}$ & $5$ & $\,\angle\,\,\mathbf{u}\,$ & $\zd$ & $t$ &
$U\oplus\mathbf{A}_{1}\oplus\mathbf{A}_{2}$ & $6$ & $\angle\star$ & $\zd$ & $n$ 
\tabularnewline
\hline 
$U(2)\oplus\mathbf{A}_{1}^{\oplus2}$ & $6$ & $\,\mathbf{u}\,$ & $\zd$ & $t$ &
$U\oplus\mathbf{A}_{3}$ & $5$ & $\angle\star$ & $\zd$ & $n$ 
\tabularnewline
\hline 
$U(3)\oplus\mathbf{A}_{1}^{\oplus2}$ & $8$ & $\,\mathbf{u}\,$ & $\zd$ & $n$ &
$[4]\oplus\mathbf{D}_{4}$ & $5$ &  & $\zd$ & $n$ 
\tabularnewline
\hline 
$U(4)\oplus\mathbf{A}_{1}^{\oplus2}$ & $8$ & $\,\mathbf{u}\,$ & $\triv$ &  &
$[8]\oplus\mathbf{D}_{4}$ & $7$ &  & $\zd$ & $n$ 
\tabularnewline
\hline 
$U\oplus\mathbf{A}_{2}$ & $4$ & $\aleph\star\,\mathbf{u}\,$ & $\zd$ & $n$ &
$[16]\oplus\mathbf{D}_{4}$ & $8$ &  & $\zd$ & $n$ 
\tabularnewline
\hline 
$U(2)\oplus\mathbf{A}_{2}$ & $4$ & $\,\mathbf{u}\,$ & $\zd$ & $n$ &
$[6]\oplus\mathbf{A}_{2}^{\oplus2}$ & $10$ & $\,\mathbf{u}\,$ & $\zd$ & $n$ 
\tabularnewline
\hline 
$U(3)\oplus\mathbf{A}_{2}$ & $4$ & $\,\angle\,\,\mathbf{u}\,$ & $\triv$ &  &
& & & & 
\tabularnewline
\hline 
$U(6)\oplus\mathbf{A}_{2}$ & $6$ & $\,\mathbf{u}\,$ & $\triv$ &  &
&  &  &  &
\tabularnewline
\hline 
$L_{12}=S_{0}\oplus\mathbf{A}_{2}$ & $6$ & $\,\mathbf{u}\,$ & $\zd$ & $n$ &
&  &  &  &
\tabularnewline
\hline 
$[4]\oplus[-4]\oplus\mathbf{A}_{2}$ & $6$ &  & $\zd$ & $n$ &
&  &  &  &
\tabularnewline
\hline 
$[4]\oplus\mathbf{A}_{3}$ & $5$ & $\,\mathbf{u}\,$ & $\zd$ & $n$ &
&  &  &  &
\tabularnewline
\hline 
\end{xltabular}
}

\pagebreak

{ \setlength\extrarowheight{3.2pt}
\begin{xltabular}{\linewidth}{|c|c|c|c|c|c|c|c|c|c|}
\hline 
Lattice & $\#\cu\,$ & $\,\,\,$ & $\aut$ &  & Lattice & $\#\cu$ & $\,\,\,$ & $\aut$ & \tabularnewline
\hline
\textbf{Rank} $6$ &  &  &  &  &
\textbf{Rank} $9$ &  &  &  & \tabularnewline
\hline 
$U(3)\oplus\mathbf{A}_{2}^{\oplus2}$ & $12$ & $\,\mathbf{u}\,$ & $\zd$ & $n$&
$U\oplus\mathbf{E}_{7}$ & $9$ & $\,\aleph\,\,\mathbf{u}\,$ & $\zd$ & $t$
\tabularnewline
\hline 
$U(4)\oplus\mathbf{D}_{4}$ & $9$ &  & $\zd$ & $n$& 
$U\oplus\mathbf{D}_{6}\oplus\mathbf{A}_{1}$ & $10$ &  & $\zd$ & $t$
\tabularnewline
\hline 
$U\oplus\mathbf{A}_{4}$ & $6$ & $\,\star\,$ & $\zd$ & $n$& 
$U\oplus\mathbf{D}_{4}\oplus\mathbf{A}_{1}^{\oplus3}$ & $15$ &  & $\zd$ & $t$ 
\tabularnewline
\hline 
$U\oplus\mathbf{A}_{1}\oplus\mathbf{A}_{3}$ & $7$ & $\,\star\,$ & $\zd$ & $n$& 
$U\oplus\mathbf{A}_{1}^{\oplus7}$ & $37$ & $\,\mathbf{u}\,$ & $\zd$ & $t$ 
\tabularnewline
\hline 
$U\oplus\mathbf{A}_{2}^{\oplus2}$ & $7$ & $\angle\star$ & $\zd$ & $n$& 
$U(2)\oplus\mathbf{A}_{1}^{\oplus7}$ & $240$ & $\,\mathbf{u}\,$ & $\zde$ & $tn$ 
\tabularnewline
\hline 
$U\oplus\mathbf{A}_{1}^{\oplus2}\oplus\mathbf{A}_{2}$ & $8$ & $\,\star\,$ & $\zd$ & $n$& 
$U\oplus\mathbf{A}_{7}$ & $9$ & $\,\star\,$ & $\zd$ & $n$
\tabularnewline
\hline 
$U(2)\oplus\mathbf{A}_{1}^{\oplus4}$ & $16$ & $\,\mathbf{u}\,$ & $\zd$ & $t$& 
$U\oplus\mathbf{D}_{4}\oplus\mathbf{A}_{3}$ & $10$ & $\,\star\,$ & $\zd$ & $n$
\tabularnewline
\hline 
$U\oplus\mathbf{A}_{1}^{\oplus4}$ & $9$ &  & $\zd$ & $t$& 
$U\oplus\mathbf{D}_{5}\oplus\mathbf{A}_{2}$ & $10$ & $\,\star\,$ & $\zd$ & $n$

\tabularnewline
\hline 
$U(2)\oplus\mathbf{D}_{4}$ & $6$ & $\,\aleph\,\,\mathbf{u}\,$ & $\zd$ & $t$& 
$U\oplus\mathbf{D}_{7}$ & $9$ & $\,\star\,$ & $\zd$ & $t$
\tabularnewline
\hline
$U\oplus\mathbf{D}_{4}$ & $6$ & $\,\aleph\,\,\mathbf{u}\,$ & $\zd$ & $t$&
$U\oplus\mathbf{A}_{1}\oplus\mathbf{E}_{6}$ & $10$ & $\aleph\star\,\mathbf{u}\,$ & $\zd$ & $n$
\tabularnewline
\hline
&  &  &  &  &  &  &  &  & \tabularnewline
\hline
\textbf{Rank} $7$ &  &  &  &  & \textbf{Rank} $10$ &  &  &  & \tabularnewline
\hline 
$U\oplus\mathbf{D}_{4}\oplus\mathbf{A}_{1}$ & $8$ &  & $\zd$ & $t$ &
 $U\oplus\mathbf{E}_{8}$ & $10$ & $\aleph\,\mathbf{u}\,$ & $\zd$ & $t$
\\
\hline
$U\oplus\mathbf{A}_{1}^{\oplus5}$ & $12$ & $\,\mathbf{u}\,$ & $\zd$ & $t$ &
 $U\oplus\mathbf{D}_{8}$ & $10$ &  & $\zd$ & $t$ 
\\
\hline
$U(2)\oplus\mathbf{A}_{1}^{\oplus5}$ & $27$ & $\,\mathbf{u}\,$ & $\zd$ & $t$ &
 $U\oplus\mathbf{E}_{7}\oplus\mathbf{A}_{1}$ & $11$ & $\,\aleph\,\,\mathbf{u}\,$ & $\zd$ & $t$ 
\\
\hline
$U\oplus\mathbf{A}_{1}\oplus\mathbf{A}_{2}^{\oplus2}$ & $9$ & $\,\star\,$ & $\zd$ & $n$ &
 $U\oplus\mathbf{D}_{4}^{\oplus2}$ & $11$ & $\,\aleph\,\,\mathbf{u}\,$ & $\zd$ & $t$ 
\\
\hline
$U\oplus\mathbf{A}_{1}^{\oplus2}\oplus\mathbf{A}_{3}$ & $9$ & $\,\star\,$ & $\zd$ & $n$ &
$U\oplus\mathbf{D}_{6}\oplus\mathbf{A}_{1}^{\oplus2}$ & $14$ &  & $\zd$ & $t$  
\\
\hline

$U\oplus\mathbf{A}_{2}\oplus\mathbf{A}_{3}$ & $8$ & $\angle\star$ & $\zd$ & $n$&
$U(2)\oplus\mathbf{D}_{4}^{\oplus2}$ & $18$ & $\,\aleph\,\,\mathbf{u}\,$ & $\zde$ & $tn$ 
\\
\hline
$U\oplus\mathbf{A}_{1}\oplus\mathbf{A}_{4}$ & $8$ & $\,\star\,$ & $\zd$ & $n$&
$U\oplus\mathbf{D}_{4}\oplus\mathbf{A}_{1}^{\oplus4}$ & $27$ & $\,\mathbf{u}\,$ & $\zd$ & $t$ 
\\
\hline

$U\oplus\mathbf{A}_{5}$ & $7$ & $\angle\star$ & $\zd$ & $n$&
 $U\oplus\mathbf{A}_{1}^{\oplus8}$ & $145$ &  & $\zde$ & $tn$ 
\\
\hline
$U\oplus\mathbf{D}_{5}$ & $7$ & $\aleph\star\,\mathbf{u}\,$ & $\zd$ & $n$&
$U\oplus\mathbf{A}_{2}\oplus\mathbf{E}_{6}$ & $11$ & $\aleph\star\,\mathbf{u}\,$ & $\zd$ & $n$
\\
\hline
&  &  &  &  &  &  &  &  & \tabularnewline
\hline
\textbf{Rank} $8$ &  &  &  &  & \textbf{Rank} $11$ &  &  &  & \tabularnewline
\hline 
$U\oplus\mathbf{D}_{6}$ & $8$ &  & $\zd$ & $t$ & $U\oplus\mathbf{E}_{8}\oplus\mathbf{A}_{1}$ & $12$ & $\,\aleph\,\,\mathbf{u}\,$ & $\zd$ & $t$\tabularnewline
\hline 
$U\oplus\mathbf{D}_{4}\oplus\mathbf{A}_{1}^{\oplus2}$ & $10$ &  & $\zd$ & $t$ & $U\oplus\mathbf{D}_{8}\oplus\mathbf{A}_{1}$ & $14$ &  & $\zd$ & $t$\tabularnewline
\hline 
$U\oplus\mathbf{A}_{1}^{\oplus6}$ & $19$ & $\,\mathbf{u}\,$ & $\zd$ & $t$ & $U\oplus\mathbf{D}_{4}^{\oplus2}\oplus\mathbf{A}_{1}$ & $22$ &  & $\zd$ & $t$\tabularnewline
\hline 
$U(2)\oplus\mathbf{A}_{1}^{\oplus6}$ & $56$ & $\,\mathbf{u}\,$ & $\zd$ & $t$ & $U\oplus\mathbf{D}_{4}\oplus\mathbf{A}_{1}^{\oplus5}$ & $90$ &  & $\zde$ & $tn$\tabularnewline
\hline 
$U\oplus\mathbf{A}_{2}^{\oplus3}$ & $10$ & $\,\star\,$ & $\zd$ & $n$ &  &  &  &  & \tabularnewline
\hline 
$U\oplus\mathbf{A}_{3}^{\oplus2}$ & $9$ & $\,\star\,$ & $\zd$ & $n$ &  &  &  &  & \tabularnewline
\hline 
$U\oplus\mathbf{A}_{2}\oplus\mathbf{A}_{4}$ & $9$ & $\,\star\,$ & $\zd$ & $n$ & \textbf{Rank} $12$ &  &  &  & \tabularnewline
\hline 
$U\oplus\mathbf{A}_{1}\oplus\mathbf{A}_{5}$ & $9$ & $\,\star\,$ & $\zd$ & $t$ & $U\oplus\mathbf{E}_{8}\oplus\mathbf{A}_{1}^{\oplus2}$ & $14$ &  & $\zd$ & $t$\tabularnewline
\hline 
$U\oplus\mathbf{A}_{6}$ & $8$ & $\,\star\,$ & $\zd$ & $n$ & $U\oplus\mathbf{D}_{8}\oplus\mathbf{A}_{1}^{\oplus2}$ & $19$ &  & $\zd$ & $t$\tabularnewline
\hline 
$U\oplus\mathbf{A}_{2}\oplus\mathbf{D}_{4}$ & $9$ & $\aleph\star\,\mathbf{u}\,$ & $\zd$ & $n$ & $U\oplus\mathbf{D}_{4}^{\oplus2}\oplus\mathbf{A}_{1}^{\oplus2}$ & $59$ &  & $\zde$ & $tn$\tabularnewline
\hline 
$U\oplus\mathbf{A}_{1}\oplus\mathbf{D}_{5}$ & $9$ & $\,\star\,$ & $\zd$ & $n$ & $U\oplus\mathbf{A}_{2}\oplus\mathbf{E}_{8}$ & $13$ & $\aleph\star\,\mathbf{u}\,$ & $\zd$ & $n$\tabularnewline
\hline 
$U\oplus\mathbf{E}_{6}$ & $8$ & $\aleph\star\,\mathbf{u}\,$ & $\zd$ & $n$ &  &  &  &  & \tabularnewline
\hline 
Lattice & $\#\cu\,$ & $\,\,\,$ & $\aut$ &  & Lattice & $\#\cu$ & $\,\,\,$ & $\aut$ &
\tabularnewline
\hline
\textbf{Rank} $13$ &  &  &  &  & \textbf{Rank} $\geq15$ &  &  &  & \\
\hline
$U\oplus\mathbf{E}_{8}\oplus\mathbf{A}_{1}^{\oplus3}$ & $17$ & $\,\aleph\,\,\mathbf{u}\,$ & $\zd$ & $t$ &
$U\oplus\mathbf{E}_{8}\oplus\mathbf{D}_{4}\oplus\mathbf{A}_{1}$ & $21$ &  & $\zde$ & $tn$ 
\tabularnewline
\hline
$U\oplus\mathbf{D}_{8}\oplus\mathbf{A}_{1}^{\oplus3}$ & $39$ & $\aleph\,\mathbf{u}\,$ & $\zde$ & $tn$ &
$U\oplus\mathbf{E}_{8}\oplus\mathbf{D}_{6}$ & $19$ & $\,\aleph\,\,\mathbf{u}\,$ & $\zde$ & $tn$ 
\tabularnewline
\hline
$U\oplus\mathbf{E}_{8}\oplus\mathbf{A}_{3}$ & $14$ & $\,\star\,$ & $\zd$ & $n$ &
$U\oplus\mathbf{E}_{8}\oplus\mathbf{E}_{7}$ & $19$ & $\,\aleph\,\,\mathbf{u}\,$ & $\zde$ & $tn$ 
\tabularnewline
\hline
& & & & &
$U\oplus\mathbf{E}_{8}\oplus\mathbf{E}_{8}$ & $19$ & $\,\aleph\,\,\mathbf{u}\,$ & $\zde$ & $tn$ 
\tabularnewline
\hline
\textbf{Rank} $14$ &  &  & & &
$U\oplus\mathbf{E}_{8}\oplus\mathbf{E}_{8}\oplus\mathbf{A}_{1}$ & $24$ & $\,\aleph\,\,\mathbf{u}\,$ & $\tss$ & $tn$
\\
\hline
$U\oplus\mathbf{E}_{8}\oplus\mathbf{D}_{4}$ & $15$ & $\,\aleph\,\,\mathbf{u}\,$ & $\zd$ & $t$& 

&&&&
\tabularnewline
\hline
$U\oplus\mathbf{D}_{8}\oplus\mathbf{D}_{4}$ & $20$ & $\,\aleph\,\,\mathbf{u}\,$ & $\zde$ & $tn$& 
&&&&
\tabularnewline
\hline
$U\oplus\mathbf{E}_{8}\oplus\mathbf{A}_{1}^{\oplus4}$ & $27$ &  & $\zde$ & $t$& 
&&&&
\tabularnewline
\hline

\end{xltabular}
}

This table give the number of $(-2)$-curves. A $\,\aleph\,$ means
that the lattice is among the $95$ famous families; a $\,\angle\,$
means that the lattice is a mirror of one of the $95$ famous ones, but
not one of the $95$ (see Section \ref{subsec:About-the-famous95}).
A $\star$ means that their $\cu$-curve configuration is predicted
by Theorem \ref{thm:Not-2-elementaryAllMinus2}. A $\,\mathbf{u}\,$
means that the unirationality of the moduli space is proved (the absence
of $\,\mathbf{u}\,$ does not exclude the possibility of unirationality).
The column $\aut$ gives the automorphism group of the general K3
surface. A $t$ means that the action on the N\'eron--Severi lattice of
a hyperelliptic involution is trivial, an $n$ means that the action
of a hyperelliptic involution is not trivial, and a $tn$ means that both
cases exists. From that data, one can recover the kernel of the map
$\aut(X)\to O(\NS X))$.


\newcommand{\etalchar}[1]{$^{#1}$}

\end{document}